\newcommand{\R}{\mathbb{R}}
\newcommand{\N}{\mathbb{N}}
\newcommand{\E}{\mathbb{E}}
\renewcommand{\P}{\mathbb{P}}
\newcommand{\tr}{\operatorname{trace}}
\newcommand{\citationand}{\&}
\newtheorem{theorem}{Theorem}[chapter]
\newtheorem{lemma}[theorem]{Lemma}
\newtheorem{prop}[theorem]{Proposition}
\newtheorem{cor}[theorem]{Corollary}
\theoremstyle{definition}
\newtheorem{definition}[theorem]{Definition}
\theoremstyle{remark}
\numberwithin{section}{chapter}
\numberwithin{equation}{chapter}
\begin{document}
\frontmatter
\title{Numerical approximations
of stochastic
\linebreak
differential 
equations with non-globally
\linebreak
Lipschitz continuous 
coefficients}

\author{Martin Hutzenthaler}
\address{
LMU Biozentrum,
Department Biologie II,
University of Munich (LMU),
82152~Planegg-Martinsried, Germany
}
\email{hutzenthaler$\,$(at)$\,$bio.lmu.de}
\thanks{
}

\author{Arnulf Jentzen}
\address{
Seminar for Applied Mathematics,
Swiss Federal Institute of Technology
Zurich, 
8092 Zurich, Switzerland;
Program in Applied 
and Computational Mathematics, 
Princeton University,
Princeton, NJ 08544-1000, USA
}
\email{arnulf.jentzen$\,$(at)$\,$sam.math.ethz.ch
}
\thanks{
This work has been partially
supported
by the 
research project
``Numerical solutions of stochastic
differential equations with
non-globally Lipschitz
continuous coefficients''
and by the research project
``Numerical approximation of stochastic
differential equations with
non-globally Lipschitz
continuous coefficients''
both funded by the German Research
Foundation.
}

\date{}
\subjclass[2010]{Primary 60H35; \\ Secondary 65C05, 65C30}
\keywords{stochastic differential equation, rare event, strong convergence, numerical approximation, local Lipschitz condition, Lyapunov condition}

\begin{abstract}
Many 
stochastic differential equations 
(SDEs) in the literature have a 
superlinearly growing nonlinearity in 
their drift or diffusion coefficient.
Unfortunately, moments of 
the computationally efficient 
Euler-Maruyama approximation method
diverge for these SDEs
in finite time.
This article 
develops a 
general theory based on rare
events for studying
integrability properties such as moment bounds
for discrete-time 
stochastic processes.
Using this approach, 
we establish moment bounds
for fully and partially 
drift-implicit Euler methods
and for a class of new explicit
approximation methods
which require only a 
few more arithmetical 
operations 
than the Euler-Maruyama method.
These moment bounds
are then used to prove strong
convergence of the proposed
schemes.
Finally, we illustrate our
results for several SDEs
from finance,
physics,
biology
and chemistry.
\end{abstract}

\maketitle

\tableofcontents

\mainmatter

\chapter{Introduction}
\label{sec:intro}

This article investigates 
integrability and convergence
properties of 
numerical approximation
processes for stochastic differential
equations (SDEs).
In order to illustrate one of our
main results,
the following 
general setting is considered
in this
introductory chapter.
Let $ T \in (0,\infty) $,
$ d, m \in \mathbb{N}:=\{1,2,\ldots\} $,
let 
$ 
  \left( \Omega, \mathcal{F},
  \mathbb{P} \right) 
$
be a probability space 
with a normal filtration 
$ 
  ( \mathcal{F}_t )_{ t \in [0,T] }
$,
let
$
  W \colon [0,T] \times \Omega
  \rightarrow \mathbb{R}^m
$
be a
standard
$ 
  ( \mathcal{F}_t )_{ t \in [0,T] } 
$-Brownian motion,
let $ D \subset \R^d $
be an open set,
let
$
  \mu
  = ( \mu_1, \dots, \mu_d )
  \colon D
  \rightarrow \mathbb{R}^d
$
and
$
  \sigma 
  =
  ( \sigma_{ i, j } )_{
    i \in \{ 1, 2, \dots, d \} ,
    j \in \{ 1, 2, \dots, m \}
  } 
  \colon D
  \rightarrow 
  \mathbb{R}^{ d \times m } 
$
be 
locally Lipschitz continuous
functions
and let
$
  X \colon [0,T] \times \Omega
  \rightarrow D
$
be an
$ ( \mathcal{F}_t )_{ t \in [0,T] }
$-adapted stochastic process
with continuous sample
paths 
satisfying the SDE
\begin{equation}
\label{eq:SDE.intro}
   X_t = X_0
   + 
   \int_0^t \mu(X_s)\,ds
    +
    \int_0^t \sigma(X_s)\,dW_s
\end{equation}
$\P$-almost surely for all 
$ t \in [0,T] $.
Here $ \mu $ is the infinitesimal 
mean and $ \sigma\cdot\sigma^{ * } $
is the infinitesimal covariance matrix
of the solution process
$ X $ of the 
SDE~\eqref{eq:SDE.intro}.
To guarantee finiteness of 
some moments of
the SDE~\eqref{eq:SDE.intro},
we assume existence of a 
Lyapunov-type function.
More precisely, 
let $ q \in (0,\infty) $,
$ \kappa \in \R $
be real numbers 
and let
$ 
  V \colon D \to [1,\infty)
$
be a twice continuously differentiable
function with
$ 
  \mathbb{E}[ V( X_0 ) ]
  < \infty 
$
and with
$
  V(x) \geq
  \| x \|^q 
$ 
and
\begin{equation}
\label{eq:Lyapunov.type.condition.intro}
  \sum_{ i = 1 }^d
  \left(
    \frac{
      \partial 
      V
    }{
      \partial x_i
    } 
  \right)\!( x )
  \cdot
  \mu_{ i }(x)
  +
  \frac{ 1 }{ 2 }
  \sum_{ i, j = 1 }^{ d }
  \sum_{ k = 1 }^{ m }
  \left(
    \frac{
      \partial^2 
      V
    }{
      \partial x_i
      \partial x_j
    } 
  \right)\!( x )
  \cdot
  \sigma_{ i, k }(x)
  \cdot
  \sigma_{ j, k }(x)
\leq
  \kappa \cdot V(x)
\end{equation}
for all $ x \in D $.
These assumptions ensure
\begin{equation}
  \E\big[
    V( X_t )
  \big]
  \leq
  e^{ \kappa t } \cdot
  \E\big[ V( X_0 ) \big]
\end{equation}
for all $ t \in [0,T] $
and, therefore, 
finiteness of the $ q $-th
absolute moments of the
solution process $ X_t $, $ t \in [0,T] $,
of the SDE~\eqref{eq:SDE.intro},
i.e.,
$ 
  \sup_{ t \in [0,T] }
  \mathbb{E}\big[ \| X_t \|^q \big]
  < \infty 
$. 
Note that in this setting both
the drift coefficient $ \mu $
and the diffusion coefficient
$ \sigma $ of the
SDE~\eqref{eq:SDE.intro}
may grow superlinearly
and are, in particular, not
assumed to be globally
Lipschitz continuous.
Our main goal
in this introduction
is to construct 
and to analyze
numerical approximation processes
that converge strongly to the exact
solution of the SDE~\eqref{eq:SDE.intro}.
The standard literature
in computational stochastics
(see, for instance, 
Kloeden \citationand\ Platen~\cite{kp92}
and Milstein~\cite{m95}) 
concentrates on SDEs
with globally Lipschitz continuous
coefficients and can therefore
not be applied here.
Strong numerical approximations
of the SDE~\eqref{eq:SDE.intro}
are of particular interest for 
the computation
of statistical quantities
of the solution process of 
the SDE~\eqref{eq:SDE.intro}
through computationally efficient 
multilevel Monte Carlo methods
(see Giles~\cite{g08b}, 
Heinrich~\cite{h01}
and, e.g., in
Creutzig et al.~\cite{cdmr09},
Hickernell et al.~\cite{Hickernelletal2010},
Barth, Lang \& Schwab~\cite{bls11}
and in the references therein
for further recent results
on multilevel Monte Carlo methods).

Several SDEs from the literature 
satisfy the above setting
(see Sections~\ref{sec:vanderpol}--\ref{sec:langevin}
below).
For instance, the function
$ 
  V(x) = \left( 1 + \| x \|^2 \right)^r
$,
$ x \in D $,
for an arbitrary $ r \in (0,\infty) $
serves as a Lyapunov-type function
for the stochastic
van der Pol 
oscillator~\eqref{eq:ex_Vanderpol},
for the stochastic Lorenz equation~\eqref{eq:Lorenz},
for the Cox-Ingersoll-Ross process~\eqref{eq:ex_CIR}
and for the simplified 
Ait-Sahalia interest
rate 
model~\eqref{eq:ex_AitSahalia}
but not for the
stochastic Duffing-van der
Pol 
oscillator~\eqref{eq:ex_Duffing},
not for the stochastic
Brusselator~\eqref{eq:ex_Brusselator},
not for the stochastic SIR 
model~\eqref{eq:ex_SIR},
not for the Lotka-Volterra predator
prey model~\eqref{eq:ex_PredatorPrey}
and, in general, also not
for the Langevin 
equation~\eqref{eq:ex_Langevin}.
The function
$ 
  V(x) = \left( 
    1 + (x_1)^4 + 2 (x_2)^2 
  \right)^r
$,
$ x = (x_1, x_2) \in D = \R^2 $,
for an arbitrary $ r \in (0,\infty) $
is a Lyapunov-type function
for the stochastic Duffing-van
der Pol 
oscillator~\eqref{eq:ex_Duffing}.
For the stochastic SIR model~\eqref{eq:ex_SIR},
the function
$ 
  V(x) = 
  \left( 1 + ( x_1 + x_2 )^2 
  + (x_3)^2 \right)^r 
$,
$
  x = (x_1, x_2, x_3) \in 
  D = (0,\infty)^3 
$,
for an arbitrary $ r \in (0,\infty) $
serves as a Lyapunov-type function
and for the
stochastic Lotka-Volterra
system~\eqref{eq:ex_Volterra},
the function
$ 
  V(x) = 
  \left( 1 + 
    ( v_1 x_1 + \dots v_d x_d )^2 
  \right)^r 
$,
$ x \in D = (0,\infty)^d $,
for an arbitrary $ r \in (0,\infty) $
and an appropriate 
$ v = (v_1, \dots, v_d) \in \R^d $
is a Lyapunov-type function.
More details on the examples
can be found in Chapter~\ref{sec:examples}.

The standard method
for approximating SDEs
with globally Lipschitz continuous
coefficients is the Euler-Maruyama
method.
Unfortunately,
the Euler-Maruyama method often fails
to converge strongly to the exact
solution of nonlinear SDEs of the
form~\eqref{eq:SDE.intro};
see \cite{hjk11}.
Indeed, if at least one of the
coefficients of the SDE 
grows superlinearly, then 
the Euler-Maruyama scheme
diverges in the strong sense. 
More precisely,
let
$
  Z^N
  \colon
  \{ 0, 1, \ldots, N\}
  \times\Omega\to\R^d
$, 
$ N \in \N $,
be Euler-Maruyama approximations
for the 
SDE~\eqref{eq:SDE.intro}
defined recursively through 
$ Z^N_0 := X_0 $ 
and
\begin{equation}
\label{eq:Euler.intro}
  Z_{n+1}^N
  :=
  Z_n^N
  +
  \bar{\mu}( Z_n^N )
  \tfrac{T}{N}
  +
  \bar{\sigma}( Z_n^N )
  \big(
    W_{ \frac{ ( n+1 ) T }{ N } }
    -
    W_{ \frac{ n T }{ N } }
  \big)
\end{equation}
for all $ n \in \{ 0, 1, \ldots, N-1 \} $ 
and all $ N \in \mathbb{N} $.
Here
$ 
  \bar{\mu} \colon \R^d
  \to \R^d
$
and
$
  \bar{\sigma} \colon \R^d
  \to \R^{ d \times m }
$
are extensions of $ \mu $ and $ \sigma $
given by
$ \bar{\mu}(x) = 0 $,
$ \bar{\sigma}(x) = 0 $
for all $ x \in D^c $ 
and by
$ \bar{\mu}(x) = \mu(x) $,
$ \bar{\sigma}(x) = \sigma(x) $
for all $ x \in D $ respectively
(see also Sections~\ref{sec:consistent}
and \ref{sec:schemes} for more general extensions).
Theorem~2.1 of \cite{hjk11b} 
(which 
generalizes Theorem 2.1 of \cite{hjk11})
then implies in the case 
$ d = m = 1 $ that  
if there exists a real number 
$ \varepsilon \in (0,\infty) $
such that 
$
  |\bar{\mu}(x)| + 
  |\bar{\sigma}(x)|
  \geq 
  \varepsilon |x|^{ (1 + \varepsilon) }
$ 
for all 
$
  |x| \geq 1 / \varepsilon
$
and if 
$
  \P[ \sigma(X_0) \neq 0 ] > 0
$, 
then 
$
  \lim_{ N \to \infty }
  \E\big[
    |Y_N^N|^r
  \big] = \infty
$ 
for all $ r \in (0,\infty) $
and therefore
$
  \lim_{ N \rightarrow \infty }
  \mathbb{E}\big[
    | X_T - Y^N_N |^r
  \big]
  = \infty
$
for all $ r \in (0,q] $
(see also Sections~4 and 5
in \cite{hjk11b} for divergence results 
for the corresponding
multilevel Monte Carlo Euler method).
Due to these deficiencies of the 
Euler-Maruyama method, 
we look for 
numerical 
approximation methods whose 
computational cost is 
close to that of
the Euler-Maruyama method
and which converge strongly
even in the case of SDEs
with superlinearly growing
coefficients.

There are a number
of strong convergence results
for temporal numerical
approximations 
of SDEs of the 
form~\eqref{eq:SDE.intro}
with possibly superlinearly growing
coefficients
in the literature.
Many of these results 
assume beside other
assumptions
that the drift coefficient $ \mu $
of the SDE~\eqref{eq:SDE.intro} 
is globally one-sided
Lipschitz continuous
and also prove rates of convergence 
in that case.
In particular, 
if the drift coefficient 
is globally 
one-sided Lipschitz
continuous and 
if the diffusion coefficient 
is globally Lipschitz
continuous beside
other assumptions,
then strong convergence of 
the fully drift-implicit Euler 
method
follows
from Theorem 2.4 in 
Hu~\cite{h96}
and
from Theorem 5.3 in 
Higham, Mao \citationand~Stuart~\cite{hms02},
strong convergence of the split-step backward Euler method
follows from Theorem 3.3 in Higham, 
Mao \citationand~Stuart~\cite{hms02},
strong convergence of a drift-tamed 
Euler-Maruyama method follows 
from Theorem~1.1 in 
\cite{hjk10b}
and
strong convergence of a drift-tamed Milstein scheme follows
from Theorem 3.2 in Gan 
\citationand~Wang~\cite{gw11}.
Theorem 2 and Theorem 3 in 
Higham \citationand~Kloeden~\cite{hk07}
generalize Theorem 3.3 and Theorem 5.3 in Higham, Mao \citationand~Stuart~\cite{hms02}
to SDEs with Poisson-driven jumps.
In addition,
Theorem 6.2 in Szpruch et al.~\cite{hmps11}
establishes strong convergence of the 
fully drift-implicit 
Euler method
of a one-dimensional 
Ait-Sahalia-type interest rate model
having a superlinearly growing diffusion coefficient $\sigma$
and a globally one-sided Lipschitz continuous drift coefficient $\mu$ which
is unbounded near $0$.
Moreover, Theorem~4.4 
in 
Mao \citationand\ 
Szpruch~\cite{MaoSzpruch2012pre} 
generalizes this result to 
a class of SDEs
which have globally one-sided Lipschitz 
continuous drift coefficients 
and in which the function
$
  V(x) = 1 + \| x \|^2
$, 
$
  x \in D
$,
is a Lyapunov-type function
(see also 
Mao \citationand\ Szpruch~\cite{MaoSzpruch2012B} for 
related results but with rates 
of convergence).
A similar method is used in
Proposition~3.3 in 
Dereich, Neuenkirch 
\citationand~Szpruch~\cite{DereichNeuenkirchSzpruch2012}
to obtain strong convergence 
of a drift-implicit Euler 
method for a class of
Bessel type processes.
Moreover,
Gy\"ongy \citationand~Millet
establish
in
Theorem~2.10
in \cite{GyoengyMillet2005}
strong convergence of
implicit numerical approximation
processes
for a class of possibly infinite
dimensional SDEs whose 
drift $ \mu $ and diffusion
$ \sigma $ 
satisfy a suitable 
one-sided Lipschtz condition
(see Assumption~(C1) 
in \cite{GyoengyMillet2005} 
for details).
Strong convergence of
temporal numerical 
approximations for 
two-dimensional stochastic
Navier-Stokes equations
is obtained in
Theorem~7.1 in Brze\'zniak, 
Carelli \citationand\ Prohl~\cite{BrzezniakCarelliProhl2010}.
In all of the above 
mentioned results 
from the literature,
the function
$
  V(x) = 1 + \| x \|^2
$, 
$
  x \in D
$,
is a Lyapunov-type function
of the considered
SDE.
A result on more 
general Lyapunov-type functions is the framework in
Schurz~\cite{Schurz2006}
which assumes general abstract conditions
on the numerical approximations.
The applicability of this framework
is demonstrated in the case
of SDEs which have globally one-sided
Lipschitz continuous drift 
coefficients and in 
which the function
$
  V(x) = 1 + \| x \|^2
$, 
$
  x \in D
$,
is a Lyapunov-type function;
see \cite{Schurz2003,Schurz2005,Schurz2006}.
To the best 
of our knowledge,
no strong numerical approximation 
results are known
for the stochastic van der Pol
oscillator~\eqref{eq:ex_Vanderpol},
for the stochastic Duffing-van der Pol oscillator~\eqref{eq:ex_Duffing},
for the stochastic Lorenz equation~\eqref{eq:Lorenz},
for the stochastic Brusselator~\eqref{eq:ex_Brusselator},
for the stochastic SIR model~\eqref{eq:ex_SIR},
for the experimental
psychology
model~\eqref{eq:experimental.psychology}
and for the Lotka-Volterra predator-prey model~\eqref{eq:ex_Volterra}.

In this article, the following 
increment-tamed Euler-Maruyama 
scheme is proposed to 
approximate the solution process
of the SDE~\eqref{eq:SDE.intro}
in the strong sense. 
Let 
$
  Y^N \colon \{0,1 \ldots,N\}
  \times\Omega\to\R^d
$, 
$ N \in \N $,
be numerical approximation
processes
defined through
$ Y_0^N := X_0 $
and
\begin{equation}  \label{eq:increment.tamed.Euler.intro}
  Y^N_{ n+1 }
:=
  Y^N_{ 
    n
  }
+
  \frac{
    \bar{\mu}\big( 
      Y^N_{  n } 
    \big) 
    \frac{ T }{ N }
    + 
        \bar{\sigma}\big( 
          Y^N_{  n } 
        \big) 
        \big(
          W_{ \frac{ (n + 1) T }{ N }  
          } 
          -
          W_{ \frac{ n T }{ N } } 
        \big)
  }{
    \max\!\big( 1, 
      \frac{ T }{ N }
      \| 
        \bar{\mu}( 
          Y^N_{  n } 
        ) 
        \frac{ T }{ N }
        + 
        \bar{\sigma}( 
          Y^N_{  n } 
        ) 
        \big(
          W_{ \frac{ ( n + 1 ) T 
          }{ N } } 
          -
          W_{ \frac{ n T }{ N } } 
        \big)
      \| 
    \big)
  }
\end{equation}
for all 
$ 
  n \in \{ 0, 1, \dots, N - 1 \} 
$
and all
$ N \in \mathbb{N} $.
Note that the computation 
of \eqref{eq:increment.tamed.Euler.intro}
requires only a few additional arithmetical operations when
compared to the computation
of the Euler-Maruyama 
approximations~\eqref{eq:Euler.intro}.
Moreover, we emphasize that the 
scheme~\eqref{eq:increment.tamed.Euler.intro}
is a special case of a more general
class of suitable 
tamed 
schemes proposed in
Subsection~\ref{sec:taming}
below.
Next let
$
  \bar{Y}^N\colon[0,T]\times\Omega\to\R^d
$, 
$ N \in \N $,
be linearly interpolated 
continuous-time versions of
\eqref{eq:increment.tamed.Euler.intro}
defined
through
$
  \bar{Y}^N_t
:=
  \big(
    n + 1 - \tfrac{tN}{T}
  \big)
  Y_n^N
  +
  \big(
    \tfrac{tN}{T}-n
  \big)
  Y_{ n + 1 }^N
$
for all
$
  t \in 
  [
    n T / N,
    (n + 1) T / N 
  ]
$,
$ 
  n \in \{ 0, 1, \dots, N - 1 \} 
$
and all
$ N \in \mathbb{N} $.
For proving strong convergence
of the numerical
approximation processes
$ \bar{Y}^N $, $ N \in \N $,
to the exact solution process
$ X $ of the 
SDE~\eqref{eq:SDE.intro}, 
we additionally assume
that
$ 
  \E\big[
    \| X_0 \|^r
  \big] 
  < \infty
$
for all $ r \in [0,\infty) $
and that 
there exist real numbers
$ 
  \gamma_0, \gamma_1, c 
  \in [0,\infty)
$,
$ p \in [3,\infty) $
and a three times 
continuously
differentiable extension 
$ 
  \bar{V}
  \colon \R^d \rightarrow [1,\infty) 
$
of 
$ 
  V \colon D \rightarrow [1,\infty)
$
such that
$ 
  \bar{V}(x) 
  \geq
  \| x \|^q
$,
$
  \|
    \bar{V}^{(i)}( x )
  \|_{
    L^{ (i) }( 
      \mathbb{R}^d, \mathbb{R} 
    )
  }
  \leq
  c \,
  |
    \bar{V}(x)
  |^{ 
    \left[
      1 - \frac{ i }{ p }
    \right]
  }
$
and
\begin{equation}
  \| \bar{\mu}(x) \|                  
\leq 
  c \, 
  | \bar{V}(x) |^{ 
    \left[ 
      \frac{ \gamma_0 + 1 }{ p } 
    \right]
  }
\qquad
  \text{and}
\qquad
  \|\bar{\sigma}(x)
  \|_{ L(\R^m,\R^d) }
  \leq 
  c \, |\bar{V}(x)|^{ 
    \left[ 
      \frac{ \gamma_1 + 2 }{ 2 p } 
    \right] 
  } 
\end{equation}
for all 
$ x \in \mathbb{R}^d $ and all 
$ i \in \{1,2,3 \} $.
These assumptions are 
satisfied in all of the example SDEs 
from 
Sections~\ref{sec:vanderpol}--\ref{sec:lotka}.
In the case of 
the squared
volatility process~\eqref{eq:squaredvola}
in Section~\ref{sec:vola}
and in case of 
the Langevin
equation~\eqref{eq:ex_Langevin}
in Section~\ref{sec:langevin},
these assumptions are also satisfied
if the model parameters satisfy
suitable regularity conditions
(see Sections~\ref{sec:vola}
and \ref{sec:langevin} for details).
Under these assumptions,
Theorem~\ref{thm:strongC} 
below shows that
\begin{equation}  \label{eq:strong.convergence.intro}
  \lim_{ N \to \infty }
  \sup_{ t \in [0,T] }
  \E\Big[
    \big\|
     X_t - \bar{Y}_t^N
    \big\|^r
  \Big]=0
\end{equation}
for all $ r \in (0,q) $
satisfying
$
  r < 
  \frac{ p }{
    2 \gamma_1 +
    4 \max( \gamma_0, \gamma_1, 1 / 2 )
  }
  - 
  \frac{ 1 }{ 2 }
$.
Theorem~\ref{thm:strongC} 
thereby proves
strong convergence of 
the increment-tamed 
Euler-Maruyama 
method~\eqref{eq:increment.tamed.Euler.intro}
for all example SDEs 
from Sections~\ref{sec:vanderpol}--\ref{sec:lotka}
and in parts also for
the example SDEs
from
Sections~\ref{sec:vola}--\ref{sec:langevin}.
Moreover, using a whole family 
of Lyapunov-type functions, 
we will deduce from Theorem~\ref{thm:strongC} 
for most of the examples of
Chapter~\ref{sec:examples}
that strong
$
  L^r
$-convergence~\eqref{eq:strong.convergence.intro}
holds for all $ r \in (0,\infty) $
(see Corollary~\ref{cor:CT1} below
for details).
To the best of our knowledge,
Theorem~\ref{thm:strongC} 
is
the first result in the literature
that proves strong convergence 
of a numerical approximation method
for the stochastic van der Pol
oscillator~\eqref{eq:ex_Vanderpol},
for the stochastic Duffing-van der Pol 
oscillator~\eqref{eq:ex_Duffing},
for the stochastic Lorenz 
equation~\eqref{eq:Lorenz},
for the stochastic
Brusselator~\eqref{eq:ex_Brusselator},
for the stochastic SIR model~\eqref{eq:ex_SIR},
for the experimental
psychology
model~\eqref{eq:experimental.psychology}
and for the stochastic 
Lotka-Volterra 
predator-prey 
model~\eqref{eq:ex_PredatorPrey}.
%
%
%

Theorem~\ref{thm:strongC} proves
the strong convergence~\eqref{eq:strong.convergence.intro}
in a quite general setting.
One may ask whether 
it is also possible to establish
a strong convergence 
rate in this setting.
There is a strong hint that this is 
not possible in this general setting.
More precisely, 
Theorem~1.2 in
Hairer et 
al.~\cite{hhj12} shows 
that in this setting
there exist SDEs with smooth 
and globally bounded 
coefficients whose solution processes are 
nowhere 
locally H\"{o}lder continuous in 
the strong mean square sense 
with respect to the initial values.
This instability suggests 
that 
there exist SDEs 
with smooth and globally
bounded coefficients
for which
there exist no one-step 
numerical approximation 
processes 
which converge in the strong sense
with a 
convergence rate.
In addition,
Theorem~1.3 in
Hairer et 
al.~\cite{hhj12} proves
that there exist
SDEs
with smooth and globally
bounded coefficients
to which the Euler-Maruyama
scheme (see \eqref{eq:Euler.intro})
and other schemes
such as the Milstein
scheme
converge in the strong
mean square sense 
without
any arbitrarily small
positive rate of convergence.
It remains an open question
which conditions on the coefficients 
$ \mu $ (more general than globally 
one-sided Lipschitz continuous)
and $ \sigma $ 
of the SDE~\eqref{eq:SDE.intro}
are sufficient to ensure
strong convergence of 
appropriate one-step numerical 
approximation processes
to the exact solution of
the SDE~\eqref{eq:SDE.intro}
with the standard strong
convergence order 
$ 1/2 $
at least.

Finally, we summarize a few more 
results of this article.
In Chapter~\ref{sec:integrability properties},
we establish uniform moment bounds
of approximation processes for SDEs
which are typically the 
first step in proving 
strong and numerically weak
convergence results.
In particular, Corollary~\ref{cor:apriori_increment}
in Subsection~\ref{sec:momentbounds2}
proves uniform moment bounds for the
increment-tamed Euler-Maruyama scheme~\eqref{eq:increment.tamed.Euler.intro}.
Moreover,
Corollary~\ref{c:Lyapunov.implicit.Euler}
in Subsection~\ref{sec:Full.drift.implicit.approximation.schemes}
yields uniform moment bounds
for the fully 
drift-implicit Euler scheme
and Lemma~\ref{l:more.Lyapunov.implicit.Euler}
in Subsection~\ref{sec:Partial.drift.implicit.approximation.schemes}
establishes uniform moment bounds
for partially drift-implicit approximation schemes.
These results on uniform moment bounds are
applications of a general theory 
which
we develop in 
Section~\ref{sec:general}.
In this theory 
(see Propositions~\ref{p:stability} 
and \ref{p:PG}
and Corollaries~\ref{cor:stability2}, \ref{cor:stability} and \ref{cor:FV})
we assume a 
Lyapunov-type inequality
to be satisfied
by the approximation processes
on large subevents of the 
probability space, i.e., on
complements of {\it rare events};
see inequality~\eqref{eq:semi.V.stable2}
in Corollary~\ref{cor:stability2}.
One of our main results
(Theorem~\ref{thm:SVstability} in Subsection~\ref{sec:SVEM})
establishes
this Lyapunov-type condition
for the
Euler-Maruyama 
approximations~\eqref{eq:Euler.intro}.
More precisely, whereas
the Euler-Maruyama approximations
often do not satisfy a Lyapunov-type inequality
on events of probability one
in the case of superlinearly growing coefficients
according to 
Corollary~\ref{cor:disprove}
in Subsection~\ref{sec:SVEM},
the Euler-Maruyama approximations
do satisfy the Lyapunov-type inequality~\eqref{eq:semi.V.stable2}
on large subevents of the probability 
space
according to
Theorem~\ref{thm:SVstability} in Subsection~\ref{sec:SVEM}.
This integrability result on the Euler-Maruyama approximation
processes
can then be transfered to a large class 
of other one-step approximation processes.
To be more precise, Lemma~\ref{lem:compareSVstability} 
in Subsection~\ref{sec:SVtaming}
proves that if
two general one-step approximation schemes are close to each other
in the sense of~\eqref{eq:distPhi}
(see Lemma~\ref{lem:compareSVstability} 
for the details)
and if
one approximation scheme satisfies the
Lyapunov-type inequality~\eqref{eq:semi.V.stable2} on large subevents,
then the other approximation scheme
satisfies the Lyapunov-type inequality~\eqref{eq:semi.V.stable2} 
on large subevents of the probability
space as well.
After having established the 
Lyapunov-type 
inequality~\eqref{eq:semi.V.stable2} 
on such complements of rare events, the general rare event
based theory in
Section~\ref{sec:general} can 
be applied to derive moment
bounds and further integrability
properties of the approximation
processes.
In Chapter~\ref{chap:convergence},
we then proceed to study 
convergence in probability (see Section~\ref{sec:convergence in probability}), 
strong convergence (see Section~\ref{sec:strongconvergence})
and weak convergence 
(see Section~\ref{sec:weakconvergence})
of approximation processes
for SDEs.
Definition~\ref{def:consistent} 
in Section~\ref{sec:consistent}
specifies a local consistency condition on approximation schemes
which is, according to Theorem~\ref{thm:convergence} 
in 
Section~\ref{sec:convergence in probability},
sufficient for convergence in 
probability of the approximation
processes
to the exact solution of 
the SDE~\eqref{eq:SDE.intro}.
This convergence in probability
and
the uniform moment bounds in
Corollary~\ref{cor:apriori_increment}
then result in the strong convergence~\eqref{eq:strong.convergence.intro}
of the increment-tamed 
Euler-Maruyama approximations~\eqref{eq:increment.tamed.Euler.intro};
see Theorem~\ref{thm:strongC} in Subsection~\ref{sec:strongCtamed} 
for the details.
Moreover, we obtain results
for approximating moments and 
more general statistical quantities
of 
solutions of SDEs 
of the form~\eqref{eq:SDE.intro} 
in Section~\ref{sec:weakconvergence}.
In particular,
Corollary~\ref{cor:MCEmethod} in Subsection~\ref{sec:montecarloeuler}
establishes convergence of the
Monte Carlo Euler approximations 
for SDEs of the 
form~\eqref{eq:SDE.intro}.

\section{Notation}

Throughout this article, the
following notation is used.
For a set $ \Omega $,
a measurable space
$
  \left( E, \mathcal{E} \right)
$
and a mapping
$ 
  Y \colon \Omega \rightarrow E 
$
we denote by
$
  \sigma_{ \Omega }( Y )
:=
  \{
    Y^{ - 1 }(A) \subset \Omega
    \colon
    A \in \mathcal{E}
  \}
$ 
the smallest sigma algebra
with respect to which
$ Y \colon \Omega \rightarrow E 
$ 
is measurable.
Furthermore,
for a topological space
$
  \left( E, \mathcal{E} \right)
$
we denote by
$
  \mathcal{B}( E ) :=
  \sigma_{ E }( \mathcal{E} )
$
the Borel sigma-algebra of
$
  \left( E, \mathcal{E} \right)
$.
Moreover, for a natural number $ d \in \N $
and two sets $ A, B \subset \R^d $ 
we denote by
\begin{equation}
  \text{dist}(A,B) :=
  \begin{cases}
    \inf\!\left\{
      \| a - b \| 
      \in [0,\infty)
      \colon
      (a,b) \in A \times B
    \right\}
  &
    \colon 
    A \neq \emptyset
    \text{ and }
    B \neq \emptyset
  \\
    \infty 
    & \colon 
    \text{else}
  \end{cases}
\end{equation}
the distance of $ A $ and $ B $.
In addition, for a natural number
$ d \in \N $, 
an element $ x \in \R^d $
and a set $ A \subset \R^d $
we denote by
$
  \text{dist}(x,A) :=
  \text{dist}( \{ x \}, A )
$
the distance of $ x $ and $ A $.
Throughout this article we also often calculate
and formulate expressions 
in the extended 
positive real numbers
$ 
  [0,\infty]
  =
  [0,\infty) 
  \cup \{ \infty \}
$.
For instance,
we frequently 
use the conventions
$ \frac{ a }{ \infty } = 0 $
for all $ a \in [0,\infty) $,
$ \frac{ a }{ 0 } = \infty $
for all $ a \in (0,\infty] $
and
$ 0 \cdot \infty = 0 $.
Moreover, 
let 
$ \chi_p \in [0,\infty) $,
$ p \in [1,\infty) $, be a family
of real numbers 
such that
for every $ p \in [1,\infty) $,
every probability space
$ \left( \Omega, \mathcal{F}, \P \right) $,
every stochastic processes
$ 
  Z \colon \N \times \Omega
  \to \R 
$
with the property that
$
  (
    \sum_{ k = 1 }^n
    Z_k
  )_{ n \in \N_0 }
$
is a martingale
and every
$ N \in \N_0 := \{ 0, 1, \dots \} $
it holds that
\begin{equation}
\label{eq:burkholder}
\begin{split}
    \left\| 
      \sup_{ n \in \{ 0, 1, \dots, N \} }
      \left|
       \sum\nolimits_{ k = 1 
       }^{ n } Z_k
      \right|
    \right\|_{ L^p( \Omega; \R ) 
    }^2
  \leq
  \chi_p
  \left(
    \sum\nolimits_{ n = 1 }^{ N 
    }
    \|
      Z_n
    \|_{ L^p( \Omega; \R ) }^2
  \right)
  .
\end{split}
\end{equation}
The Burkholder-Davis-Gundy inequality
(see, e.g., Theorem~48
in Protter~\cite{Protter2004})
ensures that the 
real numbers
$ \chi_p \in [0,\infty) $,
$ p \in [1,\infty) $, 
in \eqref{eq:burkholder}
do indeed exist.
Next for two sets $ A $ and $ B $
we denote 
by
$ 
  \mathcal{M}( A, B ) 
$
the set of all mappings from
$ A $ to $ B $.
Furthermore, for 
natural numbers
$ d, m \in \N $ 
and a
$ d \times m $-matrix
$ A \in \R^{ d \times m } $
we denote by
$ A^{*} \in \R^{ m \times d } $
the transpose of the matrix $ A $.
In addition,
for 
$ d, m \in \mathbb{N} $
and arbitrary functions
$ 
  \mu =
  ( \mu_1, \dots, \mu_d )
  \colon \mathbb{R}^d 
  \rightarrow \mathbb{R}^d 
$
and
$
  \sigma 
  =
  ( \sigma_{ i, j } )_{
    i \in \{ 1, 2, \dots, d \} ,
    j \in \{ 1, 2, \dots, m \}
  } 
  =
  ( \sigma_k )_{
    k \in \{ 1, 2, \dots, m \}
  }
  \colon \mathbb{R}^d
  \rightarrow \mathbb{R}^{ d \times m } 
$
we denote
by
$  
  \mathcal{G}_{ \mu, \sigma }
  \colon
  C^2( \mathbb{R}^d, \mathbb{R} )
  \rightarrow
  \mathcal{M}( \mathbb{R}^d,
    \mathbb{R}
  )
$
and
$  
  \mathcal{
    \tilde{G}
  }_{ \mu, \sigma }
  \colon
  C^2( \mathbb{R}^d, \mathbb{R} )
  \rightarrow
  \mathcal{M}( 
    \mathbb{R}^d \times \mathbb{R}^d,
    \mathbb{R}
  )
$
linear operators 
defined through
\begin{equation}
\label{eq:generator}
\begin{split}
&
  ( \mathcal{G}_{ \mu, \sigma } \varphi )  
  (x)
\\ & :=
  \varphi'(x) \, \mu(x)
+
  \frac{ 1 }{ 2 }
  \sum_{ k = 1 }^{ m }
  \varphi''(x)\big(
    \sigma_k(x) ,
    \sigma_k(x) 
  \big)
\\ & =
  \left< 
    \mu(x),
    (\nabla \varphi)(x) 
  \right>
+
    \frac{ 1 }{ 2 }
  \tr\!\big(
    \sigma(x) 
    \sigma(x)^{ * }
    (\text{Hess } \varphi)(x) 
  \big)
\\ & =
  \sum_{ i = 1 }^d
  \left(
    \frac{
      \partial 
      \varphi
    }{
      \partial x_i
    } 
  \right)\!( x )
  \cdot
  \mu_{ i }(x)
  +
  \frac{1}{2}
  \sum_{ i, j = 1 }^{ d }
  \sum_{ k = 1 }^{ m }
  \left(
    \frac{
      \partial^2 
      \varphi
    }{
      \partial x_i
      \partial x_j
    } 
  \right)\!( x )
  \cdot
  \sigma_{ i, k }(x)
  \cdot
  \sigma_{ j, k }(x)
\end{split}
\end{equation}
and
\begin{equation}
\label{eq:tildeG}
\begin{split}
&
  ( 
    \mathcal{\tilde{G}}_{ \mu, \sigma }
    \varphi 
  )(x,y)
\\ & := 
  \varphi'(x) \, \mu(y)
+
  \frac{ 1 }{ 2 }
  \sum_{ k = 1 }^{ m }
  \varphi''(x)\big(
    \sigma_k(y) ,
    \sigma_k(y) 
  \big)
\\ & =
  \left< 
    \mu(y),
    (\nabla \varphi)(x) 
  \right>
+
    \frac{ 1 }{ 2 }
  \tr\!\big(
    \sigma(y) 
    \sigma(y)^{ * }
    (\text{Hess } \varphi)(x) 
  \big)
\\ & =
  \sum_{ i = 1 }^d
  \left(
    \frac{
      \partial 
      \varphi
    }{
      \partial x_i
    } 
  \right)\!( x )
  \cdot
  \mu_{ i }(y)
  +
  \frac{1}{2}
  \sum_{ i, j = 1 }^{ d }
  \sum_{ k = 1 }^{ m }
  \left(
    \frac{
      \partial^2 
      \varphi
    }{
      \partial x_i
      \partial x_j
    } 
  \right)\!( x )
  \cdot
  \sigma_{ i, k }(y)
  \cdot
  \sigma_{ j, k }(y)
\end{split}
\end{equation}
for all $ x, y \in \mathbb{R}^d $
and all $ 
  \varphi \in C^2( \mathbb{R}^d,
  \mathbb{R} )
$
where 
$ 
  \sigma_k \colon \mathbb{R}^d
  \rightarrow 
  \mathbb{R}^d
$,
$ k \in \{ 1, 2, \dots, m \} $,
fulfill
$
  \sigma_k(x)
=
  \left(   
    \sigma_{ 1, k }(x) ,
    \dots ,
    \sigma_{ d, k }(x) 
  \right)
$
for all $ x \in \mathbb{R}^d $
and all $ k \in \{ 1, 2, \dots, m \} 
$.
The linear operator
in \eqref{eq:generator}
is associated to the exact solution
of the SDE~\eqref{eq:SDE.intro}
and the linear operator 
in \eqref{eq:tildeG}
is associated to the Euler-Maruyama
approximations of the 
SDE~\eqref{eq:SDE.intro}
(see, e.g., \eqref{eq:lem_SVstabilityI} 
in the proof
of Lemma~\ref{lem:SVstabilityI} 
below).
Furthermore,
for $ d \in \mathbb{N} $
and a Borel
measurable set
$ A \in \mathcal{B}( \mathbb{R}^d ) $
we denote by
$
  \lambda_{ A }
  \colon
  \mathcal{B}( A )
  \rightarrow [0,\infty]
$
the Lebesgue-Borel 
measure on 
$ A \subset \mathbb{R}^d $.
In addition, 
for 
$ n, d \in \mathbb{N} $,
$ 
p \in (0,\infty] $
and a set
$ 
  A \subset \mathbb{R} 
$
we denote 
by 
$ C^n_p( \mathbb{R}^d, A ) $
the set
\begin{equation}
\begin{split}
&
  \mathcal{C}^n_p( \mathbb{R}^d, A )
\\ & :=
  \left\{  
    f \in C^{ n - 1 }( \mathbb{R}^d, A )
    \colon
    \begin{array}{c}
      f^{(n-1)} 
      \text{ is locally Lipschitz continuous 
      and there} 
    \\
      \text{exists a real number } c \in [0,\infty)
      \text{ such that for}
    \\
      \lambda_{ \mathbb{R}^d 
      }\text{-almost all }
      x \in \mathbb{R}^d
      \text{ and all } i \in \{ 1, 2, \dots, n \}
    \\ 
      \text{ we have }
      \| 
        f^{(i)}(x)
      \|_{ 
        L^{(i)}( \mathbb{R}^d, 
        \mathbb{R} )
      }
      \leq c \, | f(x) |^{ [ 1 - i/p ] }
    \end{array}
  \right\} 
\end{split}
\end{equation}
throughout the rest of
this article.
Note that this definition
is well-defined since
Ra\-de\-ma\-cher's theorem
proves that a locally Lipschitz
continuous function 
is almost everywhere differentiable.

\chapter{Integrability properties 
of approximation processes for SDEs}
\label{sec:integrability properties}

A central step in establishing 
strong and numerically
weak convergence 
of approximation processes
is to prove uniform moment bounds.
For this analysis, we propose 
a Lyapunov-type condition
on the one-step function of 
a one-step approximation scheme
(see Definition~\ref{def:SVstability}
in Subsection~\ref{sec:onestep}).
In Sections~\ref{sec:SVstability}
and~\ref{sec:implicit}, we will show that many numerical approximation
schemes including the 
Euler-Maruyama scheme,
the increment-tamed
Euler-Maruyama
scheme~\eqref{eq:increment.tamed.Euler.intro}
and some implicit approximation 
schemes
satisfy this condition
in the case of several nonlinear SDEs.
Subject of Section~\ref{sec:general}
is to infer from this Lyapunov-type condition on the one-step function
that the associated numerical approximations have certain
uniform integrability properties.

\section{General 
discrete-time stochastic
processes}
\label{sec:general}

This section introduces a general approach
for studying integrability and stability
properties
of discrete-time
stochastic processes.
We assume a Lyapunov-type 
estimate on a subevent of
the probability space for each time step.
From this, we derive Lyapunov-type estimates
for the process uniformly in the time variable
in Subsection~\ref{sec:stabest}.
This approach is then applied to derive uniform
moment bounds in finite time 
(see Subsection~\ref{sec:moment_bounds000};
see also
Proposition~\ref{p:stability}
for infinite time)
for a large class of possibly
infinite dimensional
approximation processes.
Note that the state space
$ 
  \left( E, \mathcal{E} \right) 
$
appearing in 
Propositions~\ref{p:stability} 
and \ref{p:PG}
and in Corollaries~\ref{cor:stability2}, \ref{cor:stability} and \ref{cor:FV}
is an arbitrary
measurable space.
In our examples in 
Sections~\ref{sec:SVstability}
and \ref{sec:implicit}
below we restrict ourself,
however, to explicit 
(see Section~\ref{sec:SVstability})
and implicit 
(see 
Section~\ref{sec:implicit})
approximation schemes
for finite dimensional SDEs
driven by standard
Brownian motions.
Our approach is mainly influenced
by ideas in \cite{hj11,bh11,hjk10b};
see the end of 
Subsection~\ref{sec:onestep}
for more details on these 
articles.

\subsection{Lyapunov-type 
estimates on complements
of rare events}
\label{sec:stabest}

The following result (Proposition~\ref{p:stability}) proves
Lyapunov-type estimates for 
discrete-time 
stochastic processes which 
do,
in general, not hold on the whole
probability space but only on 
a family of typically large subevents 
of the probability space.
These subevents
are defined in terms of an
appropriate {\it Lyapunov-type function}
$ V \colon E \rightarrow [0,\infty) $
on the measurable state space 
$ \left( E, \mathcal{E} \right) $ 
and in terms of a suitable {\it truncation function}
$
  \zeta \colon [0,\infty)
  \rightarrow (0,\infty]
$.
In the case where the trunction
function is infinity, i.e.,
$
  \zeta(t) = \infty
$
for all $ t \in [0,\infty) $,
Proposition~\ref{p:stability}
and most of its consequences in Section~\ref{sec:general} 
are 
well-known;
see, e.g.,
Mattingly, Stuart
\citationand\ Higham~\cite{msh02}
and
Schurz~\cite{Schurz2005}.

\begin{prop}
\label{p:stability}
Let 
$ \rho \in \mathbb{R} $,
let
$ 
  \left( 
    \Omega, \mathcal{F}, 
    \mathbb{P} 
  \right)
$
be a probability 
space,
let
$
  \left( E, \mathcal{E} \right)
$
be a measurable space,
let 
$ t_n \in \mathbb{R} $, 
$ n \in \mathbb{N}_0 $,
be a non-decreasing sequence,
let
$
  \zeta \colon [0,\infty)
  \rightarrow (0,\infty]
$
be a function,
let
$
  V  \colon E
  \rightarrow [0,\infty)
$
be an
$ \mathcal{E} $/$ \mathcal{B}( [0,\infty) ) 
$-measurable function
and let
$ 
  Y 
  \colon \N_0 \times 
  \Omega
  \rightarrow E
$,
$ 
  Z 
  \colon \mathbb{N} \times 
  \Omega
  \rightarrow \mathbb{R}
$
be stochastic processes with
$ 
  \mathbb{E}[ 
    \mathbbm{1}_{ \Omega_n }
    | Z_{ n } |   
  ] < \infty 
$
and
\begin{equation}
\label{eq:semi.V.stable}
  \mathbbm{1}_{
    \Omega_n
  }
    V( Y_{ n } )
\leq 
  e^{ \rho \left( t_{ n } - t_{ n - 1 } \right) }
  V( Y_{ n - 1 } )
  +
  \mathbbm{1}_{
    \Omega_n
  }
  Z_{ n }
\end{equation}
for 
all $ n \in \mathbb{N} $
where
$
  \Omega_n :=
  \cap_{ k = 0 }^{ n - 1 }
  \{ 
    V( Y_k ) \leq 
    \zeta( t_{ k + 1 } - t_k )
  \}
  \in \mathcal{F}
$
for all $ n \in \mathbb{N}_0 $.
Then 
\begin{align}
\label{eq:exactrep}
  \mathbbm{1}_{ 
    \Omega_n 
  }
  V( Y_n )
&\leq
  e^{ \rho \left( t_n - t_0 \right) }
  V( Y_0 )
  +
  \sum_{ k = 1 }^{ n }
  e^{ \rho \left( t_n - t_k \right) }
  \mathbbm{1}_{ \Omega_k }
  Z_k ,
\\
\label{eq:rareevents}
  \mathbb{P}\big[
    ( 
      \Omega_n 
    )^{ c }
  \big]
  &\leq
  \sum_{ k = 0 }^{ n - 1 }
  \left(
  \frac{
    e^{ 
      \rho \left( t_k - t_0 \right) 
    }   
    \,
    \mathbb{E}[
      V( 
        {Y}_0
      )
    ] 
    +
    \sum_{ l = 1 }^{ k }
    e^{ \rho \left( t_k - t_l \right) }
    \,
    \mathbb{E}[
      \mathbbm{1}_{ \Omega_l }
      Z_l 
    ]
  }{
    \zeta\!\left(
     t_{ k+1 } - t_k 
    \right)
  }
  \right)
\end{align}
for all 
$ n \in \mathbb{N}_0 $.
\end{prop}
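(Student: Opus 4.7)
The plan is to prove the two displays in sequence, obtaining \eqref{eq:exactrep} by induction and then plugging it into a Markov-type argument to obtain \eqref{eq:rareevents}. The central bookkeeping observation is that the events $\Omega_n$ are nested: since $\Omega_{n}=\Omega_{n-1}\cap\{V(Y_{n-1})\leq\zeta(t_n-t_{n-1})\}$, we have $\Omega_n\subset\Omega_{n-1}$ and therefore $\mathbbm{1}_{\Omega_n}\leq\mathbbm{1}_{\Omega_{n-1}}$.

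For \eqref{eq:exactrep}, I set $a_n:=\mathbbm{1}_{\Omega_n}V(Y_n)$ and argue by induction on $n$. The base case $n=0$ is trivial since $\Omega_0=\Omega$. For the inductive step, multiply both sides of the hypothesis \eqref{eq:semi.V.stable} by $\mathbbm{1}_{\Omega_n}$; using $\mathbbm{1}_{\Omega_n}^2=\mathbbm{1}_{\Omega_n}$ on the left and $\mathbbm{1}_{\Omega_n}\leq\mathbbm{1}_{\Omega_{n-1}}$ on the right gives
\begin{equation*}
  a_n\;\leq\;e^{\rho(t_n-t_{n-1})}\,a_{n-1}+\mathbbm{1}_{\Omega_n}Z_n.
\end{equation*}
Iterating this one-step recursion and noting that $a_0=V(Y_0)$ yields \eqref{eq:exactrep} directly.

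For \eqref{eq:rareevents}, I decompose the exceptional event as the disjoint union
\begin{equation*}
  (\Omega_n)^c\;=\;\bigsqcup_{k=0}^{n-1}\bigl(\Omega_k\setminus\Omega_{k+1}\bigr)\;=\;\bigsqcup_{k=0}^{n-1}\bigl(\Omega_k\cap\{V(Y_k)>\zeta(t_{k+1}-t_k)\}\bigr),
\end{equation*}
where the second equality uses the defining relation of $\Omega_{k+1}$. Summing the probabilities and applying Markov's inequality to the nonnegative random variable $\mathbbm{1}_{\Omega_k}V(Y_k)$ (using the convention $a/0=\infty$ to handle the case $\zeta(t_{k+1}-t_k)=0$ or $\infty$) produces
\begin{equation*}
  \mathbb{P}\bigl[(\Omega_n)^c\bigr]\;\leq\;\sum_{k=0}^{n-1}\frac{\mathbb{E}\!\left[\mathbbm{1}_{\Omega_k}V(Y_k)\right]}{\zeta(t_{k+1}-t_k)}.
\end{equation*}
Taking expectation in \eqref{eq:exactrep} (which is legitimate since the hypothesis $\mathbb{E}[\mathbbm{1}_{\Omega_n}|Z_n|]<\infty$ guarantees integrability of the bound) and substituting into the numerator yields \eqref{eq:rareevents}.

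The only subtlety to watch is the indicator manipulation in the inductive step: one must not accidentally drop the $\mathbbm{1}_{\Omega_n}$ from the $V(Y_{n-1})$ term, since $V(Y_{n-1})$ itself need not be integrable — only $\mathbbm{1}_{\Omega_{n-1}}V(Y_{n-1})$ is controlled. The nesting $\mathbbm{1}_{\Omega_n}\leq\mathbbm{1}_{\Omega_{n-1}}$ is precisely what converts the raw hypothesis into a genuine recursion on the truncated quantity $a_n$, and is the whole reason the statement works with $V(Y_{n-1})$ rather than $\mathbbm{1}_{\Omega_{n-1}}V(Y_{n-1})$ on the right-hand side of \eqref{eq:semi.V.stable}. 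Everything else is routine.
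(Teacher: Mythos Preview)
Your proposal is correct and follows essentially the same approach as the paper: the paper also uses the nesting $\Omega_n\subset\Omega_{n-1}$ to convert \eqref{eq:semi.V.stable} into a recursion for $\mathbbm{1}_{\Omega_n}V(Y_n)$, then passes to \eqref{eq:exactrep} (the paper does this by rescaling with $e^{-\rho t_n}$ and telescoping rather than by direct iteration, but this is a cosmetic difference), and for \eqref{eq:rareevents} it uses exactly the same disjoint decomposition $(\Omega_n)^c=\biguplus_{k=0}^{n-1}(\Omega_k\setminus\Omega_{k+1})$ followed by Markov's inequality and \eqref{eq:exactrep}. One minor slip: $\zeta$ takes values in $(0,\infty]$, so the relevant convention is $a/\infty=0$, not $a/0=\infty$.
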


\begin{proof}[Proof
of Proposition~\ref{p:stability}]
First, observe that
assumption~\eqref{eq:semi.V.stable}
and the relation 
$ \Omega_n \subset \Omega_{ n - 1 } $
for all $ n \in \mathbb{N} $
show that
\begin{equation}
\label{eq:firstineq}
  \mathbbm{1}_{
    \Omega_{ n }
  }
    V( Y_{ n } )
\leq 
  \mathbbm{1}_{
    \Omega_{ n - 1 }
  }
  e^{ \rho \left( t_{ n } - t_{ n - 1 } \right) }
  V( Y_{ n - 1 } )
  +
  \mathbbm{1}_{
    \Omega_{ n }
  }
  Z_{ n }
\end{equation}
for all $ n \in \mathbb{N} $.
Estimate~\eqref{eq:firstineq} 
is equivalent to the inequality
\begin{equation}
\label{eq:difference}
  \mathbbm{1}_{
    \Omega_{ n }
  }
  e^{ - \rho t_{ n } }
  V( Y_{ n } )
  -
  \mathbbm{1}_{
    \Omega_{ n - 1 }
  }
  e^{ - \rho t_{ n - 1 } }
  V( Y_{ n - 1 } )
\leq
  \mathbbm{1}_{ \Omega_n }
  e^{ - \rho t_{ n } }
  Z_{ n }
\end{equation}
for all $ n \in \mathbb{N} $.
Next note that
\eqref{eq:difference} 
and the fact $ \Omega_0 = \Omega $
imply
\begin{equation}
\label{eq:firstresult}
\begin{split}
&
    \mathbbm{1}_{
    \Omega_{ n }
  }
  e^{ - \rho t_{ n } }
  V( Y_{ n } )
\\ & =
  \mathbbm{1}_{
    \Omega_{ 0 }
  }
  e^{ - \rho t_{ 0 } }
  V( Y_{ 0 } )
  +
  \sum_{ k = 1 }^{ n }  
  \left(
    \mathbbm{1}_{
      \Omega_{ k }
    }
    e^{ - \rho t_{ k } }
    V( Y_{ k } )
    -
    \mathbbm{1}_{
      \Omega_{ k - 1 }
    }
    e^{ - \rho t_{ k - 1 } }
    V( Y_{ k - 1 } )
  \right)
\\ & \leq
  e^{ - \rho t_{ 0 } }
  V( Y_{ 0 } )
  +
  \sum_{ k = 1 }^{ n }  
    \mathbbm{1}_{ \Omega_k }
    e^{ - \rho t_{ k } }
    Z_{ k }
\end{split}
\end{equation}
for all $ n \in \mathbb{N}_0 $.
This
implies \eqref{eq:exactrep}.
For proving~\eqref{eq:rareevents},
note that
the relation
$ \Omega_{ n } \subset \Omega_{ n - 1 } $
for all $ n \in \N $
implies
\begin{equation}
\label{eq:sets}
  \left( 
    \Omega_n 
  \right)^c
  =
  \big(
    \Omega_{
      n-1
    } 
    \setminus
    \Omega_n 
  \big)
  \uplus
  \big(
  \left( 
    \Omega_{
      n-1
    } 
  \right)^{ c }
  \setminus \Omega_n
  \big)
  =
  \big(
    \Omega_{
      n-1
    } 
    \setminus
    \Omega_{n} 
  \big)
  \uplus
  \big(
  \left( 
    \Omega_{
      n-1
    } 
  \right)^{ c }
  \big)
\end{equation}
for all 
$
  n \in \mathbb{N}
$.
Iterating 
equation~\eqref{eq:sets} and 
using again 
$ \Omega_0 = \Omega $ 
shows
\begin{equation}
\begin{split}
&
  \left( 
    \Omega_n 
  \right)^c
\\ & =
  \bigg(
  \biguplus_{ k = 0 }^{ n-1 }
  \left( 
    \Omega_{
      k
    }
    \! 
    \setminus
    \Omega_{
      { k + 1 }
    } 
  \right)
  \bigg)
  \;\biguplus\;
  \big(
  \left( 
    \Omega_{
      0
    } 
  \right)^{ c }
  \big)
  =
  \biguplus_{ k = 0 }^{ n-1 }
  \left( 
    \Omega_{
      k
    }
    \! 
    \setminus
    \Omega_{
      { k + 1 }
    } 
  \right)
 \\ & =
  \biguplus_{ k = 0 }^{ n-1 }
  \left(
    \Omega_{
      k
    } 
    \cap
    \left\{
      V( 
        {Y}_{ 
          k
        } 
      )
      > 
      \zeta\!\left(
        t_{ k + 1 } - t_k 
      \right)
    \right\} 
  \right)
  =
  \biguplus_{ k = 0 }^{ n-1 }
    \left\{
      \mathbbm{1}_{
        \Omega_{
          k
        } 
      }
      V( 
        {Y}_{ 
          k
        } 
      )
      > 
      \zeta\!\left(
        t_{k+1} - t_k 
      \right)
    \right\} 
\end{split}
\end{equation}
for all 
$
  n \in \mathbb{N}_0  
$.
Additivity of
the probability measure
$ \mathbb{P} $,
Markov's inequality
and 
inequality~\eqref{eq:exactrep}
therefore 
imply
\begin{equation}
\begin{split}
  \mathbb{P}\big[
  ( 
    \Omega_n 
  )^c
  \big]
& =
  \sum_{ k = 0 }^{ n-1 }
  \mathbb{P}\Big[ \,
    \mathbbm{1}_{
      \Omega_{
        k
      } 
    }
      V( 
        {Y}_{ 
          k
        } 
      )
      > 
      \zeta\!\left(
        t_{k+1} - t_k 
      \right)
  \Big]
 \leq
  \sum_{ k = 0 }^{ n-1 }
  \left[
    \frac{
      \mathbb{E}\big[ 
      \mathbbm{1}_{
        \Omega_{
          k
        } 
      }
        V( 
          Y_{ 
            k
          } 
        )
      \big]
    }{
      \zeta\!\left(
        t_{k+1} - t_k 
      \right)
    }
  \right]
\\ & \leq
  \sum_{ k = 0 }^{ n - 1 }
  \left[
  \frac{
    e^{ 
      \rho \left( t_k - t_0 \right) 
    }   
    \,
    \mathbb{E}[
      V( 
        {Y}_0
      )
    ] 
    +
    \sum_{ l = 1 }^{ k }
    e^{ \rho \left( t_k - t_l \right) }
    \,
    \mathbb{E}[
      \mathbbm{1}_{ \Omega_l }
      Z_l 
    ]
  }{
    \zeta\!\left(
     t_{ k+1 } - t_k 
    \right)
  }
  \right]
\end{split}
\end{equation}
for all 
$
  n \in \mathbb{N}_0
$.
This is inequality~\eqref{eq:rareevents}
and the proof of 
Proposition~\ref{p:stability}
is thus completed.
\end{proof}

Let us illustrate Proposition~\ref{p:stability}
with the following 
simple implication.
If the assumptions
of Proposition~\ref{p:stability}
are fulfilled, 
if
$ \rho \in (-\infty,0) $, 
$ \zeta \equiv \infty $,
$
  \E[ V( Y_0 ) ] 
  < \infty
$
and if there exist real numbers 
$ h \in (0,\infty) $
and $ c \in [0,\infty) $
such that
$
  \sup_{ k \in \N }
  \E[ Z_k ] \leq c h
$
and 
$ t_n = n h $
for all $ n \in \N_0 $,
then we infer from
inequality~\eqref{eq:exactrep} 
that
\begin{equation}
\begin{split}
&
  \limsup_{ n \to \infty 
  }
  \E\big[
    V(Y_n)
  \big]
\leq
  \limsup_{ n \to \infty 
  }
  \left(
    e^{
      \rho n h
    }
    \,
    \E\!\left[
      V( Y_0 )
    \right]
    +
    \sum_{ k = 1 }^{ n }
    e^{
      \rho ( n - k ) h
    }
    \,
    \E\!\left[ Z_k \right]
  \right)
\\ & \leq
  \limsup_{ n \to \infty 
  }
  \left(
    e^{
      \rho n h
    }
    \,
    \E\!\left[
      V( Y_0 )
    \right]
    +
    c h
    \left[
    \sum_{ k = 1 }^{ n }
    e^{
      \rho ( n - k ) h
    }
    \right]
  \right)
\\ & =
  \limsup_{ n \to \infty 
  }
  \left(
    e^{
      \rho n h
    }
    \,
    \E\!\left[
      V( Y_0 )
    \right]
    +
    \frac{ c h 
      \left(
        1 - e^{ \rho n h }
      \right)
    }{
      \left( 1 - e^{ \rho h } \right)
    }
  \right)
  =
  \frac{ c h }{ 
    ( 1 - \exp( \rho h ) )
  }
\\ & =
  \frac{ c h }{ 
    \left| \rho \right|
    \left(
    \int_0^h
    \exp( \rho s ) \,
    ds
    \right)
  }
\leq
  \frac{ 
    c 
  }{
    \left| \rho \right|
    e^{ \rho h }
  }
=
  \frac{ 
    c \,
    e^{ |\rho| h }
  }{
    | \rho |
  }
  < \infty
  .
\end{split}
\end{equation}
In many situations, the random variables
$ ( Z_n )_{ n \in \mathbb{N} } $
appearing in 
Proposition~\ref{p:stability}
are centered or even
appropriate martingale differences.
This is the subject of the next
two corollaries 
(Corollary~\ref{cor:stability2}
and Corollary~\ref{cor:stability})
of 
Proposition~\ref{p:stability}.

\begin{cor}
\label{cor:stability2}
Let 
$ \rho \in \mathbb{R} $,
let
$ 
  \left( 
    \Omega, \mathcal{F}, 
    \mathbb{P} 
  \right)
$
be a probability 
space,
let
$
  \left( E, \mathcal{E} \right)
$
be a measurable space,
let 
$ t_n \in \mathbb{R} $, 
$ n \in \mathbb{N}_0 $,
be a non-decreasing sequence,
let
$
  \zeta \colon [0,\infty)
  \rightarrow (0,\infty]
$
be a function,
let
$
  V  \colon E
  \rightarrow [0,\infty)
$
be an
$ \mathcal{E} $/$ \mathcal{B}( [0,\infty) ) 
$-measurable function
and
let
$ 
  Y \colon \mathbb{N}_0 \times 
  \Omega
  \rightarrow E
$
be a stochastic process with
$ 
  \mathbb{E}[ V( Y_0 ) ] < \infty 
$
and
\begin{equation}
\label{eq:semi.V.stable2}
  \mathbbm{1}_{
      \cap_{ k = 0 }^{ n }
      \{ 
        V( Y_k ) 
        \leq 
        \zeta( t_{ k + 1 } - t_k ) 
      \}
  }
  \cdot
  \mathbb{E}\big[
    V( Y_{ n + 1 } )
    \, | \,
    ( Y_k )_{ k \in \{ 0, 1, \dots, n \} }
  \big]
\leq 
  e^{ \rho \left( t_{ n + 1 } - t_n \right) }
  \cdot
  V( Y_n )
\end{equation}
$ 
  \mathbb{P}
$-a.s.\ 
for 
all $ n \in \mathbb{N}_0 $.
Then the stochastic process
$
  \mathbbm{1}_{ \Omega_n }
  e^{ - \rho t_n }
  V( Y_n )
$, 
$ n \in \mathbb{N}_0 $,
is a non-negative 
supermartingale and
\begin{equation}
\label{eq:EV.inequality}
    \mathbb{E}\big[
      \mathbbm{1}_{ \Omega_n }
      V( 
        {Y}_n
      )
    \big] 
   \leq
    e^{ \rho \left( t_n - t_0 \right) }
    \,
    \mathbb{E}\big[
      V( 
        {Y}_0 
      )
    \big] ,
  \;
  \mathbb{P}\big[
    ( 
      \Omega_n 
    )^{ c }
  \big]
  \leq
  \bigg(
  \sum_{ k = 0 }^{ n - 1 }
  \frac{
    e^{ 
      \rho \left( t_k - t_0 \right) 
    } 
  }{
    \zeta\!\left(
     t_{ k+1 } - t_k 
    \right)
  }
  \bigg)
  \mathbb{E}\big[
    V( 
      {Y}_0
    )
  \big] ,
\end{equation}
\begin{equation}
\label{eq:EVbar}
\begin{split}
  \mathbb{E}\big[
    \bar{V}({Y}_n)
  \big]
&
  \leq 
  e^{ \rho (t_n - t_0) } \,
  \mathbb{E}\big[ 
    V( { Y }_{ 0 } )
  \big]
\\ & \quad
  +
  \|
    \bar{V}( Y_n )
  \|_{ 
    L^p( \Omega; \mathbb{R} )
  }
  \bigg[
  \bigg(
  \sum_{ k = 0 }^{ n - 1 }
  \frac{e^{\rho(t_k-t_0)}
  }{
    \zeta\!\left(
      t_{ k+1 } - t_k 
    \right)
  }
  \bigg)
  \mathbb{E}\big[
    V( {Y}_0 )
  \big]
  \bigg]^{ \! ( 1 - 1/p ) }
\end{split}
\end{equation}
for all 
$ n \in \mathbb{N}_0 $,
$ p \in [1,\infty] $
and all
$ \mathcal{E} $/$ \mathcal{B}( [0,\infty) ) 
$-measurable functions
$ \bar{V} \colon E \rightarrow [0,\infty) $
with $ \bar{V}(x) \leq V(x) $
for all $ x \in E $
where
$
  \Omega_n :=
  \cap_{ k = 0 }^{ n - 1 }
  \{ 
    V( Y_k ) \leq 
    \zeta( t_{ k + 1 } - t_k )
  \}
  \in \mathcal{F}
$
for all $ n \in \mathbb{N}_0 $.
\end{cor}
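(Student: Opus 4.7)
The plan is to derive this corollary from Proposition~\ref{p:stability} by specializing the random variables $Z_n$ to martingale increments, combined with a direct supermartingale argument that follows from the stronger conditional hypothesis \eqref{eq:semi.V.stable2}. First I would verify the supermartingale claim. Since $\Omega_{n+1} = \cap_{k=0}^{n}\{V(Y_k) \leq \zeta(t_{k+1}-t_k)\}$ is $\sigma(Y_0,\dots,Y_n)$-measurable and $\Omega_{n+1} \subset \Omega_n$, pulling the indicator $\mathbbm{1}_{\Omega_{n+1}}$ out of the conditional expectation and applying hypothesis \eqref{eq:semi.V.stable2} yields
\begin{equation*}
  \mathbb{E}\big[\mathbbm{1}_{\Omega_{n+1}} V(Y_{n+1}) \,\big|\, \sigma(Y_0,\dots,Y_n)\big]
  = \mathbbm{1}_{\Omega_{n+1}} \mathbb{E}\big[V(Y_{n+1})\,\big|\,\sigma(Y_0,\dots,Y_n)\big]
  \leq e^{\rho(t_{n+1}-t_n)}\, \mathbbm{1}_{\Omega_n} V(Y_n).
\end{equation*}
Multiplying by $e^{-\rho t_{n+1}}$ produces the supermartingale inequality.

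Second, the first bound in \eqref{eq:EV.inequality} follows by taking expectations in the supermartingale inequality and using $\Omega_0 = \Omega$. For the tail bound on $\mathbb{P}[(\Omega_n)^c]$ in \eqref{eq:EV.inequality}, I would apply Proposition~\ref{p:stability} with the martingale-increment choice $Z_n := V(Y_n) - \mathbb{E}[V(Y_n)\,|\,\sigma(Y_0,\dots,Y_{n-1})]$. The $\sigma(Y_0,\dots,Y_{n-1})$-measurability of $\mathbbm{1}_{\Omega_n}$ together with \eqref{eq:semi.V.stable2} gives the pointwise hypothesis \eqref{eq:semi.V.stable} of Proposition~\ref{p:stability}, while the same measurability forces $\mathbb{E}[\mathbbm{1}_{\Omega_n} Z_n] = 0$ for every $n \in \mathbb{N}$. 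Substituting these vanishing expectations into inequality \eqref{eq:rareevents} collapses the double sum and yields exactly the asserted estimate on $\mathbb{P}[(\Omega_n)^c]$.

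Third, for inequality \eqref{eq:EVbar}, I would split
\begin{equation*}
  \mathbb{E}\big[\bar{V}(Y_n)\big]
  = \mathbb{E}\big[\mathbbm{1}_{\Omega_n}\bar{V}(Y_n)\big]
  + \mathbb{E}\big[\mathbbm{1}_{(\Omega_n)^c}\bar{V}(Y_n)\big].
\end{equation*}
The first summand is bounded by $\mathbb{E}[\mathbbm{1}_{\Omega_n} V(Y_n)] \leq e^{\rho(t_n-t_0)} \mathbb{E}[V(Y_0)]$ by the first part of \eqref{eq:EV.inequality} and the hypothesis $\bar{V} \leq V$. The second summand is controlled by H\"older's inequality with conjugate exponents $p$ and $p/(p-1)$ as $\|\bar{V}(Y_n)\|_{L^p(\Omega;\mathbb{R})} \cdot \mathbb{P}[(\Omega_n)^c]^{1-1/p}$, and inserting the already established bound on $\mathbb{P}[(\Omega_n)^c]$ produces \eqref{eq:EVbar}.

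The main obstacle, I anticipate, is not conceptual but rather the careful bookkeeping of the measurability of $\mathbbm{1}_{\Omega_n}$ with respect to $\sigma(Y_0,\dots,Y_{n-1})$ and the verification that the integrability hypothesis $\mathbb{E}[\mathbbm{1}_{\Omega_n}|Z_n|] < \infty$ of Proposition~\ref{p:stability} is met. For the latter one argues inductively: on $\Omega_n$ the previous-step constraint $V(Y_{n-1}) \leq \zeta(t_n - t_{n-1})$ together with the supermartingale inequality already delivers $\mathbb{E}[\mathbbm{1}_{\Omega_n} V(Y_n)] \leq e^{\rho(t_n-t_0)}\mathbb{E}[V(Y_0)] < \infty$, which bounds both terms defining $\mathbbm{1}_{\Omega_n} Z_n$ in $L^1$. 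Once this technical point is dispatched, the three conclusions follow cleanly from the Proposition and one application of H\"older.
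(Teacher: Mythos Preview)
Your proposal is correct and follows essentially the same approach as the paper: establish the supermartingale property directly from \eqref{eq:semi.V.stable2}, define $Z_n$ as a martingale increment so that $\mathbb{E}[\mathbbm{1}_{\Omega_n}Z_n]=0$, apply Proposition~\ref{p:stability} to obtain the tail bound, and then split $\mathbb{E}[\bar V(Y_n)]$ along $\Omega_n$ and $(\Omega_n)^c$ with H\"older. The only minor discrepancy is that the paper defines $Z_n := \mathbbm{1}_{\Omega_n}V(Y_n) - \mathbb{E}[\mathbbm{1}_{\Omega_n}V(Y_n)\mid (Y_k)_{k\le n-1}]$ rather than your $Z_n := V(Y_n) - \mathbb{E}[V(Y_n)\mid\sigma(Y_0,\dots,Y_{n-1})]$; this guarantees that $Z_n$ is genuinely $\mathbb{R}$-valued without appealing to the extended conditional expectation of a nonnegative variable, but since $\mathbbm{1}_{\Omega_n}Z_n$ coincides for both choices, the argument is unaffected.
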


\begin{proof}[Proof
of Corollary~\ref{cor:stability2}]
The relation 
$ \Omega_{n+1} \subset \Omega_{ n } $
for all $ n \in \mathbb{N}_0 $
shows that
assumption~\eqref{eq:semi.V.stable2}
is equivalent to the estimate
\begin{equation}
\label{eq:difference2}
  \mathbb{E}\big[
    \mathbbm{1}_{
      \Omega_{ n + 1 }
    }
    e^{ - \rho t_{ n + 1 } }
    V( Y_{ n + 1 } )
    \, | \,
    ( Y_k )_{ k \in \{ 0, 1, \dots, n \} }
  \big]
\leq 
  \mathbbm{1}_{
    \Omega_{ n }
  }
  e^{ - \rho t_{ n } }
  V( Y_{ n } )
\end{equation}
$ \mathbb{P} $-a.s.\ for 
all $ n \in \mathbb{N}_0 $.
Combining \eqref{eq:difference2},
the assumption
$ \mathbb{E}[ V( Y_0 ) ] < \infty $
and the relation
\begin{equation}
  \sigma_{ \Omega }\big(
    (
    \mathbbm{1}_{
      \Omega_{ k }
    }
    e^{ - \rho t_{ k } }
    V( Y_{ k } ) 
    )_{
      k \in \{ 0, 1, \dots, n \}
    }
  \big)
  \subset
  \sigma_{ \Omega }\big(
    ( Y_k )_{
      k \in \{ 0, 1, \dots, n \} 
    }
  \big)
\end{equation}
for all $ n \in \mathbb{N}_0 $
proves that the process
$
  \mathbbm{1}_{ \Omega_n }
  e^{ - \rho t_n }
  V( Y_n )
$,
$ n \in \mathbb{N}_0 $,
is a non-negative 
supermartingale.
This implies the first
inequality in \eqref{eq:EV.inequality}.
In addition, this ensures
that the stochastic process
$ 
  Z \colon \mathbb{N} \times \Omega
  \rightarrow \R 
$
given
by
\begin{equation}
  Z_n =
  \mathbbm{1}_{ \Omega_n }
  V( Y_{ n } )
  -
  \E\!\left[
    \mathbbm{1}_{ \Omega_n }
    V( Y_n ) \, | \,
    ( Y_k )_{ 
      k \in \{ 0, 1, \dots, n-1 \} 
    }
 \right]
\end{equation}
$ \mathbb{P} $-a.s.\ for 
all $ n \in \mathbb{N} $
satisfies 
$ 
  \mathbb{E}\big[ 
  \mathbbm{1}_{ \Omega_n }
    |Z_n| 
  \big] < \infty 
$
and
$ 
  \mathbb{E}[ 
    \mathbbm{1}_{ \Omega_n } 
    Z_n 
  ] = 0 
$
for all $ n \in \mathbb{N} $.
Moreover, the definition
of 
$ 
  Z \colon \mathbb{N} \times
  \Omega \rightarrow \R 
$ 
and assumption~\eqref{eq:semi.V.stable2}
ensure
\begin{equation}
\begin{split}
  \mathbbm{1}_{ \Omega_n }
  V( Y_n )
& =
  \E\!\left[
    \mathbbm{1}_{ \Omega_{ n  } }
    V( Y_n ) \, | \,
    ( Y_k )_{ 
      k \in \{ 0, 1, \dots, n-1 \}   
    }
   \right]
  +
  Z_n
\\ & =
  \mathbbm{1}_{ \Omega_{ n  } }
  \E\!\left[
    V( Y_n ) \, | \,
    ( Y_k )_{ 
      k \in \{ 0, 1, \dots, n-1 \}   
    }
   \right]
  +
  \mathbbm{1}_{ \Omega_n }
  Z_n
\\ & \leq
  e^{ 
    \rho \left( t_{ n } - t_{ n - 1 } 
    \right) 
  }
  V( Y_{ n - 1 } )
  +
  \mathbbm{1}_{ \Omega_n }
  Z_n
\end{split}
\end{equation}
$ \mathbb{P} $-a.s.\ for 
all $ n \in \mathbb{N} $.
An application of 
Proposition~\ref{p:stability}
thus proves the second inequality
in \eqref{eq:EV.inequality}.
Next observe that H\"{o}lder's
inequality implies
\begin{equation}
\label{eq:simpleest}
  \mathbb{E}[
    X
  ]
\leq
  \mathbb{E}\!\left[
    \mathbbm{1}_{ \tilde{ \Omega } }
    X
  \right]
  +
  \big(
    \mathbb{P}\big[
      ( \tilde{\Omega} )^c
    \big]
  \big)^{ 
    \! ( 1 - 1/p )
  }
  \|
    X
  \|_{ 
    L^{ p }( \Omega; \mathbb{R} ) 
  }
\end{equation}
for all 
$ 
  \tilde{\Omega} \in \mathcal{F} 
$,
$ p \in [1,\infty] $
and all
$ \mathcal{F} $/$
  \mathcal{B}( [0,\infty) )
$-measurable
mappings
$ 
  X \colon \Omega 
  \rightarrow [0,\infty)
$.
Combining \eqref{eq:EV.inequality}
and \eqref{eq:simpleest}
finally results in
\begin{equation}  \begin{split}
  \mathbb{E}&\big[
    \bar{V}({Y}_n)
  \big]
  \leq 
  \mathbb{E}\big[ 
    \mathbbm{1}_{\Omega_n}
    V( { Y }_{ n } )
  \big]
  +
  \|
    \bar{V}( Y_n )
  \|_{ 
    L^p( \Omega; \mathbb{R} )
  }
  \left(
    \mathbb{P}\big[
      ( \Omega_n )^c
    \big]
  \right)^{ 
    ( 1 - 1/p )
  }
\\
  &\leq 
  e^{ \rho ( t_n - t_0 ) } \,
  \mathbb{E}\big[ 
    V( { Y }_{ 0 } )
  \big]
  +
  \|
    \bar{V}( Y_n )
  \|_{ 
    L^p( \Omega; \mathbb{R} )
  }
  \bigg[
  \bigg(
  \sum_{ k = 0 }^{ n - 1 }
  \frac{e^{\rho(t_k-t_0)}
  }{
    \zeta\!\left(
      t_{ k+1 } - t_k 
    \right)
  }
  \bigg)
  \mathbb{E}\big[
    V( {Y}_0 )
  \big]
  \bigg]^{
    \! (1 - 1/p)
  }
\end{split}     
\end{equation}
for all 
$ n \in \mathbb{N}_0 $,
$ p \in [1,\infty] $
and all
$ \mathcal{E} $/$ \mathcal{B}( [0,\infty) ) 
$-measurable functions
$ \bar{V} \colon E \rightarrow [0,\infty) $
with $ \bar{V}(x) \leq V(x) $
for all $ x \in E $.
The proof of 
Corollary~\ref{cor:stability2}
is thus completed.
\end{proof}


Corollary~\ref{cor:stability2},
in particular,
proves 
estimates
on the quantities
\begin{equation}
  \sup_{ k \in \{ 0, 1, \dots, n \} }
    \mathbb{E}\big[
      \mathbbm{1}_{ \Omega_k }
      V( 
        Y_k
      )
    \big] 
\end{equation}
for $ n \in \mathbb{N} $
(see the first
inequality in \eqref{eq:EV.inequality}).
Here $ \Omega_n \subset \Omega $,
$ n \in \mathbb{N}_0 $,
are typically large 
subevents of the
probability space
$ 
  \left( \Omega, \mathcal{F}, \mathbb{P}
  \right) 
$
and $ V \colon E \rightarrow [0,\infty) $
is an appropriate Lyapunov-type
function (see 
Corollary~\ref{cor:stability2}
for details).
Under suitable additional 
assumptions, one can also
obtain an estimate on the 
larger quantities
\begin{equation}
    \mathbb{E}\!\left[
      \sup_{ k \in \{ 0, 1, \dots, n \} }
      \mathbbm{1}_{ \Omega_k }
      V( 
        Y_k
      )
    \right] 
\end{equation}
for $ n \in \mathbb{N} $.
This is the subject of the next
corollary.

\begin{cor}
\label{cor:stability}
Let 
$ \rho \in \mathbb{R} $, 
$ p \in [1,\infty) $,
let
$ 
  \left( 
    \Omega, \mathcal{F}, 
    \mathbb{P}
  \right)
$
be a probability 
space,
let
$
  \left( E, \mathcal{E} \right)
$
be a measurable space,
let 
$ t_n \in \mathbb{R} $, 
$ n \in \mathbb{N}_0 $,
be a non-decreasing sequence,
let
$
  \zeta \colon [0,\infty)
  \rightarrow (0,\infty]
$,
$
  \nu \colon \mathbb{N}
  \rightarrow [0,\infty)
$
be functions,
let
$
  V  \colon E
  \rightarrow [0,\infty)
$
be an
$ \mathcal{E} $/$ \mathcal{B}( [0,\infty) ) 
$-measurable
function and
let
$ 
  Y \colon \mathbb{N}_0 \times 
  \Omega
  \rightarrow E
$,
$ 
  Z \colon \mathbb{N} \times 
  \Omega
  \rightarrow \R
$
be stochastic processes 
such that the process
$ 
  \sum_{ k = 1 }^{ n }
  \mathbbm{1}_{ \Omega_k }
  Z_k
$,
$ n \in \N $,
is a martingale and
such that
\begin{align}
  \label{eq:semi.Lyapunov.martingal}
  \mathbbm{1}_{
    \Omega_{ n }
  }
    V( Y_{ n } )
& \leq 
  e^{ \rho \left( t_{ n } - t_{ n - 1 } \right) }
  V( Y_{ n - 1 } )
  +
  \mathbbm{1}_{ \Omega_n }
  Z_{ n } 
  \qquad
  \P\text{-a.s.},
\\ 
  \label{eq:variationZbounded.by.Y}
  \|
     \mathbbm{1}_{ \Omega_n }
     Z_n
  \|_{ L^p( \Omega; \R ) }
  & \leq
  \nu_{n} \,
  \bigg\|
     \sup_{ 
       k \in \{ 0, 1, \ldots, n - 1 \}
     }
     \mathbbm{1}_{ \Omega_{k} } 
     e^{ \rho ( t_{ n } - t_{ k } ) } 
     V( Y_k )
  \bigg\|_{ L^p(\Omega;\R) } 
\end{align}
for all $ n \in \mathbb{N} $
where
$
  \Omega_n :=
  \cap_{ k = 0 }^{ n - 1 }
  \{ 
    V( Y_k ) \leq 
    \zeta( t_{ k + 1 } - t_k )
  \}
  \in \mathcal{F}
$
for all $ n \in \mathbb{N}_0 $.
Then
\begin{equation}  
\label{eq:cor:stability}
\begin{split}  
  \bigg\|
    \sup_{
      k
      \in \{0,1,\ldots,n\}
    }
    \mathbbm{1}_{ \Omega_k }
    e^{ - \rho t_k }
    V(Y_k)
  \bigg\|_{ L^p(\Omega;\R) }
  \!\!\!\!\!\!\!
\leq
  \sqrt{ 2 }
  \left\| 
    V( Y_0 )
  \right\|_{ L^p( \Omega; \R ) }
  \exp\!\left(
    \!
    \chi_p
    \!
    \left[
    \sum_{ k = 1 }^{ n }
    \left| \nu_k \right|^2
    \right]
    - \rho t_0
    \!
  \right)
\end{split}     
\end{equation}
for all $ n \in \N_0 $.
\end{cor}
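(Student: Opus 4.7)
The plan is to combine the pathwise Lyapunov identity already available from Proposition~\ref{p:stability}, the Burkholder--Davis--Gundy inequality in the form \eqref{eq:burkholder}, and a discrete Gr\"onwall step.

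First I would notice that hypothesis \eqref{eq:semi.Lyapunov.martingal} is exactly assumption~\eqref{eq:semi.V.stable} of Proposition~\ref{p:stability} (with the natural integrability $\mathbb{E}[\mathbbm{1}_{\Omega_n}|Z_n|]<\infty$ following from \eqref{eq:variationZbounded.by.Y} together with the monotone convergence argument applied iteratively). Hence inequality~\eqref{eq:exactrep} applies pathwise and, after multiplying through by $e^{-\rho t_n}$, yields
\begin{equation*}
  \mathbbm{1}_{\Omega_n}e^{-\rho t_n}V(Y_n)
  \;\leq\;
  e^{-\rho t_0}V(Y_0)
  \;+\;
  M_n,
  \qquad
  M_n:=\sum_{k=1}^{n}e^{-\rho t_k}\mathbbm{1}_{\Omega_k}Z_k,
\end{equation*}
for every $n\in\mathbb{N}_0$. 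Since the $t_k$ are deterministic, the process $(M_n)_{n\in\mathbb{N}_0}$ inherits the martingale property from $\sum_{k=1}^{n}\mathbbm{1}_{\Omega_k}Z_k$. Using the trivial bound $\sup_k|M_k|\geq \sup_k M_k$, I obtain
\begin{equation*}
  \sup_{k\in\{0,\ldots,n\}}\mathbbm{1}_{\Omega_k}e^{-\rho t_k}V(Y_k)
  \;\leq\;
  e^{-\rho t_0}V(Y_0)
  \;+\;
  \sup_{k\in\{0,\ldots,n\}}|M_k|,
\end{equation*}
and, taking $L^p$-norms and squaring with the elementary inequality $(a+b)^2\leq 2a^2+2b^2$, I reduce the proof to controlling the maximal function $\bigl\|\sup_{k\le n}|M_k|\bigr\|_{L^p}^2$.

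Next I would apply the Burkholder--Davis--Gundy inequality~\eqref{eq:burkholder} to the martingale $M$, which gives
\begin{equation*}
  \Bigl\|\sup_{k\in\{0,\ldots,n\}}|M_k|\Bigr\|_{L^p(\Omega;\mathbb{R})}^{2}
  \;\leq\;
  \chi_p\sum_{k=1}^{n}
  e^{-2\rho t_k}\,
  \bigl\|\mathbbm{1}_{\Omega_k}Z_k\bigr\|_{L^p(\Omega;\mathbb{R})}^{2}.
\end{equation*}
Plugging in the hypothesis~\eqref{eq:variationZbounded.by.Y} and observing that the deterministic factor $e^{\rho t_n}$ inside the supremum can be pulled out gives the crucial cancellation
\begin{equation*}
  e^{-2\rho t_n}\bigl\|\mathbbm{1}_{\Omega_n}Z_n\bigr\|_{L^p}^{2}
  \;\leq\;
  \nu_n^{2}\,
  \Bigl\|\sup_{k\in\{0,\ldots,n-1\}}\mathbbm{1}_{\Omega_k}e^{-\rho t_k}V(Y_k)\Bigr\|_{L^p}^{2}.
\end{equation*}
Writing $A_n:=\bigl\|\sup_{k\in\{0,\ldots,n\}}\mathbbm{1}_{\Omega_k}e^{-\rho t_k}V(Y_k)\bigr\|_{L^p}^{2}$, the two displays combine into the recursive estimate
\begin{equation*}
  A_n
  \;\leq\;
  2\,e^{-2\rho t_0}\,\|V(Y_0)\|_{L^p}^{2}
  \;+\;
  2\chi_p\sum_{k=1}^{n}\nu_k^{2}\,A_{k-1}.
\end{equation*}

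Finally I would close the argument by the standard discrete Gr\"onwall lemma, which turns the above recursion into
\begin{equation*}
  A_n
  \;\leq\;
  2\,e^{-2\rho t_0}\,\|V(Y_0)\|_{L^p}^{2}\,
  \prod_{k=1}^{n}\bigl(1+2\chi_p\nu_k^{2}\bigr)
  \;\leq\;
  2\,e^{-2\rho t_0}\,\|V(Y_0)\|_{L^p}^{2}\,
  \exp\!\Bigl(2\chi_p\sum_{k=1}^{n}\nu_k^{2}\Bigr),
\end{equation*}
where the last inequality uses $1+x\leq e^{x}$. Taking square roots delivers exactly the claimed bound \eqref{eq:cor:stability}. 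I expect the only subtlety to be the careful identification of the martingale structure of $(M_n)_{n\in\mathbb{N}_0}$ and the bookkeeping of the exponential weights $e^{\pm\rho t_k}$; once the weight $e^{-\rho t_n}$ is inserted at the very start, the computation proceeds smoothly and the $\sqrt{2}$ in the conclusion is forced by the splitting $(a+b)^{2}\leq 2a^{2}+2b^{2}$.
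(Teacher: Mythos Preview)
Your proof is correct and follows essentially the same route as the paper's own proof of Corollary~\ref{cor:stability}: apply \eqref{eq:exactrep} from Proposition~\ref{p:stability}, square via $(a+b)^2\le 2a^2+2b^2$, invoke the Burkholder--Davis--Gundy inequality~\eqref{eq:burkholder} for the weighted martingale $\sum_{l=1}^{k}e^{-\rho t_l}\mathbbm{1}_{\Omega_l}Z_l$, use~\eqref{eq:variationZbounded.by.Y} to close the recursion, and finish with the discrete Gr\"onwall lemma. Your write-up is in fact slightly more explicit than the paper's in two places---you spell out the product form $\prod_k(1+2\chi_p\nu_k^2)$ before bounding it by the exponential, and you note the exact cancellation of the $e^{\pm\rho t_n}$ weights---but the argument is the same.
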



\begin{proof}[Proof
of Corollary~\ref{cor:stability}]
Inequality~\eqref{eq:exactrep}
in Proposition~\ref{p:stability}
implies
\begin{equation}
    \sup_{ 
      k \in \{ 0, 1, \ldots, n \}
    }
    \mathbbm{1}_{ \Omega_k }
    e^{ - \rho t_k }
    V( Y_k )
  \leq
    \sup_{ 
      k \in \{ 0, 1, \ldots, n \}
    }
    e^{ - \rho t_0 }
    V( Y_0 )
    +
    \sup_{ 
      k \in \{ 0, 1, \ldots, n \}
    }
    \left[
      \sum_{ l = 1 }^k
      \mathbbm{1}_{
        \Omega_k
      }
      e^{ - \rho t_k }
      Z_k
    \right]
\end{equation}
$ \P $-a.s.\ for all $ n \in \N_0 $.
The triangle inequality
and the estimate 
$ (a+b)^2 \leq 2 a^2 + 2b^2 $ 
for all $ a, b \in \R $
hence yield
\begin{equation}
\begin{split}
&
  \left\|
    \sup_{ 
      k \in \{ 0, 1, \ldots, n \}
    }
      \mathbbm{1}_{ \Omega_k }
    e^{ - \rho t_k }
    V( Y_k )
  \right\|_{ 
    L^p( \Omega; \R) 
  }^2
\\ & \leq
  2 
  \left\| 
    e^{ - \rho t_0 } V(Y_0)
  \right\|_{ L^p( \Omega; \R ) }^2
  +
  2
  \left\|
    \sup_{ 
      k \in \{ 0, 1, \ldots, n \}
    }
    \left|
      \sum_{ l = 1 }^k 
      \mathbbm{1}_{ \Omega_l }
      e^{ - \rho t_l }
      Z_l
    \right|
  \right\|_{ L^p( \Omega; \R) }^2
\end{split}
\end{equation}
for all $ n \in \N_0 $.
The definition~\eqref{eq:burkholder}
of
$ \chi_p \in [0,\infty) $,
$ p \in [1,\infty) $,
applied to
the martingale
$
  \sum_{ l = 1 }^{ k }    
  \mathbbm{1}_{ \Omega_l }
  e^{ - \rho t_l } 
  Z_l
$,
$ k \in \mathbb{N}_0 $,
therefore shows
\begin{equation}  
\begin{split}
&
  \left\|
    \sup_{ 
      k \in \{ 0, 1, \ldots, n \}
    }
      \mathbbm{1}_{ \Omega_k }
    e^{ - \rho t_k }
    V( Y_k )
  \right\|_{ 
    L^p( \Omega; \R) 
  }^2
\\ & \leq
  2 
  \left\| 
    e^{ - \rho t_0 } V(Y_0)
  \right\|_{ L^p( \Omega; \R) }^2
  +
  2 \chi_p
  \sum_{ k = 1 }^{ n }
  \left\| 
    \mathbbm{1}_{ \Omega_k }
    e^{ - \rho t_k }
    Z_k
  \right\|_{
    L^p( \Omega; \R)
  }^2
\\ & \leq
  2 
  \left\| 
    e^{ - \rho t_0 } V(Y_0)
  \right\|_{ L^p( \Omega; \R ) }^2
  +
  2 \chi_p
  \sum_{ k = 0 }^{ n - 1 }
  \left| \nu_{ k + 1 } \right|^2
  \bigg\|
     \sup_{ 
       l \in \{ 0, 1, \ldots, k \}
     }
     \mathbbm{1}_{ \Omega_l }
     e^{ - \rho t_l }
    V(Y_l)
  \bigg\|_{ L^p( \Omega; \R ) }^2
\end{split}     
\end{equation}
for all $ n \in \N_0 $
where the last inequality
follows from assumption
\eqref{eq:variationZbounded.by.Y}.
Consequently, 
Gronwall's lemma for 
discrete time yields
\begin{equation}  
\begin{split}
&
  \bigg\|
    \sup_{
      k \in \{ 0, 1, \ldots, n \}
    }
    \mathbbm{1}_{ \Omega_k }
    e^{ - \rho t_k }
    V( Y_k )
  \bigg\|_{
    L^p( \Omega; \R) 
  }^2
\\ & \leq
  2 
  \left\| 
    e^{ - \rho t_0 } V(Y_0)
  \right\|_{
    L^p( \Omega; \R) 
  }^2
  \exp\!\left(
    2 \chi_p
    \sum_{ k = 0 }^{ n - 1 } 
    \left| \nu_{ k + 1 } \right|^2
  \right)
\end{split}     
\end{equation}
for all $ n \in \N_0 $.
This finishes the proof 
of 
Corollary~\ref{cor:stability}.
\end{proof}

\noindent
An application of 
Corollary~\ref{cor:stability}
can be found in 
Lemma~\ref{l:more.Lyapunov.implicit.Euler}
below.

\subsection{Moment 
bounds on complements of
rare events}
\label{sec:semimoment}

In the case of nonlinear
SDEs,
it has been shown in \cite{hjk11} 
that the Euler-Maruyama approximations
often fail to satisfy moment 
bounds although the exact solution
of the SDE does satisfy such moment 
bounds.
Nonetheless, the Euler-Maruyama
approximations often
satisfy suitable moment
bounds restricted to events
whose probabilities converge to one sufficiently
fast; see Corollary~4.4
and Lemma~4.5
in \cite{hj11}. 
This is one motivation for the
next definition.

\begin{definition}[Semi
boundedness]
\label{def:RB}
Let 
$ \alpha \in (0,\infty] $,
let
$ I \subset \mathbb{R} $
be a subset of $ \mathbb{R} $,
let
$
  \left( 
    E, \mathcal{E} 
  \right)
$
be a measurable space,
let
$
  \left( 
    \Omega, \mathcal{F},
    \mathbb{P}
  \right)
$
be a probability space
and let
$
  V \colon E 
  \rightarrow [0,\infty)
$
be an 
$ 
  \mathcal{E} 
$/$ 
  \mathcal{B}( 
    [0,\infty) 
  ) 
$-measurable 
mapping.
A sequence
$
  Y^N \colon I
  \times \Omega
  \rightarrow E
$,
$ 
  N \in \mathbb{N} 
$,
of stochastic processes
is then said to be
$ \alpha $-semi 
$ V $-bounded 
(with respect to $ \mathbb{P} $)
if there exists
a sequence 
$ 
  \Omega_N \in 
  \sigma_{ \Omega }\big( 
    ( Y_t^N )_{ t \in I } 
  \big)
  =
  \sigma_{ \Omega }( Y^N )
  \subset
  \mathcal{F} 
$,
$ N \in \mathbb{N} $,
of events
such that
\begin{equation}
\label{eq:semibounded}
  \limsup_{ N \rightarrow \infty }
  \left(
  \sup_{ t \in I }
  \mathbb{E}\big[
    \mathbbm{1}_{ \Omega_N }
    V( Y^N_t )
  \big]
  +
  N^{ \alpha } \cdot
  \mathbb{P}\big[
    ( \Omega_N )^c
  \big]
  \right)
  < \infty .
\end{equation}
Moreover, a sequence
$
  Y^N \colon I
  \times \Omega
  \rightarrow E
$,
$ 
  N \in \mathbb{N} 
$,
of stochastic processes
is said to be
$ 0 $-semi 
$ V $-bounded 
(with respect to $ \mathbb{P} $)
if there exists
a sequence 
$ 
  \Omega_N \in 
  \sigma_{ \Omega }( Y^N )
$,
$ N \in \mathbb{N} $,
of events
such that
$
  \limsup_{ N \rightarrow \infty }
  \sup_{ t \in I }
  \mathbb{E}\big[
    \mathbbm{1}_{ \Omega_N }
    V( Y^N_t )
  \big]
  < \infty
$
and
$
  \lim_{ N \to \infty }
  \mathbb{P}\big[
    ( \Omega_N )^c
  \big]
  = 0
$.
\end{definition}

We now present some remarks
concerning Definition~\ref{def:RB}.
First, note that the concept
of semi boundedness
in the sense of Definition~\ref{def:RB}
is a property of the 
probability measures associated
to the stochastic processes.
More precisely,  in 
the setting of Definition~\ref{def:RB},
a sequence of stochastic
processes
$ 
  Y^N \colon I \times \Omega
  \rightarrow E
$,
$ N \in \mathbb{N} 
$,
is $ \alpha $-semi $ V $-bounded
if and only if there exists
a sequence 
$ A_N \in \mathcal{E}^{ \otimes I } $,
$ N \in \mathbb{N} $,
of sets
such that
\begin{equation}
  \limsup_{ N \rightarrow \infty }
  \left\{
  \sup_{ t \in [0,T] }
  \int_{ E^{ \times I } }
  \mathbbm{1}_{ A_N }(x) \,
  V(\pi_t(x)) \,
  \mathbb{P}_{ Y^N }(dx)
  +
    N^{ \alpha }
    \cdot
    \mathbb{P}_{ Y^N }\!\big[ 
      ( A_N )^c 
    \big]
  \right\}
  < \infty
\end{equation}
where
$ 
  \mathbb{P}_{ Y^N }
  \colon
  \mathcal{E}^{ \otimes I }
  \rightarrow [0,1]
$,
$ N \in \mathbb{N} $,
with
$
  \mathbb{P}_{ Y^N }[ A ]
  = \mathbb{P}[ Y^N \in A ]
$
for all 
$ 
  A \in \mathcal{E}^{ \otimes I } 
$
and all
$ N \in \mathbb{N} $
are the probability measures
associated to 
$ 
  Y^N \colon I \times \Omega
  \rightarrow E
$, 
$ N \in \mathbb{N} $,
and where
$ 
  \pi_t \colon E^{ \times I }
  \rightarrow E
$,
$ t \in I $,
with
$   
  \pi_t(x) = x(t)
$
for all $ x \in E^{ \times I } $
and all $ t \in I $
are projections from
$ E^{ \times I } $
to $ E $.
Semi boundedness
in the sense of Definition~\ref{def:RB}
is thus a property of the sequence
$
  \mathbb{P}_{ Y^N }
  \colon \mathcal{E}^{ \otimes I }
  \rightarrow [0,1]
$,
$ N \in \N $,
of probability measures
on the measurable path space 
$ 
  \left( 
    E^{ \times I }, 
    \mathcal{E}^{ \otimes I } 
  \right)
$. 
Moreover, observe that in the case
where the index set $ I $ appearing
in Definition~\ref{def:RB} consists
of only one real number $ t_0 \in \R $, i.e., $ I = \{ t_0 \} $, 
the sequence
$ 
  Y^N \colon I \times \Omega
  \to E
$, $ N \in \N $,
of stochastic processes reduces to
a sequence 
$
  Z_N \colon \Omega \to E
$,
$ N \in \N $,
of random variables with
$ Z_N(\omega) = Y^N_{ t_0 }(\omega) $
for all $ \omega \in \Omega $
and all $ N \in \N $.
Furthermore, observe that
in the case $ \alpha = \infty $
in Definition~\ref{def:RB},
condition~\eqref{eq:semibounded}
is equivalent to the condition
$
  \limsup_{ N \rightarrow \infty }
  \sup_{ t \in I }
  \mathbb{E}\big[
    V( Y^N_t )
  \big]
  < \infty 
$.
Next we would like to add
some further comments 
to Definition~\ref{def:RB}.
For this we first note
in Lemma~\ref{lem:convergenceprobab}
a well-known 
characterization
of convergence in probability
(see, e.g., Exercise~6.2.1 (i)
in Klenke~\cite{k06}
for a related excercise
or, e.g., also Remark~9 in \cite{jk12}).
The proof of Lemma~\ref{lem:convergenceprobab}
is for completeness given below.

\begin{lemma}[Convergence
in probability]
\label{lem:convergenceprobab}
Let 
$ 
  \left(   
    \Omega, \mathcal{F},
    \mathbb{P}
  \right)
$
be a probability space,
let
$ 
  \left( E, d_E \right)
$
be a separable metric space and
let
$
  X \colon 
  \Omega \rightarrow E
$
and
$
  Y_N \colon 
  \Omega \rightarrow E
$,
$ N \in \mathbb{N} $,
be 
$ \mathcal{F} $/$
  \mathcal{B}(E)
$-measurable mappings.
Then the following three
assertions
\begin{itemize}

\item[(i)]
the sequence
$ Y_N $,
$ N \in \mathbb{N} $,
converges to $ X $ in
probability, i.e., 
it holds that
$
  \lim_{ N \rightarrow \infty }
  \mathbb{P}\big[
    d_E( X, Y_N ) > \varepsilon
  \big] = 0
$
for all $ \varepsilon \in (0,\infty) $,

\item[(ii)]
it holds that
$
  \lim_{ N \rightarrow \infty }
  \mathbb{P}\big[
    d_E( X, Y_N ) \leq 1
  \big] = 1
$
and it holds that
$
  \lim_{ N \rightarrow \infty }
$
$
  \mathbb{E}\big[
    \mathbbm{1}_{
      \left\{
        d_E( X, Y_N ) \leq 1
      \right\}
    }
    d_E( X, Y_N )
  \big] = 0
$,

\item[(iii)]
there exists
a sequence 
$ \Omega_N \in \mathcal{F} $,
$ N \in \mathbb{N} $,
with
$ 
  \lim_{ N \rightarrow \infty }
  \mathbb{P}\big[  
    \Omega_N  
  \big] = 1
$
and
$
  \lim_{ N \rightarrow \infty }
  \mathbb{E}\big[
    \mathbbm{1}_{ \Omega_N }
    d_E( X, Y_N )
  \big] = 0
$ 

\end{itemize}
are equivalent and each of these
assertions implies
for every $ p \in (0,\infty) $
and every $ x \in E $
with
$
  \E\big[
    \left| d_E( x, X ) \right|^p
  \big] < \infty
$
that
\begin{equation}
\label{eq:bound0semi}
  \lim\nolimits_{ N \to \infty }
  \P\big[
    d_E( X, Y_N ) > 1
  \big]
  = 0 
  \text{ and }
  \sup\nolimits_{ N \in \N }
  \E\big[
    \mathbbm{1}_{
      \left\{
        d_E( X, Y_N ) \leq 1
      \right\}
    }
    \left| d_E( x, Y_N ) \right|^p
  \big]
  < \infty 
  .
\end{equation}
\end{lemma}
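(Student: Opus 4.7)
The plan is to establish the cyclic chain of implications (i) $\Rightarrow$ (ii) $\Rightarrow$ (iii) $\Rightarrow$ (i) and then to deduce \eqref{eq:bound0semi} directly from (i). Throughout, the only real ingredients are the triangle inequality for $d_E$, Markov's inequality, and the fact that for a separable metric space the function $\omega \mapsto d_E(X(\omega),Y_N(\omega))$ is a genuine real-valued random variable, so all events and expectations below are well-defined.

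For (i) $\Rightarrow$ (ii), the convergence $\P[d_E(X,Y_N)\leq 1]\to 1$ follows from (i) applied with $\varepsilon=1$. For the expectation term, I would fix an arbitrary $\varepsilon\in(0,1)$ and split
\begin{equation}
\E\big[\mathbbm{1}_{\{d_E(X,Y_N)\leq 1\}}\,d_E(X,Y_N)\big]
\leq \varepsilon + \P\!\big[d_E(X,Y_N)>\varepsilon\big],
\end{equation}
which comes from separating the event $\{d_E(X,Y_N)\leq 1\}$ into $\{d_E(X,Y_N)\leq\varepsilon\}$ and $\{\varepsilon<d_E(X,Y_N)\leq 1\}$. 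Sending $N\to\infty$ and then $\varepsilon\to 0$ finishes this implication. For (ii) $\Rightarrow$ (iii), I would simply take $\Omega_N:=\{d_E(X,Y_N)\leq 1\}\in\mathcal{F}$, which makes (iii) a verbatim restatement of (ii).

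For (iii) $\Rightarrow$ (i), fix any $\varepsilon\in(0,\infty)$ and bound
\begin{equation}
\P\!\big[d_E(X,Y_N)>\varepsilon\big]
\leq \P\!\big[(\Omega_N)^c\big]
+ \tfrac{1}{\varepsilon}\,\E\!\big[\mathbbm{1}_{\Omega_N}\,d_E(X,Y_N)\big],
\end{equation}
using Markov's inequality on the second summand. Both terms vanish by the hypotheses in (iii), yielding (i).

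For the final assertion \eqref{eq:bound0semi}, the first convergence is just (i) at $\varepsilon=1$. For the uniform moment bound, I would apply the triangle inequality to get $d_E(x,Y_N)\leq d_E(x,X)+d_E(X,Y_N)$, so on the event $\{d_E(X,Y_N)\leq 1\}$ one has $|d_E(x,Y_N)|^p\leq (d_E(x,X)+1)^p$, and an elementary inequality of the form $(a+b)^p\leq 2^{\max(p,1)}(a^p+b^p)$ then gives
\begin{equation}
\E\!\big[\mathbbm{1}_{\{d_E(X,Y_N)\leq 1\}}\,|d_E(x,Y_N)|^p\big]
\leq 2^{\max(p,1)}\!\left(\E\!\big[|d_E(x,X)|^p\big]+1\right),
\end{equation}
which is finite and independent of $N$. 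The only mildly subtle point is separating the sub-$1$ and super-$1$ regimes for $p$ in this last step; otherwise the argument is routine bookkeeping.
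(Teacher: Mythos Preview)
Your proof is correct and follows the same cyclic scheme (i) $\Rightarrow$ (ii) $\Rightarrow$ (iii) $\Rightarrow$ (i) as the paper, with the same choice $\Omega_N=\{d_E(X,Y_N)\leq 1\}$ and the same triangle-inequality argument for \eqref{eq:bound0semi}. The only cosmetic differences are that the paper invokes dominated convergence for (i) $\Rightarrow$ (ii) where you use the elementary $\varepsilon$-split, and the paper bounds via the $L^p$-triangle inequality $\|\mathbbm{1}_{\{d_E(X,Y_N)\leq 1\}}d_E(x,Y_N)\|_{L^p}\leq \|d_E(x,X)\|_{L^p}+1$ where you use the pointwise power inequality; both variants are equally valid.
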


\begin{proof}[Proof
of Lemma~\ref{lem:convergenceprobab}]
Note that if
$
  \lim_{ N \rightarrow \infty }
  \!\mathbb{P}\big[
    d_E( X, Y_N ) > \varepsilon
  \big]\! = 0
$
for all $ \varepsilon \in (0,\infty) $,
then
\begin{equation}
  \lim_{ N \rightarrow \infty }
  \!\mathbb{P}\big[
    d_E( X, Y_N ) \leq 1
  \big] = 1
\quad
\text{and}
\quad
  \lim_{ N \rightarrow \infty }
  \mathbb{E}\big[
    \mathbbm{1}_{
      \left\{
        d_E( X, Y_N ) \leq 1
      \right\}
    }
    d_E( X, Y_N )
  \big] 
  = 0
\end{equation}
due to Lebesgue's theorem 
of dominated convergence.
This shows that (i) implies (ii).
Next observe that
(ii), clearly, implies (iii).
Moreover, if there exists
a sequence 
$ \Omega_N \in \mathcal{F} $,
$ N \in \mathbb{N} $,
with
$
  \lim_{ N \rightarrow \infty }
  \mathbb{E}\big[
    \mathbbm{1}_{ \Omega_N }
    d_E( X, Y_N )
  \big] = 0
$
and
$ 
  \lim_{ N \rightarrow 0 }
  \mathbb{P}\big[  
    \Omega_N
  \big] = 1
$,
then
$
  \lim_{ N \rightarrow \infty }
  \mathbb{P}\big[
    \mathbbm{1}_{ \Omega_N }
    d_E( X, Y_N ) > \varepsilon 
  \big] 
  = 0
$
for all $ \varepsilon \in (0,\infty) $
and therefore
\begin{equation}
\begin{split}
  \lim_{ N \rightarrow \infty }
  \mathbb{P}\big[
    d_E( X, Y_N ) > \varepsilon
  \big] 
& \leq
  \lim_{ N \rightarrow \infty }
  \mathbb{P}\big[
    \Omega_N \cap 
    \{ d_E( X, Y_N ) > \varepsilon \}
  \big] 
  +
  \lim_{ N \rightarrow \infty }
  \mathbb{P}\big[
    ( \Omega_N )^c
  \big]
\\ & =
  \lim_{ N \rightarrow \infty }
  \mathbb{P}\big[
    \mathbbm{1}_{ \Omega_N }
    d_E( X, Y_N ) > \varepsilon 
  \big] 
  = 0
\end{split}
\end{equation}
for all $ \varepsilon \in (0,\infty) $.
This proves that
(iii) implies (i).
Finally, observe that 
\begin{equation}
\begin{split}
&
  \big\|
    \mathbbm{1}_{
      \left\{
        d_E( X, Y_N ) \leq 1
      \right\}
    }
    d_E( x, Y_N )
  \big\|_{ L^p( \Omega; \R ) }
\\ & \leq
  \big\|
    \mathbbm{1}_{
      \left\{
        d_E( X, Y_N ) \leq 1
      \right\}
    }
    d_E( x, X ) 
  \big\|_{ L^p( \Omega; \R ) }
  +
  \big\|
    \mathbbm{1}_{
      \left\{
        d_E( X, Y_N ) \leq 1
      \right\}
    }
    d_E( X, Y_N ) 
  \big\|_{ L^p( \Omega; \R ) }
\\ & \leq
  \big\|
    d_E( x, X ) 
  \big\|_{ L^p( \Omega; \R ) }
  + 1
\end{split}
\end{equation}
for all $ x \in E $,
$ N \in \N $
and all
$ p \in (0,\infty) $.
The proof
of Lemma~\ref{lem:convergenceprobab}
is thus completed.
\end{proof}

Let us now study 
under which conditions 
Euler-Maruyama approximations
are $ \alpha $-semi $ V $-bounded with
$ \alpha \in [0,\infty] $ 
and
$
  V \colon \mathbb{R}^d
  \rightarrow [0,\infty) 
$
appropriate
and $ d \in \mathbb{N} $.
First, note that convergence 
in probability of the Euler-Maruyama
approximations has been established
in the literature
for a large class of possibly
highly nonlinear SDEs
(see, e.g., Krylov~\cite{Krylov1990},
Gy\"{o}ngy \& Krylov~\cite{GyoengyKrylov1996},
Gy\"{o}ngy~\cite{g98b}
and Jentzen, Kloeden
\citationand\ Neuenkirch~\cite{jkn09a}).
For these SDEs, one can thus
apply Lemma~\ref{lem:convergenceprobab}
to obtain the existence of
a sequence of events 
whose probabilities converge 
to one and on which
moments of the Euler 
approximations are bounded
in the sense of \eqref{eq:bound0semi}.
This is, however, not sufficient
to establish $ \alpha $-semi
$ \left\| \cdot \right\|^p 
$-boundedness of the
Euler-Maruyama approximations
with $ \alpha \in [0,\infty] $
and $ p \in (0,\infty) $
since the events
in \eqref{eq:bound0semi}
do, in general, not satisfy
the required measurability condition
in Definition~\ref{def:RB}.
Moreover, we are mainly interested
in $ \alpha $-semi
$ \left\| \cdot \right\|^p $-boundedness
of the Euler-Maruyama approximations
with $ \alpha, p \in (0,\infty) $.
Lemma~\ref{lem:convergenceprobab} 
only shows that
the complement of the events 
on the right side of \eqref{eq:bound0semi}
converge to zero but gives no 
information on the rate of 
convergence of the probabilities
of these events.
So, in general,
$ \alpha $-semi
$ \left\| \cdot \right\|^p $-boundedness
with $ \alpha, p \in (0,\infty) $
cannot be inferred from convergence in probability.
Here we employ the theory of Subsection~\ref{sec:stabest}
to obtain
semi boundedness for the
Euler-Maruyama approximations
(see 
Theorem~\ref{thm:SVstability}
and
Corollary~\ref{cor:Semi.Stability}
below).
In particular, Corollary~\ref{cor:stability2} 
immediately implies the 
next corollary.

\begin{cor}[Semi moment 
bounds]
\label{cor:FV}
Let 
$ 
  \left( 
    \Omega, \mathcal{F}, \mathbb{P}
  \right)
$
be a probability space,
let 
$
  \left( E, \mathcal{E} \right)
$
be a measurable space,
let 
$
  \rho, T \in (0,\infty)
$,
$
  \alpha, q \in (1,\infty]
$,
$ N_0 \in \N $,
let
$ 
  V \colon 
  E \rightarrow
  [0,\infty)
$
be an
$ \mathcal{E} 
$/$ \mathcal{B}( [0,\infty) ) 
$-measurable 
mapping and
let
$ 
  Y^N
  \colon
  \{ 0, 1, \dots, N \}
  \times
  \Omega  
  \rightarrow E 
$,
$ N \in \N $,
be a sequence of 
stochastic processes
satisfying
\begin{equation}
\label{eq:assumedsemimoment}
  \mathbbm{1}_{ 
    \left\{
      V( Y^N_n )
      \leq
      \left|
        \frac{ N }{ T }
      \right|^{ \alpha }
    \right\}
  }
  \cdot
  \mathbb{E}\big[
    V( Y_{ n + 1 }^N )
    \, | \,
    ( Y^N_k )_{ k \in \{ 0, 1, \dots, n \} }
  \big]
\leq
  e^{ 
    \frac{ \rho T }{ N }
  }
  \cdot
  V( Y_n^N )
\end{equation}
$ \mathbb{P} $-a.s.\ for 
all 
$ 
  n \in \{ 0, 1, \dots, N - 1 \} 
$
and all
$ 
  N \in 
  \{ N_0, N_0 + 1, \dots \}
$
and
\begin{equation}
\label{eq:CorSemiBounded_InitialAss}
  \limsup_{ N \rightarrow \infty }
  \mathbb{E}\big[ 
    V( 
      Y^N_0
    ) 
  \big] 
  < \infty 
  .
\end{equation}
Then the sequence 
$
  Y^N
  \colon  
  \{ 0, 1, \dots, N \}
  \times \Omega
  \rightarrow E
$,
$ N \in \mathbb{N} $,
of stochastic
processes
is
$   
(\alpha-1)
$-semi 
$ V $-bounded.
%
%
%
%
\end{cor}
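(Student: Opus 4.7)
The plan is to apply Corollary~\ref{cor:stability2} to each process $Y^N$ with the natural choices coming from the statement: take the measurable space to be $(E,\mathcal{E})$ as given, the time grid $t_n = nT/N$ so that $t_{n+1} - t_n = T/N$, the rate parameter $\rho$ as given, the Lyapunov function $V$ as given, and the truncation function $\zeta \colon [0,\infty) \to (0,\infty]$ defined by $\zeta(s) := (N/T)^{\alpha}$ (constant in $s$, for each fixed $N$). With these choices, the sets in Corollary~\ref{cor:stability2} become
\begin{equation*}
  \Omega_n^N := \bigcap_{k=0}^{n-1} \{ V(Y_k^N) \leq (N/T)^{\alpha} \}
  \in \sigma_\Omega(Y^N),
\end{equation*}
and the target sequence of events for semi boundedness will be $\Omega_N := \Omega_N^N$.

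The first step is to verify assumption~\eqref{eq:semi.V.stable2} of Corollary~\ref{cor:stability2}. Our hypothesis~\eqref{eq:assumedsemimoment} supplies exactly the required conditional bound, but only with the single indicator $\mathbbm{1}_{\{V(Y^N_n) \leq (N/T)^{\alpha}\}}$ on the left. Multiplying both sides by the $\sigma_\Omega((Y^N_k)_{k \leq n})$-measurable indicator $\mathbbm{1}_{\Omega_n^N}$ and using $\mathbbm{1}_{\Omega_{n+1}^N} = \mathbbm{1}_{\Omega_n^N}\cdot \mathbbm{1}_{\{V(Y^N_n)\leq (N/T)^\alpha\}}$ yields the desired estimate with the full intersection indicator.

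Next, I invoke the two inequalities in \eqref{eq:EV.inequality}. The first gives
\begin{equation*}
  \mathbb{E}\big[\mathbbm{1}_{\Omega_n^N} V(Y^N_n)\big] \leq e^{\rho n T/N} \,\mathbb{E}[V(Y^N_0)] \leq e^{\rho T}\, \mathbb{E}[V(Y^N_0)]
\end{equation*}
for every $n \in \{0,1,\ldots,N\}$, and since $\Omega_N \subset \Omega_n^N$ this bound passes to $\sup_{n} \mathbb{E}[\mathbbm{1}_{\Omega_N} V(Y^N_n)]$. The second inequality, combined with the constant choice $\zeta \equiv (N/T)^\alpha$, gives
\begin{equation*}
  \mathbb{P}\big[(\Omega_N)^c\big]
  \leq
  \left(\sum_{k=0}^{N-1}\frac{e^{\rho k T/N}}{(N/T)^\alpha}\right) \mathbb{E}[V(Y^N_0)]
  \leq
  \frac{T^{\alpha} e^{\rho T}}{N^{\alpha-1}}\, \mathbb{E}[V(Y^N_0)],
\end{equation*}
so that $N^{\alpha-1} \mathbb{P}[(\Omega_N)^c] \leq T^\alpha e^{\rho T} \mathbb{E}[V(Y^N_0)]$.

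Combining these two bounds and taking $\limsup_{N\to\infty}$, the assumption \eqref{eq:CorSemiBounded_InitialAss} that $\mathbb{E}[V(Y^N_0)]$ is bounded in $N$ immediately yields the defining inequality~\eqref{eq:semibounded} of $(\alpha-1)$-semi $V$-boundedness. There is essentially no hard step: the only thing requiring a moment of care is passing from the single indicator in \eqref{eq:assumedsemimoment} to the chained-intersection indicator required by~\eqref{eq:semi.V.stable2}, which is a one-line manipulation. The entire content of the result is a direct bookkeeping application of Corollary~\ref{cor:stability2} with the specific choice $\zeta(s) = (N/T)^\alpha$, whose role is to trade off a factor of $N^{-\alpha}$ per time step against the $N$ time steps in $\{0,\ldots,N\}$ to produce the $(\alpha-1)$-power decay of $\mathbb{P}[(\Omega_N)^c]$.
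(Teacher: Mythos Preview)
Your proof is correct and follows essentially the same route as the paper: apply Corollary~\ref{cor:stability2} to each $Y^N$ with the uniform grid $t_n = nT/N$ and truncation level $(N/T)^\alpha$, then read off the moment bound and the probability bound on $(\Omega_N)^c$. The only cosmetic difference is that the paper takes $\zeta(t) = t^{-\alpha}$ (which evaluates to $(N/T)^\alpha$ on the grid) rather than your $N$-dependent constant $\zeta$, and the paper leaves implicit both the passage from the single indicator in \eqref{eq:assumedsemimoment} to the intersection indicator in \eqref{eq:semi.V.stable2} and the monotonicity step $\Omega_N \subset \Omega_n^N$ that you spell out.
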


\begin{proof}[Proof
of Corollary~\ref{cor:FV}]
Corollary~\ref{cor:stability2}
above 
with the truncation function
$ 
  \zeta \colon [0,\infty) 
  \rightarrow (0,\infty]
$
given by
\begin{equation}
  \zeta(0) = \infty
\qquad
\text{and}
\qquad
  \zeta(t) = 
  \frac{ 1 }{ t^{ \alpha } }
\end{equation}
for all $ t \in (0,\infty) $
and with the sequence
$ t_n \in \R $, $ n \in \N_0 $,
given by
$ t_n = n T/ N $
for all 
$ n \in \N_0 $
implies
\begin{equation}
\label{eq:semi1}
    \mathbb{E}\big[
      \mathbbm{1}_{ \Omega_N }
      V( 
        Y^N_n
      )
    \big] 
  \leq
    e^{ \frac{ \rho n T }{ N } }
    \cdot
    \mathbb{E}\big[
      V( 
        Y^N_0 
      )
    \big] 
  \leq
    e^{ \rho T }
    \cdot
    \mathbb{E}\big[
      V( 
        Y^N_0 
      )
    \big] 
\end{equation}
and
\begin{equation}
\label{eq:semi2}
  \mathbb{P}\big[
    ( 
      \Omega_{ N }
    )^{ c }
  \big]
\leq
  \left| \frac{ T }{ N } \right|^{ \alpha }
  \cdot
  \left(
  \sum_{ k = 0 }^{ N - 1 }
    e^{ 
      \frac{ \rho k T }{ N }
    } 
  \right)
  \cdot
  \mathbb{E}\big[
    V( 
      Y^N_0
    )
  \big] 
  \leq
  \left| \frac{ T }{ N } \right|^{ \alpha }
  \cdot
  N
  \cdot
  e^{ \rho T }
  \cdot
  \mathbb{E}\big[
    V( 
      Y^N_0
    )
  \big] 
\end{equation}
for all 
$ n \in \{ 0, 1, \dots, N \} $
and all
$ N \in \{ N_0,  N_0 + 1, \dots \} $
where
$
  \Omega_N :=
  \cap_{ k = 0 }^{ N - 1 }
  \{ 
    V( Y^N_k ) \leq 
    \left( N / T \right)^{ \alpha }
  \}
  \in \mathcal{F}
$
for all $ N \in \N $.
Combining \eqref{eq:semi1},
\eqref{eq:semi2} 
and assumption~\eqref{eq:CorSemiBounded_InitialAss}
implies
\begin{equation}
  \limsup_{ N \rightarrow \infty }
  \left(
  \sup_{ n \in \{ 0, 1, \dots, N \} }
    \mathbb{E}\big[
      \mathbbm{1}_{ \Omega_N }
      V( 
        Y^N_n
      )
    \big] 
    +
    N^{ ( \alpha - 1 ) }
    \cdot
  \mathbb{P}\big[
    ( 
      \Omega_N
    )^{ c }
  \big]
  \right)
  < \infty.
\end{equation}
The proof of 
Corollary~\ref{cor:FV}
is thus completed.
\end{proof}

\subsection{Moment
bounds}
\label{sec:moment_bounds000}

Corollary~\ref{cor:FV} above, in particular,
establishes moment bounds
restricted to the complements of rare events 
for sequences of stochastic processes.
In some situations, the sequence
of stochastic processes fulfills
an additional growth bound assumption 
(see \eqref{eq:PG}
and \eqref{eq:PG2} below for details)
which can be used to prove moment
bounds on the 
full probability space.
This is the subject of the next proposition.
The main idea of this proposition
is a certain bootstrap argument
which exploits 
inequality~\eqref{eq:EVbar}
in Corollary~\ref{cor:stability2}
(see also estimate~\eqref{eq:simpleest}).

\begin{prop}[Moment bounds]
\label{p:PG}
Let 
$ 
  \left( 
    \Omega, \mathcal{F}, \mathbb{P}
  \right)
$
be a probability space,
let 
$
  \left( E, \mathcal{E} \right)
$
be a measurable space,
let 
$
  T \in (0,\infty)
$,
$
  \rho \in [0,\infty)
$,
$
  \alpha \in (1,\infty]
$,
$ N_0 \in \N $,
let
$ 
  V, \bar{V} \colon 
  E \rightarrow
  [0,\infty)
$
be 
$ \mathcal{E} 
$/$ \mathcal{B}( [0,\infty) ) 
$-measurable 
mappings
with $ \bar{V}(x) \leq V(x) $
for all $ x \in E $ and
let
$ 
  Y^N
  \colon
  \{ 0, 1, \dots, N \}
  \times
  \Omega  
  \rightarrow E 
$,
$ N \in \N $,
be a sequence of stochastic
processes satisfying 
\begin{equation}
\label{eq:assumedsemimoment2}
  \mathbbm{1}_{ 
    \left\{
      V( Y^N_n )
      \leq
      \left|
        \frac{ N }{ T }
      \right|^{ \alpha }
    \right\}
  }
  \cdot
  \mathbb{E}\big[
    V( Y_{ n + 1 }^N )
    \, | \,
    ( Y^N_k )_{ k \in \{ 0, 1, \dots, n \} }
  \big]
\leq
  e^{ 
    \frac{ \rho T }{ N }
  }
  \cdot
  V( Y_n^N )
\end{equation}
$ \mathbb{P} $-a.s.\ for 
all 
$ 
  n \in \{ 0, 1, \dots, N - 1 \} 
$
and all
$ 
  N \in 
  \{ N_0, N_0 + 1, \dots \}.
$
Then
\begin{equation}  
\label{eq:PG}
\begin{split}
&
  \limsup_{ N \to \infty }
  \sup_{ n \in \{ 0, 1, \dots, N \} }
  \mathbb{E}\big[
    \bar{V}( 
      Y^N_n 
    )
  \big] 
\\ & \leq
  e^{ \rho T } 
  \left(
  1 +
  \limsup_{ N \to \infty }
  \mathbb{E}\big[ 
    V( Y^N_{ 0 } )
  \big]
  \right)
\\ & \quad
  \cdot
  \left(
  1 +
    T^{ 
    \alpha 
    (1 - 1/p)
  }
  \limsup_{ N \to \infty }
  \!
  \left[
  N^{(1-\alpha) (1 - 1/p) }
  \sup_{ 0 \leq n \leq N }
  \|
    \bar{V}( Y^N_n )
  \|_{ 
    L^p( \Omega; \mathbb{R} )
  }  
  \right]
  \right) 
\end{split}
\end{equation}
for all $ p \in [1,\infty] $.
\end{prop}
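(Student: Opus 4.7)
The plan is to apply Corollary~\ref{cor:stability2} with the truncation function $\zeta\colon[0,\infty)\to(0,\infty]$ given by $\zeta(0)=\infty$ and $\zeta(t) = t^{-\alpha}$ for $t\in(0,\infty)$, together with the time grid $t_n = nT/N$ for $n\in\N_0$. Under this choice the events $\Omega_n = \cap_{k=0}^{n-1}\{V(Y_k)\le \zeta(t_{k+1}-t_k)\}$ appearing in Corollary~\ref{cor:stability2} become $\Omega_n^N = \cap_{k=0}^{n-1}\{V(Y^N_k)\le (N/T)^{\alpha}\}$. Since $\Omega_{n+1}^N\subset\{V(Y^N_n)\le (N/T)^{\alpha}\}$, hypothesis~\eqref{eq:assumedsemimoment2} immediately implies the Lyapunov-type bound~\eqref{eq:semi.V.stable2} of Corollary~\ref{cor:stability2}, so that corollary is applicable. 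I may assume without loss of generality that $\limsup_{N\to\infty}\E[V(Y^N_0)]<\infty$, since otherwise the right-hand side of~\eqref{eq:PG} is $\infty$ and the claim is trivial; in particular, $\E[V(Y^N_0)]<\infty$ for all sufficiently large $N$.

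Next I would insert these choices into inequality~\eqref{eq:EVbar}. Since $\rho\ge 0$, the weight sum is bounded by
\begin{equation*}
  \sum_{k=0}^{n-1} \frac{e^{\rho(t_k-t_0)}}{\zeta(t_{k+1}-t_k)}
  \;\le\; (T/N)^{\alpha}\sum_{k=0}^{N-1} e^{\rho kT/N}
  \;\le\; T^{\alpha}\,N^{1-\alpha}\,e^{\rho T},
\end{equation*}
and $e^{\rho(t_n-t_0)}\le e^{\rho T}$. Thus~\eqref{eq:EVbar} yields, for every $n\in\{0,1,\ldots,N\}$, every $p\in[1,\infty]$ and every sufficiently large $N$,
\begin{equation*}
  \E\bigl[\bar V(Y^N_n)\bigr]
  \;\le\; e^{\rho T}\,\E[V(Y^N_0)]
  + \bigl\|\bar V(Y^N_n)\bigr\|_{L^p(\Omega;\R)}\,
  \Bigl(T^{\alpha}\,N^{1-\alpha}\,e^{\rho T}\,\E[V(Y^N_0)]\Bigr)^{1-1/p}.
\end{equation*}

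Finally, I would take the supremum over $n\in\{0,1,\ldots,N\}$ on both sides, followed by $\limsup_{N\to\infty}$, factoring the power $N^{(1-\alpha)(1-1/p)}$ onto the $L^p$-norm so that it combines with the quantity inside the second limsup in~\eqref{eq:PG}. Using the elementary estimates $e^{\rho T(1-1/p)}\le e^{\rho T}$ (since $\rho\ge 0$ and $1-1/p\in[0,1]$) and $a^{1-1/p}\le 1+a$ for all $a\in[0,\infty)$ and $p\in[1,\infty]$, the two additive terms can be bounded by $e^{\rho T}(1+\limsup_N\E[V(Y^N_0)])$ multiplied against $1$ and against $T^{\alpha(1-1/p)}\cdot\limsup_N[N^{(1-\alpha)(1-1/p)}\sup_n\|\bar V(Y^N_n)\|_{L^p}]$ respectively, and factoring out gives exactly the product form in~\eqref{eq:PG}.

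I expect the only real bookkeeping is the last paragraph, where the additive two-term bound has to be massaged into the product form on the right-hand side of~\eqref{eq:PG}; the conceptual heart of the argument is simply recognising that the truncation level $\zeta(T/N)=(N/T)^{\alpha}$ inherent in~\eqref{eq:assumedsemimoment2} drives the rare-event probability in~\eqref{eq:EVbar} down at polynomial rate $N^{1-\alpha}$, which is precisely the rate that the $L^p$-factor $\|\bar V(Y^N_n)\|_{L^p}$ is allowed to grow in the statement.
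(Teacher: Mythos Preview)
Your proposal is correct and follows essentially the same route as the paper: apply Corollary~\ref{cor:stability2} with $\zeta(t)=t^{-\alpha}$ and $t_n=nT/N$, insert into~\eqref{eq:EVbar}, bound the geometric sum by $N e^{\rho T}$, and then convert the resulting two-term additive estimate into the product form via $a^{1-1/p}\le 1+a$ and $e^{\rho T(1-1/p)}\le e^{\rho T}$. Your explicit WLOG reduction to the case $\limsup_N\E[V(Y^N_0)]<\infty$ (needed so that Corollary~\ref{cor:stability2} applies for large $N$) is a point the paper leaves implicit.
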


\begin{proof}[Proof
of Proposition~\ref{p:PG}]
We proof Proposition~\ref{p:PG}
through an application of
Corollary~\ref{cor:stability2}.
More precisely,
observe that with the truncation
function
$ 
  \zeta \colon [0,\infty)
  \rightarrow (0,\infty]
$
given by
\begin{equation}
  \zeta(0) = 0
\qquad
\text{and}
\qquad
  \zeta(t) = t^{ - \alpha }
\end{equation}
for all $ t \in (0,\infty) $
and with 
the sequence
$ t_n \in \R $, $ n \in \N_0 $,
given by
$ t_n = n T/ N $
for all 
$ n \in \N_0 $
we get 
from inequality~\eqref{eq:EVbar}
in 
Corollary~\ref{cor:stability2}
that
\begin{equation}
\begin{split}
&
  \mathbb{E}\big[
    \bar{V}( Y^N_n)
  \big]
\\ & \leq 
  e^{ \frac{ \rho n T }{ N } } \,
  \mathbb{E}\big[ 
    V( Y^N_{ 0 } )
  \big]
  +
  \|
    \bar{V}( Y^N_n )
  \|_{ 
    L^p( \Omega; \mathbb{R} )
  }
  \bigg[
  \left| \frac{ T }{ N } \right|^{ \alpha }
  \bigg(
    \sum_{ k = 0 }^{ n - 1 }
    e^{ \frac{ \rho k T }{ N } }
  \bigg)
  \mathbb{E}\big[
    V( Y^N_0 )
  \big]
  \bigg]^{ \! ( 1 - 1/p ) }
\\ & \leq
  e^{ \rho T } \,
  \mathbb{E}\big[ 
    V( Y^N_{ 0 } )
  \big]
  +
  e^{ \rho T } \,
  \|
    \bar{V}( Y^N_n )
  \|_{ 
    L^p( \Omega; \mathbb{R} )
  }
  \,
  \Big[
  \left| \tfrac{ T }{ N } \right|^{ \alpha }
  \cdot N \cdot
  \mathbb{E}\big[
    V( Y^N_0 )
  \big]
  \Big]^{ ( 1 - 1/p ) }
\\ & \leq
  e^{ \rho T } 
  \left(
  1 +
  \mathbb{E}\big[ 
    V( Y^N_{ 0 } )
  \big]
  \right)
  \left(
  1 +
  \left| 
    \tfrac{ T }{ N } 
  \right|^{ 
    \alpha 
    (1 - 1/p)
  }
  N^{ (1 - 1/p) }
  \,
  \|
    \bar{V}( Y^N_n )
  \|_{ 
    L^p( \Omega; \mathbb{R} )
  }  
  \right)
\end{split}
\end{equation}
for all $ n \in \{ 0, 1, \dots, N \} $,
$ N \in \{ N_0, N_0 + 1, \dots \} $
and all $ p \in [1,\infty] $.
This completes the proof
of Proposition~\ref{p:PG}.
\end{proof}

\subsection{One-step 
approximation schemes}
\label{sec:onestep}

Let $T\in(0,\infty)$
be a real number,
let
$ ( \Omega, \mathcal{F},
( \mathcal{F}_t )_{ t \in [0,T] },
\mathbb{P} ) $
be a filtered probability space
and let $(E,\mathcal{E})$ be a measurable space.
We are interested in general 
one-step approximation 
schemes for SDEs with 
state space $E$.
The driving noise of the SDE could be, e.g., a standard Brownian motion,
a fractional Brownian motion or a Lev\'y process.
In such a general situation,
a sequence of approximation processes of the solution process
with uniform time discretization is often
given by a measurable mapping $\Psi\colon E\times[0,T]^2\times \Omega\to E$
as follows. 
Let $\xi\colon\Omega\to E$ be a measurable mapping.
Define stochastic processes
$
  Y^N\colon\{0,1,\ldots,N\}
  \times\Omega\to E
$, 
$ N \in \N $, 
through $ Y_0^N = \xi $
and
\begin{equation} 
\label{eq:Psi}
  Y_{n+1}^N
  =
  \Psi\big(
    Y_n^N,
    \tfrac{ n T }{ N },
    \tfrac{ (n+1) T }{ N }
  \big)
\end{equation}
for all 
$ 
  n \in \{0, 1, \ldots, N - 1 \}
$ 
and all 
$ N \in \N $.
For example, in the setting of the introduction, the
Euler-Maruyama
scheme~\eqref{eq:Euler.intro}
is given by the one-step function 
\begin{equation}
  \Psi(x,s,t)
  =
  x +
  \bar{\mu}(x) (t-s)
  +
  \bar{\sigma}(x)\big(W_t-W_s\big)
\end{equation}
for all $x\in\R^d$, $s,t\in[0,T]$.
The general approach of Subsections~\ref{sec:stabest}--\ref{sec:moment_bounds000}
can be used to study 
moment
bounds for stochastic processes 
defined as in~\eqref{eq:Psi}.

In Sections~\ref{sec:SVstability}
and \ref{sec:implicit} below, 
we focus on 
finite-dimensional SDEs driven 
by standard Brownian motions.
Due to the Markov property of the Brownian motion,
a one-step approximation scheme
can in this case be specified by a function
of the current position, of the 
time increment and of the increment
of a Brownian motion.
More precisely, 
let 
$ d, m \in \N $, 
$ \theta \in (0,T] $
and let
$  
  \Phi \colon 
  \R^d \times
  [0,\theta] \times \R^m \to \R^d
$ 
be a Borel measurable
function.
Using this function and a uniform 
time discretization,
we define a family 
of stochastic processes
$
  Y^N\colon\{0,1,
  \ldots,N\}
  \times
  \Omega\to\R^d
$, 
$ 
  N \in \N \cap [\frac{ T }{ \theta }, \infty) 
$,
through $Y_0^N=\xi$
and
\begin{equation}  \label{eq:approximation.defined.through.Phi}
  Y_{n+1}^N
=
  \Phi\big(
    Y_n^N,
    \tfrac{T}{N},
    W_{\frac{(n+1)T}{N}} -
    W_{\frac{nT}{N}}
  \big)
\end{equation}
for all 
$ n \in \{0,1,\ldots,N-1\}$ 
and all 
$ 
  N \in \N \cap [\frac{ T }{ \theta }, \infty)
$.
In this notation, the 
Euler-Maruyama
scheme~\eqref{eq:Euler.intro}
is given by 
$
  \Phi(x,t,y)
  = x + 
  \bar{\mu}(x)t
  +
  \bar{\sigma}(x)y
$
for all 
$
  (x,t,y)
  \in
  \R^d
  \times
  [0, \theta]
  \times
  \R^m
$.
Moreover, in this setting, 
condition~\eqref{eq:semi.V.stable2}
in Corollary~\ref{cor:stability2}
on the approximation 
processes 
$
  Y^N\colon\{0,1,
  \ldots,N\}
  \times\Omega\to\R^d
$, $N\in\N$,
follows from the 
following condition
(see \eqref{eq:Vstable})
on the one-step function
$
  \Phi
  \colon
  \R^d \times [0,\theta]
  \times
  \R^m \to \R^d
$.

%
%
\begin{definition}[Semi stability with 
respect
to Brownian motion]
\label{def:SVstability}
Let 
$ 
  \theta \in (0,\infty)
$,
$ \alpha \in (0,\infty] $,
$ d, m \in \mathbb{N} $
and
let
$ 
  V \colon \mathbb{R}^d
  \rightarrow [0,\infty) 
$
be a Borel measurable function.
A Borel measurable function
$ 
  \Phi \colon 
  \mathbb{R}^d \times [0,\theta] \times 
  \mathbb{R}^m
  \rightarrow 
  \mathbb{R}^d
$
is then said to 
be
$ \alpha $-semi
$ V $-stable 
with respect to
Brownian motion
if there exists a real
number $ \rho \in \mathbb{R} $
such that
\begin{equation}
\label{eq:Vstable}
  \mathbb{E}\big[
    V(
      \Phi( x, t, 
        W_t
      )
    )
  \big]
  \leq
  e^{ \rho t } \cdot
  V(x)
\end{equation}
for all 
$
  (x,t) \in
  \{
    (y,s) \in \mathbb{R}^d \times (0,\theta]
    \colon
    \alpha = \infty \text{ or }
    V(y) \leq s^{ - \alpha }
  \}
$
where 
$
  W \colon [0,\theta] \times \Omega
  \rightarrow \mathbb{R}^m
$
is an arbitrary 
standard Brownian motion
on a probability space
$
  \left( 
    \Omega, \mathcal{F},
    \mathbb{P}
  \right)
$.
In addition,
a Borel measurable function
$ 
  \Phi \colon 
  \mathbb{R}^d \times [0,\theta] \times 
  \mathbb{R}^m
  \rightarrow 
  \mathbb{R}^d
$
is simply said to be
$ V $-stable 
with respect to Brownian 
motion if it is 
$ \infty $-semi $ V $-stable
with respect to Brownian motion.
\end{definition}

Let us add some remarks to 
Definition~\ref{def:SVstability}.
Inequalities of the form
\eqref{eq:Vstable}
with $ \alpha = \infty $
have been frequently
used 
in the literature; see, e.g.,
Assumption~2.2
in Mattingly, Stuart
\citationand\ Higham~\cite{msh02}
and
Section~3.1 in 
Schurz~\cite{Schurz2005}.
In particular, 
inequality~(16) in
Schurz~\cite{Schurz2005}
(see also Schurz~\cite{Schurz2006})
defines a numerical 
approximation of the 
form 
\eqref{eq:Psi}
to be 
(weakly) $ V $-stable
if 
inequality~\eqref{eq:Vstable}
holds with $ \alpha = \infty $ 
where 
$ V \colon \R^d
\to [0,\infty) $
is Borel measurable.

In the case 
$ \alpha \in (0,\infty) $,
inequality~\eqref{eq:Vstable} in
Definition~\ref{def:SVstability}
is restricted
to the subset
$
  \{ 
    y \in \R^d \colon V(y) \leq t^{ - \alpha } 
  \} 
  \subset \R^d
$
which
increases to the 
full state space $ \R^d $
as 
the time step-size $ t \in (0,T] $
decreases to zero.
Roughly speaking,
the parameter $ \alpha \in (0,\infty) $
describes the speed how fast
the subsets
$
  \{ 
    y \in \R^d \colon V(y) \leq t^{ - \alpha } 
  \} 
$,
$ t \in (0,T] $,
increase to the 
full state space $ \R^d $.

Next note that if
$ 
  \theta \in (0,\infty)
$,
$ \beta \in (0,\infty] $,
$ d, m \in \mathbb{N} $,
if
$ 
  V \colon \mathbb{R}^d
  \rightarrow [1,\infty) 
$
is a Borel measurable function
and if
$ 
  \Phi \colon \R^d \times
  [0,\theta] \times \R^m
  \to \R^d
$
is $ \beta $-semi $ V $-stable
with respect to Brownian
motion, then
it holds 
for every $ \alpha \in (0,\beta] $
that
$ 
  \Phi 
$
is also $ \alpha $-semi 
$ V $-stable
with respect to Brownian
motion.
However,
the converse is not true in general.
In particular,
Theorem~\ref{thm:SVstability}
and Corollary~\ref{cor:disprove}
show in many situations that 
the Euler-Maruyama scheme 
is
$ \alpha $-semi $ V $-stable
with respect to Brownian motion 
for some $ \alpha \in (0,\infty) $
but not $ V $-stable 
with respect to Brownian motion
if the coefficients of the 
underlying SDE grow 
superlinearly.

%
It follows immediately from the Markov property of the Brownian motion
that 
the approximation 
processes~\eqref{eq:approximation.defined.through.Phi}
associated to an $\alpha$-semi $V$-stable one-step function
satisfy condition~\eqref{eq:assumedsemimoment}
where
$ \alpha \in (0,\infty) $ 
and
$
  V \colon \mathbb{R}^d
  \rightarrow [0,\infty) 
$
are
appropriate
and where $ d \in \mathbb{N} $.
The next corollary collects consequences of this observation.

\begin{cor}[Semi moment
bounds and moment bounds
based on semi stability with
respect to Brownian motion]
\label{cor:Semi.Stability}
Let 
$ d, m \in \N $, 
$ T \in (0,\infty) $,
$ \theta \in (0,T] $,
$
  \alpha \in (1,\infty]
$,
$ 
  p \in [1,\infty] 
$,
let
$ 
  V, \bar{V} \colon 
  \R^d \rightarrow
  [0,\infty)
$
be 
Borel measurable 
mappings
with $ \bar{V}(x) \leq V(x) $
for all $ x \in \R^d $,
let
$ 
  \left( 
    \Omega, 
    \mathcal{F}, 
     ( \mathcal{F}_t )_{ t \in [0,T] }, 
    \mathbb{P}
  \right)
$
be a filtered probability space,
let
$ 
  W \colon [0,T] \times
  \Omega \to \R^m
$
be a standard
$ 
  ( \mathcal{F}_t )_{ t \in [0,T] } 
$-Brownian motion,
let
$  
  \Phi \colon 
  \R^d \times
  [0,\theta] \times \R^m \to \R^d
$ 
be $ \alpha $-semi
$ V $-stable with respect
to Brownian motion and
let 
$
  Y^N
  \colon
  \{ 0, 1, \ldots, N \}
  \times
  \Omega \to \R^d
$, 
$ 
  N \in \N 
$,
be a sequence of 
$ ( \mathcal{F}_t )_{ t \in [0,T] } 
$-adapted
stochastic
processes satisfying
$
  \limsup_{ N \to \infty }
  \E[
    V( Y^N_0 ) 
  ]
  < \infty
$
and
\begin{equation}
  Y_{n+1}^N
=
  \Phi\big(
    Y_n^N,
    \tfrac{T}{N},
    W_{ (n+1)T / N } -
    W_{ nT / N }
  \big)
\end{equation}
for all 
$ n \in \{0,1,\ldots,N-1\}
$ 
and all 
$ 
  N \in \N \cap [T/\theta,\infty) 
$.
Then the stochastic
processes $ Y^N $, $ N \in \N $,
are $ ( \alpha - 1 ) $-semi
$ V $-bounded. 
Moreover, if
\begin{equation}
\label{eq:PG2}
  \limsup_{ N \rightarrow \infty }
  \big(
    N^{ 
      ( 1 - \alpha ) 
      ( 1 - 1/p ) 
    }
    \cdot
    \sup\nolimits_{ 
      n \in \{ 0, 1, \dots, N \}
    }
    \|
      \bar{V}( Y^N_n )
    \|_{
      L^p( \Omega; \mathbb{R} )
    }
  \big)
  <
  \infty 
\end{equation}
in addition to the above 
assumptions,
then the stochastic processes
$ Y^N $, $ N \in \N $,
are also $ \infty $-semi
$ \bar{V} $-bounded, i.e.,
%
$
  \limsup_{ N \rightarrow \infty }
  \sup_{ n \in \{ 0, \dots, N \} }
  \mathbb{E}\big[
    \bar{V}( 
      Y^N_n 
    )
  \big] 
$
$
  < \infty 
$.
\end{cor}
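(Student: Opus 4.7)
The plan is to reduce the two claims to the earlier results of this subsection: the first assertion to Corollary~\ref{cor:FV} and the second to Proposition~\ref{p:PG}. For both reductions the crucial technical step is to establish the one-step Lyapunov-type inequality~\eqref{eq:assumedsemimoment}/\eqref{eq:assumedsemimoment2} for the discrete-time processes $Y^N$, after which the cited results apply directly.

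First I would show that for every $N \in \N \cap [T/\theta,\infty)$ and every $n \in \{0,1,\dots,N-1\}$,
\begin{equation*}
  \mathbb{E}\big[
    V( Y^N_{n+1} )
    \,|\,
    ( Y^N_k )_{ k \in \{0,1,\dots,n\} }
  \big]
  =
  \varphi_N( Y^N_n )
  \qquad \P\text{-a.s.},
\end{equation*}
where $\varphi_N \colon \R^d \to [0,\infty]$ is the Borel measurable function defined by $\varphi_N(x) := \mathbb{E}[V(\Phi(x, T/N, W_{T/N}))]$. This would follow from two ingredients: the $(\mathcal{F}_t)_{t \in [0,T]}$-adaptedness assumption, which forces $(Y^N_0,\dots,Y^N_n)$ to be $\mathcal{F}_{nT/N}$-measurable, and the independence of the increment $W_{(n+1)T/N}-W_{nT/N}$ from $\mathcal{F}_{nT/N}$, combined with the standard factorization lemma for conditional expectations with an independent noise variable. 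Here Borel measurability of $\varphi_N$ in $x$ is obtained from Borel measurability of $\Phi$ and $V$ via Tonelli.

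Next I would use the $\alpha$-semi $V$-stability of $\Phi$ to conclude that $\varphi_N(x) \leq e^{\rho T/N}\, V(x)$ on the set $\{x \in \R^d \colon V(x) \leq (N/T)^\alpha\}$. Multiplying the displayed conditional expectation identity by the indicator $\mathbbm{1}_{\{V(Y^N_n) \leq (N/T)^\alpha\}}$, which is measurable with respect to $\sigma((Y^N_k)_{k \leq n})$, yields precisely the hypothesis~\eqref{eq:assumedsemimoment} required by Corollary~\ref{cor:FV} and the hypothesis~\eqref{eq:assumedsemimoment2} required by Proposition~\ref{p:PG}. Together with the assumption $\limsup_N \mathbb{E}[V(Y^N_0)] < \infty$, Corollary~\ref{cor:FV} then immediately gives the $(\alpha-1)$-semi $V$-boundedness of the family $(Y^N)_{N \in \N}$, establishing the first claim.

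For the second claim, observe that hypothesis~\eqref{eq:PG2} is tailored to match the growth factor $N^{(1-\alpha)(1-1/p)} \sup_n \|\bar{V}(Y^N_n)\|_{L^p}$ appearing in the bound~\eqref{eq:PG} of Proposition~\ref{p:PG}. Applying that proposition to the pair $(V,\bar{V})$ and the already-verified one-step inequality~\eqref{eq:assumedsemimoment2} therefore yields $\limsup_{N\to\infty} \sup_{n \in \{0,\dots,N\}} \mathbb{E}[\bar{V}(Y^N_n)] < \infty$, which is $\infty$-semi $\bar{V}$-boundedness (take $\Omega_N = \Omega$ in Definition~\ref{def:RB}). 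The main obstacle is the conditional-expectation/independence step of the first paragraph; every remaining ingredient is a direct invocation of an earlier result and a rearrangement of the hypotheses.
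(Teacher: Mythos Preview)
Your proposal is correct and follows exactly the route the paper takes: the paper states that Corollary~\ref{cor:Semi.Stability} ``is an immediate consequence of Corollary~\ref{cor:FV} and Proposition~\ref{p:PG}'' and, a few lines earlier, that the one-step condition~\eqref{eq:assumedsemimoment} ``follows immediately from the Markov property of the Brownian motion.'' You have simply spelled out this Markov-property step (adaptedness of $(Y^N_0,\dots,Y^N_n)$, independence of the increment, factorization of the conditional expectation) in more detail than the paper does, and then invoked the same two results for the two assertions.
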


Corollary~\ref{cor:Semi.Stability}
is an immediate consequence
of Corollary~\ref{cor:FV}
and Proposition~\ref{p:PG}.
Below in 
Sections~\ref{sec:SVstability}
and \ref{sec:implicit}
we will study both explicit
and implicit one-step numerical
approximation processes
of the form~\eqref{eq:approximation.defined.through.Phi}.
For these approximation processes
we will then give sufficient conditions
which ensure that the function
$ 
  \Phi \colon \R^d
  \times [0,\theta] \times \R^m 
  \rightarrow \R^d
$
appearing in \eqref{eq:approximation.defined.through.Phi}
is $ \alpha $-semi $ V $-stable
with respect to Brownian motion
with $ \alpha \in (1,\infty] $ 
appropriate
so that 
\eqref{eq:assumedsemimoment2}
is fulfilled and the
abstract results
developed in
Subsections~\ref{sec:stabest}--\ref{sec:onestep}
can thus be applied.

To the best of our knowledge,
the idea to restrict 
the Euler-Maruyama approximations
to large subevents of the probability
space which increase to the full 
probability space
as the time discretization 
step-size decreases to $0$
appeared first in~\cite{hj11} 
(see Section~4 in \cite{hj11}
and also \cite{hjk10b}
for details).
The idea to restrict a 
Lyapunov-type 
condition
of the form \eqref{eq:Vstable}
on the one-step function of 
(Metropolis-adjusted) 
Euler-Maruyama schemes
to subsets of the state space 
which increase to the full state space
as the time discretization 
step-size decreases to $ 0 $
appeared first in 
Bou-Rabee \citationand\ 
Hairer~\cite{bh11}
in the setting of the Langevin
equation
(see Lemma~3.5 and
Section~5 
in~\cite{bh11} for details).
In Section~5 in \cite{bh11} 
it is also proved in the setting of the Langevin equation
that the one-step function 
of (Metropolis-adjusted) Euler-Maruyama schemes
satisfy suitable Lyapunov-type conditions
restricted to subsets of the state space
which increase to the full state space
as the time discretization step-size
decreases to $ 0 $ 
(see Proposition~5.2 and Lemma~5.6 in \cite{bh11}).
In this article the Lyapunov-type condition~\eqref{eq:Vstable} is
proved in Subsection~\ref{sec:SVEM}
in the case of the Euler-Maruyama scheme
and in Subsection~\ref{sec:SVtaming}
in the case of suitable tamed methods.
Finally, 
to the best of our knowledge,
a bootstrap
argument 
similar as in \eqref{eq:EVbar}
and Proposition~\ref{p:PG}
appeared first
in \cite{hjk10b}
for a class of drift-tamed 
Euler-Maruyama approximations
(see the proof of
Lemma~3.9 in
\cite{hjk10b} for details).

\section{Explicit 
approximation schemes}
\label{sec:SVstability}

This section investigates 
stability properties and 
moment bounds of explicit
approximation schemes.
We begin with the Euler-Maruyama
scheme in 
Subsection~\ref{sec:SVEM}
and then analyze further 
approximation methods
in Subsections~\ref{sec:SVtaming}
and \ref{sec:momentbounds2}
below.

\subsection{Semi stability for the 
Euler-Maruyama 
scheme}
\label{sec:SVEM}

The main result of this subsection,
Theorem~\ref{thm:SVstability}
below, gives sufficient conditions
for the Euler-Maruyama scheme
to be $ \alpha $-semi 
$ V $-stable
with respect to Brownian motion
with 
$ 
  \alpha \in (0,\infty) 
$,
$ 
  V \colon \mathbb{R}^d \rightarrow
  [0,\infty) 
$
appropriate
and 
$ d \in \mathbb{N} $.
For proving this result,
we first present 
three auxiliary results 
(Lemmas~\ref{lem:SVstabilityI}--\ref{lem:Vest}).
The first one
(Lemma~\ref{lem:SVstabilityI})
establishes
$ \alpha $-semi 
$ V $-stability 
with respect to Brownian
motion
of the Euler-Maruyama 
scheme
with 
$ \alpha \in (0,\infty) $,
$ 
  V \colon \mathbb{R}^d \rightarrow
  [0,\infty) 
$
appropriate
and $ d \in \mathbb{N} $
under a general abstract
condition
(see 
inequality~\eqref{eq:basicass}
below for details).

\begin{lemma}[An abstract
condition for semi
$ V $-stability with respect
to Brownian motion of the 
Euler-Maruyama 
scheme]
\label{lem:SVstabilityI}
Let $ \alpha, T \in (0,\infty) $,
$ d, m \in \mathbb{N} $,
let
$
  \mu \colon \mathbb{R}^d
  \rightarrow \mathbb{R}^d
$,
$
  \sigma 
  \colon \mathbb{R}^d
  \rightarrow \mathbb{R}^{ d \times m } 
$
be Borel measurable functions,
$ \rho, \tilde{\rho} \in \mathbb{R} $,
let 
$ 
  \left( \Omega, \mathcal{F},
  \mathbb{P} \right) 
$
be a probability space,
let
$
  W \colon [0,T] \times \Omega
  \rightarrow \mathbb{R}^m
$
be a standard
Brownian motion
and let
$ 
  V \colon
  \mathbb{R}^d \rightarrow
  [0,\infty)
$
be a twice continuously differentiable
function with
$
  \left( 
    \mathcal{G}_{ \mu, \sigma } 
    V 
  \right)\!( x )
  \leq
  \rho \cdot V(x)
$
and
\begin{equation}
\label{eq:basicass}
  \mathbb{E}\Big[
  \big( 
    \mathcal{\tilde{G}}_{ \mu, \sigma } V 
  \big)
  \big(
    x + \mu(x) \cdot t
    + \sigma(x) W_t ,
    x
  \big)
  -
  \big( 
    \mathcal{G}_{ \mu, \sigma } V 
  \big)(x)
  \Big]
\leq
  \tilde{\rho} \cdot V(x)
\end{equation}
for all
$ 
  (x,t) \in 
  \{
    (y,s) \in 
    \mathbb{R}^d \times
    (0,T]
    \colon
    V(y) \leq s^{ -\alpha } 
  \} 
$.
Then
\begin{equation}
  \mathbb{E}\big[
    V\big(
      x + \mu(x) \cdot t
      + \sigma(x) W_t
    \big)
  \big]
  \leq
  e^{ \left( \rho + \tilde{\rho} \right) t } 
  \cdot
  V(x)
\end{equation}
for all
$ 
  (x,t) \in 
  \{
    (y,s) \in 
    \mathbb{R}^d \times
    (0,T]
    \colon
    V(y) \leq s^{ -\alpha } 
  \} 
$.
In particular,
the Euler-Maruyama scheme
$
  \R^d \times [0,T] \times \R^m
  \ni (x,t,y) 
  \mapsto
  x + \mu( x ) t + \sigma( x ) y
  \in \R^d
$
is $ \alpha $-semi
$ V $-stable
with respect to Brownian motion.
\end{lemma}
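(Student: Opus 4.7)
The plan is to apply Itô's formula to the continuous-time Euler process $Y_s := x + \mu(x)\,s + \sigma(x)\,W_s$ for fixed $(x,t)$ with $V(x) \leq t^{-\alpha}$. Since $Y$ has drift $\mu(x)$ and dispersion $\sigma(x)$ (both constant in $s$) while $V$ is $C^2$, Itô's formula together with the definition of $\tilde{\mathcal{G}}_{\mu,\sigma}$ in \eqref{eq:tildeG} yields
\begin{equation}
V(Y_s) = V(x) + \int_0^s (\tilde{\mathcal{G}}_{\mu,\sigma} V)(Y_u, x)\, du + \int_0^s V'(Y_u)\,\sigma(x)\, dW_u
\end{equation}
$\mathbb{P}$-a.s.\ for all $s \in [0,t]$. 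Introducing the localizing stopping times $\tau_n := \inf\{ s \in [0,t] : \|Y_s\| \geq n\}$ turns the stochastic integral (stopped at $\tau_n$) into a bounded martingale with zero expectation, so that
\begin{equation}
\mathbb{E}\bigl[V(Y_{t \wedge \tau_n})\bigr] = V(x) + \int_0^t \mathbb{E}\!\left[ \mathbbm{1}_{\{s < \tau_n\}}\, (\tilde{\mathcal{G}}_{\mu,\sigma} V)(Y_s, x) \right] ds.
\end{equation}

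Next, I would observe that if $V(x) \leq t^{-\alpha}$ and $s \in (0,t]$, then $V(x) \leq t^{-\alpha} \leq s^{-\alpha}$, so assumption~\eqref{eq:basicass} may be invoked at the pair $(x, s)$. Combining this with the pointwise Lyapunov bound $(\mathcal{G}_{\mu,\sigma} V)(x) \leq \rho\, V(x)$ gives, for each $s \in (0,t]$,
\begin{equation}
\mathbb{E}\bigl[(\tilde{\mathcal{G}}_{\mu,\sigma} V)(Y_s, x)\bigr] \leq (\mathcal{G}_{\mu,\sigma} V)(x) + \tilde{\rho}\, V(x) \leq (\rho + \tilde{\rho})\, V(x).
\end{equation}
Sending $n \to \infty$ (using Fatou on the left and, on the right, the continuity of $y \mapsto (\tilde{\mathcal{G}}_{\mu,\sigma} V)(y, x)$ together with Gaussian integrability of $Y_s$ to justify passage to the limit under the $s$-integral) then yields $\mathbb{E}[V(Y_t)] \leq V(x) + (\rho + \tilde{\rho})\, V(x)\, t$. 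Invoking the elementary inequality $1 + r \leq e^r$ for all $r \in \mathbb{R}$ upgrades this to the desired exponential bound $\mathbb{E}[V(Y_t)] \leq V(x)\, e^{(\rho + \tilde{\rho})\, t}$. The concluding ``in particular'' claim is then immediate from Definition~\ref{def:SVstability} applied to $\Phi(x,t,y) = x + \mu(x)\, t + \sigma(x)\, y$ with the constant $\rho + \tilde{\rho}$ in place of $\rho$.

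The main technical obstacle will be the localization step together with the passage to the limit $n \to \infty$, since the hypotheses do not impose any global growth bound on $V$, $V'$, or $V''$. The argument must rely solely on the Gaussian law of $Y_s$ at each fixed $s$ together with continuity of $V$ and its first two derivatives; in particular, one cannot assume a priori that $\mathbb{E}[V(Y_t)]$ is finite, and so the stopped expectation $\mathbb{E}[V(Y_{t \wedge \tau_n})]$ must be the quantity actually controlled before taking the limit. A secondary minor point to verify is that the monotonicity $s \leq t \Rightarrow s^{-\alpha} \geq t^{-\alpha}$ (which holds since $\alpha > 0$) indeed propagates the admissibility condition $V(x) \leq t^{-\alpha}$ from the time $t$ of interest back to all intermediate times $s \in (0,t]$.
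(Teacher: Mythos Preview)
Your proposal is correct and follows essentially the same route as the paper: apply It\^o's formula to $s \mapsto V(x+\mu(x)s+\sigma(x)W_s)$, take expectations, split $(\tilde{\mathcal G}_{\mu,\sigma}V)(Y_s,x)$ into $(\mathcal G_{\mu,\sigma}V)(x)$ plus the remainder, bound each piece by $\rho\,V(x)$ and $\tilde\rho\,V(x)$ respectively, and finish with $1+r\le e^r$. The paper writes It\^o's formula directly in expectation without an explicit localization argument, so your inclusion of the stopping times $\tau_n$ and Fatou's lemma is in fact more careful than what appears there; otherwise the two arguments coincide.
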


\begin{proof}[Proof
of Lemma~\ref{lem:SVstabilityI}]
Ito's formula shows
\begin{equation}
\label{eq:lem_SVstabilityI}
\begin{split}
&
  \mathbb{E}\Big[
    V\big( 
      x + \mu(x) \cdot t + \sigma(x) W_t 
    \big)
  \Big]
\\ & =
  V( x )
  +
  \int_0^t
    \mathbb{E}\Big[
  \big( 
    \mathcal{\tilde{G}}_{ \mu, \sigma } V 
  \big)
  \big(
    x + \mu(x) \cdot t
    + \sigma(x) W_t ,
    x
  \big)
    \Big]
  \, ds
\\ & =
  V( x )
  +
  \big( \mathcal{G}_{ \mu, \sigma } V \big)(x) \cdot t
\\ & \quad
  +
  \int_0^t
    \mathbb{E}\Big[
  \big( 
    \mathcal{\tilde{G}}_{ \mu, \sigma } V 
  \big)
  \big(
    x + \mu(x) \cdot t
    + \sigma(x) W_t ,
    x
  \big)
  -
  \big( 
    \mathcal{G}_{ \mu, \sigma } V 
  \big)(x)
    \Big]
  \, ds
\end{split}
\end{equation}
for all $ x \in \mathbb{R}^d $
and all
$ t \in [0,T] $
and the assumption 
$
  \left( 
    \mathcal{G}_{ \mu, \sigma } 
    V 
  \right)\!( x )
  \leq
  \rho \cdot V(x)
$
hence implies
\begin{equation}
\begin{split}
&
  \mathbb{E}\Big[
    V\big( 
      x + \mu(x) \cdot t + \sigma(x) W_t 
    \big)
  \Big]
\\ & \leq
  V( x )
  +
  \rho \cdot t \cdot V(x)
\\ & \quad
  +
  \int_0^t
    \mathbb{E}\Big[
  \big( 
    \mathcal{\tilde{G}}_{ \mu, \sigma } V 
  \big)
  \big(
    x + \mu(x) \cdot t
    + \sigma(x) W_t ,
    x
  \big)
  -
  \big( 
    \mathcal{G}_{ \mu, \sigma } V 
  \big)(x)
    \Big]
  \, ds
\end{split}
\end{equation}
for all $ x \in \mathbb{R}^d $
and all
$ t \in [0,T] $.
Inequality~\eqref{eq:basicass}
therefore shows
\begin{equation}
\begin{split}
  \mathbb{E}\Big[
    V\big( 
      x + \mu(x) \cdot t + \sigma(x) W_t 
    \big)
  \Big]
& \leq
  V( x )
  +
  \rho \cdot t \cdot V(x)
  +
  \tilde{\rho} \cdot t \cdot V(x)
\\ & =
  V( x )
  \left(
  1
  +
  \rho \cdot t 
  +
  \tilde{\rho} \cdot t 
  \right)
\leq
  e^{ \left( \rho + \tilde{\rho} \right) t }
  \cdot
  V(x)
\end{split}
\end{equation}
for all
$ 
  (x,t) \in 
  \{
    (y,s) \in 
    \mathbb{R}^d \times
    (0,T]
    \colon
    V(y) \leq s^{ -\alpha } 
  \} 
$.
The proof of
Lemma~\ref{lem:SVstabilityI} is thus
completed.
\end{proof}

In many situations,
it is tedious
to verify \eqref{eq:basicass}.
We therefore give
more concrete conditions
for $ \alpha $-semi 
$ V $-stability
with respect to Brownian motion
of the Euler-Maruyama scheme
with 
$ \alpha \in (0,\infty) $,
$ 
  V \colon \mathbb{R}^d \rightarrow
  [0,\infty) 
$
appropriate
and $ d \in \mathbb{N} $
in Theorem~\ref{thm:SVstability}
below.
For establishing this theorem,
the next simple lemma 
is used.

\begin{lemma}
\label{lem:funcest}
Let
$
  T \in (0,\infty),
  c \in [0,\infty),
  p \in [1,\infty)
$
be real numbers
and let
$
  y \colon [0,T]
  \rightarrow \mathbb{R}
$
be an absolute continuous
function
with
$
  y'(t) \leq
  c 
  \left| 
    y(t) 
  \right|^{ (1 - 1/p) }
$
for 
$ \lambda_{ [0,T] } $-almost
all $ t \in [0,T] $.
Then
\begin{equation}
\label{eq:yest}
  y(t)
\leq 
  \left[
    \left| y(0) \right|^{ 
      1 / p	 
    }
    +
    \tfrac{ c t }{ p }
  \right]^{
    p
  }
\leq
  2^{
    \left( p - 1 \right)
  }
  \bigg[
    \left| y(0) \right|
    +
    \left|
      \tfrac{ c t }{ p }
    \right|^{
      p 
    } 
  \bigg] 
\end{equation}
and
\begin{equation}
\label{eq:yest22}
\begin{split}
  y(t)
& \leq 
  \left| y(0) \right|
  +
  c \, t \,
  \big[
    \left| y(0) \right|^{
      1 / p
    }
    +
    c \, t \,
  \big]^{
    \left( p - 1 \right) 
  }
\\ & \leq
  \left| y(0) \right|
  +
  2^{
    \left( p - 1 \right)
  }
  \left[
    c \, t 
    \left| y(0) \right|^{
      \left( 1 - 1/p \right)
    }
    +
    \left|
      c \, 
      t
    \right|^{ p }
  \right]
\end{split}
\end{equation}
for all $ t \in [0,T] $.
\end{lemma}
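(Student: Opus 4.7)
The plan is to reduce the nonlinear differential inequality to a linear one via the substitution $w\mapsto w^{1/p}$, and then to derive the remaining estimates by elementary algebraic manipulations.

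First I would prove the left inequality in~\eqref{eq:yest}. Since $y$ is absolutely continuous and $r\mapsto\max(r,0)$ is Lipschitz, the function $w:=\max(y,0)\colon[0,T]\to[0,\infty)$ is absolutely continuous with $w'(t)=y'(t)\,\mathbbm{1}_{\{y(t)>0\}}$ for $\lambda_{[0,T]}$-almost all $t\in[0,T]$. The hypothesis therefore yields $w'(t)\leq c\,|y(t)|^{1-1/p}\mathbbm{1}_{\{y(t)>0\}}=c\,w(t)^{1-1/p}$ almost everywhere. The main obstacle is that $r\mapsto r^{1-1/p}$ is not locally Lipschitz at $r=0$ whenever $p>1$, which blocks a direct application of standard ODE comparison. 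I would circumvent this by regularizing: for $\epsilon>0$ set $w_{\epsilon}:=w+\epsilon\geq\epsilon$, so that $w_{\epsilon}^{1/p}$ is absolutely continuous by composition with the Lipschitz map $r\mapsto r^{1/p}$ on $[\epsilon,\infty)$. Using $1/p-1\leq 0$, the chain rule combined with the differential inequality for $w$ yields
\begin{equation*}
  \bigl(w_{\epsilon}^{1/p}\bigr)'(t)
  =\tfrac{1}{p}\,w_{\epsilon}(t)^{1/p-1}\,w'(t)
  \leq\tfrac{1}{p}\,w(t)^{1/p-1}\,c\,w(t)^{1-1/p}\,\mathbbm{1}_{\{y(t)>0\}}
  \leq\tfrac{c}{p}
\end{equation*}
for $\lambda_{[0,T]}$-almost all $t\in[0,T]$. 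Integrating from $0$ to $t$ and letting $\epsilon\downarrow 0$ gives $w(t)^{1/p}\leq w(0)^{1/p}+ct/p\leq|y(0)|^{1/p}+ct/p$, and hence $y(t)\leq w(t)\leq[|y(0)|^{1/p}+ct/p]^{p}$.

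The right inequality in~\eqref{eq:yest} is then immediate from the convexity bound $(a+b)^{p}\leq 2^{p-1}(a^{p}+b^{p})$ for $a,b\in[0,\infty)$ and $p\in[1,\infty)$, applied with $a=|y(0)|^{1/p}$ and $b=ct/p$.

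For~\eqref{eq:yest22}, I would start from the first inequality in~\eqref{eq:yest} and apply the mean value identity $(a+h)^{p}-a^{p}=\int_{a}^{a+h}p\,x^{p-1}\,dx\leq p\,h\,(a+h)^{p-1}$, valid for $a,h\in[0,\infty)$ and $p\geq 1$ since $x\mapsto x^{p-1}$ is non-decreasing. Choosing $a=|y(0)|^{1/p}$ and $h=ct/p$ and using $ct/p\leq ct$ together with $(|y(0)|^{1/p}+ct/p)^{p-1}\leq(|y(0)|^{1/p}+ct)^{p-1}$ (both because $p\geq 1$) yields the first inequality in~\eqref{eq:yest22}. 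The second inequality in~\eqref{eq:yest22} reduces, after factoring out $ct$, to the elementary bound $(a+b)^{p-1}\leq 2^{p-1}(a^{p-1}+b^{p-1})$ for $a,b\in[0,\infty)$ and $p-1\in[0,\infty)$, which follows from subadditivity when $p-1\in[0,1]$ and from convexity (with a factor $2^{p-2}\leq 2^{p-1}$) when $p-1\geq 1$.
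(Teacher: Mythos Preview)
Your proof is correct. The main difference from the paper's argument lies in how you establish the first inequality in~\eqref{eq:yest}. The paper introduces the ``last zero'' time $\tau(t)=\max(\{0\}\cup\{s\in[0,t]:y(s)=0\})$, integrates $y'(s)\,y(s)^{1/p-1}$ over $(\tau(t),t]$ where $y$ is strictly positive, obtains $y(t)^{1/p}\leq |y(\tau(t))|^{1/p}+c(t-\tau(t))/p$, and then uses that either $\tau(t)=0$ or $y(\tau(t))=0$ to bound $|y(\tau(t))|\leq|y(0)|$. You instead pass to $w=\max(y,0)$, regularize to $w_\epsilon=w+\epsilon$ so that $w_\epsilon^{1/p}$ is absolutely continuous, bound its derivative globally by $c/p$, and let $\epsilon\downarrow 0$. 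Both approaches circumvent the non-Lipschitz behaviour of $r\mapsto r^{1-1/p}$ at $r=0$; your regularization avoids the case analysis on $\tau(t)$ at the cost of a limit argument, while the paper's approach is more direct but requires tracking where $y$ vanishes. For~\eqref{eq:yest22} the two arguments coincide: the paper also uses $(x+y)^r\leq x^r+ry(x+y)^{r-1}$ (written as an integral) and then the convexity/subadditivity bound on $(a+b)^{p-1}$.
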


\begin{proof}[Proof
of Lemma~\ref{lem:funcest}]
The assumption
$
  y'(t) \leq
  c 
  \left| 
    y(t) 
  \right|^{ (1 - 1/p) }
$
for 
$ \lambda_{ [0,T] } $-almost
all $ t \in [0,T] $
implies
\begin{equation}
\label{eq:yest2}
  \int_{ t_0 }^t
  y'(s) 
  \left(
    y(s)
  \right)^{ \left( 1 / p - 1 \right) }
  ds
  \leq 
  c \left( t - t_0 \right)
\end{equation}
for all 
$ 
  t_0 \in ( \tau(t), t ] 
$
and all
$ t \in [0,T] $
with $ y(t) > 0 $
where the function
$
  \tau
  \colon
  [0,T]
  \rightarrow [0,T] 
$
is defined through
\begin{equation}
  \tau(t)
:=
  \max\!\Big(
    \{ 0 \}
    \cap
    \big\{
      s \in [0,t]
      \colon
      y(s) = 0
    \big\}
  \Big)
\end{equation}
for all $ t \in [0,T] $.
Estimate~\eqref{eq:yest2}
then gives
\begin{equation}
  \big( y(t) \big)^{
    1 / p
  }
\leq
  \big( y(t_0) \big)^{
    1 / p
  }
  +
  \frac{ c \left( t - t_0 \right) }{ p }
\end{equation}
for all 
$ 
  t_0 \in ( \tau(t), t ] 
$
and all
$ t \in [0,T] $
with $ y(t) > 0 $.
This yields
\begin{equation}
\label{eq:yest3}
\begin{split}
  \big( y(t) \big)^{
    1 / p
  }
& \leq
  \big| y\big( \tau(t) \big) \big|^{
    1 / p
  }
  +
  \frac{
    c 
    \left( t - \tau(t) \right)
  }{
    p
  }
\leq
  \big| y(0) \big|^{
    1 / p
  }
  +
  \frac{ c t }{ p }
\end{split}
\end{equation}
for all $ t \in [0,T] $
with $ y(t) > 0 $.
Inequality~\eqref{eq:yest3}
implies \eqref{eq:yest}.
In addition, note that
combining \eqref{eq:yest}
and
the inequality
\begin{equation}
\begin{split}
  \left( x + y \right)^r
& =
  x^r
  +
  \int_0^1 
    r
    \left( x + s y \right)^{ ( r - 1 ) }
    y \,
  ds
\leq
  x^r 
  +
  r \, y
  \left( x + y \right)^{ ( r - 1 ) }
\end{split}
\end{equation}
for all $ x, y \in [0,\infty) $,
$ r \in [1,\infty) $
shows \eqref{eq:yest22}.
The proof of Lemma~\ref{lem:funcest}
is thus completed.
\end{proof}

\noindent
An immediate consequence of
Lemma~\ref{lem:funcest} 
are the following estimates.

\begin{lemma}
\label{lem:Vest}
Let 
$
  c,p \in [1,\infty)
$
be real numbers
and let
$
  V \in C^1_p( \mathbb{R}^d, 
  \mathbb{R} )
$
with
$
  \left\| (\nabla V)(x) \right\|
  \leq
  c \left| V(x) \right|^{ ( 1 - 1 / p ) }
$
for 
$ \lambda_{ \mathbb{R}^d } $-almost
all $ x \in \mathbb{R}^d $.
Then
\begin{align}
\label{eq:Vest1}
  V( x + y )
&\leq
  c^p \,
  2^{ 
    ( p - 1 )
  }
  \Big(
    | V(x) |
    +
    \| y \|^{
      p
    }
  \Big) ,
\\
\label{eq:Vest2}
  V( x + y )
&\leq
  \left| V(x) \right|
  +
  c^p \,
  2^{ 
    ( p - 1 )
  }
  \left(
    \| y \|
    \left| V(x) \right|^{
      ( 1 - 1 / p ) 
    }
    +
    \left\| y \right\|^{ p }
  \right)
\end{align}
for all $ x, y \in \mathbb{R}^d $.
\end{lemma}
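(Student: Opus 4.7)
The plan is to reduce both bounds to Lemma~\ref{lem:funcest} by restricting $V$ to the line segment from $x$ to $x+y$. Specifically, fix $x,y\in\mathbb{R}^d$ and define $g\colon[0,1]\to\mathbb{R}$ by $g(t):=V(x+ty)$. Since $V\in\mathcal{C}^1_p(\mathbb{R}^d,\mathbb{R})$ is by definition locally Lipschitz continuous on $\mathbb{R}^d$, the composition $g$ is Lipschitz continuous on the compact segment $[0,1]$, hence absolutely continuous. I would then identify $g'(t)=\langle(\nabla V)(x+ty),y\rangle$ for $\lambda_{[0,1]}$-almost every $t\in[0,1]$.

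Once this identification is in hand, Cauchy--Schwarz and the hypothesis $\|(\nabla V)(z)\|\le c\,|V(z)|^{1-1/p}$ yield
\begin{equation*}
  g'(t)\;\le\;\|(\nabla V)(x+ty)\|\,\|y\|\;\le\;c\,\|y\|\,|g(t)|^{1-1/p}
\end{equation*}
for $\lambda_{[0,1]}$-almost every $t\in[0,1]$. Applying Lemma~\ref{lem:funcest} with $T=1$ and with the constant $c\,\|y\|$ in place of $c$, and then evaluating the conclusion at $t=1$, gives $V(x+y)=g(1)$ on the left-hand side while inequality~\eqref{eq:yest} produces
\begin{equation*}
  V(x+y)\;\le\;2^{p-1}\Bigl[|V(x)|+\tfrac{c^p\,\|y\|^p}{p^p}\Bigr]
\end{equation*}
and inequality~\eqref{eq:yest22} produces
\begin{equation*}
  V(x+y)\;\le\;|V(x)|+2^{p-1}\bigl[c\,\|y\|\,|V(x)|^{1-1/p}+c^p\,\|y\|^p\bigr].
\end{equation*}
Since $c\in[1,\infty)$ and $p\in[1,\infty)$, we have $c^p\ge c\ge 1$ and $p^p\ge 1$; using these elementary inequalities to enlarge the right-hand sides yields exactly \eqref{eq:Vest1} and \eqref{eq:Vest2}.

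The only delicate point is the chain-rule identification $g'(t)=\langle(\nabla V)(x+ty),y\rangle$: $V$ is differentiable only Lebesgue-a.e.\ in $\mathbb{R}^d$ by Rademacher's theorem, and the straight line $\{x+ty:t\in[0,1]\}$ is a Lebesgue null set in $\mathbb{R}^d$, so one cannot simply restrict the pointwise differentiability of $V$ to this line. The standard way around this is to note that $g$ is Lipschitz on $[0,1]$, hence differentiable a.e.\ in $t$, and then to justify the chain rule either by a mollification argument applied to $V$ or by appealing to the chain rule for Lipschitz maps on a line (which holds for $\lambda_{[0,1]}$-a.e.\ $t$). Beyond this technical step, the remainder of the proof consists of the direct application of Lemma~\ref{lem:funcest} sketched above.
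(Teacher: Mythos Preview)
Your proposal is correct and follows essentially the same route as the paper: reduce to the scalar function $t\mapsto V(x+ty)$, bound its derivative via the gradient hypothesis, and apply the two estimates of Lemma~\ref{lem:funcest}. You are in fact more explicit than the paper about the chain-rule technicality on the null set, which the paper glosses over with a direct ``for $\lambda_{\mathbb{R}}$-almost all $t$'' claim.
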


\begin{proof}[Proof
of Lemma~\ref{lem:Vest}]
First of all, note that
the assumption
$
  \left\| (\nabla V)(x) \right\|
  \leq
  c \left| V(x) \right|^{ ( 1 - 1 / p ) }
$
for 
$ \lambda_{ \mathbb{R}^d } $-almost
all $ x \in \mathbb{R}^d $
implies
\begin{equation}
\begin{split}
  \frac{ d }{ d t }
  \Big(
    V( x + t y )
  \Big)
& =
  V'\!\left( x + t y \right) y
\leq
  \left\| 
    \left( \nabla
    V \right)\!\left( x + t y \right) 
  \right\|
  \left\|
    y
  \right\|
\\ & \leq
  c
  \left| 
    V( x + t y ) 
  \right|^{ 
    ( 1 - 1/p ) 
  }
  \left\|
    y
  \right\|
\end{split}
\label{eq:V_assumption}
\end{equation}
for 
$ \lambda_{ \mathbb{R} } $-almost
all $ t \in \mathbb{R} $
and all $ x, y \in \mathbb{R}^d $.
Inequality~\eqref{eq:yest22}
in Lemma~\ref{lem:funcest}
hence gives
\eqref{eq:Vest2}.
Moreover, combining \eqref{eq:V_assumption}
and
\eqref{eq:yest}
implies
\begin{equation}
\begin{split}
  V( x + y )
& \leq	
  2^{ 
    \left( p - 1 \right)
  }
  \Big(
    \left| V(x) \right|
    +
    \left|
      \tfrac{ c \left\| y \right\| }{ p }
    \right|^{
      p 
    }
  \Big)
\leq
  c^p \,
  2^{ 
    ( p - 1 )
  }
  \Big(
    \left| V(x) \right|
    +
    \left\| y \right\|^{
      p
    }
  \Big)
\end{split}
\end{equation}
for all $ x, y \in \mathbb{R}^d $.
The proof
of Lemma~\ref{lem:Vest}
is thus completed.
\end{proof}

We are now ready to present the
promised theorem which shows
$ \alpha $-semi $ V $-stability 
with respect to Brownian motion
of the
Euler-Maruyama scheme
with 
$ \alpha \in (0,\infty) $,
$ 
  V \colon \mathbb{R}^d \rightarrow
  [0,\infty) 
$
appropriate
and $ d \in \mathbb{N} $.
It makes use of 
Lemma~\ref{lem:SVstabilityI}
and Lemma~\ref{lem:Vest}
above.

\begin{theorem}[Semi $V$-stability 
with respect to Brownian motion for 
the Euler-Maruyama
scheme]
\label{thm:SVstability}
Let $ T \in (0,\infty) $,
$ d, m \in \mathbb{N} $,
$
  p \in [3,\infty)
$,
$
  c,
  \gamma_0,
  \gamma_1
  \in [0,\infty)
$
be real numbers
with
$ 
  \gamma_0 + \gamma_1 > 0 
$,
let
$
  \mu \colon \mathbb{R}^d
  \rightarrow \mathbb{R}^d
$,
$
  \sigma 
  \colon \mathbb{R}^d
  \rightarrow \mathbb{R}^{ d \times m }
$
be Borel measurable functions and
let
$
  V \in C^3_p( \mathbb{R}^d, [1,\infty) )
$
with
$
  ( \mathcal{G}_{ \mu, \sigma } V)(x)
\leq
  c \cdot V(x) 
$
and
\begin{equation}
\label{eq:mu_sigma_Vgrowth_assumption}
  \left\|
    \mu(x)
  \right\|
  \leq
  c \,
  |
    V(x)
  |^{
    \left[
      \frac{ \gamma_0 + 1 }{ p }
    \right]
  } 
\qquad
  \text{and}
\qquad 
  \|
    \sigma(x)
  \|_{
    L( \mathbb{R}^m, \mathbb{R}^d )
  }
  \leq
  c \,
  |
    V(x)
  |^{ 
    \left[
      \frac{ \gamma_1 + 2 }{ 2p }
    \right]
  } 
\end{equation}
for all $ x \in \mathbb{R}^d $.
Then the Euler-Maruyama 
scheme 
$
  \mathbb{R}^d \times [0,T]
  \times \mathbb{R}^m
  \ni (x,t,y) 
  \mapsto
  x + \mu(x) t + \sigma(x) y 
  \in \mathbb{R}^d
$
is 
$ 
  p / (
    \gamma_1 + 
    2 ( \gamma_0 \vee \gamma_1 ) 
  )
$-semi
$ V $-stable
with respect to Brownian motion.
\end{theorem}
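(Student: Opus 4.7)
\medskip
\noindent\textbf{Proof proposal.}
The plan is to apply Lemma~\ref{lem:SVstabilityI} with $\rho = c$ (the Lyapunov constant already given by the hypothesis on $\mathcal{G}_{\mu,\sigma}V$) and to verify the abstract inequality~\eqref{eq:basicass} for some $\tilde{\rho} \in \R$ on the set $\{(x,t)\in\R^d\times(0,T] : V(x) \leq t^{-\alpha}\}$ with $\alpha = p/(\gamma_1 + 2(\gamma_0\vee\gamma_1))$. Writing $\Delta = \mu(x)t + \sigma(x) W_t$ and $z = x + \Delta$, the quantity to be controlled is
\begin{equation*}
  \mathbb{E}\!\left[\big(V'(z)-V'(x)\big)\mu(x) + \tfrac12\sum_{k=1}^{m}\big(V''(z)-V''(x)\big)\big(\sigma_k(x),\sigma_k(x)\big)\right].
\end{equation*}
Apply Taylor's theorem to expand $V'(z)-V'(x) = V''(x)\Delta + \int_0^1(1-s)\,V'''(x+s\Delta)(\Delta,\Delta,\cdot)\,ds$ and $V''(z)-V''(x) = \int_0^1 V'''(x+s\Delta)(\Delta,\cdot,\cdot)\,ds$. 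Taking expectations, the cross term $\mathbb{E}[V''(x)(\sigma(x)W_t,\mu(x))]$ vanishes by mean-zero of $W_t$, leaving the deterministic contribution $V''(x)(\mu(x),\mu(x))\,t$ together with the $V'''$-remainder integrals.

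Each of the remaining summands is then bounded using four ingredients: (i) the derivative growth $\|V^{(i)}(y)\|_{L^{(i)}(\R^d,\R)} \leq c\,|V(y)|^{1-i/p}$ built into $V \in C^3_p(\R^d,[1,\infty))$; (ii) Lemma~\ref{lem:Vest} (applied with the $C^1_p$-bound) to obtain $V(x+s\Delta) \leq 2^{p-1}c^p(V(x)+\|\Delta\|^p)$, so that, by concavity of $r\mapsto r^{1-i/p}$ for $p\geq i$, $V(x+s\Delta)^{1-i/p} \leq C_p\big(V(x)^{1-i/p} + \|\Delta\|^{p-i}\big)$; (iii) the coefficient growth~\eqref{eq:mu_sigma_Vgrowth_assumption}, giving $\|\mu(x)\|\leq c\,V(x)^{(\gamma_0+1)/p}$ and $\|\sigma(x)\|\leq c\,V(x)^{(\gamma_1+2)/(2p)}$; and (iv) the Gaussian moment bound $\mathbb{E}\|\sigma(x) W_t\|^k \leq C_k\,\|\sigma(x)\|^k\,t^{k/2}$. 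Every term produced in this way has the shape $(\mathrm{const})\cdot V(x)^{1+\eta/p}\,t^{\delta}$ for specific $\eta\geq 0$, $\delta > 0$, and under the restriction $V(x)\leq t^{-\alpha}$ each such term is $\leq (\mathrm{const})\cdot V(x)\cdot t^{\,\delta - \alpha\eta/p}$, hence uniformly bounded whenever $\alpha\,\eta \leq p\,\delta$.

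The final verification is a case-by-case check that $\alpha = p/(\gamma_1 + 2(\gamma_0\vee\gamma_1))$ satisfies $\alpha\,\eta \leq p\,\delta$ for every $(\eta,\delta)$ arising in the decomposition. Two pairs determine the bound: the deterministic term $V''(x)(\mu(x),\mu(x))\,t$ yields $(\eta,\delta) = (2\gamma_0, 1)$, forcing $\alpha \leq p/(2\gamma_0)$; the dominant third-order remainder piece, coming from $V'''(x+s\Delta)(\sigma(x)W_t,\sigma_k(x),\sigma_k(x))$ combined with $\mathbb{E}\|W_t\|\leq C\sqrt{t}$, yields $(\eta,\delta) = (3\gamma_1/2, 1/2)$, forcing $\alpha \leq p/(3\gamma_1)$. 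Because $\gamma_1 + 2(\gamma_0\vee\gamma_1) \geq 2\gamma_0$ and $\gamma_1 + 2(\gamma_0\vee\gamma_1) \geq 3\gamma_1$, the proposed $\alpha$ meets both constraints, and a routine check shows that every other cross term (obtained by expanding $\Delta = \mu(x)t + \sigma(x)W_t$ inside $\|\Delta\|^k$ and pairing with $V(x)^{1-i/p}$ or $\|\Delta\|^{p-i}$) is strictly less restrictive. The main obstacle is precisely this bookkeeping: listing all contributions systematically and verifying that none of them is sharper than the two critical ones identified above. Once this is done, inequality~\eqref{eq:basicass} holds, and Lemma~\ref{lem:SVstabilityI} delivers the claimed $\alpha$-semi $V$-stability with respect to Brownian motion.
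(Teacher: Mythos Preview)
Your proposal is correct and follows the same overall route as the paper: reduce to Lemma~\ref{lem:SVstabilityI}, bound the remainder $(\tilde{\mathcal{G}}_{\mu,\sigma}V)(z,x)-(\mathcal{G}_{\mu,\sigma}V)(x)$ using the $C^3_p$-growth of $V^{(i)}$, Lemma~\ref{lem:Vest}, the coefficient bounds~\eqref{eq:mu_sigma_Vgrowth_assumption}, and Gaussian moments, and then check $\alpha\eta\le p\delta$ term by term.

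The one genuine difference is your expansion of $V'(z)-V'(x)$. The paper uses a single first-order step for \emph{both} $i=1,2$, i.e.\ $\|V^{(i)}(z)-V^{(i)}(x)\|\le\int_0^1\|V^{(i+1)}(x+r\Delta)\|\,\|\Delta\|\,dr$ (its estimate~\eqref{eq:derivativeV1}), and then indexes all resulting terms by $(i,j)\in\{1,2\}^2$. In particular its $(i,j)=(1,2)$ term produces the constraint $\alpha\le p/(2\gamma_0+\gamma_1)$, which is exactly what makes the final answer $p/(\gamma_1+2(\gamma_0\vee\gamma_1))$ rather than $p/\max(2\gamma_0,3\gamma_1)$. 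You go one order further for $i=1$, splitting off $V''(x)\Delta$ explicitly; the part $\mathbb{E}[V''(x)(\sigma(x)W_t,\mu(x))]$ then vanishes, so that constraint never appears. Your enumeration therefore actually supports the (slightly larger) value $\alpha=p/\max(2\gamma_0,3\gamma_1)\ge p/(\gamma_1+2(\gamma_0\vee\gamma_1))$, and the theorem's claim follows a fortiori. One technical caveat: $V\in C^3_p$ only guarantees that $V''$ is locally Lipschitz, so $V'''$ exists merely almost everywhere; your integral-remainder formula is still valid via absolute continuity along line segments, but it is cleanest to write the remainder as $\int_0^1\bigl[V''(x+s\Delta)-V''(x)\bigr](\Delta,\cdot)\,ds$ and then apply the paper's estimate~\eqref{eq:derivativeV1} with $i=2$, which yields exactly the same exponents you list.
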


\begin{proof}[Proof
of Theorem~\ref{thm:SVstability}]
Throughout this proof, let 
$ 
  \left( \Omega, \mathcal{F},
  \mathbb{P} \right) 
$
be a probability space and
let
$
  W \colon [0,T] \times \Omega
  \rightarrow \mathbb{R}^m
$
be a standard
Brownian motion.
We will prove 
semi $ V $-stability
with respect to Brownian
motion
for the Euler-Maruyama scheme
by applying 
Lemma~\ref{lem:SVstabilityI}.
Our aim is thus to 
verify~\eqref{eq:basicass}.
For this note that
\begin{equation}
\begin{split}
\lefteqn{
  \mathbb{E}\Big[
  \big|
  ( \mathcal{\tilde{G}}_{ 
    \mu, \sigma } V 
  )
  \big(
    x + \mu(x) \cdot t
    + \sigma(x) W_t, x
  \big)
  -
  ( \mathcal{G}_{ \mu, \sigma } V )
  (
    x
  )
  \big|
  \Big] 
}
\\ & \leq
  \mathbb{E}\Big[
    \big\|
      V'\big( 
        x + \mu(x) \cdot t
        + \sigma(x) W_t
      \big)
      - V'(x)
    \big\|_{ 
      L( \mathbb{R}^d, \mathbb{R} )
    }
  \Big]
  \left\| \mu(x) \right\|
\\ & +  
  \frac{ 1 }{ 2 } \cdot
  \mathbb{E}\Big[
    \big\|
      V''\big( 
        x + \mu(x) \cdot t
        + \sigma(x) W_t
      \big)
      - V''(x)
    \big\|_{ 
      L^{ (2) }( \mathbb{R}^d, \mathbb{R} )
    }
  \Big]
  \left(
    \sum_{ k = 1 }^{ m }
    \left\| \sigma_k(x) \right\|^2
  \right)
\end{split}
\end{equation}
for all
$
  x \in \mathbb{R}^d
$
and all
$ t \in [0,T] $.
Assumption~\eqref{eq:mu_sigma_Vgrowth_assumption}
hence shows
\begin{equation}
\label{eq:derivativeVV1}
\begin{split}
&
  \mathbb{E}\Big[
  \big|
  ( \mathcal{\tilde{G}}_{ 
    \mu, \sigma } V 
  )
  \big(
    x + \mu(x) \cdot t
    + \sigma(x) W_t, x
  \big)
  -
  ( \mathcal{G}_{ \mu, \sigma } V )
  (
    x
  )
  \big|
  \Big] 
\\ & \leq
  m \left( c + 1 \right)^2 
\\ & 
  \cdot
  \left(
  \sum_{ i = 1 }^{ 2 }
  \mathbb{E}\Big[
    \big\|
      V^{ (i) }\big( 
        x + \mu(x) \cdot t
        + \sigma(x) W_t
      \big)
      - V^{ (i) }(x)
    \big\|_{ 
      L^{ (i) }( \mathbb{R}^d, \mathbb{R} )
    }
  \Big]
  \left| V(x) 
  \right|^{
    \left[
      \frac{ \gamma_{ (i - 1) } + i }{ p }
    \right]
  }
  \right)
\end{split}
\end{equation}
for all
$
  x \in \mathbb{R}^d
$
and all
$ t \in [0,T] $.
Next observe that
the assumption
\begin{equation}
  V \in C_p^3( \mathbb{R}^d,
  [1,\infty) ) 
\end{equation}
and estimate~\eqref{eq:Vest1}
imply the existence
of a real number 
$ \hat{c} \in [c + 1,\infty) $
such that
\begin{equation}
\label{eq:derivativeV1}
\begin{split}
&
  \left\| 
    V^{(i)}(y) - 
    V^{(i)}(x)
  \right\|_{
    L^{(i)}( \mathbb{R}^d, 
      \mathbb{R} 
    )
  }
\\ & \leq
  \int_0^1
  \left\| 
    V^{(i+1)}\big( x + r (y - x ) \big)
  \right\|_{
    L^{(i+1)}( \mathbb{R}^d, 
      \mathbb{R} 
    )
  }  
  \left\| y - x \right\|
  dr
\\ & \leq
  \hat{c}
  \left(
  \int_0^1
  \left| 
    V\big( x + r (y - x ) \big)
  \right|^{
    \frac{ (p - i - 1) }{ p }
  }
  dr
  \right)
  \left\| y - x \right\|
\\ & \leq
  \left| \hat{c} \right|^{ (p+1) }
  2^p 
  \left(
  \left| 
    V( x )
  \right|^{
    \frac{ (p - i - 1) }{ p }
  }
  +
  \left\| y - x
  \right\|^{
    \left( p - i - 1 \right)
  }
  \right)
  \left\| y - x \right\|
\\ & =
  \left| \hat{c} \right|^{ (p+1) }
  2^p 
  \left(
  \left| 
    V( x )
  \right|^{
    \frac{ (p - i - 1) }{ p }
  }
  \left\| y - x \right\|
  +
  \left\| y - x
  \right\|^{
    \left( p - i \right)
  }
  \right)
\end{split}
\end{equation}
for all 
$ x, y \in \mathbb{R}^d $
and all
$ i \in \{ 1, 2 \} $.
Putting \eqref{eq:derivativeV1}
into \eqref{eq:derivativeVV1}
then results in
\begin{equation}
\label{eq:derivativeVV3}
\begin{split}
\lefteqn{
  \mathbb{E}\Big[
  \big|
  ( \mathcal{\tilde{G}}_{ 
    \mu, \sigma } V 
  )
  \big(
    x + \mu(x) \cdot t
    + \sigma(x) W_t, x
  \big)
  -
  ( \mathcal{G}_{ \mu, \sigma } V )
  (
    x
  )
  \big|
  \Big] 
}
\\ & \leq
  m 
  \left| \hat{c} \right|^{ (p+3) }
  2^p 
  \left(
  \sum_{ i = 1 }^{ 2 }
  \mathbb{E}\Big[
    \|
      \mu(x) \cdot t
      + \sigma(x) W_t
    \|
  \Big]
  \left| V(x) 
  \right|^{
    \left[
      \frac{ \gamma_{ (i - 1) } + p - 1 }{ p }
    \right]
  }
  \right)
\\ & \quad
  +
  m 
  \left| \hat{c} \right|^{ (p+3) }
  2^p 
  \left(
  \sum_{ i = 1 }^{ 2 }
  \mathbb{E}\Big[
    \|
      \mu(x) \cdot t
      + \sigma(x) W_t
    \|^{ (p-i) }
  \Big]
  \left| V(x) 
  \right|^{
    \left[
      \frac{ \gamma_{ (i - 1) } + i }{ p }
    \right]
  }
  \right)
\end{split}
\end{equation}
for all
$
  x \in \mathbb{R}^d
$
and all
$ t \in [0,T] $.
In the next step, we 
put the estimate
\begin{equation}
\label{eq:y_est}
\begin{split}
&
  \mathbb{E}\Big[
    \big\|
      \mu(x) \cdot t
      + \sigma(x) W_t
    \big\|^{ 
      r 
    }
  \Big]
\\ & \leq
  \left( m + 1 \right)^r
  \left(
    \| \mu(x) \|^r \cdot t^r
    +
    \sum_{ k = 1 }^{ m }
    \| \sigma_k(x) \|^r \cdot 
    \mathbb{E}\Big[
      \big| W^{(k)}_t
      \big|^{ 
        r 
      }
    \Big]
  \right)
\\ & \leq
  c^r
  \left( m + 1 \right)^{ (r + 1) }
  \Big(
    \left| V(x) \right|^{
      \left[ \frac{r \gamma_0 + r}{p} \right]
	  }
    t^r
    +
    \left| V(x) \right|^{
      \left[ 
        \frac{ r \gamma_1 + 2r }{ 2p }
      \right] 
	  }
    \mathbb{E}\Big[
      \big| W^{(1)}_t
      \big|^{ 
        r 
      }
    \Big]
  \Big)
\\ & \leq
  c^r
  \left( m + r + 1 \right)^{ (2 r + 1) }
  \Big(
    \left| V(x) \right|^{
      \left[ \frac{r \gamma_0 + r}{p} \right]
	  }
    t^r
    +
    \left| V(x) \right|^{
      \left[ 
        \frac{ r \gamma_1 + 2r }{ 2p }
      \right] 
	  }
    t^{ \frac{ r }{ 2 } }
  \Big)
\\ & =
  c^r
  \left( m + r + 1 \right)^{ (2 r + 1) }
  \left(
    \sum_{ j = 1 }^{ 2 }
    \left| V(x) \right|^{
      \left[ 
        \frac{ r \gamma_{(j-1)} + jr }{j p }
      \right]
	  }
    t^{ \frac{ r }{ j } }
  \right)
\end{split}
\end{equation}
for all 
$ x \in \mathbb{R}^d $,
$ t \in [0,T] $,
$ r \in [0,\infty) $
into
\eqref{eq:derivativeVV3} 
to obtain
\begin{equation}
\begin{split}
\lefteqn{
  \mathbb{E}\Big[
  \big|
  ( \mathcal{\tilde{G}}_{ 
    \mu, \sigma } V 
  )
  \big(
    x + \mu(x) \cdot t
    + \sigma(x) W_t, x
  \big)
  -
  ( \mathcal{G}_{ \mu, \sigma } V )
  (
    x
  )
  \big|
  \Big] 
}
\\ & \leq
  \left( m + p \right)^{ 3 p }
  \left| \hat{c} \right|^{ (2 p+3) }
  2^p 
  \left(
  \sum_{ i, j = 1 }^{ 2 }
  \left| V(x) 
  \right|^{
    \left[
      \frac{ \gamma_{ (i - 1) } + \gamma_{ (j - 1) }/j }
           { p }
      + 
      1
    \right]
  }
  t^{ \frac{ 1 }{ j } }
  \right)
\\ & \quad
  +
  \left( m + p \right)^{ 3 p }
  \left| \hat{c} \right|^{ (2 p + 3) }
  2^p 
  \left(
  \sum_{ i, j = 1 }^{ 2 }
  \left| V(x) 
  \right|^{
    \left[
      \frac{
      \gamma_{ (i - 1) }
      +
        ( p - i )
        \gamma_{ (j - 1) } / j
      }{ p }
      +
      1
    \right]
  }
  t^{ \frac{ (p - i) }{ j } }
  \right)
\end{split}
\end{equation}
and hence
\begin{equation}
\label{eq:derivativeVV4}
\begin{split}
&
  \mathbb{E}\Big[
  \big|
  ( \mathcal{\tilde{G}}_{ 
    \mu, \sigma } V 
  )
  \big(
    x + \mu(x) \cdot t
    + \sigma(x) W_t, x
  \big)
  -
  ( \mathcal{G}_{ \mu, \sigma } V )
  (
    x
  )
  \big|
  \Big] 
\leq
  \left( m + p \right)^{ 3 p }
  \left| \hat{c} \right|^{ (2 p+3) }
  2^p 
\\ & \cdot
  \left(
  \sum_{ i, j = 1 }^{ 2 }
  \left[
  \left| V(x) 
  \right|^{
    \left[
      \frac{ \gamma_{ (i - 1) } + \gamma_{ (j - 1) }/j }
           { p }
    \right]
  }
  t^{ \frac{ 1 }{ j } }
  +
  \left| V(x) 
  \right|^{
    \left[
      \frac{
      \gamma_{ (i - 1) }
      +
        ( p - i )
        \gamma_{ (j - 1) } / j
      }{ p }
    \right]
  }
  t^{ \frac{ (p - i) }{ j } }
  \right]
  \right)
  V(x)
\end{split}
\end{equation}
for all
$
  x \in \mathbb{R}^d
$
and all
$ t \in [0,T] $.
This implies that there exists
a real number 
$ \tilde{ \rho } \in [0, \infty) $
such that
\begin{equation}
  \mathbb{E}\Big[
  \big|
  ( \mathcal{\tilde{G}}_{ 
    \mu, \sigma } V 
  )
  \big(
    x + \mu(x) \cdot t
    + \sigma(x) W_t, x
  \big)
  -
  ( \mathcal{G}_{ \mu, \sigma } V )
  (
    x
  )
  \big|
  \Big] 
\leq
  \tilde{ \rho }
  \cdot V(x)
\end{equation}
for all 
$ 
  (x,t) \in 
  \{
    (y,s) \in 
    \mathbb{R}^d \times
    (0,T]
    \colon
    V(y) \leq s^{ -\alpha } 
  \} 
$
where
\begin{equation}
\begin{split}
  \alpha 
& :=
  \min_{ i, j \in \{ 1, 2 \} } \!
  \left(
  \min\!
  \left\{
  \frac{
    p 
  }{
    j 
    \cdot
    \left(
      \gamma_{ (i-1) }
      +
      \frac{ \gamma_{(j-1)} }{ j }
    \right)
  } ,
  \frac{
    p \cdot \left( p - i \right) 
  }{
    j 
    \cdot
    \left(
      \gamma_{ (i-1) }
      +
      \frac{
        \left( p - i \right) 
        \gamma_{(j-1)} 
      }{ j }
    \right)
  }
  \right\}
  \right) .
\end{split}
\end{equation}
Lemma~\ref{lem:SVstabilityI}
therefore shows
that the Euler-Maruyama 
scheme is 
$ 
  \alpha
$-semi
$ V $-stable
with respect to Brownian motion, 
i.e.,
there exists
a real number
$ \rho \in \mathbb{R} $
such that
\begin{equation}
  \mathbb{E}\big[
    V(
      x + \mu(x) \cdot t + \sigma(x) W_t
    )
  \big]
\leq
  e^{ \rho t } \cdot V(x)
\end{equation}
for all 
$ 
  (x,t) \in 
  \{
    (y,s) \in 
    \mathbb{R}^d \times
    (0,T]
    \colon
    V(y) \leq s^{ -\alpha } 
  \} 
$.
Finally, note that
\begin{equation}
\begin{split}
  \alpha
 & =
  \min_{ i, j \in \{ 1, 2 \} } \!
  \left(
  \min\!
  \left\{
  \frac{
    p 
  }{
    j 
    \cdot
    \left(
      \gamma_{ (i-1) }
      +
      \frac{ \gamma_{(j-1)} }{ j }
    \right)
  } ,
  \frac{
    p
  }{
    j 
    \cdot
    \left(
      \frac{ 
        \gamma_{ (i-1) }
      }{ (p - i) }
      +
      \frac{
        \gamma_{(j-1)} 
      }{ j }
    \right)
  }
  \right\}
  \right) 
\\ & =
  \min_{ i, j \in \{ 1, 2 \} } \!
  \left(
  \frac{
    p 
  }{
    \left(
      j \cdot
      \gamma_{ (i-1) }
      +
      \gamma_{(j-1)} 
    \right)
  } 
  \right) 
  =
  \frac{ p }
  {
    \max_{ j \in \{1,2\} }\!
    \Big(
      j \cdot
      \max\!\left(
        \gamma_{ 0 }, \gamma_1
      \right) 
      +
      \gamma_{ (j-1) } 
    \Big)
  }
\\ & =
  \frac{ p }{
      \gamma_{ 1 } +
      2 
      \max\!\left(
        \gamma_{ 0 }, \gamma_1
      \right)
  } .
\end{split}
\end{equation}
This completes 
the proof of 
Theorem~\ref{thm:SVstability}.
\end{proof}

If 
$ 
  V \colon \mathbb{R}^d
  \rightarrow [1,\infty)
$ 
is a Lyapunov-type 
function
in the sense of 
Theorem~\ref{thm:SVstability}
with $ d \in \mathbb{N} $,
then the function
$
  \mathbb{R}^d \ni x 
  \mapsto 
  \left| V(x) \right|^q
  \in [1,\infty)
$
with
$ q \in (0,\infty) $
appropriate
is in many situations
a Lyapunov-type 
function too.
This is the subject of 
Corollary~\ref{cor:SVstability} below.
The next result is a simple lemma
which will be used in the proof of 
Corollary~\ref{cor:SVstability}.

\begin{lemma}
\label{lem:potencies}
Let 
$ q \in [1,\infty) $,
$ p \in [2,\infty) $,
$ d \in \mathbb{N} $
and let
$
  V \in C^3_p( \mathbb{R}^d, [1,\infty) ) 
$.
Then the function
$ 
  \hat{V}
  \colon 
  \mathbb{R}^d
  \rightarrow [1,\infty)
$
given by
$
  \hat{V}( x ) = 
  \left( V(x) \right)^q
$
for all $ x \in \mathbb{R}^d $
satisfies
$ 
  \hat{V}
  \in C^3_{ p q }( \mathbb{R}^d, [1,\infty) )
$.
\end{lemma}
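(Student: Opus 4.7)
The plan is to compute $\hat{V}'$, $\hat{V}''$ and $\hat{V}'''$ explicitly via the chain and product rules and then verify the two requirements for membership in $C^3_{pq}(\mathbb{R}^d,[1,\infty))$: the regularity requirement ($\hat{V}\in C^2(\mathbb{R}^d,[1,\infty))$ with locally Lipschitz continuous $\hat{V}''$) and the almost-everywhere pointwise growth bounds on $\hat{V}^{(i)}$ for $i\in\{1,2,3\}$.

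First I would write down, for $x \in \mathbb{R}^d$ and $h, h_1, h_2 \in \mathbb{R}^d$,
\begin{align*}
  \hat{V}'(x)[h]
  &= q \, V(x)^{q-1} \, V'(x)[h] , \\
  \hat{V}''(x)[h_1,h_2]
  &= q(q-1)\, V(x)^{q-2}\, V'(x)[h_1]\, V'(x)[h_2]
  + q\, V(x)^{q-1}\, V''(x)[h_1,h_2] ,
\end{align*}
and a three-term analogue for $\hat{V}'''$ (a $V^{q-3}(V')^{\otimes 3}$ term, a sum of three mixed $V^{q-2}V''V'$ terms, and a $V^{q-1}V'''$ term). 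Since $V \in C^2(\mathbb{R}^d,[1,\infty))$ with $V \geq 1$ and $q \geq 1$, the function $\hat{V}$ is of class $C^2$. Moreover $\hat{V}''$ is a finite sum of products of functions that are either $C^1$ (namely $V^{q-1}$, $V^{q-2}$ and $V'$) or locally Lipschitz continuous (namely $V''$, by the hypothesis $V \in C^3_p$), hence $\hat{V}''$ is locally Lipschitz continuous, and therefore differentiable $\lambda_{\mathbb{R}^d}$-almost everywhere by Rademacher's theorem.

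The main (but routine) step is to verify the growth bounds. Inserting the a.e.\ estimates $\|V^{(i)}(x)\|_{L^{(i)}(\mathbb{R}^d,\mathbb{R})} \leq c\,|V(x)|^{1-i/p}$ for $i\in\{1,2,3\}$ into the expressions for $\hat{V}^{(i)}$, every summand turns out to carry the same power of $V(x)$: for $i=1$ one gets $V(x)^{(q-1)+(1-1/p)}=V(x)^{q-1/p}$; for $i=2$ both summands produce the exponent $q-2/p$; and for $i=3$ the pure $V'''$ term yields $V(x)^{(q-1)+(1-3/p)}$, the mixed terms yield $V(x)^{(q-2)+(1-1/p)+(1-2/p)}$ and the pure $V'$ term yields $V(x)^{(q-3)+3(1-1/p)}$, all of which collapse to $V(x)^{q-3/p}$. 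The key observation is the identity
\begin{equation*}
  q - \tfrac{i}{p} = q\bigl(1 - \tfrac{i}{p q}\bigr) ,
\end{equation*}
which gives $V(x)^{q-i/p} = |\hat{V}(x)|^{1 - i/(pq)}$, so each bound reads $\|\hat{V}^{(i)}(x)\|_{L^{(i)}(\mathbb{R}^d,\mathbb{R})} \leq c'\,|\hat{V}(x)|^{1-i/(pq)}$ with a constant $c'$ depending only on $c$, $q$ and universal combinatorial factors. This is exactly the growth condition required for membership in $C^3_{pq}(\mathbb{R}^d,[1,\infty))$.

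I do not anticipate a genuine obstacle: the content is entirely a book-keeping exercise in the chain and product rules combined with the hypothesis $V \in C^3_p$. The only point requiring mild care is that $V'''$ is to be interpreted in the almost-everywhere sense supplied by Rademacher's theorem applied to the locally Lipschitz continuous function $V''$ (just as in the definition of $C^n_p(\mathbb{R}^d, A)$), and, correspondingly, the claimed pointwise bound on $\hat{V}'''$ is asserted only $\lambda_{\mathbb{R}^d}$-almost everywhere.
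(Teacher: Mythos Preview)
Your proposal is correct and follows essentially the same approach as the paper: explicitly compute $\hat{V}'$, $\hat{V}''$, $\hat{V}^{(3)}$ via the chain/product rule and then observe that the growth bounds on $V^{(i)}$ make every summand a multiple of $V(x)^{q-i/p}=|\hat{V}(x)|^{1-i/(pq)}$. The paper's write-up is in fact terser than yours---it displays the three derivative formulas and then simply asserts that the hypothesis $V\in C^3_p$ ``therefore shows'' the conclusion---so your explicit exponent bookkeeping and your remark about local Lipschitz continuity of $\hat{V}''$ only add detail, not a different idea.
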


\begin{proof}[Proof
of Lemma~\ref{lem:potencies}]
Note that
\begin{equation}
  \hat{V}'(x)(v_1)
=
  q
  \left( V(x) \right)^{ (q - 1) }
  V'(x)(v_1) ,
\end{equation}
\begin{equation}
\begin{split}
&
  \hat{V}''(x)(v_1, v_2)
\\ & =
  q \left( q - 1 \right)
  \left( V(x) \right)^{ (q - 2) } 
  V'(x)(v_1)
  \,
  V'(x)(v_2)
  +
  q
  \left( V(x) \right)^{ (q - 1) }
  V''(x)(v_1, v_2)
\end{split}
\end{equation}
for all 
$ 
  x, v_1, v_2 \in
  \mathbb{R}^d
$
and
\begin{equation}
\begin{split}
& 
  \hat{V}^{(3)}(x)(v_1, v_2, v_3)
\\ & =
  q 
  \left( q - 1 \right)
  \left( q - 2 \right)
  \left( V(x) \right)^{ (q - 3) } 
  V'(x)(v_1)
  \,
  V'(x)(v_2)
  \,
  V'(x)(v_3)
\\ & \quad
  +
  q \left( q - 1 \right)
  \left( V(x) \right)^{ (q - 2) }
  V''(x)(v_1, v_2)
  \,
  V'(x)( v_3 )
\\ & \quad +
  q \left( q - 1 \right)
  \left( V(x) \right)^{ (q - 2) }
  V''(x)(v_1, v_3)
  \,
  V'(x)( v_2 )
\\ & \quad
  +
  q \left( q - 1 \right)
  \left( V(x) \right)^{ (q - 2) }
  V'(x)(v_1)
  \,
  V''(x)( v_2, v_3 )
\\ & \quad
  +
  q
  \left( V(x) \right)^{ (q - 1) }
  V^{(3)}(x)(v_1, v_2, v_3)
\end{split}
\end{equation}
for all 
$ v_1, v_2, v_3 \in \mathbb{R}^d 
$
and
all
$ 
  x \in \{
    y \in \mathbb{R}^d \colon
    V'' \text{ is differentiable in } y
  \}
$.
The assumption
$
  V \in C^3_p( \mathbb{R}^d, [1,\infty) ) 
$
therefore
shows that 
$
  \hat{V} 
  \in C^3_{ p q }( \mathbb{R}^d, [1,\infty) ) 
$.
The proof of
Lemma~\ref{lem:potencies}
is thus completed.
\end{proof}

\begin{cor}[Powers of
the Lyapunov-type function]
\label{cor:SVstability}
Let 
$ T \in (0,\infty) $,
$ d, m \in \mathbb{N} $,
$
  p \in [3,\infty)
$,
$
  q \in [1,\infty)
$,
$
  c,
  \gamma_0,
  \gamma_1
  \in [0,\infty)
$
be real numbers
with
$ 
  \gamma_0 + \gamma_1 > 0 
$,
let
$
  \mu \colon \mathbb{R}^d
  \rightarrow \mathbb{R}^d
$,
$
  \sigma 
  \colon \mathbb{R}^d
  \rightarrow \mathbb{R}^{ d \times m }
$
be Borel measurable 
functions
and 
let
$ 
  V \in C^3_p( \mathbb{R}^d, [1,\infty) )
$
with
$
  \left\|
    \mu(x)
  \right\|
  \leq
  c \,
  |
    V(x)
  |^{
    \left[
      \frac{ \gamma_0 + 1 }{ p }
    \right]
  }
$,
$
  \|
    \sigma(x)
  \|_{
    L( \mathbb{R}^m, \mathbb{R}^d )
  }
  \leq
  c \,
  |
    V(x)
  |^{ 
    \left[
      \frac{ \gamma_1 + 2 }{ 2p }
    \right]
  } 
$
and
\begin{equation}
\label{eq:generatorplus}
  ( \mathcal{G}_{ \mu, \sigma } V)(x)
  + 
  \frac{
    \left( q - 1 \right)
    \| 
      V'(x) \sigma(x) 
    \|^2_{ 
      HS( \mathbb{R}^m, \mathbb{R} ) 
    }
  }{
    2 \cdot V(x)
  }
  \leq c \cdot V(x) 
\end{equation}
for  
all
$ x \in \mathbb{R}^d $.
Then the Euler-Maruyama 
scheme 
\begin{equation}
  \mathbb{R}^d \times [0,T]
  \times \mathbb{R}^m
  \ni (x,t,y) 
  \mapsto
  x + \mu(x) t + \sigma(x) y 
  \in \mathbb{R}^d
\end{equation}
is 
$ 
  p q / (
    \gamma_1 + 
    2 ( \gamma_0 \vee \gamma_1 ) 
  )
$-semi
$
  \left| V \right|^q 
$-stable
with respect to Brownian motion.
\end{cor}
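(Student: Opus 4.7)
The plan is to reduce the corollary to Theorem~\ref{thm:SVstability} by passing from the Lyapunov-type function $V$ to its $q$-th power $\hat V := V^q$. First I would verify the regularity hypothesis needed to apply Theorem~\ref{thm:SVstability} to $\hat V$ with exponent $pq$: Lemma~\ref{lem:potencies} delivers $\hat V \in C^3_{pq}(\mathbb{R}^d,[1,\infty))$ directly. Second, the growth bounds on $\mu$ and $\sigma$ translate cleanly: from $\|\mu(x)\|\le c\,|V(x)|^{(\gamma_0+1)/p}=c\,|\hat V(x)|^{(\gamma_0+1)/(pq)}$ and $\|\sigma(x)\|\le c\,|V(x)|^{(\gamma_1+2)/(2p)}=c\,|\hat V(x)|^{(\gamma_1+2)/(2pq)}$ one sees that the growth exponents in the hypothesis of Theorem~\ref{thm:SVstability} (with $V$ replaced by $\hat V$ and $p$ replaced by $pq$) are still $\gamma_0$ and $\gamma_1$.

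The one non-trivial step is to extract the generator bound $(\mathcal{G}_{\mu,\sigma}\hat V)(x)\le \tilde c\cdot \hat V(x)$ from assumption~\eqref{eq:generatorplus}. I would compute
\begin{equation*}
\hat V'(x)v = q\,V(x)^{q-1}V'(x)v,\qquad \hat V''(x)(v,w)=q(q-1)V(x)^{q-2}V'(x)v\,V'(x)w + qV(x)^{q-1}V''(x)(v,w),
\end{equation*}
and substitute into~\eqref{eq:generator} with $\varphi=\hat V$. After collecting terms one gets
\begin{equation*}
(\mathcal{G}_{\mu,\sigma}\hat V)(x) = q\,V(x)^{q-1}\,(\mathcal{G}_{\mu,\sigma}V)(x) + \tfrac{q(q-1)}{2}\,V(x)^{q-2}\sum_{k=1}^m\bigl(V'(x)\sigma_k(x)\bigr)^2,
\end{equation*}
which factors as $q\,V(x)^{q-1}$ times exactly the left-hand side of~\eqref{eq:generatorplus}, since $\sum_{k=1}^m (V'(x)\sigma_k(x))^2 = \|V'(x)\sigma(x)\|^2_{HS(\mathbb{R}^m,\mathbb{R})}$. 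The hypothesis~\eqref{eq:generatorplus} then gives $(\mathcal{G}_{\mu,\sigma}\hat V)(x)\le qc\cdot V(x)^q = qc\cdot\hat V(x)$, so the generator bound holds with constant $qc$.

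Finally, since $pq\ge 3$ (as $p\ge 3$ and $q\ge 1$), all hypotheses of Theorem~\ref{thm:SVstability} are met for $\hat V$ in place of $V$ and $pq$ in place of $p$, while the parameters $\gamma_0,\gamma_1$ are unchanged. The theorem therefore yields that the Euler-Maruyama scheme is $\tfrac{pq}{\gamma_1+2(\gamma_0\vee\gamma_1)}$-semi $\hat V$-stable with respect to Brownian motion, which is the claimed statement. I expect the generator rewriting to be the only place where one must be careful; the main obstacle is recognizing that the correction term arising from the chain rule for $V^q$ is precisely what the extra $\tfrac{(q-1)}{2V(x)}\|V'(x)\sigma(x)\|^2_{HS}$ in~\eqref{eq:generatorplus} is designed to absorb.
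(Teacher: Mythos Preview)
Your proposal is correct and follows essentially the same approach as the paper: define $\hat V=V^q$, use Lemma~\ref{lem:potencies} for the $C^3_{pq}$ regularity, rewrite the growth bounds with exponent $pq$, compute $(\mathcal{G}_{\mu,\sigma}\hat V)(x)=q\,V(x)^{q-1}\bigl[(\mathcal{G}_{\mu,\sigma}V)(x)+\tfrac{(q-1)}{2V(x)}\|V'(x)\sigma(x)\|^2_{HS}\bigr]$ to extract the generator bound $qc\cdot\hat V$, and then invoke Theorem~\ref{thm:SVstability}. The paper's proof is organized identically.
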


\begin{proof}[Proof of 
Corollary~\ref{cor:SVstability}]
First, define the function
$ 
  \hat{V} \colon \mathbb{R}^d
  \rightarrow [1,\infty)
$
by
$
  \hat{V}(x) 
  = 
  \left( V(x) \right)^{ q }
$
for all $ x \in \mathbb{R}^d $.
Then note that this definition
ensures
\begin{equation}
\label{eq:growthplus2}
  \left\|
    \mu(x)
  \right\|
  \leq
  c \,
  |
    \hat{V}(x)
  |^{
    \left[
      \frac{ \gamma_0 + 1 }{ p q }
    \right]
  }
\qquad
  \text{and}
\qquad
  \|
    \sigma(x)
  \|_{
    L( \mathbb{R}^m, \mathbb{R}^d )
  }
  \leq
  c \,
  |
    \hat{V}(x)
  |^{
    \left[
       \frac{ \gamma_1 + 2 }{ 2 p q }
    \right]
  } 
\end{equation}
for all $ x \in \mathbb{R}^d $.
In addition, observe that
\begin{equation}
  \big(
    \mathcal{G}_{ \mu, \sigma } 
    ( V^r )
  \big)(x)
=
  r
  \left( V(x) \right)^{ ( r - 1 ) }
  \left(
  ( \mathcal{G}_{ \mu, \sigma } V)(x)
  + 
  \frac{
    \left( r - 1 \right)
    \left\| 
      V'(x) \sigma(x) 
    \right\|^2_{ 
      HS( \mathbb{R}^m, \mathbb{R} ) 
    }
  }{
    2 \cdot V(x)
  }
  \right)
\end{equation}
for all $ x \in \mathbb{R}^d $
and all $ r \in (0,\infty) $.
Therefore, we obtain
\begin{equation}
  (
    \mathcal{G}_{ \mu, \sigma } 
    \hat{V}
  )(x)
=
  q
  \left( V(x) \right)^{ ( q - 1 ) }
  \left(
  ( \mathcal{G}_{ \mu, \sigma } V)(x)
  + 
  \frac{
    \left( q - 1 \right)
    \left\| 
      V'(x) \sigma(x) 
    \right\|^2_{ 
      HS( \mathbb{R}^m, \mathbb{R} ) 
    }
  }{
    2 \cdot V(x)
  }
  \right)
\end{equation}
for all $ x \in \mathbb{R}^d $
and 
\eqref{eq:generatorplus}
hence gives
\begin{equation}
\label{eq:generatorplus2}
  (
    \mathcal{G}_{ \mu, \sigma } 
    \hat{V}
  )(x)
\leq
  q \cdot
  c \cdot \hat{V}(x)
\end{equation}
for all $ x \in \mathbb{R}^d $.
Combining \eqref{eq:growthplus2},
\eqref{eq:generatorplus2},
Lemma~\ref{lem:potencies}
and
Theorem~\ref{thm:SVstability}
then shows that
the Euler-Maruyama 
scheme is 
$ 
  \frac{ p q }{
    \gamma_1 + 
    2 ( \gamma_0 \vee \gamma_1 ) 
  }
$-semi
$ \hat{V} $-stable
with respect to Brownian motion.
The proof of 
Corollary~\ref{cor:SVstability}
is thus completed.
\end{proof}

Note that in \eqref{eq:generatorplus}
the norm in the Hilbert space
of {\it Hilbert-Schmidt operators}
from $ \R^m $ to $ \R $ is used
where $ m \in \N $.
The definition of that norm
and more details on Hilbert-Schmidt
operators can, e.g., be found in
Appendix~B
in Pr\'{e}v\^{o}t \citationand\ 
R\"{o}ckner~\cite{pr07}.
In the next step,
Theorem~\ref{thm:SVstability}
is illustrated by a simple 
example.
More precisely,
the next corollary
considers the special 
Lyapunov-type function
$ 
  V \colon \mathbb{R}^d
  \rightarrow [1,\infty)
$
given by
$
  V(x) = 1 + \left\| x \right\|^p
$
for all 
$ x \in \mathbb{R}^d $
with 
$ p \in [3,\infty) $
and
$ d \in \mathbb{N} $.

\begin{cor}[A special polynomial
like Lyapunov-type function]
\label{cor:SVstability2}
Let 
$ d, m \in \mathbb{N} $,
$ T \in (0,\infty) $,
$ 
  c,
  \gamma_0,
  \gamma_1
  \in [0,\infty) 
$,
$ p \in [3,\infty) $
be real numbers
with
$ 
  \gamma_0 + \gamma_1 > 0
$,
let
$
  \mu \colon \mathbb{R}^d
  \rightarrow \mathbb{R}^d
$,
$
  \sigma 
  \colon \mathbb{R}^d
  \rightarrow \mathbb{R}^{ d \times m }
$
be Borel measurable functions with
\begin{equation}
\label{eq:p-generator}
  \left< x, \mu(x) \right>
  + 
  \tfrac{ \left( p - 1 \right) }{ 2 }
  \| \sigma(x) 
  \|^2_{
    HS( \mathbb{R}^m, \mathbb{R}^d )
  }
  \leq 
  c \left(
    1 + \left\| x \right\|^2
  \right) ,
\end{equation}
\begin{equation}
  \left\|
    \mu(x)
  \right\|
  \leq
  c \,
  \big( 
    1 + 
    \left\| x \right\|^{ 
      \left[ \gamma_0 + 1 \right]
    }
  \big)
\qquad
  \text{and}
\qquad
  \| 
    \sigma(x) 
  \|_{ 
    L( \mathbb{R}^m , \mathbb{R}^d ) 
  }
  \leq
  c \,
  \big( 
    1 + 
    \left\| x \right\|^{ 
      \left[ \frac{ \gamma_1 + 2 }{ 2 } \right]
    }
  \big)
\end{equation}
for all
$ 
  x \in \mathbb{R}^d  
$.
Then the Euler-Maruyama 
scheme 
\begin{equation}
  \mathbb{R}^d \times [0,T]
  \times \mathbb{R}^m
  \ni (x,t,y) 
  \mapsto
  x + \mu(x) t + \sigma(x) y 
  \in \mathbb{R}^d
\end{equation}
is 
$
   p / (
    \gamma_1 + 
    2 ( \gamma_0 \vee \gamma_1 ) 
  )
$-semi
$ ( 1 + \| x \|^p )_{ x \in \mathbb{R}^d } 
$-stable
with respect to Brownian motion.
\end{cor}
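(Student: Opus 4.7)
The plan is to apply Theorem~\ref{thm:SVstability} directly, with the Lyapunov-type function $V\colon\mathbb{R}^d\to[1,\infty)$ defined by $V(x):=1+\|x\|^p$. The rate appearing in the conclusion of Theorem~\ref{thm:SVstability}, namely $p/(\gamma_1+2(\gamma_0\vee\gamma_1))$, is exactly the one claimed, so the task reduces to verifying the hypotheses of Theorem~\ref{thm:SVstability} for this choice of $V$.

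First I would check the regularity requirement $V\in C^3_p(\mathbb{R}^d,[1,\infty))$. Since $p\geq 3$, direct differentiation shows that $V$ is twice continuously differentiable on $\mathbb{R}^d$, that $V''$ is locally Lipschitz, and that the pointwise estimates $\|V^{(i)}(x)\|_{L^{(i)}(\mathbb{R}^d,\mathbb{R})}\leq c_i(1+\|x\|^p)^{(p-i)/p}$ hold for each $i\in\{1,2,3\}$; these follow from $\|x\|^{p-i}\leq(1+\|x\|^p)^{(p-i)/p}$ together with the explicit formulas for $V^{(i)}$. The growth conditions of Theorem~\ref{thm:SVstability} on $\mu$ and $\sigma$ follow from the hypotheses of the corollary via the elementary estimates $1+\|x\|^{\gamma_0+1}\leq 2(1+\|x\|^p)^{(\gamma_0+1)/p}$ and $1+\|x\|^{(\gamma_1+2)/2}\leq 2(1+\|x\|^p)^{(\gamma_1+2)/(2p)}$, each verified by distinguishing the cases $\|x\|\leq 1$ and $\|x\|\geq 1$.

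The main step is to establish the infinitesimal bound $(\mathcal{G}_{\mu,\sigma}V)(x)\leq\tilde{c}\,V(x)$ for some $\tilde{c}\in[0,\infty)$. Using $\partial_i V(x)=p\|x\|^{p-2}x_i$ and $\partial_i\partial_j V(x)=p\|x\|^{p-2}\delta_{ij}+p(p-2)\|x\|^{p-4}x_ix_j$, a direct computation yields, for $x\neq 0$,
\begin{equation*}
(\mathcal{G}_{\mu,\sigma}V)(x)=p\|x\|^{p-2}\langle x,\mu(x)\rangle+\tfrac{p}{2}\|x\|^{p-2}\|\sigma(x)\|_{HS(\mathbb{R}^m,\mathbb{R}^d)}^{2}+\tfrac{p(p-2)}{2}\|x\|^{p-4}\|\sigma(x)^{\ast}x\|^{2}.
\end{equation*}
Since $p\geq 2$ the last term is non-negative, and the Cauchy--Schwarz bound $\|\sigma(x)^{\ast}x\|^{2}\leq\|x\|^{2}\|\sigma(x)\|_{HS(\mathbb{R}^m,\mathbb{R}^d)}^{2}$ collapses the three summands into $p\|x\|^{p-2}\bigl[\langle x,\mu(x)\rangle+\tfrac{p-1}{2}\|\sigma(x)\|_{HS(\mathbb{R}^m,\mathbb{R}^d)}^{2}\bigr]$. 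Hypothesis~\eqref{eq:p-generator} then gives $(\mathcal{G}_{\mu,\sigma}V)(x)\leq cp\|x\|^{p-2}(1+\|x\|^2)\leq 2cp(1+\|x\|^p)=2cp\,V(x)$; the case $x=0$ follows by continuity.

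With all hypotheses of Theorem~\ref{thm:SVstability} verified, its conclusion is precisely the claimed $p/(\gamma_1+2(\gamma_0\vee\gamma_1))$-semi $V$-stability of the Euler-Maruyama scheme. The main (though modest) obstacle is recognizing that the mixed term $\|x\|^{p-4}\|\sigma(x)^{\ast}x\|^{2}$ coming from the non-diagonal part of $V''$ is exactly what, after Cauchy--Schwarz, adjusts the diffusion coefficient from $p/2$ to $p(p-1)/2$, thereby producing the prefactor $(p-1)/2$ that matches hypothesis~\eqref{eq:p-generator}; without this matching the absorption of the drift and diffusion contributions into the single Lyapunov inequality would fail.
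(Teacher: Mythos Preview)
Your proposal is correct and follows essentially the same approach as the paper: both set $V(x)=1+\|x\|^p$, verify $V\in C^3_p(\mathbb{R}^d,[1,\infty))$, rewrite the growth bounds on $\mu$ and $\sigma$ in terms of $V$, and derive $(\mathcal{G}_{\mu,\sigma}V)(x)\leq 2pc\,V(x)$ from hypothesis~\eqref{eq:p-generator} before invoking Theorem~\ref{thm:SVstability}. Your treatment of the generator is slightly more explicit (writing out the Hessian and the Cauchy--Schwarz step), whereas the paper simply records the resulting inequality, but the argument is the same.
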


\begin{proof}[Proof
of 
Corollary~\ref{cor:SVstability2}]
Let
$
  V \colon \mathbb{R}^d
  \rightarrow [1,\infty)
$
be given by
$
  V(x) = 1 + \left\| x \right\|^p
$
for all $ x \in \mathbb{R}^d $.
Then note that
inequality~\eqref{eq:p-generator}
implies
\begin{equation}
\label{eq:p-generator2}
\begin{split}
  ( \mathcal{G}_{ \mu, \sigma } V)(x)
& \leq
  p 
  \left\| x \right\|^{ (p - 2) }
  \left< x, \mu(x) \right>
  + 
  \tfrac{ p \left( p - 1 \right) }{ 2 }
  \left\| x \right\|^{ (p - 2) }
  \| \sigma(x) 
  \|^2_{
    HS( \mathbb{R}^m, \mathbb{R}^d )
  }
\\ & \leq
  2 \cdot
  p \cdot c \cdot V(x)
\end{split}
\end{equation}
for all $ x \in \mathbb{R}^d $.
Next
observe that
\begin{align}
\label{eq:pgrowth1}
  \left\|
    \mu(x)
  \right\|
  &\leq
  c \,
  \big(
    1 + 
    \left\| x \right\|^{ 
      \left[ \gamma_0 + 1 \right]
    }
  \big)
  \leq
  2 c 
  \left|
    V(x)
  \right|^{
    \left[
    \frac{
      \gamma_0 + 1
    }{
      p
    }
    \right]
  } ,
\\
\label{eq:pgrowth2}
  \left\|
    \sigma(x)
  \right\|_{
    L( \mathbb{R}^m, \mathbb{R}^d )
  }
  &\leq
  c
  \,
  \big(
    1 + 
    \left\| x \right\|^{   
      \left[ \frac{ \gamma_1 + 2 } { 2 } \right]
    }
  \big)
  \leq
  2 c
  \left|
    V(x)
  \right|^{ 
    \left[ \frac{ \gamma_1 + 2 } { 2p } \right]
  }
\end{align}
for all
$ x \in \mathbb{R}^d $.
Combining \eqref{eq:p-generator2}--\eqref{eq:pgrowth2},
the fact
$
  V \in C^3_p( \mathbb{R}^d, [1,\infty) )
$
and
Theorem~\ref{thm:SVstability} 
hence shows that the
Euler-Maruyama scheme
is 
$
   p / (
    \gamma_1 + 
    2 ( \gamma_0 \vee \gamma_1 ) 
  )
$-semi
$ V $-stable with respect
to Brownian motion.
The proof of 
Corollary~\ref{cor:SVstability2}
is thus completed.
\end{proof}

Theorem~\ref{thm:SVstability},
Corollary~\ref{cor:SVstability} and
Corollary~\ref{cor:SVstability2} 
give sufficient conditions for
the Euler-Maruyama scheme
to be $ \alpha $-semi $ V $-stable
with respect to Brownian motion
with
$ 
  \alpha \in (0,\infty)
$, 
$ 
  V \colon \mathbb{R}^d 
  \rightarrow [0,\infty) 
$
appropriate
and $ d \in \mathbb{N} $.
One may ask whether 
the Euler-Maruyama scheme
is also $ V $-stable
with respect to Brownian motion.
The next result,
which is a corollary 
of Theorem~2.1 in \cite{hjk11b}
(which generalizes Theorem~2.1 in \cite{hjk11}),
disproves this 
statement for one-dimensional
SDEs in which at least one of 
the coefficients $ \mu $ and 
$ \sigma $ grows more than 
linearly.
There is thus a large
class of SDEs in which the
Euler-Maruyama scheme
is $ \alpha $-semi $ V $-stable 
but not $ V $-stable
with respect to Brownian motion.

\begin{cor}[Disproof of $ V $-stability
with respect to Brownian motion
for the Euler-Maruyama scheme]
\label{cor:disprove}
Let 
$ T, \rho \in (0,\infty) $,
$ 
  \alpha, c \in (1,\infty)
$,
let 
$ 
  \mu, \sigma \colon \mathbb{R}
  \rightarrow \mathbb{R}
$
be Borel measurable functions
with
\begin{equation}
  \left| \mu(x) \right|
  + 
  \left| \sigma(x) \right|
  \geq
  \frac{ \left| x \right|^{ \alpha } }{ c }
\end{equation}
for all 
$ x \in ( - \infty, c] \cup [c, \infty) $ 
and with
$ \sigma \neq 0 $
(i.e., with the property that
there exists a real number
$ x_0 \in \R $ such that
$ 
  \sigma(x_0) \neq 0 
$).
Then there
exists no Borel measurable function
$
  V \colon \mathbb{R}
  \rightarrow [0,\infty)
$
which fulfills
\begin{equation}
  \limsup_{ r \searrow 0 }
  \sup_{ x \in \R }
  \left(
  \frac{
    | x |^{ r }
  }{
    \left( 1 + V(x) \right)
  }
  \right)
  < \infty
\end{equation}
and
\begin{equation}
\label{eq:disprove}
  \mathbb{E}\big[
    V\big(
      x + \mu(x) \cdot t
      + \sigma(x) W_t
    \big)
  \big]
  \leq
  e^{ \rho t } 
  \cdot
  V(x)
\end{equation}
for all $ (x,t) \in \mathbb{R} \times (0,T] $
where
$
  W \colon [0,T] \times \Omega
  \rightarrow \mathbb{R}
$
is an arbitrary standard Brownian
motion on a probability
space 
$ 
  \left( 
    \Omega, \mathcal{F}, \mathbb{P}
  \right)
$.
\end{cor}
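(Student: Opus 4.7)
The plan is to argue by contradiction and invoke Theorem~2.1 of \cite{hjk11b}, cited in the introduction, which says that when the coefficients grow superlinearly, the Euler--Maruyama approximations diverge in all positive $L^r$-moments. Suppose for the sake of contradiction that such a Borel measurable function $V\colon\R\to[0,\infty)$ exists. Since $\sigma\not\equiv 0$, pick $x_0\in\R$ with $\sigma(x_0)\neq 0$ and, on the probability space carrying $W$, define the Euler--Maruyama approximations $Y^N\colon\{0,1,\ldots,N\}\times\Omega\to\R$, $N\in\N$, by $Y^N_0:=x_0$ and
\begin{equation*}
  Y^N_{n+1}:=Y^N_n+\mu(Y^N_n)\tfrac{T}{N}+\sigma(Y^N_n)\big(W_{(n+1)T/N}-W_{nT/N}\big)
\end{equation*}
for all $n\in\{0,1,\ldots,N-1\}$.

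First I would iterate the hypothesized bound~\eqref{eq:disprove}. Conditioning on $Y^N_n$, using that $W_{(n+1)T/N}-W_{nT/N}$ is independent of $Y^N_n$ and distributed as $W_{T/N}$, and applying \eqref{eq:disprove} pointwise in the conditioning variable with $t=T/N$ yields
\begin{equation*}
  \E\big[V(Y^N_{n+1})\,\big|\,Y^N_n\big]\leq e^{\rho T/N}\cdot V(Y^N_n)
\end{equation*}
almost surely. Taking expectations and iterating over $n\in\{0,1,\ldots,N-1\}$ gives $\E[V(Y^N_N)]\leq e^{\rho T}V(x_0)<\infty$ for every $N\in\N$. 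The growth assumption on $V$, namely $\limsup_{r\searrow 0}\sup_{x\in\R}|x|^r/(1+V(x))<\infty$, provides constants $r_0,C\in(0,\infty)$ with $|x|^r\leq C(1+V(x))$ for all $x\in\R$ and $r\in(0,r_0]$, which combined with the preceding display yields
\begin{equation*}
  \sup_{N\in\N}\E\big[|Y^N_N|^r\big]\leq C\big(1+e^{\rho T}V(x_0)\big)<\infty
\end{equation*}
for every $r\in(0,r_0]$.

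Finally I would apply Theorem~2.1 of \cite{hjk11b} to reach a contradiction. By choosing $\varepsilon\in(0,\min(\alpha-1,1/c))$ sufficiently small, the superlinear growth hypothesis $|\mu(x)|+|\sigma(x)|\geq c^{-1}|x|^\alpha$ for $|x|\geq c$ upgrades to $|\mu(x)|+|\sigma(x)|\geq\varepsilon|x|^{1+\varepsilon}$ for all $|x|\geq 1/\varepsilon$; in addition $\P[\sigma(Y^N_0)\neq 0]=1>0$ by construction. Theorem~2.1 of \cite{hjk11b} therefore gives $\lim_{N\to\infty}\E[|Y^N_N|^r]=\infty$ for every $r\in(0,\infty)$, directly contradicting the uniform moment bound established in the previous paragraph. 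The only nontrivial bookkeeping is the passage from the corollary's polynomial growth assumption to the precise hypothesis of Theorem~2.1 of \cite{hjk11b}, which amounts to a routine comparison of exponents; the rest of the argument is a straightforward combination of the tower property with the hypothesized estimate~\eqref{eq:disprove}.
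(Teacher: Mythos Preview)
Your proposal is correct and follows essentially the same route as the paper: assume such a $V$ exists, run the Euler--Maruyama scheme from a point $x_0$ with $\sigma(x_0)\neq 0$, iterate \eqref{eq:disprove} to bound $\E[V(Y^N_N)]$ uniformly in $N$, convert this to a uniform bound on $\E[|Y^N_N|^r]$ for some small $r>0$, and contradict Theorem~2.1 of \cite{hjk11b}. The only cosmetic difference is that the paper packages the iteration step into an appeal to Corollary~\ref{cor:stability2} (with $\zeta\equiv\infty$), whereas you carry it out by hand via the tower property; you are also slightly more explicit than the paper about matching the growth hypothesis to the form required by Theorem~2.1 of \cite{hjk11b}.
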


\begin{proof}[Proof
of Corollary~\ref{cor:disprove}]
Suppose that
$
  V \colon \mathbb{R}
  \rightarrow [0,\infty)
$
is a Borel measurable function
which fulfills
\eqref{eq:disprove}
and
\begin{equation}
  \limsup_{ r \searrow 0 }
  \sup_{ x \in \R }
  \left(
  \frac{
    | x |^{ r }
  }{
    \left( 1 + V(x) \right)
  }
  \right)
  < \infty .
\end{equation}
Then there exists a real number
$ r \in (0,\infty) $ 
such that
\begin{equation}
  | x |^r \leq
  \frac{ 1 }{ r } 
  \left( 1 + V(x) \right)
\end{equation}
for all
$ x \in \R $.
Next observe that the
assumption
$
  \sigma \neq 0
$
implies that there exists
a real number $ x_0 \in \mathbb{R} $
with $ \sigma( x_0 ) \neq 0 $.
Now define a sequence
$ 
  Y^N \colon \{ 0, 1, \dots, N \} 
  \times
  \Omega
  \rightarrow \mathbb{R} 
$,
$ N \in \mathbb{N} $,
of stochastic processes by
$ Y^N_0 = x_0 $
and 
\begin{equation}
  Y^N_{ n + 1 } = 
  Y^N_n 
  + 
  \mu( Y^N_n ) \tfrac{ T }{ N }
  +
  \sigma( Y^N_n )
  \left( 
    W_{ \frac{ (n+1) T }{ N } }
    - 
    W_{ \frac{ n T }{ N } }
  \right)
\end{equation}
for all $ n \in \{ 0, 1, \dots, N - 1 \} $
and all
$ N \in \mathbb{N} $.
Corollary~\ref{cor:stability2}
then implies
\begin{equation}
  \limsup_{ N \rightarrow \infty }
  \mathbb{E}\big[
    \| Y^{ N }_N \|^{ r }
  \big]
\leq
  \tfrac{ 1 }{ r }
  \left(
    1 +
    \limsup_{ N \rightarrow \infty }
    \mathbb{E}\big[
      V( Y^{ N }_N )
    \big]
  \right)
\leq
  \tfrac{ 1 }{ r }
  \left(
    1 +
    e^{ \rho T }
    \cdot
    V( x_0 )
  \right)
< \infty .
\end{equation}
This contradicts to
Theorem~2.1 in \cite{hjk11b}.
The proof of Corollary~\ref{cor:disprove}
is thus completed.
\end{proof}

\subsection{Semi
stability
for tamed schemes}
\label{sec:SVtaming}

In the previous subsection,
semi $ V $-stability 
with respect to Brownian
motion for
the Euler-Maruyama scheme
has been analyzed.
In this subsection,
semi $ V $-stability 
with respect to Brownian
motion for
appropriately modified 
Euler-type methods is investigated.
We begin with a general 
abstract result
which shows that if two numerical
schemes are close in some sense
(see inequality~\eqref{eq:distPhi} below
for details) and if one of the two
numerical schemes is 
$ \alpha $-semi $V$-stable
with respect to Brownian
motion
with $ \alpha \in (0,\infty) $,
$ V \colon \mathbb{R}^d
\rightarrow [0,\infty) $
appropriate
and $ d \in \mathbb{N} $, 
then the other
scheme is 
$ \alpha $-semi $V$-stable
with respect to Brownian
motion too.

\begin{lemma}[A comparison
principle for semi $ V $-stability
with respect to Brownian motion]
\label{lem:compareSVstability}
Let 
$ \alpha, T \in (0,\infty) $,
$ c \in [0,\infty) $,
$
  p \in [1,\infty)
$,
$ d, m \in \mathbb{N} $,
let 
$ 
  \left( \Omega, \mathcal{F},
  \mathbb{P} \right) 
$
be a probability space,
let
$
  W \colon [0,T] \times \Omega
  \rightarrow \mathbb{R}^m
$
be a
standard
Brownian motion,
let
$
  \Phi,
  \tilde{\Phi}
  \colon \mathbb{R}^d \times
  [0,T] \times \mathbb{R}^m
  \rightarrow \mathbb{R}^d 
$
be Borel measurable functions
and let
$
  V \in C^1_p( \mathbb{R}^d,
  [0,\infty) )
$ 
be such that
$
  \Phi \colon \mathbb{R}^d \times
  [0,T] \times \mathbb{R}^m
  \rightarrow \mathbb{R}^d 
$
is $ \alpha $-semi $V$-stable
with respect to Brownian
motion and such that
\begin{equation}
\label{eq:distPhi}
  \Big(
    \mathbb{E}\Big[
      \big\|
        \Phi( x, t, 
          W_{ t }
        )
        -
        \tilde{\Phi}( x, t, 
          W_{ t }
        )
      \big\|^{ p }
    \Big]
  \Big)^{ 1 / p }
  \leq
    c \cdot
    t  
    \cdot
    \left|
      V(x)
    \right|^{ 1 / p }
\end{equation}
for all 
$ 
  (x,t) \in 
  \{
    (y,s) \in 
    \mathbb{R}^d \times
    (0,T]
    \colon
    V(y) \leq s^{ -\alpha } 
  \} 
$.
Then
$ 
  \tilde{\Phi} \colon 
    \mathbb{R}^d \times
    [0,T] \times \mathbb{R}^m
  \rightarrow 
    \mathbb{R}^d
$
is also
$ \alpha $-semi $V$-stable 
with respect to Brownian
motion.
\end{lemma}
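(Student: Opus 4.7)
The plan is to fix a pair $(x,t)$ with $V(x) \leq t^{-\alpha}$ and $t \in (0,T]$ and to bound $V(\tilde{\Phi}(x,t,W_t))$ by comparison to $V(\Phi(x,t,W_t))$, exploiting that $V$ controls its own gradient through the $C^1_p$ structure. Writing $D := \tilde{\Phi}(x,t,W_t) - \Phi(x,t,W_t)$, I would first apply inequality \eqref{eq:Vest2} in Lemma~\ref{lem:Vest} (with a constant $\hat{c} \in [1,\infty)$ produced by the $C^1_p$ hypothesis on $V$, enlarging the constant supplied by the definition of $C^1_p$ if necessary so that it is at least $1$) to obtain the pointwise bound
\begin{equation*}
  V\big(\tilde{\Phi}(x,t,W_t)\big)
  \leq
  V\big(\Phi(x,t,W_t)\big)
  +
  \hat{c}^{\,p}\, 2^{(p-1)}
  \Big(
    \|D\| \cdot \big|V(\Phi(x,t,W_t))\big|^{(1-1/p)}
    +
    \|D\|^{p}
  \Big).
\end{equation*}

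Next I would take expectations and estimate the three resulting terms separately. The expectation of the first term is bounded by $e^{\rho t}\, V(x)$ through the $\alpha$-semi $V$-stability of $\Phi$ with some $\rho \in \R$. For the cross term, I would apply H\"older's inequality with exponents $p$ and $p/(p-1)$ and then combine the hypothesis \eqref{eq:distPhi} with $\E[V(\Phi(x,t,W_t))] \leq e^{\rho t}\, V(x)$ to arrive at an estimate of the form $c\, t\, e^{\rho t(1-1/p)}\, V(x)$. For the $\|D\|^p$ term, \eqref{eq:distPhi} gives $\E[\|D\|^p] \leq c^p\, t^p\, V(x)$ directly, and I would absorb the extra power of $t$ via the elementary inequality $t^p \leq T^{(p-1)}\, t$ on $[0,T]$.

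Summing the contributions produces an inequality of the shape
\begin{equation*}
  \E\!\left[V(\tilde{\Phi}(x,t,W_t))\right]
  \leq
  V(x)\big(e^{\rho t} + K\, t\big)
\end{equation*}
for some constant $K \in [0,\infty)$ depending only on $c$, $\hat{c}$, $p$, $T$, $\rho$, and not on the admissible pair $(x,t)$. To finish, it is enough to choose $\tilde{\rho} \in \R$ large enough that $e^{\rho t} + K\, t \leq e^{\tilde{\rho}\, t}$ holds for all $t \in [0,T]$; the elementary bounds $e^{\rho t} \leq 1 + |\rho|\, e^{|\rho|T}\, t$ and $1 + a\, t \leq e^{a\, t}$ for $a \geq 0$ show that $\tilde{\rho} := |\rho|\, e^{|\rho|T} + K$ does the job. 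This yields the defining inequality \eqref{eq:Vstable} for $\tilde{\Phi}$ with constant $\tilde{\rho}$ and thus the claimed $\alpha$-semi $V$-stability of $\tilde{\Phi}$ with respect to Brownian motion.

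The main (rather modest) obstacle is purely bookkeeping: aligning the H\"older exponents so that \eqref{eq:distPhi} combines with the $(1-1/p)$-power of $V(\Phi)$ to produce exactly a linear-in-$t$ perturbation of $V(x)$, rather than some higher power of either $t$ or $V(x)$. Observe in particular that the scaling in \eqref{eq:distPhi}, namely $\|D\|_{L^p} \leq c\, t\, V(x)^{1/p}$, is precisely tailored to this: after applying H\"older's inequality the contribution from the cross term becomes proportional to $t\, V(x)$, which is exactly what the multiplicative Lyapunov-type inequality \eqref{eq:Vstable} can absorb.
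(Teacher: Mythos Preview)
Your proposal is correct and follows essentially the same argument as the paper: apply inequality~\eqref{eq:Vest2} from Lemma~\ref{lem:Vest} pointwise, take expectations, handle the cross term by H\"older's inequality with exponents $p$ and $p/(p-1)$, and then combine the $\alpha$-semi $V$-stability of $\Phi$ with hypothesis~\eqref{eq:distPhi} to obtain a bound of the form $(e^{\rho t}+K t)\,V(x)$, which is absorbed into $e^{\tilde{\rho} t}\,V(x)$ for a suitable~$\tilde{\rho}$. The only cosmetic difference is that the paper takes $\rho\geq 0$ without loss of generality and bounds $e^{\rho t}+K t\leq e^{(\rho+K')t}$ directly, whereas you linearize $e^{\rho t}$ first; both routes are equivalent.
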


\begin{proof}[Proof
of Lemma~\ref{lem:compareSVstability}]
Inequality~\eqref{eq:Vest2} in
Lemma~\ref{lem:Vest} implies
the existence of a real
number $ \hat{c} \in [c+1, \infty) $
such that
\begin{equation}
\begin{split}
&
  \mathbb{E}\Big[
    V\big(
      \tilde{\Phi}( x, t, 
        W_{ t }
      )
    \big)
  \Big]
\\ & =
  \mathbb{E}\Big[
    V\Big(
      \Phi( x, t, 
        W_{ t }
      )
      +
      \big\{
      \tilde{\Phi}( x, t, 
        W_{ t }
      )
      -
      \Phi( x, t, 
        W_{ t }
      )
      \big\}
    \Big)
  \Big]
\\ & \leq
  \mathbb{E}\Big[
    V\big(
      \Phi( x, t, 
        W_{ t }
      )
    \big)
  \Big]
 \\ & \quad
  +
  2^p 
  \left| \hat{c} \right|^p
    \mathbb{E}\Big[
      \big|
      V\big(
        \Phi( x, t, 
          W_{ t }
        )
      \big)
      \big|^{ \frac{ (p-1) }{ p } } \,
      \big\|
        \Phi( x, t, 
          W_{ t }
        )
        -
        \tilde{\Phi}( x, t, 
          W_{ t }
        )
      \big\|
    \Big]
\\ & \quad +
  2^p 
  \left| \hat{c} \right|^p
    \mathbb{E}\Big[
      \big\|
        \Phi( x, t, 
          W_{ t }
        )
        -
        \tilde{\Phi}( x, t, 
          W_{ t }
        )
      \big\|^{ p }
    \Big]
\end{split}
\end{equation}
and H\"{o}lder's inequality
hence gives
\begin{equation}
\begin{split}
&
  \mathbb{E}\Big[
    V\big(
      \tilde{\Phi}( x, t, 
        W_{ t }
      )
    \big)
  \Big]
\\ & \leq
  \mathbb{E}\Big[
    V\big(
      \Phi( x, t, 
        W_{ t }
      )
    \big)
  \Big]
\\ & \quad +
  2^p 
  \left| \hat{c} \right|^p 
  \Big(
    \mathbb{E}\Big[
      V\big(
        \Phi( x, t, 
          W_{ t }
        )
      \big)
    \Big]
  \Big)^{ 
    \!
    \frac{ (p-1) }{ p }
  }
  \,
  \Big(
    \mathbb{E}\Big[
      \big\|
        \Phi( x, t, 
          W_{ t }
        )
        -
        \tilde{\Phi}( x, t, 
          W_{ t }
        )
      \big\|^{
        p
      }
    \Big]
  \Big)^{
    \frac{ 1 }{ p }
  }
\\ & \quad 
  +
  2^p 
  \left| \hat{c} \right|^p
    \mathbb{E}\Big[
      \big\|
        \Phi( x, t, 
          W_{ t }
        )
        -
        \tilde{\Phi}( x, t, 
          W_{ t }
        )
      \big\|^{ 
        p
      }
    \Big]
\end{split}
\label{eq:ctaming1}
\end{equation}
for all $ x \in \mathbb{R}^d $
and all $ t \in [0,T] $.
Moreover, 
by assumption there exists
a real number 
$ \rho \in [0,\infty) $
such that
\begin{equation}
\label{eq:ctaming2}
    \mathbb{E}\Big[
      V\big(
        \Phi( x, t, 
          W_{ t }
        )
      \big)
    \Big]
\leq
  e^{ \rho t } \cdot
  V(x) 
\end{equation}
for all 
$ 
  (x,t) \in 
  \{
    (y,s) \in 
    \mathbb{R}^d \times
    (0,T]
    \colon
    V(y) \leq s^{ -\alpha } 
  \} 
$.
Putting 
\eqref{eq:ctaming2}
and
\eqref{eq:distPhi} 
into \eqref{eq:ctaming1}
then results in
\begin{equation}
\begin{split}
  \mathbb{E}\Big[
    V\big(
      \tilde{\Phi}( x, t, 
        W_{ t }
      )
    \big)
  \Big]
& \leq
  e^{ \rho t } \cdot V(x)
  +
  2^p 
  \left| \hat{c} \right|^p 
  \left(
    e^{ \rho t }  
    \cdot \hat{c} \cdot t \cdot
    V(x)
    +
    \left| \hat{c} \right|^p
    \cdot
    t^{ p } 
    \cdot
    V(x)
  \right)
\\ & \leq
  \Big(
  e^{ \rho t } 
  +
  t
  \left( T + 1 \right)^{
    p
  }
  e^{ \rho T }
  \,
  2^{
    (p + 1)
  }
  \left|
    \hat{c}
  \right|^{
    2 p
  }
  \Big)
  \cdot
  V(x)
\\ & \leq
  \exp\!
  \left(
    \left[
      \rho
      +
      \left( T + 1 \right)^{
        p
      }
      e^{ \rho T }
  2^{
    (p + 1)
  }
  \left|
    \hat{c}
  \right|^{
    2 p
  }
    \right] \cdot t 
  \right)
  \cdot
  V(x)
\end{split}
\end{equation}
for all 
$ 
  (x,t) \in 
  \{
    (y,s) \in 
    \mathbb{R}^d \times
    (0,T]
    \colon
    V(y) \leq s^{ -\alpha } 
  \} 
$.
The proof
of 
Lemma~\ref{lem:compareSVstability}
is thus completed.
\end{proof}

A direct consequence
of Lemma~\ref{lem:compareSVstability}
is the next corollary.
It proves $ \alpha $-semi $V$-stability 
with respect to Brownian
motion
with $ \alpha \in (0,\infty) $,
$ 
  V \colon \mathbb{R}^d 
  \rightarrow   
  [0,\infty) 
$
appropriate
and
$
  d \in \mathbb{N}
$
for a class of suitably
``tamed'' numerical methods.

\begin{cor}[Semi $V$-stability 
with respect to Brownian motion for
an increment-taming principle]
\label{cor:SVincrement}
Let $ \alpha, T \in (0,\infty) $,
$ \beta, c \in [0,\infty) $,
$
  p \in [1,\infty)
$,
$ d, m \in \mathbb{N} $,
let 
$ 
  \left( \Omega, \mathcal{F},
  \mathbb{P} \right) 
$
be a probability space,
let
$
  W \colon [0,T] \times \Omega
  \rightarrow \mathbb{R}^m
$
be a standard
Brownian motion,
let
$
  V \in C^1_p( \mathbb{R}^d ,
  [0,\infty) )
$,
let
$
  \Phi \colon \mathbb{R}^d \times
  [0,T] \times \mathbb{R}^m
  \rightarrow \mathbb{R}^d 
$
be a Borel measurable function
which is $ \alpha $-semi $V$-stable 
with respect to Brownian motion
and
assume that
\begin{equation}
\label{eq:checkinc}
    \mathbb{E}\!\left[
      \left\|
        \Phi( x, t, 
          W_{ t }
        )
        -
        x
      \right\|^{ 2 p }
    \right]
  \leq
   c \cdot
   t^{ p \left( 1 - \beta \right) }
   \cdot
    V(x)
\end{equation}
for all 
$ 
  (x,t) \in 
  \{
    (y,s) \in 
    \mathbb{R}^d \times
    (0,T]
    \colon
    V(y) \leq s^{ -\alpha } 
  \} 
$.
Then the function
\begin{equation}
  \mathbb{R}^d \times [0,T]
  \times \mathbb{R}^m
  \ni (x,t,y) 
  \mapsto
  x +
  \frac{
    \Phi(x,t,y) - x
  }{
    \max\!\left( 
      1 , 
      t^{ \beta } 
      \left\| \Phi(x,t,y) - x \right\| 
    \right)
  }
  \in \mathbb{R}^d
\end{equation}
is
$ \alpha $-semi $V$-stable
with respect to Brownian motion.
\end{cor}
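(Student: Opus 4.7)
The plan is to reduce the statement to Lemma~\ref{lem:compareSVstability} by comparing $\Phi$ to its tamed version $\tilde{\Phi}(x,t,y) := x + \frac{\Phi(x,t,y)-x}{\max(1,t^{\beta}\|\Phi(x,t,y)-x\|)}$. Since $\Phi$ is already $\alpha$-semi $V$-stable with respect to Brownian motion and $V \in C^1_p(\mathbb{R}^d, [0,\infty))$, all that remains is to verify the closeness bound~\eqref{eq:distPhi}, namely
\begin{equation*}
  \bigl(\mathbb{E}\bigl[\|\Phi(x,t,W_t)-\tilde{\Phi}(x,t,W_t)\|^{p}\bigr]\bigr)^{1/p}
  \leq \hat{c}\cdot t\cdot |V(x)|^{1/p}
\end{equation*}
for all $(x,t)$ with $V(x)\leq t^{-\alpha}$ and some $\hat{c}\in[0,\infty)$.

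First I would note the pointwise identity
\begin{equation*}
  \Phi(x,t,y)-\tilde{\Phi}(x,t,y)
  = \bigl(\Phi(x,t,y)-x\bigr)\left(1-\frac{1}{\max(1,t^{\beta}\|\Phi(x,t,y)-x\|)}\right),
\end{equation*}
which vanishes on the set $\{t^{\beta}\|\Phi(x,t,y)-x\|\leq 1\}$ and is dominated in norm by $\|\Phi(x,t,y)-x\|$ elsewhere. Consequently,
\begin{equation*}
  \|\Phi(x,t,W_t)-\tilde{\Phi}(x,t,W_t)\|^{p}
  \leq \|\Phi(x,t,W_t)-x\|^{p}\cdot\mathbbm{1}_{\{\|\Phi(x,t,W_t)-x\|>t^{-\beta}\}}.
\end{equation*}

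Next I would apply the Cauchy--Schwarz inequality followed by Markov's inequality to obtain
\begin{equation*}
\begin{split}
  \mathbb{E}\bigl[\|\Phi(x,t,W_t)-\tilde{\Phi}(x,t,W_t)\|^{p}\bigr]
  &\leq \bigl(\mathbb{E}[\|\Phi(x,t,W_t)-x\|^{2p}]\bigr)^{1/2}
  \bigl(\mathbb{P}[\|\Phi(x,t,W_t)-x\|>t^{-\beta}]\bigr)^{1/2}\\
  &\leq t^{p\beta}\cdot\mathbb{E}\bigl[\|\Phi(x,t,W_t)-x\|^{2p}\bigr].
\end{split}
\end{equation*}
The crucial hypothesis~\eqref{eq:checkinc} then gives the factor $c\cdot t^{p(1-\beta)}\cdot V(x)$, which combines with $t^{p\beta}$ to produce exactly $c\cdot t^{p}\cdot V(x)$ on the restricted set $\{(y,s): V(y)\leq s^{-\alpha}\}$. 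Taking $p$-th roots yields the bound~\eqref{eq:distPhi} with $\hat{c}=c^{1/p}$, and an application of Lemma~\ref{lem:compareSVstability} completes the proof.

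The argument is straightforward once the exact cancellation in the definition of $\tilde{\Phi}$ is exploited. The only subtlety I anticipate is getting the exponents of $t$ to line up correctly: the $t^{\beta}$ gain from restricting to the atypical event $\{\|\Phi-x\|>t^{-\beta}\}$ via Markov must precisely absorb the $t^{-\beta}$ defect in the tamed increment moment~\eqref{eq:checkinc}, producing the linear $t$-factor required by~\eqref{eq:distPhi}. This is why $\beta$ appears with the specific scaling $t^{p(1-\beta)}$ in~\eqref{eq:checkinc}.
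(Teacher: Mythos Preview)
Your proof is correct and follows the same strategy as the paper: reduce to Lemma~\ref{lem:compareSVstability} by verifying the closeness bound~\eqref{eq:distPhi}, arriving at $\mathbb{E}[\|\Phi-\tilde{\Phi}\|^p]\leq t^{p\beta}\,\mathbb{E}[\|\Phi-x\|^{2p}]$ and then invoking~\eqref{eq:checkinc}. The only difference is that the paper obtains this bound in one step via the pointwise inequality $\|\Phi-\tilde{\Phi}\|\leq t^{\beta}\|\Phi-x\|^{2}$ (from $\frac{\max(1,a)-1}{\max(1,a)}\leq a$), whereas you reach the same expectation bound through the slightly longer route of an indicator bound followed by Cauchy--Schwarz and Markov; both arguments are valid and yield the identical constant $\hat{c}=c^{1/p}$.
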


\begin{proof}[Proof
of Corollary~\ref{cor:SVincrement}]
We show
Corollary~\ref{cor:SVincrement}
by an application of
Lemma~\ref{lem:compareSVstability}.
We thus need to
verify inequality~\eqref{eq:distPhi}.
For this let
$ 
  \tilde{ \Phi } \colon 
  \mathbb{R}^d \times [0,T]
  \times \mathbb{R}^m
  \rightarrow \mathbb{R}^d
$
be given by
\begin{equation}
  \tilde{ \Phi }(t,x,y) =
  x +
  \frac{
    \Phi(x,t,y) - x
  }{
    \max\left( 1, 
      t^{ \beta } \left\| \Phi(x,t,y) - x \right\| 
    \right)
  }
\end{equation}
for all
$ x \in \mathbb{R}^d $,
$ t \in [0,T] $,
$ y \in \mathbb{R}^m $
and note that
\begin{equation}
\begin{split}
  \big\|
    \Phi(x,t,W_t) - \tilde{\Phi}(x,t,W_t)
  \big\|
& =
  \left\|
    \Phi(x,t,W_t) - x
  \right\|
  \cdot
  \frac{
    \max\!\left( 1, 
      t^{ \beta } \left\| \Phi(x,t,y) - x \right\| 
    \right)
    - 1
  }{
    \max\!\left( 1, 
      t^{ \beta } \left\| \Phi(x,t,y) - x \right\| 
    \right)
  }
\\ & \leq
  t^{ \beta }
  \left\|
    \Phi(x,t,W_t) - x
  \right\|^2
\end{split}
\end{equation}
for all $ x \in \mathbb{R}^d $,
$ t \in [0,T] $
and all
$ y \in \mathbb{R}^m $.
Estimate~\eqref{eq:checkinc}
therefore shows 
\eqref{eq:distPhi}
and
Lemma~\ref{lem:compareSVstability} 
hence 
completes the proof of 
Corollary~\ref{cor:SVincrement}.
\end{proof}

The next result is an 
immediate
consequence of
Corollary~\ref{cor:SVincrement}
and Theorem~\ref{thm:SVstability}.
It shows $ \alpha $-semi 
$V$-stability
with respect to Brownian motion
with 
$ \alpha \in (0,\infty) $,
$ 
  V \colon \mathbb{R}^d \rightarrow
  [0,\infty) 
$
appropriate
and
$
  d \in \mathbb{N}
$
for a suitable
``tamed'' Euler-Maruyama 
scheme.

\begin{cor}[Semi $V$-stability 
with respect to Brownian motion for 
an increment-tamed Euler-Maruyama
scheme]
\label{cor:SVincrement2}
Let $ T \in (0,\infty) $,
$ d, m \in \mathbb{N} $,
$
  p \in [3,\infty)
$,
$
  c,
  \gamma_0,
  \gamma_1
  \in [0,\infty)
$,
let
$
  \mu \colon \mathbb{R}^d
  \rightarrow \mathbb{R}^d
$,
$
  \sigma  
  \colon \mathbb{R}^d
  \rightarrow \mathbb{R}^{ d \times m }
$
be Borel measurable functions and
let
$ 
  V \in C^3_p( \mathbb{R}^d, [1,\infty) )
$
with
\begin{equation*}
  ( \mathcal{G}_{ \mu, \sigma } V)(x)
\leq
  c \cdot V(x) ,
\;\;
  \left\|
    \mu(x)
  \right\|
  \leq
  c \,
  |
    V(x)
  |^{
    \left[
      \frac{ \gamma_0 + 1 }{ p }
    \right]
  } ,
\;\;
  \|
    \sigma(x)
  \|_{
    L( \mathbb{R}^m, \mathbb{R}^d )
  }
  \leq
  c \,
  |
    V(x)
  |^{ 
    \left[
      \frac{ \gamma_1 + 2 }{ 2p }
    \right]
  } 
\end{equation*}
for all $ x \in \mathbb{R}^d $.
Then the function
\begin{equation}
\label{eq:inc_tamed_Euler}
  \mathbb{R}^d \times [0,T]
  \times \mathbb{R}^m
  \ni (x,t,y) 
  \mapsto
  x + 
  \frac{ 
    \mu(x) t + \sigma(x) y 
  }{
    \max( 1, t \| \mu(x) t + \sigma(x) y \| )
  }
  \in \mathbb{R}^d
\end{equation}
is 
$ 
  \tfrac{ p }
  {
      \gamma_1 + 
      2 
      \max( 
        \gamma_0, \gamma_1, 1/2
      )
  }
$-semi
$ V $-stable with respect to 
Brownian motion.
\end{cor}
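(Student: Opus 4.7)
The plan is to deduce Corollary~\ref{cor:SVincrement2} as a direct combination of Theorem~\ref{thm:SVstability} and Corollary~\ref{cor:SVincrement} applied with the taming exponent $\beta = 1$. The underlying ``base'' scheme is the classical Euler-Maruyama one-step function $\Phi \colon \mathbb{R}^d \times [0,T] \times \mathbb{R}^m \to \mathbb{R}^d$ given by $\Phi(x,t,y) := x + \mu(x) t + \sigma(x) y$; note that the increment-tamed scheme~\eqref{eq:inc_tamed_Euler} equals $(x,t,y) \mapsto x + (\Phi(x,t,y) - x)/\max(1, t\|\Phi(x,t,y) - x\|)$ and therefore has exactly the form appearing in Corollary~\ref{cor:SVincrement} with $\beta = 1$.

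Set $\alpha_0 := p / (\gamma_1 + 2\max(\gamma_0, \gamma_1))$ and $\alpha := p / (\gamma_1 + 2\max(\gamma_0, \gamma_1, 1/2))$. Theorem~\ref{thm:SVstability} applies under exactly the hypotheses of Corollary~\ref{cor:SVincrement2} and yields that $\Phi$ is $\alpha_0$-semi $V$-stable with respect to Brownian motion. Since $\max(\gamma_0, \gamma_1, 1/2) \geq \max(\gamma_0, \gamma_1)$, we have $\alpha \leq \alpha_0$, so by the monotonicity property of semi stability recorded in the remarks following Definition~\ref{def:SVstability} the one-step function $\Phi$ is also $\alpha$-semi $V$-stable with respect to Brownian motion.

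It remains to verify the increment bound~\eqref{eq:checkinc} of Corollary~\ref{cor:SVincrement} for $\Phi$ with $\beta = 1$ and the above $\alpha$. This is a direct application of the moment estimate~\eqref{eq:y_est} from the proof of Theorem~\ref{thm:SVstability}: specialising it to $r = 2p$ gives, for some constant $C \in [0,\infty)$ depending only on $c, p, m, \gamma_0, \gamma_1$,
\begin{equation}
  \mathbb{E}\big[ \|\mu(x) t + \sigma(x) W_t\|^{2p} \big]
  \leq
  C \left( |V(x)|^{2(\gamma_0+1)} t^{2p} + |V(x)|^{\gamma_1 + 2} t^p \right)
\end{equation}
for all $(x,t) \in \mathbb{R}^d \times [0,T]$. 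On the subset $\{V(x) \leq t^{-\alpha}\}$ the first summand is bounded by $V(x) \cdot t^{\,2p - \alpha(2\gamma_0+1)}$ and the second by $V(x) \cdot t^{\,p - \alpha(\gamma_1+1)}$. Both exponents of $t$ are non-negative: the first inequality reduces to $2\max(\gamma_0, \gamma_1, 1/2) \geq \gamma_0 + 1/2$ (valid since $2\max(\gamma_0, 1/2) \geq \gamma_0 + 1/2$) and the second to $\max(\gamma_0, \gamma_1, 1/2) \geq 1/2$ (obvious). Bounding $t^{(\cdot)} \leq T^{(\cdot)}$ then yields \eqref{eq:checkinc} with $\beta = 1$ and an appropriate constant, and Corollary~\ref{cor:SVincrement} delivers the asserted $\alpha$-semi $V$-stability of the increment-tamed Euler-Maruyama scheme.

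I anticipate no serious obstacle. The only real content is the arithmetic balancing of the two summands coming from the drift and diffusion parts of the Euler increment, and the extra ``$1/2$'' inside $\max(\gamma_0, \gamma_1, 1/2)$ (relative to the exponent $\alpha_0$ produced by Theorem~\ref{thm:SVstability}) is precisely what is needed to absorb the diffusion-dominated $L^{2p}$-moment $\mathbb{E}[\|\sigma(x) W_t\|^{2p}]$, which scales like $|V(x)|^{\gamma_1+2} t^p$ rather than the more benign drift contribution.
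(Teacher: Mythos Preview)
Your proposal is correct and follows essentially the same route as the paper: both combine Theorem~\ref{thm:SVstability} with Corollary~\ref{cor:SVincrement} (taking $\beta=1$) and reduce matters to the $L^{2p}$-moment bound on the Euler increment, which you cite from~\eqref{eq:y_est} while the paper rederives it directly. The only cosmetic difference is that you fix the target $\alpha$ in advance and verify the two exponent inequalities, whereas the paper computes the minimum of the three competing constraints and then simplifies it to the same expression.
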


\begin{proof}[Proof
of Corollary~\ref{cor:SVincrement2}]
We prove
Corollary~\ref{cor:SVincrement2}
by using
Theorem~\ref{thm:SVstability}
and 
Corollary~\ref{cor:SVincrement}.
We thus need to verify
condition~\eqref{eq:checkinc}.
For this let $ \left( \Omega, \mathcal{F}, \P \right) $
be a probability space and let
$
  W = ( W^{ (1) }, \dots, W^{ (m) } ) 
  \colon [0, \infty) \times \Omega \to \R^m
$
be an $ m $-dimensional standard Brownian
motion throughout this proof.
Then note that
the assumptions
\begin{equation}
  \left\|
    \mu(x)
  \right\|
  \leq
  c \,
  |
    V(x)
  |^{
      (\gamma_0 + 1) / p
  }
\quad
\text{and}
\quad
  \|
    \sigma(x)
  \|_{
    L( \mathbb{R}^m, \mathbb{R}^d )
  }
  \leq
  c \,
  |
    V(x)
  |^{ 
      (\gamma_1 + 2)/(2p)
  } 
\end{equation}
for all $ x \in \mathbb{R}^d $
imply
\begin{equation}
\begin{split}
&
  \mathbb{E}\!\left[
  \left\|
    \mu(x) \cdot t +
    \sigma(x) W_t
  \right\|^{ 2 p }
  \right]
\\ & \leq
  \left(
    m + 1
  \right)^{ (2 p - 1) } \,
  \left(
    \left\|
      \mu(x)
    \right\|^{ 2 p }
    t^{ 2 p }
    +
    \sum_{ k = 1 }^m
    \left\|
      \sigma_k(x)
    \right\|^{ 2 p }
    \mathbb{E}\big[
      | W^{ (k) }_t |^{ 2 p }
    \big]
  \right)
\\ & \leq
  \left(
    m + 1
  \right)^{ (2 p - 1) } \,
  \mathbb{E}\big[
    | W^{ (1) }_1 |^{ 2 p }
  \big]
  \left(
    \left\|
      \mu(x)
    \right\|^{ 2 p }
    t^{ 2 p }
    +
    \sum_{ k = 1 }^m
    \left\|
      \sigma_k(x)
    \right\|^{ 2 p }
    t^{ p }
  \right)
\\ & \leq
  c^{ 2 p }
  \left(
    m + 1
  \right)^{ 2 p } \,
  \mathbb{E}\big[
    | W^{ (1) }_1 |^{ 2 p }
  \big]
  \left(
    \left|
      V(x)
    \right|^{ 
      \left(
        2 \gamma_0 + 1
      \right)
    }
    t^{ 2 p }
    +
    \left|
      V(x)
    \right|^{ 
      \left(
        \gamma_1 + 1
      \right)
    }
    t^{ p }
  \right)
  V(x)
\end{split}
\end{equation}
for all $ x \in \mathbb{R}^d $
and all $ t \in [0,T] $.
Applying
Theorem~\ref{thm:SVstability}
and
Corollary~\ref{cor:SVincrement}
therefore shows that
\eqref{eq:inc_tamed_Euler} 
is $ \alpha $-semi 
$ V $-stable 
with respect to Brownian motion
with
\begin{equation}
  \alpha
:= 
  \min\!\left(
    \frac{ 
      2 p 
    }{
      2  \gamma_0 + 1
    }
    ,
    \frac{ 
      p 
    }{
      \gamma_1 + 1
    }
    ,
    \frac{ 
      p 
    }{
      \gamma_1 + 
      2 ( \gamma_0 \vee \gamma_1 ) 
    }
  \right) 
  \in (0,\infty) .
\end{equation}
Next note that
\begin{equation}
\begin{split}
  \alpha
& = 
  \frac{ 
    p 
  }{
    \max\!\left(
      \gamma_0 + 
      \frac{1}{2}
      ,
      \gamma_1 + 1
      ,
      \gamma_1 + 
      2 ( \gamma_0 \vee \gamma_1 ) 
    \right)
  }
\\ & =
  \frac{ 
    p 
  }{
    \max\!\left(
      \gamma_0 + \frac{1}{2}
      ,
      \gamma_1 + 
      2 
      \max( 
        \gamma_0, \gamma_1, \frac{1}{2}
      )
    \right)
  }
  =
  \frac{ 
    p 
  }{
      \gamma_1 + 
      2 
      \max( 
        \gamma_0, \gamma_1, \frac{1}{2}
      )
  } .
\end{split}
\end{equation}
The proof of 
Corollary~\ref{cor:SVincrement2}
is thus completed.
\end{proof}

Note that, 
under the
assumptions of
Theorem~\ref{thm:SVstability},
the Euler-Maruyama scheme
is 
$ 
  p / (
      \gamma_1 + 
      2 
      \max( 
        \gamma_0, \gamma_1
      )
  )
$-semi $ V $-stable
with respect to Brownian motion
but the appropriate 
tamed numerical
method in 
Corollary~\ref{cor:SVincrement2}
is 
$ 
  p / (
      \gamma_1 + 
      2 
      \max( 
        \gamma_0, \gamma_1, 1/2
      )
  )
$-semi $V$-stable
with respect to Brownian motion.

\subsection{Moment bounds
for an increment-tamed Euler-Maruyama\\
scheme}
\label{sec:momentbounds2}

In 
Subsections~\ref{sec:SVEM}
and \ref{sec:SVtaming} above,
we established
under suitable assumptions
that the Euler-Maruyama scheme
(see Theorem~\ref{thm:SVstability}
in Subsection~\ref{sec:SVEM})
as well as appropriately tamed
numerical methods 
(see Corollary~\ref{cor:SVincrement}
and
Corollary~\ref{cor:SVincrement2}
in Subsection~\ref{sec:SVtaming})
are $ \alpha $-semi
$ V $-stable
with respect to Brownian
motion with $ \alpha \in (1,\infty) $,
$ V \colon \mathbb{R}^d
\rightarrow [0,\infty) $
appropriate
and $ d \in \mathbb{N} $.
Corollary~\ref{cor:FV}
can then be applied to show
that these approximations
have uniformly bounded
moments restricted to events 
whose probabilities converge to one with
convergence order $ (\alpha - 1) $.
In the case of appropriately
tamed numerical methods
(see Subsection~\ref{sec:SVtaming}
for a few simple examples
and Section~\ref{sec:schemes}
below for more examples),
it can even be shown that
the approximations have
uniformly bounded moments
without restricting to a sequence
of events whose probabilities converge to one
sufficiently fast.
In the next result this is
illustrated in the case
of the increment-tamed
Euler-Maruyama scheme
from 
Corollary~\ref{cor:SVincrement2}.
Its proof is based
on an application of
Corollary~\ref{cor:Semi.Stability} 
above.

\begin{cor}[Moment
bounds for an
increment-tamed 
Euler-Maruyama
scheme]
\label{cor:apriori_increment}
Let $ T \in (0,\infty) $,
$ d, m \in \mathbb{N} $,
$
  p \in [3,\infty)
$,
$
  c,
  \gamma_0,
  \gamma_1
  \in [0,\infty)
$,
let
$
  \mu \colon \mathbb{R}^d
  \rightarrow \mathbb{R}^d
$,
$
  \sigma 
  \colon \mathbb{R}^d
  \rightarrow \mathbb{R}^{ d \times m }
$
be Borel measurable functions,
let 
$ 
  \left( \Omega, \mathcal{F},
  ( \mathcal{F}_t )_{ t \in [0,T] } ,
  \mathbb{P} \right) 
$
be a filtered probability space,
let
$
  W \colon [0,T] \times \Omega
  \rightarrow \mathbb{R}^m
$
be a standard
$
  ( \mathcal{F}_t )_{ t \in [0,T] }
$-Brownian motion
and let
$ 
  \xi \colon \Omega
  \rightarrow \mathbb{R}^d
$
be an
$ 
  \mathcal{F}_0 
$/$ \mathcal{B}(\mathbb{R}^d) 
$-measurable mapping
with 
$
  \mathbb{E}\big[
    \| \xi \|^q 
  \big]
  < \infty
$
for all $ q \in [0,\infty) $.
Moreover, let
$ 
  V \in 
  C^3_p( \mathbb{R}^d, [1,\infty) )
$
with
\begin{equation*}
  ( \mathcal{G}_{ \mu, \sigma } V)(x)
\leq
  c \cdot V(x) ,
\;
  \left\|
    \mu(x)
  \right\|
  \leq
  c \,
  |
    V(x)
  |^{
    \left[
      \frac{ \gamma_0 + 1 }{ p }
    \right]
  } ,
\;
  \|
    \sigma(x)
  \|_{
    L( \mathbb{R}^m, \mathbb{R}^d )
  }
  \leq
  c \,
  |
    V(x)
  |^{ 
    \left[
      \frac{ \gamma_1 + 2 }{ 2p }
    \right]
  } 
\end{equation*}
for all $ x \in \mathbb{R}^d $.
Furthermore, let
$
  \bar{Y}^N \colon 
  [0,T] \times
  \Omega
  \rightarrow \mathbb{R}^d
$,
$ N \in \mathbb{N} 
$,
be a sequence of stochastic
processes 
given by
$
  \bar{Y}^N_0 = \xi 
$
and
\begin{equation}
\label{eq:defTamed}
  \bar{Y}^N_{ t }
=
  \bar{Y}^N_{ 
    \frac{ n T }{ N }
  }
+
  \left(
    \tfrac{ t N }{ T } - n
  \right) 
  \cdot
  \frac{
    \mu( 
      \bar{Y}^N_{ \frac{ n T }{ N } } 
    ) 
    \frac{ T }{ N }
    + 
    \sigma( 
      \bar{Y}^N_{ \frac{ n T }{ N } } 
    ) 
    (
      W_{ \frac{ (n + 1) T }{ N } } 
      -
      W_{ \frac{ n T }{ N } } 
    )
  }{
    \max\!\big( 1, 
      \frac{ T }{ N }
      \| 
        \mu( 
          \bar{Y}^N_{ \frac{ n T }{ N } } 
        ) 
        \frac{ T }{ N }
        + 
        \sigma( 
          \bar{Y}^N_{ \frac{ n T }{ N } } 
        ) 
        (
          W_{ \frac{ (n + 1) T }{ N } } 
          -
          W_{ \frac{ n T }{ N } } 
        )
      \| 
    \big)
  }
\end{equation}
for all 
$
  t \in 
  \big(
    \frac{ n T }{ N },
    \frac{ (n + 1) T }{ N }
  \big]
$,
$ 
  n \in \{ 0, 1, \dots, N - 1 \} 
$,
$ N \in \mathbb{N} $.
Then 
\begin{equation}
\label{eq:momentbound_incEuler}
  \sup_{ N \in \mathbb{N} }
    \sup_{ t \in [0,T] }
    \mathbb{E}\big[
      \| 
        \bar{Y}^N_t
      \|^{ 
        q
      }
    \big] 
  < \infty 
\end{equation}
for all 
$ 
  q 
  \in 
  [0,\infty)
$
with
$
  q <
  \frac{
      p 
    }{
      2 \gamma_1 + 
      4 
      \max( 
        \gamma_0, \gamma_1, 1/2
      )
    }
  -
  \frac{ 1 }{ 2 }
$
and
$
  \sup_{ 
    x \in \mathbb{R}^d
  }
  \| x \|^{ q } /
  V(x)
  < \infty
$.
\end{cor}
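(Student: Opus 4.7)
The plan is to reduce \eqref{eq:momentbound_incEuler} to the discrete-time skeleton $Y^N_n := \bar Y^N_{nT/N}$ and then to invoke the $\infty$-semi $\bar V$-bounded conclusion of Corollary~\ref{cor:Semi.Stability}. First, $Y^N$ is precisely the numerical scheme generated by the increment-tamed one-step function from Corollary~\ref{cor:SVincrement2}. Since the assumptions on $\mu$, $\sigma$, and $V$ here coincide with those of Corollary~\ref{cor:SVincrement2}, that corollary yields $\alpha$-semi $V$-stability with respect to Brownian motion with $\alpha := p / ( \gamma_1 + 2 \max( \gamma_0, \gamma_1, 1/2 ) )$. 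If $\alpha \leq 1$, then $p/(2\gamma_1 + 4\max(\gamma_0,\gamma_1,1/2)) - 1/2 \leq 0$ and the condition on $q$ is vacuous, so I assume $\alpha > 1$ in the sequel.

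Next I exploit the taming: inspection of the denominator in \eqref{eq:defTamed} shows that every one-step increment satisfies $\| Y^N_{n+1} - Y^N_n \| \leq N/T$ pointwise on $\Omega$ (if the max equals $1$, then the numerator itself is bounded by $N/T$, and otherwise the quotient has norm exactly $N/T$). Iterating yields $\| Y^N_n \| \leq \| \xi \| + N^2/T$ almost surely for every $n \in \{ 0, 1, \dots, N \}$. Fix $q$ as in the statement and set $C := \sup_{ x \in \R^d } \| x \|^q / V(x) \in [0,\infty)$ and $\bar V(x) := \| x \|^q / ( C + 1 )$ for $x \in \R^d$, so that $\bar V(x) \leq V(x)$ for all $x$. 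The moment assumption $\E[ \| \xi \|^s ] < \infty$ for every $s \in [0,\infty)$ combined with the pointwise bound above gives, for every $r \in [1,\infty)$,
\begin{equation*}
  \sup_{ 0 \leq n \leq N }
  \| \bar V( Y^N_n ) \|_{ L^r( \Omega; \R ) }
  \leq K_r \cdot N^{ 2 q }
\end{equation*}
for some $K_r \in (0,\infty)$ and all sufficiently large $N$.

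Hypothesis~\eqref{eq:PG2} of Corollary~\ref{cor:Semi.Stability} then reduces to the requirement that $\limsup_{ N \to \infty } N^{ ( 1 - \alpha )( 1 - 1/r ) + 2 q } < \infty$, which holds as soon as $2 q \leq ( \alpha - 1 )( 1 - 1/r )$. Since $q < ( \alpha - 1 ) / 2$ by assumption, I pick $r \in [1,\infty)$ large enough to satisfy this inequality. Corollary~\ref{cor:Semi.Stability} then produces $\limsup_{ N \to \infty } \sup_{ 0 \leq n \leq N } \E[ \bar V( Y^N_n ) ] < \infty$, from which the corresponding bound on $\E[ \| Y^N_n \|^q ]$ follows via the definition of $\bar V$; finiteness of the supremum over all $N \in \N$ rather than just the $\limsup$ is immediate since the moments of $\xi$ together with $\| Y^N_n \| \leq \| \xi \| + N^2/T$ give $\E[ \| Y^N_n \|^q ] < \infty$ for each fixed $N$. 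Finally, to pass from the grid to an arbitrary $t \in [0,T]$, convexity of $\| \cdot \|^q$ applied to the linear interpolation formula yields $\| \bar Y^N_t \|^q \leq \| Y^N_n \|^q + \| Y^N_{n+1} \|^q$ for $t \in [ nT/N, (n+1)T/N ]$, extending the bound to all $t$ and establishing \eqref{eq:momentbound_incEuler}.

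The main obstacle is verifying \eqref{eq:PG2}: the taming only provides the crude polynomial growth $\| Y^N_n \| \leq \| \xi \| + N^2/T$, which is far larger than the typical size of the exact solution and must be absorbed by the rare-event decay $N^{ 1 - \alpha }$ coming from Corollary~\ref{cor:Semi.Stability}. The strict inequality $q < ( \alpha - 1 ) / 2$ provides exactly the slack needed to push $r$ large enough in \eqref{eq:PG2} to cancel the $N^{ 2 q }$ overshoot.
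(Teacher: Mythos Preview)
Your proof is correct and follows essentially the same route as the paper: invoke Corollary~\ref{cor:SVincrement2} for $\alpha$-semi $V$-stability, use the pointwise taming bound $\|Y^N_n\| \leq \|\xi\| + N^2/T$ to verify the growth hypothesis~\eqref{eq:PG2} of Corollary~\ref{cor:Semi.Stability} for some large $r$ (exploiting $q < (\alpha-1)/2$), and conclude. One cosmetic remark: the interpolation step at the end appeals to ``convexity of $\|\cdot\|^q$'', which is only valid for $q \geq 1$; for $q \in (0,1)$ the same inequality follows instead from the triangle inequality plus subadditivity of $t \mapsto t^q$, so the bound (and hence the proof) still goes through.
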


\begin{proof}[Proof
of Corollary~\ref{cor:apriori_increment}]
If there exists no real number
$ 
  q 
  \in 
  (0,\infty)
$
which satisfies
\begin{equation}
\label{eq:q_to_fulfill}
  q <
  \frac{
      p 
    }{
      2 \gamma_1 + 
      4 
      \max( 
        \gamma_0, \gamma_1, 1/2
      )
    }
  -
  \frac{ 1 }{ 2 }
\qquad
\text{and}
\qquad
  \sup_{ 
    x \in \mathbb{R}^d
  }  
  \left(
  \frac{
    \| x \|^{ q } 
  }{
    V(x)
  }
  \right)
  < \infty ,
\end{equation}
then \eqref{eq:momentbound_incEuler}
follows immediately.
We thus assume 
in the following 
that 
$ 
  q 
  \in 
  (0,\infty)
$
is a real number
which satisfies
\eqref{eq:q_to_fulfill}.
Next observe that
Corollary~\ref{cor:SVincrement2}
shows
that the function
$
  \Phi \colon
  \mathbb{R}^d
  \times [0,T]
  \times \mathbb{R}^m
  \rightarrow 
  \mathbb{R}^d
$
given by
\begin{equation}
  \Phi(x,t,y)
=
  x +
  \frac{
    \mu(x) t + \sigma(x) y
  }{
    \max\left( 1, 
      t \left\| \mu(x) t + \sigma(x) y \right\| 
    \right)
  }
\end{equation}
for all
$ x \in \mathbb{R}^d $,
$ t \in [0,T] $,
$ y \in \mathbb{R}^m $ 
is
$ 
  \alpha
$-semi $V$-stable
with respect to Brownian motion
with
\begin{equation}
  \alpha :=
  \frac{ p 
  }{
    \gamma_1 + 
    2 
    \max( 
      \gamma_0, \gamma_1, 1 / 2
    )
  }
  \in (0,\infty)
  .
\end{equation}
Moreover, note that
\begin{equation}
\begin{split}
&
  \big\| 
    \bar{Y}^N_t
  \big\|
\\ & \leq
  \| \xi \| 
  +
  \sum_{ n = 0 }^{ N - 1 }
  \frac{
    \big\|
    \mu( 
      \bar{Y}^N_{ n T / N } 
    ) 
    \frac{ T }{ N }
    + 
    \sigma( 
      \bar{Y}^N_{ n T / N } 
    ) 
    (
      W_{ (k + 1) T / N } 
      -
      W_{ k T / N } 
    )
    \big\|
  }{
    \max\!\big( 1, 
      \frac{ T }{ N }
      \| 
        \mu( 
          \bar{Y}^N_{ n T / N } 
        ) 
        \frac{ T }{ N }
        + 
        \sigma( 
          \bar{Y}^N_{ n T / N } 
        ) 
        (
          W_{ (k + 1) T / N } 
          -
          W_{ n T / N } 
        )
      \| 
    \big)
  }
\\ & \leq
  \| \xi \|
  +
  \frac{ N^2 }{ T }
\end{split}
\end{equation}
for all 
$ 
  t \in 
  [ 
    0, T
  ]
$
and all
$ N \in \mathbb{N} $
and therefore
\begin{equation}
\label{eq:bootstrap1}
  \sup_{ N \in \mathbb{N} }
  \sup_{ t \in [0,T] }
  \left(
    N^{ - 2 q } \,
    \big\|
      \| 
        \bar{Y}^N_t 
      \|^{ 
        q
      }
    \big\|_{ 
      L^{ r }( \Omega; \mathbb{R} )
    }
  \right)
  < \infty 
\end{equation}
for all $ r \in (0,\infty) $.
Next note that the inequality
\begin{equation}
  1 - \alpha
=
  - 2 
  \left(
    \frac{ \alpha }{ 2 } -
    \frac{ 1 }{ 2 }
  \right)
=
  - 2 
  \left(
  \frac{
      p 
    }{
      2 \gamma_1 + 
      4 
      \max( 
        \gamma_0, \gamma_1, \frac{1}{2}
      )
    }
  -
  \frac{ 1 }{ 2 }
  \right)
<
  - 2 q
\end{equation}
proves that there exists a real number
$ r \in (1, \infty ) $ such that
\begin{equation}
  \left( 
    1 - \alpha 
  \right)
  \left(
    1 - \tfrac{ 1 }{ r }
  \right)
<
  - 2 q .
\end{equation}
Combining this with
\eqref{eq:bootstrap1}
proves that
\begin{equation}
  \sup_{ N \in \mathbb{N} }
  \sup_{ t \in [0,T] }
  \left(
    N^{ ( 1 - \alpha ) ( 1 - 1/r) } \,
    \big\|
      \| 
        \bar{Y}^N_t 
      \|^{ 
        q
      }
    \big\|_{ 
      L^{ r }( \Omega; \mathbb{R} )
    }
  \right)
  < \infty .
\end{equation}
Corollary~\ref{cor:Semi.Stability}
can thus be applied to give
\begin{equation}
  \limsup_{ N \rightarrow \infty }
  \sup_{ n \in \{ 0, 1, \dots, N \} }
  \mathbb{E}\Big[
    \big\| 
      \bar{Y}^N_{ \frac{ n T }{ N } } 
    \big\|^q
  \Big] 
  < \infty .
\end{equation}
Combining this 
and \eqref{eq:bootstrap1} 
finally implies
\begin{equation}
  \sup_{ N \in \mathbb{N} }
  \sup_{ t \in [0,T] }
  \mathbb{E}\Big[
    \big\| 
      \bar{Y}^N_{ t } 
    \big\|^q
  \Big] 
  < \infty .
\end{equation}
The proof of 
Corollary~\ref{cor:apriori_increment}
is thus completed.
\end{proof}

\begin{cor}[Powers of
the Lyapunov-type function]
\label{cor:qMoments}
Let 
$ T \in (0,\infty) $,
$ d, m \in \mathbb{N} $,
$
  p \in [3,\infty)
$,
$
  q \in [1,\infty)
$,
$
  c,
  \gamma_0,
  \gamma_1
  \in [0,\infty)
$,
let
$
  \mu \colon \mathbb{R}^d
  \rightarrow \mathbb{R}^d
$,
$
  \sigma 
  \colon \mathbb{R}^d
  \rightarrow \mathbb{R}^{ d \times m }
$
be Borel measurable 
functions
and let
$ 
  V \in C^3_p( \mathbb{R}^d, [1,\infty) )
$
with
$
  \left\|
    \mu(x)
  \right\|
  \leq
  c \,
  |
    V(x)
  |^{
      ( \gamma_0 + 1 )/ p
  }
$,
$
  \|
    \sigma(x)
  \|_{
    L( \mathbb{R}^m, \mathbb{R}^d )
  }
  \leq
  c \,
  |
    V(x)
  |^{ 
      (\gamma_1 +2 )/( 2p )
  } 
$
and
\begin{equation}
\label{eq:generatorplus2_ass}
  ( \mathcal{G}_{ \mu, \sigma } V)(x)
  + 
  \frac{
    \left( q - 1 \right)
    \| V'(x) \sigma(x) \|^2_{ 
      HS( \mathbb{R}^m, \mathbb{R} )
    }
  }{
    2 \cdot V(x)
  }
  \leq c \cdot V(x) 
\end{equation}
for  
all
$ x \in \mathbb{R}^d $.
Moreover,
let 
$ 
  \left( \Omega, \mathcal{F},
  ( \mathcal{F}_t )_{ t \in [0,T] },
  \mathbb{P} \right) 
$
be a filtered probability space,
let
$
  W \colon [0,T] \times \Omega
  \rightarrow \mathbb{R}^m
$
be a standard
$
  ( \mathcal{F}_t )_{ t \in [0,T] }
$-Brownian motion,
let
$ 
  \xi \colon \Omega
  \rightarrow \mathbb{R}^d
$
be an
$ 
  \mathcal{F}_0 
$/$ \mathcal{B}(\mathbb{R}^d) 
$-measurable mapping with 
$
  \mathbb{E}\big[
    \| \xi \|^r
  \big]
  < \infty
$
for all $ r \in [0,\infty) $
and let
$
  \bar{Y}^N \colon 
  [0,T] \times
  \Omega
  \rightarrow \mathbb{R}^d
$,
$ N \in \mathbb{N} 
$,
be a sequence of stochastic
processes 
given by
$
  \bar{Y}^N_0 = \xi 
$
and \eqref{eq:defTamed}.
Then 
\begin{equation}
  \sup_{ N \in \mathbb{N} }
    \sup_{ t \in [0,T] }
    \mathbb{E}\big[
      \| 
        \bar{Y}^N_t
      \|^{ 
        r 
      }
    \big] 
  < \infty 
\end{equation}
for all 
$ 
  r
  \in 
  [0,\infty)
$
which satisfy
$
  r <
  \frac{
      p q
    }{
      2 \gamma_1 + 
      4 
      \max( 
        \gamma_0, \gamma_1, 1/2
      )
    }
  -
  \frac{ 1 }{ 2 }
$
and
$
  \sup_{ 
    x \in \mathbb{R}^d
  }
  \| x \|^{ r } /
  V(x)
  < \infty
$.
\end{cor}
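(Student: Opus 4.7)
The plan is to deduce Corollary~\ref{cor:qMoments} directly from Corollary~\ref{cor:apriori_increment} by replacing the Lyapunov-type function $V$ with its $q$-th power $\hat V := V^q$ and the exponent $p$ with $pq$. All the computations needed for this reduction have essentially already been carried out in the proofs of Lemma~\ref{lem:potencies}, Corollary~\ref{cor:SVstability} and Corollary~\ref{cor:apriori_increment}, so the proposed argument is a clean bootstrap.

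First, Lemma~\ref{lem:potencies} shows $\hat V \in C^3_{pq}(\R^d,[1,\infty))$, so the regularity hypothesis of Corollary~\ref{cor:apriori_increment} (with $p$ replaced by $pq$) is satisfied. Second, the identities $|V(x)|^{(\gamma_0+1)/p} = |\hat V(x)|^{(\gamma_0+1)/(pq)}$ and $|V(x)|^{(\gamma_1+2)/(2p)} = |\hat V(x)|^{(\gamma_1+2)/(2pq)}$ immediately translate the assumed growth bounds on $\mu$ and $\sigma$ into the analogous bounds in terms of $\hat V$ and $pq$. Third, the chain rule computation already performed in the proof of Corollary~\ref{cor:SVstability} gives
\begin{equation*}
  ( \mathcal{G}_{ \mu, \sigma } \hat V)(x)
  = q \cdot |V(x)|^{ (q-1) } \cdot
  \left(
    ( \mathcal{G}_{ \mu, \sigma } V)(x)
    +
    \frac{ (q-1) \, \| V'(x) \sigma(x) \|^2_{ HS( \R^m, \R ) } }
         { 2 \, V(x) }
  \right)
\end{equation*}
for all $x \in \R^d$, and assumption~\eqref{eq:generatorplus2_ass} therefore yields the desired estimate $( \mathcal{G}_{ \mu, \sigma } \hat V)(x) \leq q c \cdot \hat V(x)$ for all $x \in \R^d$.

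With these three ingredients verified, Corollary~\ref{cor:apriori_increment} applied to the pair $(\hat V, pq)$ immediately produces
\begin{equation*}
  \sup_{ N \in \N }
  \sup_{ t \in [0,T] }
  \E\big[\, \| \bar Y^N_t \|^{ r } \,\big]
  < \infty
\end{equation*}
for every $r \in [0,\infty)$ with $r < \frac{ pq }{ 2 \gamma_1 + 4 \max( \gamma_0, \gamma_1, 1/2 ) } - \frac{ 1 }{ 2 }$ and $\sup_{ x \in \R^d } \| x \|^r / \hat V(x) < \infty$. The only small subtlety is that the hypothesis of Corollary~\ref{cor:qMoments} is stated in terms of $V$ rather than $\hat V$; but since $V \geq 1$ and $q \geq 1$, we have $\hat V(x) = V(x)^q \geq V(x)$ for all $x \in \R^d$, and hence $\sup_{ x \in \R^d } \| x \|^r / V(x) < \infty$ implies $\sup_{ x \in \R^d } \| x \|^r / \hat V(x) < \infty$, which is precisely what is needed to invoke Corollary~\ref{cor:apriori_increment}. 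I do not foresee any genuine obstacle, since the whole argument amounts to routine substitution into results already proved.
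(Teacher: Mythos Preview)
Your proposal is correct and follows essentially the same approach as the paper: define $\hat V = V^q$, invoke Lemma~\ref{lem:potencies} for the $C^3_{pq}$ regularity, rewrite the growth bounds on $\mu$ and $\sigma$ in terms of $\hat V$, use the chain-rule identity (as in Corollary~\ref{cor:SVstability}) together with assumption~\eqref{eq:generatorplus2_ass} to obtain $(\mathcal{G}_{\mu,\sigma}\hat V)(x)\le qc\,\hat V(x)$, and then apply Corollary~\ref{cor:apriori_increment} with $(\hat V,pq)$ in place of $(V,p)$. Your explicit observation that $\hat V\ge V$ (so the hypothesis $\sup_x\|x\|^r/V(x)<\infty$ implies the needed $\sup_x\|x\|^r/\hat V(x)<\infty$) is a step the paper leaves implicit.
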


\begin{proof}[Proof of 
Corollary~\ref{cor:qMoments}]
Define the function
$ 
  \hat{V} \colon \mathbb{R}^d
  \rightarrow [1,\infty)
$
by
$
  \hat{V}(x) 
  = 
  \left( V(x) \right)^{ q }
$
for all $ x \in \mathbb{R}^d $
and observe that
\begin{equation}
  \left\|
    \mu(x)
  \right\|
  \leq
  c \,
  |
    \hat{V}(x)
  |^{
    \left[
      \frac{ \gamma_0 + 1 }{ p q }
    \right]
  }
\qquad
  \text{and}
\qquad
  \|
    \sigma(x)
  \|_{
    L( \mathbb{R}^m, \mathbb{R}^d )
  }
  \leq
  c \,
  |
    \hat{V}(x)
  |^{ 
    \left[
      \frac{ \gamma_1 + 2 }{ 2p q }
    \right]
  } 
\end{equation}
for all $ x \in \mathbb{R}^d $.
Lemma~\ref{lem:potencies}
implies that $\hat{V}\in C^3_{pq}( \mathbb{R}^d, [1,\infty) )$
and
inequality~\eqref{eq:generatorplus2_ass}
yields that
$ 
  ( \mathcal{G}_{ \mu, \sigma } \hat{V} )( x ) 
  \leq q \cdot c \cdot \hat{V}(x)
$
for all
$
  x \in \R^d
$.
Therefore,
Corollary~\ref{cor:apriori_increment}
shows that
\begin{equation}
  \sup_{ N \in \mathbb{N} }
    \sup_{ t \in [0,T] }
    \mathbb{E}\big[
      \| 
        \bar{Y}^N_t
      \|^{ 
        r
      }
    \big] 
  < \infty 
\end{equation}
for all 
$ 
  r 
  \in 
  [0,\infty)
$
with
$
  r <
  \frac{
      pq 
    }{
      2 \gamma_1 + 
      4 
      \max( 
        \gamma_0, \gamma_1,1/2
      )
    }
  -
  \frac{ 1 }{ 2 }
$
and
$
  \sup_{ 
    x \in \mathbb{R}^d
  }
  \| x \|^{ r } /
  \hat{V}(x)
  < \infty
$.
The proof of 
Corollary~\ref{cor:qMoments}
is thus completed.
\end{proof}

We now illustrate the
moment bounds 
of
Corollary~\ref{cor:apriori_increment}
and 
of
Corollary~\ref{cor:qMoments}
by two simple corollaries.

\begin{cor}
\label{cor:SVstabilityT1}
Let 
$ c, T \in (0,\infty) $,
$ d, m \in \mathbb{N} $,
let
$
  \mu \colon \mathbb{R}^d
  \rightarrow \mathbb{R}^d
$,
$
  \sigma  
  \colon \mathbb{R}^d
  \rightarrow \mathbb{R}^{ d \times m }
$
be Borel measurable functions
and 
let
$ 
  V \in 
  \cup_{ p \in (0,\infty) } 
  C^3_p( \mathbb{R}^d, [1,\infty) )
$
be a function
with
$
  \limsup_{ 
    q \searrow 0
  }
  \sup_{ 
    x \in \mathbb{R}^d
  }
  \frac{ 
    \| x \|^{ q } 
  }{
    V(x)
  }
  < \infty
$
and with
\begin{equation}
  \sup_{ x \in \mathbb{R}^d }
  \left[
  \frac{
    ( \mathcal{G}_{ \mu, \sigma } V)(x)
  }{
    V(x)
  }
  +
  \frac{
    r \,
    \| 
      V'(x) \sigma(x) 
    \|_{ 
      L( \mathbb{R}^m, \mathbb{R} ) 
    }^2
  }{
    | V(x) |^2
  }
  \right]
  < \infty ,
\end{equation}
\begin{equation}
  \sup_{ x \in \mathbb{R}^d }
  \left[
  \frac{
    \|
      \mu(x)
    \|
    +
    \|
      \sigma(x)
    \|_{
      L( \mathbb{R}^m, \mathbb{R}^d )
    }
  }{
    \left(
      1 + \| x \|^{ c }
    \right)
  }
  \right]
  < \infty 
\end{equation}
for all $ r \in [0,\infty) $.
Moreover,
let 
$ 
  \left( \Omega, \mathcal{F},
  ( \mathcal{F}_t )_{ t \in [0,T] },
  \mathbb{P} \right) 
$
be a filtered probability space,
let
$
  W \colon [0,T] \times \Omega
  \rightarrow \mathbb{R}^m
$
be a standard
$
  ( \mathcal{F}_t )_{ t \in [0,T] }
$-Brownian motion,
let
$ 
  \xi \colon \Omega
  \rightarrow \mathbb{R}^d
$
be an
$ 
  \mathcal{F}_0 
$/$ \mathcal{B}(\mathbb{R}^d) 
$-measurable
mapping
with 
$ 
  \mathbb{E}\big[ 
    \| \xi \|^q 
  \big] < \infty 
$
for all $ q \in [0,\infty) $
and let
$
  \bar{Y}^N \colon 
  [0,T] \times
  \Omega
  \rightarrow \mathbb{R}^d
$,
$ N \in \mathbb{N} 
$,
be a sequence of stochastic
processes 
given by
$
  \bar{Y}^N_0 = \xi 
$
and \eqref{eq:defTamed}.
Then 
\begin{equation}
  \sup_{ N \in \mathbb{N} }
    \sup_{ t \in [0,T] }
    \mathbb{E}\big[
      \| 
        \bar{Y}^N_t
      \|^{ 
        q
      }
    \big] 
  < \infty 
\end{equation}
for all 
$ 
  q 
  \in 
  [0,\infty)
$.
\end{cor}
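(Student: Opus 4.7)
The plan is to apply Corollary~\ref{cor:qMoments} with the parameter $q$ chosen arbitrarily large, which produces moment bounds for $\bar{Y}^N_t$ of arbitrarily large order. I will carry out three preparatory steps and then invoke that corollary.

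First, I would fix $p \in (0,\infty)$ with $V \in C^3_p(\R^d,[1,\infty))$. Since $V \geq 1$, for every $p' \geq p$ and every $i \in \{1,2,3\}$ the inequality $V(x)^{1-i/p'} \geq V(x)^{1-i/p}$ holds, so $V \in C^3_{p'}$; hence I may replace $p$ by $\max(p,3)$ and assume $p \in [3,\infty)$, as required by Corollary~\ref{cor:qMoments}.

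Second, I would extract real numbers $\gamma_0,\gamma_1 \in [0,\infty)$ so that $\|\mu(x)\| \leq \tilde{c}\, V(x)^{(\gamma_0+1)/p}$ and $\|\sigma(x)\|_{L(\R^m,\R^d)} \leq \tilde{c}\, V(x)^{(\gamma_1+2)/(2p)}$ for all $x \in \R^d$ and a suitable constant $\tilde{c} \in [0,\infty)$. The hypothesis $\limsup_{q \searrow 0} \sup_x \|x\|^q/V(x) < \infty$ supplies constants $q_0,K_0 \in (0,\infty)$ with $\|x\|^{q_0} \leq K_0 V(x)$ for all $x$; combining this with the polynomial growth bound $\|\mu(x)\| + \|\sigma(x)\|_{L(\R^m,\R^d)} \leq c(1+\|x\|^c)$ and with $V \geq 1$ yields the desired $V$-based bounds for any $\gamma_0,\gamma_1$ satisfying $(\gamma_0+1)/p \geq c/q_0$ and $(\gamma_1+2)/(2p) \geq c/q_0$.

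Finally, given any target exponent $q \in [0,\infty)$, I would choose $Q \in [1,\infty)$ large enough that both
\begin{equation*}
  q < \frac{pQ}{2\gamma_1 + 4\max(\gamma_0,\gamma_1,1/2)} - \frac{1}{2}
  \qquad\text{and}\qquad
  q \leq q_0 Q
\end{equation*}
hold; since both right-hand sides grow linearly in $Q$, such a $Q$ exists. The second inequality together with $\|x\|^{q_0} \leq K_0 V(x)$ gives $\sup_x \|x\|^q / V(x)^Q < \infty$, which is the growth condition on $V^Q$ that feeds into Corollary~\ref{cor:qMoments} (via its internal passage to the Lyapunov function $V^Q$ and subsequent invocation of Corollary~\ref{cor:apriori_increment}). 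Specializing the assumption $\sup_x \bigl[\frac{(\mathcal{G}_{\mu,\sigma}V)(x)}{V(x)} + \frac{r\,\|V'(x)\sigma(x)\|_{L(\R^m,\R)}^2}{|V(x)|^2}\bigr] < \infty$ to $r = (Q-1)/2$ yields exactly condition~\eqref{eq:generatorplus2_ass} in Corollary~\ref{cor:qMoments}, so that corollary applied with parameter $Q$ produces $\sup_{N \in \N} \sup_{t \in [0,T]} \E[\|\bar{Y}^N_t\|^q] < \infty$. Since $q \in [0,\infty)$ was arbitrary, the proof is complete. I do not anticipate a serious technical obstacle: the content of the argument is a bootstrap through powers of $V$, exploiting that the uniform-in-$r$ generator hypothesis is tailor-made to cover every power $V^Q$.
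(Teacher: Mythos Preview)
Your proposal is correct and follows essentially the same route as the paper, which simply states that the result is an immediate consequence of Corollary~\ref{cor:qMoments}. You have spelled out the details the paper leaves implicit: upgrading $p$ to lie in $[3,\infty)$, converting the polynomial growth bound on $\mu,\sigma$ into $V$-based bounds via $\|x\|^{q_0}\le K_0 V(x)$, and then letting the parameter $Q$ (the $q$ of Corollary~\ref{cor:qMoments}) tend to infinity. Your observation that the relevant growth condition is $\sup_x \|x\|^q/V(x)^Q<\infty$ rather than $\sup_x \|x\|^q/V(x)<\infty$ is exactly right---this is what the proof of Corollary~\ref{cor:qMoments} actually uses via $\hat V=V^Q$ and Corollary~\ref{cor:apriori_increment}---so your appeal to the internal passage is appropriate.
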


Corollary~\ref{cor:SVstabilityT1}
is an immediate consequence
of Corollary~\ref{cor:qMoments}.
Note also for every 
$ d \in \mathbb{N} $
that a function
$ 
  V \colon \mathbb{R}^d
  \rightarrow [1,\infty)
$
is in
$
  \cup_{ p \in (0,\infty) }
  C^3_p( \mathbb{R}^d, [1,\infty) )
$
if and only if it is twice differentiable  
with a locally Lipschitz continuous
second derivative and 
if there
exists a real number $ c \in (0,\infty) $
such that
$
  \sum_{ i = 1 }^{ 3 }
  \|
    V^{(i)}( x )
  \|_{
    L^{ (i) }( 
      \mathbb{R}^d, \mathbb{R} 
    )
  }
  \leq
  c \,
  |
    V(x)
  |^{ 
    [ 1 - 1 / c ]
  }
$
for 
$ \lambda_{ \mathbb{R}^d } $-almost
all 
$ x \in \mathbb{R}^d $.
The next corollary of 
Corollary~\ref{cor:apriori_increment}
is the counterpart to
Corollary~\ref{cor:SVstability2}

\begin{cor}[A special polynomial like
Lyapunov-type function]
\label{cor:SVstabilityT2}
Let $ T \in (0,\infty) $,
$ 
  c,
  \gamma_0,
  \gamma_1
  \in [0,\infty) 
$,
$ p \in [3,\infty) $,
$ d, m \in \mathbb{N} $,
let
$
  \mu \colon \mathbb{R}^d
  \rightarrow \mathbb{R}^d
$,
$
  \sigma 
  \colon \mathbb{R}^d
  \rightarrow \mathbb{R}^{ d \times m }
$
be Borel measurable functions with
\begin{equation}
\label{eq:p-generatorB}
  \left< x, \mu(x) \right>
  + 
  \tfrac{ \left( p - 1 \right) }{ 2 }
  \| \sigma(x) 
  \|^2_{
    HS( \mathbb{R}^m, \mathbb{R}^d )
  }
  \leq 
  c \left(
    1 + \left\| x \right\|^2
  \right) ,
\end{equation}
\begin{equation}
  \left\|
    \mu(x)
  \right\|
  \leq
  c \,
  \big( 
    1 + 
    \left\| x \right\|^{ 
      [ \gamma_0 + 1 ]
    }
  \big)
\qquad
  \text{and}
\qquad
  \| 
    \sigma(x) 
  \|_{ 
    L( \mathbb{R}^m , \mathbb{R}^d ) 
  }
  \leq
  c \,
  \big( 
    1 + 
    \left\| x \right\|^{ 
      \left[ \frac{ \gamma_1 + 2 }{ 2 } \right]
    }
  \big)
\end{equation}
for all
$ 
  x \in \mathbb{R}^d  
$.
Moreover,
let 
$ 
  \left( \Omega, \mathcal{F},
  ( \mathcal{F}_t )_{ t \in [0,T] },
  \mathbb{P} \right) 
$
be a filtered probability space,
let
$
  W \colon [0,T] \times \Omega
  \rightarrow \mathbb{R}^m
$
be a standard
$
  ( \mathcal{F}_t )_{ t \in [0,T] }
$-Brownian motion,
let
$ 
  \xi \colon \Omega
  \rightarrow \mathbb{R}^d
$
be an
$ 
  \mathcal{F}_0 
$/$ \mathcal{B}(\mathbb{R}^d) 
$-measurable mapping
with 
$ 
  \mathbb{E}[ \| \xi \|^q ] < \infty 
$
for all $ q \in [0,\infty) $
and let
$
  \bar{Y}^N \colon 
  [0,T] \times
  \Omega
  \rightarrow \mathbb{R}^d
$,
$ N \in \mathbb{N} 
$,
be a sequence of stochastic
processes 
given by
$
  \bar{Y}^N_0 = \xi 
$
and \eqref{eq:defTamed}.
Then
\begin{equation}
\label{eq:inc_tamed_Euler_momentbound_p}
  \sup_{ N \in \mathbb{N} }
    \sup_{ t \in [0,T] }
    \mathbb{E}\big[
      \| 
        \bar{Y}^N_t
      \|^{ 
        q
      }
    \big] 
  < \infty 
\end{equation}
for all 
$ 
  q 
  \in 
  [0,\infty)
$
which satisfy
$
  q <
    \frac{
      p
    }{
      2 \gamma_1 + 
      4 
      \max( 
        \gamma_0, 
        \gamma_1,
        1/2
      )
    }
    -
    \frac{ 1 }{ 2 }
$.
\end{cor}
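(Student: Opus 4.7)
The plan is to reduce the statement to Corollary~\ref{cor:apriori_increment} via the specific Lyapunov-type function $V\colon\mathbb{R}^d\to[1,\infty)$ defined by $V(x)=1+\|x\|^p$ for all $x\in\mathbb{R}^d$, mirroring the argument used in Corollary~\ref{cor:SVstability2} but applied at the level of the increment-tamed approximations~\eqref{eq:defTamed} rather than at the level of semi $V$-stability.

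First, I would check that $V\in C^3_p(\mathbb{R}^d,[1,\infty))$. Since $p\in[3,\infty)$, the function $x\mapsto 1+\|x\|^p$ is twice continuously differentiable with locally Lipschitz continuous second derivative, and a direct calculation of the first three derivatives together with standard estimates yields a constant $\tilde{c}\in[0,\infty)$ such that $\|V^{(i)}(x)\|_{L^{(i)}(\mathbb{R}^d,\mathbb{R})}\leq\tilde{c}\,|V(x)|^{[1-i/p]}$ for all $x\in\mathbb{R}^d$ and $i\in\{1,2,3\}$. Next, following the computation that produced \eqref{eq:p-generator2} in the proof of Corollary~\ref{cor:SVstability2}, assumption~\eqref{eq:p-generatorB} yields
\begin{equation*}
  (\mathcal{G}_{\mu,\sigma}V)(x)
  \leq p\,\|x\|^{(p-2)}\left<x,\mu(x)\right>
  +\tfrac{p(p-1)}{2}\,\|x\|^{(p-2)}\|\sigma(x)\|^2_{HS(\mathbb{R}^m,\mathbb{R}^d)}
  \leq 2pc\cdot V(x)
\end{equation*}
for all $x\in\mathbb{R}^d$. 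Moreover, the polynomial growth hypotheses on $\mu$ and $\sigma$ combined with the elementary inequality $(1+\|x\|^r)\leq 2\,|V(x)|^{r/p}$ for $r\in[0,p]$ (applied with $r=\gamma_0+1$ and $r=(\gamma_1+2)/2$, both of which are allowed since $q<\frac{p}{2\gamma_1+4\max(\gamma_0,\gamma_1,1/2)}-\frac{1}{2}$ forces the relevant exponents to lie below $p$) give constants bounding $\|\mu(x)\|$ by a multiple of $|V(x)|^{[(\gamma_0+1)/p]}$ and $\|\sigma(x)\|_{L(\mathbb{R}^m,\mathbb{R}^d)}$ by a multiple of $|V(x)|^{[(\gamma_1+2)/(2p)]}$, exactly as in \eqref{eq:pgrowth1}--\eqref{eq:pgrowth2}.

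With these four ingredients in place, Corollary~\ref{cor:apriori_increment} applies with this choice of $V$ and with $c$ replaced by a suitably enlarged constant. It yields
\begin{equation*}
  \sup_{N\in\mathbb{N}}\sup_{t\in[0,T]}\mathbb{E}\big[\|\bar{Y}^N_t\|^q\big]<\infty
\end{equation*}
for every $q\in[0,\infty)$ satisfying both $q<\frac{p}{2\gamma_1+4\max(\gamma_0,\gamma_1,1/2)}-\frac{1}{2}$ and $\sup_{x\in\mathbb{R}^d}\|x\|^q/V(x)<\infty$. The second condition is automatic because $\|x\|^q/(1+\|x\|^p)$ is bounded as soon as $q\leq p$, and the admissible range of $q$ is contained in $[0,p/2-1/2]\subset[0,p]$ (since $2\gamma_1+4\max(\gamma_0,\gamma_1,1/2)\geq 2$), so this inclusion is automatic. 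Thus the bound \eqref{eq:inc_tamed_Euler_momentbound_p} holds for all admissible $q$.

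There is no genuinely hard step here: the entire argument is a matter of verifying, for the explicit Lyapunov function $V(x)=1+\|x\|^p$, the abstract hypotheses of Corollary~\ref{cor:apriori_increment}. The only mild subtlety is the bookkeeping that shows the polynomial growth hypotheses on $\mu$ and $\sigma$ translate into the correct $|V|^{[\cdot]}$-bounds and that $V$ belongs to $C^3_p(\mathbb{R}^d,[1,\infty))$ for $p\in[3,\infty)$; both calculations parallel those carried out in the proof of Corollary~\ref{cor:SVstability2} and require no new ideas.
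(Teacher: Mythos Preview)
Your proposal is correct and follows essentially the same route as the paper: define $V(x)=1+\|x\|^p$, verify $V\in C^3_p(\mathbb{R}^d,[1,\infty))$, derive $(\mathcal{G}_{\mu,\sigma}V)(x)\le 2pc\,V(x)$ and the growth bounds $\|\mu(x)\|\le 2c|V(x)|^{[(\gamma_0+1)/p]}$, $\|\sigma(x)\|_{L(\mathbb{R}^m,\mathbb{R}^d)}\le 2c|V(x)|^{[(\gamma_1+2)/(2p)]}$ exactly as in the proof of Corollary~\ref{cor:SVstability2}, apply Corollary~\ref{cor:apriori_increment}, and finally observe that the admissible range of $q$ is contained in $[0,p]$ so that $\sup_{x}\|x\|^q/V(x)<\infty$ is automatic. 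One small remark: your parenthetical justification that $\gamma_0+1$ and $(\gamma_1+2)/2$ lie below $p$ is unnecessary, since the inequality $1+\|x\|^r\le 2(1+\|x\|^p)^{r/p}$ in fact holds for every $r\in[0,\infty)$.
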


\begin{proof}[Proof
of 
Corollary~\ref{cor:SVstabilityT2}]
Let
$
  V \colon \mathbb{R}^d
  \rightarrow [1,\infty)
$
be given by
$
  V(x) = 1 + \left\| x \right\|^p
$
for all $ x \in \mathbb{R}^d $.
Then note, as in the
proof of Corollary~\ref{cor:SVstability2}, 
that
\begin{equation}
  ( \mathcal{G}_{ \mu, \sigma } V)(x)
\leq
  2 \cdot
  p \cdot c \cdot V(x) ,
\end{equation}
\begin{equation}
  \left\|
    \mu(x)
  \right\|
  \leq
  2 c 
  \left|
    V(x)
  \right|^{
    \left[
      \frac{ \gamma_0 + 1 }{ p }
    \right]
  } 
\qquad
  \text{and}
\qquad
  \left\|
    \sigma(x)
  \right\|_{
    L( \mathbb{R}^m, \mathbb{R}^d )
  }
  \leq
  2c
  \left|
    V(x)
  \right|^{ 
    \left[
      \frac{
        \gamma_1 + 2
      }{ 2 p }
    \right]
  } 
\end{equation}
for all
$ x \in \mathbb{R}^d $.
Combining this,
the fact
$
  V \in C^3_p( \mathbb{R}^d, [1,\infty) )
$
and
Corollary~\ref{cor:apriori_increment} 
then shows that 
\begin{equation}
  \sup_{ N \in \mathbb{N} }
    \sup_{ t \in [0,T] }
    \mathbb{E}\big[
      \| 
        \bar{Y}^N_t
      \|^{ 
        q
      }
    \big] 
  < \infty 
\end{equation}
for all 
$ 
  q 
  \in 
  [0,\infty)
$
which satisfy
$
  q <
  \frac{
      p 
    }{
      2 \gamma_1 + 
      4 
      \max( 
        \gamma_0, \gamma_1,
        \frac{ 1 }{ 2 }
      )
    }
  -
  \frac{ 1 }{ 2 }
$
and
$
  \sup_{ 
    x \in \mathbb{R}^d
  }
  \| x \|^{ q } /
  V(x)
  < \infty
$.
The estimate
$
  \frac{
      p 
    }{
      2 \gamma_1 + 
      4 
      \max( 
        \gamma_0, \gamma_1,
        \frac{ 1 }{ 2 }
      )
    }
  -
  \frac{ 1 }{ 2 }
  < p
$
hence implies
\eqref{eq:inc_tamed_Euler_momentbound_p}
and this completes 
the proof of 
Corollary~\ref{cor:SVstabilityT2}.
\end{proof}

\section{Implicit 
approximation schemes}
\label{sec:implicit}

In this section, stability 
properties and moment 
bounds for implicit 
approximation
schemes are analyzed.
The main results of this 
section are
Corollary~\ref{c:Lyapunov.implicit.Euler}
for the fully 
drift-implicit Euler scheme
and Lemma~\ref{l:more.Lyapunov.implicit.Euler}
for partially 
drift-implicit approximation
schemes.

\subsection{Fully drift-implicit 
approximation schemes}
\label{sec:Full.drift.implicit.approximation.schemes}

Corollary~\ref{c:Lyapunov.implicit.Euler}
below proves uniform bounds on the
$ q $-th moments of fully 
drift-implicit Euler approximations
for a class of SDEs with globally one-sided Lipschitz continuous drift coefficients
where $ q \in [0,\infty) $.
This result generalizes,
in the case of the 
fully drift-implicit Euler 
scheme,
Theorem~3.6 of 
Mao \citationand~Szpurch~\cite{MaoSzpruch2012pre}
which establishes 
uniform bounds on
the second moments of the
numerical approximation 
processes.
First we prove
two auxiliary lemmas
(Lemma~\ref{l:estimate.F}
and Lemma~\ref{l:Lyapunov.implicit.Euler}).
The first lemma
is a slight generalization 
of Lemma~3.2
of Mao \citationand~Szpruch~\cite{MaoSzpruch2012pre}.

\begin{lemma} 
  \label{l:estimate.F}
  Let $ d \in \mathbb{N} $, 
  $ c \in (0,\infty) $
  and let
  $ \mu \colon \mathbb{R}^d
  \rightarrow \mathbb{R}^d $
  be a function
  with
  $
    \langle x, \mu(x) \rangle
    \leq 
    c \left( 1 + \|x\|^2 \right)
  $ 
  for all $ x \in \R^d $.
  Then
  \begin{equation}
    1+\left\| x - \mu(x) s \right\|^2
    \leq
    e^{ 4 c (t - s) }
    \left(
      1 + 
      \left\|
        x - \mu(x) t
      \right\|^2
    \right)
  \end{equation}
  for all
  $ x \in \R^d $
  and all 
  $ s, t \in [0, \frac{ 1 }{ 4 c } ] $
  with
  $ 
    s \leq t  
  $.
\end{lemma}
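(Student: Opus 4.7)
The plan is to reduce the claim to a differential-inequality/Gronwall argument applied to the scalar function
\[
  g(r) := 1 + \| x - \mu(x) \, r \|^2,
  \qquad r \in \big[ 0, \tfrac{1}{4c} \big].
\]
Indeed, the asserted inequality $g(s) \leq e^{4c(t-s)} g(t)$ is equivalent to monotonicity of $r \mapsto e^{4cr} g(r)$, so it suffices to prove $g'(r) + 4c\, g(r) \geq 0$ on $[0, \tfrac{1}{4c}]$.

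First I would compute the derivative directly to get
\[
  g'(r) = -2 \big\langle x - \mu(x) r,\ \mu(x) \big\rangle
        = -2 \langle x, \mu(x) \rangle + 2 r \, \| \mu(x) \|^2.
\]
Then I would invoke the one-sided bound $\langle x, \mu(x)\rangle \leq c(1+\|x\|^2)$ to obtain
\[
  g'(r) \geq -2c\,(1 + \|x\|^2) + 2 r \, \| \mu(x) \|^2.
\]
The key algebraic step is to re-express $1+\|x\|^2$ in terms of $g(r)$. Using the trivial inequality $\|x\|^2 \leq 2\|x - \mu(x)r\|^2 + 2 r^2 \|\mu(x)\|^2$ one gets
\[
  1 + \|x\|^2 \leq 2\, g(r) + 2 r^2 \|\mu(x)\|^2,
\]
which, substituted above, yields
\[
  g'(r) \geq -4c\, g(r) + 2 r \, \| \mu(x) \|^2 \,(1 - 2 c r).
\]

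The point of the restriction $r \leq \tfrac{1}{4c}$ is precisely that $(1 - 2cr) \geq \tfrac{1}{2} \geq 0$, so the extra term is nonnegative and we conclude $g'(r) \geq -4c\, g(r)$ on the whole interval. An integrating-factor computation then shows that $e^{4cr} g(r)$ is nondecreasing in $r$ on $[0, \tfrac{1}{4c}]$, and evaluating at $s \leq t$ in that interval gives exactly the claimed estimate. The only mildly delicate step is the algebraic manipulation that turns the one-sided bound on $\langle x,\mu(x)\rangle$ into a lower bound of the form $-4c\,g(r)$; this works out cleanly precisely because the leftover $\|\mu(x)\|^2$-term has a good sign on $[0, \tfrac{1}{4c}]$, so no further assumption on $\mu$ is needed beyond the one-sided coercivity.
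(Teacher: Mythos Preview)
Your proof is correct. The differential-inequality argument you give is a genuinely different route from the paper's proof, though both are short and elementary.

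The paper works purely algebraically: it expands $1+\|F_s(x)\|^2$ directly (writing $F_s(x)=F_t(x)+(t-s)\mu(x)$) to get
\[
  1+\|F_s(x)\|^2 \leq 1+\|F_t(x)\|^2 + 2c(t-s)(1+\|x\|^2),
\]
then uses the special case $s=0$ of this same inequality to bound $1+\|x\|^2 \leq (1+\|F_t(x)\|^2)/(1-2ct)$, substitutes back, and finishes with $1+z\leq e^z$ together with $\tfrac{2c(t-s)}{1-2ct}\leq 4c(t-s)$ on $[0,\tfrac{1}{4c}]$. Your approach instead studies $r\mapsto e^{4cr}g(r)$ and shows it is nondecreasing via a pointwise lower bound on $g'(r)$. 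The paper's version has the slight advantage of giving the sharper intermediate exponent $\tfrac{2c(t-s)}{1-2ct}$ before relaxing to $4c(t-s)$, while your argument is arguably more transparent about why the restriction $r\leq\tfrac{1}{4c}$ enters (it is exactly what makes the leftover $\|\mu(x)\|^2$-term nonnegative). Either way the conclusion is the same.
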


\begin{proof}[Proof of 
Lemma~\ref{l:estimate.F}]
Throughout this proof, let
  $
    F \colon [0,\infty) \times \R^d \to \R^d
  $ 
be a function defined through
\begin{equation}
  F_t(x) := x - \mu(x) t 
\end{equation}
for all 
$ (t,x) \in [0,\infty) \times \R^d $.
  The assumption
   $
    \langle x,\mu(x)\rangle\leq 
    c \left(1+\|x\|^2\right)
  $ 
  for all 
  $ x \in \R^d $ then
  implies
  \begin{equation}  
  \label{eq:F.lemma.Ft}
  \begin{split}  
    1+\left\|F_s(x)\right\|^2
    &=
    1+\left\|F_t(x)+(t-s)\mu(x)\right\|^2
    \\
    &=
    1+\left\|F_t(x)\right\|^2+2\langle x-t\mu(x),(t-s)\mu(x)\rangle
    +(t-s)^2\left\| \mu(x)\right\|^2
    \\
    &=
    1+\left\|F_t(x)\right\|^2+2(t-s)\langle x,\mu(x)\rangle
    -(t+s)(t-s)\left\| \mu(x)\right\|^2
    \\
    &\leq
    1+\left\|F_t(x)\right\|^2
  + 2 c \left( t - s \right)
    \left(1+\|x\|^2\right)
  \end{split}     \end{equation}
  for all $x\in\R^d$ and all
  $ s, t \in [0,\infty) $ with $ s \leq t $.
  The special case $s=0$ in 
  \eqref{eq:F.lemma.Ft} shows
  \begin{equation}  
  \begin{split}  
  \label{eq:F.lemma.x2}
    1+\|x\|^2
    &
    \leq 
    \frac{ 
      \left(
        1 + \left\| F_t(x) \right\|^2
      \right)
    }{ 
      ( 1 - 2 t c )
    }
  \end{split}     \end{equation}
  for all 
  $ x \in \R^d $
  and all
  $ t \in [0, \frac{ 1 }{ 2 c } ) $.
  Next we 
  apply~\eqref{eq:F.lemma.x2} 
  to~\eqref{eq:F.lemma.Ft} and 
  arrive at
  \begin{equation}  
  \begin{split}  
  \label{eq:F.lemma.Fs}
  &
    1 + \left\| F_s(x) \right\|^2
  \leq
    1 + 
    \left\| F_t(x) \right\|^2
    +
    2 c \left( t - s \right)
    \tfrac{ 
      ( 
        1 + \| F_t(x) \|^2
      )
    }{ 
      ( 1 - 2 c t ) 
    }
    \\
    &=
    \left(
      1 + \left\| F_t(x) \right\|^2
    \right)
    \left(
      1
      +
      \tfrac{ 
        2 c \left( t - s \right)
      }{ 
        \left( 1 - 2 c t \right) 
      }
    \right)
  \leq
    \left( 
      1 +
      \left\| F_t(x) \right\|^2
    \right)
    e^{ 
      \frac{ 2 c (t-s) }{ ( 1 - 2 c t ) } 
    }
  \end{split}     
  \end{equation}
  for all 
  $ x \in \R^d $
  and all
  $ s, t \in [ 0, \frac{ 1 }{ 2 c } ) $
  with $ s \leq t $.
  Combining this with the estimate
  \begin{equation}
    \frac{ 2 c \left( t - s \right) }{ 
      \left( 1 - 2 c t \right)
    }
    \leq 4 c \left( t - s \right)
  \end{equation}
  for all $ s, t \in [0, \frac{ 1 }{ 4 c } ] $
  with $ s \leq t $
  completes the proof
  of Lemma~\ref{l:estimate.F}.
\end{proof}

\begin{lemma}[Stability of
the fully drift-implicit 
Euler scheme]
\label{l:Lyapunov.implicit.Euler}
  Let
  $ d, m \in \mathbb{N} $,
  $ c \in (0,\infty) $,
  $ p \in [2,\infty) $
  and
  let
  $
    \mu \colon \mathbb{R}^d
    \rightarrow \mathbb{R}^d
  $,
  $
    \sigma 
    \colon \mathbb{R}^d
    \rightarrow \mathbb{R}^{ d \times m } 
  $
  be functions with
  \begin{equation}  
  \label{eq:one-sided.Lipschitz}
    \langle x,\mu(x)\rangle
  +
    \tfrac{ ( p - 1 ) }{ 2 }
    \|
      \sigma(x)
    \|_{ HS(\R^m,\R^d)
    }^2
    \leq
    c \left( 1 + \|x\|^2
    \right)
  \end{equation}
  for all $x\in\R^d$.
  Then there exists a real
  number $ \rho \in \mathbb{R} $
  such that
  \begin{equation}  \label{eq:Lyapunov.implicit.Euler}
    \E\!\left[
      \left(
        1 + 
        \left\| 
          x + \sigma(x) W_t
        \right\|^2
      \right)^q
    \right]
    \leq
    e^{ \rho t }
    \left(
      1 + 
      \left\|
        x - \mu(x) t
      \right\|^2
    \right)^q
  \end{equation}
  for all 
  $ x \in \R^d $,
  $ t \in [0,\frac{ 1 }{ 4 c }] $
  and all
  $ q \in [0,\tfrac{p}{2}] $
  where
  $
    W \colon [0,\infty) \times \Omega
    \rightarrow \mathbb{R}^m
  $
  is an arbitrary standard Brownian
  motion on a probability space
  $ 
    \left( \Omega, \mathcal{F},
    \mathbb{P} \right) 
  $.
\end{lemma}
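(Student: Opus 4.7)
The plan is to first reduce to the top exponent $q=p/2$ and then run an It\^{o}/Gronwall argument combined with the one-sided Lipschitz condition~\eqref{eq:one-sided.Lipschitz} and Lemma~\ref{l:estimate.F}. For fixed $x\in\R^d$ the random vector $Z_{t}:=x+\sigma(x)W_{t}$ is Gaussian, so all moments are finite and Jensen's inequality applied to the concave map $y\mapsto y^{2q/p}$ gives
\begin{equation*}
  \E\!\left[(1+\|Z_t\|^2)^q\right]
  \leq
  \left(\E\!\left[(1+\|Z_t\|^2)^{p/2}\right]\right)^{\!2q/p}
\end{equation*}
for all $q\in[0,p/2]$, so it suffices to prove~\eqref{eq:Lyapunov.implicit.Euler} with $q=p/2$ (a suitably enlarged $\rho$ will then handle $q\in[0,p/2)$ since the right-hand side base is $\geq 1$).

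Next I would set $V(z):=(1+\|z\|^2)^{p/2}$ and apply It\^{o}'s formula to $V(Z_s)$. A direct computation yields
\begin{equation*}
  \tfrac{1}{2}\tr\!\big(\sigma(x)\sigma(x)^{*}\,(\text{Hess }V)(z)\big)
  \leq
  \tfrac{p(p-1)}{2}\,\|\sigma(x)\|_{HS(\R^m,\R^d)}^{2}\,(1+\|z\|^2)^{p/2-1},
\end{equation*}
where the factor $(p-1)$ comes from combining the two terms in $(\text{Hess }V)(z)$ via the elementary bound $\|\sigma(x)^{*}z\|^{2}\leq\|\sigma(x)\|_{HS}^{2}\|z\|^{2}\leq\|\sigma(x)\|_{HS}^{2}(1+\|z\|^{2})$. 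The stochastic integral is a true martingale (Gaussianity gives enough integrability) so taking expectation and using Jensen with $y\mapsto y^{(p-2)/p}$ produces, for $f(s):=\E[V(Z_s)]$, the differential inequality $f'(s)\leq\tfrac{p(p-1)}{2}\|\sigma(x)\|_{HS}^{2}\,f(s)^{(p-2)/p}$. The substitution $g(s):=f(s)^{2/p}$ turns this into the linear bound $g'(s)\leq(p-1)\|\sigma(x)\|_{HS}^{2}$, so integration yields
\begin{equation*}
  \E\!\left[(1+\|Z_t\|^2)^{p/2}\right]^{2/p}
  \;\leq\;
  (1+\|x\|^2)+(p-1)\|\sigma(x)\|_{HS}^{2}\,t.
\end{equation*}

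The final and most delicate step is converting this $(1+\|x\|^2)$-style bound into the $(1+\|x-\mu(x)t\|^2)$-style bound demanded by~\eqref{eq:Lyapunov.implicit.Euler}. The one-sided Lipschitz hypothesis~\eqref{eq:one-sided.Lipschitz} gives $(p-1)\|\sigma(x)\|_{HS}^{2}\leq 2c(1+\|x\|^2)-2\langle x,\mu(x)\rangle$; combining this with the algebraic identity
\begin{equation*}
  -2t\langle x,\mu(x)\rangle
  \;=\;
  (1+\|x-\mu(x)t\|^{2})-(1+\|x\|^{2})-t^{2}\|\mu(x)\|^{2}
\end{equation*}
gives $g(t)\leq (1+\|x-\mu(x)t\|^{2})+2ct(1+\|x\|^2)$. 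I then apply Lemma~\ref{l:estimate.F} with $s=0$, which gives $1+\|x\|^{2}\leq e^{4ct}(1+\|x-\mu(x)t\|^{2})$ on $t\in[0,\tfrac{1}{4c}]$, to obtain $g(t)\leq (1+2cte^{4ct})(1+\|x-\mu(x)t\|^{2})$. On $[0,\tfrac{1}{4c}]$ one has $e^{4ct}\leq e$, hence $1+2cte^{4ct}\leq e^{2ect}$, and raising to the $p/2$ power yields~\eqref{eq:Lyapunov.implicit.Euler} for $q=p/2$ with $\rho:=ecp$. Combined with the Jensen reduction, this constant works for every $q\in[0,p/2]$. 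The main obstacle is precisely this last bookkeeping: one has to carefully package the one-sided Lipschitz bound together with Lemma~\ref{l:estimate.F} so that the leftover cross term $-2t\langle x,\mu(x)\rangle$ is absorbed into the target $1+\|x-\mu(x)t\|^{2}$ rather than producing an uncontrolled $\|\mu(x)\|^{2}t^{2}$ contribution, which is only possible thanks to the restriction $t\leq\tfrac{1}{4c}$.
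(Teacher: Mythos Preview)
Your proof is correct and takes a genuinely different route from the paper. The paper proves \eqref{eq:Lyapunov.implicit.Euler} for each $q\in[0,\tfrac{p}{2}]$ by induction over the integer intervals $(n,n+1]$: it combines It\^o's formula for $(1+\|x+\sigma(x)W_t\|^2)^q$ with the fundamental theorem of calculus for $s\mapsto(1+\|F_s(x)\|^2)^q$, and then bounds the term $\E[(1+\|x+\sigma(x)W_s\|^2)^{q-1}]$ appearing in the right-hand side by the induction hypothesis, producing a recursively defined constant $\kappa(q)$. You instead collapse the whole range to the single exponent $q=\tfrac{p}{2}$ via Jensen's inequality, and inside the It\^o argument you close the differential inequality by a second use of Jensen, $\E[(1+\|Z_s\|^2)^{p/2-1}]\leq f(s)^{(p-2)/p}$, which after the substitution $g=f^{2/p}$ linearizes and integrates cleanly. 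Both approaches rely on the one-sided condition~\eqref{eq:one-sided.Lipschitz} and on Lemma~\ref{l:estimate.F} to convert $(1+\|x\|^2)$ into $(1+\|x-\mu(x)t\|^2)$; the difference is that you avoid the induction and obtain an explicit constant $\rho=ecp$, at the small cost of a slightly looser bound for small $q$ (the paper's $\kappa(q)$ is smaller for $q\leq 1$, though that does not matter for the statement). Your final bookkeeping step---dropping the nonpositive $-t^2\|\mu(x)\|^2$ and absorbing $2ct(1+\|x\|^2)$ via Lemma~\ref{l:estimate.F}---is exactly the same mechanism the paper uses in its base case.
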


\begin{proof}[Proof
of 
Lemma~\ref{l:Lyapunov.implicit.Euler}]
Throughout this proof,
let
  $
    F \colon [0,\infty) \times \R^d \to \R^d
  $ 
be a function defined through
  $ F_t(x) := x - \mu(x) t $ 
for all 
$ (t,x) \in [0,\infty) \times \R^d $
  and let 
  $ e^m_1 := (1, 0, \dots, 0) $, 
  $ \dots $, 
  $ e^m_m := (0, \dots, 0, 1) \in \R^m $
  be the canonical basis of
  $ \R^m $.
  It\^o's lemma yields
  \begin{equation}  
  \label{eq:F.diff_zero}
  \begin{split}  
  &
    \E\!\left[
      \left(
        1 + 
        \left\| x + \sigma(x) W_t
        \right\|^2
      \right)^q
    \right]
  \\ & =
    \left( 
      1 + \left\|x\right\|^2
    \right)^q
    +
    q 
    \left\| \sigma(x) \right\|^2_{
      HS( \R^m, \R^d )
    }
    \int_0^t 
    \E\!\left[
      \big(
        1 + 
        \left\| 
          x + \sigma(x) W_s
        \right\|^2
      \big)^{ (q -1) }
    \right]
    ds
  \\ & 
    +
    q \left( 2 q - 2 \right)
    \sum_{ k = 1 }^{ m } 
    \int_0^t 
    \E\!\left[
      \big(
        1 + 
        \left\| 
          x + \sigma(x) W_s
        \right\|^2
      \big)^{ (q -2) }
      \left|
      \left<
        x + \sigma(x) W_s, \sigma(x) e_k^{m}
      \right>
      \right|^2
    \right]
    ds
  \\ & \leq
    \left( 
      1 + \left\| x \right\|^2
    \right)^q
  \\ &
    +
    q \left( 2 q - 1 \right)
    \left\|
      \sigma(x)
    \right\|_{ HS(\R^m,\R^d) }^2
    \int_0^t 
      \E\!\left[
        \left(
          1 + 
          \left\| 
            x + \sigma(x) W_s
          \right\|^2
        \right)^{ (q - 1) }
      \right]
    ds
\end{split}
\end{equation}
for all 
$ t \in [0,\infty) $, 
$ x \in \R^d $ 
and all 
$ q \in [1,\infty) $.
  In addition, the fundamental theorem 
  of calculus implies
  \begin{equation}  
  \begin{split}  
  &
    \left(
      1 + 
      \left\| F_t(x) \right\|^2
    \right)^q
  \\ & =
    \left(
      1 + 
      \left\| F_0(x) \right\|^2
    \right)^q
    +
    2 q
    \int_0^t 
      \left(
        1 + 
        \left\| F_s(x) \right\|^2
      \right)^{ (q - 1) }
      \langle F_s(x), \tfrac{ \partial }{ \partial s } F_s( x ) \rangle
    \, ds
  \end{split}     
  \end{equation}
  and therefore
  \begin{equation}  
  \label{eq:F.fundamental}
  \begin{split}  
  &
    \left(
      1+ \left\| x \right\|^2
    \right)^q
  \\ & =
    \left(
      1 + 
      \left\| F_t(x) \right\|^2
    \right)^q
    +
    2 q
    \int_0^t 
      \left(
        1 + 
        \left\| F_s(x) \right\|^2
      \right)^{ (q - 1) }
      \langle F_s(x),\mu(x)\rangle
    \, ds
  \end{split}     
  \end{equation}
  for all 
  $ t \in [0,\infty) $, 
  $ x \in \R^d $ 
  and all 
  $ q \in [1,\infty) $.
Putting \eqref{eq:F.fundamental}
into \eqref{eq:F.diff_zero} 
gives
\begin{equation}
\label{eq:F.diff}
\begin{split}
  &
    \E\!\left[
      \left(
        1 + 
        \left\| x + \sigma(x) W_t
        \right\|^2
      \right)^q
    \right]
  \\ & \leq
    \left(
      1 + 
      \left\| F_t(x) \right\|^2
    \right)^q
    +
    2 q
    \int_0^t
    \left(
      1 + \left\|F_s(x)\right\|^2
    \right)^{ (q - 1) }
    \langle F_s(x),\mu(x)\rangle\,ds
  \\ & \quad
    +
    q \left( 2 q - 1 \right)
    \left\|
      \sigma(x)
    \right\|_{ HS(\R^m,\R^d) }^2
    \int_0^t 
    \E\!\left[
      \left(
        1 + 
        \left\|
          x + \sigma(x) W_s
        \right\|^2
      \right)^{ (q - 1) }
     \right]
     ds
  \end{split}     
  \end{equation}
  for all 
  $ t \in [0,\infty) $, 
  $ x \in \R^d $ 
  and all $ q \in [1,\infty) $.
  Roughly speaking,
  we now use \eqref{eq:F.diff}
  to prove 
  \eqref{eq:Lyapunov.implicit.Euler}
  by induction on $ q \in [0,\frac{p}{2}] $.
  More precisely, let
  $ 
    \kappa \colon [0, \frac{p}{2}] 
    \rightarrow [0,\infty) 
  $
  be a function defined recursively through
  $ \kappa( q ) := 6c q $
  for all  
  $ q \in [0,1] $
  and through
  \begin{equation}
    \kappa(q) :=
      2 p^3 c
      \exp\!\left( 
        \frac{ \kappa(q - 1) }{ 2 c } 
        + p
      \right)
  \end{equation}
  for all 
  $ 
    q \in 
    ( n, n+1 ]
    \cap [0, \frac{p}{2} ]
  $
  and all
  $ n \in \mathbb{N} $. 
  We then prove 
  \begin{equation}  
  \label{eq:Lyapunov.implicit.Euler2}
    \E\!\left[
      \left(
        1 + 
        \left\| 
          x + \sigma(x) W_t
        \right\|^2
      \right)^{ q }
    \right]
    \leq
    e^{ \kappa(q) t }
    \left(
      1 + 
      \left\|
        F_t(x)
      \right\|^2
    \right)^q
  \end{equation}
  for all 
  $ t \in [0, \tfrac{ 1 }{ 4 c }] $, 
  $ x \in \R^d $ 
  and all 
  $ q \in ( n, n+1 ] \cap [0,\frac{p}{2}] $
  by induction on 
  $ n \in \N_0 $. 
  For the case $ n = 0 $
  and $ q = 1 $, 
  we apply 
  assumption~\eqref{eq:one-sided.Lipschitz}
  and Lemma~\ref{l:estimate.F} 
  to the right-hand side 
  of~\eqref{eq:F.diff} 
  and get
  \begin{equation}  
  \label{eq:case.p1}
  \begin{split}  
  &
    \E\!\left[
      1+\left\|x+\sigma(x)W_t\right\|^2
    \right]
  \\ & \leq
    1 + 
    \left\| F_t(x) \right\|^2
    + 
    2
    \int_0^t 
      \langle F_s( x ), \mu(x) \rangle
    \, ds
    +
    \int_0^t 
      \left\|
        \sigma(x)
      \right\|_{ HS( \R^m, \R^d ) }^2
    ds
  \\ & = 
    1 + 
    \left\| F_t(x) \right\|^2
    +
    2
    \int_0^t 
    \left(
      \langle x,\mu(x) \rangle
      +
      \tfrac{ 1 }{ 2 }
      \left\|
        \sigma(x)
      \right\|_{ HS( \R^m, \R^d ) }^2
      - 
      s 
      \left\| \mu(x) \right\|^2
    \right)
    ds
  \\ & \leq
    1 + 
    \left\| F_t(x) \right\|^2
    +
    2
    \int_0^t 
    \left(
      \langle x,\mu(x) \rangle
      +
      \tfrac{ ( p - 1 ) }{ 2 }
      \left\|
        \sigma(x)
      \right\|_{ HS( \R^m, \R^d ) }^2
    \right)
    ds
  \\ & \leq 
    1 +
    \left\|
      F_t(x)
    \right\|^2
    +
    2 c 
    \int_0^t 
    \left( 1 + \|x\|^2 \right)
    ds
  \leq
    1 + 
    \left\| F_t(x) \right\|^2
    +
    2 t c \, e^{ 4 t c }
    \left(
      1 + \left\|F_t(x)\right\|^2
    \right)
  \\ & \leq
    \left(
      1 +
      2 e c t
    \right)
    \left(
      1 + \left\|F_t(x)\right\|^2
    \right)
  \leq
    e^{ 2 e c t }
    \left(
      1 + \left\| F_t(x) \right\|^2
    \right)
  \leq
    e^{ \kappa(1) t }
    \left(
      1 + \left\| F_t(x) \right\|^2
    \right)
  \end{split}     
  \end{equation}
  for all 
  $
    t \in [0, \tfrac{1}{4 c} ]
  $ and all 
  $ x \in \R^d $.
  Next observe 
  for every $ q \in (0,1) $ that
  the function
  $ 
    [0,\infty) \ni z \mapsto z^q \in [0,\infty)
  $ 
  is concave.
  Hence, Jensen's 
  inequality and~\eqref{eq:case.p1}
  imply
  \begin{equation}  
  \begin{split}
    \E\!\left[
      \left( 
        1 + \left\| x + \sigma(x) W_t \right\|^2
      \right)^q
    \right]
  & \leq
    \left(
      \E\!\left[
        1 + 
        \left\| x + \sigma(x) W_t \right\|^2
      \right]
    \right)^q
  \\ \leq
    \left(
      e^{ \kappa(1) t }
      \left( 
        1 + \left\| F_t(x) \right\|^2
      \right)
    \right)^q
    &=
      e^{ \kappa(1) q t }
      \left( 
        1 + \left\| F_t(x) \right\|^2
      \right)^q
    =
      e^{ \kappa(q) t }
      \left( 
        1 + \left\| F_t(x) \right\|^2
      \right)^q
  \end{split}     \end{equation}
  for all 
  $ t \in [0,\tfrac{1}{4 c}] $, 
  $ x \in \R^d $ 
  and all 
  $ q \in [0,1] $.
  This proves 
  \eqref{eq:Lyapunov.implicit.Euler2}
  in the base case 
  $ n = 0 $.
  For the induction step 
  $ n \to n + 1 $,
  apply the induction hypothesis 
  on the right-hand side of~\eqref{eq:F.diff}
  to obtain
  \begin{equation}  
  \begin{split}
  &
    \E\!\left[
      \left( 
        1 +
        \left\| x + \sigma(x) W_t
        \right\|^2
      \right)^q
    \right]
    -
    \left(
      1 + \left\| F_t(x) \right\|^2
    \right)^q
  \\ & \leq
    \int_0^t 
    \left( 
      1 + \left\| F_s(x) \right\|^2
    \right)^{ ( q - 1 ) }
  \\ & \cdot
    \left[
      2 q 
      \left< F_s(x), \mu(x) \right>
      +
      q \left( 2 q - 1 \right) 
      e^{ \kappa(q-1) s }
      \left\| \sigma(x) \right\|_{
        HS(\R^m,\R^d)
      }^2
    \right]
    ds
  \\ & =
    2 q
    \int_0^t
    \left(
      1 + \left\| F_s(x) \right\|^2
    \right)^{ (q - 1) }
    \Big[
      \langle x, \mu(x) \rangle
      +
      \tfrac{ (2 q - 1 ) }{ 2 }
      \left\| \sigma(x) 
      \right\|_{ HS(\R^m, \R^d ) }^2
    \Big]
    \, ds
  \\ & 
    +
    2 q 
    \int_0^t 
    \left(
      1 + 
      \left\| F_s(x) \right\|^2
    \right)^{ ( q - 1 ) }
  \\ & \cdot
    \left[
      \tfrac{ ( 2 q - 1 ) }{ 2 } 
      \left( 
        e^{ \kappa(q-1) s } - 1
      \right)
      \left\| \sigma(x)
      \right\|_{ HS(\R^m,\R^d) }^2
      - s \left\| \mu(x) \right\|^2
    \right]
    ds
  \end{split}     
  \end{equation}
  and 
  assumption~\eqref{eq:one-sided.Lipschitz}
  and
  the fact that the function
  $ 
    (0,\infty) \ni x \mapsto
    \frac{ ( e^x - 1 ) }{ x }
    \in (0,\infty)
  $
  is increasing hence show
  \begin{equation} 
  \label{eq:for.now} 
  \begin{split}
  &
    \E\!\left[
      \left( 
        1 +
        \left\| x + \sigma(x) W_t
        \right\|^2
      \right)^q
    \right]
    -
    \left(
      1 + \left\| F_t(x) \right\|^2
    \right)^q
  \\ & \leq
    2 q c
    \int_0^t
    \left(
      1 + \left\| F_s(x) \right\|^2
    \right)^{ (q - 1) }
    \left( 1 + \| x \|^2 \right)
    ds
  \\ & 
    +
    2 q 
    \int_0^t 
    \left(
      1 + 
      \left\| F_s(x) \right\|^2
    \right)^{ ( q - 1 ) }
  \\ & 
    \cdot
     s 
    \left[
      \left( q - \tfrac{ 1 }{ 2 } \right) 
      \tfrac{ 
        \left( 
          e^{ \kappa(q - 1) t } - 1
        \right)
      }{ t } 
      \left\| \sigma(x)
      \right\|_{ HS(\R^m,\R^d) }^2
      - \left\| \mu(x) \right\|^2
    \right]
    ds
  \\ & \leq
    2 q c
    \int_0^t
    \left(
      1 + \left\| F_s(x) \right\|^2
    \right)^{ (q - 1) }
    \left( 1 + \| x \|^2 \right)
    ds
  \\ & 
    +
    2 q 
    \int_0^t 
    \left(
      1 + 
      \left\| F_s(x) \right\|^2
    \right)^{ ( q - 1 ) }
  \\ & 
    \cdot
     s 
    \left[
      \left( q - \tfrac{ 1 }{ 2 } \right) 
      4 c
      \left( 
        e^{ \frac{ \kappa(q - 1) }{ 4 c } } - 1
      \right)
      \left\| \sigma(x)
      \right\|_{ HS(\R^m,\R^d) }^2
      - \left\| \mu(x) \right\|^2
    \right]
    ds
  \\ & \leq
    2 q c
    \int_0^t
    \left(
      1 + \left\| F_s(x) \right\|^2
    \right)^{ (q - 1) }
    \left( 1 + \| x \|^2 \right)
    ds
  \\ & 
    +
    2 q 
    \int_0^t 
    \left(
      1 + 
      \left\| F_s(x) \right\|^2
    \right)^{ ( q - 1 ) }
  \\ & \cdot
    s 
    \left[
      2 c p 
      \exp\!\left( 
        \tfrac{ \kappa(q - 1) }{ 4 c } 
      \right)
      \left\| \sigma(x)
      \right\|_{ HS(\R^m,\R^d) }^2
      - \left\| \mu(x) \right\|^2
    \right]
    ds
  \end{split}     
  \end{equation}
  for all 
  $ t \in [0, \tfrac{ 1 }{ 4 c }] $, 
  $ x \in \R^d $ 
  and all 
  $ q \in (n+1,n+2] \cap [0,\frac{p}{2}] $.
  Next observe that 
  Young's inequality 
  and again
  assumption~\eqref{eq:one-sided.Lipschitz}
  give
  \begin{equation}  
  \begin{split}  
  \label{eq:for.later}
  &
    r
    \left\| \sigma(x) 
    \right\|_{ HS(\R^m,\R^d) }^2
    - \left\| \mu(x) \right\|^2
  \\ & =
    \frac{ 2 r }{ (p - 1 ) }
    \left[
      \left< x, \mu(x) \right>
      +
      \tfrac{ (p - 1) }{ 2 } 
      \left\| \sigma(x) \right\|^2_{ 
        HS( \R^m, \R^d )
      }
    \right]
    -
    \frac{
      2 r \left< x, \mu(x) \right>
    }{ 
      (p - 1) 
    }
    - \left\| \mu(x) \right\|^2
  \\ & \leq
    \frac{ 
      2 r c 
      \left( 1 + \| x \|^2 \right)
    }{ (p - 1 ) }
    +
    \frac{
      2 r \left\| x \right\| 
      \left\| \mu(x) \right\|
    }{ 
      (p - 1) 
    }
    - \left\| \mu(x) \right\|^2
  \leq
    \frac{ 
      2 r c 
      \left( 1 + \| x \|^2 \right)
    }{ (p - 1 ) }
    +
    \frac{
      r^2 \left\| x \right\|^2 
    }{ 
      \left( p - 1 \right)^2 
    }
  \\ & \leq
    \left(
      \frac{ 
        2 r c 
      }{ (p - 1 ) }
      +
      \frac{
        r^2 
      }{ 
        \left( p - 1 \right)^2 
      }
    \right)
    \left( 1 + \| x \|^2 \right)
  \leq
    \left(
      2 r c 
      +
      r^2 
    \right)
    \left( 1 + \| x \|^2 \right)
  \end{split}     
  \end{equation}
  for all $ x \in \R^d $ and all
  $ r \in [0,\infty) $.
  Combining 
  \eqref{eq:for.now} 
  and 
  \eqref{eq:for.later} 
  implies
  \begin{equation}  
  \begin{split}
  &
    \E\!\left[
      \left( 
        1 +
        \left\| x + \sigma(x) W_t
        \right\|^2
      \right)^q
    \right]
    -
    \left(
      1 + \left\| F_t(x) \right\|^2
    \right)^q
  \\ & \leq
    2 q c
    \int_0^t
    \left(
      1 + \left\| F_s(x) \right\|^2
    \right)^{ (q - 1) }
    \left( 1 + \| x \|^2 \right)
    ds
  \\ & 
    +
    2 q 
    \int_0^t 
    \left(
      1 + 
      \left\| F_s(x) \right\|^2
    \right)^{ ( q - 1 ) }
    \cdot s \cdot
    \left(
      4 c^2 p 
      +
      4 c^2 p^2 
    \right)
      \exp\!\left( 
        \tfrac{ \kappa(q - 1) }{ 2 c } 
      \right)
    \left(
      1 + \| x \|^2
    \right)
     \,
    ds
  \\ & =
    2 q c
    \int_0^t
    \left(
      1 + \left\| F_s(x) \right\|^2
    \right)^{ (q - 1) }
    \left[
    1 +
    4 c s 
    \left(
      p 
      +
      p^2 
    \right)
      \exp\!\left( 
        \tfrac{ \kappa(q - 1) }{ 2 c } 
      \right)
    \right]
    \left( 1 + \| x \|^2 \right)
    ds
  \end{split}     
  \end{equation}
  and 
  Lemma~\ref{l:estimate.F}
  hence gives
  \begin{equation}  
  \begin{split}
  &
    \E\!\left[
      \left( 
        1 +
        \left\| x + \sigma(x) W_t
        \right\|^2
      \right)^q
    \right]
    -
    \left(
      1 + \left\| F_t(x) \right\|^2
    \right)^q
  \\ & \leq 
    2 q c
    \left(
      1
      +
      \tfrac{ 3 p^2 }{ 2 } 
      \exp\!\left( 
        \tfrac{ \kappa(q - 1) }{ 2 c } 
      \right)
    \right)
    \int_0^t 
    \left(
      1 + 
      \left\| F_s(x) \right\|^2
    \right)^{ ( q - 1 ) }
    \left(
      1 + \| x \|^2
    \right)
    ds
  \\ & \leq
    p c
    \left(
      1
      +
      \tfrac{ 3 p^2 }{ 2 } 
      \exp\!\left( 
        \tfrac{ \kappa(q - 1) }{ 2 c } 
      \right)
    \right)
    \int_0^t 
    e^{ 
      \left(
        4 c \left( t - s \right) \left( q - 1 \right) 
        + 4 c t
      \right)
    }
    \left(
      1 + 
      \left\| F_t(x) \right\|^2
    \right)^{ q }
    ds
  \\ & \leq
    p c
    \left(
      1
      +
      \tfrac{ 3 p^2 }{ 2 } 
    \right)
      \exp\!\left( 
        \tfrac{ \kappa(q - 1) }{ 2 c } 
      \right)
    \int_0^t 
    e^{ 
      4 c t q 
    }
    \left(
      1 + 
      \left\| F_t(x) \right\|^2
    \right)^{ q }
    ds
  \\ & \leq
    2 p^3 c \, t
      \exp\!\left( 
        \tfrac{ \kappa(q - 1) }{ 2 c } 
        + \tfrac{ p }{ 2 }
      \right)
    \left(
      1 + 
      \left\| F_t(x) \right\|^2
    \right)^{ q }
  \end{split}     
  \end{equation}
  for all 
  $ t \in [0, \tfrac{ 1 }{ 4 c }] $, 
  $ x \in \R^d $ 
  and all 
  $ q \in (n+1,n+2] \cap [0,\frac{p}{2}] $.  
  The estimate
  $ 1 + r \leq e^{ r } $ for all
  $ r \in \mathbb{R} $ therefore shows
  \begin{equation}  
  \begin{split}
  &
    \E\!\left[
      \left( 
        1 +
        \left\| x + \sigma(x) W_t
        \right\|^2
      \right)^q
    \right]
  \\ & \leq
    \left[
      1 +
      2 p^3 c \, t
      \exp\!\left( 
        \tfrac{ \kappa(q - 1) }{ 2 c } 
        + p
      \right)
    \right]
    \left(
      1 + 
      \left\| F_t(x) \right\|^2
    \right)^{ q }
  \\ & \leq
    e^{  t \cdot \kappa(q) }
    \left(
      1 + 
      \left\| F_t(x) \right\|^2
    \right)^{ q }
  \end{split}     
  \end{equation}
  for all 
  $ t \in [0, \tfrac{ 1 }{ 4 c }] $, 
  $ x \in \R^d $ 
  and all 
  $ q \in (n+1,n+2] \cap [0,\frac{p}{2}] $.
  This finishes the induction 
  step and  
  inequality~\eqref{eq:Lyapunov.implicit.Euler2}
  thus holds for all $ n \in \mathbb{N}_0 $.
  In particular,
  we get from 
  inequality~\eqref{eq:Lyapunov.implicit.Euler2}
  that
  \begin{equation}  
    \E\!\left[
      \left(
        1 + 
        \left\| 
          x + \sigma(x) W_t
        \right\|^2
      \right)^{ q }
    \right]
    \leq
    \exp\!\left( 
      t\cdot
        \sup\nolimits_{ r \in [0,\frac{p}{2}] } 
        \kappa(r) 
    \right)
    \left(
      1 + 
      \left\|
        x - \mu(x) t
      \right\|^2
    \right)^q
  \end{equation}
  for all 
  $ t \in [0, \tfrac{ 1 }{ 4 c }] $, 
  $ x \in \R^d $ 
  and all 
  $ q \in [0,\frac{p}{2}] $.
  This and the estimate
  $ 
    \sup_{ r \in [0,\frac{p}{2}] } 
    \kappa(r)
    < \infty
  $ 
  then
  complete the proof of
  Lemma~\ref{l:Lyapunov.implicit.Euler}.
\end{proof}

Now we apply
Lemma~\ref{l:Lyapunov.implicit.Euler}
and Corollary~\ref{cor:stability2}
to obtain moment bounds for 
fully drift-implicit Euler approximations.

\begin{cor}  
\label{c:Lyapunov.implicit.Euler}
  Let 
  $ d, m \in \mathbb{N} $,
  $ c, T \in (0,\infty) $,
  $ p \in [2,\infty) $
  be real numbers,
  let
  $
    \left( 
      \Omega, \mathcal{F}, 
      ( \mathcal{F}_t )_{ t \in [0,T] },
      \mathbb{P}
    \right)
  $
  be a filtered probability space,
  let 
  $ 
    W \colon [0,T] \times
    \Omega \rightarrow \mathbb{R}^m 
  $
  be a standard
  $ ( \mathcal{F}_t )_{ t \in [0,T] } $-Brownian
  motion, 
  let 
  $ 
    \xi\colon\Omega\to\R^d
  $ be an 
  $\mathcal{F}_0/\mathcal{B}(\R^d)
  $-measurable
  function 
  and let
  $
    \mu \colon \mathbb{R}^d 
    \rightarrow \mathbb{R}^d  
  $,
  $
    \sigma \colon \mathbb{R}^d
    \rightarrow \mathbb{R}^{ d \times m }
  $
  be Borel measurable functions with
  $
    \E\big[ \|\mu(\xi)\|^p \big]<\infty
  $
  and
  \begin{align}
    \langle x-y, \mu(x)-\mu(y) \rangle
    & \leq
    c \left\| x - y \right\|^2 ,
    \\
    \langle x,\mu(x) \rangle
    +
    \tfrac{ (p - 1) }{ 2 }
    \| 
      \sigma(x)
    \|_{ HS(\R^m, \R^d) }^2
  & \leq
    c \left( 1 + \|x\|^2 \right)
  \end{align}
  for all $ x, y \in \R^d $.
  Then there exists a unique 
  family 
  $
    Y^N
    \colon \{ 0, 1, \ldots, N \}
    \times \Omega \to \R^d
  $, 
  $ 
    N \in \mathbb{N} \cap ( c T, \infty )
  $,   
  of stochastic processes
  satisfying
  $ Y_0^N = \xi $ 
  and 
    \begin{equation}
      Y_{n+1}^N=
      Y_n^N
      +
      \mu( Y^N_{ n + 1 } )
      \tfrac{ T }{ N }
      +
      \sigma(Y_n^N )
      \big(
        W_{ (n + 1) T / N }
        - W_{ n T / N }
      \big)
    \end{equation}
  for all $ n \in \{0, 1, \ldots, N-1 \}$
  and all
  $ N \in \N \cap ( c T, \infty ) $
  and there exists a real number $\rho\in(0,\infty)$ such that
  \begin{equation} 
  \label{eq:moment.bound.implicit.Euler}
    \limsup_{ N \to \infty }
    \sup_{ n \in \{0,1,\ldots,N\} }
    \E\Big[
      \big\{
        1+ \|Y_{n}^N \|^2
      \big\}^{ q }
    \Big]
    \leq
    e^{\rho T}
    \cdot
    \E\!\left[
      \left\{ 
        1 + \left\| \xi \right\|^2
      \right\}^{ q }
    \right]
  \end{equation}
  for all $ q \in [0,\tfrac{p}{2}] $.
\end{cor}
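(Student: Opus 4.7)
The plan is to reduce the moment estimate to Corollary~\ref{cor:stability2} by introducing the auxiliary Lyapunov-type function $\tilde{V}_h(y) := V(y - \mu(y)h)$ with $V(y) := (1+\|y\|^2)^q$ and $h := T/N$, and then to transfer the resulting bound on $\tilde{V}_h$ back to $V$ using Lemma~\ref{l:estimate.F}. First, existence and uniqueness: the one-sided Lipschitz hypothesis yields
\begin{equation*}
\langle (x_1 - \mu(x_1)h) - (x_2 - \mu(x_2)h), x_1 - x_2\rangle \geq (1-ch)\|x_1-x_2\|^2,
\end{equation*}
so for $h < 1/c$ (i.e., $N > cT$) the map $F_h(x) := x - \mu(x)h$ is strongly monotone. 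Together with a fixed-point argument (or Browder--Minty in case $\mu$ is continuous) this produces a Borel-measurable inverse $F_h^{-1}$, and the scheme is then well-defined via $Y^N_{n+1} := F_h^{-1}(Y^N_n + \sigma(Y^N_n)\Delta W_n)$ with $\Delta W_n := W_{(n+1)h} - W_{nh}$.

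The key observation is that the implicit relation $Y^N_{n+1} - \mu(Y^N_{n+1})h = Y^N_n + \sigma(Y^N_n)\Delta W_n$ rewrites $\tilde{V}_h(Y^N_{n+1})$ as $V(Y^N_n + \sigma(Y^N_n)\Delta W_n)$, so that Lemma~\ref{l:Lyapunov.implicit.Euler} applied pointwise at $x = Y^N_n$ (which is $\mathcal{F}_{nh}$-measurable, while $\Delta W_n$ is independent of $\mathcal{F}_{nh}$ and distributed as $W_h$) yields, for $N$ large enough that $h \leq 1/(4c)$,
\begin{equation*}
  \E\big[\tilde{V}_h(Y^N_{n+1}) \,\big|\, \mathcal{F}_{nh}\big]
  \leq e^{\rho h} V(Y^N_n - \mu(Y^N_n)h) = e^{\rho h} \tilde{V}_h(Y^N_n).
\end{equation*}
This is exactly hypothesis~\eqref{eq:semi.V.stable2} of Corollary~\ref{cor:stability2} with $\zeta \equiv \infty$ and Lyapunov function $\tilde{V}_h$, so the corollary yields $\E[\tilde{V}_h(Y^N_n)] \leq e^{\rho nh}\E[\tilde{V}_h(\xi)] \leq e^{\rho T}\E[V(\xi - \mu(\xi)h)]$.

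It remains to transfer this estimate from $\tilde{V}_h$ back to $V$. Lemma~\ref{l:estimate.F} applied with $s=0$, $t=h$ gives $V(y) \leq e^{4cqh}\tilde{V}_h(y)$, whence $\E[V(Y^N_n)] \leq e^{4cqh+\rho T}\E[V(\xi - \mu(\xi)h)]$. To pass to the limit $N \to \infty$, a convexity bound yields $V(\xi - \mu(\xi)h) \leq C_q(V(\xi) + h^{2q}\|\mu(\xi)\|^{2q})$; since $2q \leq p$ and $\E[\|\mu(\xi)\|^p] < \infty$, this dominator lies in $L^1$ provided $\E[V(\xi)] < \infty$ (otherwise the target bound is vacuous), and $V(\xi - \mu(\xi)h) \to V(\xi)$ pointwise, so dominated convergence together with $e^{4cqh} \to 1$ produces
\begin{equation*}
  \limsup_{N\to\infty}\, \sup_{0 \leq n \leq N} \E\big[(1+\|Y^N_n\|^2)^q\big] \leq e^{\rho T}\E\big[(1+\|\xi\|^2)^q\big].
\end{equation*}

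The principal subtlety is the existence and Borel-measurability of $F_h^{-1}$ under only Borel measurability of $\mu$: in the standard stochastic-analysis treatment this requires either assuming continuity of $\mu$ (to invoke Browder--Minty) or more careful measurable-selection arguments. Once $Y^N$ is well-defined, the moment estimate itself follows smoothly from the interplay between Lemma~\ref{l:Lyapunov.implicit.Euler} (which provides the one-step Lyapunov decrease for $\tilde{V}_h$) and Lemma~\ref{l:estimate.F} (which handles the comparison $V \leq e^{4cqh}\tilde{V}_h$), once the auxiliary function $\tilde{V}_h$ has been identified.
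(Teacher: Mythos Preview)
Your proof is correct and follows essentially the same route as the paper: introduce the auxiliary Lyapunov function $\tilde V_h(y)=(1+\|F_h(y)\|^2)^q$ with $F_h(y)=y-\mu(y)h$, observe that the implicit relation turns the one-step estimate into an application of Lemma~\ref{l:Lyapunov.implicit.Euler}, feed this into Corollary~\ref{cor:stability2} with $\zeta\equiv\infty$, and then use Lemma~\ref{l:estimate.F} plus dominated convergence (enabled by $\E[\|\mu(\xi)\|^p]<\infty$ and the vacuous case $\E[\|\xi\|^{2q}]=\infty$) to transfer the bound back to $(1+\|\cdot\|^2)^q$. Your measurable-selection caveat for $F_h^{-1}$ under merely Borel $\mu$ is a fair technical point; the paper simply asserts unique existence from the monotonicity inequality without further comment.
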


\begin{proof}[Proof 
  of 
  Corollary~\ref{c:Lyapunov.implicit.Euler}]
  Throughout this proof, let
  $
    F \colon [0,\infty) \times \R^d \to \R^d
  $ 
  be a function defined through
  $ F_t(x) := x - \mu(x) t $ 
for all 
$ (t,x) \in [0,\infty) \times \R^d $.
  Next observe that
  \begin{equation}
    \left< 
      x - y, 
      \left( \mu(x) t - x \right) - 
      \left( \mu(y) t - y \right)
    \right> 
  \leq
    \left( c t - 1 \right)
    \left\| x - y \right\|^2
  \end{equation}
  for all $ x, y \in \mathbb{R}^d $
  and all $ t \in [0,\infty) $.
  This inequality ensures the unique 
  existence of the stochastic
  processes
  $
    Y^N
    \colon \{ 0, 1, \ldots, N \}
    \times \Omega \to \R^d
  $, 
  $ 
    N \in \mathbb{N} \cap ( c T, \infty )
  $.
  In the next step, note that
  \begin{equation}
    \big\|
      F_{ T / N }( Y^N_{ n + 1 } )
    \big\|^2 =
    \big\|
      Y_n^N
      +
      \sigma(Y_n^N )
      \big(
        W_{ ( n + 1 ) T / N }
        - W_{ n T / N }
      \big)
    \big\|^2
  \end{equation}
  for all 
  $ 
    n \in \{ 0, 1,\ldots,N - 1 \} 
  $ and all
  $ N \in \N \cap ( c T , \infty ) $.
  Lemma~\ref{l:Lyapunov.implicit.Euler} 
  hence implies the existence of 
  a real number $ \rho \in \mathbb{R} $
  such that
  \begin{equation}  \label{eq:cor.Lyapunov.implicit.Euler}
    \E\Big[
      \big\{
        1 + 
        \| 
          F_{ T / N }(
            Y_{n+1}^N
          )
        \|^2
      \big\}^{ q } \,
      \big| \, Y_n^N
    \Big]
  \leq
    \exp\!\left(
      \tfrac{ \rho T }{ N } 
    \right)
    \cdot
    \left\{
      1 + 
      \|
        F_{ T / N }(
          Y_n^N
        )
      \|^2
    \right\}^{ q }
  \end{equation}
  $ \mathbb{P} $-a.s.\ for 
  all 
  $ 
    n \in \{ 0, 1,\ldots,N - 1 \} 
  $,
  $ N \in \N \cap [ 4 c T , \infty ) $
  and all $ q \in [0,\frac{p}{2}] $.
  Next fix a real number
  $ q \in [0,\frac{p}{2}] $
  and we now prove
  \eqref{eq:moment.bound.implicit.Euler}
  for this
  $ q \in [0,\frac{p}{2}] $.
  If 
  $ 
    \E\big[ 
      \| \xi \|^{ 2 q } 
    \big] = \infty
  $,
  then \eqref{eq:moment.bound.implicit.Euler} 
  is trivial.
  We thus assume
  $
    \E\big[
      \|\xi\|^{ 2 q }
    \big]
    < \infty
  $ 
  for the rest of this proof.
  Hence, we obtain that
  \begin{equation}
    \E\big[
      \| \xi \|^{ 2 q }
      +
      \| \mu(\xi) \|^{ 2 q }
    \big]
    < \infty .
  \end{equation}
  Now we 
  apply
  Corollary~\ref{cor:stability2}
  with the Lyapunov-type function
  $ 
    V \colon \R^d \rightarrow [0,\infty) 
  $
  given by
  \begin{equation}
    V( x ) = 
    \big\{
      1 + \| F_{ T / N }( x ) \|^2
    \big\}^{ q }
  \end{equation}
  for all $ x \in \R^d $,
  with the truncation function
  $
    \zeta \colon [0,\infty)
    \rightarrow (0,\infty]
  $
  given by
  $ 
    \zeta(t) = \infty
  $
  for all $ t \in [0,\infty) $
  and with the sequence
  $ t_n \in \R $, $ n \in \N_0 $,
  given by
  $ 
    t_n = \min( n T / N, T )
  $ 
  for all 
  $ n \in \N_0 $
  to obtain
  \begin{equation}
    \sup_{ n \in \{ 0, 1, \dots, N \} }
    \E\Big[
      \big\{
        1 + 
        \| 
          F_{ T / N }( Y_{n}^N )
        \|^2
      \big\}^{ q }
    \Big]
  \leq
    e^{ \rho T }
    \cdot
    \E\Big[
      \big\{
        1 +
        \| 
          F_{ T / N }( \xi  )
        \|^2
      \big\}^{ q }
    \Big]
  \end{equation}
  for all $ n \in \{ 0, 1, \ldots, N \} $
  and all $ N \in \N \cap [ 4 c T, \infty ) $.
  Lemma~\ref{l:estimate.F}
  and the dominated convergence
  theorem  hence give
  \begin{equation}  
  \begin{split}
  &
    \limsup_{ N \to \infty }
    \sup_{ 
      n \in\{ 0, 1, \ldots, N \} 
    }
    \E\Big[
      \big\{
        1 + \| Y_n^N \|^2
      \big\}^{ q }
    \Big]
  \\ & \leq
    \lim_{ N \to \infty }
    \left(
    e^{ \frac{ 4 c T }{ N } }
    \cdot
    e^{ \rho T }
    \cdot
    \E\Big[
      \big\{
        1 + 
        \| 
          F_{ T / N }( \xi )
        \|^2
      \big\}^{ q }
    \Big]
    \right)
  \\ & =
    e^{ \rho T }
    \cdot
    \E\Big[
      \lim_{N\to\infty}
      \big\{
        1 + 
        \|
          F_{ T / N }( \xi )
        \|^2
      \big\}^{ q }
    \Big]
   =
    e^{ \rho T }
    \cdot
    \E\Big[
      \big\{ 
        1 +
        \| \xi \|^2
      \big\}^{ q }
    \Big] 
  \end{split}     
  \end{equation}
  and this completes the proof
  of 
  Corollary~\ref{c:Lyapunov.implicit.Euler}.
\end{proof}

\subsection{Partially
drift-implicit approximation 
schemes}
\label{sec:Partial.drift.implicit.approximation.schemes}

In Subsection \ref{sec:Full.drift.implicit.approximation.schemes}
above,
moment bounds
for the fully 
drift-implicit Euler
schemes have been 
established.
This subsection concentrates on partially
drift-implicit schemes.

\begin{lemma}[Partially 
drift-implicit
schemes for SDEs 
with at most
linearly growing diffusion coefficients]
  \label{l:more.Lyapunov.implicit.Euler}
  Let 
  $ h \in (0,\infty) $,
  $ c \in [1,\infty) $
  be real numbers with
  $ h c \leq \frac{ 1 }{ 4 } $, 
  let
  $ 
    \left( 
      \Omega, \mathcal{F}, 
      \mathbb{P}
    \right)
  $
  be a probability space,
  let
  $
    W \colon [0,\infty) \times
    \Omega \rightarrow \mathbb{R}^m
  $
  be a standard Brownian motion,
  let 
  $
    \varphi \colon
    \R^d \times \R^d \to \R^d
  $,
  $
    \sigma \colon
    \R^d \to \R^{ d \times m }
  $
  be Borel measurable   
  functions with
\begin{equation}  
\label{eq:phi}
    \langle y, \varphi(x,y) \rangle
  \leq
    c
    \left( 2 + \| x \|^2 + \| y \|^2 \right) ,
  \qquad
    \left\| \sigma(x) \right\|_{ 
      HS( \R^m, \R^d ) 
    }^2
    \leq
    c 
    \left( 1 + \|x\|^2 \right)
\end{equation}
for all $ x, y \in \R^d $ and let
  $
    Y \colon \mathbb{N}_0
    \times \Omega \to \R^d
  $
  be a
  stochastic process with
  \begin{equation}
    Y_{ n + 1 }
    =
    Y_n
    +
    \varphi\!\left( Y_n, Y_{n+1} \right)
    h
    +
    \sigma( Y_n ) \,
    \big(
      W_{ (n+1) h } - 
      W_{ n h }
    \big)
  \end{equation}
  for all 
  $ n \in \N_0 $. 
  Then
  \begin{equation}  
  \label{eq:more.Lyapunov.implicit}
    \bigg\|
      \sup_{ 
        k \in\{ 0, 1, \ldots, n \}
      }
      \left\| Y_k \right\|
    \bigg\|_{
      L^{ p }( \Omega; \R)
    }
  \leq
    2
    \left(
      1 + 
      \left\| Y_0 \right\|_{
        L^{ p }(\Omega; \R^d)
      }
    \right)
    \exp\!\big(
      (
        4 + 
        p^4
        \chi_{ p / 2  } 
      )
      c^2
      n h
    \big)
  \end{equation}
  for all $ p \in [4,\infty) $
  and all $ n \in \N_0 $.
\end{lemma}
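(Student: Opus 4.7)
The plan is to apply Corollary~\ref{cor:stability} with the Lyapunov-type function $V\colon\R^d\to[1,\infty)$ defined by $V(x):=1+\|x\|^2$, the time grid $t_n:=nh$, the trivial truncation $\zeta\equiv\infty$ (so that $\Omega_n=\Omega$ for every $n\in\N_0$), and the exponent $p/2\in[2,\infty)$. Once the hypotheses~\eqref{eq:semi.Lyapunov.martingal} and~\eqref{eq:variationZbounded.by.Y} are verified, the output controls $\|\sup_{k\leq n} V(Y_k)\|_{L^{p/2}(\Omega;\R)}$, which via the pointwise bound $\|Y_k\|^2\leq V(Y_k)$ and the elementary estimate $\|V(Y_0)\|_{L^{p/2}}^{1/2}\leq(1+\|\|Y_0\|^2\|_{L^{p/2}})^{1/2}\leq 1+\|Y_0\|_{L^p(\Omega;\R^d)}$ delivers~\eqref{eq:more.Lyapunov.implicit} after taking square roots and absorbing numerical constants into the $2(1+\|Y_0\|_{L^p})$ prefactor.

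First I will derive the one-step inequality. Rewriting the scheme as $Y_{n+1}-h\varphi(Y_n,Y_{n+1})=Y_n+\sigma(Y_n)\Delta W_n$ with $\Delta W_n:=W_{(n+1)h}-W_{nh}$ and squaring, the cross term $2h\langle Y_{n+1},\varphi(Y_n,Y_{n+1})\rangle$ can be controlled via the assumption $\langle y,\varphi(x,y)\rangle\leq c(2+\|x\|^2+\|y\|^2)$. Discarding the non-positive $-h^2\|\varphi(Y_n,Y_{n+1})\|^2$, rearranging, and using $hc\leq 1/4$ (so that $1/(1-2hc)\leq 1+4hc$) leads to an inequality of the form
\begin{equation*}
  1+\|Y_{n+1}\|^2
\leq
  \tfrac{(1+2hc)(1+\|Y_n\|^2)+2\langle Y_n,\sigma(Y_n)\Delta W_n\rangle+\|\sigma(Y_n)\Delta W_n\|^2}{1-2hc}.
\end{equation*}
Splitting $\|\sigma(Y_n)\Delta W_n\|^2=h\|\sigma(Y_n)\|_{HS(\R^m,\R^d)}^2+(\|\sigma(Y_n)\Delta W_n\|^2-h\|\sigma(Y_n)\|_{HS}^2)$ and invoking $\|\sigma(x)\|_{HS}^2\leq c(1+\|x\|^2)$ absorbs the conditional mean of the noise into the deterministic part, yielding $V(Y_{n+1})\leq e^{\rho h}V(Y_n)+Z_{n+1}$ with an absolute multiple $\rho$ of $c$ and
\begin{equation*}
  Z_{n+1}
:=
  (1-2hc)^{-1}\big(
    2\langle Y_n,\sigma(Y_n)\Delta W_n\rangle
    +\|\sigma(Y_n)\Delta W_n\|^2
    -h\|\sigma(Y_n)\|_{HS}^2
  \big).
\end{equation*}
Since $\Delta W_n$ is independent of $\sigma(Y_0,\dots,Y_n)$, one has $\E[Z_{n+1}\mid Y_0,\dots,Y_n]=0$, so $\sum_{k=1}^n Z_k$ is a martingale and hypothesis~\eqref{eq:semi.Lyapunov.martingal} holds with $\Omega_n=\Omega$.

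Next I will establish~\eqref{eq:variationZbounded.by.Y} at exponent $p/2$. Conditional on $\mathcal{F}_{nh}$ the inner-product term $\langle Y_n,\sigma(Y_n)\Delta W_n\rangle$ is centered Gaussian with variance $h\|\sigma(Y_n)^{*}Y_n\|^2\leq hc\,\|Y_n\|^2(1+\|Y_n\|^2)\leq hc\,|V(Y_n)|^2$, so Gaussian moment estimates together with $\|N(0,1)\|_{L^{p/2}}\leq K\sqrt{p}$ produce $\|\langle Y_n,\sigma(Y_n)\Delta W_n\rangle\|_{L^{p/2}}\leq K\sqrt{phc}\,\|V(Y_n)\|_{L^{p/2}}$. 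The two remaining terms in $Z_{n+1}$ are bounded by the operator-norm estimate $\|\sigma(Y_n)\Delta W_n\|^2\leq\|\sigma(Y_n)\|_{HS}^2\|\Delta W_n\|^2$ combined with the independence of $\Delta W_n$ from $Y_n$ and $\|\|\Delta W_n\|\|_{L^p}\leq K'\sqrt{ph}$, yielding an $L^{p/2}$-bound of the form $K''phc\,\|V(Y_n)\|_{L^{p/2}}$; with $hc\leq 1/4$ this is dominated by $K'''p\sqrt{hc}\,\|V(Y_n)\|_{L^{p/2}}$. Since $V(Y_n)\leq \sup_{k<n+1}e^{\rho(t_{n+1}-t_k)}V(Y_k)$, the choice $\nu_{n+1}:=K'''p\sqrt{hc}$ verifies~\eqref{eq:variationZbounded.by.Y}. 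Corollary~\ref{cor:stability} (applied at exponent $p/2$) then gives
\begin{equation*}
  \Big\|\sup_{k\leq n}e^{-\rho kh}V(Y_k)\Big\|_{L^{p/2}}
\leq
  \sqrt{2}\,\|V(Y_0)\|_{L^{p/2}}\exp\!\big(\chi_{p/2}\,(K''')^2p^2hcn\big),
\end{equation*}
and reinstating the factor $e^{\rho nh}$, taking square roots, and estimating $\rho\leq c^2$ together with $(K''')^2 p^2\leq p^4$ (valid for $p\geq 4$ once $K'''$ is absolute, using $c\geq 1$) absorbs all constants into the stated bound $\exp((4+p^4\chi_{p/2})c^2nh)$. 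The main technical hurdle is Step~1: the implicit dependence of $\varphi$ on $Y_{n+1}$ must be eliminated algebraically, and it is exactly the one-sided structure of the hypothesis on $\varphi$ together with the step-size restriction $hc\leq 1/4$ which makes the $\|Y_{n+1}\|^2$ contribution on the right absorbable into the $(1-2hc)^{-1}$ factor; all subsequent steps are standard Gaussian moment arithmetic and a direct invocation of the discrete Burkholder-Davis-Gundy framework already encapsulated in Corollary~\ref{cor:stability}.
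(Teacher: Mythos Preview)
Your proposal follows essentially the same route as the paper's proof: the same Lyapunov function $V(x)=1+\|x\|^2$, the same martingale increment $Z_{n+1}$, the same reduction to Corollary~\ref{cor:stability} at exponent $p/2$, and the same final square-root step. Your derivation of the one-step inequality (squaring the rearranged identity $Y_{n+1}-h\varphi=Y_n+\sigma\Delta W$ and discarding $-h^2\|\varphi\|^2$) is a mild variant of the paper's use of $2\|Y_n\|^2=2\langle Y_n,\cdot\rangle$ with Young's inequality, but both lead to the same structure with $\rho$ an absolute multiple of $c$.

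The one place where the paper is sharper is the constant tracking in the $L^{p/2}$-bound on $Z_{n+1}$. The paper invokes a dimension-free moment inequality (Lemma~7.7 in Da~Prato--Zabczyk) to obtain $\|Z_n\|_{L^{p/2}}\leq c\,p^2\sqrt{2h}\,\|V(Y_{n-1})\|_{L^{p/2}}$, i.e.\ $\nu_n=c\,p^2\sqrt{2h}$, which feeds directly into the stated exponent $(4+p^4\chi_{p/2})c^2nh$. Your bound $\nu_{n+1}=K'''p\sqrt{hc}$ with an unspecified absolute $K'''$ does not obviously give this: you would need $(K''')^2p^2c\leq 2p^4c^2$, i.e.\ $K'''\leq p\sqrt{2c}$, and your crude estimate $\|\sigma(Y_n)\Delta W_n\|\leq\|\sigma(Y_n)\|_{HS}\|\Delta W_n\|$ followed by $\|\|\Delta W_n\|\|_{L^p}$ introduces an $m$-dependent factor that the paper avoids. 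So while your strategy is correct and matches the paper's, to recover the exact constant in \eqref{eq:more.Lyapunov.implicit} you should replace the rough Gaussian estimates by the sharp Hilbert--Schmidt moment bound the paper uses.
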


\begin{proof}[Proof
of
Lemma~\ref{l:more.Lyapunov.implicit.Euler}]
  First of all, 
  let 
  $ p \in [4,\infty) $ be arbitrary.
  If 
  $ \E\big[ \| Y_0 \|^{ p } \big] = \infty 
  $, 
  then 
  inequality~\eqref{eq:more.Lyapunov.implicit}
  is trivial.
  Thus we assume 
  $ \E\big[ \| Y_0 \|^{ p } \big] < \infty $ 
  for the rest of this proof.
  In the sequel, we will show
  \eqref{eq:more.Lyapunov.implicit} 
  by an application
  of Corollary~\ref{cor:stability}.
  For this we define a stochastic process
  $ 
    Z \colon \N \times \Omega \to \R^d
  $ 
  through
  \begin{equation}  
  \label{eq:defZ}
  \begin{split}
    Z_{ n }
  := &
    \frac{ 1 }{
      ( 1 - 2 h c  ) 
    }
    \Big( 
      2 
      \left\langle 
        Y_{ n - 1 },
        \sigma( Y_{ n - 1 } )
        \big(
          W_{ n h } - 
          W_{ (n - 1) h }
        \big)
      \right\rangle
  \\ & +
      \big\|
        \sigma( Y_{ n - 1 } )
        \big(
          W_{ n h } -
          W_{ (n - 1) h }
        \big)
      \big\|^2
      -
      h
      \left\|
        \sigma( Y_{ n - 1 } )
      \right\|_{ HS( \R^m, \R^d ) }^2
  \Big)
  \end{split}     
  \end{equation}
  for all 
  $ n \in \N $ and
  we then verify 
  inequality~\eqref{eq:semi.Lyapunov.martingal}
  and
  inequality~\eqref{eq:variationZbounded.by.Y}.
  For
  inequality~\eqref{eq:semi.Lyapunov.martingal}
  note that
  Young's inequality 
  and assumption~\eqref{eq:phi}
  give
  \begin{equation}  
  \label{eq:estfirst}
  \begin{split}
  &
    2 \left\| Y_{ n } \right\|^2
    =
    2 \left\langle Y_{ n }, Y_{ n - 1 }
      + 
      \varphi\!\left( Y_{ n - 1 }, Y_{ n } \right) h
      + \sigma( Y_{ n - 1 } )
      \big(
        W_{ n h } -
        W_{ (n - 1) h }
      \big)
      \right\rangle
  \\
    & = 
    2 \left\langle Y_{ n }, Y_{ n - 1 }
      + \sigma( Y_{ n - 1 } )
      \big(
        W_{ n h } -
        W_{ (n - 1) h }
      \big)
      \right\rangle
      +
      2 h  
      \left\langle 
        Y_{ n },
        \varphi\!\left( Y_{ n - 1 }, Y_{ n } \right)
      \right\rangle
  \\ & \leq
     \left\| 
       Y_n
     \right\|^2
    +
    \left\|
      Y_{ n - 1 } + 
      \sigma( Y_{ n - 1 } )
      \big(
        W_{ n h } -
        W_{ (n - 1) h }
      \big)
    \right\|^2
  \\ & \quad
    +
    2 h c 
    \left( 
      2 + \| Y_{ n - 1 } \|^2 + 
      \| Y_n \|^2
    \right)
  \end{split}     
  \end{equation}
  for all 
  $ n \in \N $. 
  Rearranging \eqref{eq:estfirst}
  yields
  \begin{equation}  
\label{eq:lastest}
  \begin{split}
  &
    1 + \left\| Y_n \right\|^2
  \\
    &\leq
     \frac{
        1 +
       \left\|
         Y_{ n - 1 } +
         \sigma( Y_{ n - 1 } )
         \big(
           W_{ n h } -
           W_{ (n - 1) h }
         \big)
       \right\|^2
      + 2 h c 
      \left( 1 + \| Y_{ n - 1 } \|^2\right)
      }{ ( 1 - 2 h c ) }
    \\
    &=
     \frac{
        1 + \left\| Y_{ n - 1 } \right\|^2
        +
        h
        \left\| 
          \sigma( Y_{ n - 1 } ) 
        \right\|_{
          HS( \R^m, \R^d )
        }^2
        + 2 h c 
        \left(
          1 + \| Y_{ n - 1 } \|^2
        \right)
      }{ ( 1 - 2 h c ) }
      + Z_{ n }
  \\ & \leq
     \frac{
        \left(
          1 + \| Y_{ n - 1 } \|^2
        \right)
        \left( 1 + 3 h c \right)
      }{ ( 1 - 2 h c ) }
      + Z_{ n }
    \leq
      e^{ 7 h c }
      \left( 1 + \| Y_{ n - 1 } \|^2 \right)
      + Z_{ n }
  \end{split}     
  \end{equation}
  for all $ n \in \N $ where the last
inequality follows from the estimate
$ 
  \frac{ 1 + 3 x }{ 1 - 2 x } \leq e^{ 7 x} 
$
for all $ x \in [0,\frac{1}{4}] $.
Estimate~\eqref{eq:lastest} is
inequality 
\eqref{eq:semi.Lyapunov.martingal}
with $ \rho = 7 c \in [0,\infty) $.
Combinig \eqref{eq:lastest}, 
\eqref{eq:defZ},
\eqref{eq:phi} and the 
assumption that
$
  \E\big[
    \| Y_0 \|^p
  \big]
  < \infty
$
then shows that 
$
  \E\big[ \| Y_n \|^p \big] < \infty
$
for all $ n \in \N_0 $
and that
$
  \E\big[ | Z_n | \big] < \infty
$
for all $ n \in \N $.
In addition, note that
  \begin{equation}
    \E\big[
      Z_{ n } 
      \, | \, 
      ( Z_k )_{
        k \in \{ 0, 1, \ldots, n - 1 \}
      }
    \big]=0
  \end{equation}
  $ \mathbb{P} $-a.s.\ for 
  all $ n \in \N $.
  It thus remains to verify 
  inequality~\eqref{eq:variationZbounded.by.Y}
  to complete the proof of Lemma~\ref{l:more.Lyapunov.implicit.Euler}.
  For this observe that the estimate
  \begin{equation}
  \left\| 
    X - \mathbb{E}[ X ]
  \right\|_{ L^q( \Omega; \R ) }
  \leq
  2
  \left\| 
    X
  \right\|_{ L^q( \Omega; \R ) }
  \end{equation}
  for all $ q \in [1,\infty) $
  and all 
  $ \mathcal{F} $/$ \mathcal{B}( \R ) 
  $-measurable mappings
  $ X \colon \Omega \rightarrow \R $
  with $ \E\big[ | X | \big] < \infty $
  and
  Lemma~7.7 in Da Prato
  \citationand\ Zabzcyk~\cite{dz92} 
  give
  \begin{equation}  
  \begin{split}
    \left\|
      Z_{ n } 
    \right\|_{ 
      L^{ p / 2 }( \Omega; \R ) 
    }
  & \leq 
    \frac{ 
      \left\|
        \left<
          Y_{ n - 1 } ,
          \sigma( Y_{ n - 1 } )
          ( W_{ n h } - 
            W_{ (n - 1) h } 
          )
        \right>
      \right\|_{ 
        L^{ p / 2 }( \Omega; \R ) 
      }
    }{
      \left( 1/2 - h c \right)
    }
  \\ & \quad + 
    \frac{ 
      \big\|
          \sigma( Y_{ n - 1 } )
          \big(
            W_{ n h } -
            W_{ (n - 1) h  }
          \big)
      \big\|_{ 
        L^{ p } ( \Omega; \R^d ) 
      }^2  
    }{
      \left( 1/2 - h c \right)
    }
  \\ & \leq 
    \frac{ 
      \frac{ p \sqrt{ h } 
      }{ \sqrt{ 8 } } 
      \left\|   
          \left< Y_{ n - 1 },
          \sigma( Y_{ n - 1 } )( 
            \cdot 
          )
        \right>
      \right\|_{ 
        L^{ p / 2 
        }( \Omega; HS( \R^m, \R ) ) 
      }
    }{
      \left( 1/2 - h c \right)
    }
\\ & \quad
    +
    \frac{ 
      \frac{ p \left( p - 1 \right) h 
      }{ 2 }
      \left\|
        \sigma( Y_{ n - 1 } )
      \right\|_{ 
        L^{ p }( 
          \Omega; HS( \R^m, \R^d ) 
        ) 
      }^2
    }{
      \left( 1/2 - h c \right)
    }
  \end{split}
  \end{equation}
  and hence
  \begin{equation}  
  \begin{split}
    \left\|
      Z_{ n } 
    \right\|_{ 
      L^{ p / 2 }( \Omega; \R ) 
    }
  & \leq 
    \frac{
      \frac{ p \sqrt{ h } 
      }{ \sqrt{ 2 } } 
      \,
      \big\|   
        \| Y_{ n - 1 } \|
        \| \sigma( Y_{ n - 1 } ) \|_{ 
          HS( \R^m, \R^d ) 
        }
      \big\|_{ 
        L^{ p/2 }( \Omega; \R ) 
      }
    }{
      \left( 1 - 2 h c \right)
    }
  \\ & \quad
    +
    \frac{
      c h p ( p - 1 )
      \left\|
        1 + \| Y_{ n - 1 } \|^2
      \right\|_{ 
        L^{ p/2 }( \Omega; \R ) 
      }
    }{
      \left( 1 - 2 h c \right)
    }
  \\ & \leq 
    \frac{
      \frac{ p \sqrt{ h } 
      }{ \sqrt{ 2 } } 
      \,
      \big\|
        \frac{ \| Y_{ n - 1 } \|^2 
        }{ 2 }
        +
        \frac{ c }{ 2 }
        \left[
          1 + \| Y_{ n - 1 } \|^2
        \right]
      \big\|_{ 
        L^{ p/2 }( \Omega; \R ) 
      }
    }{
      \left( 1 - 2 h c \right)
    }
  \\ & \quad
    +
    \frac{
      c h p ( p - 1 )
      \left\|
        1 + \| Y_{ n - 1 } \|^2
      \right\|_{ 
        L^{ p/2 }( \Omega; \R ) 
      }
    }{
      \left( 1 - 2 h c \right)
    }
\end{split}
\end{equation}
and therefore
\begin{equation}
\begin{split}
  &
    \left\|
      Z_{ n } 
    \right\|_{ 
      L^{ p / 2 }( \Omega; \R ) 
    }
  \\ & \leq 
    \frac{
      \frac{ p \sqrt{ h } c
      }{ \sqrt{ 2 } } 
      \left\|      
        1 + \| Y_{ n - 1 } \|^2
      \right\|_{ 
        L^{ p / 2 }( \Omega; \R ) 
      }
      +
      c h p ( p - 1 )
      \left\|
        1 + \| Y_{ n - 1 } \|^2
      \right\|_{ 
        L^{ p / 2 }( \Omega; \R ) 
      }
    }{
      \left( 1 - 2 h c \right)
    }
\\ & =
  p c
  \sqrt{h}
  \left[
    \frac{
      \frac{ 1 }{ 
        \sqrt{2}
      }
      +
      \sqrt{h} \left( p - 1 \right)
    }{
      \left( 1 - 2 h c \right)
    }
    \right]
      \left\|
        1 + \| Y_{ n - 1 } \|^2
      \right\|_{ 
        L^{ p / 2 }( \Omega; \R ) 
      }
\\ & \leq
  p 
  (
    p +
    \sqrt{2} - 1 
  )
  c
  \sqrt{h}
      \left\|
        1 + \| Y_{ n - 1 } \|^2
      \right\|_{ 
        L^{ p / 2 }( \Omega; \R ) 
      }
  \\ &
  \leq
    c p^2 
    \sqrt{2 h}
      \left\|
        1 + \| Y_{ n - 1 } \|^2
      \right\|_{ 
        L^{ p / 2 }( \Omega; \R ) 
      }
  \end{split}     
  \end{equation}
  for all $ n \in \N $.
  This implies 
  inequality~\eqref{eq:variationZbounded.by.Y}
  with 
  $
    \nu \colon \mathbb{N}
    \rightarrow [0,\infty)
  $
  given by
  $ 
    \nu_n = 
    c p^2 
    \sqrt{2 h}
  $ 
  for all $ n \in \N $.
  Now we apply 
  Corollary~\ref{cor:stability}
  with
  $ \rho = 7 c $,
  with the Lyapunov-type function
  $ V \colon \R^d \rightarrow [0,\infty) $
  given by
  \begin{equation}
    V(x) = 1 + \| x \|^2 
  \end{equation}
  for all $ x \in \R^d $,
  with the truncation function
  $ \zeta \colon [0,\infty)
  \rightarrow [0,\infty] $ given by
  $ \zeta(t) = \infty $ for all 
  $ t \in [0,\infty) $
  and with the sequence
  $ t_n \in \R $, $ n \in \N_0 $,
  given by
  $
    t_n = n h 
  $
  for all
  $ n \in \N_0 $
  to obtain
  \begin{equation}  
  \begin{split}
  &
    \bigg\|
      \sup_{ k \in \{ 0, 1, \ldots, n \} }
      \left\| 
        Y_k 
      \right\|^2
    \bigg\|_{ L^{ p / 2 }( \Omega; \R) }
  \\ & \leq
    e^{ 7 c n h }
    \bigg\|
      \sup_{ k \in \{0, 1, \ldots, n \} }
      e^{ - 7 c k h }
      \left(
        1 + 
        \left\| 
          Y_k
        \right\|^2
      \right)
    \bigg\|_{ 
      L^{ p / 2 }(\Omega; \R)
    }
  \\ & \leq
    2
    e^{ 7 c n h }
    \left\|
      1 + \left\| Y_0 \right\|^2
    \right\|_{ L^{ p / 2 }(\Omega; \R) }
    \exp\!\left(
      \chi_{ \frac{ p }{ 2 } } 
      \left[
      \sum_{k=1}^n 
       2 
       p^4 c^2
       h
      \right]
    \right)
  \\ & \leq
    2
    \left(
      1 + 
      \left\| Y_0 \right\|_{
        L^{ p }(\Omega; \R^d)
      }^2
    \right)
    \exp\!\left(
      \big( 
        7 +   
        2 p^4 \chi_{ \frac{ p }{ 2 } }
      \big)
      c^2
      n h
    \right)
  \end{split}     
  \end{equation}
  for all $ n \in \N_0 $.
  This finishes the proof
  of
  Lemma~\ref{l:more.Lyapunov.implicit.Euler}.
\end{proof}

\begin{lemma}[A class
of linear implicit schemes
for one-dimensional SDEs] \label{l:Lyapunov.linear.implicit.Euler}
  Let 
  $ 
    c \in [0,\infty)  
  $,
  $ T \in (0,\infty) $,
  let
  $
    \left( 
      \Omega, \mathcal{F}, 
      ( \mathcal{F}_t )_{ t \in [0,T] },
      \mathbb{P} 
    \right)
  $
  be a filtered probability space,
  let
  $
    W \colon [0,T] \times
    \Omega \rightarrow \mathbb{R}
  $
  be a standard 
  $ ( \mathcal{F}_t )_{ t \in [0,T] } 
  $-Brownian motion,
  let
  $ 
    \xi \colon \Omega \rightarrow \mathbb{R} 
  $
  be an $ \mathcal{F}_0 
  $/$ \mathcal{B}( \mathbb{R} ) 
  $-measurable function
  and
  let 
  $ a, b, \sigma \colon \R \to \R $ 
  be Borel measurable functions
  with
  \begin{equation}  
  \label{eq:one.sided.linear.growth.d1}
    x \left( a(x) \, x + b(x) \right)
    +
    \tfrac{ 1 }{ 2 }
    | \sigma(x) |^2
    \leq
    c \left( 1 + x^2 \right),
  \quad
    a(x) \leq c ,
  \quad
    | b(x) |^2 \leq
    c \left( 1 + x^2 \right)
  \end{equation}
  for all $x\in\R$.
  Then there exists a
  unique family
  $
    Y^N \colon
    \{ 0, 1, \dots, N \}
    \times \Omega
    \to \R
  $,  
  $ N \in \mathbb{N} \cap (c T, \infty) $,
  of stochastic processes
  satisfying $ Y_0^N = \xi $ 
  and
  \begin{equation}  
  \label{eq:linear.implicit.implicit}
    Y_{n+1}^N
    =
    Y_n^N
  +
    \big(
      a(Y_n^N) \, Y_{n+1}^N
      +
      b(Y_n^N)
    \big) \,
    \tfrac{ T }{ N }
    +
    \sigma( Y_n^N ) \,
    \big(
      W_{ (n+1) T / N } - 
      W_{ n T / N }
    \big)
  \end{equation}
  for all $ n \in \{ 0, 1, \dots, N \} $ and all
  $ N \in \N \cap (c T, \infty) $
  and it holds
  \begin{equation}  
  \label{eq:linear.implicit}
  \sup_{ N \in \N \cap [2 c T, \infty) }
  \sup_{ n \in\{ 0, 1, \ldots, N \} }
  \E\!\left[
    1 + \left| Y_n^N \right|^2
  \right]
  \leq
    e^{ 
      ( 8 c + 2 ) T
    }
    \cdot
    \E\!\left[ 1 + |\xi|^2 \right] .
  \end{equation}
\end{lemma}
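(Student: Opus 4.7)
\medskip

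\noindent
\textbf{Proof proposal.}
The key observation is that the scheme \eqref{eq:linear.implicit.implicit} is \emph{linear} in the unknown $Y_{n+1}^N$. Rearranging gives
\begin{equation}
  \bigl( 1 - a( Y_n^N ) \tfrac{T}{N} \bigr) \, Y_{n+1}^N
  \;=\;
  Y_n^N + b( Y_n^N ) \tfrac{ T }{ N }
  + \sigma( Y_n^N ) \bigl( W_{ (n+1) T / N } - W_{ n T / N } \bigr)
  .
\end{equation}
Because $a(x) \le c$ by assumption and $\frac{T}{N} < \frac{1}{c}$ for every $N \in \N \cap (cT, \infty)$, the prefactor $1 - a( Y_n^N ) \tfrac{T}{N}$ is strictly positive. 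Thus $Y_{n+1}^N$ is uniquely determined as a Borel measurable function of $( Y_n^N, W_{(n+1)T/N} - W_{nT/N} )$, which gives the asserted existence and uniqueness by induction on $n$ and also identifies $Y_n^N$ as $\mathcal{F}_{nT/N}$-adapted.

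For the moment estimate \eqref{eq:linear.implicit}, I plan to square the rearranged one-step relation and take the conditional expectation with respect to $\mathcal{F}_{nT/N}$. Writing $h := T/N$ and $\Delta W_n := W_{(n+1)h} - W_{nh}$, and using that $\Delta W_n$ is independent of $\mathcal{F}_{nh}$ with $\E[\Delta W_n] = 0$ and $\E[|\Delta W_n|^2] = h$, one obtains
\begin{equation}
  \bigl( 1 - a( Y_n^N ) h \bigr)^2
  \, \E\bigl[ |Y_{n+1}^N|^2 \,\big|\, \mathcal{F}_{nh} \bigr]
  =
  |Y_n^N|^2 + 2 h \, Y_n^N b( Y_n^N )
  + h^2 \, |b(Y_n^N)|^2
  + h \, |\sigma( Y_n^N )|^2 .
\end{equation}
The one-sided growth condition~\eqref{eq:one.sided.linear.growth.d1} rewritten as
$
  Y_n^N b( Y_n^N ) + \tfrac{1}{2} |\sigma( Y_n^N )|^2
  \le c (1 + |Y_n^N|^2) - a( Y_n^N ) |Y_n^N|^2
$
together with $|b(Y_n^N)|^2 \le c (1 + |Y_n^N|^2)$ and the elementary inequality $(1 - a h)^2 \ge 1 - 2 a h$ then yields
\begin{equation}
  \bigl( 1 - a( Y_n^N ) h \bigr)^2
  \,\E\bigl[ |Y_{n+1}^N|^2 \,\big|\, \mathcal{F}_{nh} \bigr]
  \le
  \bigl( 1 - a( Y_n^N ) h \bigr)^2 |Y_n^N|^2
  +
  ( 2 h c + c h^2 ) \bigl( 1 + |Y_n^N|^2 \bigr) .
\end{equation}

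The main (minor) obstacle is to uniformly control the factor $(1 - a(Y_n^N) h)^{-2}$ after dividing through. For $N \ge 2 c T$ one has $c h \le \tfrac{1}{2}$, so in the case $a( Y_n^N ) \le 0$ the factor is bounded by $1$, while in the case $0 < a( Y_n^N ) \le c$ it is bounded by $(1 - c h)^{-2} \le 4$. Either way,
\begin{equation}
  \E\bigl[ 1 + |Y_{n+1}^N|^2 \,\big|\, \mathcal{F}_{nh} \bigr]
  \le
  \bigl( 1 + |Y_n^N|^2 \bigr)
  \bigl[ 1 + 4 ( 2 h c + c h^2 ) \bigr]
  \le
  \bigl( 1 + |Y_n^N|^2 \bigr) \, e^{ (8 c + 4 c h) h } .
\end{equation}
Since $N \ge 2 c T$ gives $4 c h \le 2$, the per-step exponent is bounded by $(8 c + 2) h$. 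Taking total expectations and iterating (equivalently, invoking Corollary~\ref{cor:stability2} with $V(x) = 1 + x^2$, $\zeta \equiv \infty$, $t_n = n h$ and $\rho = 8 c + 2$) gives
\begin{equation}
  \E\bigl[ 1 + |Y_n^N|^2 \bigr]
  \le
  e^{ (8 c + 2) n h }
  \,
  \E\bigl[ 1 + |\xi|^2 \bigr]
  \le
  e^{ (8 c + 2) T }
  \,
  \E\bigl[ 1 + |\xi|^2 \bigr]
\end{equation}
for every $n \in \{0, 1, \dots, N\}$ and every $N \in \N \cap [2 c T, \infty)$, which is \eqref{eq:linear.implicit}.
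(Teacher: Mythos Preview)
Your proof is correct and follows essentially the same approach as the paper: both solve the linear relation for $Y_{n+1}^N$, square and take (conditional) expectations, use the three growth assumptions to bound the numerator by $(1-a h)^2 Y_n^2 + (2ch + ch^2)(1+Y_n^2)$ via the identity $(1-ah)^2 \ge 1 - 2ah$, control $(1-a(Y_n^N)h)^{-2}$ uniformly for $ch \le \tfrac{1}{2}$, and then iterate (equivalently, apply Corollary~\ref{cor:stability2} with $V(x)=1+x^2$, $\zeta\equiv\infty$, $\rho=8c+2$). The only cosmetic difference is that the paper keeps the factor $(1-ct)^{-2}$ and bounds $\tfrac{(2c+ct)t}{(1-ct)^2}\le (8c+2)t$ directly, whereas you first bound $(1-ah)^{-2}\le 4$ and then use $1+8ch+4ch^2\le e^{(8c+2)h}$; both routes give the same constant.
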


\begin{proof}[Proof
of 
Lemma~\ref{l:Lyapunov.linear.implicit.Euler}]
First, note that the assumption
$ a(x) \leq c \in [0,\infty) $ 
for all $ x \in \R $ 
(see 
\eqref{eq:one.sided.linear.growth.d1})
ensures the
unique existence of 
a family
$ 
  Y^N \colon \{ 0, 1, \dots, N \} \times
  \Omega \rightarrow \R
$, $ N \in \N \cap (cT,\infty) $,
of stochastic processes
satisfying 
$ Y_0 = \xi $
and \eqref{eq:linear.implicit.implicit}.
The stochastic processes
$
  ( Y^N )_{ N \in \N }
$
thus fulfill
\begin{equation}
  Y_{n+1}^N
  =
  \frac{
    Y_n^N
  +
    b( Y_n^N ) \,
    \tfrac{ T }{ N }
    +
    \sigma( Y_n^N ) \,
    \big(
      W_{ (n+1) T / N } - 
      W_{ n T / N }
    \big)
  }{
    \left(
      1 -
      a( Y_n^N ) \tfrac{ T }{ N }
    \right)
  }
\end{equation}
for all $ n \in \{ 0, 1, \dots, N - 1 \} $
and all $ N \in \N \cap ( c T , \infty ) $.
Next observe that~\eqref{eq:one.sided.linear.growth.d1}
implies
  \begin{equation}
  \begin{split}
  &
    \mathbb{E}\!\left[
      \frac{ 
        \left(
          x 
          +
          b(x) t
          +
          \sigma(x) W_t
        \right)^2
      }{
        \left( 1 - a(x) t \right)^2
      }
    \right]
  =
      \frac{ 
        \left(
          x 
          +
          b(x) t
        \right)^2
          +
        \left| \sigma(x) \right|^2 t
      }{
        \left( 1 - a(x) t \right)^2
      }
  \\ & =
      \frac{ 
        x^2
        \left( 1 - 2 a(x) t 
        \right)
      }{
        \left( 1 - a(x) t \right)^2
      }
      +
      \frac{
        2 t \left\{ 
        x \left( a(x) \, x + b(x) \right) 
        +
        \frac{ 1 }{ 2 } 
        \left| \sigma(x) \right|^2 
        \right\}
        + \left| b(x) \right|^2 t^2
      }{
        \left( 1 - a(x) t \right)^2
      }
  \\ & \leq
    x^2 
    +
      \frac{
        2 t c \left( 1 + x^2 \right)
        + c \left( 1 + x^2 \right)
        t^2
      }{
        \left( 1 - a(x) t \right)^2
      }
  =
    x^2 
    +
    \left( 1 + x^2 \right) 
    \left(
      \frac{
        \left( 
          2 c + c t
        \right) t
      }{
        \left( 1 - a(x) t \right)^2
      }
    \right)
  \end{split}
  \end{equation}
  and therefore
  \begin{equation}
  \begin{split}
  &
    \mathbb{E}\!\left[
      1 +
      \frac{ 
        \left(
          x 
          +
          b(x) t
          +
          \sigma(x) W_t
        \right)^2
      }{
        \left( 1 - a(x) t \right)^2
      }
    \right]
  \\ & \leq
    \left( 1 + x^2 \right) 
    \left(
      1 +
      \frac{
        \left( 
          2 c + c t
        \right) t
      }{
        \left( 1 - a(x) t \right)^2
      }
    \right)
    \leq
    \left(
      1 +
      x^2
    \right)
    \exp\!\left(   
      \frac{
        \left( 2 c + c t \right) t
      }{
        \left( 1 - c t \right)^2
      }
    \right)
  \end{split}
  \end{equation}
  for all $ t \in [0,\frac{1}{c}) $
  and all $ x \in \R $.
  Hence, we obtain
  \begin{equation}
  \label{eq:vstab.lin.implicit}
  \begin{split}
  &
    \mathbb{E}\!\left[
      1 
      +
      \frac{ 
        \left(
          x 
          +
          b(x) t
          +
          \sigma(x) W_t
        \right)^2
      }{
        \left( 1 - a(x) t \right)^2
      }
    \right]
  \leq
    \left(
      1 +
      x^2
    \right)
    e^{ ( 8 c + 2 ) t }
  \end{split}
  \end{equation}
  for all $ t \in [0,\frac{ 1 }{ 2 c }] $
  and all $ x \in \R $.
  Note that \eqref{eq:vstab.lin.implicit}
  shows that the linear implicit
  scheme
  \begin{equation}
    \R \times [0,\tfrac{ 1 }{ 2 c }] \times
    \R \ni
    (x,t,y) \mapsto
    \frac{ 
      x + b(x) t + \sigma(x) y
    }{
      \left( 1 - a(x) t \right)^2
    }
    \in \R
  \end{equation}
  is
  $
    ( 1 + x^2 )_{ x \in \R }
  $-stable with respect to
  Brownian motion.
  Moreover,
  combining \eqref{eq:vstab.lin.implicit}
  and 
  Corollary~\ref{cor:stability2}
  results in \eqref{eq:linear.implicit}
  and this completes the proof
  of
  Lemma~\ref{l:Lyapunov.linear.implicit.Euler}.
\end{proof}

\chapter{Convergence 
properties 
of approximation processes 
for SDEs}
\label{chap:convergence}

With the integrability 
properties of Chapter~\ref{sec:integrability properties}
at hand, we now prove
convergence in probability, 
strong convergence
and weak convergence 
results for  
numerical approximation 
processes
for SDEs.
For this, our central assumption 
on the numerical approximation
method is a certain consistency 
property in
Definition~\ref{def:consistent} below.

\section{Setting
and assumptions}
\label{sec:convergencesetting}

%
%
Let $ T \in (0,\infty) $, 
$ d, m \in \mathbb{N} $,
let 
$ ( \Omega, \mathcal{F}, \mathbb{P} ) $
be a probability space
with a normal filtration
$ ( \mathcal{F}_t )_{ t \in [0,T] } $
and 
let 
$ W \colon [0,T] \times \Omega
\rightarrow \mathbb{R}^m $
be a
standard 
$ ( \mathcal{F}_t )_{ t \in [0,T] } 
$-Brownian motion.
Moreover, let 
$ D \subset \mathbb{R}^d $ 
be a non-empty open set,
let $ \mu \colon D 
\rightarrow \mathbb{R}^d $
and
$ 
  \sigma \colon 
  D
  \rightarrow \mathbb{R}^{ d \times m }
$
be locally
Lipschitz continuous functions
and let
$ X \colon [0,T] \times \Omega
\rightarrow D $ be an
$ ( \mathcal{F}_t )_{ t \in [0,T] } 
$-adapted stochastic
process with continuous
sample paths satisfying
\begin{equation}
\label{eq:SDE}
  X_t
=
  X_0
+
  \int_0^t 
    \mu( X_s )
  \, ds
+
  \int_0^t 
    \sigma( X_s )
  \, dW_s
\end{equation}
$ \mathbb{P} $-a.s.\ for
all $ t \in [0,T] $.
Note that we assume 
existence of a solution 
process staying in the 
open set $ D $.
Next let 
$ 
  \phi \colon
    \mathbb{R}^d \times
    [0,T] \times \mathbb{R}^m
  \rightarrow 
    \mathbb{R}^d
$
be a
Borel measurable function
and let
$ 
  \bar{Y}^N \colon 
  [0,T] \times \Omega
  \rightarrow \mathbb{R}^d 
$,
$ N \in \mathbb{N} $,
be a sequence of 
stochastic processes
defined through 
$ 
  \bar{Y}^N_0 := X_0 
$ 
and
\begin{equation}
\label{eq:recX}
  \bar{Y}^N_t
:=
  \bar{Y}^N_{ 
    \frac{ n T }{ N } 
  }
  +
  \big( 
    \tfrac{ t N }{ T } - n
  \big)
  \cdot
  \phi\big( 
    \bar{Y}^N_{ 
      \frac{ n T }{ N } 
    },
    \tfrac{T}{N}, 
    W_{ \frac{ (n+1) T }{ N } }
    -
    W_{ \frac{ n T }{ N } }
  \big)
\end{equation}
for all 
$ 
  t \in 
  \big(
    \frac{ n T }{ N },
    \frac{ (n+1) T }{ N }
  \big] 
$,
$ n \in \{ 0, 1, \dots, N-1 \} $
and all $ N \in \mathbb{N} $.
A central goal of this chapter is to give
sufficient conditions to ensure
that the stochastic
processes 
$ 
  \bar{Y}^N \colon [0,T]
  \times \Omega
  \rightarrow \mathbb{R}^d
$,
$ N \in \mathbb{N} $,
converge in a suitable sense 
to the solution process 
$ 
  X \colon [0,T] \times \Omega
  \rightarrow D
$
of the SDE~\eqref{eq:SDE}.

\section{Consistency}
\label{sec:consistent}

This section
introduces a 
consistency property 
of the increment function
$
  \phi \colon \mathbb{R}^d
  \times
  [0,T] \times \mathbb{R}^m
  \rightarrow \mathbb{R}^d
$
from Section~\ref{sec:convergencesetting}
which ensures that
the stochastic
processes 
$ 
  \bar{Y}^N \colon [0,T]
  \times \Omega
  \rightarrow \mathbb{R}^d
$,
$ N \in \mathbb{N} $,
defined in \eqref{eq:recX}
converge in probability
to the solution
process 
$ 
  X \colon [0,T] \times \Omega
  \rightarrow D
$
of the 
SDE~\eqref{eq:SDE}
(see
Theorem~\ref{thm:convergence}
below).

\begin{definition}[Consistency
of numerical methods
for SDEs driven by standard
Brownian motions]
\label{def:consistent}
Let $ T \in (0,\infty) $,
$ d, m \in \mathbb{N} $,
let $ D \subset \mathbb{R}^d $
be an open set and let
$ 
  \mu \colon 
  D \rightarrow \mathbb{R}^d
$
and
$
  \sigma \colon
  D \rightarrow 
  \mathbb{R}^{ d \times m }
$
be functions.
A Borel measurable 
function
$ 
  \phi \colon
    \mathbb{R}^d \times
    [0,T] \times \mathbb{R}^m
  \rightarrow 
    \mathbb{R}^d
$
is then said to
be 
$ (\mu, \sigma) 
$-consistent with 
respect to Brownian motion
if 
\begin{equation}
\label{ass:consistency_1}
  \limsup_{ t \searrow 0 }
  \left(
    \tfrac{1}{\sqrt{t}}
    \cdot
    \sup_{ 
      x \in K
    }
    \mathbb{E}\Big[
    \big\|
      \sigma(x) 
      W_t
      -
        \phi( 
          x, 
          t, 
          W_t
        )
    \big\|
    \Big]
  \right)
  = 0
\end{equation}
and
\begin{equation}
\label{ass:consistency_2}
  \limsup_{ t \searrow 0 }
  \left(
  \sup_{ 
    x \in K
  }
  \left\|
    \mu(x)
    -
    \tfrac{1}{t}
    \cdot 
     \mathbb{E}\big[
      \phi( 
        x, 
        t, 
        W_t
      )
    \big]
  \right\|
  \right)
  = 0
\end{equation}
for all non-empty compact sets
$ K \subset D $
where
$
  W \colon [0,T] \times
  \Omega \rightarrow 
  \mathbb{R}^m 
$
is an arbitrary standard Brownian
motion on a probability
space 
$
  \left( 
    \Omega, \mathcal{F}, \mathbb{P}
  \right)
$.
\end{definition}
%
%
%

Note that
\eqref{ass:consistency_1}
in 
Definition~\ref{def:consistent}
assures
that the expectation in
\eqref{ass:consistency_2} 
is well-defined.
Further consistency notions
for numerical approximation
schemes for SDEs and results
for such schemes in the
case of SDEs
with globally Lipschitz
continuous coefficients
can, e.g., be found in 
Section~9.6 of
Kloeden \citationand\ Platen~\cite{kp92},
in Chapter~1 of Milstein~\cite{m95},
in Beyn \& Kruse~\cite{bk10},
in Kruse~\cite{k11}
and in the references therein.
The next lemma gives 
a simple
characterization of 
$ (\mu,\sigma) $-consistency 
with respect to Brownian motion.
Its proof 
is straightforward and therefore
omitted.

\begin{lemma}
\label{lem:consistent}
Let $ T \in (0,\infty) $,
$ d, m \in \mathbb{N} $,
let $ D \subset \mathbb{R}^d $
be an open set and let
$ 
  \mu \colon 
  D \rightarrow \mathbb{R}^d
$
and
$
  \sigma \colon
  D \rightarrow 
  \mathbb{R}^{ d \times m }
$
be functions.
A Borel measurable 
function
$ 
  \phi \colon
    \mathbb{R}^d \times
    [0,T] \times \mathbb{R}^m
  \rightarrow 
    \mathbb{R}^d
$
is then 
$ (\mu, \sigma) $-consistent with respect to Brownian motion
if and only if 
\begin{equation}
\label{ass:consistency_1B}
  \limsup_{ t \searrow 0 }
  \left(
  \tfrac{1
  }{
    \sqrt{t}
  }
  \cdot
    \sup\nolimits_{ 
      x \in D_v
    }
    \mathbb{E}\big[
    \|
      \sigma(x) 
      W_t
      -
        \phi( 
          x, 
          t, 
          W_t
        )
    \|
    \big]
  \right)
  = 0
\end{equation}
and
\begin{equation}
\label{ass:consistency_2B}
  \limsup_{ t \searrow 0 }
  \left(
  \tfrac{1
  }{ t }
  \cdot
  \sup_{ 
    x \in D_v
  }
  \big\|
    \mu(x) t
    -
     \mathbb{E}\big[
      \phi( 
        x, 
        t, 
        W_t
      )
    \big]
  \big\|
  \right)
  = 0
\end{equation}
for all $ v \in \mathbb{N} $
where
the sets
$ D_v \subset D $,
$ v \in \mathbb{N} $, 
are given by
$ 
  D_v :=
  \{ 
    v \in D 
    \colon 
    \| x \| < v
    \text{ and }
    \text{dist}(x, D^c) > \frac{ 1 }{ v } 
  \}
$
for all $ v \in \mathbb{N} $
and where
$
  W \colon [0,T] \times
  \Omega \rightarrow 
  \mathbb{R}^m 
$
is an arbitrary standard Brownian
motion on a probability
space 
$
  \left( 
    \Omega, \mathcal{F}, \mathbb{P}
  \right)
$.
\end{lemma}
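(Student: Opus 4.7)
The plan is to establish the equivalence by exploiting the observation that the family $D_v \subset D$, $v \in \mathbb{N}$, is an exhaustion of $D$ by sets whose closures are compact subsets of $D$, and that, conversely, every non-empty compact subset of $D$ is contained in some $D_v$. Once these two topological facts are in hand, the characterization in terms of the $D_v$ follows mechanically from the definition since the expectations in \eqref{ass:consistency_1}--\eqref{ass:consistency_2} only appear as quantities whose supremum over a subset of $D$ is taken.

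For the forward direction (Definition~\ref{def:consistent} implies \eqref{ass:consistency_1B}--\eqref{ass:consistency_2B}), I would fix $v \in \N$ and, assuming $D_v \neq \emptyset$, verify that $\overline{D_v}$ is a non-empty compact subset of $D$. Boundedness is immediate from $\overline{D_v} \subset \{x \in \R^d : \|x\| \leq v\}$. The inclusion $\overline{D_v} \subset D$ follows from continuity of $x \mapsto \operatorname{dist}(x, D^c)$ (with the conventions of Section~1 of the paper so that $\operatorname{dist}(x, \emptyset) = \infty$ handles the case $D = \R^d$), which gives $\operatorname{dist}(x, D^c) \geq 1/v > 0$ for every $x \in \overline{D_v}$. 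Applying \eqref{ass:consistency_1} and \eqref{ass:consistency_2} with $K = \overline{D_v}$ and using $D_v \subset \overline{D_v}$ immediately yields \eqref{ass:consistency_1B} and \eqref{ass:consistency_2B}; the case $D_v = \emptyset$ is vacuous.

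For the reverse direction, given a non-empty compact set $K \subset D$, I would produce $v \in \N$ with $K \subset D_v$. Compactness of $K$ gives $M := \sup_{x \in K} \|x\| < \infty$, and since $K$ is compact with $K \cap D^c = \emptyset$ and $D^c$ closed, continuity of the distance function forces $\delta := \inf_{x \in K} \operatorname{dist}(x, D^c) > 0$ (again using $\operatorname{dist}(x, \emptyset) = \infty$ when $D = \R^d$). Choosing any integer $v$ with $v > M$ and $1/v < \delta$ gives $K \subset D_v$, so that the supremum over $K$ in \eqref{ass:consistency_1} and \eqref{ass:consistency_2} is dominated by the corresponding supremum over $D_v$, and the conclusion follows from \eqref{ass:consistency_1B} and \eqref{ass:consistency_2B}.

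There is no genuine obstacle here; the whole argument is a standard exhaustion-by-compacta manipulation. The only item that requires a moment of care is the uniform use of the convention $\operatorname{dist}(x, \emptyset) = \infty$ so that both directions work equally well in the boundary case $D = \R^d$, which explains why the author marks the proof as straightforward and omits it.
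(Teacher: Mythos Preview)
Your argument is correct and is precisely the standard exhaustion-by-compacta reasoning the paper has in mind; indeed, the paper explicitly omits the proof as ``straightforward,'' and what you have written is the canonical way to fill in that omission. There is nothing to add or correct.
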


A list of numerical schemes 
that
are $ ( \mu, \sigma ) $-consistent
with respect to Brownian motion
can be found in 
Section~\ref{sec:schemes}.

\section{Convergence in probability}
\label{sec:convergence in probability}

The next theorem
shows that the stochastic
processes 
$ 
  \bar{Y}^N \colon [0,T]
  \times \Omega
  \rightarrow \mathbb{R}^d
$,
$ N \in \mathbb{N} $,
in \eqref{eq:recX}
converge in probability
to the solution
process 
$ 
  X \colon [0,T] \times \Omega
  \rightarrow D
$
of the SDE~\eqref{eq:SDE}
if the increment function
$
  \phi \colon \mathbb{R}^d
  \times
  [0,T] \times \mathbb{R}^m
  \rightarrow \mathbb{R}^d
$
is $ ( \mu, \sigma ) $-consistent
with respect to Brownian
motion (see
Definition~\ref{def:consistent}).

\begin{theorem}[Convergence in probability]
\label{thm:convergence}
Assume that the setting
in Section~\ref{sec:convergencesetting}
is fulfilled
and that
$ 
  \phi \colon \mathbb{R}^d
  \times [0,T] \times \mathbb{R}^m
  \rightarrow \mathbb{R}^d
$ is
$ ( \mu, \sigma ) $-consistent with respect to Brownian motion.
Then
\begin{equation}
  \lim_{ N \rightarrow \infty }
  \mathbb{P}\Bigg[
    \sup_{ t \in [0,T] }
    \big\| 
      X_t 
      - 
      \bar{Y}^N_t
    \big\|
    \geq
    \varepsilon
  \Bigg]
= 
  0 
\end{equation}
for all 
$ \varepsilon \in (0,\infty) $.
\end{theorem}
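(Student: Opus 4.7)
The plan is to combine a localization argument with consistency-based coupling to a standard Euler--Maruyama scheme for globally Lipschitz coefficients, in the spirit of Jentzen, Kloeden \citationand\ Neuenkirch~\cite{jkn09a}. For $R \in \N$ set $D_R := \{ x \in D \colon \| x \| < R,\ \text{dist}(x, D^c) > 1/R \}$; these form an exhaustion of $D$ by sets with compact closure in $D$. Path-continuity of $X$ in $D$ yields $\lim_{R \to \infty} \P[\tau_R < T] = 0$ for the exit times $\tau_R := \inf\{t \in [0,T] \colon X_t \notin D_R\} \wedge T$. The local Lipschitz continuity of $\mu,\sigma$ on $D$ permits constructing globally Lipschitz and bounded extensions $\mu_R \colon \R^d \to \R^d$ and $\sigma_R \colon \R^d \to \R^{d \times m}$ that coincide with $\mu,\sigma$ on $\overline{D_R}$. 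Let $X^R$ be the unique strong solution of the SDE driven by $\mu_R,\sigma_R$ with $X^R_0 = X_0$; pathwise uniqueness gives $X = X^R$ on $[0, \tau_R]$. Let $Z^{N,R}$ denote the Euler--Maruyama scheme for $(\mu_R,\sigma_R)$ with piecewise linear interpolation $\bar{Z}^{N,R}$. The classical strong convergence theorem (Kloeden \citationand\ Platen~\cite{kp92}) gives $\lim_{N \to \infty} \E\big[\sup_{t \in [0,T]} \| X^R_t - \bar{Z}^{N,R}_t \|^2\big] = 0$.

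The heart of the proof is coupling $\bar{Y}^N$ with $\bar{Z}^{N,R}$ via consistency. Introduce the grid stopping time $\tau^N_R := \inf\{ nT/N \colon n \in \{0,1,\ldots,N\},\ \bar{Y}^N_{nT/N} \notin \overline{D_R} \} \wedge T$. On $[0, \tau^N_R]$ all arguments $\bar{Y}^N_{nT/N}$ fed to $\phi$ lie in the compact set $\overline{D_R}$, on which $\mu = \mu_R$ and $\sigma = \sigma_R$, so $(\mu,\sigma)$-consistency yields
\[
\sup_{x \in \overline{D_R}} \big\| \E[\phi(x, \tfrac{T}{N}, W_{T/N})] - \mu_R(x) \tfrac{T}{N} \big\| = o(\tfrac{T}{N}), \quad \sup_{x \in \overline{D_R}} \E\| \phi(x, \tfrac{T}{N}, W_{T/N}) - \sigma_R(x) W_{T/N} \| = o(\sqrt{T/N})
\]
as $N \to \infty$. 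Decomposing $\bar{Y}^N_{(n+1)T/N} - \bar{Z}^{N,R}_{(n+1)T/N}$ into a Lipschitz-controlled drift increment, a Lipschitz-controlled martingale diffusion increment, the predictable compensator of the consistency remainder (of size $o(T/N)$ per step, hence $o(1)$ in total), and a centered martingale remainder, and then applying the discrete Burkholder--Davis--Gundy inequality~\eqref{eq:burkholder} together with a discrete Gronwall lemma after stopping at $\tau^N_R$, one obtains
\[
\lim_{N \to \infty} \E\!\left[ \max_{n \colon nT/N \leq \tau^N_R} \| \bar{Y}^N_{nT/N} - \bar{Z}^{N,R}_{nT/N} \| \wedge 1 \right] = 0.
\]
The piecewise linear interpolation between grid points contributes only $O(N^{-1/2})$ on $[0, \tau^N_R]$, controlled via the boundedness of $\mu_R,\sigma_R$ and Doob's maximal inequality applied to the Brownian increments.

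To conclude, fix $\varepsilon, \eta > 0$, choose $R$ with $\P[\tau_R < T] < \eta/3$, and take $R' > R$ so that on $\{\tau_R = T\}$ the path $X$ stays strictly inside $D_{R'}$ with a positive safety margin (guaranteed by path-continuity of $X$). Combining the two previous convergences with the triangle inequality yields $\sup_{t \in [0,T]} \| X_t - \bar{Y}^N_t \| \to 0$ in probability on $\{\tau_R = T\} \cap \{\tau^N_{R'} = T\}$; the residual event $\{\tau^N_{R'} < T\} \cap \{\tau_R = T\}$ has probability $o(1)$ as $N \to \infty$ because on it $\bar{Y}^N$ would have to exit $D_{R'}$ while $X$ remained in the smaller set $D_R$, contradicting the coupling bound once $N$ is large. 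A union bound then gives $\P[\sup_{t \in [0,T]} \|X_t - \bar{Y}^N_t\| \geq \varepsilon] < \eta$ for $N$ sufficiently large, which is what is required. The main technical obstacle is precisely this bootstrap: the coupling estimate only holds up to $\tau^N_{R'}$, while one needs the coupling estimate itself to deduce $\tau^N_{R'} = T$ with high probability. The self-reference is resolved by the safety-margin choice $R' > R$ combined with the uniformly positive distance between $X$ and $\partial D_{R'}$ on $\{\tau_R = T\}$ and the $o(1)$-smallness of the coupling remainder.
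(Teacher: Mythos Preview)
Your proposal is correct and follows essentially the same architecture as the paper: localize via the sets $D_R$, couple $\bar{Y}^N$ to an auxiliary Euler--Maruyama scheme using the consistency conditions together with discrete Burkholder--Davis--Gundy and Gronwall (the paper's Lemma~\ref{lem_A}), and bootstrap to control the discrete exit times (the paper's Lemma~\ref{lem_sigma}, which is exactly your $R' > R$ safety-margin argument). The only cosmetic difference is that you build the auxiliary Euler scheme from globally Lipschitz truncations $\mu_R,\sigma_R$ and invoke classical strong convergence from Kloeden \citationand\ Platen~\cite{kp92}, whereas the paper uses a single Euler scheme for the crude extension $\bar{\mu},\bar{\sigma}$ (set to zero on $D^c$) and cites Gy\"ongy \citationand\ Krylov~\cite{GyoengyKrylov1996} for its convergence in probability to $X$.
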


The proof of 
Theorem~\ref{thm:convergence}
is presented in the following
subsection.
More results on convergence in
probability and pathwise 
convergence of 
temporal numerical
approximation processes
for SDEs with non-globally
Lipschitz continuous coefficients
can, e.g., be found in
\cite{Krylov1990,GyoengyKrylov1996,g98b,g99,p01,ps05,jkn09a,j08b,cv12}
and in the references 
therein.

Theorem~\ref{thm:convergence}
proves convergence in probability 
for a class of one-step numerical 
approximation processes~\eqref{eq:recX}
in the case of finite dimensional 
SDEs with locally Lipschitz
continuous coefficients
$ \mu $ and $ \sigma $.
The locally Lipschitz assumptions
on $ \mu $ and $ \sigma $
ensure that solutions of
the SDE~\eqref{eq:SDE}
are unique up to indistinguishability.
We expect that it is possible to
generalize 
Theorem~\ref{thm:convergence}
to a more general class of 
possibly infinite dimensional SDEs
and also to 
replace the locally Lipschitz
assumptions on $ \mu $ and
$ \sigma $ by 
weaker conditions such
as local monotonicity
(see, e.g.,
Krylov~\cite{Krylov1990},
Gy\"{o}ngy
\citationand\ Krylov~\cite{GyoengyKrylov1996}
in the finite dimensional case
and
Liu \citationand\
R\"{o}ckner~\cite{LiuRoeckner2010,LiuRoeckner2012}
in the infinite dimensional case)
which ensure that solutions of
the considered SDE
are unique.

\subsection{Proof of 
Theorem~\ref{thm:convergence}}
\label{sec:proof.convergence}

For the proof of
Theorem~\ref{thm:convergence},
we introduce more notation.
First, we define 
mappings 
$ Y_n^N 
\colon \Omega \rightarrow 
\mathbb{R}^d $, 
$ n \in \{ 0, 1, \ldots, N \} $, 
$ N \in \mathbb{N} $, through 
$ Y_n^N := 
\bar{Y}_{ n T / N }^N $ 
for all $ n \in \{ 0, 1, \ldots, N \} $ 
and all $ N \in \mathbb{N} $. 
In addition, we use the 
mappings 
$ 
  \bar{\mu} \colon \mathbb{R}^d 
  \rightarrow \mathbb{R}^d 
$, 
$ 
  \bar{\sigma} 
  = 
  \left( \bar{\sigma}_{ i, j } ( x ) 
  \right)_{ 
    i \in \{ 1, \ldots, d \}, 
    j \in \{ 1, \ldots, m \} 
  } 
  \colon 
  \mathbb{R}^d \rightarrow 
  \mathbb{R}^{ d \times m } 
$ 
and 
$ 
  \bar{\sigma}_i \colon 
  \mathbb{R}^d \rightarrow 
  \mathbb{R}^d 
$, 
$ 
  i \in \{ 1, 2, \ldots, m \} 
$, 
defined by 
\begin{equation}
  \bar{\mu}(x) := \mu(x) 
  \qquad
  \text{and}
  \qquad 
  \bar{\sigma}(x) := \sigma(x) 
\end{equation}
for all 
$ x \in D $, 
by 
\begin{equation}
  \bar{\mu}(x) := 0 
\qquad
  \text{and}
\qquad   
  \bar{\sigma}(x) := 0 
\end{equation}
for all 
$ x \in D^c $ 
and by 
$ \bar{\sigma}_i( x ) := 
\left( \bar{\sigma}_{1, i}( x ), \ldots, \bar{\sigma}_{d, i}( x ) \right) $ 
for all $ x \in \mathbb{R}^d $ 
and all $ i \in \{ 1, 2, \ldots, m \} $. 
Next let $ D_v \subset D $, $ v \in \N $,
be a sequence of open sets defined by
\begin{equation}
  D_v :=
  \left\{
    x \in D
    \colon
    \left\| x \right\| < v
    \text{ and }
    \text{dist}(x,D^c) > \tfrac{ 1 }{ v }
  \right\}
\end{equation}
for all $ v \in \N $.
The assumption that
$ 
  \phi \colon
  \mathbb{R}^d \times
  [0,T] \times \mathbb{R}^m
  \rightarrow 
  \mathbb{R}^d
$
is
$ (\mu, \sigma) 
$-consistent with 
respect to Brownian motion
and
Lemma~\ref{lem:consistent}
then ensure that there exists
a sequence
$ t_v \in (0,T] $,
$ v \in \mathbb{N} $,
of real numbers
such that
\begin{equation}
\label{eq:def_tv}
  \sup_{ t \in [0,t_v] }
  \sup_{ x \in D_v }
  \mathbb{E}\Big[
    \| \phi( x, t, W_t) \|
  \Big]
  < \infty
\end{equation}
for all 
$ v \in \mathbb{N} $
with $ D_v \neq \emptyset $.
Then let $ c_v \in [0, \infty) $, 
$ v \in \mathbb{N} $, be a family 
of real numbers defined by
\begin{equation}
\label{eq:def_cr}
  c_v
  :=
  \begin{cases}
  \sup_{ 
    \substack{ 
      x, y \in \bar{D}_v \\ x \neq y 
    } 
  }
  \frac{ \left\| \mu( x ) - \mu( y ) \right\| }{ \left\| x - y \right\| }
  +
  \sum_{ i = 1 }^m
  \left(
    \sup_{ 
      \substack{ 
        x, y \in \bar{D}_v \\ x \neq y 
      } 
    }
    \frac{ 
      \left\| \sigma_i( x ) - \sigma_i( y ) 
      \right\| 
    }{ 
      \left\| x - y \right\| 
    }
  \right)
  &
  \colon
  D_v \neq \emptyset
  \\
  0
  &
  \colon \text{else}
  \end{cases}
\end{equation}
for all $ v \in \mathbb{N} $. 
Using Lebesgue's number 
lemma one can indeed show that
$ c_v < \infty $ 
for all $ v \in \mathbb{N} $ 
since 
$ 
  \mu \colon D \rightarrow 
  \mathbb{R}^d 
$ 
and 
$ 
  \sigma \colon 
  D \rightarrow \mathbb{R}^{ d \times m } 
$ 
are assumed to 
be locally Lipschitz continuous 
and since 
$ \bar{D}_v \subset D $, 
$ v \in \mathbb{N} $, is a 
sequence of compact sets.

Roughly speaking, the consistency
condition of Definition~\ref{def:consistent}
requires the increment function
$
  \phi \colon \mathbb{R}^d
  \times
  [0,T] \times \mathbb{R}^m
  \rightarrow \mathbb{R}^d
$
from 
Section~\ref{sec:convergencesetting}
to be close to the increment function
of the respective 
Euler-Maruyama approximation
method.
For this reason,
we estimate the distance 
of the exact solution 
$ X \colon [0,T] \times 
\Omega \rightarrow D $ 
of the SDE~\eqref{eq:SDE} and 
of the Euler-Maruyama approximations
(see Lemma~\ref{l:CE} below)
and we estimate the 
distance of 
the Euler-Maruyama approximations
and 
of the numerical approximations 
$ 
Y_n^N 
\colon \Omega \rightarrow 
\mathbb{R}^d $, 
$ n \in \{ 0, 1, \ldots, N \} $, 
$ N \in \mathbb{N} $, (see Lemma~\ref{l:CA} below).
The triangle inequality 
will then yield an estimate for 
$ \| X_{ \frac{ nT }{ N } } - 
Y_n^N \| $, 
$ n \in \{ 0, 1, \ldots, N \} $, $ N \in \mathbb{N} $ 
(see Corollary~\ref{lem_C} below). 
For this strategy,
we now introduce
suitable 
Euler-Maruyama 
approximations
for the SDE~\eqref{eq:SDE}. 
More formally, let 
$ Z_n^N \colon 
\Omega \rightarrow \mathbb{R}^d $, 
$ n \in \{ 0, 1, \ldots, N \} $, 
$ N \in \mathbb{N} $, 
be defined recursively through 
$ Z_0^N := X_0 $ and 
\begin{equation}
  Z_{ n + 1 }^N := Z_n^N 
  + \bar{\mu}( Z_n^N ) 
  \cdot 
  \tfrac{T}{N} 
  + 
  \bar{\sigma}( Z_n^N 
  )\left( 
    W_{ \frac{ (n+1)T}{N} } - 
    W_{ \frac{nT}{N} } 
  \right) 
\end{equation} 
for all $ n \in \{ 0, 1, \ldots, N - 1 \} $ 
and all $ N \in \mathbb{N} $. 
Furthermore, let 
$ \tilde{Z}^N \colon 
[0,T] \times \Omega \rightarrow \mathbb{R}^d $, $ N \in \mathbb{N} $, 
be given by 
\begin{equation} 
\label{eq:def:Ztilde}
  \tilde{Z}^N_t = Z_n^N 
  + 
  \bar{\mu}( Z_n^N )
  \cdot \big( t - \tfrac{ nT}{N} \big)   
  + 
  \bar{\sigma}( Z_n^N 
  )\left( W_t - W_{ \frac{nT}{N} } \right) 
\end{equation} 
for all $ t \in [ \frac{ nT }{ N }, 
\frac{ (n+1)T}{N} ] $, 
$ n \in \{ 0, 1, \ldots, N - 1 \} $ 
and all $ N \in \mathbb{N} $. 
Finally, let 
$ \tau_v^N \colon \Omega 
\rightarrow [0,T] $, 
$ v, N \in \mathbb{N} $, 
and 
$ 
  \delta_v^N \colon \Omega 
  \rightarrow \{ 0, 1, \ldots, N \} 
$, 
$ v, N \in \mathbb{N} $, 
be defined by
\begin{equation}
\begin{split}
&
  \tau_v^N( \omega )
  :=
  \inf\!\left(
    \{ T \} 
    \cup
    \big\{
      t \in [0,T] \colon 
      X_t( \omega ) \notin D_v
    \big\}
    \cup 
    \big\{
      t \in [0,T] \colon
      \tilde{Z}_t^N(\omega) 
      \notin D_v
    \big\}
  \right)
\end{split}
\end{equation}
and by
\begin{multline}
  \delta_v^N( \omega )
  :=
  \min\!\Big(
    \{ N \} 
    \cup
    \big\{
      n \in \{ 0, 1, \ldots, N \} 
      \colon
      Z_n^N( \omega ) \notin D_v
    \big\}
\\
    \cup 
    \big\{
      n \in \{ 0, 1, \ldots, N \} 
      \colon
      Y_n^N( \omega ) \notin D_v
    \big\}
  \Big)
\end{multline}
for all $ \omega \in \Omega $
and all
$ v, N \in \mathbb{N} $.
Using the notation introduced
above, the proof of 
Theorem~\ref{thm:convergence} 
is divided into the 
following lemmas.

\begin{lemma}
\label{l:CE}
Assume that the setting
in 
Section~\ref{sec:convergencesetting}
is fulfilled,
that the setting
in the beginning of Subsection~\ref{sec:proof.convergence}
is fulfilled
and that
$ 
  \phi \colon \mathbb{R}^d
  \times [0,T] \times \mathbb{R}^m
  \rightarrow \mathbb{R}^d
$ is
$ ( \mu, \sigma ) $-consistent with respect to Brownian motion.
Then
\begin{equation}
  \lim_{ N \rightarrow \infty }
  \mathbb{P}\Bigg[
    \sup_{ t \in [0,T] }
    \big\| 
      X_t 
      - 
      \tilde{Z}_t^N
    \big\|
    \geq
    \varepsilon
  \Bigg]
= 
  0 
\end{equation}
for all 
$ \varepsilon \in (0,\infty) $.
\end{lemma}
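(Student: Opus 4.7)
The plan is to prove convergence in probability through a localization argument that compares $\tilde{Z}^N$ with the Euler-Maruyama scheme for a truncated SDE whose coefficients are globally Lipschitz. Fix $\varepsilon \in (0,\infty)$ and $\eta \in (0,\infty)$. Since $X$ has continuous sample paths taking values in the open set $D$, for $\mathbb{P}$-almost every $\omega$ the set $\{X_t(\omega) : t \in [0,T]\}$ is a compact subset of $D$ and hence lies in some $D_v$ for $v$ sufficiently large. Therefore
\begin{equation*}
  \lim_{v \to \infty} \mathbb{P}\big[\,\exists\, t \in [0,T] : X_t \notin D_v\big] = 0,
\end{equation*}
and I would fix $v \in \mathbb{N}$ together with a strictly larger integer $w > v$, chosen so that the $(1/w)$-neighborhood of $\bar D_v$ is contained in $D_w$, such that $\mathbb{P}[\exists\, t : X_t \notin D_v] < \eta/3$ and $1/w < \varepsilon$.

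Next I would introduce a smooth cutoff $\psi \in C^\infty_c(\R^d, [0,1])$ with $\psi \equiv 1$ on $\bar D_w$ and $\psi \equiv 0$ outside $D_{w+1}$, and define truncated coefficients $\mu^{\star} := \psi \cdot \bar\mu$ and $\sigma^{\star} := \psi \cdot \bar\sigma$. These are globally bounded and globally Lipschitz continuous, with constants controlled by $c_{w+1}$ from \eqref{eq:def_cr}. Let $X^{\star}$ denote the unique strong solution of the SDE with coefficients $(\mu^{\star}, \sigma^{\star})$ and initial value $X_0$, and let $Z^{\star,N}$ denote its linearly interpolated Euler-Maruyama approximation with step-size $T/N$. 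The classical strong convergence theorem for Euler-Maruyama schemes under global Lipschitz assumptions (see, e.g., Kloeden \& Platen~\cite{kp92} or Milstein~\cite{m95}) gives
\begin{equation*}
  \lim_{N \to \infty} \E\Big[\sup_{t \in [0,T]} \|X^{\star}_t - Z^{\star,N}_t\|^2\Big] = 0,
\end{equation*}
so by Markov's inequality there exists $N_0 \in \mathbb{N}$ with $\mathbb{P}\big[\sup_{t\in [0,T]} \|X^{\star}_t - Z^{\star,N}_t\| \geq 1/w\big] < \eta/3$ for all $N \geq N_0$.

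To conclude, pathwise uniqueness for \eqref{eq:SDE} (a consequence of local Lipschitz continuity of $\mu$ and $\sigma$) implies that on the event $\{X_t \in D_v \text{ for all } t \in [0,T]\}$ we have $X = X^{\star}$ pathwise on $[0,T]$. On the intersection with the event $\{\sup_t \|X^{\star}_t - Z^{\star,N}_t\| < 1/w\}$, the process $Z^{\star,N}$ stays in $D_w$ throughout $[0,T]$ by the choice of $w$, and on $D_w$ the coefficients $\mu^{\star}, \sigma^{\star}$ coincide with $\bar\mu, \bar\sigma$; a straightforward induction on the time grid then yields $\tilde Z^N = Z^{\star,N}$ pathwise on this event. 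Combining the three bounds gives $\mathbb{P}[\sup_t \|X_t - \tilde Z^N_t\| \geq \varepsilon] < \eta$ for all $N \geq N_0$, which proves the lemma since $\eta$ was arbitrary. The main technical obstacle is precisely this final comparison step: the unmodified scheme $\tilde Z^N$ may blow up globally in the non-Lipschitz setting, so one must verify that it genuinely coincides with the tame auxiliary scheme $Z^{\star,N}$ on an event of probability at least $1 - \eta$. The geometric buffer provided by placing the cutoff strictly outside $D_w \supsetneq D_v$ absorbs the $(1/w)$-scale fluctuation between $X$ and $Z^{\star,N}$ and lets the pathwise induction close.
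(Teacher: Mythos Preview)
The paper does not actually prove this lemma: it simply states that the proof ``is literally the same as the proof of Corollary~2.6 of Gy\"ongy \&~Krylov~\cite{GyoengyKrylov1996}'' (with the existence of the solution replacing their monotonicity assumption) and also points to Section~2 in~\cite{jkn09a}. Your localization-via-smooth-cutoff argument is a standard and correct way to recover that result, and is in the spirit of~\cite{jkn09a}; Gy\"ongy--Krylov work instead with stopping times and a direct Gronwall estimate on the stopped processes, but the two approaches are interchangeable here.

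There is one genuine slip in your write-up. You define $Z^{\star,N}$ as the \emph{linearly interpolated} Euler--Maruyama approximation, but $\tilde Z^N$ in~\eqref{eq:def:Ztilde} is the \emph{continuous-time} Euler interpolation $Z_n^N+\bar\mu(Z_n^N)(t-nT/N)+\bar\sigma(Z_n^N)(W_t-W_{nT/N})$. Your induction on the grid only gives $\tilde Z^N_{nT/N}=Z^{\star,N}_{nT/N}$; between grid points the two interpolations differ by a Brownian-bridge term $\bar\sigma(Z_n^N)\big[(W_t-W_{nT/N})-\tfrac{t-nT/N}{T/N}(W_{(n+1)T/N}-W_{nT/N})\big]$, so the pathwise equality $\tilde Z^N=Z^{\star,N}$ on $[0,T]$ does not follow as stated, and hence neither does $\sup_t\|X_t-\tilde Z^N_t\|<\varepsilon$. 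The cleanest fix is to use the continuous-time interpolation~\eqref{eq:def:Ztilde} also for $Z^{\star,N}$; the classical strong convergence theorem for globally Lipschitz coefficients applies equally well to that version, and then your induction yields $\tilde Z^N_t=Z^{\star,N}_t$ for all $t$ on the good event. Alternatively you can keep the linear interpolation and add one line bounding the bridge term (on the good event $\|\bar\sigma(Z_n^N)\|$ is bounded on $D_w$ and the supremum of the bridge over $[0,T]$ is $O_\mathbb{P}((T/N)^{1/2})$).
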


The proof
of Lemma~\ref{l:CE}
is literally the same as 
the proof of
Corollary~2.6 of Gy\"ongy 
\citationand\ Krylov~\cite{GyoengyKrylov1996}
  (replace assumption \textit{(ii)} in~\cite{GyoengyKrylov1996}
  by the weaker assumption 
  of the existence 
  of an exact solution;
  see also Section~2 in
  \cite{jkn09a}).
  The proof of Lemma~\ref{l:CE} is therefore omitted.

\begin{lemma}
\label{lem_tau}
Assume that the setting
in Section~\ref{sec:convergencesetting}
is fulfilled,
that the setting
in the beginning of Subsection~\ref{sec:proof.convergence}
is fulfilled
and that
$ 
  \phi \colon \mathbb{R}^d
  \times [0,T] \times \mathbb{R}^m
  \rightarrow \mathbb{R}^d
$ is
$ ( \mu, \sigma ) $-consistent with respect to Brownian motion.
Then
\begin{equation}
\label{eq:lem_tau}
  \lim_{ v \rightarrow \infty }
  \limsup_{ N \rightarrow \infty }
  \mathbb{P}\!\left[
    \tau_v^N < T
  \right]
  = 0.
\end{equation}
\end{lemma}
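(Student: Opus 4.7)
The plan is to reduce the statement to two ingredients: first, that the exact solution $X$ stays inside $D_v$ with high probability as $v \to \infty$, and second, that $\tilde{Z}^N$ can only exit $D_v$ if either $X$ already exits some smaller $\bar{D}_{v_0}$ or $\tilde{Z}^N$ is far from $X$, where the latter is controlled by Lemma~\ref{l:CE}. The key geometric ingredient is that $\bar{D}_{v_0}$ has a positive distance to the complement of $D_v$ when $v > v_0$, so a uniformly small perturbation of a path staying in $\bar{D}_{v_0}$ remains in $D_v$.

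First I would exploit the continuity of the sample paths of $X$ and $\tilde{Z}^N$ together with the definition of $\tau_v^N$ to obtain the inclusion
\begin{equation*}
  \{ \tau_v^N < T \}
  \subset
  \{ \exists\, t \in [0,T] : X_t \notin D_v \}
  \cup
  \{ \exists\, t \in [0,T] : \tilde{Z}_t^N \notin D_v \} .
\end{equation*}
Since $X$ has continuous sample paths with $X_t \in D$ for all $t \in [0,T]$, the image $X([0,T])$ is almost surely a compact subset of $D$. Because $D = \bigcup_{v \in \N} D_v$ with $D_v \subset D_{v+1}$, every compact subset of $D$ lies in $D_v$ for all $v$ large enough. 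Hence the event $\{\exists\, t : X_t \notin D_v\}$ decreases to a $\P$-null event as $v \to \infty$, proving
\begin{equation*}
  \lim_{v \to \infty} \P\!\left[\exists\, t \in [0,T] : X_t \notin D_v\right] = 0 .
\end{equation*}

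Next, for $v_0 \in \N$ fixed and $v \in \N$ with $v > v_0$, I would establish the geometric bound
$\operatorname{dist}(\bar{D}_{v_0}, \R^d \setminus D_v) \geq \delta(v,v_0) := \min(v - v_0,\, 1/v_0 - 1/v) > 0$. This is done by a case analysis: if $y \notin D_v$ because $y \notin D$, then $\|y - x\| \geq \operatorname{dist}(x,D^c) \geq 1/v_0$ for every $x \in \bar{D}_{v_0}$; if $\|y\| \geq v$ and $x \in \bar{D}_{v_0}$, then $\|y - x\| \geq v - v_0$; and if $y \in D$ with $\operatorname{dist}(y,D^c) \leq 1/v$ then, choosing an approximating $z \in D^c$ and using the triangle inequality, $\|y - x\| \geq 1/v_0 - 1/v$. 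Consequently,
\begin{equation*}
  \{ \exists\, t : \tilde{Z}_t^N \notin D_v \}
  \subset
  \{ \exists\, t : X_t \notin \bar{D}_{v_0} \}
  \cup
  \big\{ \sup\nolimits_{t \in [0,T]} \| X_t - \tilde{Z}_t^N \| \geq \delta(v,v_0) \big\} .
\end{equation*}

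Combining these inclusions with the monotonicity $D_{v_0} \subset D_v$ (so $\{X_t \notin D_v\} \subset \{X_t \notin D_{v_0}\}$) yields, for every $v \geq v_0$,
\begin{equation*}
  \P[\tau_v^N < T]
  \leq
  2 \, \P\!\left[\exists\, t : X_t \notin D_{v_0}\right]
  +
  \P\!\left[ \sup\nolimits_{t \in [0,T]} \| X_t - \tilde{Z}_t^N \| \geq \delta(v,v_0) \right] .
\end{equation*}
For fixed $v_0$ and fixed $v > v_0$, Lemma~\ref{l:CE} gives that the last probability tends to $0$ as $N \to \infty$, whence $\limsup_{N \to \infty} \P[\tau_v^N < T] \leq 2 \, \P[\exists\, t : X_t \notin D_{v_0}]$. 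Letting first $v \to \infty$ (while keeping $v_0$ fixed) and then $v_0 \to \infty$, and invoking the first step to send the remaining term to $0$, gives~\eqref{eq:lem_tau}. The only substantive step is the geometric separation $\delta(v,v_0) > 0$ together with the correct bookkeeping of the three ways a point can fail to lie in $D_v$; everything else is routine once Lemma~\ref{l:CE} is at hand.
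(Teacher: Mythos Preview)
Your proof is correct and follows essentially the same approach as the paper: both arguments rest on the positive separation $\text{dist}(D_w,(D_v)^c)>0$ for $w<v$, the convergence in probability of $\tilde{Z}^N$ to $X$ from Lemma~\ref{l:CE}, and the fact that the continuous $D$-valued process $X$ eventually stays inside $D_w$. The paper decomposes slightly more directly by working at the stopping time $\tau_v^N$ (on $\{\tau_v^N<T,\ \forall t\colon X_t\in D_w\}$ one has $X_{\tau_v^N}\in D_w$ and $\tilde{Z}^N_{\tau_v^N}\in (D_v)^c$), whereas you first split $\{\tau_v^N<T\}$ into the $X$-exit and $\tilde{Z}^N$-exit events and then compare $\tilde{Z}^N$ to $X$; this yields the harmless extra factor $2$ and your explicit lower bound $\delta(v,v_0)$ in place of the paper's bare assertion that the distance is positive.
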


\begin{lemma}
\label{lem_A}
Assume that the setting
in Section~\ref{sec:convergencesetting}
is fulfilled,
that the setting
in the beginning of Subsection~\ref{sec:proof.convergence}
is fulfilled
and that
$ 
  \phi \colon \mathbb{R}^d
  \times [0,T] \times \mathbb{R}^m
  \rightarrow \mathbb{R}^d
$ is
$ ( \mu, \sigma ) $-consistent with respect to Brownian motion.
Then
\begin{equation}
  \lim_{ N \rightarrow \infty }
  \mathbb{E}\!\left[
    \sup_{ 
      n \in 
      \{ 0, 1, \ldots, \delta_v^N \} 
    }
    \left\|
      Z_n^N - Y_n^N
    \right\|
  \right]
  = 0
\end{equation}
for all $ v \in \mathbb{N} $.
\end{lemma}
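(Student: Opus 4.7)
The plan is to use the familiar martingale-plus-predictable decomposition of the local defect of $\phi$ against the Euler--Maruyama scheme, coupled with a discrete Gronwall argument on the stopped difference $\Delta_n := Z_n^N - Y_n^N$ restricted to $\{n \leq \delta_v^N\}$. First I would derive the recursion
\begin{equation*}
\Delta_{n+1} = \Delta_n + [\mu(Z_n^N) - \mu(Y_n^N)]\tfrac{T}{N} + [\sigma(Z_n^N) - \sigma(Y_n^N)]\Delta W^n + R_n
\end{equation*}
valid on $\{n < \delta_v^N\}$, with $\Delta W^n := W_{(n+1)T/N} - W_{nT/N}$ and local defect
\begin{equation*}
R_n := \bar\mu(Y_n^N)\tfrac{T}{N} + \bar\sigma(Y_n^N)\Delta W^n - \phi\big(Y_n^N, \tfrac{T}{N}, \Delta W^n\big),
\end{equation*}
which I would split as $R_n = A_n + B_n$ with the $\mathcal{F}_{nT/N}$-measurable piece $A_n := \bar\mu(Y_n^N)\tfrac{T}{N} - \E[\phi(Y_n^N, \tfrac{T}{N}, \Delta W^n) \mid \mathcal{F}_{nT/N}]$ and martingale difference $B_n := R_n - A_n$.

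Next I would feed in the consistency estimates from Definition~\ref{def:consistent} applied to the compact set $\bar{D}_v \subset D$: since $Y_n^N \in \bar{D}_v$ on $\{n < \delta_v^N\}$, it would follow that $\|A_n\mathbf{1}_{\{n<\delta_v^N\}}\|_{L^\infty(\Omega;\R^d)} = o(T/N)$ and $\|B_n\mathbf{1}_{\{n<\delta_v^N\}}\|_{L^1(\Omega;\R^d)} = o(\sqrt{T/N})$ as $N \to \infty$, uniformly for the values of $Y_n^N$. On $\{n<\delta_v^N\}$ the local Lipschitz bounds $\|\mu(Z_n^N) - \mu(Y_n^N)\| \leq c_v \|\Delta_n\|$ and $\|\sigma(Z_n^N) - \sigma(Y_n^N)\|_{L(\R^m,\R^d)} \leq c_v\|\Delta_n\|$ from~\eqref{eq:def_cr} apply. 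Setting $S_n := \sup_{m \leq n \wedge \delta_v^N}\|\Delta_m\|$, summing the recursion, using triangle inequality, and squaring, the Lipschitz drift term contributes $c_v^2 T \cdot \tfrac{T}{N}\sum_k \E[\|\Delta_k\|^2\mathbf{1}_{\{k<\delta_v^N\}}]$ via Cauchy--Schwarz, the diffusion martingale $\sum_{k<n\wedge\delta_v^N}[\sigma(Z_k^N) - \sigma(Y_k^N)]\Delta W^k$ is handled by Doob's maximal inequality combined with $L^2$-orthogonality (yielding $4c_v^2 m T \cdot \tfrac{T}{N}\sum_k \E[\|\Delta_k\|^2\mathbf{1}_{\{k<\delta_v^N\}}]$), the sum $\sum_{k<n\wedge\delta_v^N} A_k$ is controlled pathwise by $T \cdot o(1)$, and $\sum_{k<n\wedge\delta_v^N}B_k$ is controlled by Doob's inequality plus $L^2$-orthogonality of martingale differences. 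A discrete Gronwall would then yield $\E[S_N^2] \leq \varepsilon(N) e^{C_v T}$ with $\varepsilon(N) \to 0$, and Cauchy--Schwarz concludes $\E[S_N] \to 0$.

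The main obstacle is the $L^2$-bound on the martingale increments $B_n$, since Definition~\ref{def:consistent} directly supplies only the $L^1$-estimate. To bridge this gap, I would exploit the uniform $L^1$-bound~\eqref{eq:def_tv} on $\phi(x, t, W_t)$ for $x \in D_v$, $t \in [0, t_v]$, combined with the standard Gaussian moments of $\Delta W^n$ and the boundedness of $\bar\mu, \bar\sigma$ on $\bar D_v$, to obtain an $L^2$-control on $B_n$ of the correct order $o(\sqrt{T/N})$; indeed the conditional expectation $\E[\phi(Y_n^N, T/N, \Delta W^n)\mid \mathcal{F}_{nT/N}]$ is of order $O(T/N)$ by the second consistency condition, so $\|B_n\|_{L^2}^2$ reduces essentially to $\|\bar\sigma(Y_n^N)\Delta W^n - \phi(Y_n^N, T/N, \Delta W^n)\|_{L^2}^2$ on $\{n<\delta_v^N\}$, which one estimates via interpolation between the $L^1$ consistency bound and the $L^2$ control inherited from compactness of $\bar D_v$. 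A short additional argument then handles the boundary index $n = \delta_v^N$: the single jump $\Delta_{\delta_v^N} - \Delta_{\delta_v^N - 1}$ has $L^1$-norm of order $\sqrt{T/N}$, since $Y_{\delta_v^N - 1}^N \in D_v$ and both $\bar\sigma(Y_{\delta_v^N - 1}^N)\Delta W^{\delta_v^N - 1}$ and $\phi(Y_{\delta_v^N - 1}^N, T/N, \Delta W^{\delta_v^N - 1})$ are of this order by~\eqref{eq:def_tv} and consistency.
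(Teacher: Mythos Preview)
Your overall strategy---decompose the local defect into a predictable part and a martingale-difference part, control the Lipschitz increments via $c_v$, and close with a discrete Gronwall argument on the stopped difference---matches the paper's approach. The genuine gap is the $L^2$ step.

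The consistency definition gives you \emph{only} $L^1$ information on $\phi$: equation~\eqref{eq:def_tv} says $\sup_{x\in D_v,\,t\le t_v}\E[\|\phi(x,t,W_t)\|]<\infty$, and nothing more. The function $\phi$ is merely Borel measurable, so there is no reason for $\phi(x,t,W_t)$ to lie in $L^2$ at all, and ``$L^2$ control inherited from compactness of $\bar D_v$'' does not exist---compactness bounds $\mu$ and $\sigma$ on $\bar D_v$, not $\phi$. Consequently your interpolation argument for $\|B_n\|_{L^2}$ has no second endpoint to interpolate against, and the Doob/$L^2$-orthogonality route for the defect martingale $\sum B_k$ cannot be carried out under the stated hypotheses.

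The paper sidesteps this by never leaving $L^1$: it squares the $L^1$-\emph{norm} (not the random variable), and controls the supremum of each martingale sum via the Burkholder--Davis--Gundy inequality in the form~\eqref{eq:burkholder} with $p=1$, obtaining
\[
\Big\|\sup_{k\le n}\Big\|\sum_{l=0}^{k-1}\mathbbm{1}_{\{\delta_v^N>l\}}M_l\Big\|\Big\|_{L^1(\Omega;\R)}^2
\;\le\;\kappa\sum_{l=0}^{n-1}\big\|\mathbbm{1}_{\{\delta_v^N>l\}}M_l\big\|_{L^1(\Omega;\R^d)}^2,
\]
so that only first moments of the increments enter. For the defect martingale this yields $N\cdot o(T/N)=o(1)$ directly from the $L^1$ consistency bound, and Gronwall then runs on the sequence $\big\|\sup_{k\le n}\|Z^N_{k\wedge\delta_v^N}-Y^N_{k\wedge\delta_v^N}\|\big\|_{L^1}^2$. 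This is the key device you are missing. Note also that by writing the stopped processes as $\sum_{l=0}^{k-1}\mathbbm{1}_{\{\delta_v^N>l\}}(\cdots)$ the paper handles the terminal index automatically; no separate ``boundary'' estimate is required.
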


\begin{lemma}
\label{lem_sigma}
Assume that the setting
in Section~\ref{sec:convergencesetting}
is fulfilled,
that the setting
in the beginning of Subsection~\ref{sec:proof.convergence}
is fulfilled
and that
$ 
  \phi \colon \mathbb{R}^d
  \times [0,T] \times \mathbb{R}^m
  \rightarrow \mathbb{R}^d
$ is
$ ( \mu, \sigma ) $-consistent with respect to Brownian motion.
Then
\begin{equation}
\label{eq:lem_delta}
  \lim_{ v \rightarrow \infty }
  \limsup_{ N \rightarrow \infty }
  \mathbb{P}\!\left[
    \delta_v^N < N
  \right]
  = 0.
\end{equation}
\end{lemma}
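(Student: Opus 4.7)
The plan is to decompose the event $\{\delta_v^N<N\}$ according to whether the continuous Euler--Maruyama path $\tilde{Z}^N$ leaves the slightly smaller domain $D_{v-1}$ on $[0,T]$ or not, and then bound the two pieces separately using Lemma~\ref{lem_tau} and Lemma~\ref{lem_A}. The geometric ingredient I will need is the separation estimate $\operatorname{dist}( D_{v-1}, (D_v)^c ) \geq \frac{ 1 }{ v(v-1) }$ for every integer $v \geq 2$. This follows by a direct case analysis from the definition $D_v = \{ x \in D : \| x \| < v,\ \operatorname{dist}(x,D^c) > 1/v \}$: for $x \in D_{v-1}$ and $y \in (D_v)^c$, either $y \notin D$ (so $\|x-y\| \geq \operatorname{dist}(x,D^c) > 1/(v-1)$), or $\|y\| \geq v$ (so $\|x-y\| \geq 1$), or $y \in D$ with $\operatorname{dist}(y,D^c) \leq 1/v$ (so the triangle inequality applied to any approximate nearest point in $D^c$ forces $\|x-y\| \geq 1/(v-1) - 1/v = 1/(v(v-1))$).

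Next I would verify the set inclusion
\begin{equation*}
  \{ \delta_v^N < N \}
  \subset
  \{ \tau_{v-1}^N < T \}
  \;\cup\;
  \Big\{
    \sup_{ n \in \{ 0, 1, \dots, \delta_v^N \} }
    \| Z_n^N - Y_n^N \|
    \geq
    \tfrac{ 1 }{ v(v-1) }
  \Big\} .
\end{equation*}
On the complement $\{ \tau_{v-1}^N = T \}$ the continuous Euler--Maruyama process satisfies $\tilde{Z}_t^N \in D_{v-1}$ for every $t \in [0,T]$, so in particular $Z_n^N = \tilde{Z}_{nT/N}^N \in D_{v-1} \subset D_v$ for every $n \in \{ 0, 1, \dots, N \}$. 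Hence, whenever additionally $\delta_v^N < N$, the minimality in the definition of $\delta_v^N$ forces the $Y$-process to be the one leaving $D_v$, that is, $Y_{\delta_v^N}^N \notin D_v$ while $Z_{\delta_v^N}^N \in D_{v-1}$; the separation estimate above then gives $\| Z_{\delta_v^N}^N - Y_{\delta_v^N}^N \| \geq 1/(v(v-1))$, which yields the inclusion.

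With the inclusion at hand, Markov's inequality implies
\begin{equation*}
  \P\!\left[ \delta_v^N < N \right]
  \leq
  \P\!\left[ \tau_{v-1}^N < T \right]
  +
  v(v-1)
  \cdot
  \E\!\left[
    \sup_{ n \in \{ 0, 1, \dots, \delta_v^N \} }
    \| Z_n^N - Y_n^N \|
  \right]
\end{equation*}
for all $v\geq 2$ and all $N\in\N$. For each fixed $v$, Lemma~\ref{lem_A} sends the last expectation to zero as $N \to \infty$ (note that the prefactor $v(v-1)$ is independent of $N$), hence $\limsup_{N\to\infty}\P[\delta_v^N<N]\leq\limsup_{N\to\infty}\P[\tau_{v-1}^N<T]$. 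Letting $v\to\infty$ and invoking Lemma~\ref{lem_tau} then closes the proof of~\eqref{eq:lem_delta}.

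I do not anticipate a serious obstacle: the argument reduces to (i) the case analysis giving the geometric separation between $D_{v-1}$ and $(D_v)^c$ and (ii) the observation that if $\tilde{Z}^N$ stays in $D_{v-1}$ then the discrete sample $Z^N$ automatically stays in $D_v$, so that any early stopping of $\delta_v^N$ must be blamed on $Y^N$ and hence on a non-trivial discrepancy between $Z^N$ and $Y^N$. The only subtle point to keep track of is to use the smaller index $v-1$ on the $Z$-side so that the distance to $(D_v)^c$ is controlled uniformly.
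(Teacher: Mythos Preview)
Your proof is correct and follows essentially the same strategy as the paper: decompose $\{\delta_v^N<N\}$ according to whether the continuous Euler--Maruyama path exits a smaller domain $D_w$, then use Lemma~\ref{lem_A} and Markov's inequality on the piece where it does not, and Lemma~\ref{lem_tau} on the piece where it does. The only difference is cosmetic: the paper keeps $w<v$ as a free parameter, uses only the qualitative fact $\operatorname{dist}(D_w,(D_v)^c)>0$, and sends $w\to\infty$ at the very end, whereas you couple $w=v-1$ and supply the explicit bound $\operatorname{dist}(D_{v-1},(D_v)^c)\geq 1/(v(v-1))$; this lets you collapse the paper's three-limit argument into two. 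One minor imprecision: the assertion ``$\tilde{Z}_t^N\in D_{v-1}$ for every $t\in[0,T]$'' on $\{\tau_{v-1}^N=T\}$ may fail at the endpoint $t=T$ (one only gets $\tilde{Z}_T^N\in\overline{D_{v-1}}$), but this is harmless since $\delta_v^N<N$ forces $\delta_v^N T/N<T$, which is all you use.
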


\begin{lemma}
\label{l:CA}
Assume that the setting
in Section~\ref{sec:convergencesetting}
is fulfilled,
that the setting
in the beginning of Subsection~\ref{sec:proof.convergence}
is fulfilled
and that
$ 
  \phi \colon \mathbb{R}^d
  \times [0,T] \times \mathbb{R}^m
  \rightarrow \mathbb{R}^d
$ is
$ ( \mu, \sigma ) $-consistent with respect to Brownian motion.
Then
\begin{equation}
  \lim_{ N \rightarrow \infty }
  \mathbb{P}\Bigg[
      \sup_{ n \in \{ 0, 1, \ldots, N \} }
      \left\|
        Z_n^N - Y_n^N
      \right\|
    \geq
    \varepsilon
  \Bigg]
= 
  0 
\end{equation}
for all 
$ \varepsilon \in (0,\infty) $.
\end{lemma}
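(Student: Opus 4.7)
The plan is to deduce Lemma~\ref{l:CA} by combining the localized $L^1$-convergence result of Lemma~\ref{lem_A} with the probabilistic control on the localization time provided by Lemma~\ref{lem_sigma}. The key observation is that on the event $\{\delta_v^N = N\}$ the full supremum $\sup_{n \in \{0,1,\ldots,N\}} \|Z_n^N - Y_n^N\|$ coincides with the localized supremum $\sup_{n \in \{0,1,\ldots,\delta_v^N\}} \|Z_n^N - Y_n^N\|$ appearing in Lemma~\ref{lem_A}, so removing the stopping level $\delta_v^N$ costs only the event $\{\delta_v^N < N\}$.

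More concretely, fix $\varepsilon \in (0,\infty)$ and $\eta \in (0,\infty)$. First I would use Lemma~\ref{lem_sigma} to pick a $v \in \mathbb{N}$ large enough that
\begin{equation}
  \limsup_{N \to \infty} \mathbb{P}\!\left[ \delta_v^N < N \right] < \tfrac{\eta}{2}.
\end{equation}
With this $v$ fixed, I would then use the elementary inclusion
\begin{equation}
  \Big\{ \sup_{n \in \{0,\ldots,N\}} \|Z_n^N - Y_n^N\| \geq \varepsilon \Big\}
  \subset
  \Big\{ \sup_{n \in \{0,\ldots,\delta_v^N\}} \|Z_n^N - Y_n^N\| \geq \varepsilon \Big\}
  \cup
  \{ \delta_v^N < N \},
\end{equation}
apply Markov's inequality to the first set on the right, and invoke Lemma~\ref{lem_A} to conclude that, for all $N$ sufficiently large,
\begin{equation}
  \mathbb{P}\!\left[ \sup_{n \in \{0,\ldots,\delta_v^N\}} \|Z_n^N - Y_n^N\| \geq \varepsilon \right]
  \leq
  \tfrac{1}{\varepsilon}\,
  \mathbb{E}\!\left[ \sup_{n \in \{0,\ldots,\delta_v^N\}} \|Z_n^N - Y_n^N\| \right]
  < \tfrac{\eta}{2}.
\end{equation}
Adding the two bounds yields $\mathbb{P}[\sup_{n} \|Z_n^N - Y_n^N\| \geq \varepsilon] < \eta$ for all large $N$, and since $\eta > 0$ was arbitrary, the claim follows.

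The hard work has already been done in the preceding lemmas: Lemma~\ref{lem_A} provides the uniform $L^1$-control on bounded sets (where Lipschitz constants are finite and the consistency assumption can be used to compare $\phi$ with the Euler-Maruyama increment), and Lemma~\ref{lem_sigma}, in turn, reduces controlling exit times of $Y^N$ and $Z^N$ from $D_v$ to controlling the exit times of $X$ and $\tilde{Z}^N$ (which are handled via Lemmas~\ref{l:CE} and \ref{lem_tau}). Given these ingredients, the present lemma is essentially a bookkeeping step, and I expect no genuine obstacle beyond assembling the two estimates; the only care required is ensuring that the $v$ chosen in the first step is truly independent of $N$, which is granted by the $\limsup_N$ form of the bound in Lemma~\ref{lem_sigma}.
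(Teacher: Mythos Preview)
Your proposal is correct and matches the paper's proof essentially line for line: the paper uses the decomposition $\{\delta_v^N < N\} \uplus \{\delta_v^N = N\} = \Omega$, applies Markov's inequality on the second event to invoke Lemma~\ref{lem_A}, and then uses Lemma~\ref{lem_sigma} on the first. The only cosmetic difference is that the paper writes the bound uniformly in $v$ and $N$ before taking limits, whereas you fix $\eta$ first and choose $v$ accordingly; the content is identical.
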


The proofs of Lemmas~\ref{lem_tau}--\ref{l:CA} 
are given below.
The next corollary is an immediate
consequence of
Lemma~\ref{l:CE} and 
Lemma~\ref{l:CA}.

\begin{cor}
\label{lem_C}
Assume that the setting
in Section~\ref{sec:convergencesetting}
is fulfilled,
that the setting
in the beginning of Subsection~\ref{sec:proof.convergence}
is fulfilled
and that
$ 
  \phi \colon \mathbb{R}^d
  \times [0,T] \times \mathbb{R}^m
  \rightarrow \mathbb{R}^d
$ is
$ ( \mu, \sigma ) $-consistent with respect to Brownian motion.
Then
\begin{equation}
  \lim_{ N \rightarrow \infty }
  \mathbb{P}\Bigg[
      \sup_{ n \in \{ 0, 1, \ldots, N \} }
      \big\|
        X_{ \frac{nT}{N} } - Y_n^N
      \big\|
    \geq
    \varepsilon
  \Bigg]
= 
  0 
\end{equation}
for all 
$ \varepsilon \in (0,\infty) $.
\end{cor}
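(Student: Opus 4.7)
The plan is to deduce Corollary~\ref{lem_C} directly from Lemma~\ref{l:CE} and Lemma~\ref{l:CA} by a triangle inequality argument. The key observation is that the piecewise-linear interpolation $\tilde{Z}^N$ of the Euler-Maruyama scheme agrees with the discrete Euler-Maruyama approximation at the grid points, i.e., $\tilde{Z}^N_{nT/N} = Z_n^N$ for all $n \in \{0,1,\ldots,N\}$ and all $N \in \mathbb{N}$, as is immediate from the definition~\eqref{eq:def:Ztilde}. Consequently, for every $n \in \{0,1,\ldots,N\}$ and every $N \in \mathbb{N}$ the triangle inequality yields
\begin{equation*}
  \big\| X_{ nT/N } - Y_n^N \big\|
  \leq
  \big\| X_{ nT/N } - \tilde{Z}^N_{ nT/N } \big\|
  +
  \big\| Z_n^N - Y_n^N \big\| .
\end{equation*}

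Taking the supremum over $n \in \{0,1,\ldots,N\}$ on both sides and bounding the first summand by $\sup_{t \in [0,T]} \| X_t - \tilde{Z}^N_t \|$ then shows
\begin{equation*}
  \sup_{ n \in \{ 0, 1, \ldots, N \} }
  \big\| X_{ nT/N } - Y_n^N \big\|
  \leq
  \sup_{ t \in [0,T] }
  \big\| X_t - \tilde{Z}^N_t \big\|
  +
  \sup_{ n \in \{ 0, 1, \ldots, N \} }
  \big\| Z_n^N - Y_n^N \big\| .
\end{equation*}

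For every $\varepsilon \in (0,\infty)$, the elementary inclusion $\{A + B \geq \varepsilon\} \subset \{A \geq \varepsilon/2\} \cup \{B \geq \varepsilon/2\}$ combined with subadditivity of $\mathbb{P}$ then yields
\begin{equation*}
\begin{split}
  \mathbb{P}\bigg[
    \sup_{ n \in \{ 0, 1, \ldots, N \} }
    \big\| X_{ nT/N } - Y_n^N \big\|
    \geq \varepsilon
  \bigg]
& \leq
  \mathbb{P}\bigg[
    \sup_{ t \in [0,T] }
    \big\| X_t - \tilde{Z}^N_t \big\|
    \geq \tfrac{ \varepsilon }{ 2 }
  \bigg]
\\ & \quad +
  \mathbb{P}\bigg[
    \sup_{ n \in \{ 0, 1, \ldots, N \} }
    \big\| Z_n^N - Y_n^N \big\|
    \geq \tfrac{ \varepsilon }{ 2 }
  \bigg]
\end{split}
\end{equation*}
for all $N \in \mathbb{N}$ and all $\varepsilon \in (0,\infty)$. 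Applying Lemma~\ref{l:CE} to the first summand and Lemma~\ref{l:CA} to the second summand, both of which tend to $0$ as $N \to \infty$, completes the proof. There is essentially no obstacle here; the entire content of the corollary is contained in Lemmas~\ref{l:CE} and~\ref{l:CA}, and the only minor point worth verifying is the identity $\tilde{Z}^N_{nT/N} = Z_n^N$, which follows by inserting $t = nT/N$ into the definition~\eqref{eq:def:Ztilde} of $\tilde{Z}^N$.
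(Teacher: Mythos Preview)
Your proof is correct and matches the paper's approach exactly: the paper states that this corollary is an immediate consequence of Lemma~\ref{l:CE} and Lemma~\ref{l:CA}, having already remarked in the preamble to the proof section that the triangle inequality via the Euler--Maruyama approximations $Z_n^N$ is what links the two. Your explicit verification that $\tilde{Z}^N_{nT/N} = Z_n^N$ and the subsequent splitting into two probabilities is precisely the intended argument.
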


Using Corollary~\ref{lem_C}, 
the proof of 
Theorem~\ref{thm:convergence} is  completed at the end
of this subsection.
We now present the proofs of Lemmas~\ref{lem_tau}--\ref{l:CA}.
Let us begin with the proof
of Lemma~\ref{lem_tau}.


\begin{proof}[Proof of Lemma~\ref{lem_tau}]
Observe that subadditivity 
and monotonicity 
of the probability 
measure 
$ \mathbb{P} $ show that
\begin{equation}
\begin{split}
  &
  \mathbb{P}\!\left[
    \tau_v^N < T
  \right]
  \\ & \leq
  \mathbb{P}\Big[
    \exists \, t \in [0,T]
    \colon
    X_t \notin D_{ w }
  \Big]
  +
  \mathbb{P}\Big[
    \tau_v^N < T, \,
    \forall \,
    t \in [0, T ]
    \colon
    X_t \in D_{ w }
  \Big]
\\&\leq
  \mathbb{P}\Big[
    \exists \, t \in [0,T]
    \colon
    X_t \notin D_{ w }
  \Big]
  +
  \mathbb{P}\Big[
    \tau_v^N < T, \,
    X_{ \tau_v^N } \in 
    D_{ w }, \,
    \tilde{Z}_{ \tau_v^N }^N
    \in 
    \left( D_v \right)^c
  \Big]
\\&\leq
  \mathbb{P}\Big[
    \exists \, t \in [0,T]
    \colon
    X_t \notin D_{ w }
  \Big]
  +
  \mathbb{P}\!\left[
    \sup_{t\in[0,T]}
    \|
      X_{ t }  
      -
      \tilde{Z}_{ t }^N
    \|
    \geq
    \text{dist}\big( 
      D_{ w }, 
      \left( D_v \right)^c
    \big)
  \right]
\end{split}
\end{equation}
for all $ v, w, N \in \mathbb{N} $ 
with $ w \leq v $. 
Lemma~\ref{l:CE} 
and the estimate
$
  \text{dist}\big( 
      D_{ w }, 
      \left( D_v \right)^c
    \big)
  > 0
$
for all $ v, w \in \N $ 
with $ w < v $ hence give
\begin{equation}
\begin{split}
  \limsup_{ v \rightarrow \infty }
  \limsup_{ N \rightarrow \infty }
  \mathbb{P}\!\left[
    \tau_v^N < T
  \right]
 \leq
  \mathbb{P}\Big[
    \exists \, t \in [0,T]
    \colon
    X_t \notin D_{ w }
  \Big]
\end{split}
\end{equation}
for all $ w \in \mathbb{N} $.
The continuity of the sample 
paths 
of $ X \colon [0, T ] \times 
\Omega \rightarrow D $ 
therefore yields
\begin{equation}
  \limsup_{ v \rightarrow \infty }
  \limsup_{ N \rightarrow \infty }
  \mathbb{P}\!\left[
    \tau_v^N < T
  \right]
\leq
  \lim_{ w \rightarrow \infty }
  \mathbb{P}\big[
    \exists \, t \in [0,T]
    \colon
    X_t \notin D_{ w }
  \big]
  = 0
\end{equation}
and this completes
the proof of 
Lemma~\ref{lem_tau}.
\end{proof}


\begin{proof}[Proof of 
Lemma~\ref{lem_A}]
Throughout this proof, the 
mappings 
$ 
  \Delta W_n^N 
  \colon 
  \Omega \rightarrow 
  \mathbb{R}^m 
$, 
$ n \in $  
$ \{ 0, 1, $ $ \ldots, $ $ N - 1 \} 
$, 
$ N \in \mathbb{N} $, 
defined by 
$ 
  \Delta W_n^N 
  := 
  W_{ \frac{ (n+1) T }{ N } } - 
  W_{ \frac{ n T }{ N } } 
$ 
for all 
$ n \in \{ 0, 1, \ldots N -1 \} $ 
and all 
$ N \in \mathbb{N} $
are used.
This notation, in particular, ensures
\begin{equation}
\label{eq:Zequation}
\begin{split}
  Z_{ k \wedge \delta_v^N }^N
 &=
  X_0
  +
  \sum_{ l = 0 }^{ 
    ( k \wedge \delta_v^N ) - 1 
  }
  \Big(
    \bar{\mu}( Z_l^N ) \cdot
    \tfrac{T}{N}
    +
    \bar{\sigma}( Z_l^N ) \,
    \Delta W_l^N
  \Big)
\\&=
  X_0
  +
  \sum_{ l = 0 }^{ k - 1 }
  \mathbbm{1}_{ \{ \delta_v^N > l \} }
  \Big(
    \bar{\mu}( Z_l^N ) \cdot
    \tfrac{T}{N}
    +
    \bar{\sigma}( Z_l^N ) \,
    \Delta W_l^N
  \Big)
\end{split}
\end{equation}
and
\begin{equation}
\label{eq:Yequation}
  Y_{ k \wedge \delta_v^N }^N
  =
  X_0
  +
  \sum_{ l = 0 }^{ k - 1 }
  \mathbbm{1}_{ \{ \delta_v^N > l \} } 
  \,
  \phi\!\left(
    Y_l^N,
    \tfrac{T}{N},
    \Delta W_l^N
  \right)
\end{equation}
for all $ k \in \{ 0, 1, \ldots, N \} $, 
$ N \in \mathbb{N} $ and all 
$ v \in \mathbb{N} $. 
In addition, the mappings
$ \phi_v \colon D_v \times [0,t_v] \to \R^d $,
$ v \in \N $,
defined through
\begin{equation}
\label{eq:defphiv}
  \phi_v( x, t )
  :=
  \E\big[
    \phi( x, t, W_t )
  \big]
\end{equation}
for all $ (x,t) \in D_v \times [0,t_v] $
and all $ v \in \N $
are used throughout this proof.
Observe that the definition of 
$ t_v $, $ v \in \N $,
(see \eqref{eq:def_tv})
ensures that the expectation in \eqref{eq:defphiv}
is well defined and thus that the
mappings $ \phi_v $, $ v \in \N $,
are well defined.
In addition, note that
\begin{equation}
\label{eq:integrability}
\begin{split}
&
  \E\Big[
    \left\|
    \mathbbm{1}_{ \{ \delta_v^N > l \} } 
    \,
    \phi\!\left(
      Y_l^N,
      \tfrac{T}{N},
      \Delta W_l^N
    \right)
    \right\|
  \Big]
=
  \E\!\Big[
    \mathbbm{1}_{ \{ \delta_v^N > l \} } 
    \left\|
    \phi\!\left(
      Y_l^N,
      \tfrac{T}{N},
      \Delta W_l^N
    \right)
    \right\|
  \Big]
\\ & \leq
  \sup_{ x \in D_v }
  \E\!\Big[
    \left\|
    \phi\!\left(
      x,
      \tfrac{T}{N},
      \Delta W_l^N
    \right)
    \right\|
  \Big]
  < \infty
\end{split}
\end{equation}
for all 
$ l \in \{ 0, 1, \dots, N - 1 \} $,
$ N \in \N $
with $ \frac{ T }{ N } \leq t_v $
and all
$ v \in \N $
and that
\begin{equation}
\label{eq:Bedingte_Erwartung}
  \E\Big[
    \mathbbm{1}_{ \{ \delta_v^N > l \} } 
    \,
    \phi\!\left(
      Y_l^N,
      \tfrac{T}{N},
      \Delta W_l^N
    \right)
    \big| \,
    \mathcal{F}_{ \frac{ l T }{ N } }
  \Big]
  =
  \mathbbm{1}_{ \{ \delta_v^N > l \} } 
  \,
  \phi_v\!\left(
    Y^N_l ,
    \tfrac{T}{N}
  \right)
\end{equation}
$ \P $-a.s.\ for all 
$ l \in \{ 0, 1, \dots, N - 1 \} $,
$ N \in \N $
with $ \frac{ T }{ N } \leq t_v $
and all
$ v \in \N $.
Combining \eqref{eq:Zequation} and \eqref{eq:Yequation}
with the triangle inequality 
then implies
\begin{align*}
  &
  \big\|
    Z_{ k \wedge \delta_v^N }^N
    -
    Y_{ k \wedge \delta_v^N }^N
  \big\|
\\ & \leq
  \frac{T}{N}
  \sum_{ l = 0 }^{ k - 1 }
  \mathbbm{1}_{ \{ \delta_v^N > l \} }
  \left\|
    \bar{\mu}( Z_l^N ) - \bar{\mu}( Y_l^N ) 
  \right\|
  +
  \left\|
    \sum_{ l = 0 }^{ k - 1 }
    \mathbbm{1}_{ \{ \delta_v^N > l \} }
    \left(
      \bar{\sigma}( Z_l^N ) - 
      \bar{\sigma}( Y_l^N ) 
    \right)
    \Delta W_l^N
  \right\|
\\ & +
  \frac{T}{N}
  \sum_{ l = 0 }^{ k - 1 }
  \mathbbm{1}_{ \{ \delta_v^N > l \} } 
  \left\|
    \bar{\mu}( Y_l^N ) 
    -
    \tfrac{N}{T}
    \cdot
    \phi_v\!\left(
      Y_l^N,
      \tfrac{T}{N}
    \right)
  \right\|
\\ & +
  \Bigg\|
    \sum_{ l = 0 }^{ k - 1 }
    \mathbbm{1}_{ \{ \delta_v^N > l \} } \,
    \bigg(
      \bar{\sigma}( Y_l^N ) \,
      \Delta W_l^N
      +
      \phi_v\!\left(
        Y_l^N,
        \tfrac{T}{N}
      \right)
    -
      \phi\!\left(
        Y_l^N,
        \tfrac{T}{N},
        \Delta W_l^N
      \right)
    \bigg)
  \Bigg\|
\end{align*}
for all 
$ k \in \{ 0, 1, \ldots, N \} $, 
$ N \in \mathbb{N} $ with 
$ \frac{T}{N} \leq t_v $ 
and all 
$ v \in \mathbb{N} $. 
The definition of 
$ c_v \in [0, \infty) $, 
$ v \in \mathbb{N} $,
(see~\eqref{eq:def_cr}) 
hence yields
\begin{align*}
  &
  \sup_{ k \in \{ 0, 1, \ldots, n \} }
  \big\|
    Z_{ k \wedge \delta_v^N }^N
    -
    Y_{ k \wedge \delta_v^N }^N
  \big\|
\\&\leq
  \frac{T c_v}{N}
  \sum_{ l = 0 }^{ n - 1 }
  \big\|
    Z_{ l \wedge \delta_v^N }^N
    -
    Y_{ l \wedge \delta_v^N }^N
  \big\|
  +
  \sup_{ k \in \{ 0, 1, \ldots, n \} }
  \left\|
    \sum_{ l = 0 }^{ k - 1 }
    \mathbbm{1}_{ \{ \delta_v^N > l \} }
    \left(
      \bar{\sigma}( Z_l^N ) - 
      \bar{\sigma}( Y_l^N ) 
    \right)
    \Delta W_l^N
  \right\|
\\ & +
  T 
  \left(
    \sup_{ x \in D_v }
    \left\|
      \mu( x ) 
      -
      \tfrac{N}{T}
      \cdot
        \phi_v(
          x,
          \tfrac{T}{N}
        )
    \right\|
  \right)
\\&+
  \sup_{ 0 \leq k \leq N }
  \Bigg\|
    \sum_{ l = 0 }^{ k - 1 }
    \mathbbm{1}_{ \{ \delta_v^N > l \} }
    \Big(
      \bar{\sigma}( Y_l^N ) \,
      \Delta W_l^N
      +
      \phi_v\!\left(
        Y_l^N,
        \tfrac{T}{N}
      \right)
      -
      \phi\!\left(
        Y_l^N,
        \tfrac{T}{N},
        \Delta W_l^N
      \right)
    \Big)
  \Bigg\|
\end{align*}
for all $ n \in \{ 0, 1, \ldots, N \} $, 
$ N \in \mathbb{N} $ 
with $ \frac{T}{N} \leq t_v $ 
and all 
$ v \in \mathbb{N} $. 
Combining this and \eqref{eq:Bedingte_Erwartung}
with the Burkholder-Davis-Gundy
inequality
(see, e.g., 
Theorem~48
in Protter~\cite{Protter2004})
then implies 
the existence of a real 
number $ \kappa \in [0, \infty) $ 
such that
\begin{align*}
  &
  \left\|
    \sup_{ k \in \{ 0, 1, \ldots, n \} }
    \left\|
      Z_{ k \wedge \delta_v^N }^N
      -
      Y_{ k \wedge \delta_v^N }^N
    \right\|
  \right\|_{ L^1( \Omega; \mathbb{R} ) }^2
\\&\leq
  \frac{4 T^2 | c_v |^2 }{ N }
  \sum_{ l = 0 }^{ n - 1 }
  \left\|
    Z_{ l \wedge \delta_v^N }^N
    -
    Y_{ l \wedge \delta_v^N }^N
  \right\|_{ 
    L^1( \Omega; \mathbb{R}^d ) 
  }^2
\\&+
  4 \kappa
  \sum_{ l = 0 }^{ n - 1 }
  \left\|
    \mathbbm{1}_{ \{ \delta_v^N > l \} }
    \left(
      \bar{\sigma}( Z_l^N ) - 
      \bar{\sigma}( Y_l^N ) 
    \right)
    \Delta W_l^N
  \right\|_{ 
    L^1( \Omega; \mathbb{R}^d ) 
  }^2
\\&+
  4 T^2 
  \left(
    \sup_{ x \in D_v }
    \left\|
      \mu( x ) 
      -
      \tfrac{N}{T}
      \cdot
      \phi_v(
        x,
        \tfrac{T}{N}
      )
    \right\|^2
  \right)
\\&+
  4 \kappa
  \sum_{ l = 0 }^{ N - 1 }
  \Big\|
    \mathbbm{1}_{ \{ \delta_v^N > l \} }
    \Big(
      \bar{\sigma}( Y_l^N ) \,
      \Delta W_l^N
      +
      \phi_v\!\left(
        Y_l^N,
        \tfrac{T}{N}
      \right)
      -
      \phi\!\left(
        Y_l^N,
        \tfrac{T}{N},
        \Delta W_l^N
      \right)
    \Big)
  \Big\|_{ 
    L^1( \Omega; \mathbb{R}^d ) 
  }^2
\end{align*}
for all $ n \in \{ 0, 1, \ldots, N \} $, 
$ N \in \mathbb{N} $ with 
$ \frac{T}{N} \leq t_v $ and all 
$ v \in \mathbb{N} $.
The estimate
\begin{equation}
\begin{split}
&
  \mathbb{E}\bigg[
  \left\|
    \mathbbm{1}_{ \{ \delta_v^N > l \} }
    \Big(
      \bar{\sigma}( Y_l^N ) \,
      \Delta W_l^N
      +
      \phi_v\!\left(
        Y_l^N,
        \tfrac{T}{N}
      \right)
      -
      \phi\!\left(
        Y_l^N,
        \tfrac{T}{N},
        \Delta W_l^N
      \right)
    \Big)
  \right\|
  \bigg]
\\ & =
  \mathbb{E}\bigg[
    \mathbbm{1}_{ \{ \delta_v^N > l \} }
  \left\|
      \bar{\sigma}( Y_l^N ) \,
      \Delta W_l^N
      +
      \phi_v\!\left(
        Y_l^N,
        \tfrac{T}{N}
      \right)
      -
      \phi\!\left(
        Y_l^N,
        \tfrac{T}{N},
        \Delta W_l^N
      \right)
  \right\|
  \bigg]
\\ & \leq
    \sup_{ x \in D_v }
    \mathbb{E}\bigg[
      \Big\|
        \sigma( x )
        W_{ T / N }
        +
        \phi_v(
          x,
          \tfrac{T}{N}
        )
    -
        \phi(
          x,
          \tfrac{T}{N},
          W_{ T/N }
        )
      \Big\|
    \bigg]
\end{split}
\end{equation}
for all 
$ l \in \{ 0, 1, \dots, N-1 \} $,
$ N \in \mathbb{N} $
with 
$ \frac{ T }{ N } \leq t_v $
and all $ v \in \mathbb{N} $
hence shows
\begin{equation}
\begin{split}
  &
  \left\|
    \sup_{ k \in \{ 0, 1, \ldots, n \} }
    \left\|
      Z_{ k \wedge \delta_v^N }^N
      -
      Y_{ k \wedge \delta_v^N }^N
    \right\|
  \right\|_{ 
    L^1( \Omega; \mathbb{R} ) 
  }^2
\\&\leq
  \frac{
    4 T^2 | c_v |^2 
  }{ 
    N 
  }
  \sum_{ l = 0 }^{ n - 1 }
  \left\|
    Z_{ l \wedge \delta_v^N }^N
    -
    Y_{ l \wedge \delta_v^N }^N
  \right\|_{ 
    L^1( \Omega; \mathbb{R}^d ) 
  }^2
\\ & +
  \frac{
    4 T m \kappa 
  }{ N }
  \left(
  \sum_{ l = 0 }^{ n - 1 }
  \sum_{ i = 1 }^{ m }
  \left\|
    \mathbbm{1}_{ 
      \{ \delta_v^N > l \} 
    }
    \left(
      \bar{\sigma}_i( Z_l^N ) - 
      \bar{\sigma}_i( Y_l^N ) 
    \right)
  \right\|_{ 
    L^1( \Omega; \mathbb{R}^d )
  }^2
  \right)
\\ & +
  4 T^2 \!
  \left(
    \sup_{ x \in D_v }
    \left\|
      \mu( x ) 
      -
      \tfrac{N}{T}
      \cdot
      \phi_v(
        x,
        \tfrac{T}{N}
      )
    \right\|^2
  \right)
\\ & +
  4 \kappa N
  \left(
    \sup_{ x \in D_v }
      \Big\|
        \sigma( x )
        W_{ T / N }
        +
        \phi_v(
          x,
          \tfrac{T}{N}
        )
        -
        \phi(
          x,
          \tfrac{T}{N},
          W_{ T/N }
        )
  \Big\|_{
    L^1(\Omega;\R^d)
  }^2
  \right)
\end{split}
\end{equation}
for all $ n \in \{ 0, 1, \ldots, N \} $, 
$ N \in \mathbb{N} $ with 
$ \frac{T}{N} \leq t_v $ and all 
$ v \in \mathbb{N} $. 
The definition 
of $ c_v \in [0, \infty) $, 
$ v \in \mathbb{N} $, 
(see~\eqref{eq:def_cr}) 
hence gives
\begin{equation}
\begin{split}
\label{eq:proof_lem_A_P}
  &
  \left\|
    \sup_{ k \in \{ 0, 1, \ldots, n \} }
    \left\|
      Z_{ k \wedge \delta_v^N }^N
      -
      Y_{ k \wedge \delta_v^N }^N
    \right\|
  \right\|_{ 
    L^1( \Omega; \mathbb{R} ) 
  }^2
\\&\leq
  \left(
    \frac{
      4 T m^2 | c_v |^2 
      ( T + \kappa )
    }{ N }
  \right)
  \left(
  \sum_{ l = 0 }^{ n - 1 }
  \left\|
    Z_{ l \wedge \delta_v^N }^N
    -
    Y_{ l \wedge \delta_v^N }^N
  \right\|_{ 
    L^1( \Omega; \mathbb{R}^d ) 
  }^2
  \right)       
\\&+
  4 T^2 \!
  \left(
    \sup_{ x \in D_v }
    \left\|
      \mu( x ) 
      -
      \tfrac{N}{T}
      \cdot
        \phi_v(
          x,
          \tfrac{T}{N}
        )
    \right\|^2
  \right)
\\&+
  4 T \kappa
  \left(
    \frac{ N }{ T }
    \cdot
    \sup_{ x \in D_v }
      \Big\|
        \sigma( x )
        W_{ T / N }
        +
        \phi_v(
          x,
          \tfrac{T}{N}
        )
      -
        \phi(
          x,
          \tfrac{T}{N},
          W_{ T/N }
        )
      \Big\|_{
    L^1(\Omega;\R^d)
  }^2
  \right)
\end{split}
\end{equation}
for all $ n \in \{ 0, 1, \ldots, N \} $, 
$ N \in \mathbb{N} $ with 
$ \frac{T}{N} \leq t_v $ and 
all $ v \in \mathbb{N} $. 
Moreover, note that~\eqref{ass:consistency_2B} 
in Lemma~\ref{lem:consistent}
implies
\begin{equation}
  \lim_{ t \searrow 0 }
  \left(
    \frac{ 1 }{ \sqrt{t } }
    \cdot
    \sup_{ x \in D_v }
    \big\| 
        \phi_v(
          x,
          t
        )
    \big\|
  \right)
  = 0
\end{equation}
for all $ v \in \mathbb{N} $. 
This and~\eqref{ass:consistency_1B} 
in Lemma~\ref{lem:consistent}
then yield
\begin{equation}
\label{eq:proof_lem_A_Q}
  \lim_{ t \searrow 0 }
  \left(
    \frac{1}{t}
    \cdot
    \sup_{ x \in D_v }
      \Big\|
        \sigma(x) W_t
        +
          \phi_v(
            x,
            t
          )
        -
        \phi(
          x,
          t,
          W_t
        )
      \Big\|_{
    L^1(\Omega;\R^d)
  }^2
  \right)
  = 0
\end{equation}
for all $ v \in \mathbb{N} $. 
Combining~\eqref{ass:consistency_2B},
\eqref{eq:proof_lem_A_P},
\eqref{eq:proof_lem_A_Q} and 
Gronwall's lemma 
then shows
\begin{equation}
  \lim_{ N \rightarrow \infty }
  \left\|
    \sup_{ n \in \{ 0, 1, \ldots, N \} }
    \left\|
      Z_{ n \wedge \delta_v^N }^N
      -
      Y_{ n \wedge \delta_v^N }^N
    \right\|
  \right\|_{ 
    L^1( \Omega; \mathbb{R} ) 
  }^2
  = 0
\end{equation}
for all $ v \in \mathbb{N} $. 
This completes the proof 
of Lemma~\ref{lem_A}.
\end{proof}


\begin{proof}[Proof of Lemma~\ref{lem_sigma}]
Note that subadditivity and monotonicity
of the probability 
measure 
$ \mathbb{P} $ show that
\begin{equation}
\begin{split}
  \mathbb{P}\!\left[
    \delta_v^N < N
  \right]
  &\leq
  \mathbb{P}\!\left[
    \tau_{ w }^N < T
  \right]
  +
  \mathbb{P}\!\left[
    \tau_{ w }^N = T, \,
    \delta_v^N < N
  \right]
\\ & \leq
  \mathbb{P}\!\left[
    \tau_{ w }^N < T
  \right]
  +
  \mathbb{P}\!\left[
    Z_{ \delta_v^N }^N 
    \in D_{ w }, \,
    Y_{ \delta_v^N }^N \notin D_{ v }
  \right]
\\ & \leq
  \mathbb{P}\!\left[
    \tau_{ w }^N < T
  \right]
  +
  \mathbb{P}\!\left[
    \|
      Z_{ \delta_v^N }^N 
      -
      Y_{ \delta_v^N }^N
    \|
    \geq
    \text{dist}\big(
      D_{ w }, \left( D_{ v } \right)^c
    \big)
  \right]
\end{split}
\end{equation}
for all $ v,w, N \in \mathbb{N} $
with $ w \leq v $.
Markov's inequality 
and Lemma~\ref{lem_A}
therefore yield
\begin{equation}
\label{eq:uselimit}
\begin{split}
&
  \limsup_{ v \rightarrow \infty }
  \limsup_{ N \rightarrow \infty }
  \mathbb{P}\!\left[
    \delta_v^N < N
  \right]
\\ & \leq
  \limsup_{ N \rightarrow \infty }
  \mathbb{P}\!\left[
    \tau_{ w }^N < T
  \right]
  +
  \limsup_{ v \rightarrow \infty }
  \limsup_{ N \rightarrow \infty }
  \left(
  \frac{
    \mathbb{E}\big[
      \|
        Z_{ \delta_v^N }^N 
        -
        Y_{ \delta_v^N }^N
      \|
    \big]
  }{
    \text{dist}\big(
      D_{ w }, 
      \left( D_{ v } \right)^c
    \big)
  }
  \right)
\\ & =
  \limsup_{ N \rightarrow \infty }
  \mathbb{P}\!\left[
    \tau_{ w }^N < T
  \right]
\end{split}
\end{equation}
for all $ w \in \mathbb{N} $.
Combining \eqref{eq:uselimit}
and 
Lemma~\ref{lem_tau}
completes the proof
of Lemma~\ref{lem_sigma}.
\end{proof}

\begin{proof}[Proof of 
Lemma~\ref{l:CA}]
The identity 
$ 
  \{ \delta_v^N < N \}
  \uplus
  \{ \delta_v^N = N \}
  =\Omega
$ 
and Markov's inequality imply
\begin{equation}  
\begin{split}
  &
  \mathbb{P}\!\left[
    \sup_{ 
      n \in \{ 0, 1, \ldots, N \} 
    }     
    \left\| Z_n^N - Y_n^N \right\|
    \geq
    \varepsilon
  \right]
 \\
 &\leq
  \mathbb{P}\!\left[
    \delta_v^N < N
  \right]
  +
  \mathbb{P}\!\left[
    \delta_v^N = N,
    \sup_{ 
      n \in 
      \{ 0, 1, \ldots, \delta_v^N  \} 
    }
    \left\| Z_n^N - Y_n^N \right\|
    \geq
    \varepsilon
  \right]
 \\
 &\leq
  \mathbb{P}\!\left[
    \delta_v^N < N
  \right]
  +
  \frac{ 1 }{ \varepsilon }
  \cdot
  \mathbb{E}\!\left[
    \sup_{ 
      n \in 
      \{ 0, 1, \ldots, \delta_v^N  \} 
    }
    \left\| Z_n^N - Y_n^N \right\|
  \right]
\end{split}     
\end{equation}
for all $v,N\in\N$ and 
all $ \varepsilon \in (0,\infty) $. 
Lemma~\ref{lem_sigma}
and
Lemma~\ref{lem_A}
therefore yield
\begin{equation}
  \lim_{ N \rightarrow \infty }
  \mathbb{P}\!\left[
    \sup_{ n \in \{ 0, 1, \ldots, N \} }
    \left\| Z_n^N - Y_n^N \right\|
    \geq
    \varepsilon
  \right]
  =
  0
\end{equation}
for all $ \varepsilon \in (0,\infty) $. 
This completes the 
proof of Lemma~\ref{l:CA}.
\end{proof}


\begin{proof}[Proof of Theorem~\ref{thm:convergence}]
In order to show Theorem~\ref{thm:convergence}, 
let 
$ \bar{X}^N \colon [0,T] 
\times \Omega \rightarrow \mathbb{R}^d 
$, 
$ N \in \mathbb{N} $, 
be a sequence of 
stochastic processes 
defined by 
\begin{equation}
  \bar{X}_t^N 
  := 
  \left( n + 1 - \tfrac{ t N }{ T } \right) 
  X_{ \frac{ nT }{ N } } 
  + 
  \left( \tfrac{ t N }{ T } - n \right) 
  X_{ \frac{ (n+1)T }{ N } } 
\end{equation}
for all 
$ 
  t \in 
  [ \frac{ nT }{ N }, \frac{ (n+1)T }{ N } ] 
$, 
$ n \in \{ 0,1,\ldots, N-1 \} $ 
and all $ N \in \mathbb{N} $. 
Then 
\begin{equation}
  \sup_{ t \in [0, T ] }
  \left\|
    \bar{X}_t^N - \bar{Y}_t^N
  \right\|
  =
  \sup_{ n \in \{ 0,1,\ldots, N \} }
  \left\|
    X_{ \frac{ nT }{ N } } - Y_n^N
  \right\|
\end{equation}
for all $ N \in \mathbb{N} $.
Moreover, the continuity of the sample 
paths of 
$ 
  X \colon [0,T] \times \Omega 
  \rightarrow D 
$ 
yields
\begin{equation}
\label{eq:thm2_p2}
  \lim_{ N \rightarrow \infty }
  \!
  \left(
    \sup_{ t \in [0, T ] }
    \left\|
      X_t - \bar{X}_t^N
    \right\|
  \right)
  = 0
\end{equation}
$ \mathbb{P} $-a.s.. 
This implies 
$   
  \lim_{ N \to \infty }
  \mathbb{P}\big[
    \sup_{ t \in [0, T] }
    \|
      X_t - \bar{X}_t^N
    \|
    \geq \varepsilon
  \big]
  = 0
$
for all 
$ 
  \varepsilon \in (0,\infty)
$
and
Corollary~\ref{lem_C}
hence shows
\begin{equation}  
\begin{split}
  &
  \limsup_{ N \to \infty }
  \mathbb{P}\Bigg[
    \sup_{ t \in [0,T] }
    \big\| 
      X_t 
      - 
      \bar{Y}^N_t
    \big\|
    \geq
    \varepsilon
  \Bigg]
\\ & \leq 
  \limsup_{ N \to \infty }
  \mathbb{P}\Bigg[
    \sup_{ t \in [0,T] }
    \big\| 
      X_t 
      - 
      \bar{X}_t^N
    \big\|
    +
    \sup_{ t \in [0,T] }
    \big\| 
      \bar{X}_t^N
      - 
      \bar{Y}^N_t
    \big\|
    \geq
    \varepsilon
  \Bigg]
\\ & \leq 
  \limsup_{ N \to \infty }
  \mathbb{P}\Bigg[
    \sup_{ t \in [0,T] }
    \big\| 
      X_t 
      - 
      \bar{X}_t^N
    \big\|
    \geq
    \tfrac{\varepsilon}{2}
  \Bigg]
\\ & \quad
  +
  \limsup_{ N \to \infty }
  \mathbb{P}\Bigg[
  \sup_{ n \in \{ 0,1,\ldots, N \} }
  \left\|
    X_{ \frac{ nT }{ N } } - Y_n^N
  \right\|
    \geq
    \tfrac{\varepsilon}{2}
  \Bigg]
  = 0
\end{split}     
\end{equation}
for all 
$ \varepsilon \in (0,\infty) $.
This completes the proof of Theorem~\ref{thm:convergence}.
\end{proof}

\section{Strong convergence}
\label{sec:strongconvergence}

In this section,
we combine the convergence in probability result
of Theorem~\ref{thm:convergence}
with moment bounds 
for the numerical approximation
processes
in \eqref{eq:recX}
to obtain strong convergence
of the numerical approximation
processes
in \eqref{eq:recX}.

\subsection{Strong convergence
based on moment bounds}
\label{sec:strongCbased}

This subsection presents
strong convergence results
under the assumption
that the
numerical approximation
processes in \eqref{eq:recX}
satisfy suitable
moment bounds.
Below in 
Subsections~\ref{sec:strongCtamed}
and
\ref{sec:strongcsemi1},
we will give more concrete
conditions on the drift
coefficient $ \mu $,
the diffusion coefficient
$ \sigma $ and the
numerical method which
allow us to avoid to impose 
these
moment bound assumptions.
The strong convergence
results in this subsection
use the following well-known
modification of
Fatou's lemma.
For completeness its proof
is given below.

\begin{lemma}[A modified
version of Fatou's lemma]
\label{lem:fatou}
Let 
$ 
  \left( \Omega, \mathcal{F},
  \mathbb{P} \right)
$
be a probability space,
let
$
  \left( E, d_E \right)
$
be a separable metric space
and let
$ 
  Z_N \colon \Omega
  \rightarrow E 
$,
$ N \in \N $,
and
$
  Z \colon \Omega \rightarrow
 E
$
be 
$ \mathcal{F} $/$ \mathcal{B}(E) 
$-measurable mappings
with
$ 
  \lim_{ N \to \infty }
  \mathbb{P}\big[
    d_E( Z_N, Z ) \geq \varepsilon 
  \big]
  = 0
$
for all
$ \varepsilon \in (0,\infty) $.
Then
\begin{equation}
  \mathbb{E}\big[
    \varphi( Z )
  \big]
  \leq
  \liminf_{ N \to \infty }
  \mathbb{E}\big[
    \varphi( Z_N )
  \big]
\end{equation}
for all continuous
functions 
$ 
  \varphi \colon E
  \rightarrow [0,\infty]
$.
\end{lemma}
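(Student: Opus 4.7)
The plan is to reduce to the classical Fatou lemma by first extracting an almost surely convergent subsequence. First I would choose a subsequence $N_k$, $k \in \mathbb{N}$, realizing the liminf, i.e.\ satisfying $\lim_{k \to \infty} \mathbb{E}[\varphi(Z_{N_k})] = \liminf_{N \to \infty} \mathbb{E}[\varphi(Z_N)]$. Since the assumption $\lim_{N \to \infty} \mathbb{P}[d_E(Z_N,Z) \geq \varepsilon] = 0$ for all $\varepsilon \in (0,\infty)$ is inherited by every subsequence, the random variables $Z_{N_k}$, $k \in \mathbb{N}$, still converge to $Z$ in probability.

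Next I would invoke the standard fact that convergence in probability (in a separable metric space) implies almost sure convergence along a further subsequence. Concretely, I would pick $k_j \in \mathbb{N}$, $j \in \mathbb{N}$, increasing such that $\mathbb{P}[d_E(Z_{N_{k_j}},Z) \geq 2^{-j}] \leq 2^{-j}$, and apply the first Borel--Cantelli lemma to conclude that $d_E(Z_{N_{k_j}}, Z) \to 0$ as $j \to \infty$ $\mathbb{P}$-almost surely. Here the measurability of the events $\{d_E(Z_{N_{k_j}},Z) \geq 2^{-j}\}$ uses that $d_E \colon E \times E \to [0,\infty)$ is continuous and therefore Borel measurable, combined with the measurability of $Z_{N_{k_j}}$ and $Z$; separability of $E$ ensures that the Borel $\sigma$-algebra on $E \times E$ agrees with the product $\sigma$-algebra so that $(Z_{N_{k_j}},Z)$ is $\mathcal{F}/\mathcal{B}(E \times E)$-measurable.

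Then continuity of $\varphi \colon E \to [0,\infty]$ yields $\varphi(Z_{N_{k_j}}) \to \varphi(Z)$ as $j \to \infty$ $\mathbb{P}$-almost surely. Since $\varphi \geq 0$, the classical Fatou lemma gives
\begin{equation}
  \mathbb{E}[\varphi(Z)]
  \leq
  \liminf_{j \to \infty} \mathbb{E}[\varphi(Z_{N_{k_j}})].
\end{equation}
Finally, because $(\mathbb{E}[\varphi(Z_{N_{k_j}})])_{j \in \mathbb{N}}$ is a subsequence of the convergent sequence $(\mathbb{E}[\varphi(Z_{N_k})])_{k \in \mathbb{N}}$, its liminf equals the chosen limit, which is $\liminf_{N \to \infty} \mathbb{E}[\varphi(Z_N)]$, completing the argument.

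There is essentially no hard step here: the entire content of the lemma is the classical passage from convergence in probability to almost sure convergence along a subsequence, and then the ordinary Fatou lemma. The only mild technical point to be careful about is verifying joint measurability of $(Z_N,Z)$ in the separable metric space setting, so that $\{d_E(Z_N,Z) \geq \varepsilon\}$ is genuinely an event and Borel--Cantelli is applicable; separability of $(E,d_E)$ makes this routine.
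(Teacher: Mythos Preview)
Your proof is correct and follows essentially the same approach as the paper: pick a subsequence realizing the $\liminf$, extract a further almost surely convergent subsequence from convergence in probability, use continuity of $\varphi$, and apply the classical Fatou lemma. You actually give more detail than the paper (the explicit Borel--Cantelli construction and the measurability discussion via separability), whereas the paper simply invokes the standard subsequence fact without further justification.
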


\begin{proof}[Proof
of Lemma~\ref{lem:fatou}]
First, let 
$ 
  \varphi \colon E
  \to [0,\infty]
$
be an arbitrary continuous
function.
Then let
$ N(k) \in \N $,
$ k \in \N $,
be an increasing sequence
of natural numbers
such that
\begin{equation}
\label{eq:fatou1}
  \liminf_{ N \to \infty }
  \mathbb{E}\big[
    \varphi( Z_N )
  \big]
  =
  \lim_{ k \to \infty }
  \mathbb{E}\big[
    \varphi( Z_{ N(k) } )
  \big] .
\end{equation}
Next note 
that
$ 
  \lim_{ k \to \infty }
  \mathbb{P}\big[
    d_E( Z_{ N(k) }, Z ) 
    \geq \varepsilon 
  \big]
  = 0
$
for all 
$   
  \varepsilon \in (0,\infty)
$
by assumption.
Consequently,
there exists
an increasing
sequence
$ k_l \in \N $,
$ l \in \N $,
of natural numbers
such that
$
  \lim_{ l \to \infty }
  Z_{ N( k_l ) }
  =
  Z
$
$ \mathbb{P} $-a.s..
The continuity of
$ \varphi $
hence implies
$
  \lim_{ l \to \infty }
  \varphi( Z_{ N( k_l ) } )
  =
  \varphi( Z )
$
$ \mathbb{P} $-a.s..
Combining this,
Fatou's lemma 
and
\eqref{eq:fatou1}
then
gives
\begin{equation}
\begin{split}
  \mathbb{E}\big[
    \varphi(Z)
  \big]
&  =
  \mathbb{E}\!\left[
    \lim_{ l \to \infty }
    \varphi( Z_{ N(k_l) } )
  \right]
  \leq
  \liminf_{ l \to \infty }
  \mathbb{E}\big[
    \varphi( Z_{ N(k_l) } )
  \big] 
\\ & =
  \lim_{ k \to \infty }
  \mathbb{E}\big[
    \varphi( Z_{ N(k) } )
  \big] 
  =
  \liminf_{ N \to \infty }
  \mathbb{E}\big[
    \varphi( Z_N )
  \big] .
\end{split}
\end{equation}
The proof of
Lemma~\ref{lem:fatou}
is thus completed.
\end{proof}

Let us now present the
promised strong convergence
results which use the
assumption of moment bounds
for the numerical approximation
processes in \eqref{eq:recX}.

\begin{cor}[Strong final value
convergence
based on moment bounds]
\label{cor:convergence1}
Assume that the setting
in Section~\ref{sec:convergencesetting}
is fulfilled, 
suppose that
$ 
  \phi \colon \mathbb{R}^d
  \times [0,T] \times \mathbb{R}^m
  \rightarrow \mathbb{R}^d
$ is
$ ( \mu, \sigma ) $-consistent with respect to Brownian motion
and
let 
$ p \in (0,\infty ) $
be a real number such that
\begin{equation}
\label{eq:cor1a}
  \limsup_{ N \rightarrow \infty }
  \mathbb{E}\Big[
    \big\| \bar{Y}^N_T \big\|^p
  \Big]
  < \infty .
\end{equation}
Then 
$ \E\big[ \| X_T \|^p \big] < \infty $
and
\begin{equation}
\label{eq:cor1b}
  \lim_{ N \rightarrow \infty }
    \mathbb{E}\Big[
      \big\| 
        X_T
        - 
        \bar{Y}^N_T
      \big\|^q
    \Big]
= 
  0 
\end{equation}
for all $ q \in (0,p) $.
\end{cor}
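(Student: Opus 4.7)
My plan is to combine the convergence in probability from Theorem~\ref{thm:convergence} with the assumed uniform $p$-th moment bound via standard uniform integrability reasoning, using Lemma~\ref{lem:fatou} to transfer the moment bound to the limit.

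First I would note that Theorem~\ref{thm:convergence} applied under the present hypotheses yields that $\bar{Y}^N_T \to X_T$ in probability as $N \to \infty$. Since the function $\mathbb{R}^d \ni x \mapsto \|x\|^p \in [0,\infty]$ is continuous, an application of Lemma~\ref{lem:fatou} with $Z_N = \bar{Y}^N_T$, $Z = X_T$, and $\varphi(x) = \|x\|^p$ gives
\begin{equation*}
  \mathbb{E}\big[\|X_T\|^p\big]
  \leq
  \liminf_{N \to \infty} \mathbb{E}\big[\|\bar{Y}^N_T\|^p\big]
  < \infty
\end{equation*}
by hypothesis~\eqref{eq:cor1a}. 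This establishes the first claim.

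Next I would fix $q \in (0,p)$ and set $r := p/q \in (1,\infty)$. By the triangle inequality in $L^p$ together with \eqref{eq:cor1a} and the moment bound for $X_T$ just obtained,
\begin{equation*}
  \sup_{N \in \mathbb{N}} \mathbb{E}\Big[\big\|X_T - \bar{Y}^N_T\big\|^{qr}\Big]
  =
  \sup_{N \in \mathbb{N}} \mathbb{E}\Big[\big\|X_T - \bar{Y}^N_T\big\|^{p}\Big]
  < \infty.
\end{equation*}
The classical de~la Vall\'ee-Poussin criterion (or equivalently, boundedness in $L^r$ with $r > 1$) then implies that the family $\{\|X_T - \bar{Y}^N_T\|^q : N \in \mathbb{N}\}$ is uniformly integrable.

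Finally, since convergence in probability of $\bar{Y}^N_T$ to $X_T$ together with continuity of $\|\cdot\|^q$ implies convergence in probability of $\|X_T - \bar{Y}^N_T\|^q$ to $0$, combining this convergence in probability with the uniform integrability established above yields convergence in $L^1$, which is precisely \eqref{eq:cor1b}. No serious obstacle is anticipated: the only subtlety is that we cannot take $q = p$ directly, which is exactly why the exponent $r = p/q > 1$ is needed to extract uniform integrability from a bound at the borderline exponent $p$.
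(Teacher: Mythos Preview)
Your proof is essentially identical to the paper's: both invoke Theorem~\ref{thm:convergence} for convergence in probability, Lemma~\ref{lem:fatou} to obtain $\mathbb{E}[\|X_T\|^p]<\infty$, and then boundedness of $\|X_T-\bar{Y}^N_T\|^q$ in $L^{p/q}$ (with $p/q>1$) to deduce uniform integrability and hence $L^1$-convergence. One minor point: since hypothesis~\eqref{eq:cor1a} is only a $\limsup$, your displayed $\sup_{N\in\mathbb{N}}$ should be replaced by $\sup_{N\geq N_0}$ for some $N_0\in\mathbb{N}$ (as the paper does explicitly), which is all that is needed for the limit conclusion.
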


\begin{proof}[Proof
of Corollary~\ref{cor:convergence1}]
First, let $ q \in (0,p) $ be 
arbitrary.
Next observe that 
inequality~\eqref{eq:cor1a} 
implies that there exists a natural
number $ N_0 \in \mathbb{N} $
such that
\begin{equation}
\label{eq:momentcor1}
  \sup_{ 
    N \in \{ N_0, N_0+1, \dots \}
  }
  \mathbb{E}\Big[
    \big\| \bar{Y}^N_T \big\|^p
  \Big]
  < \infty .
\end{equation}
Theorem~\ref{thm:convergence}
and Lemma~\ref{lem:fatou}
hence imply that
\begin{equation}
\label{eq:fatou}
\begin{split}
  \E\big[
    \| X_T \|^p
  \big]
  \leq
  \liminf_{ N \to \infty }
  \E\Big[
    \big\|
      \bar{Y}_T^{ N }
    \big\|^p
  \Big]
  \leq
  \sup_{ 
    N \in \{ N_0, N_0+1, \dots \}
  }
  \mathbb{E}\Big[
    \big\| \bar{Y}^N_T \big\|^p
  \Big]
  < \infty .
\end{split}
\end{equation}
This together with
inequality~\eqref{eq:momentcor1}
shows that the family of 
random variables
\begin{equation}
  \| X_T - \bar{Y}^N_T \|^q ,
  \quad
  N \in \{ N_0, N_0+1, \dots \} ,
\end{equation}
is bounded in 
$ L^{ p / q }(\Omega; \R)
$
and, therefore, 
uniformly integrable 
(see, e.g., Corollary~6.21 in 
Klenke~\cite{k08b}).
Theorem~6.25
in Klenke~\cite{k08b} 
and Theorem~\ref{thm:convergence}
hence 
imply \eqref{eq:cor1b}.
This completes the proof
of Corollary~\ref{cor:convergence1}.
\end{proof}

\begin{cor}[Strong convergence
based on moment 
bounds]
\label{cor:convergence2}
Assume that the setting
in Section~\ref{sec:convergencesetting}
is fulfilled,
suppose that
$ 
  \phi \colon \mathbb{R}^d
  \times [0,T] \times \mathbb{R}^m
  \rightarrow \mathbb{R}^d
$ is
$ ( \mu, \sigma ) $-consistent with respect to Brownian motion
and
let 
$ p \in (0,\infty ) $
be a real number such that
\begin{equation}
\label{eq:cor2a}
  \limsup_{ N \rightarrow \infty }
  \sup_{ n \in \{ 0, 1, \dots, N \} }
  \mathbb{E}\Big[
    \big\| 
      \bar{Y}^N_{ 
        \frac{ n T }{ N }
      } 
    \big\|^p
  \Big]
  < \infty .
\end{equation}
Then
$
  \sup_{ t \in [0,T] }
  \E\big[
    \| X_t \|^p
  \big]
  < \infty
$
and
\begin{equation}
\label{eq:cor2b}
  \lim_{ N \rightarrow \infty }
  \Bigg(
    \sup_{ t \in [0,T] }
    \mathbb{E}\Big[
      \big\| 
        X_t 
        - 
        \bar{Y}^N_t
      \big\|^q
    \Big]
  \Bigg)
= 
  0 
\end{equation}
for all $ q \in (0,p) $.
\end{cor}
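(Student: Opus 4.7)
The plan is to combine the uniform-in-$t$ convergence in probability from Theorem~\ref{thm:convergence} with a uniform integrability argument, after first extending the grid-point moment bound to the full interval $[0,T]$.

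First I would bootstrap the hypothesis~\eqref{eq:cor2a} to a bound on $\bar{Y}^N_t$ for all $t\in[0,T]$. Since $\bar{Y}^N_t$ is by definition the convex combination $(n+1-\tfrac{tN}{T})\,Y^N_n + (\tfrac{tN}{T}-n)\,Y^N_{n+1}$ on each subinterval, applying convexity of $\|\cdot\|^p$ if $p\ge 1$ and subadditivity of $x\mapsto x^p$ on $[0,\infty)$ if $p\in(0,1)$ yields a constant $C_p\in(0,\infty)$ with $\|\bar{Y}^N_t\|^p \le C_p\big(\|Y^N_n\|^p + \|Y^N_{n+1}\|^p\big)$ for $t\in[\tfrac{nT}{N},\tfrac{(n+1)T}{N}]$. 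Taking expectations and using~\eqref{eq:cor2a} then gives $\limsup_{N\to\infty}\sup_{t\in[0,T]}\mathbb{E}[\|\bar{Y}^N_t\|^p]<\infty$, so there is $N_0\in\mathbb{N}$ and $M\in(0,\infty)$ with $\sup_{N\ge N_0}\sup_{t\in[0,T]}\mathbb{E}[\|\bar{Y}^N_t\|^p]\le M$.

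Next I would transfer this moment bound to the exact solution. By Theorem~\ref{thm:convergence} the sequence $\sup_{s\in[0,T]}\|X_s-\bar{Y}^N_s\|$ tends to $0$ in probability, hence for each fixed $t$, $\bar{Y}^N_t\to X_t$ in probability. Applying Lemma~\ref{lem:fatou} to the continuous map $x\mapsto\|x\|^p$ at each $t$ produces $\mathbb{E}[\|X_t\|^p]\le\liminf_{N\to\infty}\mathbb{E}[\|\bar{Y}^N_t\|^p]\le M$, and thus $\sup_{t\in[0,T]}\mathbb{E}[\|X_t\|^p]<\infty$.

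For the convergence assertion, fix $q\in(0,p)$. The triangle inequality $\|X_t-\bar{Y}^N_t\|^q \le 2^q(\|X_t\|^q+\|\bar{Y}^N_t\|^q)$ combined with the two preceding moment bounds shows that the family $\mathcal{Z}:=\{\|X_t-\bar{Y}^N_t\|^q : t\in[0,T],\,N\ge N_0\}$ is bounded in $L^{p/q}(\Omega;\mathbb{R})$. Since $p/q>1$, the de la Vall\'ee-Poussin criterion (see, e.g., Corollary~6.21 in Klenke~\cite{k08b}) shows $\mathcal{Z}$ is uniformly integrable. Given $\varepsilon\in(0,\infty)$, pick $K\in(0,\infty)$ so that $\sup_{t,N\ge N_0}\mathbb{E}[\|X_t-\bar{Y}^N_t\|^q\,\mathbbm{1}_{\{\|X_t-\bar{Y}^N_t\|^q>K\}}] <\tfrac{\varepsilon}{3}$, set $\delta:=\tfrac{\varepsilon}{3}$, and write
\begin{equation*}
  \mathbb{E}\big[\|X_t-\bar{Y}^N_t\|^q\big]
  \le \delta + K\cdot\mathbb{P}\big[\|X_t-\bar{Y}^N_t\|^q>\delta\big] + \tfrac{\varepsilon}{3}
  \le \tfrac{2\varepsilon}{3} + K\cdot\mathbb{P}\Big[\sup_{s\in[0,T]}\|X_s-\bar{Y}^N_s\|^q>\delta\Big].
\end{equation*}
Taking the supremum over $t\in[0,T]$ and then letting $N\to\infty$, Theorem~\ref{thm:convergence} drives the last term below $\tfrac{\varepsilon}{3}$ eventually, which gives $\limsup_{N\to\infty}\sup_{t\in[0,T]}\mathbb{E}[\|X_t-\bar{Y}^N_t\|^q]\le\varepsilon$. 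As $\varepsilon$ was arbitrary, \eqref{eq:cor2b} follows.

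The main subtlety I foresee is step three, where the uniformity in $t$ in the resulting $L^q$ convergence must be extracted. The argument above succeeds only because Theorem~\ref{thm:convergence} provides convergence in probability of the \emph{uniform} deviation $\sup_{s\in[0,T]}\|X_s-\bar{Y}^N_s\|$ (not merely pointwise in $t$), so that the single event controlling the probability term works simultaneously for all $t$; without this, one could only conclude pointwise $L^q$ convergence rather than the required supremum version.
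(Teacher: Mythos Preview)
Your proof is correct and follows essentially the same strategy as the paper: bootstrap the grid-point moment bound to all $t$ via the linear interpolation, transfer it to $X$ using Theorem~\ref{thm:convergence} and Lemma~\ref{lem:fatou}, and then combine the resulting $L^{p/q}$-boundedness with the uniform-in-$t$ convergence in probability from Theorem~\ref{thm:convergence}. The only cosmetic difference is that the paper carries out the last step by splitting on the event $\{\sup_{s}\|X_s-\bar{Y}^N_s\|<1\}$ and applying H\"older's inequality on its complement, whereas you use an explicit three-level $\delta$--$K$ truncation via uniform integrability; both arguments exploit exactly the same ingredients.
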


\begin{proof}[Proof
of Corollary~\ref{cor:convergence2}]
Inequality~\eqref{eq:cor2a} 
implies
\begin{equation}
\label{eq:momentcor2_0}
  \limsup_{ N \rightarrow \infty }
  \sup_{ t \in [0,T] }
  \mathbb{E}\Big[
    \big\| \bar{Y}^N_{ t } \big\|^p
  \Big]
  < \infty 
\end{equation}
and this yields that there 
exists a natural
number $ N_0 \in \mathbb{N} $
such that
\begin{equation}
\label{eq:momentcor2}
  \sup_{ 
    N \in \{ N_0, N_0+1, \dots \}
  }
  \sup_{ t \in [0,T] }
  \mathbb{E}\Big[
    \big\| \bar{Y}^N_t \big\|^p
  \Big]
  < \infty .
\end{equation}
Theorem~\ref{thm:convergence}
and Lemma~\ref{lem:fatou}
therefore show that
\begin{equation}  
\begin{split}  
\label{eq:XLp.finite}
  \sup_{ t \in [0,T] }
  \mathbb{E}\Big[
    \big\| X_t \big\|^p
  \Big]
  & \leq
  \sup_{ t \in [0,T] }
  \liminf_{ N \to \infty }
  \mathbb{E}\Big[
    \big\| \bar{Y}^N_t \big\|^p
  \Big]
\leq
  \limsup_{ N \to \infty }
  \sup_{ t \in [0,T] }
  \mathbb{E}\Big[
    \big\| \bar{Y}^N_t \big\|^p
  \Big] < \infty .
\end{split}     
\end{equation}
Moreover, the identity
\begin{equation}
  \mathbbm{1}_{
    \left\{
      \sup_{ s \in [0,T] } 
      \| X_s - \bar{Y}^N_s \|
      < 1 
    \right\}
  }
+
  \mathbbm{1}_{
    \left\{
      \sup_{ s \in [0,T] } 
      \| X_s - \bar{Y}^N_s \|
      \geq 1 
    \right\}
  }
\equiv 
  1 
\end{equation}
and H\"{o}lder's inequality
imply
\begin{align*}
&
    \sup_{ t \in [0,T] }
    \mathbb{E}\Big[
      \left\| 
        X_t 
        - 
        \bar{Y}^N_t
      \right\|^q
    \Big]
\\
& \leq
    \sup_{ t \in [0,T] }
    \mathbb{E}\bigg[
  \mathbbm{1}_{
    \left\{
      \sup_{ s \in [0,T] } 
      \| X_s - \bar{Y}^N_s \|
      < 1 
    \right\}
  }
      \left\| 
        X_t 
        - 
        \bar{Y}^N_t
      \right\|^q
    \bigg]
\\ & +
    \sup_{ t \in [0,T] }
    \mathbb{E}\bigg[
  \mathbbm{1}_{
    \left\{
      \sup_{ s \in [0,T] } 
      \| X_s - \bar{Y}^N_s \|
      \geq 1 
    \right\}
  }
      \left\| 
        X_t 
        - 
        \bar{Y}^N_t
      \right\|^q
    \bigg]
\\ & \leq
    \mathbb{E}\bigg[
  \mathbbm{1}_{
    \left\{
      \sup_{ t \in [0,T] } 
      \| X_t - \bar{Y}^N_t \|
      < 1 
    \right\}
  }
    \sup_{ t \in [0,T] }
      \left\| 
        X_t 
        - 
        \bar{Y}^N_t
      \right\|^q
    \bigg]
\\ & +
  \left(
    \mathbb{P}\bigg[
      \sup_{ t \in [0,T] } 
      \| X_t - \bar{Y}^N_t \|
      \geq 1 
    \bigg]
  \right)^{ 
    \!\! \frac{ (p - q) }{ p }
  }
\!\!
  \left(
    \sup_{
      M \in \{ N_0, N_0+1,\dots\}
    }
    \sup_{ t \in [0,T] }
    \mathbb{E}\Big[
      \left\| 
        X_t 
        - 
        \bar{Y}^M_t
      \right\|^p
    \Big]
  \right)^{ 
    \!\! \frac{ q }{ p }
  }
\end{align*}
for all 
$ N \in \{ N_0, N_0+1, \dots 
\} $
and all $ q \in (0,p) $.
The estimate
$ 
  \left| a + b \right|^p
  \leq 2^p \left| a \right|^p + 2^p 
  \left| b \right|^p 
$
for all 
$ a, b \in \mathbb{R} $
and 
\eqref{eq:momentcor2}
and~\eqref{eq:XLp.finite}
therefore give
\begin{align}
\label{eq:help0}
&
    \sup_{ t \in [0,T] }
    \mathbb{E}\Big[
      \left\| 
        X_t 
        - 
        \bar{Y}^N_t
      \right\|^q
    \Big]
\\
& \leq
\nonumber
  \mathbb{E}\bigg[
    \min\!\bigg( 1,
      \sup_{ t \in [0,T] }
      \left\| 
        X_t 
        - 
        \bar{Y}^N_t
      \right\|^q
    \bigg)
  \bigg]
\\ & +
\nonumber
  2^p
  \left(
    \mathbb{P}\bigg[
      \sup_{ t \in [0,T] } 
      \| X_t - \bar{Y}^N_t \|
      \geq 1 
    \bigg]
  \right)^{ 
    \!\! \frac{ (p - q) }{ p }
  }
  \left(
    \sup_{
      M \in \{ N_0, N_0+1,\dots\}
    }
    \sup_{ t \in [0,T] }
    \mathbb{E}\Big[
      \big\| 
        \bar{Y}^M_t
      \big\|^p
    \Big]
  \right)^{ 
    \!\! \frac{ q }{ p }
  }
\\ & +
\nonumber
  2^p
  \left(
    \mathbb{P}\bigg[
      \sup_{ t \in [0,T] } 
      \| X_t - \bar{Y}^N_t \|
      \geq 1 
    \bigg]
  \right)^{ 
    \!\! \frac{ (p - q) }{ p }
  }
  \left(
    \sup_{ t \in [0,T] }
    \mathbb{E}\Big[
      \big\| 
        X_t 
      \big\|^p
    \Big]
  \right)^{ 
    \!\! \frac{ q }{ p }
  }
  < \infty
\end{align}
for all 
$ N \in \{ N_0, N_0+1, \dots 
\} $
and all $ q \in (0,p) $.
Moreover, 
Theorem~\ref{thm:convergence}
implies
\begin{equation}
\label{eq:help1}
  \lim_{ N \rightarrow \infty }
    \mathbb{P}\bigg[
      \sup_{ t \in [0,T] } 
      \| X_t - \bar{Y}^N_t \|
      \geq 1 
    \bigg]
= 0 
\end{equation}
and
\begin{equation}
\label{eq:help2}
  \lim_{ N \rightarrow \infty }
  \mathbb{E}\bigg[
    \min\!\bigg( 1,
      \sup_{ t \in [0,T] }
      \left\| 
        X_t 
        - 
        \bar{Y}^N_t
      \right\|^q
    \bigg)
  \bigg]
  = 0
\end{equation}
for all $ q \in (0,\infty) $.
Combining 
\eqref{eq:help0}--\eqref{eq:help2}
then shows
\eqref{eq:cor2b}.
This completes the proof
of Corollary~\ref{cor:convergence2}.
\end{proof}

\begin{cor}[Uniform strong
convergence based on a 
priori moment bounds]
\label{cor:convergence4}
Assume that the setting
in 
Section~\ref{sec:convergencesetting}
is fulfilled,
suppose that
$ 
  \phi \colon \mathbb{R}^d
  \times [0,T] \times \mathbb{R}^m
  \rightarrow \mathbb{R}^d
$ is
$ ( \mu, \sigma ) $-consistent with respect to Brownian motion
and let $ p \in (0,\infty) $ be a real
number such that
\begin{equation}
\label{eq:cor4a}
  \limsup_{ N \rightarrow \infty }
  \mathbb{E}\Bigg[
    \sup_{ n \in \{ 0, 1, \dots, N \} }
    \big\| 
      \bar{Y}^N_{ 
        \frac{ n T }{ N }
      } 
    \big\|^p
  \Bigg]
  < \infty .
\end{equation}
Then
$
  \E\big[
  \sup_{ t \in [0,T] } \| X_t \|^p
  \big]
  < \infty
$
and
\begin{equation}
\label{eq:cor4b}
  \lim_{ N \rightarrow \infty }
    \mathbb{E}\Bigg[
      \sup_{ t \in [0,T] }
      \big\| 
        X_t 
        - 
        \bar{Y}^N_t
      \big\|^q
    \Bigg]
= 
  0 
\end{equation}
for all $ q \in (0,p) $.
\end{cor}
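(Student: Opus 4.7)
The plan is to reduce Corollary~\ref{cor:convergence4} to an argument parallel to the one already carried out for Corollary~\ref{cor:convergence2}, with the pointwise-in-time moment bound on the grid replaced by a moment bound on the pathwise supremum. The key preliminary observation is that, because of \eqref{eq:recX}, the process $ \bar{Y}^N $ is piecewise linear between consecutive grid points $ nT/N $ and $ (n+1)T/N $, i.e.\ $ \bar{Y}^N_t $ lies in the line segment joining $ \bar{Y}^N_{nT/N} $ and $ \bar{Y}^N_{(n+1)T/N} $. Convexity of $ \| \cdot \| $ therefore gives, for every $ p \in (0,\infty) $ and every $ N \in \mathbb{N} $, the identity
\begin{equation}
  \sup_{ t \in [0,T] } \big\| \bar{Y}^N_t \big\|^p
  =
  \sup_{ n \in \{ 0, 1, \dots, N \} } \big\| \bar{Y}^N_{ nT/N } \big\|^p ,
\end{equation}
so that hypothesis \eqref{eq:cor4a} is equivalent to $ \limsup_{ N \to \infty } \mathbb{E}[ \sup_{ t \in [0,T] } \| \bar{Y}^N_t \|^p ] < \infty $.

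Given this strengthened bound, I would proceed in three steps. First, Theorem~\ref{thm:convergence} yields that $ \sup_{ t \in [0,T] } \| X_t - \bar{Y}^N_t \| $ converges to $ 0 $ in probability, and hence, by the reverse triangle inequality, the real-valued random variables $ \sup_{ t \in [0,T] } \| \bar{Y}^N_t \| $ converge in probability to the random variable $ \sup_{ t \in [0,T] } \| X_t \| $ (which is well-defined and finite $ \mathbb{P} $-a.s.\ by continuity of the sample paths of $ X $). Applying Lemma~\ref{lem:fatou} with the continuous function $ [0,\infty) \ni z \mapsto z^p \in [0,\infty] $ then gives
\begin{equation}
  \mathbb{E}\!\left[ \sup_{ t \in [0,T] } \| X_t \|^p \right]
  \leq
  \liminf_{ N \to \infty } \mathbb{E}\!\left[ \sup_{ t \in [0,T] } \big\| \bar{Y}^N_t \big\|^p \right]
  < \infty .
\end{equation}

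Second, I would combine the two $ L^p $-bounds with the elementary estimate $ (a+b)^p \leq 2^p ( a^p + b^p ) $ for $ a, b \in [0,\infty) $ to obtain a natural number $ N_0 \in \mathbb{N} $ with
\begin{equation}
  \sup_{ N \in \{ N_0, N_0 + 1, \dots \} }
  \mathbb{E}\!\left[ \sup_{ t \in [0,T] } \big\| X_t - \bar{Y}^N_t \big\|^p \right]
  < \infty .
\end{equation}
For any fixed $ q \in (0,p) $ this says that the family $ ( \sup_{ t \in [0,T] } \| X_t - \bar{Y}^N_t \|^q )_{ N \geq N_0 } $ is bounded in $ L^{ p/q }(\Omega; \mathbb{R}) $, and hence uniformly integrable by Corollary~6.21 in Klenke~\cite{k08b}.

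Third, combining uniform integrability with the convergence $ \sup_{ t \in [0,T] } \| X_t - \bar{Y}^N_t \|^q \to 0 $ in probability (again from Theorem~\ref{thm:convergence}) and invoking Theorem~6.25 in Klenke~\cite{k08b} produces $ L^1(\Omega;\mathbb{R}) $-convergence, which is precisely the desired assertion \eqref{eq:cor4b}. The only non-routine ingredient is the piecewise-linearity observation at the outset; everything else is a direct adaptation of the uniform-integrability bootstrap used in Corollary~\ref{cor:convergence2}, so I do not expect a genuine obstacle.
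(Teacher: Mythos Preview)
Your proposal is correct and follows essentially the same route the paper indicates: the paper omits the proof entirely, stating only that it is ``analogous to the proof of Corollary~\ref{cor:convergence1}'', and your argument is precisely that analogy carried out (Fatou via Lemma~\ref{lem:fatou} for the moment bound on $X$, then uniform integrability plus convergence in probability from Theorem~\ref{thm:convergence}). The piecewise-linearity observation and the reverse-triangle step to pass from $\sup_t\|X_t-\bar Y^N_t\|\to 0$ in probability to $\sup_t\|\bar Y^N_t\|\to\sup_t\|X_t\|$ in probability are exactly the small extra ingredients needed to make the analogy with Corollary~\ref{cor:convergence1} go through at the level of pathwise suprema.
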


The proof of 
Corollary~\ref{cor:convergence4}
is analogous to the proof of
Corollary~\ref{cor:convergence1}
and therefore omitted.
Corollary~\ref{cor:convergence4}
extends Theorem~2.2
of Higham, Mao \citationand\ 
Stuart~\cite{hms02}
in several ways. First, the moment bound
condition on the exact solution in Assumption~2.1
in \cite{hms02} is omitted
in Corollary~\ref{cor:convergence4}.
This assumption can be omitted
in Corollary~\ref{cor:convergence4}
since \eqref{eq:cor4a}
and
Lemma~\ref{lem:fatou}
imply 
$ 
  \E\big[ 
    \sup_{ t \in [0,T] } \| X_t \|^p
  \big]
  < \infty
$.
Moreover, Theorem~2.2 in \cite{hms02}
proves uniform
strong mean square convergence
for the Euler-Maruyama scheme while
Corollary~\ref{cor:convergence4}
proves uniform
strong $ L^q $-convergence
with $ q \in (0,p) $
for 
$ (\mu,\sigma) 
$-consistent 
one-step schemes of the
form \eqref{eq:recX}.

\subsection{Strong convergence 
based on semi stability}
\label{sec:strongcsemi1}

Roughly speaking, the next 
corollary asserts
that semi stability 
with respect to 
Brownian motion
(see Definition~\ref{def:SVstability})
together 
with
consistency
with respect to Brownian motion
(see
Definition~\ref{def:consistent})
and with the a priori growth bound
\eqref{eq:a.priori.growth.bound}
implies strong convergence
of the numerical approximation 
processes~\eqref{eq:recX}
to the solution of the
SDE~\eqref{eq:SDE}.
Its proof is a direct consequence of
Corollary~\ref{cor:Semi.Stability}
and of Corollary~\ref{cor:convergence2}
and is therefore omitted.

\begin{cor}
\label{cor:strongConvergence}
Assume that the setting
in Section~\ref{sec:convergencesetting}
is 
fulfilled,
let 
$ \alpha, r \in (1,\infty] $,
$ p \in (0,\infty) $, 
$ \theta \in (0,T] $,
let 
$ 
  V \colon \mathbb{R}^d
  \rightarrow [0,\infty)
$
be a Borel measurable
function
with
$
   \sup_{ 
      x \in \mathbb{R}^d
    }
    \frac{
    \| x \|^{ p }  
    }{
      1 + V(x)
    }
    < \infty
$
and
$
  \E[ V( X_0 ) ]
  < \infty
$,
assume that
$ 
  \phi \colon \mathbb{R}^d
  \times [0,T] \times
  \mathbb{R}^m \rightarrow 
  \mathbb{R}^d
$
is $ \left( \mu, \sigma \right) 
$-consistent with respect to 
Brownian motion,
assume that
\begin{equation}
    \mathbb{R}^d
    \times [0,\theta] \times
    \mathbb{R}^m
    \ni (x,t,y)  
    \mapsto
    x + \phi(t,x,y) \in \mathbb{R}^d
\end{equation}
is $ \alpha $-semi
$ V $-stable
with respect to Brownian
motion
and assume that
\begin{equation}  \label{eq:a.priori.growth.bound}
  \limsup_{ N \rightarrow \infty }
  \sup_{ n \in \{ 0, 1, \dots, N \} }
  \left(
  N^{ 
    ( 1 - \alpha ) ( 1 / p - 1 / ( p r ) )
  }
  \|
    \bar{Y}^N_{ n T / N }
  \|_{ 
    L^{ p r }( \Omega; \mathbb{R}^d ) 
  }
  \right)
  < \infty.
\end{equation}
Then
$
  \limsup_{ N \to \infty }
  \sup_{ t \in [0,T] }
    \mathbb{E}\big[
      \| 
        X_t
      \|^p
      +
      \| \bar{Y}^N_t 
      \|^{ 
        p
      }
    \big] 
  < \infty
$
and
\begin{equation}
  \lim_{ N \rightarrow \infty }
    \sup_{ t \in [0,T] }
    \mathbb{E}\big[
      \| 
        X_t - \bar{Y}^N_t 
      \|^{ 
        q
      }
    \big] 
  = 0
\end{equation}
for all 
$
  q \in (0,p)
$.
\end{cor}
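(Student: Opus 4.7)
My plan is to combine Corollary~\ref{cor:Semi.Stability} (to upgrade the a priori growth bound~\eqref{eq:a.priori.growth.bound} to a genuine uniform-in-$N$ moment bound at the grid points) with Corollary~\ref{cor:convergence2} (to upgrade convergence in probability to strong $L^q$-convergence for $q\in(0,p)$).

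As a preparatory step, I would replace $V$ by a comparison Lyapunov-type function that dominates $\|\cdot\|^p$. Set $c:=\max\!\big(1,\sup_{x\in\R^d}\tfrac{\|x\|^p}{1+V(x)}\big)\in[1,\infty)$ and $\tilde V(x):=c(1+V(x))$, $\bar V(x):=\|x\|^p$, so that $\bar V(x)\leq\tilde V(x)$ for all $x\in\R^d$, as required in Corollary~\ref{cor:Semi.Stability}. Since $c\geq 1$, the inclusion $\{x\in\R^d\colon\tilde V(x)\leq t^{-\alpha}\}\subset\{x\in\R^d\colon V(x)\leq t^{-\alpha}\}$ holds for every $t\in(0,\theta]$. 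A short computation starting from the $\alpha$-semi $V$-stability of the one-step function $\Phi(x,t,y):=x+\phi(x,t,y)$ then shows that $\Phi$ is also $\alpha$-semi $\tilde V$-stable: for $(x,t)$ in the relevant subset and with $\rho$ denoting the original stability rate,
\begin{equation*}
\E\big[\tilde V(\Phi(x,t,W_t))\big]
=c+c\,\E\big[V(\Phi(x,t,W_t))\big]
\leq c+c\,e^{\rho t}\,V(x)
\leq e^{\max(\rho,0)\,t}\,\tilde V(x).
\end{equation*}

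Next I would apply Corollary~\ref{cor:Semi.Stability} to the grid-point processes $Y^N_n:=\bar Y^N_{nT/N}$, which satisfy $Y^N_0=X_0$ and the one-step recursion associated to $\Phi$. The initial moment condition $\limsup_{N\to\infty}\E[\tilde V(Y^N_0)]=c(1+\E[V(X_0)])<\infty$ is immediate. With the choice $\bar V(x)=\|x\|^p$, the auxiliary hypothesis~\eqref{eq:PG2} of Corollary~\ref{cor:Semi.Stability} (with Lebesgue-exponent $r$ in place of its internal $p$) becomes, via the identity $\|\bar V(Y^N_n)\|_{L^r(\Omega;\R)}=\|Y^N_n\|_{L^{pr}(\Omega;\R^d)}^p$ and the arithmetic relation $(1-\alpha)(1-1/r)/p=(1-\alpha)(1/p-1/(pr))$, exactly the $p$-th power of the assumed a priori growth bound~\eqref{eq:a.priori.growth.bound}. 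Corollary~\ref{cor:Semi.Stability} thus delivers $\limsup_{N\to\infty}\sup_{n\in\{0,\ldots,N\}}\E[\|Y^N_n\|^{p}]<\infty$.

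Finally, this is precisely the hypothesis~\eqref{eq:cor2a} of Corollary~\ref{cor:convergence2}, which together with the assumed $(\mu,\sigma)$-consistency of $\phi$ yields both $\sup_{t\in[0,T]}\E[\|X_t\|^{p}]<\infty$ and the strong convergence $\lim_{N\to\infty}\sup_{t\in[0,T]}\E[\|X_t-\bar Y^N_t\|^{q}]=0$ for every $q\in(0,p)$. The corresponding uniform-in-$N$ bound on $\E[\|\bar Y^N_t\|^{p}]$ at all $t\in[0,T]$, and not merely at grid points, is inherited from the grid-point bound via the piecewise-linear definition~\eqref{eq:recX} together with the elementary estimate $\|\bar Y^N_t\|^{p}\leq\|\bar Y^N_{nT/N}\|^{p}+\|\bar Y^N_{(n+1)T/N}\|^{p}$ on each subinterval $[nT/N,(n+1)T/N]$. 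I do not foresee any genuine obstacle; the only place demanding attention is the careful translation between the Lyapunov-type functions $V$ and $\tilde V$ and the parallel translation between the exponent conventions of~\eqref{eq:PG2} and~\eqref{eq:a.priori.growth.bound}.
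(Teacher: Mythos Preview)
Your proposal is correct and follows precisely the route the paper indicates: it combines Corollary~\ref{cor:Semi.Stability} with Corollary~\ref{cor:convergence2}, and you have carefully supplied the details the paper omits (the passage from $V$ to $\tilde V=c(1+V)$ so that $\bar V(x)=\|x\|^p\le\tilde V(x)$, the verification that $\alpha$-semi $\tilde V$-stability is inherited, and the translation of the exponent in~\eqref{eq:a.priori.growth.bound} into condition~\eqref{eq:PG2} with Lebesgue exponent $r$). The final remark on extending the grid-point moment bound to all $t\in[0,T]$ via the piecewise-linear structure of~\eqref{eq:recX} is also fine; the same observation appears inside the proof of Corollary~\ref{cor:convergence2} as inequality~\eqref{eq:momentcor2_0}.
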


\subsection{Strong convergence
of an increment-tamed
Euler-Maruyama\\ scheme}
\label{sec:strongCtamed}

In Subsection~\ref{sec:strongCbased},
strong convergence of numerical
methods of the form \eqref{eq:recX}
has been proved under the
assumption of suitable moment
bounds
for the numerical
approximation
processes~\eqref{eq:recX}.
The next result proves
strong convergence of an
increment-tamed Euler
method and imposes appropriate
assumptions on the coefficients
$ \mu $ and $ \sigma $ of
the SDE~\eqref{eq:SDE}.

\begin{theorem}[Strong convergence
of an
increment-tamed 
Euler-Maruyama
scheme]
\label{thm:strongC}
Assume that the setting
in Section~\ref{sec:convergencesetting}
is fulfilled,
assume
$
  \E\big[
    \| X_0 \|^r
  \big]
  < \infty
$
for all $ r \in [0,\infty) $,
let
$
  p \in [3,\infty)
$,
$
  c,
  \gamma_0,
  \gamma_1
  \in [0,\infty)
$,
let 
$ 
\bar{\mu} \colon \mathbb{R}^d
\rightarrow \mathbb{R}^d 
$,
$
  \bar{\sigma} 
  \colon \mathbb{R}^d
  \rightarrow \mathbb{R}^{ d \times m }
$
be Borel measurable functions 
with
$
  \bar{\mu}|_D = \mu
$
and
$
  \bar{\sigma}|_D = \sigma
$,
let
$ 
  V \in C^3_p( \mathbb{R}^d, [1,\infty) )
$
with 
\begin{equation}
  ( \mathcal{G}_{ 
    \bar{\mu}, \bar{\sigma} 
  } V)(x)
\leq
  c \cdot V(x) ,
\;
  \left\|
    \bar{\mu}(x)
  \right\|
  \leq
  c \,
  |
    V(x)
  |^{
    \left[
       \frac{ \gamma_0 + 1 }{ p }
    \right]
  } ,
\;
  \|
    \bar{\sigma}(x)
  \|_{
    L( \mathbb{R}^m, \mathbb{R}^d )
  }
  \leq
  c \,
  |
    V(x)
  |^{ 
    \left[
      \frac{ \gamma_1 + 2 }{ 2p } 
    \right]
  } 
\end{equation}
for all $ x \in \mathbb{R}^d $
and let
$
  \bar{Y}^N \colon 
  [0,T] \times
  \Omega
  \rightarrow \mathbb{R}^d
$,
$ N \in \mathbb{N} 
$,
satisfy
\begin{equation}
\label{eq:deftamedBAR}
  \bar{Y}^N_{ t }
=
  \bar{Y}^N_{ 
    \frac{ n T }{ N }
  }
+
  \frac{
  \left(
    \tfrac{ t N }{ T } - n
  \right) 
    \left(
    \bar{\mu}( 
      \bar{Y}^N_{ n T / N } 
    ) 
    \frac{ T }{ N }
    + 
    \bar{\sigma}( 
      \bar{Y}^N_{ n T / N } 
    ) 
    (
      W_{ (n + 1) T / N } 
      -
      W_{ n T / N } 
    )
    \right)
  }{
    \max\!\big( 1, 
      \frac{ T }{ N }
      \| 
        \bar{\mu}( 
          \bar{Y}^N_{ n T / N } 
        ) 
        \frac{ T }{ N }
        + 
        \bar{\sigma}( 
          \bar{Y}^N_{ n T / N } 
        ) 
        (
          W_{ (n + 1) T / N } 
          -
          W_{ n T / N } 
        )
      \| 
    \big)
  }
\end{equation}
for all 
$
  t \in 
  (
    n T / N,
    (n + 1) T / N 
  ]
$,
$ 
  n \in \{ 0, 1, \dots, N - 1 \} 
$
and all
$ N \in \mathbb{N} $.
Then
$
  \sup_{ t \in [0,T] }
  \E\big[
    \| X_t \|^q
  \big] < \infty
$
and
\begin{equation}
  \lim_{ N \rightarrow \infty }
    \sup_{ t \in [0,T] }
    \mathbb{E}\big[
      \| 
        X_t - \bar{Y}^N_t 
      \|^{ 
        q
      }
    \big] 
  = 0
\end{equation}
for all 
$
  q \in (0,\infty)
$
with
$
  q 
  <
  \frac{
      p 
    }{
      2 \gamma_1 + 
      4 
      \max( 
        \gamma_0, \gamma_1, 1/2
      )
    }
  -
  \frac{ 1 }{ 2 }
$
and
$
  \limsup_{ r \searrow q }
   \sup_{ 
      x \in \mathbb{R}^d
    }
    \| x \|^{ r }  
    / V(x)
$
$
    < \infty
$.
\end{theorem}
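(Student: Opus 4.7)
The plan is to deduce Theorem~\ref{thm:strongC} as a direct composition of three results already established in the excerpt: the moment bounds for the increment-tamed scheme from Corollary~\ref{cor:apriori_increment}, the convergence-in-probability result of Theorem~\ref{thm:convergence}, and the packaging of these two ingredients into strong $L^q$-convergence that is provided by Corollary~\ref{cor:convergence2}. The only genuinely new piece of work is to identify the increment function associated with the scheme~\eqref{eq:deftamedBAR} and to verify that it is $(\mu,\sigma)$-consistent with respect to Brownian motion in the sense of Definition~\ref{def:consistent}.

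First I would set
\begin{equation*}
  \phi(x,t,y)
  :=
  \frac{
    \bar{\mu}(x)\,t + \bar{\sigma}(x)\,y
  }{
    \max\!\bigl(
      1,\,
      t\,\|\bar{\mu}(x)\,t + \bar{\sigma}(x)\,y\|
    \bigr)
  }
\end{equation*}
so that the stochastic processes $\bar{Y}^N$ in~\eqref{eq:deftamedBAR} coincide with those defined in~\eqref{eq:recX}. To check consistency on a compact set $K\subset D$, I would split the probability space according to the event $A_{x,t}:=\{t\,\|\bar{\mu}(x)\,t+\bar{\sigma}(x)W_t\|\le 1\}$. On $A_{x,t}$ the taming factor is $1$ and thus $\phi(x,t,W_t)=\bar{\mu}(x)t+\bar{\sigma}(x)W_t$, which on $K\subset D$ equals $\mu(x)t+\sigma(x)W_t$; so the difference $\sigma(x)W_t-\phi(x,t,W_t)$ on $A_{x,t}$ is just $-\mu(x)t$ of magnitude $O(t)$ uniformly in $x\in K$. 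On the complement $A_{x,t}^c$ the growth and moment bounds on $\bar{\mu}$, $\bar{\sigma}$ together with Markov's inequality give $\mathbb{P}[A_{x,t}^c]\le C_K\,t^{p}(t+\sqrt{t})^{p}$ uniformly in $x\in K$ for every $p\in[1,\infty)$, while $\|\phi(x,t,W_t)\|\le 1/t$ by definition of the taming. Combining these two regimes, both quantities $\tfrac{1}{\sqrt{t}}\sup_{x\in K}\E[\|\sigma(x)W_t-\phi(x,t,W_t)\|]$ and $\sup_{x\in K}\|\mu(x)-\tfrac{1}{t}\E[\phi(x,t,W_t)]\|$ tend to zero as $t\searrow 0$ (for the second, use additionally that $\E[\sigma(x)W_t]=0$, so that $\E[\mathbbm{1}_{A_{x,t}}\sigma(x)W_t]=-\E[\mathbbm{1}_{A_{x,t}^c}\sigma(x)W_t]$ is controlled by $\sqrt{\mathbb{P}[A_{x,t}^c]}$ via Cauchy--Schwarz). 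This establishes $(\mu,\sigma)$-consistency with respect to Brownian motion.

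Having verified consistency, I would fix an admissible $q\in(0,\infty)$ satisfying the two conditions in the statement and pick $q'\in(q,\infty)$ still strictly less than $\frac{p}{2\gamma_1+4\max(\gamma_0,\gamma_1,1/2)}-\frac{1}{2}$ and still with $\sup_{x\in\mathbb{R}^d}\|x\|^{q'}/V(x)<\infty$; such a $q'$ exists by the $\limsup_{r\searrow q}$ hypothesis. Corollary~\ref{cor:apriori_increment} then yields
\begin{equation*}
  \sup_{N\in\mathbb{N}}\sup_{t\in[0,T]}\E\bigl[\|\bar{Y}^N_t\|^{q'}\bigr]<\infty,
\end{equation*}
which in particular gives the moment bound~\eqref{eq:cor2a} in Corollary~\ref{cor:convergence2} at exponent $p=q'$. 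Applying Corollary~\ref{cor:convergence2} (whose consistency hypothesis is supplied by the verification above) immediately delivers both $\sup_{t\in[0,T]}\E[\|X_t\|^{q'}]<\infty$, which a fortiori gives $\sup_{t\in[0,T]}\E[\|X_t\|^q]<\infty$, and $\lim_{N\to\infty}\sup_{t\in[0,T]}\E[\|X_t-\bar{Y}^N_t\|^q]=0$, completing the proof.

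The main obstacle, and indeed the only step requiring actual calculation, is the uniform-in-$x\in K$ consistency check on the rare event $A_{x,t}^c$: one must confirm that the probability of taming activation decays in $t$ strictly faster than the trivial bound $1/t$ on $\|\phi\|$ grows, uniformly over compact sets in $D$. Everything else is a bookkeeping composition of the tools developed in Chapters~\ref{sec:integrability properties} and~\ref{chap:convergence}.
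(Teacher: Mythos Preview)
Your proposal is correct and follows the same overall architecture as the paper: combine the moment bounds of Corollary~\ref{cor:apriori_increment} with Corollary~\ref{cor:convergence2}, after checking $(\mu,\sigma)$-consistency of the increment function. The only difference is in the consistency step: the paper does not carry out your direct $A_{x,t}$-splitting computation but instead invokes Lemma~\ref{lem:eta_phi} (the increment-taming principle), taking $\hat\phi(x,t,y)=\bar\mu(x)t+\bar\sigma(x)y$ as the Euler--Maruyama increment (trivially consistent) and $\eta(x,t,y)=\max(1,t\|\hat\phi(x,t,y)\|)^{-1}I$, for which conditions~\eqref{eq:propE_1}--\eqref{eq:propE_2} are immediate on compact $K\subset D$. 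Your hands-on verification is perfectly valid and amounts to redoing the relevant special case of that lemma; the paper's route is just a slightly cleaner packaging of the same estimate. One cosmetic point: you reuse the symbol $p$ for the Markov-inequality exponent, which clashes with the $p\in[3,\infty)$ fixed in the theorem statement; a different letter there would avoid confusion.
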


Theorem~\ref{thm:strongC}
is a direct consequence
of
Corollary~\ref{cor:apriori_increment},
Corollary~\ref{cor:convergence2}
and Lemma \ref{lem:eta_phi}
(see \eqref{eq:incrementtamed3}).
The next result is a special case 
of Theorem~\ref{thm:strongC}.

\begin{cor}[Powers of
the Lyapunov-type function]
\label{cor:PotencyLyap}
Assume that the setting
in Section~\ref{sec:convergencesetting}
is fulfilled, 
assume
$
  \E\big[
    \| X_0 \|^r
  \big]
  < \infty
$
for all $ r \in [0,\infty) $,
let
$
  p \in [3,\infty)
$,
$
  q \in [1,\infty)
$,
$
  c,
  \gamma_0,
  \gamma_1
  \in [0,\infty)
$,
let 
$ 
\bar{\mu} \colon \mathbb{R}^d
\rightarrow \mathbb{R}^d 
$,
$
  \bar{\sigma} 
  \colon \mathbb{R}^d
  \rightarrow \mathbb{R}^{ d \times m }
$
be Borel measurable functions 
with
$
  \bar{\mu}|_D = \mu
$
and
$
  \bar{\sigma}|_D = \sigma
$,
let
$ 
  V \in 
  C^3_p( \mathbb{R}^d, [1,\infty) )
$
with 
$
  \left\|
    \bar{\mu}(x)
  \right\|
  \leq
  c \,
  |
    V(x)
  |^{
    \left[
      \frac{ \gamma_0 + 1 }{ p }
    \right]
  }
$,
$
  \|
    \bar{\sigma}(x)
  \|_{
    L( \mathbb{R}^m, \mathbb{R}^d )
  }
  \leq
  c \,
  |
    V(x)
  |^{ 
    \left[
      \frac{ \gamma_1 + 2 }{ 2p }
    \right]
  } 
$
and
\begin{equation}
  ( \mathcal{G}_{ 
      \bar{\mu}, 
      \bar{\sigma} 
    } 
  V)(x)
  + 
  \frac{
    \left( q - 1 \right)
  }{
    2 V(x)
  }
    \| 
      V'(x) \bar{\sigma}(x) 
    \|^2_{ 
      HS( \mathbb{R}^m, \mathbb{R} ) 
    }
  \leq c \cdot V(x) 
\end{equation}
for  
all
$ x \in \mathbb{R}^d $
and let
$
  \bar{Y}^N \colon 
  [0,T] \times
  \Omega
  \rightarrow \mathbb{R}^d
$,
$ N \in \mathbb{N} 
$,
satisfy
\eqref{eq:deftamedBAR}.
Then 
$
  \sup_{ t \in [0,T] }
  \E\big[
    \| X_t \|^r
  \big] < \infty
$
and
\begin{equation}
  \lim_{ N \rightarrow \infty }
    \sup_{ t \in [0,T] }
    \mathbb{E}\big[
      \| 
        X_t - \bar{Y}^N_t 
      \|^{ 
        r
      }
    \big] 
  = 0
\end{equation}
for all 
$
  r \in (0,\infty)
$
with
$
  r
  <
  \frac{
      p q 
    }{
      2 \gamma_1 + 
      4 
      \max( 
        \gamma_0, \gamma_1, 1/2
      )
    }
$
$
  -
  \frac{ 1 }{ 2 }
$
and
$
  \limsup_{ v \searrow r }
   \sup_{ 
      x \in \mathbb{R}^d
    }
    \frac{
    \| x \|^{ v }  
    }{
    V(x)
    }
    < \infty
$.
\end{cor}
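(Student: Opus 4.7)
The plan is to reduce the statement directly to Theorem~\ref{thm:strongC} by passing from the Lyapunov-type function $V$ to its $q$-th power, in complete analogy with how Corollary~\ref{cor:SVstability} was deduced from Theorem~\ref{thm:SVstability} and how Corollary~\ref{cor:qMoments} was deduced from Corollary~\ref{cor:apriori_increment}. More precisely, I would define the auxiliary function $\hat{V}\colon \mathbb{R}^d \to [1,\infty)$ by
\begin{equation}
  \hat{V}(x) := \bigl( V(x) \bigr)^q
\end{equation}
for all $x \in \mathbb{R}^d$ and verify that $\hat{V}$ satisfies all three hypotheses of Theorem~\ref{thm:strongC} with the parameter $p$ replaced by $pq$ (and the same constants $c,\gamma_0,\gamma_1$, possibly enlarging $c$ by a factor that depends only on $q$).

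First I would invoke Lemma~\ref{lem:potencies} to conclude that $\hat{V} \in C^3_{pq}(\mathbb{R}^d,[1,\infty))$. Second, the assumed growth bounds on $\|\bar{\mu}(x)\|$ and $\|\bar{\sigma}(x)\|_{L(\mathbb{R}^m,\mathbb{R}^d)}$ in terms of $V$ translate into the corresponding bounds in terms of $\hat{V}$ with exponents $(\gamma_0+1)/(pq)$ and $(\gamma_1+2)/(2pq)$ respectively, simply because $|V(x)|^{\alpha} = |\hat{V}(x)|^{\alpha/q}$. Third, and this is the only substantive computation, I would verify the generator bound $(\mathcal{G}_{\bar{\mu},\bar{\sigma}} \hat{V})(x) \leq qc \cdot \hat{V}(x)$. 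Here the chain rule computation already carried out in the proof of Corollary~\ref{cor:SVstability} gives the identity
\begin{equation}
  \bigl( \mathcal{G}_{\bar{\mu},\bar{\sigma}} \hat{V} \bigr)(x)
  =
  q \, \bigl( V(x) \bigr)^{q-1}
  \left(
    \bigl( \mathcal{G}_{\bar{\mu},\bar{\sigma}} V \bigr)(x)
    +
    \frac{ (q-1) \, \| V'(x) \bar{\sigma}(x) \|^2_{HS(\mathbb{R}^m,\mathbb{R})} }{ 2 V(x) }
  \right)
\end{equation}
for all $x \in \mathbb{R}^d$, and the assumed modified-generator inequality then yields exactly $(\mathcal{G}_{\bar{\mu},\bar{\sigma}} \hat{V})(x) \leq qc \, \hat{V}(x)$.

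With all three hypotheses of Theorem~\ref{thm:strongC} verified for $\hat{V}$ and $pq$ in place of $V$ and $p$, an application of that theorem yields $\sup_{t\in[0,T]} \E[\|X_t\|^r]<\infty$ and $\lim_{N\to\infty}\sup_{t\in[0,T]} \E[\|X_t-\bar{Y}^N_t\|^r]=0$ for every $r \in (0,\infty)$ satisfying
\begin{equation}
  r < \frac{pq}{2\gamma_1 + 4\max(\gamma_0,\gamma_1,1/2)} - \frac{1}{2}
  \quad\text{and}\quad
  \limsup_{v \searrow r} \sup_{x \in \mathbb{R}^d} \frac{\|x\|^v}{\hat{V}(x)} < \infty.
\end{equation}
Since $\hat{V}(x) = V(x)^q \geq V(x)$ for $V(x) \geq 1$ and $q\geq 1$, the growth condition on $\|x\|^r/\hat{V}(x)$ is implied by (in fact equivalent to) the condition on $\|x\|^r/V(x)$ assumed in the corollary, and this yields the stated conclusion. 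I do not anticipate a genuine obstacle here: the entire argument is a mechanical transfer of parameters, and the only calculation requiring care is the chain-rule identity for $\mathcal{G}_{\bar{\mu},\bar{\sigma}}\hat{V}$, which is already available verbatim from the proof of Corollary~\ref{cor:SVstability}.
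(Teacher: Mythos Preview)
Your approach is correct and is essentially the same as the paper's. The paper deduces Corollary~\ref{cor:PotencyLyap} from Corollary~\ref{cor:qMoments}, Corollary~\ref{cor:convergence2} and Lemma~\ref{lem:eta_phi}; since Theorem~\ref{thm:strongC} is itself obtained from Corollary~\ref{cor:apriori_increment}, Corollary~\ref{cor:convergence2} and Lemma~\ref{lem:eta_phi}, and since Corollary~\ref{cor:qMoments} is obtained from Corollary~\ref{cor:apriori_increment} by exactly the $\hat V = V^q$ substitution you perform, the two routes differ only in the order in which the ``power-of-$V$'' trick and the assembly of ingredients are carried out. One minor imprecision: the condition $\limsup_{v\searrow r}\sup_x \|x\|^v/\hat V(x)<\infty$ is implied by, but not in general equivalent to, the corresponding condition on $V$ (take for instance $V(x)=1+\|x\|$, $q=2$, $r=3/2$); only the implication is needed, and that follows from $\hat V\geq V$ as you note.
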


Corollary~\ref{cor:PotencyLyap}
follows immediately
Corollary~\ref{cor:qMoments},
Corollary~\ref{cor:convergence2}
and Lemma \ref{lem:eta_phi} (see 
\eqref{eq:incrementtamed3}).
The next corollary gives sufficient conditions
for strong 
$ 
  L^q
$-convergence 
of the increment-tamed
Euler-Maruyama 
method~\eqref{eq:deftamedBAR}
for all $ q \in (0,\infty) $.

\begin{cor}
\label{cor:CT1}
Assume that the setting
in Section~\ref{sec:convergencesetting}
is fulfilled, 
assume
$
  \E\!\big[
    \| X_0 \|^r
  \big]
  < \infty
$
for all $ r \in [0,\infty) $,
let
$ c \in (0,\infty) $,
let 
$ 
\bar{\mu} \colon \mathbb{R}^d
\rightarrow \mathbb{R}^d 
$,
$
  \bar{\sigma} 
  \colon \mathbb{R}^d
  \rightarrow \mathbb{R}^{ d \times m }
$
be Borel measurable functions 
with
$
  \bar{\mu}|_D = \mu
$
and
$
  \bar{\sigma}|_D = \sigma
$, 
let
$ 
  V \colon \mathbb{R}^d
  \rightarrow [1,\infty)
$
be a twice differentiable function 
with a locally Lipschitz continuous
second derivative,
with
$
  \limsup_{ 
    q \searrow 0
  }
  \sup_{ 
    x \in \mathbb{R}^d
  }
  \frac{ 
    \| x \|^{ q } 
  }{
    V(x)
  }
  < \infty
$,
with
$
  \sum_{ i = 1 }^{ 3 }
  \|
    V^{(i)}( x )
  \|_{
    L^{ (i) }( 
      \mathbb{R}^d, \mathbb{R} 
    )
  }
  \leq
  c \,
  |
    V(x)
  |^{ 
    [ 1 - 1 / c ]
  }
$
for 
$ \lambda_{ \mathbb{R}^d } $-almost
all 
$ x \in \mathbb{R}^d $
and with
\begin{equation}
  \sup_{ x \in \mathbb{R}^d }
  \left[
  \frac{
    ( \mathcal{G}_{ \bar{\mu}, \bar{\sigma} 
    } V)(x)
  }{ V(x) }
  +
  \frac{
   r \,
    \| 
      V'(x) \bar{\sigma}(x) 
    \|_{ 
      L( \mathbb{R}^m, \mathbb{R} ) 
    }^2
  }{
    \left| V(x) \right|^2
  }
  \right]
  < \infty ,
\end{equation}
\begin{equation}
  \sup_{ x \in \mathbb{R}^d }
  \left[
  \frac{
    \|
      \bar{\mu}(x)
    \|
    +
    \|
      \bar{\sigma}(x)
    \|_{
      L( \mathbb{R}^m, \mathbb{R}^d )
    }
  }{
    \left(
      1 + \| x \|^{ c }
    \right)
  }
  \right]
  < \infty 
\end{equation}
for all $ r \in [0,\infty) $
and let
$
  \bar{Y}^N \colon 
  [0,T] \times
  \Omega
  \rightarrow \mathbb{R}^d
$,
$ N \in \mathbb{N} 
$,
satisfy
\eqref{eq:deftamedBAR}.
Then 
$
  \sup_{ t \in [0,T] }
  \E\big[
    \| X_t \|^q
  \big] < \infty
$
and
$
  \lim_{ N \rightarrow \infty }
    \sup_{ t \in [0,T] }
    \mathbb{E}\big[
      \| 
        X_t - \bar{Y}^N_t 
      \|^{ 
        q
      }
    \big] 
  = 0
$
for all 
$
  q \in (0,\infty)
$.
\end{cor}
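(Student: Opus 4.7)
The plan is to deduce the corollary from Corollary~\ref{cor:PotencyLyap} applied with a sufficiently large power of $V$ playing the role of the Lyapunov function.

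First I would record three elementary consequences of the standing hypotheses. The pointwise bound on $V^{(i)}$, $i \in \{1,2,3\}$, combined with $V \geq 1$ shows that $V \in C^3_p(\R^d,[1,\infty))$ for every $p \in [c,\infty)$. The condition $\limsup_{r \searrow 0}\sup_{x \in \R^d} \|x\|^r/V(x) < \infty$ produces a real number $q_0 \in (0,\infty)$ and a constant $\delta \in (0,\infty)$ with $V(x) \geq \delta \max(1,\|x\|^{q_0})$ for all $x \in \R^d$. Combining this lower bound with the polynomial growth assumption $\|\bar{\mu}(x)\| + \|\bar{\sigma}(x)\|_{L(\R^m,\R^d)} \leq C(1 + \|x\|^c)$ then furnishes, after enlarging $p$ if necessary to an element of $[3,\infty)$, real numbers $\gamma_0,\gamma_1 \in [0,\infty)$ such that the growth bounds $\|\bar{\mu}(x)\| \leq c\,|V(x)|^{[(\gamma_0+1)/p]}$ and $\|\bar{\sigma}(x)\|_{L(\R^m,\R^d)} \leq c\,|V(x)|^{[(\gamma_1+2)/(2p)]}$ required by Corollary~\ref{cor:PotencyLyap} hold.

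Next, for an arbitrary target exponent $q \in (0,\infty)$, I would let $M \in \N$ be a parameter to be fixed and apply Corollary~\ref{cor:PotencyLyap} with the Lyapunov function replaced by $\tilde{V} := V^M$, with the differentiation parameter $pM$, with Lyapunov exponent $1$, and with the same $\gamma_0,\gamma_1$. Lemma~\ref{lem:potencies} gives $\tilde{V} \in C^3_{pM}(\R^d,[1,\infty))$, and the growth inequalities for $\bar{\mu},\bar{\sigma}$ expressed in the form $\|\bar{\mu}(x)\| \leq c\,|\tilde{V}(x)|^{[(\gamma_0+1)/(pM)]}$ and $\|\bar{\sigma}(x)\|_{L(\R^m,\R^d)} \leq c\,|\tilde{V}(x)|^{[(\gamma_1+2)/(2pM)]}$ are immediate. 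To verify the Lyapunov condition, the calculation from the proof of Corollary~\ref{cor:SVstability} gives
\begin{equation*}
  (\mathcal{G}_{\bar{\mu},\bar{\sigma}} \tilde{V})(x)
  =
  M\,|V(x)|^{M-1}
  \left(
    (\mathcal{G}_{\bar{\mu},\bar{\sigma}} V)(x)
    +
    \tfrac{(M-1)\,\|V'(x)\bar{\sigma}(x)\|^2_{HS(\R^m,\R)}}{2\,V(x)}
  \right),
\end{equation*}
and the supremum hypothesis of Corollary~\ref{cor:CT1} applied with $r = (M-1)/2$ bounds the bracketed factor by a constant multiple (depending on $M$) of $V(x)$, so that $(\mathcal{G}_{\bar{\mu},\bar{\sigma}} \tilde{V})(x) \leq \tilde{c}\,\tilde{V}(x)$ for all $x \in \R^d$.

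Corollary~\ref{cor:PotencyLyap} then yields $\sup_{t \in [0,T]} \E[\|X_t\|^r] < \infty$ and $\lim_{N \to \infty}\sup_{t \in [0,T]} \E[\|X_t - \bar{Y}^N_t\|^r] = 0$ for every $r \in (0,\infty)$ satisfying both $r < \frac{pM}{2\gamma_1 + 4\max(\gamma_0,\gamma_1,1/2)} - \frac{1}{2}$ and $\limsup_{v \searrow r}\sup_x \|x\|^v/\tilde{V}(x) < \infty$. The second condition is implied by $r < q_0 M$ via the bound $\tilde{V}(x) \geq \delta^M \max(1,\|x\|^{q_0 M})$ from the first paragraph. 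Since both admissibility thresholds grow linearly in $M$, choosing $M$ large enough (depending on $q$) makes $r = q$ admissible, proving the claim. There is no genuinely hard step; the main effort is the bookkeeping in the second paragraph — in particular, checking that $V^M$ still fits the hypothesis framework of Corollary~\ref{cor:PotencyLyap} with the Lyapunov constant produced by the $r$-uniform bound in the hypothesis — and then trading off the two linear-in-$M$ thresholds against the fixed target $q$.
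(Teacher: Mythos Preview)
Your proposal is correct and is essentially the paper's one-line proof (``follows directly from Corollary~\ref{cor:PotencyLyap}'') made explicit: applying Corollary~\ref{cor:PotencyLyap} with the original $V$ and exponent $q=M$ is, by the proof of that corollary, the same as applying it with $\tilde V=V^M$ and exponent $1$, which is exactly what you do. The only minor slip is that the derivative bound $\|V^{(i)}\|\le c\,V^{1-1/c}$ gives $V\in C^3_p$ for $p\ge 3c$ rather than $p\ge c$, but this is harmless since you are free to take $p$ large.
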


Corollary~\ref{cor:CT1}
follows directly from
Corollary~\ref{cor:PotencyLyap}.
The next
corollary
specializes
Theorem~\ref{thm:strongC}
to the function
$
  1 + \| x \|^p
$,
$ x \in \R^d $,
with $ p \in [3,\infty) $
appropriate
as Lyapunov-type function
and
is the counterpart
to Corollary~\ref{cor:SVstabilityT2}.
It is an immediate consequence
of Corollary~\ref{cor:SVstabilityT2},
Corollary~\ref{cor:convergence2}
and Lemma~\ref{lem:eta_phi}
(see \eqref{eq:incrementtamed3}).

\begin{cor}[A special polynomial
like Lyapunov-type function]
\label{cor:CT2}
Assume that the setting
in Section~\ref{sec:convergencesetting}
is fulfilled, 
assume
$
  \E\big[
    \| X_0 \|^r
  \big]
  < \infty
$
for all $ r \in [0,\infty) $,
let
$ 
  c,
  \gamma_0,
  \gamma_1
  \in [0,\infty) 
$,
$ p \in [3,\infty) $,
let 
$ 
\bar{\mu} \colon \mathbb{R}^d
\rightarrow \mathbb{R}^d 
$,
$
  \bar{\sigma} 
  \colon \mathbb{R}^d
  \rightarrow \mathbb{R}^{ d \times m }
$
be Borel measurable functions 
with
$
  \bar{\mu}|_D = \mu
$,
$
  \bar{\sigma}|_D = \sigma
$ and
\begin{equation}
  \left< x, \bar{\mu}(x) \right>
  + 
  \tfrac{ \left( p - 1 \right) }{ 2 }
  \| \bar{\sigma}(x) 
  \|^2_{
    HS( \mathbb{R}^m, \mathbb{R}^d )
  }
  \leq 
  c \left(
    1 + \left\| x \right\|^2
  \right) ,
\end{equation}
\begin{equation}
  \left\|
    \bar{\mu}(x)
  \right\|
  \leq
  c \,
  \big( 
    1 + 
    \left\| x \right\|^{ 
      \left[
         \gamma_0 + 1
      \right]
    }
  \big)
\quad
  \text{and}
\quad
  \| 
    \bar{\sigma}(x) 
  \|_{ 
    L( \mathbb{R}^m , \mathbb{R}^d ) 
  }
  \leq
  c \,
  \big( 
    1 + 
    \left\| x \right\|^{ 
      \left[
         \frac{ \gamma_1 + 2 }{ 2 }
      \right]
    }
  \big)
\end{equation}
for all
$ 
  x \in \mathbb{R}^d  
$
and let
$
  \bar{Y}^N \colon 
  [0,T] \times
  \Omega
  \rightarrow \mathbb{R}^d
$,
$ N \in \mathbb{N} 
$,
satisfy
\eqref{eq:deftamedBAR}.
Then 
$
  \sup_{ t \in [0,T] }
  \E\big[
    \| X_t \|^q
  \big] < \infty
$
and
\begin{equation}
  \lim_{ N \rightarrow \infty }
    \sup_{ t \in [0,T] }
    \mathbb{E}\big[
      \| 
        X_t - \bar{Y}^N_t 
      \|^{ 
        q
      }
    \big] 
  = 0
\end{equation}
for all 
$ 
  q 
  \in 
  (0,\infty)
$
with
$
  q <
    \frac{
      p
    }{
      2 \gamma_1 + 
      4 
      \max( 
        \gamma_0, 
        \gamma_1,
        1/2
      )
    }
    -
    \frac{ 1 }{ 2 }
$.
\end{cor}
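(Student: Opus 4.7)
The plan is to obtain Corollary~\ref{cor:CT2} by combining three ingredients already established in the paper: the uniform moment bound for the increment-tamed Euler-Maruyama scheme under the polynomial-type Lyapunov function $V(x)=1+\|x\|^p$ (Corollary~\ref{cor:SVstabilityT2}), the consistency of the tamed one-step function with respect to Brownian motion (Lemma~\ref{lem:eta_phi}, cf.~\eqref{eq:incrementtamed3}), and the abstract strong-convergence consequence of Theorem~\ref{thm:convergence} packaged in Corollary~\ref{cor:convergence2}. The arrangement of the proof is essentially to translate the scheme~\eqref{eq:deftamedBAR} into the one-step form~\eqref{eq:recX} with
\begin{equation*}
  \phi(x,t,y)
  =
  \frac{
    \bar{\mu}(x)\,t + \bar{\sigma}(x)\,y
  }{
    \max\!\big(1,\, t\,\|\bar{\mu}(x)\,t + \bar{\sigma}(x)\,y\|\big)
  },
\end{equation*}
verify the hypotheses of Corollary~\ref{cor:convergence2} for a suitable exponent strictly larger than $q$, and then read off the desired $L^{q}$-convergence.

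The first step is to observe that the hypotheses of Corollary~\ref{cor:CT2} are precisely those of Corollary~\ref{cor:SVstabilityT2}: the one-sided growth bound~\eqref{eq:p-generatorB} together with the polynomial growth conditions on $\bar{\mu}$ and $\bar{\sigma}$, and the full-moment assumption on the initial value $X_0$. Applying Corollary~\ref{cor:SVstabilityT2} therefore yields
\begin{equation*}
  \sup_{N\in\mathbb{N}}\,\sup_{t\in[0,T]}\mathbb{E}\big[\|\bar{Y}^N_t\|^{\bar{q}}\big] < \infty
\end{equation*}
for every $\bar{q}\in[0,\infty)$ satisfying $\bar{q}<\frac{p}{2\gamma_1+4\max(\gamma_0,\gamma_1,1/2)}-\frac12$. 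For any $q$ admissible in the statement of Corollary~\ref{cor:CT2}, one can pick $\bar{q}\in(q,\frac{p}{2\gamma_1+4\max(\gamma_0,\gamma_1,1/2)}-\frac12)$ and thereby secure the moment bound~\eqref{eq:cor2a} required by Corollary~\ref{cor:convergence2}.

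The second step is to check that $\phi$ as above is $(\mu,\sigma)$-consistent with respect to Brownian motion in the sense of Definition~\ref{def:consistent}. Since $\bar{\mu}|_D=\mu$ and $\bar{\sigma}|_D=\sigma$, and since the taming denominator equals $1$ unless $t\,\|\bar{\mu}(x)t+\bar{\sigma}(x)W_t\|>1$, the difference between $\phi(x,t,W_t)$ and the Euler-Maruyama increment $\mu(x)t+\sigma(x)W_t$ is supported on an event of small probability when $x$ ranges over a compact set $K\subset D$. A routine H\"older estimate combined with the polynomial growth of $\bar{\mu},\bar{\sigma}$ gives $\mathbb{E}[\|\phi(x,t,W_t)-\sigma(x)W_t\|]=o(\sqrt{t})$ and $\mathbb{E}[\phi(x,t,W_t)]-\mu(x)t=o(t)$ uniformly over $x\in K$ as $t\searrow 0$; this is exactly the content of Lemma~\ref{lem:eta_phi}~\eqref{eq:incrementtamed3}, which is why I invoke it rather than redo the computation.

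The third step is then a direct appeal to Corollary~\ref{cor:convergence2}: with the consistency and the moment bound for the exponent $\bar{q}>q$ both in hand, the corollary delivers
\begin{equation*}
  \sup_{t\in[0,T]}\mathbb{E}\big[\|X_t\|^{\bar{q}}\big]<\infty
  \quad\text{and}\quad
  \lim_{N\to\infty}\sup_{t\in[0,T]}\mathbb{E}\big[\|X_t-\bar{Y}^N_t\|^{q}\big]=0,
\end{equation*}
which is the assertion of Corollary~\ref{cor:CT2}. I expect no genuine obstacle here once Lemma~\ref{lem:eta_phi} is available: the only mildly delicate point is bookkeeping of the exponent inequality to ensure that a $\bar{q}\in(q,\frac{p}{2\gamma_1+4\max(\gamma_0,\gamma_1,1/2)}-\frac12)$ exists, which is immediate from strict inequality in the hypothesis on $q$.
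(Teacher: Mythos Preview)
Your proposal is correct and follows exactly the paper's own approach: the paper states that Corollary~\ref{cor:CT2} is an immediate consequence of Corollary~\ref{cor:SVstabilityT2}, Corollary~\ref{cor:convergence2}, and Lemma~\ref{lem:eta_phi} (see~\eqref{eq:incrementtamed3}), which are precisely the three ingredients you invoke and combine in the same way, including the choice of an auxiliary exponent $\bar q\in(q,\tfrac{p}{2\gamma_1+4\max(\gamma_0,\gamma_1,1/2)}-\tfrac12)$.
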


\section{Weak convergence}
\label{sec:weakconvergence}

Convergence in probability
implies stochastic weak convergence.
The next corollary is thus an immediate
consequence of 
Theorem~\ref{thm:convergence}.

\begin{cor}[Weak convergence
with bounded test functions]
\label{cor:weakconvergence}
Assume that the setting
in Section~\ref{sec:convergencesetting}
is fulfilled,
suppose that
$ 
  \phi \colon \mathbb{R}^d
  \times [0,T] \times \mathbb{R}^m
  \rightarrow \mathbb{R}^d
$ is
$ ( \mu, \sigma ) $-consistent with respect to Brownian motion
and let
$
  \left( 
    E, \left\| \cdot \right\|_E
  \right) 
$
be a separable 
$ \mathbb{R} $-Banach space.
Then
\begin{equation}
  \lim_{ N \rightarrow \infty }
  \left\|
  \mathbb{E}\big[
    f( X )
  \big]
  -
  \mathbb{E}\!\left[
    f( \bar{Y}^N )
  \right]
  \right\|_E
= 
  0 
\end{equation}
for all
bounded and continuous
functions 
$
  f \colon C( [0,T], \mathbb{R}^d )
  \rightarrow E
$.
\end{cor}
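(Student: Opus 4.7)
The plan is to view $X$ and $\bar{Y}^N$ as random elements of the separable Banach space $\left(C([0,T],\mathbb{R}^d), \|\cdot\|_\infty\right)$ equipped with the Borel sigma-algebra, and to transfer the convergence in probability from Theorem~\ref{thm:convergence} through the bounded continuous mapping $f$ to the $E$-valued Bochner expectations. First, I would observe that Theorem~\ref{thm:convergence} yields exactly $\lim_{N \to \infty} \mathbb{P}[\|X - \bar{Y}^N\|_\infty \geq \varepsilon] = 0$ for every $\varepsilon \in (0,\infty)$, i.e., $\bar{Y}^N \to X$ in probability in $C([0,T],\mathbb{R}^d)$.

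Second, continuity of $f \colon C([0,T],\mathbb{R}^d) \to E$ combined with the standard subsequence characterisation of convergence in probability promotes this to $f(\bar{Y}^N) \to f(X)$ in probability in $(E,\|\cdot\|_E)$: given an arbitrary subsequence, I would extract a further sub-subsequence converging almost surely in $C([0,T],\mathbb{R}^d)$, apply continuity of $f$ pointwise to obtain almost sure convergence of the images in $E$, and hence conclude that the original sequence converges in probability in $E$. Third, since $f$ is bounded in $E$-norm, say by $c \in [0,\infty)$, the real-valued random variables $\|f(X) - f(\bar{Y}^N)\|_E$ are uniformly dominated by the constant $2c$ and converge to $0$ in probability, so Lebesgue's dominated convergence theorem gives $\lim_{N \to \infty} \mathbb{E}[\|f(X) - f(\bar{Y}^N)\|_E] = 0$.

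Finally, separability of $E$ together with the continuity of $f$ and the Borel measurability of $X$ and $\bar{Y}^N$ ensure that $f(X)$ and $f(\bar{Y}^N)$ are Borel measurable $E$-valued mappings, and the uniform bound on $f$ then guarantees Bochner integrability. The contraction property of the Bochner integral $\|\mathbb{E}[f(X)] - \mathbb{E}[f(\bar{Y}^N)]\|_E \leq \mathbb{E}[\|f(X) - f(\bar{Y}^N)\|_E]$ combined with the previous step then yields $\lim_{N \to \infty} \|\mathbb{E}[f(X)] - \mathbb{E}[f(\bar{Y}^N)]\|_E = 0$. No step poses a serious obstacle; the only point requiring mild attention is the well-posedness of the $E$-valued expectations as Bochner integrals, which is routine under the separability and boundedness hypotheses.
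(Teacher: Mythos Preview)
Your proposal is correct and follows exactly the approach the paper has in mind: the paper simply states that the corollary ``follows directly from Theorem~\ref{thm:convergence} since the test functions \dots\ are assumed to be bounded,'' invoking the general principle that convergence in probability implies stochastic weak convergence. You have merely supplied the routine details (continuous mapping via subsequences, dominated convergence, and Bochner integrability in the separable Banach space $E$) that the paper leaves implicit.
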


Corollary~\ref{cor:weakconvergence}
follows directly from 
Theorem~\ref{thm:convergence}
since the test functions 
in Corollary~\ref{cor:weakconvergence}
are assumed to be bounded.
The case of unbounded test functions
is more subtle and is analyzed in
the sequel. The next lemma
proves weak convergence 
restricted to events whose probabilities converge
to one sufficiently fast with
possibly unbounded test function
under the assumption that
the discrete-time
stochastic processes
$ \bar{Y}^N_{ n T / N } $,
$ n \in \{ 0, 1, \dots, N \} $,
$ N \in \mathbb{N} $,
are $ 0 $-semi 
$ V $-bounded
with 
$ V \colon \mathbb{R}^d
\rightarrow [0,\infty) $
appropriate.

\begin{lemma}[Semi weak convergence]
\label{lem:weakconvergence}
Assume that the setting
in Section~\ref{sec:convergencesetting}
is fulfilled,
suppose that
$ 
  \phi \colon \mathbb{R}^d
  \times [0,T] \times \mathbb{R}^m
  \rightarrow \mathbb{R}^d
$ is
$ ( \mu, \sigma ) $-consistent with respect to Brownian motion,
let
$
  \left( 
    E, \left\| \cdot \right\|_E
  \right) 
$
be a separable 
$ \mathbb{R} $-Banach space,
let
$
  V \colon \mathbb{R}^d
  \rightarrow [0,\infty)
$
be a continuous function
and assume that 
the sequence
$ 
  \bar{Y}^N_{ n T / N }   
  \colon
  \Omega
  \rightarrow \mathbb{R}^d
$,
$ n \in \{ 0, 1, \dots, N \} $,
$ N \in \mathbb{N} $,
of discrete-time 
stochastic processes is
$ 0 $-semi
$ V $-bounded, i.e., 
let
$ 
  \Omega_N \in 
  \sigma_{ \Omega}( \bar{Y}^N ) 
$, 
$ N \in \mathbb{N} $,
be a sequence of events such that
\begin{equation}
\label{eq:semiVbounded.semiweak}
  \limsup_{ N \to \infty }
  \sup_{ n \in \{ 0, 1, \dots, N \} }
    \mathbb{E}\big[
      \mathbbm{1}_{ \Omega_N }
      V\big( 
        \bar{Y}^N_{ n T / N } 
      \big)
    \big]
  < \infty
\quad
  \text{and}
\quad
  \lim_{ N \to \infty }
    \mathbb{P}\big[
      \Omega_N 
    \big]
  = 1 .
\end{equation}
Then 
$
  \E\big[
    V( X_T ) 
    +
    \| f( X_T ) \|_E
  \big]
  < \infty
$
and
\begin{equation}
\label{eq:weakconvergence}
\begin{split}
&
  \lim_{ N \rightarrow \infty }
  \mathbb{E}\!\left[
    \mathbbm{1}_{ \Omega_N }
    \|
    f( X_T )
    -
    f( \bar{Y}^N_T )
    \|_E
  \right]
\\ & =
  \lim_{ N \rightarrow \infty }
  \left\|
  \mathbb{E}\big[
    \mathbbm{1}_{ \Omega_N }
    f( X_T )
  \big]
  -
  \mathbb{E}\!\left[
    \mathbbm{1}_{ \Omega_N }
    f( \bar{Y}^N_T )
  \right]
  \right\|_E
= 
  0 
\end{split}
\end{equation}
for all continuous functions
$
  f \colon \mathbb{R}^d 
  \rightarrow E
$
with
$
  \limsup_{ r \nearrow 1 }
  \sup_{ x \in \mathbb{R}^d }
  \frac{
    \| f(x) \|_E
  }{
    ( 1 + V(x) )^r
  }
  < \infty
$.
\end{lemma}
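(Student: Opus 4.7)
\noindent\textbf{Proof proposal for Lemma~\ref{lem:weakconvergence}.}
The plan is to combine the convergence in probability delivered by Theorem~\ref{thm:convergence} with the $0$-semi $V$-boundedness assumption~\eqref{eq:semiVbounded.semiweak} and a uniform integrability argument that is powered by the slack in the hypothesis $\limsup_{r\nearrow 1}\sup_{x\in\R^d}\|f(x)\|_E/(1+V(x))^r<\infty$. First, Theorem~\ref{thm:convergence} yields $\bar Y^N_T\to X_T$ in probability as $N\to\infty$. Since $V\colon\R^d\to[0,\infty)$ and $f\colon\R^d\to E$ are continuous and $\mathbbm{1}_{\Omega_N}\to 1$ in probability (because $\P[\Omega_N]\to 1$), the continuous mapping theorem gives $\mathbbm{1}_{\Omega_N}V(\bar Y^N_T)\to V(X_T)$ and $\mathbbm{1}_{\Omega_N}\|f(\bar Y^N_T)-f(X_T)\|_E\to 0$ in probability.

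Second, integrability of $V(X_T)$ and of $\|f(X_T)\|_E$ comes from Lemma~\ref{lem:fatou} applied with $E=\R$, $Z_N=\mathbbm{1}_{\Omega_N}V(\bar Y^N_T)$, $Z=V(X_T)$ and $\varphi=\mathrm{id}$. The hypothesis~\eqref{eq:semiVbounded.semiweak} specialized to $n=N$ yields
\begin{equation*}
  \E\!\left[V(X_T)\right]
  \leq
  \liminf_{N\to\infty}\E\!\left[\mathbbm{1}_{\Omega_N}V(\bar Y^N_T)\right]
  <\infty.
\end{equation*}
Pick $r\in(0,1)$ with $c_r:=\sup_{x\in\R^d}\|f(x)\|_E/(1+V(x))^r<\infty$; then by Jensen's inequality (applied to the concave function $z\mapsto z^r$),
$\E[\|f(X_T)\|_E]\leq c_r\,\E[(1+V(X_T))^r]\leq c_r(1+\E[V(X_T)])^r<\infty$.

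Third, to control the Bochner-valued expectation, observe that the triangle inequality for the Bochner integral yields the standard estimate
\begin{equation*}
  \left\|\E\!\left[\mathbbm{1}_{\Omega_N}f(X_T)\right]-\E\!\left[\mathbbm{1}_{\Omega_N}f(\bar Y^N_T)\right]\right\|_E
  \leq
  \E\!\left[\mathbbm{1}_{\Omega_N}\|f(X_T)-f(\bar Y^N_T)\|_E\right],
\end{equation*}
so it suffices to establish that the right-hand side tends to $0$. We have convergence to $0$ in probability from the first paragraph; it remains to verify uniform integrability of the family $\mathbbm{1}_{\Omega_N}\|f(X_T)-f(\bar Y^N_T)\|_E$, $N\in\N$. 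By the growth bound and~\eqref{eq:semiVbounded.semiweak},
\begin{equation*}
  \sup_{N\in\N}\E\!\left[\left(\mathbbm{1}_{\Omega_N}\|f(\bar Y^N_T)\|_E\right)^{1/r}\right]
  \leq
  (c_r)^{1/r}\sup_{N\in\N}\E\!\left[\mathbbm{1}_{\Omega_N}(1+V(\bar Y^N_T))\right]
  <\infty,
\end{equation*}
where $1/r>1$. Hence $(\mathbbm{1}_{\Omega_N}\|f(\bar Y^N_T)\|_E)_{N\in\N}$ is bounded in $L^{1/r}(\Omega;\R)$ and therefore uniformly integrable (cf.\ Corollary~6.21 in Klenke~\cite{k08b}); together with $\|f(X_T)\|_E\in L^1(\Omega;\R)$ this yields uniform integrability of the difference. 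The standard fact that convergence in probability together with uniform integrability implies $L^1$-convergence (cf.\ Theorem~6.25 in Klenke~\cite{k08b}) then completes the proof.

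The only nontrivial point is the uniform integrability step, for which the exponent $r<1$ is crucial: it provides an $L^{1/r}$-bound ($1/r>1$) for $\mathbbm{1}_{\Omega_N}\|f(\bar Y^N_T)\|_E$ that arises naturally from the $0$-semi $V$-boundedness and the growth condition; the rest of the argument is a routine combination of Theorem~\ref{thm:convergence}, Lemma~\ref{lem:fatou} and the continuous mapping theorem.
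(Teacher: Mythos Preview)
Your proof is correct and follows essentially the same route as the paper's: Theorem~\ref{thm:convergence} for convergence in probability, Lemma~\ref{lem:fatou} for $\E[V(X_T)]<\infty$, the growth bound plus Jensen for $\E[\|f(X_T)\|_E]<\infty$, and the $L^{1/r}$-bound (with $1/r>1$) to obtain uniform integrability and hence $L^1$-convergence. One minor slip: assumption~\eqref{eq:semiVbounded.semiweak} only gives a finite $\limsup_{N\to\infty}$, so your $\sup_{N\in\N}$ in the $L^{1/r}$-bound should be $\sup_{N\geq N_0}$ for some $N_0$ (as the paper does); this is harmless since only the tail matters for the limit.
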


\begin{proof}[Proof
of 
Lemma~\ref{lem:weakconvergence}]
First of all, let
$
  f \colon \mathbb{R}^d 
  \rightarrow E
$
be a continuous function
which satisfies
\begin{equation}
  \limsup_{ r \nearrow 1 }
  \sup_{ x \in \mathbb{R}^d }
  \frac{
    \| f(x) \|_E
  }{
    ( 1 + V(x) )^r
  }
  < \infty
  .
\end{equation}
This ensures that
there exists
a real number $ r \in (0,1) $
such that
$
  \eta :=
$
$
  \sup_{ x \in \mathbb{R}^d }
$
$
  \frac{
    \| f(x) \|_E
  }{
    ( 1 + V(x) )^r
  }
  < \infty
$.
We thus obtain that
\begin{equation}
\label{eq:fest}
  \left\|
    f(x)
  \right\|_E
\leq
  \eta 
  \left(
    1 + V(x)
  \right)^r
\end{equation}
for all $ x \in \mathbb{R}^d $.
Next observe that
Theorem~\ref{thm:convergence}
and 
assumption~\eqref{eq:semiVbounded.semiweak}
show
\begin{equation}
\begin{split}
&
  \limsup_{ N \to \infty }
  \P\!\left[
    \| 
      X_T 
      - \mathbbm{1}_{ \Omega_N }
      \bar{Y}^N_T
    \|
    \geq \varepsilon
  \right]
\\ & \leq
  \limsup_{ N \to \infty }
  \P\big[
    \| 
      X_T 
      -
      \mathbbm{1}_{ \Omega_N }
      X_T 
    \|
    \geq \tfrac{ \varepsilon }{ 2 }
  \big]  
  +
  \limsup_{ N \to \infty }
  \P\!\left[
    \mathbbm{1}_{ \Omega_N }
    \| 
      X_T 
      - 
      \bar{Y}^N_T 
    \|
    \geq \tfrac{ \varepsilon }{ 2 }
  \right]  
\\ & \leq
  \limsup_{ N \to \infty }
  \P\big[
    \mathbbm{1}_{ ( \Omega_N )^c }
    \| 
      X_T 
    \|
    \geq \tfrac{ \varepsilon }{ 2 }
  \big]  
  +
  \limsup_{ N \to \infty }
  \P\!\left[
    \| 
      X_T 
      - 
      \bar{Y}^N_T 
    \|
    \geq \tfrac{ \varepsilon }{ 2 }
  \right]  
\\ & \leq
  \limsup_{ N \to \infty }
  \P\big[
    ( \Omega_N )^c
  \big]
  = 0
\end{split}
\end{equation}
for all $ \varepsilon \in (0,\infty) $.
Lemma~\ref{lem:fatou}
hence implies that
\begin{equation}
\begin{split}
  \E\big[
    V( X_T )
  \big]
& \leq
  \liminf_{ N \to \infty }
  \E\big[
    V\big( 
      \mathbbm{1}_{ \Omega_N }
      \bar{Y}^N_T
    \big)
  \big]
\\ & \leq
  V(0) 
  +
  \limsup_{ N \to \infty }
  \E\big[
    \mathbbm{1}_{ \Omega_N }
    V( 
      \bar{Y}^N_T
    )
  \big]
< \infty 
\end{split}
\end{equation}
and
Jensen's inequality 
and \eqref{eq:fest}
therefore yield that
\begin{equation}
\label{eq:fxfinite}
\begin{split}
  \mathbb{E}\big[
    \| f( X_T ) \|_E
  \big]
& \leq
  \eta
  \cdot
  \mathbb{E}\Big[
    \big( 
      1 +
      V( X_T )
    \big)^r
  \Big]
  \leq
  \eta
  \cdot
    \big( 
      1 +
      \E[
        V( X_T )
      ]
    \big)^r
< \infty .
\end{split}
\end{equation}
In addition,
observe that 
Theorem~\ref{thm:convergence}
implies 
\begin{equation}
  \lim_{ N \rightarrow \infty }
  \mathbb{P}\!\left[
    \|
    f( X_T )
    -
    f( \bar{Y}^N_T )
    \|_E
    > \varepsilon
  \right] = 0
\end{equation}
for all 
$ \varepsilon \in (0,\infty) $.
Therefore, we obtain that
\begin{equation}
\label{eq:omegaProbC}
  \lim_{ N \rightarrow \infty }
  \mathbb{P}\!\left[
    \mathbbm{1}_{ \Omega_N }
    \|
    f( X_T )
    -
    f( \bar{Y}^N_T )
    \|_E
    > \varepsilon
  \right] = 0
\end{equation}
for all 
$ \varepsilon \in (0,\infty) $.
Next note that
estimate~\eqref{eq:fest} 
ensures that
\begin{equation}
\begin{split}
  \limsup_{ N \to \infty }
  \left\| 
    \mathbbm{1}_{ \Omega_N }
    f( \bar{Y}^N_T )
  \right\|_{ L^{ 1 / r }( \Omega; E ) }
& \leq
  \eta \cdot
  \left(
    1 +
    \limsup_{ N \to \infty }
    \mathbb{E}\Big[ 
      \mathbbm{1}_{ \Omega_N }
      V\big( \bar{Y}^N_T \big)
    \Big]
  \right)^{ \! r }
< \infty .
\end{split}
\end{equation}
This implies that there exists
a natural number $ N_0 \in \N $
such that
\begin{equation}
\label{eq:uniformintegrable}
  \sup_{
    N \in \{ N_0, N_0 + 1, \dots \}
  }
  \left\| 
    \mathbbm{1}_{ \Omega_N }
    f( \bar{Y}^N_T )
  \right\|_{ L^{ 1 / r }( \Omega; E ) }
  < \infty .
\end{equation}
Inequality~\eqref{eq:uniformintegrable},
the fact that
$ \frac{ 1 }{ r } > 1 $
and Corollary 6.21 of Klenke~\cite{k08b}
show that the family 
$
  \mathbbm{1}_{ \Omega_N }
  \| 
    f( \bar{Y}^N_T )
  \|_E
$,
$ 
  N \in \{ N_0, N_0 + 1, \dots \} 
$,
of 
$ 
  \mathcal{F} 
$/$
  \mathcal{B}( \mathbb{R} )
$-measurable mappings
is uniformly integrable.
This together with 
\eqref{eq:fxfinite}
yields that the sequence
$
  \mathbbm{1}_{ \Omega_N }
  \| 
    f( X_T ) 
    -
    f( \bar{Y}^N_T )
  \|_E
$,
$ 
  N \in \{ N_0, N_0 + 1, \dots \} 
$,
is uniformly integrable.
Equation~\eqref{eq:omegaProbC}
hence shows
\eqref{eq:weakconvergence}.
The proof of 
Lemma~\ref{lem:weakconvergence}
is thus completed.
\end{proof}

\subsection{Convergence
of Monte Carlo methods}

In \eqref{eq:weakconvergence}
in Lemma~\ref{lem:weakconvergence},
the convergence holds restricted
to a sequence of events whose
probabilities converge to one.
In Proposition~\ref{p:weakconvergence} below,
we will get rid of the
restriction to these events 
and prove convergence of the
corresponding Monte Carlo method
on an event of probability one.
For proving this result,
we first need
a minor generalization of 
Lemma~2.1 in
Kloeden \citationand\ Neuenkirch~\cite{kn07}.
More precisely,
Lemma~2.1
in Kloeden \citationand\ Neuenkirch~\cite{kn07}
proves that convergence in the
$ p $-th mean with order 
$ \beta \in (0,\infty) $ 
for all $ p \in (0,\infty) $
implies almost sure convergence
with order 
$ \beta - \varepsilon $
where
$ \varepsilon \in (0,\infty) $
is arbitrarily small.
The next result is a minor generalization
of this result and, in particular, 
proves 
for every arbitrary fixed $ p \in (0,\infty) $
that convergence in the 
$ p $-th mean with order 
$ \beta \in (\frac{1}{p},\infty) $
implies almost sure convergence
with order 
$ \beta - 1/p - \varepsilon $
for every arbitrarily small
$ \varepsilon \in (0,\infty) $.

\begin{lemma}[$ L^p $-convergence
with order 
$ 
  \beta 
  \in (1/p,\infty) 
$ 
implies almost sure convergence]
\label{lem:almostsure}
Let 
$ M \in \N $,
let
$
  \left( E, \mathcal{E}, \mu \right)
$
be a measurable space and let
$
  Y_N \colon E
  \rightarrow \mathbb{R}
$,
$ N \in \{ M, M+1, \dots \} $,
be a family
of 
$ \mathcal{E} 
$/$ \mathcal{B}( \mathbb{R} ) 
$-measurable mappings.
Then
\begin{equation}
\label{eq:almostsure}
\begin{split}
&
  \left\|
    \sup_{ 
      N \in \{ M, M + 1, \dots \} 
    }
      N^{ \alpha } 
      \cdot
      \left| Y_N \right|
  \right\|_{ 
    L^p( E; \mathbb{R} )
  }
\\ & \leq
  \left[
    \sum_{ N = M }^{ \infty }
    N^{ \left( \alpha - \beta\right) p } 
  \right]^{ \! 1 / p }
  \left[
    \sup_{ 
      N \in \{ M, M + 1, \dots \} 
    }
      N^{ \beta }
      \cdot
      \left\| Y_N \right\|_{ 
        L^p( E; \mathbb{R} ) 
      }
  \right]
\end{split}
\end{equation}
for all 
$ 
  \alpha, \beta \in \mathbb{R}
$,
$ 
  p \in (0,\infty) 
$.
In particular, if
$
  \sup_{ 
    N \in \{ M, M + 1, \dots \} 
  }
  (
    N^{ \beta } \,
    \| Y_N \|_{ 
      L^p( E; \mathbb{R} ) 
    }
  )
  < \infty
$
for one
$ p \in (0,\infty) $
and one
$
  \beta \in \mathbb{R}
$,
then
\begin{equation}
\label{eq:almostsure2}
  \int_E
    \left\{
    \sup_{ N \in \{ M, M+1, \dots \} }
    (
      N^{ \alpha } \cdot
      | Y_N |
    )
    \right\}^{ \! p }
  d \mu < \infty ,
\;
    \sup_{ N \in \{ M, M+1, \dots \} }
    \big(
      N^{ \alpha } \cdot
      | Y_N |
    \big)
    < \infty
  \;\;
  \mu\text{-a.s.}
\end{equation}
for all
$ \alpha \in (-\infty,\beta-\frac{1}{p}) $.
Moreover, if 
$
  \sup_{ 
    N \in \{ M, M + 1, \dots \} 
  }
  (
    N^{ \beta } \,
    \| Y_N \|_{ 
      L^p( E; \mathbb{R} ) 
    }
  )
  < \infty
$
for all
$ p \in (0,\infty) $
and one 
$
  \beta \in \mathbb{R}
$,
then
\begin{equation}
\label{eq:almostsure3}
  \int_E
    \left\{
    \sup_{ 
      N \in \{ M, M + 1, \dots \} 
    }
    (
      N^{ \alpha } \cdot
      | Y_N |
    )
    \right\}^{ \! p }
  d \mu < \infty ,
\;
    \sup_{ 
      N \in \{ M, M + 1, \dots \} 
    }
    \big(
      N^{ \alpha } \cdot
      | Y_N |
    \big)
    < \infty
  \;\;
  \mu\text{-a.s.}
\end{equation}
for all
$ \alpha \in (-\infty,\beta) $,
$ p \in (0,\infty) $.
\end{lemma}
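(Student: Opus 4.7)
The plan is to derive the central inequality \eqref{eq:almostsure} from a pointwise ``$\sup\leq\sum$'' estimate and then to obtain \eqref{eq:almostsure2} and \eqref{eq:almostsure3} as direct consequences. First I would use that for any non-negative reals $a_M,a_{M+1},\dots$ and any $p\in(0,\infty)$, the monotonicity of $x\mapsto x^p$ on $[0,\infty)$ gives $(\sup_{N\geq M} a_N)^p=\sup_{N\geq M}a_N^p\leq\sum_{N=M}^\infty a_N^p$. Applying this pointwise with $a_N=N^\alpha|Y_N|$ and integrating against $\mu$ (justified by Tonelli's theorem, since all summands are non-negative and measurable) yields
\begin{equation}
\Big\|\sup_{N\geq M}N^\alpha|Y_N|\Big\|_{L^p(E;\mathbb{R})}^p
\leq
\sum_{N=M}^\infty N^{\alpha p}\,\|Y_N\|_{L^p(E;\mathbb{R})}^p.
\end{equation}
The trivial factorization $N^{\alpha p}\|Y_N\|_{L^p}^p=N^{(\alpha-\beta)p}\,(N^\beta\|Y_N\|_{L^p})^p$, combined with pulling the supremum in $N$ out of the resulting sum, produces \eqref{eq:almostsure} after extracting a $p$-th root.

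The estimate \eqref{eq:almostsure2} then follows immediately: if $\alpha<\beta-1/p$, then $(\alpha-\beta)p<-1$, so the $p$-series $\sum_{N=M}^\infty N^{(\alpha-\beta)p}$ converges, and combining this with the assumed finiteness of $\sup_N N^\beta\|Y_N\|_{L^p}$ makes the right-hand side of \eqref{eq:almostsure} finite. This gives the $L^p$-integrability of $\sup_{N\geq M}N^\alpha|Y_N|$ and, since $L^p$-finiteness forces $\mu$-a.e.\ finiteness, also the $\mu$-a.s.\ assertion.

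For \eqref{eq:almostsure3} I would reduce to \eqref{eq:almostsure2} by exploiting the fact that the hypothesis is now available for every $p\in(0,\infty)$. Given $\alpha\in(-\infty,\beta)$ and the desired $p\in(0,\infty)$, choose any $q\in(0,\infty)$ with $q\geq p$ and $q>1/(\beta-\alpha)$, so that $\alpha<\beta-1/q$. The hypothesis furnishes $\sup_{N\geq M}N^\beta\|Y_N\|_{L^q}<\infty$, so \eqref{eq:almostsure2} applied with $q$ in place of $p$ yields $\sup_{N\geq M}N^\alpha|Y_N|\in L^q(\mu)$ together with the $\mu$-a.s.\ finiteness. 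The $L^p$-integrability then follows from the inclusion $L^q(\mu)\subset L^p(\mu)$ for $q\geq p$, which holds in the setting of a finite (in particular probability) measure $\mu$, as is the case in all intended applications of the lemma.

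The argument involves no genuinely hard step: the main inequality is at bottom just ``$\sup\leq\sum$'' combined with Tonelli, and the passage to the two consequences is mere bookkeeping. The only point that deserves some care is the choice of the auxiliary exponent $q$ in \eqref{eq:almostsure3} and the implicit use of finiteness of $\mu$ to transfer $L^q$-integrability to $L^p$-integrability; once those are in place, everything else is essentially a one-line computation.
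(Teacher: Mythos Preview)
Your argument for \eqref{eq:almostsure} and \eqref{eq:almostsure2} is correct and is exactly the paper's proof: bound $(\sup_N a_N)^p$ by $\sum_N a_N^p$ pointwise, interchange sum and integral by Tonelli, and factor $N^{\alpha p}=N^{(\alpha-\beta)p}N^{\beta p}$; the paper then simply states that \eqref{eq:almostsure2} and \eqref{eq:almostsure3} follow immediately from \eqref{eq:almostsure}.

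Your handling of \eqref{eq:almostsure3} is in fact more careful than the paper's. You are right that the step from $L^q$-integrability (with $q$ chosen large so that $\alpha<\beta-1/q$) to $L^p$-integrability for a smaller $p$ uses $L^q(\mu)\subset L^p(\mu)$, which requires $\mu$ to be finite. The lemma as stated imposes no such hypothesis, and indeed the $L^p$-part of \eqref{eq:almostsure3} fails for general measures: take $E=\{M,M+1,\dots\}$ with counting measure and $Y_N(k)=N^{-\beta}\mathbf{1}_{\{k=N\}}$; then $N^{\beta}\|Y_N\|_{L^p}=1$ for every $p$, yet $\int_E(\sup_N N^{\alpha}|Y_N|)^p\,d\mu=\sum_{k\ge M}k^{(\alpha-\beta)p}=\infty$ whenever $\beta-1/p\le\alpha<\beta$. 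The $\mu$-a.s.\ finiteness part of \eqref{eq:almostsure3} does follow for arbitrary $\mu$ exactly as you say (apply \eqref{eq:almostsure2} with some large $q$), and since the paper only invokes the lemma on a probability space, your proviso is harmless.
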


\begin{proof}[Proof
of Lemma~\ref{lem:almostsure}]
Note that
\begin{equation}
\begin{split}
&
  \int_{ E }
    \left\{
      \sup_{ 
        N \in \{ M, M + 1, \dots \} 
      }
      \left(
        N^{ \alpha } \cdot
        \left| Y_N \right|
      \right)
    \right\}^{ \! p }
  d \mu
\\ &  =
  \int_{ E }
  \left\{
    \sup_{ 
      N \in \{ M, M + 1, \dots \} 
    }
    \left(
      N^{ \alpha p } \cdot
      \left| Y_N \right|^p
    \right)
  \right\}
  d \mu
\\ & \leq
  \int_{ E }
  \left\{
    \sum_{ N = M }^{ \infty }
    \left(
      N^{ \alpha p } \cdot
      \left| Y_N \right|^p
    \right)
  \right\}
  d \mu
  =
  \sum_{ N = M }^{ \infty }
    \left(
      N^{ \alpha p } 
      \cdot
      \int_{ E }
        \left| Y_N \right|^p
      d \mu
    \right)
\\ & \leq
  \left(
    \sum_{ N = M }^{ \infty }
    N^{ \left( \alpha - \beta\right) p } 
  \right)
  \left(
  \sup_{ 
    N \in \{ M, M + 1, \dots \} 
  }
    N^{ \beta p }
    \cdot
    \int_{ E }
      \left| Y_N \right|^p
    d \mu
  \right)
\end{split}
\end{equation}
for all $ \alpha, \beta \in \mathbb{R} $
and all $ p \in (0,\infty) $.
This proves 
inequality~\eqref{eq:almostsure}.
The assertions
in \eqref{eq:almostsure2}
and \eqref{eq:almostsure3}
then follow immediately from
inequality~\eqref{eq:almostsure}.
The proof of Lemma~\ref{lem:almostsure}
is thus completed.
\end{proof}

Note also that the last assertion in Lemma~\ref{lem:almostsure},
i.e., the assertion in \eqref{eq:almostsure3},
is essentially 
Lemma~2.1 in \cite{kn07}
generalized to arbitrary measure
spaces.

\begin{prop}[Convergence
of Monte Carlo methods]
\label{p:weakconvergence}
Assume that the setting
in Section~\ref{sec:convergencesetting}
is fulfilled,
suppose that
$ 
  \phi \colon \mathbb{R}^d
  \times [0,T] \times \mathbb{R}^m
  \rightarrow \mathbb{R}^d
$ is
$ ( \mu, \sigma ) $-consistent with respect to Brownian motion,
let 
$ n \in \mathbb{N} $,
$ \alpha \in (n+1,\infty) $,
let
$
  V \colon \mathbb{R}^d 
  \rightarrow [0,\infty)
$
be a continuous function,
assume that 
the sequence
$ 
  (
    \bar{Y}^N_{ k T / N }   
  )_{
    k \in \{ 0, 1, \dots, N \} 
  }
$,
$ N \in \mathbb{N} $,
of discrete-time 
stochastic processes is
$ \alpha $-semi
$ V $-bounded
and for every $ N \in \mathbb{N} $
let
$
  \bar{Y}^{ N, m }
  \colon [0,T] \times
  \Omega \rightarrow \mathbb{R}^d
$,
$ m \in \mathbb{N} $,
be independent
stochastic processes with
$
  \mathbb{P}_{ \bar{Y}^N }
=
  \mathbb{P}_{ \bar{Y}^{N,m} }
$
for all $ m \in \mathbb{N} $.
Then 
\begin{equation}
\label{eq:weakfinal}
  \lim_{ N \rightarrow \infty }
  \left|
  \mathbb{E}\big[
    f( X_T )
  \big]
  -
  \frac{
    \sum_{ m = 1 }^{ N^n }
    f( \bar{Y}^{ N, m }_T )
  }{
    N^n
  }
  \right|
= 
  0 
\end{equation}
$ \mathbb{P} $-a.s.\ for 
all
continuous
functions 
$
  f \colon \mathbb{R}^d 
  \rightarrow \mathbb{R}
$
with
$
  \limsup_{ 
    r \nearrow \frac{ n \wedge 2 }{ 2 } 
  }
  \sup_{ x \in \mathbb{R}^d }
  \frac{
    | f(x) |
  }{
    ( 1 + V(x) )^r
  }
$
$
  < \infty
$.
\end{prop}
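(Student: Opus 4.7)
The plan is to decompose the Monte Carlo estimator into three pieces -- a bias term handled by Lemma~\ref{lem:weakconvergence}, a ``rare-event'' term killed by Borel--Cantelli using the quantitative decay $\mathbb{P}[(\Omega_N)^c]\leq CN^{-\alpha}$, and a centered ``variance'' term treated by a moment-bound SLLN for triangular arrays via Lemma~\ref{lem:almostsure}. To set up notation, fix the sequence of events $\Omega_N\in\sigma_\Omega(\bar{Y}^N)$ from the $\alpha$-semi $V$-boundedness hypothesis, and for each $N,m\in\mathbb{N}$ denote by $\Omega_N^m\in\sigma_\Omega(\bar{Y}^{N,m})$ the analogous event associated to the $m$-th independent copy $\bar{Y}^{N,m}$. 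By independence of the copies, the events $(\Omega_N^m)^c$, $m\in\{1,\ldots,N^n\}$, are independent with probability bounded by $CN^{-\alpha}$, and the random variables $\mathbbm{1}_{\Omega_N^m}f(\bar{Y}^{N,m}_T)$, $m\in\{1,\ldots,N^n\}$, are i.i.d.

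First I would use the $\alpha$-semi $V$-boundedness of the paths to invoke Lemma~\ref{lem:weakconvergence}, applied with $E=\mathbb{R}$, to obtain the deterministic convergence of the bias,
\begin{equation*}
  \lim_{N\to\infty}\Big|\mathbb{E}[f(X_T)]-\mathbb{E}\!\left[\mathbbm{1}_{\Omega_N}f(\bar{Y}^N_T)\right]\Big|=0.
\end{equation*}
Next the rare-event piece: by subadditivity, $\mathbb{P}\!\left[\bigcup_{m=1}^{N^n}(\Omega_N^m)^c\right]\leq N^n\cdot CN^{-\alpha}=CN^{n-\alpha}$, and since $\alpha>n+1$, these probabilities are summable in $N$. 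The Borel--Cantelli lemma therefore yields that, $\mathbb{P}$-almost surely, for all sufficiently large $N$ every factor $\mathbbm{1}_{\Omega_N^m}$ equals $1$, so that
\begin{equation*}
  \frac{1}{N^n}\sum_{m=1}^{N^n}f(\bar{Y}^{N,m}_T)=\frac{1}{N^n}\sum_{m=1}^{N^n}\mathbbm{1}_{\Omega_N^m}f(\bar{Y}^{N,m}_T)
\end{equation*}
eventually almost surely, which is the key mechanism whereby the potentially unbounded values of $f$ outside the ``good'' events are ignored.

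Finally I would control the centered variance piece $Z_N:=\frac{1}{N^n}\sum_{m=1}^{N^n}\bigl\{\mathbbm{1}_{\Omega_N^m}f(\bar{Y}^{N,m}_T)-\mathbb{E}[\mathbbm{1}_{\Omega_N}f(\bar{Y}^N_T)]\bigr\}$ by a triangular-array SLLN. Fix a growth exponent $r\in(0,(n\wedge 2)/2)$ with $|f(x)|\leq\eta(1+V(x))^r$. Choose $p>1+\tfrac{1}{n}$ with $rp\leq 1$; the existence of such $p$ is ensured by $r<(n\wedge 2)/2$ (for $n=1$ one takes $p\in(2,1/r]$ and applies Rosenthal's inequality, while for $n\geq 2$ one takes $p\in(1+1/n,1/r]\cap(1,2]$ and applies the von Bahr--Esseen / Marcinkiewicz--Zygmund inequality). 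The concavity of $y\mapsto(1+y)^{rp}$ together with Jensen's inequality and the uniform bound on $\mathbb{E}[\mathbbm{1}_{\Omega_N}V(\bar{Y}^N_T)]$ gives $\sup_{N,m}\|\mathbbm{1}_{\Omega_N^m}f(\bar{Y}^{N,m}_T)\|_{L^p(\Omega;\mathbb{R})}<\infty$. The MZ/Rosenthal inequality for centered i.i.d.\ sums then yields $\|Z_N\|_{L^p(\Omega;\mathbb{R})}\leq CN^{-\beta}$ with $\beta=n(1-1/p)>1/p$, and the choice of $p$ makes Lemma~\ref{lem:almostsure} applicable with exponent $\alpha_{\mathrm{rate}}=0<\beta-1/p$, delivering $Z_N\to 0$ almost surely. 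Combining the three steps via the triangle inequality proves \eqref{eq:weakfinal}.

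The main obstacle is the delicate matching between the growth exponent $r<(n\wedge 2)/2$ for $f$ and the admissible $L^p$ regime for the i.i.d.\ moment bound: one must simultaneously meet $rp\leq 1$ (so that the Jensen bound on $(1+V)^{rp}$ under $\mathbbm{1}_{\Omega_N}\,d\mathbb{P}$ survives, since only the first moment of $V$ is known) and $p>1+1/n$ (so that $L^p$-Chebyshev plus the MZ/Rosenthal rate is summable in $N$). This is precisely where the threshold $(n\wedge 2)/2$ enters: for $n=1$ one is forced into the Rosenthal regime $p>2$, whereas for $n\geq 2$ the MZ regime $p\in(1,2]$ suffices and the higher sample size $N^n$ compensates. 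Some bookkeeping will also be required to ensure the exceptional events from Borel--Cantelli and the $L^p$-deviation events can be combined into a single $\mathbb{P}$-null exceptional set, but this is standard.
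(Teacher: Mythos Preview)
Your three-piece decomposition (bias via Lemma~\ref{lem:weakconvergence}, rare events via Borel--Cantelli, centered i.i.d.\ sum via a moment bound plus Lemma~\ref{lem:almostsure}) is exactly the structure of the paper's proof. The bias and rare-event pieces are handled identically.

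There is, however, a genuine gap in your treatment of the centered variance piece when $n\geq 2$. You assert that the condition $r<(n\wedge 2)/2$ guarantees the existence of $p>1+\tfrac{1}{n}$ with $rp\leq 1$, but this is false: for $n\geq 2$ the growth restriction is merely $r<1$, and whenever $r\in[\tfrac{n}{n+1},1)$ one has $\tfrac{1}{r}\leq 1+\tfrac{1}{n}$, so the interval $(1+\tfrac{1}{n},\tfrac{1}{r}]$ is empty. (For a concrete failure: $n=2$, $r=0.8$ forces $p>1.5$ and $p\leq 1.25$.) The Rosenthal regime $p>2$ does not rescue this case either, since $rp\leq 1$ with $p>2$ forces $r<\tfrac{1}{2}$. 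Consequently your Marcinkiewicz--Zygmund route only delivers the rate $N^{-n(1-1/p)}$, and the summability condition $n(1-1/p)>1/p$ cannot be met for the full range of admissible $r$. (A minor related slip: in the $n=1$ Rosenthal regime the correct rate is $\beta=n/2$, not $n(1-1/p)$; this does not spoil the $n=1$ conclusion, but it signals the mismatch.)

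The paper avoids this by fixing $p=1/r$ throughout and applying the Burkholder--Davis--Gundy inequality in the form~\eqref{eq:burkholder}, which the paper states for all $p\in[1,\infty)$. This yields the rate
\[
  \left\|\tfrac{1}{N^n}\textstyle\sum_{m=1}^{N^n}\bigl\{\mathbbm{1}_{\Omega_{N,m}}f(\bar{Y}^{N,m}_T)-\mathbb{E}[\mathbbm{1}_{\Omega_N}f(\bar{Y}^N_T)]\bigr\}\right\|_{L^{1/r}}\leq CN^{-n/2}
\]
directly, without any case distinction in $p$. The almost-sure convergence then follows from Lemma~\ref{lem:almostsure} under the single condition $\tfrac{n}{2}>r$, which is precisely $r<(n\wedge 2)/2$. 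The point is that the square-function rate $N^{-n/2}$ from~\eqref{eq:burkholder} is strictly better than the von Bahr--Esseen rate $N^{-n(1-1/p)}$ when $p<2$, and this extra slack is exactly what is needed to close the range $r\in[\tfrac{n}{n+1},1)$.
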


\begin{proof}[Proof
of 
Proposition~\ref{p:weakconvergence}]
Let 
$
  f \colon \mathbb{R}^d 
  \rightarrow \mathbb{R}
$
be a continuous
function
which satisfies
\begin{equation}
\label{eq:f_assumption_prop}
  \limsup_{ 
    r \nearrow 
    \frac{ n \wedge 2 }{ 2 } 
  }
  \sup_{ x \in \mathbb{R}^d }
  \frac{
    | f(x) |
  }{
    ( 1 + V(x) )^r
  }
  < \infty
  .
\end{equation}
Next let 
$ M \in \N $
be a natural number and
let
$
  A_N \in 
  \mathcal{B}( \mathbb{R}^d )^{
    \otimes \{ 0, 1, \dots, N \}
  }
$,
$ N \in \mathbb{N} $,
be a sequence of sets
such that
\begin{equation}
\begin{split}
  c := 
&
  \sup_{ N \in \{ M, M + 1, \dots \} }
  \bigg(
    \mathbb{E}\Big[
      \mathbbm{1}_{   
        \left\{
          ( 
            \bar{Y}^N_{ k T / N }
          )_{ k \in \{ 0, 1, \dots, N \} }
          \in 
          A_N
        \right\}
      }
      V( \bar{Y}^N_T )
    \Big]
\\ & 
    +
    N^{ \alpha } \cdot
    \mathbb{P}\Big[
      ( 
        \bar{Y}^N_{ k T / N }
      )_{ k \in \{ 0, 1, \dots, N \} }
      \in 
      \left( A_N \right)^c
    \Big]
  \bigg)
  < \infty .
\end{split}
\end{equation}
Such a sequence of sets indeed
exists since
the sequence
$ 
  (
    \bar{Y}^N_{ k T / N }   
  )_{ k \in \{ 0, 1, \dots, N \} }
$,
$ N \in \mathbb{N} $,
of discrete-time 
stochastic processes 
is assumed to be
$ \alpha $-semi
$ V $-bounded.
We define now two families
$ \Omega_N \in \mathcal{F} $,
$ N \in \mathbb{N} $,
and
$ 
  \Omega_{N,m} \in \mathcal{F} 
$,
$ N, m \in \mathbb{N} $,
of events by
\begin{equation}
  \Omega_{ N } :=
  \Big\{
    ( 
      \bar{Y}^{ N }_{ k T / N } 
    )_{ 
      k \in \{ 0, 1, \dots, N \} 
    }
    \in A_N
  \Big\}
\quad
  \text{and}
\quad
  \Omega_{ N, m } :=
  \left\{
    ( 
      \bar{Y}^{ N, m }_{ k T / N } 
    )_{ 
      k \in \{ 0, 1, \dots, N \} 
    }
    \in A_N
  \right\}
\end{equation}
for all $ N, m \in \mathbb{N} $.
Note that
$
  \mathbb{P}\!\left[
    \Omega_N
  \right]  
  =
  \mathbb{P}\!\left[
    \Omega_{ N, m }
  \right]  
$
for all $ N, m \in \mathbb{N} $.
Moreover, observe that
Lemma~\ref{lem:weakconvergence}
implies
\begin{equation}
\label{eq:weakfinalZERO}
  \lim_{ N \rightarrow \infty }
  \mathbb{E}\big[
    \mathbbm{1}_{ 
      \left( \Omega_N \right)^c 
    }
    |
      f( X_T )
    |
  \big]
  = 0 
\quad
  \text{and}
\quad
  \lim_{ N \rightarrow \infty }
  \left|
  \mathbb{E}\big[
    \mathbbm{1}_{ \Omega_N }
    f( X_T )
  \big]
  -
  \mathbb{E}\!\left[
    \mathbbm{1}_{ \Omega_N }
    f( \bar{Y}^N_T )
  \right]
  \right|
= 
  0 .
\end{equation}
In the next step note that
condition \eqref{eq:f_assumption_prop}
shows the existence of a real
number 
$ 
  r \in 
  (0,
    \frac{ n \wedge 2 }{ 2 } 
  ) 
$
such that
\begin{equation}
  \eta :=
  \sup_{ x \in \mathbb{R}^d }
  \frac{
    | f(x) |
  }{
    ( 1 + V(x) )^r
  }
  < \infty
  .
\end{equation}
The Burkholder-Davis-Gundy
inequality 
(see, e.g., 
Theorem~48
in Protter~\cite{Protter2004})
then shows the
existence of a real
number $ \kappa \in [0,\infty) $
such that
\begin{equation}
\begin{split}
&
  \left\|
  \frac{ 
    \sum_{ m = 1 }^{ N^n }
    \big\{
      \mathbbm{1}_{ \Omega_{ N, m } }
      f( \bar{Y}^{ N, m}_T )
      -
      \mathbb{E}\big[
        \mathbbm{1}_{ \Omega_N }
        f( \bar{Y}^N_T )
      \big]
    \big\}
  }{
    N^n
  }
  \right\|_{ 
    L^{ 1 / r }( \Omega; \mathbb{R} ) 
  }
\\ & \leq
  \frac{ 
    \kappa
  }{
    N^n
  }
    \left(
      \sum_{ m = 1 }^{ N^n }
      \big\|
        \mathbbm{1}_{ \Omega_{ N, m } }
        f( \bar{Y}^{ N, m}_T )
        -
        \mathbb{E}\big[
        \mathbbm{1}_{ \Omega_N }
        f( \bar{Y}^N_T )
        \big]
      \big\|_{ 
        L^{ 1 / r }( \Omega; \mathbb{R} ) 
      }^2
    \right)^{ \! 1 / 2 }
\\ & \leq
  \frac{ 
    2 \kappa
      \big\|
        \mathbbm{1}_{ \Omega_N }
        f( \bar{Y}^N_T )
      \big\|_{ 
        L^{ 1 / r }( \Omega; \mathbb{R} ) 
      }
  }{
    \sqrt{ N^n }
  }
\leq
  \frac{ 
    2 \kappa \eta \,
      \big\|
        \mathbbm{1}_{ \Omega_N }
        \big| 
          1 + V( \bar{Y}^N_T ) 
        \big|^{ r }
      \big\|_{ 
        L^{ 1 / r }( \Omega; \mathbb{R} ) 
      }
  }{
    \sqrt{ N^n }
  }
\\ & =
  \frac{ 
    2 \kappa \eta 
    \left\{
      \mathbb{E}\big[
        \mathbbm{1}_{ \Omega_N }
        \big( 
          1 + V( \bar{Y}^N_T ) 
        \big)
      \big]
    \right\}^r
  }{
    N^{ n / 2 }
  }
\leq
  \frac{ 
    2 \kappa \eta \,
        \big( 
          1 + 
          \mathbb{E}\big[
            \mathbbm{1}_{ \Omega_N }
            V( \bar{Y}^N_T ) 
          \big]
        \big)^r
  }{
    N^{ n / 2 }
  }
\\ & \leq
  \frac{ 
    2 \kappa \eta 
        \left( 
          1 + 
          c
        \right)^r
  }{
    N^{ n / 2 }
  } 
\end{split}
\end{equation}
for all 
$ 
  N \in \{ M, M + 1, \dots \} 
$.
Therefore, we obtain
\begin{equation}
  \sup_{ N \in \{ M, M + 1, \dots \} }
  \left(
  N^{ \frac{ n }{ 2 } }
  \left\|
  \frac{ 
    \sum_{ m = 1 }^{ N^n }
    \big\{
      \mathbbm{1}_{ \Omega_{ N, m } }
      f( \bar{Y}^{ N, m}_T )
      -
      \mathbb{E}\big[
        \mathbbm{1}_{ \Omega_N }
        f( \bar{Y}^N_T )
      \big]
    \big\}
  }{
    N^n
  }
  \right\|_{ 
    L^{ 1 / r }( \Omega; \mathbb{R} ) 
  }
  \right)
  < \infty .
\end{equation}
The condition
$
  \frac{ n }{ 2 } > 
  r
$
and the second inequality
in \eqref{eq:almostsure2}
in Lemma~\ref{lem:almostsure}
hence yield
\begin{equation}
\label{eq:weakfinalONE}
  \lim_{ N \rightarrow \infty }
  \left(
  \frac{ 
    \sum_{ m = 1 }^{ N^n }
    \big\{
      \mathbbm{1}_{ \Omega_{ N, m } }
      f( \bar{Y}^{ N, m}_T )
      -
      \mathbb{E}\big[
        \mathbbm{1}_{ \Omega_N }
        f( \bar{Y}^N_T )
      \big]
    \big\}
  }{
    N^n
  }
  \right)
  = 0
\end{equation}
$ \mathbb{P} $-a.s..
Furthermore, observe that
\begin{equation}
\label{eq:weakfinalTWOa}
\begin{split}
&
  \lim_{ N \rightarrow \infty }
  \mathbbm{1}_{ 
    \left[
    \cup_{ K = N }^{ \infty }
    \cup_{ m = 1 }^{ K^n }
    ( \Omega_{ K, m } )^c 
    \right]
  }
  (\omega)
\\ & =
  \mathbbm{1}_{ 
    \left[
    \cap_{ N = 1 }^{ \infty }
    \cup_{ K = N }^{ \infty }
    \cup_{ m = 1 }^{ K^n }
    ( \Omega_{ K, m } )^c 
    \right]
  }
  (\omega)
 =
  \mathbbm{1}_{ 
    \left[
    \limsup_{ N \rightarrow \infty }
    \left(
    \cup_{ m = 1 }^{ N^n }
    ( \Omega_{ N, m } )^c 
    \right)
    \right]
  }
  (\omega)
\end{split}
\end{equation}
for all $ \omega \in \Omega $.
In addition, the condition
$ \alpha > n + 1 $
gives
\begin{equation}
  \sum_{ 
    N = M
  }^{ 
    \infty 
  }
  \mathbb{P}\!\left[
    \cup_{ m = 1 }^{ N^n }
    ( \Omega_{ N, m } )^c 
  \right]
\leq
  \sum_{ 
    N = M
  }^{ 
    \infty 
  }
  N^n \cdot
  \mathbb{P}\!\left[
    ( \Omega_{ N } )^c 
  \right]
\leq
  c
  \sum_{ N = M }^{ \infty }
  N^{ ( n - \alpha ) }
< \infty
\end{equation}
and the Borel-Cantelli Lemma
thus shows
\begin{equation}
\label{eq:weakfinalTWOb}
  \mathbb{P}\!\left[
    \limsup_{ N \rightarrow \infty }
    \left(
    \cup_{ m = 1 }^{ N^n }
    ( \Omega_{ N, m } )^c 
    \right)
  \right]
  = 0 .
\end{equation}
Combining 
\eqref{eq:weakfinalTWOa}
with the estimate
\begin{equation}
\begin{split}
&	
  \frac{
    \sum_{ m = 1 }^{ N^n }
    \mathbbm{1}_{ 
      ( \Omega_{ N, m } )^c 
    }
    |
      f( \bar{Y}^{ N, m }_T )
    |
  }{
    N^n
  }
\\ & \leq
  \frac{
    \left(
      \sum_{ m = 1 }^{ N^n }
      |
        f( \bar{Y}^{ N, m }_T )
      |
    \right)
    \left(
      \max_{ m \in \{ 1, 2, \dots, N^n \} }
      \mathbbm{1}_{ 
        ( \Omega_{ N, m } )^c 
      }
    \right)
  }{
    N^n
  }
\\ & \leq
  \frac{
    \left[
      \sum_{ m = 1 }^{ N^n }
      |
        f( \bar{Y}^{ N, m }_T )
      |
    \right]
    \mathbbm{1}_{ 
      \cup_{ K = N }^{ \infty }
      \cup_{ m = 1 }^{ K^n }
      ( \Omega_{ K, m } )^c 
    }
  }{
    N^n
  }
\end{split}
\end{equation}
for all $ N \in \mathbb{N} $ 
results in
\begin{equation}
  \lim_{ N \rightarrow \infty }
  \left(
  \frac{
    \sum_{ m = 1 }^{ N^n }
    \mathbbm{1}_{ 
      ( \Omega_{ N, m } )^c 
    }( \omega )
    \,
    |
      f( \bar{Y}^{ N, m }_T( \omega ) )
    |
  }{
    N^n
  }
  \right)
= 0
\end{equation}
for all
$
  \omega \in 
  \left[
    \cap_{ N = 1 }^{ \infty }
    \cup_{ K = N }^{ \infty }
    \cup_{ m = 1 }^{ K^n }
    ( \Omega_{ K, m } )^c
  \right]^c
  =
  \left[
    \limsup_{ N \to \infty }
    \left(
    \cup_{ m = 1 }^{ N^n }
      ( \Omega_{ N, m } )^c
    \right)
  \right]^c
$.
This and 
\eqref{eq:weakfinalTWOb}
then show
\begin{equation}
\label{eq:weakfinalTWO}
  \lim_{ N \rightarrow \infty }
  \left(
  \frac{
    \sum_{ m = 1 }^{ N^n }
    \mathbbm{1}_{ 
      ( \Omega_{ N, m } )^c 
    }
    |
      f( \bar{Y}^{ N, m }_T )
    |
  }{
    N^n
  }
  \right)
= 0
\end{equation}
$ \mathbb{P} $-a.s..
In the next step observe 
that the triangle inequality
implies
\begin{equation}
\label{eq:weakfinalTRIANGLE}
\begin{split}
&
  \left|
  \mathbb{E}\big[
    f( X_T )
  \big]
  -
  \frac{
    \sum_{ m = 1 }^{ N^n }
    f( \bar{Y}^{ N, m }_T )
  }{
    N^n
  }
  \right|
\\ & \leq
  \left|
  \mathbb{E}\big[
    f( X_T )
  \big]
  -
  \mathbb{E}\big[
    \mathbbm{1}_{ \Omega_N }
    f( X_T )
  \big]
  \right|
  +
  \left|
  \mathbb{E}\big[
    \mathbbm{1}_{ \Omega_N }
    f( X_T )
  \big]
  -
  \mathbb{E}\big[
    \mathbbm{1}_{ \Omega_N }
    f( \bar{Y}^N_T )
  \big]
  \right|
\\ & \quad +
  \left|
  \mathbb{E}\big[
    \mathbbm{1}_{ \Omega_N }
    f( \bar{Y}^N_T )
  \big]
  -
  \frac{
    \sum_{ m = 1 }^{ N^n }
    \mathbbm{1}_{ \Omega_{ N, m } }
    f( \bar{Y}^{ N, m }_T )
  }{
    N^n
  }
  \right|
  +
  \left|
  \frac{
    \sum_{ m = 1 }^{ N^n }
    \left(
      \mathbbm{1}_{ \Omega_{ N, m } }
      - 1 
    \right)
    f( \bar{Y}^{ N, m }_T )
  }{
    N^n
  }
  \right|
\\ & \leq
  \mathbb{E}\big[
    \mathbbm{1}_{ 
      \left( \Omega_N \right)^c 
    }
    |
      f( X_T )
    |
  \big]
  +
  \left|
  \mathbb{E}\big[
    \mathbbm{1}_{ \Omega_N }
    f( X_T )
  \big]
  -
  \mathbb{E}\big[
    \mathbbm{1}_{ \Omega_N }
    f( \bar{Y}^N_T )
  \big]
  \right|
\\ & \quad +
  \frac{ 
  \big|
    \!
    \sum_{ m = 1 }^{ N^n }
    \big\{
    \mathbbm{1}_{ \Omega_{ N, m } }
    f( \bar{Y}^{ N, m}_T )
    -
    \mathbb{E}\big[
      \mathbbm{1}_{ \Omega_N }
      f( \bar{Y}^N_T )
    \big]
    \big\}
  \big|
  }{
    N^n
  }
  +
  \frac{
    \sum_{ m = 1 }^{ N^n }
    \mathbbm{1}_{ 
      ( \Omega_{ N, m } )^c 
    }
    |
      f( \bar{Y}^{ N, m }_T )
    |
  }{
    N^n
  }
\end{split}
\end{equation}
for all $ N \in \mathbb{N} $.
Combining \eqref{eq:weakfinalZERO},
\eqref{eq:weakfinalONE}, 
\eqref{eq:weakfinalTWO}
and 
\eqref{eq:weakfinalTRIANGLE}
finally implies \eqref{eq:weakfinal}.
The proof of 
Proposition~\ref{p:weakconvergence}
is thus completed.
\end{proof}

\subsection{Convergence
of the Monte Carlo Euler method}
\label{sec:montecarloeuler}

Next we prove almost sure convergence of the Monte Carlo Euler method.
This result generalizes 
Theorem 2.1 in
\cite{hj11}
which assumes $\mu$ to be globally one-sided Lipschitz continuous,
$\sigma$ to be globally Lipschitz continuous
and $\mu$ and $\sigma$ to grow at most polynomially fast.
Corollary~\ref{cor:MCEmethod}
is a direct
consequence of
Theorem~\ref{thm:SVstability},
of Corollary~\ref{cor:FV}
and of
Proposition~\ref{p:weakconvergence}
and its proof is therefore
omitted.

\begin{cor}[Convergence
of the Monte Carlo Euler method]
\label{cor:MCEmethod}
Assume that the setting
in Section~\ref{sec:convergencesetting}
is fulfilled,
let
$
  p \in [3,\infty)
$,
$
  c,
  \gamma_0,
  \gamma_1
  \in [0,\infty)
$
be real numbers
with
$ 
    \gamma_1 + 
    2 ( \gamma_0 \vee \gamma_1 ) 
  < p/4
$,
let
$
  \bar{\mu} 
  \colon \mathbb{R}^d
  \rightarrow \mathbb{R}^d
$,
$
  \bar{\sigma} 
  \colon \mathbb{R}^d
  \rightarrow \mathbb{R}^{ d \times m }
$
be Borel measurable functions 
with 
$
  \bar{\mu}|_D = \mu
$,
$
  \bar{\sigma}|_D = \sigma
$
and
$
  \phi(x,t,y) =
  \bar{\mu}(x) t + 
  \bar{\sigma}(x) y 
$
for all $ x \in \mathbb{R}^d $,
$ t \in [0,T] $,
$ y \in \mathbb{R}^m $,
let
$ 
  V \in C^3_p( \mathbb{R}^d , [1,\infty) )
$
with
$
  \mathbb{E}[
    V( X_0 )
  ]
  < \infty
$
and with
\begin{equation*}
  ( \mathcal{G}_{ \bar{\mu}, 
    \bar{\sigma} } V)(x)
\leq
  c \cdot V(x) ,
\;
  \left\|
    \bar{\mu}(x)
  \right\|
  \leq
  c \,
  |
    V(x)
  |^{
    \left[
      \frac{ \gamma_0 + 1 }{ p }
    \right]
  } ,
\;
  \|
    \bar{\sigma}(x)
  \|_{
    L( \mathbb{R}^m, \mathbb{R}^d )
  }
  \leq
  c \,
  |
    V(x)
  |^{ 
    \left[
      \frac{ \gamma_1 + 2 }{ 2p } 
    \right]
  } 
\end{equation*}
for all $ x \in \mathbb{R}^d $
and for every $ N \in \mathbb{N} $
let
$
  \bar{Y}^{ N, m }
  \colon [0,T] \times
  \Omega \rightarrow \mathbb{R}
$,
$ m \in \mathbb{N} $,
be 
independent
stochastic processes with
$
  \mathbb{P}_{ \bar{Y}^N }
=
  \mathbb{P}_{ \bar{Y}^{N,m} }
$
for all $ m \in \mathbb{N} $.
Then 
\begin{equation}
  \lim_{ N \rightarrow \infty }
  \left|
  \mathbb{E}\big[
    f( X_T )
  \big]
  -
  \frac{
    \sum_{ m = 1 }^{ N^2 }
    f( \bar{Y}^{ N, m }_T )
  }{
    N^2
  }
  \right|
= 
  0 
\end{equation}
$ \mathbb{P} $-a.s.\ for 
all
continuous
functions 
$
  f \colon \mathbb{R}^d 
  \rightarrow \mathbb{R}
$
with
$
  \limsup_{ 
    r \nearrow 1 
  }
  \sup_{ x \in \mathbb{R}^d }
  \frac{
    | f(x) |
  }{
    ( 1 + V(x) )^r
  }
  < \infty
$.
\end{cor}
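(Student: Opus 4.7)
The plan is to realize Corollary~\ref{cor:MCEmethod} as a direct composition of three earlier results: $\alpha$-semi $V$-stability of the Euler-Maruyama scheme (Theorem~\ref{thm:SVstability}), transfer of that stability to semi $V$-boundedness of the discrete-time approximation sequence (Corollary~\ref{cor:Semi.Stability}, i.e., Corollary~\ref{cor:FV}), and the Borel--Cantelli plus Burkholder--Davis--Gundy argument for Monte Carlo averages (Proposition~\ref{p:weakconvergence}). All bookkeeping will consist of verifying that the numerical parameters match up.

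First, I would verify the hypotheses of Proposition~\ref{p:weakconvergence}. The increment function $\phi(x,t,y)=\bar{\mu}(x)t+\bar{\sigma}(x)y$ is the Euler--Maruyama increment, and since $\bar{\mu}|_D=\mu$ and $\bar{\sigma}|_D=\sigma$, for any $x\in D$ one has $\sigma(x)W_t-\phi(x,t,W_t)=-\mu(x)t$ and $\mathbb{E}[\phi(x,t,W_t)]=\mu(x)t$ exactly, so both \eqref{ass:consistency_1} and \eqref{ass:consistency_2} in Definition~\ref{def:consistent} hold trivially on any compact $K\subset D$. Hence $\phi$ is $(\mu,\sigma)$-consistent with respect to Brownian motion. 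Moreover, because $V\in C^3_p(\mathbb{R}^d,[1,\infty))$ it is in particular continuous, and $\mathbb{E}[V(X_0)]<\infty$ is part of the hypotheses.

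Next, I would apply Theorem~\ref{thm:SVstability} to the Euler--Maruyama scheme $(x,t,y)\mapsto x+\bar{\mu}(x)t+\bar{\sigma}(x)y$. The growth bounds on $\bar{\mu}$, $\bar{\sigma}$ and the Lyapunov-type inequality $(\mathcal{G}_{\bar{\mu},\bar{\sigma}}V)(x)\le c\cdot V(x)$ are exactly the assumptions of that theorem, so the scheme is $\alpha$-semi $V$-stable with respect to Brownian motion for
\begin{equation*}
  \alpha=\frac{p}{\gamma_1+2(\gamma_0\vee\gamma_1)}.
\end{equation*}
The standing assumption $\gamma_1+2(\gamma_0\vee\gamma_1)<p/4$ gives $\alpha>4$. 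Applying Corollary~\ref{cor:Semi.Stability} to the Euler--Maruyama one-step function (with $\mathbb{E}[V(\bar{Y}^N_0)]=\mathbb{E}[V(X_0)]<\infty$ uniformly in $N$) then shows that the sequence $(\bar{Y}^N_{kT/N})_{k\in\{0,1,\dots,N\}}$, $N\in\mathbb{N}$, is $(\alpha-1)$-semi $V$-bounded; crucially, $\alpha-1>3$.

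Finally, I would apply Proposition~\ref{p:weakconvergence} with $n=2$. The required hypothesis $\alpha-1>n+1=3$ is exactly what the assumption $\gamma_1+2(\gamma_0\vee\gamma_1)<p/4$ delivers, the admissible growth rate for test functions becomes $(n\wedge 2)/2=1$, matching the statement's condition $\limsup_{r\nearrow 1}\sup_{x}|f(x)|/(1+V(x))^r<\infty$, and the $i.i.d.$ copies $\bar{Y}^{N,m}$ with the prescribed law are precisely the input of that proposition. This yields the desired $\mathbb{P}$-almost sure convergence of the Monte Carlo Euler estimator based on $N^2$ samples, completing the proof. There is no genuine obstacle here beyond tracking the parameter inequalities; the only point that deserves care is confirming that the threshold $p/4$ in the hypothesis is exactly what is needed to obtain semi $V$-boundedness of order strictly greater than $3=n+1$ for $n=2$, since Proposition~\ref{p:weakconvergence} requires a strict inequality.
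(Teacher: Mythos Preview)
Your proof is correct and follows precisely the route the paper indicates: the paper states that Corollary~\ref{cor:MCEmethod} is a direct consequence of Theorem~\ref{thm:SVstability}, Corollary~\ref{cor:FV}, and Proposition~\ref{p:weakconvergence}, and omits the details. Your write-up supplies exactly those details, including the parameter bookkeeping showing that $\alpha=p/(\gamma_1+2(\gamma_0\vee\gamma_1))>4$ yields $(\alpha-1)$-semi $V$-boundedness with $\alpha-1>3=n+1$ for $n=2$, which is what Proposition~\ref{p:weakconvergence} needs.
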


More results for approximating
statistical quantities of solutions
of SDEs with non-globally Lipschitz
continuous coefficients can, e.g.,
be found in 
Yan~\cite{y02},
Milstein 
\citationand\ Tretyakov~\cite{mt05},
D\"{o}rsek~\cite{Doersek2011}
and in the references
therein.

\section{Numerical
schemes for SDEs}
\label{sec:schemes}

The purpose of this 
section is to present a few examples
of numerical schemes which are
$ ( \mu, \sigma ) $-consistent
with respect to Brownian motion.
Theorem~\ref{thm:convergence} 
then shows
that the approximation
processes~\eqref{eq:recX} 
corresponding to such schemes
converge in probability to the exact
solution of the SDE~\eqref{eq:SDE}.
We now go into detail and
describe the setting that we use
in this section.
Throughout this section,
assume that the setting
in Section~\ref{sec:convergencesetting}
is fulfilled and
let 
$ \bar{\mu} \colon \mathbb{R}^d \rightarrow \mathbb{R}^d $ and 
$ \bar{\sigma} 
\colon \mathbb{R}^d \rightarrow \mathbb{R}^{ m \times d } 
$ 
be two arbitrary Borel 
measurable functions 
satisfying 
\begin{equation}
  \bar{\mu}( x ) = \mu( x )
\qquad
  \text{and}
\qquad 
  \bar{\sigma}( x ) = \sigma( x ) 
\end{equation}
for all $ x \in D $.
Moreover, define 
mappings 
$ Y_n^N 
\colon \Omega \rightarrow 
\mathbb{R}^d $, 	
$ n \in \{ 0, 1, \ldots, N \} $, 
$ N \in \mathbb{N} $, by 
$ Y_n^N := 
\bar{Y}_{ n T / N }^N $ 
for all $ n \in \{ 0, 1, \ldots, N \} $ 
and all $ N \in \mathbb{N} $
and define mappings
$ 
  \Delta W_n^N \colon 
  \Omega \rightarrow \mathbb{R}^m 
$, 
$ n \in  \{ 0, 1, \ldots, N-1 \} $, 
$ N \in \mathbb{N} $, 
by 
$ 
  \Delta W_n^N 
  := 
  W_{ (n+1) T / N } - 
  W_{ n T / N } 
$ 
for all 
$ n \in \{ 0, 1, \ldots N -1 \} $ 
and all 
$ N \in \mathbb{N} $. 
Using this notation,
we get from \eqref{eq:recX} that
\begin{equation}
\label{eq:schemeXX}
  Y^N_{ n + 1 }
  =
  Y^N_{ n }
  +
  \phi\big(
    Y^N_{ n },
    \tfrac{ T }{ N },
    \Delta W^N_n
  \big)
\end{equation}
for all $ n \in \{ 0, 1, \ldots, N - 1 \} $ 
and all $ N \in \mathbb{N} $. 
The following subsections provide
examples 
of Borel measurable 
functions
$
  \phi \colon \mathbb{R}^d
  \times [0,T] \times \mathbb{R}^m
  \rightarrow \mathbb{R}^d
$
and numerical approximation
schemes
of the form \eqref{eq:schemeXX},
respectively,
which are 
$ (\mu,\sigma) 
$-consistent with respect to Brownian motion.

\subsection{A few Euler-type 
schemes for SDEs}

Let 
$ 
  \eta_0 \colon 
  \mathbb{R}^d \times [0,T] \times 
  \mathbb{R}^m \rightarrow \mathbb{R}^d 
$, 
$ 
  \eta_1 \colon \mathbb{R}^d 
  \times [0,T] \times \mathbb{R}^m 
  \rightarrow \mathbb{R}^{d \times d} 
$ 
and 
$ 
  \eta_2 \colon \mathbb{R}^d \times 
  [0,T] \times \mathbb{R}^m \rightarrow 
  \mathbb{R}^{d \times d} 
$ 
be Borel measurable functions. 
The next lemma then gives
sufficient conditions to
ensure that
schemes of the form
\begin{equation}
\label{eq:simple_scheme_1}
\begin{split}
  Y_{ n + 1 }^N
& =
  Y_n^N
  +
  \eta_0\!\left(
    Y_n^N, \tfrac{T}{N}, \Delta W_n^N
  \right)
  +
  \eta_1\!\left(
    Y_n^N, \tfrac{T}{N}, \Delta W_n^N
  \right)
  \bar{\mu}( Y_n^N )
  \, \tfrac{T}{N}
\\ & \quad
  +
  \eta_2\!\left(
    Y_n^N, \tfrac{T}{N}, \Delta W_n^N
  \right)
  \bar{\sigma}( Y_n^N ) \,
  \Delta W_n^N
\end{split}
\end{equation}
for all $ n \in \{ 0, 1, \ldots, N-1 \} $ 
and all $ N \in \mathbb{N} $
are 
$ \left( \mu, \sigma \right) 
$-consistent with respect to Brownian motion.

\begin{lemma}
\label{lem:eta_012}
Assume that the setting
in Section~\ref{sec:convergencesetting}
is fulfilled, let 
$ 
  \bar{\mu} \colon \mathbb{R}^d 
  \rightarrow \mathbb{R}^d 
$,
$ \bar{\sigma} 
\colon \mathbb{R}^d \rightarrow \mathbb{R}^{ m \times d } 
$, 
$ 
  \eta_1, \eta_2 \colon \mathbb{R}^d 
  \times [0,T] \times \mathbb{R}^m 
  \rightarrow \mathbb{R}^{d \times d} 
$ 
and 
$ 
  \eta_0 \colon 
  \mathbb{R}^d \times [0,T] \times 
  \mathbb{R}^m \rightarrow \mathbb{R}^d 
$
be Borel measurable functions 
with $ \bar{\mu}|_D = \mu $
and $ \bar{\sigma}|_D = \sigma $
and assume that
\begin{equation}
\label{eq_prop_0A}
  \limsup_{ t \searrow 0 }
  \left(
    \sup_{ x \in K }
    \mathbb{E}\big[
      \left\| 
        \eta_2( x, t, W_t ) 
        \sigma( x ) W_t 
      \right\|
    \big]
  \right)
  < \infty ,
\end{equation}
\begin{equation}
\label{eq_prop_1A}
  \limsup_{ t \searrow 0 }
  \left(
    \sup_{ x \in K }
    \mathbb{E}\Big[
      \left\| 
        \eta_1( x, t, W_t ) - I 
      \right\|_{ L( \mathbb{R}^d ) }
      +
      \left\| 
        \eta_2( x, t, W_t ) - I 
      \right\|_{ L( \mathbb{R}^d ) 
      }^2
    \Big]
  \right)
  = 0,
\end{equation}
\begin{equation}
\label{eq_prop_2A}
  \limsup_{ t \searrow 0 }\left(
    \frac{1}{\sqrt{t}}
    \cdot
    \sup_{ x \in K }
    \mathbb{E}\big[
      \left\| 
        \eta_0( x, t, W_t ) 
      \right\|
    \big]
  \right)
  =
  \limsup_{ t \searrow 0 }\left(
    \frac{1}{t}
    \cdot
    \sup_{ x \in K }
    \big\|
    \mathbb{E}\big[
      \eta_0( x, t, W_t ) 
    \big]
    \big\|
  \right)
  = 0,
\end{equation}
\begin{equation}
\label{eq_prop_4A}
  \limsup_{ t \searrow 0 }\left(
    \frac{1}{t}
    \cdot
    \sup_{ x \in K }
    \big\|
    \mathbb{E}\big[
      \eta_2( x, t, W_t ) \sigma( x ) W_t 
    \big]
    \big\|
  \right)
  = 0
\end{equation}
for all non-empty compact sets $ K \subset D $. 
Then 
$ 
  \phi \colon 
  \mathbb{R}^d \times [0,T] 
  \times \mathbb{R}^m 
  \rightarrow \mathbb{R}^d 
$ 
given by
\begin{equation}
  \phi( x, t, y ) = 
  \eta_0( x, t, y ) + 
  \eta_1( x, t, y ) \bar{\mu}( x ) t 
  + \eta_2( x, t, y) \bar{\sigma}( x ) y 
\end{equation}
for all 
$ x \in \mathbb{R}^d $, 
$ t \in [0,T] $,
$ y \in \mathbb{R}^m $ 
is 
$ (\mu, \sigma) $-consistent with respect to Brownian motion.
\end{lemma}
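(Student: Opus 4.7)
The plan is to verify the two consistency conditions in Definition~\ref{def:consistent} directly by splitting
\[
  \sigma(x)W_t - \phi(x,t,W_t)
  =
  -\eta_0(x,t,W_t)
  - \eta_1(x,t,W_t)\mu(x)t
  + \bigl(I - \eta_2(x,t,W_t)\bigr)\sigma(x)W_t
\]
for $x \in K$ (using $\bar{\mu}|_D=\mu$ and $\bar{\sigma}|_D=\sigma$) and analogously
\[
  \mu(x) - \tfrac{1}{t}\mathbb{E}\bigl[\phi(x,t,W_t)\bigr]
  =
  \bigl(I - \mathbb{E}[\eta_1(x,t,W_t)]\bigr)\mu(x)
  - \tfrac{1}{t}\mathbb{E}[\eta_0(x,t,W_t)]
  - \tfrac{1}{t}\mathbb{E}[\eta_2(x,t,W_t)\sigma(x)W_t],
\]
and then estimating each of the three pieces separately, uniformly over a fixed compact $K \subset D$. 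Throughout, I will use that the locally Lipschitz continuous functions $\mu$ and $\sigma$ are bounded on $K$.

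First I would treat \eqref{ass:consistency_1}. Dividing by $\sqrt{t}$ and applying the triangle inequality reduces the task to bounding three terms. The $\eta_0$-term is handled directly by the first identity in \eqref{eq_prop_2A}. The $\eta_1$-term is bounded by $\sqrt{t}\,\|\mu(x)\|\,\bigl(\mathbb{E}[\|\eta_1(x,t,W_t)-I\|]+1\bigr)$, which tends to $0$ uniformly on $K$ by \eqref{eq_prop_1A}. For the $(I-\eta_2)$-term, Cauchy--Schwarz gives
\[
  \tfrac{1}{\sqrt{t}}\,\mathbb{E}\bigl[\|(I-\eta_2(x,t,W_t))\sigma(x)W_t\|\bigr]
  \leq
  \bigl(\mathbb{E}[\|I-\eta_2(x,t,W_t)\|^2_{L(\R^d)}]\bigr)^{1/2}
  \cdot
  \|\sigma(x)\|_{HS(\R^m,\R^d)},
\]
which tends to $0$ uniformly on $K$ by the second contribution in \eqref{eq_prop_1A}. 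Assumption \eqref{eq_prop_0A} ensures that the expectation $\mathbb{E}[\eta_2(x,t,W_t)\sigma(x)W_t]$ is finite, so the splitting is justified.

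For \eqref{ass:consistency_2}, the first piece is bounded by $\mathbb{E}[\|\eta_1(x,t,W_t)-I\|]\cdot\|\mu(x)\|$ (using $\|\mathbb{E}[Z]\|\le \mathbb{E}[\|Z\|]$) and vanishes uniformly on $K$ by \eqref{eq_prop_1A}; the second piece vanishes by the second identity in \eqref{eq_prop_2A}; and the third vanishes by \eqref{eq_prop_4A}. Combining these with Lemma~\ref{lem:consistent} (or just checking the definition directly) yields that $\phi$ is $(\mu,\sigma)$-consistent with respect to Brownian motion.

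No step is really a deep obstacle here; the only technical care needed is in the treatment of the $(I-\eta_2)\sigma(x)W_t$ contribution, where a naive triangle-inequality estimate of the form $\mathbb{E}[\|I-\eta_2\|\cdot \|\sigma(x)W_t\|]$ combined with $\mathbb{E}[\|W_t\|]=O(\sqrt{t})$ is not enough, because $\eta_2$ and $W_t$ are correlated; this forces the use of a Cauchy--Schwarz type estimate and explains why \eqref{eq_prop_1A} imposes an $L^2$-condition on $\eta_2-I$ rather than just an $L^1$-condition. Everything else is uniform boundedness of $\mu, \sigma$ on the compact set $K$ together with direct application of the hypotheses \eqref{eq_prop_0A}--\eqref{eq_prop_4A}.
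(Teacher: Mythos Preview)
Your proposal is correct and follows essentially the same approach as the paper: both proofs verify \eqref{ass:consistency_1} and \eqref{ass:consistency_2} directly via the three-way splitting you describe, bounding the $\eta_0$-term by \eqref{eq_prop_2A}, the $\eta_1$-term using boundedness of $\mu$ on $K$ together with \eqref{eq_prop_1A}, and the $(I-\eta_2)$-term via Cauchy--Schwarz (H\"older in the paper's wording) combined with the $L^2$-condition on $\eta_2-I$ in \eqref{eq_prop_1A}. Your remark explaining why the $L^2$-condition on $\eta_2-I$ is needed is accurate and matches the structure of the paper's estimate.
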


\begin{proof}[Proof of Lemma~\ref{lem:eta_012}]
The triangle inequality 
and the H{\"o}lder inequality give
\begin{equation}
\label{eq:Z}
\begin{split}
  & \frac{ 1 }{ \sqrt{ t } }
  \cdot 
  \sup_{ x \in K }
  \mathbb{E}\big[
    \|
      \sigma( x ) W_t - 
      \phi( x, t, W_t )
    \|
  \big]
\\ & \leq
  \frac{ 1 }{ \sqrt{t} }
  \cdot 
  \sup_{ x \in K }
  \mathbb{E}\big[
    \|
      \eta_0( x, t, W_t )
    \|
  \big]
  +
  \frac{ 1 }{ \sqrt{t} }
  \cdot 
  \sup_{ x \in K }
  \mathbb{E}\big[
    \|
      \eta_1( x, t, W_t ) \mu( x ) t
    \|
  \big]
\\ & \quad +
  \frac{ 1 }{ \sqrt{t} }
  \cdot 
  \sup_{ x \in K }
  \E\big[
    \|
      ( I - \eta_2( x, t, W_t ) ) 
      \sigma( x ) W_t
     \|
  \big]
\\ & \leq
  \frac{ 1 }{ \sqrt{t} }
  \cdot 
  \sup_{ x \in K }
  \mathbb{E}\big[
    \| \eta_0( x, t, W_t ) \|
  \big]
  +
  \sqrt{ t }
  \left(
    \sup_{ x \in K }
    \mathbb{E}\big[
      \| 
        \eta_1( x, t, W_t ) 
      \|_{
        L( \mathbb{R}^d )
      }
    \big]
  \right)
  \left(
    \sup_{ x \in K }
    \left\| \mu( x ) \right\|
  \right)
\\ & \quad+
  \frac{
    \| W_t \|_{
      L^2( \Omega; \R^m )
    }
  }{ 
    \sqrt{ t }
  }
  \left(
    \sup_{ x \in K }
    \| 
      \eta_2( x, t, W_t ) - I 
    \|_{ 
      L^2( 
        \Omega; 
        L(\mathbb{R}^d) 
      ) 
    }
  \right)
  \left(
    \sup_{ x \in K }
    \| \sigma( x ) 
    \|_{
      L( 
        \mathbb{R}^m, 
        \mathbb{R}^d 
      )
    }
  \right)
\end{split}
\end{equation}
for all $ t \in (0, T] $ 
and all non-empty compact sets $ K \subset D $. 
Combining \eqref{eq:Z},
\eqref{eq_prop_1A} and 
\eqref{eq_prop_2A} then
shows~\eqref{ass:consistency_1}. 
It thus remains to
establish~\eqref{ass:consistency_2}
in order to complete the proof of 
Lemma~\ref{lem:eta_012}. 
To this end note that
\begin{equation}
\label{eq:W}
\begin{split}
&
  \limsup_{ t \searrow 0 }
  \left(
  \sup_{ x \in K }
  \left\|
    \mu( x )
    -
    \tfrac{ 1 }{ t }
    \cdot
    \mathbb{E}\big[
      \phi( x, t, W_t )
    \big]
  \right\|
  \right)
\\
& \leq
  \limsup_{ t \searrow 0 }
  \left(
  \tfrac{1}{t}
  \cdot
  \sup_{ x \in K }
  \big\|
    \mathbb{E}\big[
      \eta_0( x, t, W_t )
    \big]
  \big\|
  \right)
\\ & \quad
  +
  \left(
    \limsup_{ t \searrow 0 }
    \sup_{ x \in K }
    \mathbb{E}\big[
      \| \eta_1( x, t, W_t ) - I \|_{
        L( \mathbb{R}^d )
      }
    \big]
  \right)
  \left(
    \sup_{ x \in K }
    \left\| \mu( x ) \right\|
  \right)
\\ & \quad
  +
  \limsup_{ t \searrow 0 }
  \left(
  \tfrac{1}{t}
  \cdot 
  \sup_{ x \in K }
  \big\|
    \mathbb{E}\big[
      \eta_2( x, t, W_t ) \sigma( x ) W_t
    \big]
  \big\|
  \right)
\end{split}
\end{equation}
for all non-empty
compact sets $ K \subset D $. 
Inequality~\eqref{eq:W} 
and equations~\eqref{eq_prop_1A}, 
\eqref{eq_prop_2A} 
and~\eqref{eq_prop_4A} then
show~\eqref{ass:consistency_2}. 
This completes the proof of Lemma~\ref{lem:eta_012}.
\end{proof}

In the remainder of this
subsection,
a few numerical 
schemes of the form 
\eqref{eq:simple_scheme_1}
are presented.

\medskip

\paragraph{{\bf The Euler-Maruyama scheme}}

In the case 
$ \eta_0( x, t, y ) = 0 $,
$ \eta_1( x, t, y ) = I$
and
$\eta_2( x, t, y ) = I $ 
for all $ x \in \mathbb{R}^d $, 
$ t \in [0,T] $ and all 
$ y \in \mathbb{R}^m $,
the 
numerical 
scheme~\eqref{eq:simple_scheme_1} 
is the well-known 
Euler-Maruyama 
scheme
\begin{equation}
\label{eq:EulerMaruyama}
  Y_{n+1}^N
  =
  Y_n^N
  +
  \bar{\mu}( Y_n^N )
  \tfrac{T}{N}
  +
  \bar{\sigma}( Y_n^N )
  \Delta W_n^N
\end{equation}
for all $ n \in \{ 0, 1, \ldots, N-1 \} $ 
and all $ N \in \mathbb{N} $
(see Maruyama~\cite{m55}). 
Of course, this choice satisfies the assumptions 
of 
Lemma~\ref{lem:eta_012}. %
Combining 
Lemma~\ref{lem:eta_012} 
and Theorem~\ref{thm:convergence}
thus shows that the 
Euler-Maruyama
scheme~\eqref{eq:EulerMaruyama} 
converges in probability
to the exact solution of the
SDE~\eqref{eq:SDE}.
In the literature convergence
in probability and also
pathwise convergence
of the Euler-Maruyama scheme
has already been proved even in a
more general setting than
the setting considered here;
see, e.g., 
Krylov~\cite{Krylov1990}
and
Gy\"{o}ngy~\cite{g98b}.
Strong convergence of the Euler-Maruyama
scheme, however, often fails to
hold if the coefficients 
$ \mu $ and $ \sigma $
of 
the SDE~\eqref{eq:SDE} grow
more than linearly (see \cite{hjk11}) 
and therefore, we are interested
in appropriately modified
Euler-Maruyama schemes 
which are truncated or tamed
in a suitable way and which 
therefore
do converge strongly even
for SDEs with superlinearly growing
coefficients.
Although strong convergence
fails to hold, the corresponding
Monte Carlo Euler method
does converge with probability
one for a large class
of SDEs with possibly
superlinearly growing
coefficients; see 
Corollary~\ref{cor:MCEmethod}
above.

\medskip

\paragraph{{\bf A drift-truncated Euler scheme}}
In the case 
$ \eta_0( x, t, y ) = 0 $,
$   
  \eta_1( x, t, y ) = 
  \frac{ 1 }{
    \max\left(
      1, t \left\| \bar{\mu}(x) \right\|
    \right)
  } 
  I
$
and
$ \eta_2( x, t, y ) = I $ 
for all $ x \in \mathbb{R}^d $, 
$ t \in [0,T] $ and all 
$ y \in \mathbb{R}^m $, 
the numerical
scheme~\eqref{eq:simple_scheme_1} 
reads as
\begin{equation}
\label{eq:truncated}
  Y_{n+1}^N
  =
  Y_n^N
  +
  \frac{
    \bar{\mu}( Y_n^N )
    \tfrac{T}{N}
  }{
    \max\!\left(
      1, \frac{ T }{ N }
      \left\|
        \bar{\mu}( Y_n^N )
      \right\|
    \right)
  }
  +
  \bar{\sigma}( Y_n^N )
  \Delta W_n^N
\end{equation}
for all $ n \in \{ 0, 1, \ldots, N-1 \} $ 
and all $ N \in \mathbb{N} $. 
In the case where the drift
$ \bar{\mu} $ in \eqref{eq:truncated}
is of gradient type and the noise
is additive,
i.e., 
$ \bar{\sigma}(x) = \bar{\sigma}(0) $
and
$
  \bar{\mu}(x) = - (\nabla U)(x)
$
for all $ x \in \mathbb{R}^d $
and some 
appropriately smooth 
function
$ 
  U \colon \mathbb{R}^d
  \rightarrow \mathbb{R} 
$,
the scheme \eqref{eq:truncated}
has been used as proposal 
for the 
Metropolis-Adjusted-Truncated-Langevin-Algorithm 
(MATLA; see Roberts \citationand\ 
Tweedie~\cite{rt96}).
The choice
in \eqref{eq:truncated}
also 
satisfies the assumptions of 
Lemma~\ref{lem:eta_012}.
Combining 
Lemma~\ref{lem:eta_012} 
and Theorem~\ref{thm:convergence}
hence proves that the 
drift-truncated Euler 
scheme~\eqref{eq:truncated} 
converges in probability
to the exact solution of the
SDE~\eqref{eq:SDE}.
If the diffusion coefficient
$ 
  \bar{\sigma} 
$ in \eqref{eq:truncated}
grows at most linearly, then
moment bounds and
strong convergence of the
drift-truncated Euler 
scheme~\eqref{eq:truncated}
can be
studied by combining
Theorem~\ref{thm:SVstability},
Lemma~\ref{lem:compareSVstability},
Corollary~\ref{cor:Semi.Stability},
Lemma~\ref{lem:eta_012}
and 
Corollary~\ref{cor:strongConvergence}.
More precisely,
Theorem~\ref{thm:SVstability} 
and Lemma~\ref{lem:compareSVstability}
can be used to prove,
under suitable assumptions
on $ \bar{\mu} $
and $ \bar{\sigma} $
(see Theorem~\ref{thm:SVstability} 
for more details), 
that the drift-truncated 
Euler 
scheme~\eqref{eq:truncated}
is
$ \alpha $-semi $ V $-stable
with respect to Brownian motion
with $ \alpha \in (0,\infty) $
and
$ 
  V \colon \mathbb{R}^d
  \rightarrow [0,\infty)
$
appropriate.
Combining this and
Corollary~\ref{cor:strongConvergence}
with
the fact
that \eqref{eq:truncated} is
$ (\mu, \sigma) $-consistent with respect to Brownian motion
according
to Lemma~\ref{lem:eta_012} 
finally proves,
under the additional assumption
that $ \bar{\sigma} $ grows 
at most linearly, 
strong convergence of the
drift-truncated Euler 
scheme~\eqref{eq:truncated}.

\medskip

\paragraph{{\bf A drift-tamed Euler scheme}}
A slightly different variant of 
the drift-trun\-cated Euler 
scheme \eqref{eq:truncated}
is the drift-tamed Euler-type 
method 
considered in \cite{hjk10b}.
More precisely,
in the case 
$ \eta_0( x, t, y ) = 0 $,
$   
  \eta_1( x, t, y ) = 
  \frac{ 1 }{
    1 + t \left\| \bar{\mu}(x) \right\|
  } 
  I
$
and
$ \eta_2( x, t, y ) = I $ 
for all $ x \in \mathbb{R}^d $, 
$ t \in [0,T] $ and all 
$ y \in \mathbb{R}^m $,
the numerical
scheme~\eqref{eq:simple_scheme_1} 
reads as
\begin{equation}
\label{eq:tamed}
  Y_{n+1}^N
  =
  Y_n^N
  +
  \frac{
    \bar{\mu}( Y_n^N )
    \tfrac{T}{N}
  }{
      1 + \frac{ T }{ N }
      \left\|
        \bar{\mu}( Y_n^N )
      \right\|
  }
  +
  \bar{\sigma}( Y_n^N )
  \Delta W_n^N
\end{equation}
for all $ n \in \{ 0, 1, \ldots, N-1 \} $ 
and all $ N \in \mathbb{N} $. 
If $ D = \mathbb{R}^d $,
if $ \sigma $ is globally Lipschitz 
continuous
and if $ \mu $ is continuously
differentiable and
globally one-sided
Lipschitz continuous with an 
at most polynomially growing 
derivative, then strong convergence
of the drift-tamed Euler
scheme~\eqref{eq:tamed}
with the standard rate 
$ \frac{ 1 }{ 2 } $
has been proved in \cite{hjk10b}.
Moment bounds and
strong convergence of the
drift-tamed Euler 
scheme~\eqref{eq:tamed} 
in a more general setting
can be obtained in precisely the
same way as illustated
for the drift-truncated
Euler
scheme~\eqref{eq:truncated}.

\medskip

\paragraph{{\bf The Milstein 
scheme}}

In addition to the setting 
described in the beginning of
Section~\ref{sec:schemes},
assume in this paragraph 
that 
$ 
  \bar{\sigma} 
  =
  ( \bar{\sigma}_{ i, j } )_{
    i \in \{ 1, \dots, d \},
    j \in \{ 1, \dots, m \}
  }
  =
  ( \bar{\sigma}_{ j } )_{
    j \in \{ 1, \dots, m \}
  }
  \colon 
  \mathbb{R}^d
  \rightarrow 
  \mathbb{R}^{ d \times m }
$
is continuously differentiable.
In the case 
$ 
  \eta_1( x, t, y ) = 
  \eta_2( x, t, y ) = I 
$ 
and
\begin{equation}
\begin{split}
&
  \eta_0( x, t, y ) 
\\ & =  
  \frac{ 1 }{ 2 }
  \sum_{ k = 1 }^{ d }
  \sum_{ i, j = 1 }^{ m }
  \big(
    \tfrac{ \partial }{ \partial x_k }
    \bar{\sigma}_i
  \big)( x )  
  \cdot
  \bar{\sigma}_{ k , j }( x )
  \cdot
  y_i
  \cdot
  y_j
  -
  \frac{ t }{ 2 }
  \sum_{ k = 1 }^{ d }
  \sum_{ i = 1 }^{ m }
  \big(
    \tfrac{ \partial }{ \partial x_k }
    \bar{\sigma}_i
  \big)( x )  
  \cdot
  \bar{\sigma}_{ k , i }( x )
\end{split}
\end{equation}
for all 
$ 
  x = (x_1, \dots, x_d) 
  \in \mathbb{R}^d 
$, 
$ t \in [0,T] $ and all 
$ 
  y = (y_1, \dots, y_m) 
  \in \mathbb{R}^m 
$,
the numerical 
scheme~\eqref{eq:simple_scheme_1} 
reads as
\begin{equation}
\label{eq:milstein}
\begin{split}
  Y_{n+1}^N
& =
  Y_n^N
  +
  \bar{\mu}( Y_n^N )
  \tfrac{T}{N}
  +
  \bar{\sigma}( Y_n^N )
  \Delta W_n^N
\\ & \quad
  +
  \frac{ 1 }{ 2 }
  \sum_{ k = 1 }^{ d }
  \sum_{ i, j = 1 }^{ m }
  \big(
    \tfrac{ \partial }{ \partial x_k }
    \bar{\sigma}_i
  \big)( Y^N_n )  
  \cdot
  \bar{\sigma}_{ k , j }( Y^N_n )
  \cdot
  \Delta W^{ N, i }_n
  \cdot
  \Delta W^{ N, j }_n
\\ & \quad
  -
  \frac{ T }{ 2 N }
  \sum_{ k = 1 }^{ d }
  \sum_{ i = 1 }^{ m }
  \big(
    \tfrac{ \partial }{ \partial x_k }
    \bar{\sigma}_i
  \big)( Y^N_n )  
  \cdot
  \sigma_{ k , i }( Y^N_n )
\end{split}
\end{equation}
for all $ n \in \{ 0, 1, \ldots, N-1 \} $ 
and all $ N \in \mathbb{N} $
where
$
  ( 
    \Delta W^{N,1}_n, \dots, 
    \Delta W^{N,m}_n 
  )
  = \Delta W^N_n
$
for all 
$ n \in \{ 0, 1, \dots, N - 1 \} $
and all $ N \in \mathbb{N} $.
This choice satisfies the assumptions 
of Lemma~\ref{lem:eta_012}.
Moreover, if the commutativity 
condition
(see, e.g., 
(3.13) in Section~10.3 in
Kloeden \citationand\ Platen~\cite{kp92})
\begin{equation}
\label{eq:commutativity}
  \sum_{ k = 1 }^{ d }
  \big(
    \tfrac{ \partial }{ \partial x_k }
    \bar{\sigma}_i
  \big)( x )  
  \cdot
  \bar{\sigma}_{ k , j }( x )
=
  \sum_{ k = 1 }^{ d }
  \big(
    \tfrac{ \partial }{ \partial x_k }
    \bar{\sigma}_j
  \big)( x )  
  \cdot
  \bar{\sigma}_{ k , i }( x )
\end{equation}
for all 
$ 
  x 
  \in \mathbb{R}^d 
$ 
and all
$ i, j \in \{ 1, 2, \dots, m \} $
is fulfilled,
then the scheme
\eqref{eq:milstein} is
nothing else but
the well-known
Milstein scheme
(see Milstein~\cite{m74}
or, e.g., 
(3.16) in Section~10.3
in Kloeden \citationand\ Platen~\cite{kp92}).
Combining 
Lemma~\ref{lem:eta_012} 
and Theorem~\ref{thm:convergence}
thus shows that 
the 
scheme~\eqref{eq:milstein} 
converges in probability
to the exact solution of the
SDE~\eqref{eq:SDE}.
In the literature,
almost sure convergence
with rate $ 1 - \varepsilon $
for $ \varepsilon \in (0,1) $
arbitrarily small 
and thus also convergence 
in probability of the Milstein 
scheme has already been 
proved 
in \cite{jkn09a}.
However, as in the case of 
the Euler-Maruyama
scheme (see \eqref{eq:EulerMaruyama}),
strong convergence of the 
Milstein scheme often fails 
to hold if at least one of the 
coefficients 
$ \mu $ and $ \sigma $
of the SDE~\eqref{eq:SDE} grows
more than linearly 
(see \cite{hjk11}).
Nonetheless, if the
drift term is tamed
appropriately as in
\eqref{eq:tamed},
then strong convergence
of the corresponding
drift-tamed Milstein 
scheme
has been established
in Gang \citationand\ Wang~\cite{gw11}
for a class of SDEs with
possibly superlinearly
growing drift coefficients.

\medskip

\paragraph{{\bf Balanced implicit 
methods}}

Milstein, Platen and Schurz~\cite{mps98}
introduced the following class
of balanced implicit methods. 
Let 
$
  c_0,c_1,\ldots,c_m
  \colon 
  \R^d \to 
  \R^{d\times d}
$
be Borel measurable functions such that the matrix
\begin{equation}
  I + c_0(x) t + 
  \sum_{j=1}^m 
  c_j(x) |y_j| 
  \in 
  \R^{ d \times d }
\end{equation}
is invertible for all
$ x \in \R^d $,
$ t \in [0,T] $
and all
$ y = (y_1, \dots, y_m) \in \R^m $.
This condition is, e.g., satisfied 
if the matrices 
$ c_0(x,t,y) ,\ldots, c_m(c,t,y) $,
$(x,t,y)\in\R^d\times[0,T]\times\R^m$,
are positive 
semi-definite.
The associated
balanced implicit method 
is then given through
\begin{equation}
\label{eq:bim}
\begin{split}
&
  Y_{n+1}^N
\\ & =
  Y_n^N
  +
  \bar{\mu}( Y_n^N )
  \tfrac{T}{N}
  +
  \bar{\sigma}( Y_n^N )
  \Delta W_n^N
\\ & 
  +
  \left(
    c_0( Y^N_n ) \tfrac{ T }{ N } 
    + 
    \sum\nolimits_{j=1}^m 
    c_j( Y^N_n ) 
    \left| \Delta W^{ N, j }_n \right| 
  \right)
  \left( 
    Y_{n}^N - Y_{n+1}^N 
  \right)
\\ & =
  Y^N_n 
\\ & 
  +
  \left(
    I + 
    c_0( Y^N_n ) \tfrac{ T }{ N } 
    + 
    \sum\nolimits_{j=1}^m 
    c_j( Y^N_n ) 
    \left| \Delta W^{ N, j }_n \right| 
  \right)^{ \! - 1 }
  \!
  \Big(
    \bar{\mu}( Y_n^N )
    \tfrac{T}{N}
    +
    \bar{\sigma}( Y_n^N )
    \Delta W_n^N
  \Big)
\end{split}
\end{equation}
for all $ n \in \{ 0, 1, \dots, N - 1 \} $
and all $ N \in \mathbb{N} $
where
$
  ( 
    \Delta W^{N,1}_n, \dots, 
    \Delta W^{N,m}_n 
  )
  = \Delta W^N_n
$
for all $ n \in \{ 0, 1, \dots, N - 1 \} $
and all $ N \in \mathbb{N} $.
For the case of at most linearly growing coefficients $\bar{\mu}$ and $\bar{\sigma}$
and uniformly bounded matrices $c_0,c_1,\ldots,c_m$,
Theorem 4.1 of Schurz~\cite{Schurz2005} implies uniformly bounded moments
of the balanced implicit 
method~\eqref{eq:bim}.
Moreover, under additional assumptions such as global Lipschitz continuity
of $\bar{\mu}$ and $\bar{\sigma}$, Theorem 5.1 of
Schurz~\cite{Schurz2005} implies strong mean square convergence
of the balanced implicit 
method with 
convergence order 
$ \frac{ 1 }{ 2 } $.
In order to apply the above theory, write~\eqref{eq:bim}
in the form~\eqref{eq:simple_scheme_1} 
with
$
  \eta_0(x,t,y) = 0
$
and 
\begin{equation}
  \eta_1(x,t,y)
=
  \eta_2(x,t,y)
=
  \left(
    I + 
    c_0( x ) t
    + 
    \sum_{j=1}^m 
    c_j( x ) 
    | y_j | 
  \right)^{ \! \! - 1 }
\end{equation}
for all 
$ x \in \R^d $,
$ t \in [0,T] $,
$ y = (y_1, \dots, y_m) \in \R^m $.
Lemma~\ref{lem:eta_012} can be applied 
to derive conditions
%
on 
the functions $c_0,c_1,\ldots,c_m$ 
which imply 
$ (\mu, \sigma) $-consistency
with respect to
Brownian motion.
Theorem~\ref{thm:convergence}
then shows convergence 
in probability of the balanced implicit
method~\eqref{eq:bim}.
Moreover,
similar as described above for the 
drift-truncated Euler scheme,
moment bounds and
strong convergence of the
balanced implicit
method~\eqref{eq:bim}
can be
studied by combining
Theorem~\ref{thm:SVstability},
Lemma~\ref{lem:compareSVstability},
Corollary~\ref{cor:Semi.Stability},
Lemma~\ref{lem:eta_012}
and 
Corollary~\ref{cor:strongConvergence}.

\subsection{Comparison results 
for numerical schemes for SDEs}

The next lemma
considers two numerical approximation
schemes of the form \eqref{eq:recX}
and shows that if one of the two
schemes is 
$ \left( \mu, \sigma \right) $-consistent with respect to Brownian motion
and if the two schemes are close
to each other in an appropriate
sense (see \eqref{eq_prop_A1}),
then the other scheme is 
also 
$ \left( \mu, \sigma \right) 
$-consistent with respect to Brownian motion.
Its proof is straightforward and
hence omitted.

\begin{lemma}[A comparison
result for consistency]
\label{lem:phiphi} 
Assume that the setting
in 
Section~\ref{sec:convergencesetting}
is fulfilled and
let
$ 
  \hat{\phi} 
  \colon 
  \mathbb{R}^d \times [0,T] 
  \times \mathbb{R}^m 
  \rightarrow \mathbb{R}^d 
$ 
be a function
which is
$ \left( \mu, \sigma \right) 
$-consistent with respect to Brownian motion
and which satisfies
\begin{equation}
\label{eq_prop_A1}
  \limsup_{ t \searrow 0 }
  \left(
    \frac{1}{t}
    \cdot
    \sup_{ x \in K }
    \mathbb{E}\left[
      \big\| 
        \phi( x, t, W_t ) - 
        \hat{\phi}( x, t, W_t ) 
      \big\|
    \right]
  \right)
  = 0
\end{equation}
for all non-empty
compact sets $ K \subset D $.
Then 
$ 
  \phi \colon \mathbb{R}^d 
  \times [0,T] \times 
  \mathbb{R}^m \rightarrow 
  \mathbb{R}^d 
$ 
is $ 
  \left( \mu, \sigma \right) 
$-consistent with respect to Brownian motion
too.
\end{lemma}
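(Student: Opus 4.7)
The plan is to derive $(\mu,\sigma)$-consistency of $\phi$ from that of $\hat\phi$ by inserting $\hat\phi$ via the triangle inequality in both conditions of Definition~\ref{def:consistent} and then exploiting the hypothesis~\eqref{eq_prop_A1} to control the resulting error term.

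First I would verify condition~\eqref{ass:consistency_1}. For any non-empty compact set $K\subset D$ and any $t\in(0,T]$ the triangle inequality gives
\begin{equation*}
  \tfrac{1}{\sqrt{t}}\sup_{x\in K}\mathbb{E}\big[\|\sigma(x)W_t-\phi(x,t,W_t)\|\big]
  \leq
  \tfrac{1}{\sqrt{t}}\sup_{x\in K}\mathbb{E}\big[\|\sigma(x)W_t-\hat\phi(x,t,W_t)\|\big]
  +
  \tfrac{1}{\sqrt{t}}\sup_{x\in K}\mathbb{E}\big[\|\hat\phi(x,t,W_t)-\phi(x,t,W_t)\|\big].
\end{equation*}
The first term on the right tends to $0$ as $t\searrow0$ by the assumed $(\mu,\sigma)$-consistency of $\hat\phi$, and the second term equals $\sqrt{t}$ times the quantity in~\eqref{eq_prop_A1}, hence also tends to $0$.

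Next I would verify condition~\eqref{ass:consistency_2}. A similar decomposition yields
\begin{equation*}
  \sup_{x\in K}\Big\|\mu(x)-\tfrac{1}{t}\mathbb{E}\big[\phi(x,t,W_t)\big]\Big\|
  \leq
  \sup_{x\in K}\Big\|\mu(x)-\tfrac{1}{t}\mathbb{E}\big[\hat\phi(x,t,W_t)\big]\Big\|
  +
  \tfrac{1}{t}\sup_{x\in K}\mathbb{E}\big[\|\hat\phi(x,t,W_t)-\phi(x,t,W_t)\|\big],
\end{equation*}
where Jensen's inequality has been used to push the norm inside the expectation in the second summand. Both terms on the right tend to $0$ as $t\searrow0$: the first by $(\mu,\sigma)$-consistency of $\hat\phi$ and the second directly by~\eqref{eq_prop_A1}. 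Since $K$ was an arbitrary non-empty compact subset of $D$, $\phi$ is $(\mu,\sigma)$-consistent with respect to Brownian motion.

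There is no real obstacle here; the only subtlety is the factor $\sqrt{t}$ gained from passing between the $\frac{1}{t}$-scaling in~\eqref{eq_prop_A1} and the $\frac{1}{\sqrt{t}}$-scaling in~\eqref{ass:consistency_1}, which is the slightly stronger scaling in~\eqref{eq_prop_A1} that makes the argument go through for both consistency conditions simultaneously.
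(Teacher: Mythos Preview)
Your argument is correct and is exactly the straightforward triangle-inequality verification the paper has in mind; indeed the paper omits the proof of this lemma as ``straightforward.'' The only point worth noting is that your first step (establishing \eqref{ass:consistency_1} for $\phi$) also guarantees that $\mathbb{E}[\phi(x,t,W_t)]$ is well-defined for small $t$, so the expectation in the second step is legitimate.
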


The next lemma, in particular, 
illustrates that convex combinations 
of schemes
that are
$ \left( \mu, \sigma \right) 
$-consistent with 
respect to Brownian motion
are 
$ \left( \mu, \sigma \right) 
$-consistent with respect to Brownian motion too.
More precisely,
the next lemma shows that
schemes of the form
\begin{equation}
  Y^N_{ n + 1 }
  =
  Y^N_n 
  +
  \eta_1 \cdot
  \phi_1\big(
    Y^N_n, \tfrac{ T }{ N },
    \Delta W^N_n
  \big) 
  +
  \eta_2 \cdot
  \phi_2\big(
    Y^N_n, \tfrac{ T }{ N },
    \Delta W^N_n
  \big) 
\end{equation}
for all
$ n \in \{ 0, 1, \dots, N \} $
and all
$ N \in \mathbb{N} $
are 
$ ( \mu, \sigma ) $-consistent
with respect to Brownian motion
provided that
$ \eta_1, \eta_2 \in \mathbb{R} $
are real numbers with 
$ \eta_1 + \eta_2 = 1 $
and provided that
$
  \phi_1, \phi_2 \colon
  \mathbb{R}^d \times [0,T]
  \times \mathbb{R}^m
$
are 
$ \left( \mu, \sigma \right) 
$-consistent with respect to Brownian motion.
Its proof is clear
and therefore omitted.

\begin{lemma}[Generalized
convex combinations of
numerical schemes
for SDEs]
\label{lem_cc}
Assume that the setting
in 
Section~\ref{sec:convergencesetting}
is fulfilled,
let 
$ 
  \phi_1, \phi_2 \colon \mathbb{R}^d 
  \times [0,T] \times \mathbb{R}^m 
  \rightarrow \mathbb{R}^d 
$ 
be 
$ \left( \mu, \sigma \right) 
$-consistent functions
with respect to Brownian motion
and let 
$ \eta_1, \eta_2 \in \mathbb{R} $ 
be two real numbers with 
$ \eta_1 + \eta_2 = 1 $. 
Then 
$ 
  \phi \colon \mathbb{R}^d \times 
  [0,T] \times \mathbb{R}^m 
  \rightarrow \mathbb{R}^d 
$ 
given by 
\begin{equation}
  \phi( x, t, y ) = 
  \eta_1 \cdot \phi_1 ( x, t, y ) 
  + \eta_2 \cdot \phi_2( x, t, y ) 
\end{equation}
for all 
$ x \in \mathbb{R}^d $, $ t \in [0,T] $, 
$ y \in \mathbb{R}^m $ 
is
$ \left( \mu, \sigma \right) 
$-consistent with respect to Brownian motion.
\end{lemma}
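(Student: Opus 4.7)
The plan is to verify the two defining conditions of $(\mu,\sigma)$-consistency from Definition~\ref{def:consistent} directly for $\phi = \eta_1\phi_1 + \eta_2\phi_2$, using the hypothesis $\eta_1+\eta_2=1$ to rewrite the reference quantities $\sigma(x)W_t$ and $\mu(x)$ as convex combinations of themselves. This is the only nontrivial idea; everything else is the triangle inequality.

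First I would fix a non-empty compact set $K\subset D$, a standard Brownian motion $W\colon[0,T]\times\Omega\to\R^m$, and a real number $t\in(0,T]$, and then verify~\eqref{ass:consistency_1}. Using $\eta_1+\eta_2=1$ I would write
\begin{equation*}
  \sigma(x)W_t - \phi(x,t,W_t)
  = \eta_1\bigl(\sigma(x)W_t-\phi_1(x,t,W_t)\bigr)
  + \eta_2\bigl(\sigma(x)W_t-\phi_2(x,t,W_t)\bigr) ,
\end{equation*}
and then apply the triangle inequality and divide by $\sqrt{t}$ to get
\begin{equation*}
  \tfrac{1}{\sqrt{t}}\sup_{x\in K}\E\bigl[\|\sigma(x)W_t-\phi(x,t,W_t)\|\bigr]
  \leq \sum_{i=1}^2|\eta_i|\cdot\tfrac{1}{\sqrt{t}}\sup_{x\in K}\E\bigl[\|\sigma(x)W_t-\phi_i(x,t,W_t)\|\bigr].
\end{equation*}
Taking $\limsup_{t\searrow0}$ on both sides and invoking~\eqref{ass:consistency_1} for $\phi_1$ and $\phi_2$ would yield the corresponding limit for $\phi$. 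This last step implicitly uses that the expectations $\E[\phi_i(x,t,W_t)]$ are well-defined, which is automatic from the consistency of $\phi_i$ as noted in the remark after Definition~\ref{def:consistent}, so $\E[\phi(x,t,W_t)]$ is also well-defined.

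Second, I would verify~\eqref{ass:consistency_2} by the same device. Writing
\begin{equation*}
  \mu(x)-\tfrac{1}{t}\E[\phi(x,t,W_t)]
  = \eta_1\bigl(\mu(x)-\tfrac{1}{t}\E[\phi_1(x,t,W_t)]\bigr)
  + \eta_2\bigl(\mu(x)-\tfrac{1}{t}\E[\phi_2(x,t,W_t)]\bigr),
\end{equation*}
the triangle inequality gives
\begin{equation*}
  \sup_{x\in K}\bigl\|\mu(x)-\tfrac{1}{t}\E[\phi(x,t,W_t)]\bigr\|
  \leq \sum_{i=1}^2|\eta_i|\cdot\sup_{x\in K}\bigl\|\mu(x)-\tfrac{1}{t}\E[\phi_i(x,t,W_t)]\bigr\|,
\end{equation*}
and taking $\limsup_{t\searrow0}$ together with the $(\mu,\sigma)$-consistency of $\phi_1$ and $\phi_2$ completes the proof.

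There is no real obstacle here; the only point to be careful about is the use of $\eta_1+\eta_2=1$ to introduce the reference quantities into each summand, and the fact that $\eta_1,\eta_2$ may be negative, which is why the factors $|\eta_i|$ appear rather than $\eta_i$ themselves. The argument is thus symmetric, finite-dimensional, and requires no probabilistic machinery beyond linearity of expectation.
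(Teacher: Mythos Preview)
Your proof is correct and is precisely the direct verification the paper has in mind; in fact the paper omits the proof entirely, stating that it ``is clear and therefore omitted.'' Your use of $\eta_1+\eta_2=1$ to split the reference terms and then apply the triangle inequality is the natural argument, and there is nothing to add.
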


Finally, the next lemma gives
a simple characterization of
$ (\mu, \sigma) 
$-consistency with respect to Brownian motion.
Its proof is straightforward and
hence omitted.

\begin{lemma}[A characterization of consistency]
Let $ T \in (0,\infty) $,
$ d, m \in \mathbb{N} $,
let $ D \subset \mathbb{R}^d $
be an open set and let
$ 
  \mu \colon 
  D \rightarrow \mathbb{R}^d
$
and
$
  \sigma \colon
  D \rightarrow 
  \mathbb{R}^{ d \times m }
$
be locally Lipschitz continuous 
functions.
A Borel measurable 
function
$ 
  \phi \colon
    \mathbb{R}^d \times
    [0,T] \times \mathbb{R}^m
  \rightarrow 
    \mathbb{R}^d
$
is then 
$ \left( \mu, \sigma \right) 
$-consistent with respect to Brownian motion
if and only if 
there exists a
Borel measurable function
$ 
  \hat{\phi} \colon
    \mathbb{R}^d \times
    [0,T] \times \mathbb{R}^m
  \rightarrow 
    \mathbb{R}^d
$
which is
$ \left( \mu, \sigma \right) 
$-consistent
with respect to 
Brownian motion
and which satisfies
\begin{equation}
  \lim_{ t \searrow 0 }
  \left(
    \tfrac{1}{\sqrt{t}}
    \cdot
    \sup_{ 
      x \in K
    }
    \mathbb{E}\Big[
    \big\|
        \hat{\phi}( 
          x, 
          t, 
          W_t
        )
      -
        \phi( 
          x, 
          t, 
          W_t
        )
    \big\|
    \Big]
  \right)
  = 0
\end{equation}
and
\begin{equation}
  \lim_{ t \searrow 0 }
  \left(
  \tfrac{1}{t}
  \cdot 
  \sup_{ 
    x \in K
  }
  \big\|
     \mathbb{E}\big[
        \hat{\phi}( 
          x, 
          t, 
          W_t
        )
     \big]
    -
     \mathbb{E}\big[
      \phi( 
        x, 
        t, 
        W_t
      )
    \big]
  \big\|
  \right)
  = 0
\end{equation}
for all non-empty 
compact sets $ K \subset D $
where
$
  W \colon [0,T] \times
  \Omega \rightarrow 
  \mathbb{R}^m 
$
is an arbitrary standard Brownian
motion on a probability
space 
$
  \left( 
    \Omega, \mathcal{F}, \mathbb{P}
  \right)
$.
\end{lemma}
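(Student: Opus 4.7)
The plan is to prove the equivalence by two entirely separate and very short triangle-inequality arguments. For the forward implication ($\Rightarrow$), I would simply choose $\hat\phi := \phi$: the hypothesis that $\phi$ itself is $(\mu,\sigma)$-consistent with respect to Brownian motion then serves as the required existence, and both displayed limits involving $\hat\phi-\phi$ vanish trivially.

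For the reverse implication ($\Leftarrow$), fix a non-empty compact set $K\subset D$ and an arbitrary standard Brownian motion $W$ on some probability space. To verify \eqref{ass:consistency_1} for $\phi$, I would insert and subtract $\hat\phi(x,t,W_t)$ under the expectation and invoke the triangle inequality to obtain
\[
\tfrac{1}{\sqrt{t}}\sup_{x\in K}\E\bigl[\|\sigma(x)W_t-\phi(x,t,W_t)\|\bigr]
\leq \tfrac{1}{\sqrt{t}}\sup_{x\in K}\E\bigl[\|\sigma(x)W_t-\hat\phi(x,t,W_t)\|\bigr]
+\tfrac{1}{\sqrt{t}}\sup_{x\in K}\E\bigl[\|\hat\phi(x,t,W_t)-\phi(x,t,W_t)\|\bigr].
\]
The first summand tends to $0$ as $t\searrow 0$ by the assumed $(\mu,\sigma)$-consistency of $\hat\phi$ (applying \eqref{ass:consistency_1} to $\hat\phi$), while the second does so by the first hypothesis on the difference $\hat\phi-\phi$. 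The entirely analogous decomposition
\[
\sup_{x\in K}\bigl\|\mu(x)-\tfrac{1}{t}\E[\phi(x,t,W_t)]\bigr\|
\leq \sup_{x\in K}\bigl\|\mu(x)-\tfrac{1}{t}\E[\hat\phi(x,t,W_t)]\bigr\|
+\tfrac{1}{t}\sup_{x\in K}\bigl\|\E[\hat\phi(x,t,W_t)]-\E[\phi(x,t,W_t)]\bigr\|
\]
will then yield \eqref{ass:consistency_2} for $\phi$ by the same reasoning, where the bound $\|\E[Z]\|\leq\E[\|Z\|]$ is used to dominate the difference of expectations in the last term by the integrated norm appearing in the second hypothesis on $\hat\phi-\phi$.

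The only preliminary care required is to check that the expectation in \eqref{ass:consistency_2} is well-defined for $\phi$, i.e., that $\E[\|\phi(x,t,W_t)\|]<\infty$ for $(x,t)$ in a compact subset of $D\times(0,t_0]$ for some $t_0>0$; this follows from the triangle inequality $\E[\|\phi\|]\leq \E[\|\hat\phi\|]+\E[\|\hat\phi-\phi\|]$ combined with the finiteness of $\E[\|\hat\phi(x,t,W_t)\|]$ (ensured by the $(\mu,\sigma)$-consistency of $\hat\phi$, cf.\ the remark after Definition~\ref{def:consistent}) and the first hypothesis on $\hat\phi-\phi$. Since the whole argument reduces to these elementary triangle-inequality manipulations, I do not anticipate any substantive obstacle; the lemma is essentially a reformulation of the fact that the two seminorm-like defects defining $(\mu,\sigma)$-consistency are continuous with respect to the two topologies of closeness quantified in the displayed limits.
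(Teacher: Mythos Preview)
Your proposal is correct and matches the paper's intent: the paper itself omits the proof, stating only that it is ``straightforward,'' and your triangle-inequality argument is precisely the natural one. One minor remark: in the second decomposition you do not actually need the bound $\|\E[Z]\|\leq\E[\|Z\|]$, since the second displayed hypothesis in the lemma is already stated for $\|\E[\hat\phi]-\E[\phi]\|$ rather than $\E[\|\hat\phi-\phi\|]$; the required term therefore appears directly.
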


\subsection{Taming principles
for numerical schemes for SDEs}
\label{sec:taming}

Let 
$ 
  \hat{\phi} \colon \mathbb{R}^d 
  \times [0,T] \times 
  \mathbb{R}^m \rightarrow 
  \mathbb{R}^d 
$ 
be a function which is
$ 
  \left( \mu, \sigma \right) 
$-consistent with respect to Brownian motion
and consider a sequence
$ 
  Z^N \colon \{ 0, 1, \dots N \}
  \times \Omega \to \R^d
$,
$ N \in \N $,
of stochastic
processes given by
$ Z^N_0 = X_0 $
and
\begin{equation}
\label{eq:originalscheme}
  Z^N_{ n + 1 } =
  Z^N_n+
  \hat{\phi}\big( 
    Z^N_{ 
      n 
    },
    \tfrac{T}{N}, 
    \Delta W^N_n
  \big)
\end{equation}
for all $ n \in \{ 0, 1, \dots, N - 1 \} $
and all $ N \in \N $.
The next lemma 
then
gives sufficient conditions
to ensure that numerical
approximation schemes of the form
\begin{equation}
\label{eq:incrementtamed}
  Y_{ n + 1 }^N
  =
  Y_n^N
  +
  \eta\big(
    Y^N_n, \tfrac{ T }{ N },
    \Delta W^N_n
  \big) \,
  \hat{\phi}\big(
    Y^N_n, \tfrac{ T }{ N },
    \Delta W^N_n
  \big)
\end{equation}
for all $ n \in \{ 0, 1, \ldots, N-1 \} $ 
and all $ N \in \mathbb{N} $
are 
$ \left( \mu, \sigma \right) 
$-consistent with respect to Brownian motion
where
$ 
  \eta \colon \mathbb{R}^d 
  \times [0,T] \times 
  \mathbb{R}^m \rightarrow 
  \mathbb{R}^{ d \times d } 
$ 
is a suitable Borel measurable
function.

\begin{lemma}[Increment
taming principle]
\label{lem:eta_phi}
Assume that the setting in 
Section~\ref{sec:convergencesetting}
is fulfilled, 
let 
$ 
  \eta \colon \mathbb{R}^d 
  \times [0,T] \times 
  \mathbb{R}^m \rightarrow 
  \mathbb{R}^{ d \times d } 
$ 
be a Borel measurable 
function, let
$ 
  \hat{\phi} \colon \mathbb{R}^d 
  \times [0,T] \times 
  \mathbb{R}^m \rightarrow 
  \mathbb{R}^d 
$ 
be a function which is 
$ \left( \mu, \sigma \right) $-consistent
with respect to Brownian motion
and assume that
\begin{equation}
\label{eq:propE_1}
  \limsup_{ t \searrow 0 }
  \left(
    \frac{1}{t}   
    \cdot
    \sup_{ x \in K }
    \mathbb{E}\!\left[
      \big\| \hat{\phi}( x, t, W_t ) \big\|^2
    \right]
  \right)
  < \infty
\end{equation}
and
\begin{equation}
\label{eq:propE_2}
  \limsup_{ t \searrow 0 }
  \left(
    \frac{1}{t}   
    \cdot
    \sup_{ x \in K }
    \mathbb{E}\Big[
      \big\| \eta( x, t, W_t ) - I    
      \big\|^2_{ L( \mathbb{R}^d ) }
    \Big]
  \right)
  =
  0
\end{equation}
for all non-empty 
compact sets $ K \subset D $. 
Then 
$ 
  \phi \colon \mathbb{R}^d 
  \times [0,T] \times 
  \mathbb{R}^m \rightarrow 
  \mathbb{R}^d 
$ 
given by 
\begin{equation}
  \phi( x, t, y ) = 
  \eta( x, t, y ) \, \hat{\phi}( x, t, y ) 
\end{equation}
for all $ x \in \mathbb{R}^d $, 
$ t \in [0, T ] $, 
$ y \in \mathbb{R}^m $ 
is
$ \left( \mu, \sigma \right) $-consistent with respect to Brownian motion.
\end{lemma}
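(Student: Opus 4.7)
The plan is to verify the two defining conditions \eqref{ass:consistency_1} and \eqref{ass:consistency_2} of $(\mu,\sigma)$-consistency for $\phi$ by splitting $\phi - \hat{\phi} = (\eta - I)\hat{\phi}$ and estimating the correction term via Cauchy--Schwarz. More concretely, first I would write, by the triangle inequality,
\begin{equation*}
  \big\| \sigma(x) W_t - \phi(x,t,W_t) \big\|
\leq
  \big\| \sigma(x) W_t - \hat{\phi}(x,t,W_t) \big\|
  +
  \big\| \bigl( \eta(x,t,W_t) - I \bigr) \hat{\phi}(x,t,W_t) \big\|
\end{equation*}
and similarly
\begin{equation*}
  \big\| \mu(x) - \tfrac{1}{t} \mathbb{E}[\phi(x,t,W_t)] \big\|
\leq
  \big\| \mu(x) - \tfrac{1}{t} \mathbb{E}[\hat{\phi}(x,t,W_t)] \big\|
  +
  \tfrac{1}{t} \, \big\| \mathbb{E}\bigl[ (\eta(x,t,W_t) - I) \hat{\phi}(x,t,W_t) \bigr] \big\|.
\end{equation*}
The first summand in each display vanishes in the relevant scale as $t \searrow 0$ uniformly on any non-empty compact $K \subset D$ by the assumed $(\mu,\sigma)$-consistency of $\hat{\phi}$.

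The task then reduces to controlling the two remainders. Here I would apply the submultiplicativity of the operator norm and the Cauchy--Schwarz inequality to get
\begin{equation*}
  \mathbb{E}\!\left[
    \big\| (\eta(x,t,W_t) - I) \, \hat{\phi}(x,t,W_t) \big\|
  \right]
\leq
  \Big( \mathbb{E}\bigl[ \| \eta(x,t,W_t) - I \|_{L(\mathbb{R}^d)}^2 \bigr] \Big)^{1/2}
  \cdot
  \Big( \mathbb{E}\bigl[ \| \hat{\phi}(x,t,W_t) \|^2 \bigr] \Big)^{1/2} .
\end{equation*}
Assumption~\eqref{eq:propE_1} provides a constant $C_K \in [0,\infty)$ such that $\mathbb{E}[\|\hat{\phi}(x,t,W_t)\|^2] \leq C_K \, t$ for $x \in K$ and all sufficiently small $t \in (0,T]$, whereas \eqref{eq:propE_2} yields a function $\varepsilon_K(t)$ with $\lim_{t \searrow 0} \varepsilon_K(t) = 0$ and $\mathbb{E}[\|\eta(x,t,W_t) - I\|_{L(\mathbb{R}^d)}^2] \leq \varepsilon_K(t) \, t$ for $x \in K$. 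Combining these gives $\sup_{x \in K} \mathbb{E}[\|(\eta - I)\hat{\phi}\|] \leq t \sqrt{ C_K \, \varepsilon_K(t) }$.

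Dividing by $\sqrt{t}$ and by $t$ respectively then produces bounds of order $\sqrt{t \, C_K \varepsilon_K(t)}$ and $\sqrt{C_K \varepsilon_K(t)}$, both of which tend to $0$ as $t \searrow 0$; this finishes \eqref{ass:consistency_1} and \eqref{ass:consistency_2} for $\phi$. There is no real obstacle in this argument: the only point where one must be slightly careful is noting that the second conclusion needs the full strength of \eqref{eq:propE_2} (namely that the $\sup_{x \in K} \mathbb{E}[\|\eta - I\|^2]$ is $o(t)$ rather than merely $O(t)$), because it is the factor $\sqrt{\varepsilon_K(t)} \to 0$ — and not the factor $\sqrt{t}$ — that drives the remainder to zero at the $1/t$ scale.
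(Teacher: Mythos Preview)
Your proof is correct and uses the same core estimate as the paper: bound $\mathbb{E}[\|(\eta - I)\hat{\phi}\|]$ via Cauchy--Schwarz and combine \eqref{eq:propE_1} and \eqref{eq:propE_2} to show this quantity is $o(t)$ uniformly on compacts. The only organizational difference is that the paper first establishes $\limsup_{t\searrow 0}\tfrac{1}{t}\sup_{x\in K}\mathbb{E}[\|\phi-\hat{\phi}\|]=0$ and then invokes the comparison Lemma~\ref{lem:phiphi} to deduce consistency of $\phi$ from that of $\hat{\phi}$, whereas you verify \eqref{ass:consistency_1} and \eqref{ass:consistency_2} directly by the triangle inequality; this amounts to inlining the proof of Lemma~\ref{lem:phiphi}.
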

\begin{proof}[Proof of 
Lemma~\ref{lem:eta_phi}]
Combining
H{\"o}lder's inequality,
equation~\eqref{eq:propE_1} 
and 
equation~\eqref{eq:propE_2} 
implies
\begin{equation}
\label{eq:propE_3}
\begin{split}
  &
  \limsup_{ t \searrow 0 }
  \left(
    \frac{1}{t}   
    \cdot
    \sup_{ x \in K }
    \mathbb{E}\Big[
      \big\| 
        \hat{\phi}( x, t, W_t ) - 
        \phi( x, t, W_t ) 
      \big\|
    \Big]
  \right)
\\ & \leq
  \limsup_{ t \searrow 0 }
  \left(
    \frac{ 1 }{ t }   
    \cdot
    \sup_{ x \in K }
    \mathbb{E}\Big[
      \| \eta( x, t, W_t ) - I \|_{
        L( \mathbb{R}^d )
      }
      \cdot
      \| \hat{\phi}( x, t, W_t ) \|
    \Big]
  \right)
\\ & \leq
  \left(
    \limsup_{ t \searrow 0 }
    \frac{
    \sup_{ x \in K }
    \mathbb{E}\big[
      \| \eta( x, t, W_t ) - I \|^2_{
        L( \mathbb{R}^d )
      }
    \big]
}{ t }
  \right)^{ \! 1 / 2 }
\\ & \quad
  \cdot
  \left(
    \limsup_{ t \searrow 0 }
    \frac{1}{t}   
    \cdot
    \sup_{ x \in K }
    \mathbb{E}\left[
      \| \hat{\phi}( x, t, W_t ) \|^2
    \right]
  \right)^{\frac{1}{2}}
  = 0
\end{split}
\end{equation}
for all non-empty 
compact sets $ K \subset D $.
Inequality~\eqref{eq:propE_3} and Lemma~\ref{lem:phiphi} 
then complete the proof of 
Lemma~\ref{lem:eta_phi}.
\end{proof}

Let us illustrate
Lemma~\ref{lem:eta_phi}
by an example.
More precisely,
in the special case
$
  \eta(x,t,y)
=
  \frac{ 1 }{
    \max( 1, 
      t \| \hat{\phi}(x,t,y) \|
    )
  }
  I
$
for all $ x \in \mathbb{R}^d $,
$ t \in [0,T] $, $ y \in \mathbb{R}^m $,
the scheme~\eqref{eq:incrementtamed} 
reads as
\begin{equation}
\label{eq:incrementtamed2}
  Y_{ n + 1 }^N
  =
  Y_n^N
  +
  \frac{
    \hat{\phi}\big(
      Y^N_n, \tfrac{ T }{ N },
      \Delta W^N_n
    \big)
  }{
    \max\big(
      1 , \frac{ T }{ N }
      \big\|
      \hat{\phi}\big(
        Y^N_n, \tfrac{ T }{ N },
        \Delta W^N_n
      \big)
      \big\|
    \big)
  }
\end{equation}
for all $ n \in \{ 0, 1, \ldots, N-1 \} $ 
and all $ N \in \mathbb{N} $.

If we now additionally assume
that the 
$ Z^N $, $ N \in \N $,
in \eqref{eq:originalscheme}
are Euler-Maruyama
approximations, i.e., that
$ 
  \hat{\phi}(x,t,y) = 
  \bar{ \mu }(x) t +
  \bar{ \sigma }(x) y
$
for all $ x \in \mathbb{R}^d $,
$ t \in [0,T] $, $ y \in \mathbb{R}^m $,
then \eqref{eq:incrementtamed2}
reads as
\begin{equation}
\label{eq:incrementtamed3}
  Y_{ n + 1 }^N
  =
  Y_n^N
  +
  \frac{
    \bar{\mu}(
      Y^N_n
    )
    \tfrac{ T }{ N }
    +
    \bar{ \sigma }( Y^N_n )
    \Delta W^N_n
  }{
    \max\!\big(
      1 , \frac{ T }{ N }
      \big\|
      \bar{\mu}(
        Y^N_n
      )
      \tfrac{ T }{ N }
      +
      \bar{ \sigma }( Y^N_n )
      \Delta W^N_n
      \big\|
    \big)
  }
\end{equation}
for all $ n \in \{ 0, 1, \ldots, N-1 \} $ 
and all $ N \in \mathbb{N} $.
This increment-tamed 
Euler-Maruyama scheme 
clearly satisfies the assumptions
of Lemma~\ref{lem:eta_phi}.
Theorem~\ref{thm:convergence}
hence shows that
the 
scheme~\eqref{eq:incrementtamed3}
convergences 
in probability to the exact
solution of the SDE~\eqref{eq:SDE}.
Note that this scheme
is frequently studied in this article.
Strong convergence of the
scheme~\eqref{eq:incrementtamed3}
is studied in
Subsection~\ref{sec:strongCtamed}
above (see also 
Chapter~\ref{sec:examples}
for a list examples of SDEs
in case of which the
scheme~\eqref{eq:incrementtamed3}
has been shown to converge
strongly).

In the case of spatially 
discretized semilinear
stochastic partial
differential equations, another
choice for the 
increment function 
$ 
  \hat{\phi} \colon \R^d \times
  [0,T] \times \R^m \rightarrow \R^d
$
in \eqref{eq:originalscheme}
naturally arises.
More precisely,
suppose that
\begin{equation}
  \bar{\mu}(x)
  =
  A x + F(x)
\end{equation}
for all $ x \in \R^d $
where
$ A \in \R^{ d \times d } $
is a $ d \times d $-matrix
with $ \det( I - t A ) \neq 0 $
for all $ t \in [0,\infty) $
and where
$
  F \colon \R^d \to \R^d
$
is a Borel measurable function
and suppose that
\begin{equation}
  \hat{\phi}(x,t,y) =
  \left( I - t A \right)^{ - 1 }
  \left(  
    x + F(x) t + \sigma(x) y
  \right)
  - x
\end{equation}
for all $ x \in \R^d $,
$ t \in [0,T] $
and all $ y \in \R^m $.
The approximations 
processes $ Z^N $, $ N \in \N $,
in equation~\eqref{eq:originalscheme}
thus reduce to the
linear implicit Euler approximations
\begin{equation}
\begin{split}
  Z^N_{ n + 1 } 
& =
  \left( I - \tfrac{ T }{ N } A \right)^{ - 1 }
  \left(
    Z^N_n
    +
    F( Z^N_n ) \tfrac{ T }{ N }
    +
    \sigma( Z^N_n ) \Delta W_n^N
  \right)
\\ & =
  Z^N_n
  +
  \left[
  \left( I - \tfrac{ T }{ N } A \right)^{ - 1 }
  \left(
    Z^N_n
    +
    F( Z^N_n ) \tfrac{ T }{ N }
    +
    \sigma( Z^N_n ) \Delta W_n^N
  \right)
    - Z^N_n
  \right]
\end{split}
\end{equation}
for all $ n \in \{ 0, 1, \dots, N - 1 \} $
and all $ N \in \N $
and the scheme
in \eqref{eq:incrementtamed2}
then reads as
\begin{equation}
\label{eq:spdetamed}
\begin{split}
  Y^N_{ n + 1 } 
& =
  Y^N_n
  +
  \frac{
    \left( I - \tfrac{ T }{ N } A \right)^{ - 1 }
    \left(
      Y^N_n
      +
      F( Y^N_n ) \tfrac{ T }{ N }
      +
      B( Y^N_n ) \Delta W_n^N
    \right)
    - Y^N_n
  }{
    \max\!\left( 1,
    \frac{ T }{ N }
    \big\| \!
      \left( I - \tfrac{ T }{ N } A 
      \right)^{ - 1 } \!
      \left(
        Y^N_n
        +
        F( Y^N_n ) \tfrac{ T }{ N }
        +
        B( Y^N_n ) \Delta W_n^N
      \right)
      - Y^N_n
    \big\|
    \right)
  }
\end{split}
\end{equation}
for all $ n \in \{ 0, 1, \dots, N - 1 \} $
and all $ N \in \N $.

In \eqref{eq:incrementtamed},
\eqref{eq:incrementtamed2},
\eqref{eq:incrementtamed3}
and \eqref{eq:spdetamed},
respectively, the increment
function
$ 
  \hat{\phi} 
  \colon 
  \mathbb{R}^d \times
  [0,T] \times \mathbb{R}^m
  \rightarrow \mathbb{R}^d
$
is tamed in a suitable way
so that the scheme does
not diverge strongly 
(see \cite{hjk11})
and 
moment bounds
can be obtained
(see Subsections~\ref{sec:moment_bounds000},
\ref{sec:onestep}
and \ref{sec:momentbounds2}).
Instead of the increment
function, also the whole scheme
can be tamed in an appropriate
way. This is the subject of the
next lemma.
More precisely,
the next lemma gives sufficient
conditions to ensure that
schemes of the form
\begin{equation}
\label{eq:fulltamed}
  Y_{ n + 1 }^N
  =
  \eta\big(
    Y^N_n, \tfrac{ T }{ N },
    \Delta W^N_n
  \big) 
  \Big(
    Y_n^N
    +
    \hat{\phi}\big(
      Y^N_n, \tfrac{ T }{ N },
      \Delta W^N_n
    \big)
  \Big)
\end{equation}
for all $ n \in \{ 0, 1, \ldots, N-1 \} $ 
and all $ N \in \mathbb{N} $
are 
$ \left( \mu, \sigma \right) 
$-consistent with respect to Brownian motion
where
$ 
  \eta \colon \mathbb{R}^d 
  \times [0,T] \times 
  \mathbb{R}^m \rightarrow 
  \mathbb{R}^{ d \times d } 
$ 
is a Borel measurable
function and where
$ 
  \hat{\phi} \colon \mathbb{R}^d 
  \times [0,T] \times 
  \mathbb{R}^m \rightarrow 
  \mathbb{R}^d 
$ 
is a function which
is
$ 
  \left( \mu, \sigma \right) 
$-consistent with respect to Brownian motion.

\begin{lemma}[Full taming 
principle]
\label{l:eta_I}
Assume that the setting in 
Section~\ref{sec:convergencesetting}
is fulfilled, 
let 
$ 
  \eta \colon \mathbb{R}^d 
  \times [0,T] \times 
  \mathbb{R}^m \rightarrow 
  \mathbb{R}^{ d \times d } 
$ 
be a Borel measurable 
function, let
$ 
  \hat{\phi} \colon \mathbb{R}^d 
  \times [0,T] \times 
  \mathbb{R}^m \rightarrow 
  \mathbb{R}^d 
$ 
be a function
which is
$ \left( \mu, \sigma \right) $-consistent
with respect to Brownian motion
and assume that
\begin{equation}
\label{eq:propEI_1}
  \limsup_{ t \searrow 0 }
  \left(
    \sup_{ x \in K }
    \mathbb{E}\Big[
      \| \hat{\phi}( x, t, W_t ) 
      \|^2
    \Big]
  \right)
  < \infty 
\end{equation}
and
\begin{equation}
\label{eq:propEI_2}
  \limsup_{ t \searrow 0 }
  \left(
    \frac{1}{t^2}   
    \cdot
    \sup_{ x \in K }
    \mathbb{E}\Big[
      \| \eta( x, t, W_t ) - I \|^2_{
        L( \mathbb{R}^d )
      }
    \Big]
  \right)
  = 0 
\end{equation}
for all non-empty
compact sets $ K \subset D $. 
Then 
$ 
  \phi \colon \mathbb{R}^d \times 
  [0,T] \times \mathbb{R}^m 
  \rightarrow \mathbb{R}^d 
$ 
given by 
\begin{equation}
  \phi( x, t, y ) = 
  \eta( x, t, y )( x + \hat{\phi}( x, t, y ) ) - x 
\end{equation}
for all 
$ x \in \mathbb{R}^d $, 
$ t \in [0,T] $ 
and all 
$ y \in \mathbb{R}^m $ 
is 
$ \left( \mu, \sigma \right) 
$-consistent with respect to Brownian motion.
\end{lemma}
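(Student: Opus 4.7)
The plan is to reduce Lemma \ref{l:eta_I} to the comparison principle of Lemma \ref{lem:phiphi}. Since $\hat\phi$ is already $(\mu,\sigma)$-consistent with respect to Brownian motion by assumption, it suffices to verify that the difference $\phi - \hat\phi$ is negligibly small as $t \searrow 0$, more precisely, that
\begin{equation*}
  \limsup_{t \searrow 0}\Bigl(
    \tfrac{1}{t}\cdot
    \sup\nolimits_{x \in K}
    \mathbb{E}\bigl[
      \|\phi(x,t,W_t) - \hat\phi(x,t,W_t)\|
    \bigr]
  \Bigr) = 0
\end{equation*}
for every non-empty compact set $K \subset D$.

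First I would observe the algebraic identity
\begin{equation*}
  \phi(x,t,y) - \hat\phi(x,t,y)
  = \eta(x,t,y)\bigl(x + \hat\phi(x,t,y)\bigr) - x - \hat\phi(x,t,y)
  = \bigl(\eta(x,t,y) - I\bigr)\bigl(x + \hat\phi(x,t,y)\bigr),
\end{equation*}
which exactly isolates the two ingredients that the hypotheses \eqref{eq:propEI_1} and \eqref{eq:propEI_2} are designed to control. Next I would apply the operator-norm estimate together with the Cauchy--Schwarz inequality to obtain
\begin{equation*}
  \mathbb{E}\bigl[\|\phi(x,t,W_t) - \hat\phi(x,t,W_t)\|\bigr]
  \leq \bigl(\mathbb{E}\bigl[\|\eta(x,t,W_t) - I\|_{L(\mathbb{R}^d)}^{2}\bigr]\bigr)^{\!1/2}
  \bigl(\mathbb{E}\bigl[\|x + \hat\phi(x,t,W_t)\|^{2}\bigr]\bigr)^{\!1/2}.
\end{equation*}

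Dividing by $t$ and taking the supremum over $x \in K$, the first factor becomes $o(1)$ when divided by $t$ thanks to assumption \eqref{eq:propEI_2} (which yields that $\sup_{x \in K}\mathbb{E}[\|\eta(x,t,W_t) - I\|_{L(\mathbb{R}^d)}^{2}]$ is of order $o(t^{2})$, so its square root is of order $o(t)$). The second factor stays bounded as $t \searrow 0$: compactness of $K$ makes $\sup_{x\in K}\|x\|$ finite, and the inequality $\|x+\hat\phi\|^{2}\leq 2\|x\|^{2}+2\|\hat\phi\|^{2}$ combined with assumption \eqref{eq:propEI_1} shows that $\sup_{x\in K}\mathbb{E}[\|x+\hat\phi(x,t,W_t)\|^{2}]$ remains bounded as $t\searrow 0$. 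Multiplying the $o(1)$ factor by the $O(1)$ factor yields the required $o(1)$ behaviour, and Lemma \ref{lem:phiphi} then concludes the proof.

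There is no genuine obstacle here; the result is essentially a Cauchy--Schwarz splitting, and the only subtlety is to keep the two asymptotic rates straight, namely that hypothesis \eqref{eq:propEI_2} is deliberately sharpened to $o(t^{2})$ (rather than $o(t)$ as in Lemma \ref{lem:eta_phi}) precisely in order to compensate for the fact that $\|x+\hat\phi\|^{2}$ no longer vanishes as $t\searrow 0$, in contrast with the increment-taming situation of Lemma \ref{lem:eta_phi} where one factors off $\|\hat\phi\|^{2}=O(t)$ instead of $\|x+\hat\phi\|^{2}=O(1)$.
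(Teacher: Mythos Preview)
Your proposal is correct and follows essentially the same route as the paper: both proofs write $\phi-\hat\phi=(\eta-I)(x+\hat\phi)$, apply Cauchy--Schwarz (the paper calls it H\"older), bound $\|x+\hat\phi\|^2\le 2\|x\|^2+2\|\hat\phi\|^2$, and then invoke Lemma~\ref{lem:phiphi}. Your closing remark about why \eqref{eq:propEI_2} is sharpened to $o(t^2)$ relative to Lemma~\ref{lem:eta_phi} is a nice addition not present in the paper.
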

\begin{proof}[Proof of 
Lemma~\ref{l:eta_I}]
H{\"o}lder's inequality, the 
estimate 
$ (a+b)^2 \leq 2a^2 + 2b^2 $ 
for all 
$ a, b \in \mathbb{R} $ 
and equations~\eqref{eq:propEI_1}
and~\eqref{eq:propEI_2} 
imply
\begin{equation}
\label{eq:propEI_3}
\begin{split}
  &
  \limsup_{ t \searrow 0 }
  \left(
    \frac{ 1 }{ t }   
    \cdot
    \sup_{ x \in K }
    \mathbb{E}\!\left[
      \big\| 
        \hat{\phi}( x, t, W_t ) - 
        \phi( x, t, W_t ) 
      \big\|
    \right]
  \right)
\\ & \leq
  \limsup_{ t \searrow 0 }
  \left(
    \frac{1}{t}   
    \cdot
    \sup_{ x \in K }
    \mathbb{E}\!\left[
      \left\| 
        \eta( x, t, W_t ) - I 
      \right\|_{ L( \R^d ) }
      \cdot
      \big\| 
        x + \hat{\phi}( x, t, W_t ) 
      \big\|
    \right]
  \right)
\\ & \leq
  2
  \left(
    \limsup_{ t \searrow 0 }
    \frac{1}{t^2}   
    \cdot
    \sup_{ x \in K }
    \mathbb{E}\!\left[
      \big\| 
        \eta( x, t, W_t ) - I 
      \big\|^2_{ L( \R^d ) }
    \right]
  \right)^{ \! 1 / 2 }
\\ & \quad
  \cdot
  \left(
    \sup_{ x \in K }\|x\|^2
    +
    \limsup_{ t \searrow 0 }
    \sup_{ x \in K }
    \mathbb{E}\!\left[
      \big\| 
        \hat{\phi}( x, t, W_t ) 
      \big\|^2
    \right]
  \right)^{ \! 1 / 2 }
  = 0
\end{split}
\end{equation}
for all non-empty
compact sets $ K \subset D $.
Inequality~\eqref{eq:propEI_3} 
and Lemma~\ref{lem:phiphi} 
then complete the proof of 
Lemma~\ref{l:eta_I}.
\end{proof}

As an example 
of Lemma~\ref{l:eta_I},
let $ r \in (1,\infty) $ be a real number
and consider
the choice
$
  \eta \colon 
  \R^d \times [0,T] \times \R^m \to \R^{ d \times d }
$
given by
\begin{equation}
\label{eq:eta_indicator}
  \eta(x,t,y) =
  \mathbbm{1}_{ 
    [0,1]
  }
  \big(
    t^r
    \| 
      x + \hat{ \phi }(x,t,y) 
    \|
  \big)
  \cdot
  I
\end{equation}
for all
$ x \in \mathbb{R}^d $,
$ t \in [0,T] $
and all
$ y \in \mathbb{R}^m $.
Note that
the choice \eqref{eq:eta_indicator}
satisfies 
\eqref{eq:propEI_2}
in Lemma~\ref{l:eta_I}
provided that
\eqref{eq:propEI_1}
is fulfilled.
Indeed, observe that
Markov's inequality and
\eqref{eq:propEI_1}
imply that
\begin{equation}
\begin{split}
&
  \limsup_{ t \searrow 0 }
  \left(
    \frac{1}{t^2}   
    \cdot
    \sup_{ x \in K }
    \mathbb{E}\Big[
      \| \eta( x, t, W_t ) - I \|^2_{
        L( \mathbb{R}^d )
      }
    \Big]
  \right)
\\ & = 
  \limsup_{ t \searrow 0 }
  \left(
    \frac{1}{t^2}   
    \cdot
    \sup_{ x \in K }
    \mathbb{E}\Big[
    1 -
    \mathbbm{1}_{ 
      [0,1]
    }
    \big(
      t^r
      \| 
        x + \hat{ \phi }(x,t,W_t) 
      \|
    \big)
    \Big]
  \right)
\\ & = 
  \limsup_{ t \searrow 0 }
  \left(
    \frac{1}{t^2}   
    \cdot
    \sup_{ x \in K }
    \P\Big[
      t^r
      \| 
        x + \hat{ \phi }(x,t,W_t) 
      \|
      > 1
    \Big]
  \right)
\\ & = 
  \limsup_{ t \searrow 0 }
  \left(
    t^{ - 2}   
    \cdot
    \sup_{ x \in K }
    \P\Big[
      \| 
        x + \hat{ \phi }(x,t,W_t) 
      \|^2
      > t^{ - 2 r }
    \Big]
  \right)
\\ & \leq
  \limsup_{ t \searrow 0 }
  \left(
    t^{ (2r - 2) }
    \cdot
    \sup_{ x \in K }
    \E\Big[
      \| 
        x + \hat{ \phi }(x,t,W_t) 
      \|^2
    \Big]
  \right)
  = 0 
\end{split}
\end{equation}
for all non-empty
compact sets
$ K \subset D $.
Next note that 
in the case \eqref{eq:eta_indicator}
the scheme~\eqref{eq:fulltamed}
reads as
\begin{equation}
\label{eq:exfulltamed}
  Y_{ n + 1 }^N
  =
  \mathbbm{1}_{ 
    \left\{
      \|
      Y_n^N
      +
      \hat{\phi}(
        Y^N_n, \frac{ T }{ N },
        \Delta W^N_n
      )
      \|
      \leq 
      \frac{ N^r }{ T^r }
    \right\}
  }
  \Big(
    Y_n^N
    +
    \hat{\phi}\big(
      Y^N_n, \tfrac{ T }{ N },
      \Delta W^N_n
    \big)
  \Big)
\end{equation}
for all $ n \in \{ 0, 1, \ldots, N-1 \} $ 
and all $ N \in \mathbb{N} $.
A similiar class of approximations
has been proposed in
Milstein \citationand\ Tretyakov~\cite{mt05}
in which
the truncation barrier 
$ \frac{ N^r }{ T^r } $
in the indicator set in 
\eqref{eq:exfulltamed}
is replaced by a 
possibly large
real number 
$ R \in (0,\infty) $
which does not depend on $N\in\N$.

\subsection{Linear implicit numerical schemes for SDEs}

The next lemma gives 
sufficient conditions that ensure that
linear implicit numerical schemes 
of the form
\begin{equation}
\begin{split}
&
  Y_{ n + 1 }^N
\\ & =
  Y_n^N
  +
  \mathbbm{1}_{
    \left\{
    \det(
      I -
      A( 
        Y^N_n, \frac{ T }{ N },
        \Delta W^N_n
      )
    )
    \neq 0
    \right\}
  }
  A\!\left( 
    Y^N_n, \tfrac{ T }{ N },
    \Delta W^N_n
  \right)
  Y^N_{ n + 1 }
  +
  b\big(
    Y^N_n, \tfrac{ T }{ N },
    \Delta W^N_n
  \big)
\\[1ex] 
  & =
  \Big( 
    I - 
  \mathbbm{1}_{
    \left\{
    \det(
      I -
      A( 
        Y^N_n, \frac{ T }{ N },
        \Delta W^N_n
      )
    )
    \neq 0
    \right\}
  }
    A \!\left( 
     Y^N_n, \tfrac{ T }{ N },
      \Delta W^N_n
    \right)
  \Big)^{ \! - 1 } 
  \Big(
    Y_n^N
    +
    b\big(
      Y^N_n, \tfrac{ T }{ N },
      \Delta W^N_n
    \big)
  \Big)
\end{split}
\end{equation}
for all $ n \in \{ 0, 1, \ldots, N-1 \} $ 
and all $ N \in \mathbb{N} $
are 
$ \left( \mu, \sigma \right) $-consistent with respect to Brownian motion
where
$ 
A \colon \mathbb{R}^d \times [0,T] 
\times \mathbb{R}^m \rightarrow \mathbb{R}^{d \times d} 
$ 
and 
$ 
b \colon \mathbb{R}^d 
\times [0,T] \times 
\mathbb{R}^m \rightarrow 
\mathbb{R}^d 
$ 
are suitable Borel measurable 
functions (see 
Lemma~\ref{lem:Ab}
for the detailed assumptions).

\begin{lemma}[Linear implicit
numerical schemes for SDEs
driven by Brownian motions]
\label{lem:Ab}
Assume that the setting in 
Section~\ref{sec:convergencesetting}
is fulfilled, 
let 
$ 
  A \colon \mathbb{R}^d \times [0,T] 
  \times \mathbb{R}^m \rightarrow   
  \mathbb{R}^{ d \times d } 
$ 
and 
$ 
  b \colon \mathbb{R}^d 
  \times [0,T] \times 
  \mathbb{R}^m \rightarrow 
  \mathbb{R}^d 
$ 
be Borel measurable functions 
and let $ t_K \in (0, T] $, 
$ K \subset D $ non-empty compact set, 
be a family of real numbers 
such that 
$ 
  I - A( x, t, y ) 
  \in \mathbb{R}^{ d \times d } 
$ 
is invertible 
for all 
$ (x,t,y) \in K \times [0, t_K ] \times \mathbb{R}^m $ 
and all non-empty compact sets $ K \subset D $. 
Moreover, assume that 
\begin{equation}
  \mathbb{R}^d \times [0,T] 
  \times \mathbb{R}^m 
  \ni
  (x,t,y)
  \mapsto
  A( x, t, y ) \, x + b( x, t, y ) 
  \in \mathbb{R}^d
\end{equation}
is 
$ \left( \mu, \sigma \right) 
$-consistent with respect to Brownian motion,
that
\begin{equation}
  \phi( x, t, y ) = 
  \big( I - A( x, t, y ) 
  \big)^{ \! -1 }
  \big( x + b(x, t, y ) \big) 
  - x 
\end{equation}
for all $ (x,t,y) \in K \times [0,t_K] \times \mathbb{R}^m $
and all non-empty compact sets $ K \subset D $
and that
\begin{equation}
\label{eq:propA_1}
  \limsup_{ t \searrow 0 }\left(
    \frac{ 1 }{ t }   
    \cdot
    \sup_{ x \in K }
    \mathbb{E}\!\left[
      \big\|
        ( I - A( x, t, W_t ) 
        )^{ - 1 } 
        A( x, t, W_t )
      \big\|^2
    \right]
  \right)
  = 0 ,
\end{equation}
\begin{equation}
\label{eq:propA_2}
  \limsup_{ t \searrow 0 }\left(
    \frac{ 1 }{ t }   
    \cdot
    \sup_{ x \in K }
    \mathbb{E}\!\left[
      \big\|
        A( x, t, W_t ) \, x + b( x, t, W_t )
      \big\|^2
    \right]
  \right)
  < \infty
\end{equation}
for all non-empty compact sets $ K \subset D $.
Then
$ 
  \phi \colon \mathbb{R}^d 
  \times [0,T] \times 
  \mathbb{R}^m \rightarrow 
  \mathbb{R}^d 
$ 
is 
$ \left( \mu, \sigma \right) $-consistent with respect to Brownian motion.
\end{lemma}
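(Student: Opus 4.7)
The plan is to apply the comparison principle for consistency, namely Lemma~\ref{lem:phiphi}, with the reference scheme
\begin{equation*}
  \hat{\phi}(x,t,y) := A(x,t,y)\,x + b(x,t,y),
\end{equation*}
which is assumed to be $(\mu,\sigma)$-consistent with respect to Brownian motion. The key algebraic observation, which I would establish first, is the identity
\begin{equation*}
  \phi(x,t,y) - \hat{\phi}(x,t,y)
  = \bigl(I - A(x,t,y)\bigr)^{-1} A(x,t,y)\,\hat{\phi}(x,t,y)
\end{equation*}
valid on $K \times [0,t_K] \times \mathbb{R}^m$ for every non-empty compact $K \subset D$; this follows from rewriting $(I - A)^{-1}(x + b) - x = (I-A)^{-1}\bigl((x+b) - (I-A)x\bigr) = (I-A)^{-1}(Ax + b)$ and subtracting $\hat{\phi}$.

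Next I would verify the hypothesis~\eqref{eq_prop_A1} of Lemma~\ref{lem:phiphi}. By the Cauchy--Schwarz inequality together with sub-multiplicativity of the operator norm, for any non-empty compact $K \subset D$ and all sufficiently small $t \in (0,t_K]$,
\begin{equation*}
\begin{split}
  &\frac{1}{t}\sup_{x \in K} \mathbb{E}\bigl[\|\phi(x,t,W_t) - \hat{\phi}(x,t,W_t)\|\bigr] \\
  &\leq \sup_{x \in K} \left(\frac{1}{\sqrt{t}} \left\|\bigl(I - A(x,t,W_t)\bigr)^{-1} A(x,t,W_t)\right\|_{L^2(\Omega;\mathbb{R}^{d \times d})}\right) \\
  &\qquad \cdot \sup_{x \in K}\left(\frac{1}{\sqrt{t}}\left\|\hat{\phi}(x,t,W_t)\right\|_{L^2(\Omega;\mathbb{R}^d)}\right).
\end{split}
\end{equation*}
Assumption~\eqref{eq:propA_1} forces the first supremum to vanish as $t \searrow 0$, while assumption~\eqref{eq:propA_2} keeps the second supremum bounded. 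Hence their product tends to $0$, which is precisely condition~\eqref{eq_prop_A1} in Lemma~\ref{lem:phiphi}. Invoking that lemma then yields $(\mu,\sigma)$-consistency of $\phi$ with respect to Brownian motion.

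The bookkeeping obstacle, rather than any analytic difficulty, is to handle the domain on which $\phi$ is defined via the matrix inverse: one must observe that since the consistency conditions in Definition~\ref{def:consistent} involve only suprema over non-empty compact $K \subset D$ and only the limit as $t \searrow 0$, it suffices to work on $K \times (0, t_K] \times \mathbb{R}^m$ where $I - A(x,t,y)$ is invertible by hypothesis, so the identity for $\phi - \hat{\phi}$ and the Cauchy--Schwarz estimate above are justified throughout the relevant range. All other steps are routine applications of the already-proved Lemma~\ref{lem:phiphi} and standard inequalities.
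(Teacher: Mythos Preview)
Your proposal is correct and follows essentially the same approach as the paper: derive the identity $\phi - \hat{\phi} = (I-A)^{-1}A\,\hat{\phi}$ with $\hat{\phi} = Ax + b$, then combine the Cauchy--Schwarz (H\"older) inequality with assumptions~\eqref{eq:propA_1} and~\eqref{eq:propA_2} to verify the hypothesis of Lemma~\ref{lem:phiphi}. The paper's proof is slightly terser but the structure and all key ingredients are identical.
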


\begin{proof}[Proof of 
Lemma~\ref{lem:Ab}]
Note that
\begin{equation}
\label{eq:propA_3}
\begin{split}
&
  \phi( x, t, y ) - 
  \left(
    A( x, t, y ) \, x + b( x, t, y )
  \right)
\\ & =
  \left(
    \left( I - A( x, t, y ) \right)^{-1} 
    -
    I
  \right)
  \left( x + b( x, t, y ) \right)
  -
  A( x, t, y ) \, x
\\ & =
  \left( 
    I - A( x, t, y ) 
  \right)^{-1}
  A( x, t, y )
  \left( x + b( x, t, y ) \right)
  -
  A( x, t, y ) \, x
\\ & =
  \left(
    \left( I - A( x, t, y ) \right)^{-1}
    -
    I
  \right)
  A( x, t, y ) \, x
  +
  \left( I - A( x, t, y ) \right)^{-1}
  A( x, t, y ) \,
  b( x, t, y )
\\ & =
  \left( I - A( x, t, y ) \right)^{-1}
  A( x, t, y )
  \left(
    A( x, t, y ) \, x
    +
    b( x, t, y )
  \right)
\end{split}  
\end{equation}
for all $ (x,t,y) \in K \times [0,t_K] \times \R^m $ 
and all non-empty compact sets
$ K \subset D $.
Combining 
equation~\eqref{eq:propA_3}, 
H{\"o}lder's inequality, equation~\eqref{eq:propA_1}, inequality~\eqref{eq:propA_2} 
and
Lemma~\ref{lem:phiphi} then 
completes the proof of 
Lemma~\ref{lem:Ab}.
\end{proof}

\subsection{Fully drift-implicit numerical schemes for SDEs}

The next lemma gives 
sufficient conditions to ensure that
the fully drift-implicit Euler scheme
described by
\begin{equation}
\begin{split}
  Y_{ n + 1 }^N
& =
  Y_n^N
  +
  \mu\!\left( 
    Y^N_{ n + 1 }
  \right)
  \tfrac{ T }{ N }
  +
  \sigma\!\left(
    Y^N_n
  \right)
  \Delta W^N_n
\end{split}
\end{equation}
for all $ n \in \{ 0, 1, \ldots, N-1 \} $ 
and all sufficiently large $ N \in \mathbb{N} $
is
$ \left( \mu, \sigma \right) $-consistent with respect to Brownian motion
where
$ D = \R^d $
and 
where
$ 
  \mu \colon \R^d \to \R^d 
$ 
satisfies a one-sided linear growth bound 
(see 
Lemma~\ref{lem:full_implicit}
for the detailed assumptions).

\begin{lemma}[Fully drift-implicit
Euler-Maruyama scheme for SDEs
driven by Brownian motions]
\label{lem:full_implicit}
Assume that the setting in 
Section~\ref{sec:convergencesetting}
is fulfilled, 
assume that $ D = \R^d $,
let $ c, \kappa \in [0,\infty) $, $ \theta \in (0,T] $ 
be real numbers
with 
$
  \left< x, \mu(x) \right>
  \leq 
  c 
  \left( 
    1 + \| x \|^2
  \right)
$
and
$
  \left\| 
    \mu(x) - \mu(y)
  \right\|
  \leq
  c
  \left( 
    1 
    + \left\| x \right\|^{ \kappa }
    + \left\| y \right\|^{ \kappa }
  \right)
  \left\| x - y \right\|
$
for all
$ x, y \in \R^d $
and assume that
\begin{equation}
  \phi( x, t, y ) = 
  \mu\big(
    x + \phi(x,t,y)
  \big)
  \, t
  +
  \sigma(x) \, y
\end{equation}
for all $ (x,t,y) \in \R^d \times [0,\theta] \times \R^m $.
Then
$ 
  \phi \colon \mathbb{R}^d 
  \times [0,T] \times 
  \mathbb{R}^m \rightarrow 
  \mathbb{R}^d 
$ 
is 
$ \left( \mu, \sigma \right) $-consistent with respect to Brownian motion.
\end{lemma}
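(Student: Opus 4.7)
\noindent
My plan is to verify the two consistency estimates in Definition~\ref{def:consistent} directly from the defining equation $\phi(x,t,y) = \mu(x+\phi(x,t,y))\,t + \sigma(x)\,y$ by viewing $Z := Z_{x,t,y} := x + \phi(x,t,y)$ as the solution of the implicit equation $Z = \xi + \mu(Z)\,t$ with $\xi := x + \sigma(x)\,y$. The first preparatory step is an a priori bound on $\|Z\|$. Taking the inner product of the defining equation with $Z$ and using the dissipativity hypothesis $\langle z,\mu(z)\rangle\leq c(1+\|z\|^2)$ gives
\begin{equation*}
  \|Z\|^2
  \;=\;
  \langle \xi,Z\rangle + \langle \mu(Z),Z\rangle\,t
  \;\leq\;
  \|\xi\|\,\|Z\| + c\,(1+\|Z\|^2)\,t,
\end{equation*}
from which, for $t\in(0,\tfrac{1}{2c}\wedge\theta]$, I will solve a quadratic inequality to deduce $\|Z_{x,t,y}\|\leq C\,(1+\|x+\sigma(x)\,y\|)$ for some constant $C$ independent of $(x,t,y)$ (with $t$ small). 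Taking $y=W_t$, the Gaussian moments of $W_t$ together with compactness of $K$ yield the uniform moment bound $\sup_{x\in K}\sup_{t\in(0,t_K]}\mathbb{E}[\|Z_{x,t,W_t}\|^p]<\infty$ for every $p\in[1,\infty)$ and every non-empty compact $K\subset\mathbb{R}^d$.

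Next I will verify condition \eqref{ass:consistency_1}. From the defining equation, $\sigma(x)\,W_t - \phi(x,t,W_t) = -\mu(Z_{x,t,W_t})\,t$, hence
\begin{equation*}
  \tfrac{1}{\sqrt{t}}\,\mathbb{E}\!\left[\|\sigma(x)W_t-\phi(x,t,W_t)\|\right]
  = \sqrt{t}\,\mathbb{E}\!\left[\|\mu(Z_{x,t,W_t})\|\right].
\end{equation*}
The polynomial Lipschitz hypothesis applied with $y=0$ gives $\|\mu(z)\|\leq\|\mu(0)\|+c(1+\|z\|^\kappa)\|z\|$, so $\mathbb{E}[\|\mu(Z_{x,t,W_t})\|]$ is bounded uniformly in $x\in K$ and small $t$ thanks to the moment bound from the previous paragraph, and the factor $\sqrt{t}$ drives the expression to $0$.

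For condition \eqref{ass:consistency_2}, using $\mathbb{E}[W_t]=0$ I obtain
\begin{equation*}
  \mu(x) - \tfrac{1}{t}\,\mathbb{E}[\phi(x,t,W_t)]
  \;=\; \mu(x) - \mathbb{E}[\mu(Z_{x,t,W_t})],
\end{equation*}
so that, by the polynomial Lipschitz bound and the Cauchy--Schwarz inequality,
\begin{equation*}
  \|\mu(x)-\mathbb{E}[\mu(Z_{x,t,W_t})]\|
  \leq c\,\big\|1+\|x\|^\kappa+\|Z_{x,t,W_t}\|^\kappa\big\|_{L^2(\Omega;\R)}
  \cdot\|Z_{x,t,W_t}-x\|_{L^2(\Omega;\R^d)}.
\end{equation*}
The first factor is uniformly bounded on $K$ by the $L^{2\kappa}$ moment bound on $Z$. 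For the second factor I will exploit the identity $Z_{x,t,W_t}-x = \mu(Z_{x,t,W_t})\,t + \sigma(x)W_t$, which together with the uniform moment bound on $\mu(Z_{x,t,W_t})$ and $\|W_t\|_{L^2}=\sqrt{mt}$ yields $\|Z_{x,t,W_t}-x\|_{L^2(\Omega;\R^d)}=O(\sqrt{t})$ uniformly in $x\in K$, and hence the whole right-hand side tends to $0$ uniformly on $K$.

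The main obstacle is the a priori bound in the first paragraph: the dissipativity assumption $\langle z,\mu(z)\rangle\leq c(1+\|z\|^2)$ is weaker than a one-sided Lipschitz condition and does not directly provide contraction of the fixed-point map, but the quadratic inequality argument above circumvents this and is what makes the moment bounds propagate through the implicit step. Once that bound is in hand, the consistency estimates reduce to routine applications of Hölder's inequality combined with the polynomial Lipschitz hypothesis on $\mu$.
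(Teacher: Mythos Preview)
Your proposal is correct and follows essentially the same route as the paper: the a priori bound on $Z=x+\phi(x,t,y)$ via the dissipativity inequality (the paper uses Young's inequality where you solve a quadratic, which amounts to the same thing), then the polynomial growth bound $\|\mu(z)\|\leq(3c+\|\mu(0)\|)(1+\|z\|^{\kappa+1})$ to control $\mu(Z)$, and finally the identities $\sigma(x)W_t-\phi=-\mu(Z)t$ and $\mu(x)-\tfrac{1}{t}\E[\phi]=\E[\mu(x)-\mu(Z)]$ combined with H\"older/Cauchy--Schwarz to verify the two consistency limits. The only cosmetic difference is that the paper keeps $\sup_{x\in K}$ inside the expectation throughout, but your order of sup and expectation is equally sufficient for Definition~\ref{def:consistent}.
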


\begin{proof}[Proof of 
Lemma~\ref{lem:full_implicit}]
First of all, observe that
the Cauchy-Schwarz inequality 
and the assumption that
$
  \left< v, \mu(v) \right>
  \leq 
  c \left( 1 + \| v \|^2 \right)
$
for all $ v \in \R^d $
imply that
\begin{equation}
\begin{split}
&
  \left\| x + \phi( x, t, y )
  \right\|^2
  =
  \left< 
    x + \phi( x, t, y ), 
    x + \phi( x, t, y )
  \right>
\\ & =
  \left< x + \phi( x, t, y ), x + \sigma(x) y \right>
  +
  \left< x + \phi( x, t, y ), \mu( x + \phi(x,t,y) ) \right> t
\\ & \leq
  \left\| x + \phi( x, t, y ) \right\| \left\| x + \sigma( x ) y \right\|
  +
  c t
  \left( 1 + \| x + \phi( x, t, y ) \|^2 \right)
\end{split}
\end{equation}
and Young's inequality hence gives that
\begin{equation}
\begin{split}
&
  \left\| x + \phi( x, t, y )
  \right\|^2
\\ & \leq
  \left\| x + \phi( x, t, y ) \right\| \left\| x + \sigma( x ) y \right\|
  +
  c t
  +
  c t
  \left\| x + \phi( x, t, y ) \right\|^2 
\\ & \leq
  \tfrac{ 1 }{ 2 }
  \left\| x + \phi( x, t, y ) \right\|^2 
  +
  \tfrac{ 1 }{ 2 }
  \left\| x + \sigma( x ) y \right\|^2
  +
  c t
  +
  c t
  \left\| x + \phi( x, t, y ) \right\|^2 
\end{split}
\end{equation}
for all $ (x, t, y) \in \R^d \times [0,\theta] \times \R^m $.
Rearranging therefore yields
\begin{equation}
\begin{split}
&
  \left(
    1 - c t - \tfrac{ 1 }{ 2 }
  \right)
  \left\| x + \phi( x, t, y )
  \right\|^2
\leq
  \tfrac{ 1 }{ 2 }
  \left\| x + \sigma( x ) y \right\|^2
  +
  c t
\end{split}
\end{equation}
for all $ (x, t, y) \in \R^d \times [0,\theta] \times \R^m $
and this shows that
\begin{equation}
\begin{split}
&
  \left\| x + \phi( x, t, y )
  \right\|^2
\leq
  \frac{
    \tfrac{ 1 }{ 2 }
    \left\| x + \sigma( x ) y \right\|^2
    +
    c t
  }{
    \left(
      1 - c t - \tfrac{ 1 }{ 2 }
    \right)
  }
\leq
  \frac{
    \left\| x \right\|^2 
    + 
    \left\| \sigma( x ) y \right\|^2
    +
    c T
  }{
    \left(
      1 - c t - \tfrac{ 1 }{ 2 }
    \right)
  }
\\ & \leq
  \frac{
    \left\| x \right\|^2 
    + 
    \left\| \sigma( x ) 
    \right\|^2_{
      L( \R^m , \R^d )
    } 
    \left\| y \right\|^2
    +
    c T
  }{
    \left(
      1 - c t - \tfrac{ 1 }{ 2 }
    \right)
  }
\end{split}
\end{equation}
for all $ (x, t, y) \in \R^d \times [0,\theta] \times \R^m $
with $ c t < \frac{ 1 }{ 2 } $
and hence
\begin{equation}
\label{eq:implicit_x_phi_bound}
\begin{split}
&
  \limsup_{ t \searrow 0 }
  \left(
  \E\!\left[
  \sup_{ x \in K }
  \left\| x + \phi( x, t, W_t )
  \right\|^r
  \right]
  \right)
  < \infty
\end{split}
\end{equation}
for all $ r \in [0,\infty) $
and all non-empty compact sets $ K \subset \R^d $.
Combining this with the estimate
\begin{equation}
\begin{split}
&
  \left\|
    \mu(x) 
  \right\|
\leq
  \left\|
    \mu(x)
    - \mu(0)
  \right\|
  +
  \left\|
    \mu(0) 
  \right\|
\leq
  c
  \left(
    2 + \left\| x \right\|^{ \kappa }
  \right)
  \left\| x \right\|
  +
  \left\|
    \mu(0) 
  \right\|
\\ & \leq
  c
  \left(
    2 
    \left\| x \right\|
    + \left\| x \right\|^{ ( \kappa + 1 ) }
  \right)
  +
  \left\|
    \mu(0) 
  \right\|
\leq
  3 c
  \left(
    1 
    + \left\| x \right\|^{ ( \kappa + 1 ) }
  \right)
  +
  \left\|
    \mu(0) 
  \right\|
\\ & \leq
  \left(
    3 c +
    \left\|
      \mu(0) 
    \right\|
  \right)
  \left(
    1 
    + \left\| x \right\|^{ ( \kappa + 1 ) }
  \right)
\end{split}
\end{equation}
for all $ x \in \R^d $
results in
\begin{equation}
\label{eq:implicit_mu_x_phi_bound}
\begin{split}
&
  \limsup_{ t \searrow 0 }
  \left(
  \E\!\left[
    \sup_{ x \in K }
    \left\| 
      \mu\big(
        x + \phi( x, t, W_t )
      \big)
    \right\|^r
  \right]
  \right)
\\ & \leq
  \left(
    6 c 
    +
    2
    \left\|
      \mu(0) 
    \right\|
  \right)^r
  \left(
    1 
    + 
    \limsup_{ 
      t \searrow 0
    }
    \E\!\left[
    \sup_{ x \in K }
    \left\| 
      x + \phi( x, t, W_t )
    \right\|^{ r ( \kappa + 1 ) }
    \right]
  \right)
  < \infty
\end{split}
\end{equation}
for all $ r \in [0,\infty) $
and all non-empty compact sets $ K \subset \R^d $.
This implies that
\begin{equation}
\label{eq:full_implicit_phi_0}
\begin{split}
&
  \limsup_{ t \searrow 0 }
  \left(
  \E\!\left[
    \sup_{ x \in K }
    \left\| 
      \phi( x, t, W_t )
    \right\|^r
  \right]
  \right)
\\ & =
  \limsup_{ t \searrow 0 }
  \left(
  \E\!\left[
    \sup_{ x \in K }
    \left\| 
      \mu\big(
        x +
        \phi( x, t, W_t )
      \big)
      \, t
      +
      \sigma( x ) W_t
    \right\|^r
  \right]
  \right)
\\ & \leq
  2^r
  \limsup_{ t \searrow 0 }
  \left(
  t^r \cdot
  \E\!\left[
    \sup_{ x \in K }
    \left\| 
      \mu\big(
        x +
        \phi( x, t, W_t )
      \big)
    \right\|^r
  \right]
  \right)
\\ & 
  +
  2^r
  \limsup_{ t \searrow 0 }
  \left(
  \sup_{ x \in K }
  \left\|
    \sigma( x ) 
  \right\|_{
    L( \R^m, \R^d )
  }^r
  \E\big[
    \left\|
      W_t
    \right\|^r
  \big]
  \right)
= 0
\end{split}
\end{equation}
for all $ r \in ( 0, \infty ) $ 
and all non-empty compact sets $ K \subset \R^d $.
In addition, \eqref{eq:implicit_mu_x_phi_bound}
shows that
\begin{equation}
\label{eq:full_implicit_consisA}
\begin{split}
&
  \lim_{ 
    t \searrow 0 
  }
  \left(
    \tfrac{ 1 }{ \sqrt{ t } }
    \cdot
    \sup_{ x \in K }
    \E\big[
      \left\|
        \phi( x, t, W_t )
        -
        \sigma(x) W_t
      \right\|
    \big]
  \right)
\\ & =
  \lim_{ 
    t \searrow 0 
  }
  \left(
    \sqrt{ t } 
    \cdot
    \sup_{ x \in K }
    \E\big[
      \left\|
        \mu( x + \phi( x, t, W_t ) )
      \right\|
    \big]
  \right)
\\ & \leq
  \left(
    \lim_{ t \searrow 0 }
    \sqrt{ t }
  \right)
  \left(
    \limsup_{ 
      t \searrow 0 
    }
    \sup_{ x \in K }
    \E\big[
      \left\|
        \mu( x + \phi( x, t, W_t ) )
      \right\|
    \big]
  \right)
  = 0
\end{split}
\end{equation}
for all non-empty compact sets $ K \subset \R^d $.
In the next step we note that
\begin{equation}
\begin{split}
&
  \limsup_{ t \searrow 0 }
  \sup_{ x \in K }
  \left\|
    \mu( x )
    -
    \tfrac{ 1 }{ t } 
    \cdot
    \E\big[ \phi( x, t, W_t ) \big] 
  \right\|
\\ & =
  \limsup_{ t \searrow 0 }
  \sup_{ x \in K }
  \left\|
    \E\big[   
      \mu( x )
      -
      \mu\big( 
        x + \phi( x, t, W_t ) 
      \big)
    \big] 
  \right\|
\\ & \leq
  \limsup_{ t \searrow 0 }
  \E\!\left[   
    \sup_{ x \in K }
    \left\|
      \mu( x )
      -
      \mu\big( 
        x + \phi( x, t, W_t ) 
      \big)
    \right\|
  \right] 
\\ & \leq
  \limsup_{ t \searrow 0 }
  \E\!\left[  
    c
    \left(
      1 
      +
      \sup_{ x \in K }
      \| x \|^{ \kappa }
      +
      \sup_{ x \in K }
      \left\|
        x + \phi( x, t, W_t ) 
      \right\|^{ \kappa }
    \right)
    \sup_{ x \in K }
    \left\|
      \phi( x, t, W_t ) 
    \right\|
  \right] 
\end{split}
\end{equation}
and H\"{o}lder's inequality 
together with 
\eqref{eq:implicit_x_phi_bound}
and 
\eqref{eq:full_implicit_phi_0} hence 
implies that
\begin{equation}
\label{eq:full_implicit_consisB}
\begin{split}
&
  \limsup_{ t \searrow 0 }
  \sup_{ x \in K }
  \left\|
    \mu( x )
    -
    \tfrac{ 1 }{ t } 
    \cdot
    \E\big[ \phi( x, t, W_t ) \big] 
  \right\|
\\ & \leq
  3 c
  \left(
  \limsup_{ t \searrow 0 }
  \E\!\left[  
    \left(
      1 
      +
      \sup_{ x \in K }
      \| x \|^{ 2 \kappa }
      +
      \sup_{ x \in K }
      \left\|
        x + \phi( x, t, W_t ) 
      \right\|^{ 2 \kappa }
    \right)
  \right]
  \right)^{ \! \frac{ 1 }{ 2 } }
\\ & \quad 
  \cdot
  \left(
  \limsup_{ t \searrow 0 }  
  \E\!\left[
    \sup_{ x \in K }
    \left\|
      \phi( x, t, W_t ) 
    \right\|^2
  \right] 
  \right)^{ \! \frac{ 1 }{ 2 } }
= 0
\end{split}  
\end{equation}
for all non-empty compact sets
$ K \subset \R^d $.
Combining 
\eqref{eq:full_implicit_consisA} and 
\eqref{eq:full_implicit_consisB}
completes the proof of 
Lemma~\ref{lem:full_implicit}.
\end{proof}

\chapter{Examples of SDEs}
\label{sec:examples}

In this chapter,
we apply the strong convergence results of
Chapter~\ref{chap:convergence}
to a selection of examples
of SDEs with non-globally Lipschitz
continuous coefficients.
In Section~\ref{sec:setting00},
we first describe 
the general setting in which
these examples appear 
and
then treat each example
separately in the subsequent 
sections.


\section{Setting and
assumptions}
\label{sec:setting00}

%
%
The following setting is used
throughout
Chapter~\ref{sec:examples}.
Let $ T \in (0,\infty) $, 
$ d, m \in \mathbb{N} $,
let 
$ ( \Omega, \mathcal{F}, \mathbb{P} ) $
be a probability space
with a normal filtration
$ ( \mathcal{F}_t )_{ t \in [0,T] } $
and 
let 
$ 
  W = ( W^{(1)}, \dots, W^{(m)} )
  \colon [0,T] \times \Omega
  \rightarrow \mathbb{R}^m 
$
be a standard 
$ ( \mathcal{F}_t )_{ t \in [0,T] } 
$-Brownian motion.
Moreover, let 
$ D \subset \mathbb{R}^d $ 
be an open set,
let 
$ 
  \mu \colon \mathbb{R}^d
  \rightarrow \mathbb{R}^d 
$
and
$ 
  \sigma \colon 
  \mathbb{R}^d
  \rightarrow \mathbb{R}^{ d \times m }
$
be Borel measurable functions 
such that
$ 
  \mu|_D \colon D 
  \rightarrow \mathbb{R}^d 
$
and
$
  \sigma|_D
  \colon 
  D
  \rightarrow \mathbb{R}^{ d \times m }
$ 
are locally
Lipschitz continuous
and let
$
   X = ( X^{(1)}, \dots, X^{(d)} )
  \colon [0,T] \times \Omega
  \rightarrow D 
$ be an
$ ( \mathcal{F}_t )_{ t \in [0,T] } 
$-adapted stochastic
process with continuous
sample paths satisfying
$
  \mathbb{E}\big[
    \| X_0 \|^p
  \big] < \infty
$
for all $ p \in [1,\infty) $
and
\begin{equation}
\label{eq:SDE00}
  X_t
=
  X_0
+
  \int_0^t 
    \mu( X_s )
  \, ds
+
  \int_0^t 
    \sigma( X_s )
  \, dW_s
\end{equation}
$ \mathbb{P} $-a.s.\ for
all $ t \in [0,T] $.
Thus we assume the existence of a solution process
of the SDE~\eqref{eq:SDE00}.
%
%
Our goal is then to approximate
the solution process
$
  X \colon [0,T] \times \Omega
  \rightarrow D 
$
of the SDE~\eqref{eq:SDE00}
in the strong sense.
For this we concentrate
for simplicity
on the increment-tamed 
Euler-Maruyama
approximations in 
Subsection~\ref{sec:strongCtamed}.
More precisely, let
$ \bar{Y}^N \colon 
[0,T] \times \Omega
\rightarrow \mathbb{R}^d $,
$ N \in \mathbb{N} $,
be a sequence of 
stochastic processes
defined through 
$ 
  \bar{Y}^N_0 := X_0 
$ 
and
\begin{equation}
\label{eq:scheme00}
  \bar{Y}^N_{ t }
=
  \bar{Y}^N_{ 
    \frac{ n T }{ N }
  }
+
  \frac{
  \left(
    \tfrac{ t N }{ T } - n
  \right) 
  \big(
    \mu( 
      \bar{Y}^N_{ n T / N } 
    ) 
    \frac{ T }{ N }
    + 
    \sigma( 
      \bar{Y}^N_{ n T / N } 
    ) 
    \,
    (
      W_{ (n + 1) T / N } 
      -
      W_{ n T / N } 
    )
  \big)
  }{
    \max\!\big( 1, 
      \frac{ T }{ N }
      \| 
        \mu( 
          \bar{Y}^N_{ n T / N } 
        ) 
        \frac{ T }{ N }
        + 
        \sigma( 
          \bar{Y}^N_{ n T / N } 
        ) \,
        (
          W_{ (n + 1) T / N } 
          -
          W_{ n T / N } 
        )
      \| 
    \big)
  }
\end{equation}
for all 
$ 
  t \in 
  \big(
    \frac{ n T }{ N },
    \frac{ (n+1) T }{ N }
  \big] 
$,
$ n \in \{ 0, 1, \dots, N-1 \} $
and all $ N \in \mathbb{N} $.
In the next sections,
we present several
examples from the
literature for the
SDE~\eqref{eq:SDE00} 
and we show under suitable
assumptions that the 
numerical approximation
processes~\eqref{eq:scheme00}
converge strongly to the solution
process of each of the following 
examples.
To the best of our knowledge,
this is the first result in the 
literature that proves strong 
convergence 
for the stochastic van der Pol oscillator~\eqref{eq:ex_Vanderpol}, 
for the stochastic Duffing-van der Pol oscillator~\eqref{eq:ex_Duffing},
for the stochastic Lorenz equation~\eqref{eq:Lorenz},
for the stochastic Brusselator~\eqref{eq:ex_Brusselator},
for the stochastic SIR model~\eqref{eq:ex_SIR},
for the SDE~\eqref{eq:experimental.psychology}
from experimental psychology
and for the Lotka-Volterra 
predator-prey model~\eqref{eq:ex_Volterra}.


\section{Stochastic van der Pol oscillator}
\label{sec:vanderpol}

The van der Pol oscillator 
was proposed 
to describe
stable oscillation;
see van der 
Pol~\cite{VanDerPol1926} 
and the references therein.
As, for instance, 
in Timmer 
et.\ al~\cite{TimmerEtAl2000},  
we consider
a stochastic version with additive noise
(see also 
Leung~\cite{Leung1995} 
for more general 
stochastic versions of the van 
der Pol oscillator and also 
equation~(4.1)
in Schurz~\cite{Schurz2003}
for a generalized stochastic
van der Pol oscillator with
multiplicative noise).
More formally, assume
that the setting in 
Section~\ref{sec:setting00}
is fulfilled,
let
$ 
  \alpha, \beta, \gamma, \delta
  \in (0,\infty)
$
be real numbers
and suppose that
$ d = 2 $,
$ m = 1 $,
$ D = \mathbb{R}^2 $
and 
\begin{equation}
  \mu\!\left( 
    \begin{array}{c}
      x_1
      \\ 
      x_2 
    \end{array}
  \right)
=
  \left( 
    \begin{array}{c}
        x_2
      \\
        \alpha 
        \left( \gamma - ( x_1 )^2 \right)
        x_2 
      - \delta x_1 
    \end{array}
  \right) ,
\qquad
  \sigma\!\left( 
    \begin{array}{c}
      x_1
      \\ 
      x_2 
    \end{array}
  \right)
=
  \left(
    \begin{array}{ccc}
      0 
    \\
      \beta  
    \end{array}
  \right)
\end{equation}
for all
$ 
  x = (x_1, x_2)
  \in \mathbb{R}^2
$.
Then the SDE~\eqref{eq:SDE00}
is the
stochastic 
van der Pol equation
\begin{equation}
\label{eq:ex_Vanderpol}
\begin{split}
  dX_t^{(1)}
  & =
  X_t^{(2)} \, dt,
\\
  dX_t^{(2)}
 &=
  \left[
    \alpha
    \left( 
      \gamma -
      ( 
        X_t^{(1)} 
      )^2 
    \right) \!
    X_t^{(2)}
    - 
    \delta X_t^{(1)}
  \right] dt
  +
  \beta \, dW_t
\end{split}
\end{equation}
for 
$ t \in [0,\infty) $.
In an abbreviated form, 
the SDE~\eqref{eq:ex_Vanderpol}
can also be written as
\begin{equation}
  \ddot{X}_t
  -
  \alpha 
  \left(
    \gamma - (X_t)^2
  \right) \!
  \dot{X}_t
  +
  \delta X_t
  = 
  \beta \dot{W}_t
\end{equation}
for 
$ t \in [0,\infty) $.
Clearly, the diffusion coefficient
$ 
  \sigma \colon \R^2 \to \R^2 
$
in 
\eqref{eq:ex_Vanderpol}
is globally Lipschitz 
continuous and 
the drift coefficient 
$ \mu \colon \R^2 \to \R^2 $ in 
\eqref{eq:ex_Vanderpol}
is not 
globally Lipschitz 
continuous.
It is also well known that
the drift coefficient 
$ \mu \colon \R^2 \to \R^2 $ in 
\eqref{eq:ex_Vanderpol}
is not
globally one-sided
Lipschitz continuous.
Indeed, note that
\begin{equation}
\begin{split}
&
  \left< 
    \left(
      \begin{array}{c}
        u \\ u
      \end{array}
    \right)
    -
    \left(
      \begin{array}{c}
        0 \\ 2 u
      \end{array}
    \right)
    ,
    \mu\!\left(
      \begin{array}{c}
        u \\ u
      \end{array}
    \right)
    -
    \mu\!\left(
      \begin{array}{c}
        0 \\ 2 u
      \end{array}
    \right)
  \right>
\\ & =
  \left< 
    \left(
      \begin{array}{c}
        u \\ - u
      \end{array}
    \right)
    ,
    \left(
      \begin{array}{c}
        u \\ \alpha \left( \gamma - u^2 \right) u - \delta u
      \end{array}
    \right)
    -
    \left(
      \begin{array}{c}
        2 u \\ 2 \alpha \gamma u
      \end{array}
    \right)
  \right>
\\ & =
  \left< 
    \left(
      \begin{array}{c}
        u \\ - u
      \end{array}
    \right)
    ,
    \left(
      \begin{array}{c}
        - u \\ - \alpha \gamma u - \alpha u^3 - \delta u 
      \end{array}
    \right)
  \right>
  =
  \left( \alpha \gamma + \delta - 1 \right) u^2 
  + \alpha u^4
\end{split}
\end{equation}
for all $ u \in \R $
and hence
there exists no real number 
$ c \in \R $ such that it holds for every $ x, y \in \R^2 $ that
\begin{equation}
\label{eq:one_sided_Lipschitz}
  \left< x - y, \mu( x ) - \mu( y ) \right>
  \leq
  c \left\| x - y \right\|^2 .
\end{equation}
However, the drift
coefficient in 
\eqref{eq:ex_Vanderpol}
satisfies the global
one-sided linear growth
condition
\begin{equation}
\label{eq:onesidedgrowth}
  \left< x, \mu(x) \right>
\leq
  c \left( 1 + \| x \|^2 \right)
\end{equation}
for all $ x \in \R^d $
and some $ c \in [0,\infty) $
(see, e.g., Example~16 in Subsection~3.2.2 in Boccara~\cite{boccara2010}).
Therefore,
Corollary~\ref{cor:CT1} applies
here with the Lyapunov-type 
function
$
  V \colon \mathbb{R}^2
  \rightarrow [1,\infty)
$
given by
\begin{equation}
  V(x) = 
  1 + \| x \|^2
\end{equation}
for all 
$ x = (x_1, x_2) \in \mathbb{R}^2 $
(cf., e.g., page~75 in Boccara~\cite{boccara2010})
and we obtain
\begin{equation}
  \lim_{ N \rightarrow \infty }
    \sup_{ t \in [0,T] }
    \mathbb{E}\big[
      \| 
        X_t - \bar{Y}^N_t 
      \|^{ 
        q
      }
    \big] 
  = 0
\end{equation}
for all 
$
  q \in (0,\infty)
$.

\section{Stochastic 
Duffing-van 
der Pol oscillator}

The Duffing equation is a further model for an oscillator.
The Duffing-van der 
Pol equation unifies 
both the Duffing
equation and 
the van der Pol equation 
and has been
used, e.g., in 
certain aeroelasticity 
problems;
see
Holmes 
\citationand\ Rand~\cite{hr80}. 
As, for instance, in 
Schenk-Hopp{\'e}~\cite{SchenkHoppe1996}, 
we consider a 
stochastic version with an 
affine-linear noise 
term
(see, e.g., also 
Arnold, Namachchivaya 
\citationand\ Schenk-Hopp{\'e}~\cite{ass96},
Section~9.4
in Arnold~\cite{a98} 
and Section~13.1
in Kloeden \citationand\ 
Platen~\cite{kp92};
we also refer to
\cite{ass96} for
details and references on the 
physical background
of the stochastic Duffing-van der Pol oscillator). 
More precisely, assume
that the setting in 
Section~\ref{sec:setting00}
is fulfilled,
let
$ 
  (\alpha_1, \alpha_2, \alpha_3),
  (\beta_1, \beta_2, \beta_3)
  \in \mathbb{R}^3
$
be two triples of real numbers
with $\alpha_3\geq0$
and suppose that
$ d = 2 $,
$ m = 3 $,
$ D = \mathbb{R}^2 $
and 
\begin{equation}
  \mu\!\left( 
    \begin{array}{c}
      x_1
      \\ 
      x_2 
    \end{array}
  \right)
=
  \left( 
    \begin{array}{c}
      x_2
      \\ 
      \alpha_1 x_1 - \alpha_2 x_2 
      - \alpha_3 x_2 \left( x_1 \right)^2
      - \left( x_1 \right)^3
    \end{array}
  \right) 
\end{equation}
and
\begin{equation}
  \sigma\!\left( 
    \begin{array}{c}
      x_1
      \\ 
      x_2 
    \end{array}
  \right)
=
  \left(
    \begin{array}{ccc}
      0 & 0 & 0 
    \\
      \beta_1 x_1 & \beta_2 x_2 & \beta_3 
    \end{array}
  \right)
\end{equation}
for all
$ 
  x = (x_1, x_2)
  \in \mathbb{R}^2
$.
Then the SDE~\eqref{eq:SDE00}
is the stochastic 
Duffing-van der Pol equation
\begin{equation}
\label{eq:ex_Duffing}
\begin{split}
  dX_t^{(1)}
 &=
  X_t^{(2)} \, dt,
\\
  dX_t^{(2)}
 &=
  \left[
    \alpha_1 
    X_t^{(1)}
    -
    \alpha_2 
    X_t^{(2)}
    -
    \alpha_3 X_t^{(2)}
    \big( X_t^{(1)} \big)^{ 2 }
    -
    \big( X_t^{(1)} \big)^{ 3 }
  \right] dt
\\&\quad+
  \beta_1 X_t^{(1)} dW_t^{(1)}
  +
  \beta_2 X_t^{(2)} dW_t^{(2)}
  +
  \beta_3 \, dW_t^{(3)}
\end{split}
\end{equation}
for 
$ t \in [0,\infty) $.
In an abbreviated form, 
the SDE~\eqref{eq:ex_Duffing}
can also be written as
\begin{equation}
  \ddot{X}_t
  -
  \alpha_1 X_t
  +
  \alpha_2 \dot{X}_t
  +   
  \alpha_3 
  \dot{X}_t 
  (X_t)^2
  + ( X_t )^3
  = 
  \beta_1 
  X_t 
  \dot{W}_t^{(1)}
  +
  \beta_2
  \dot{X}_t 
  \dot{W}_t^{(2)}
  +
  \beta_3
  \dot{W}_t^{(3)}
\end{equation}
for 
$ t \in [0,\infty) $.
Clearly, the diffusion coefficient
$ \sigma $
in \eqref{eq:ex_Duffing}
is globally Lipschitz
continuous.
In addition, it is well know that
the drift 
coefficient $ \mu $ 
in \eqref{eq:ex_Duffing}
is not
globally one-sided
Lipschitz continuous
and also fails to
satisfy the global one-sided
linear growth
condition~\eqref{eq:onesidedgrowth}.
Indeed, note that
\begin{equation}
\begin{split}
&
  \left<  
    \left(
      \begin{array}{c}
        u \\ - 1
      \end{array}
    \right)
    ,
    \mu\!\left(
      \begin{array}{c}
        u \\ - 1
      \end{array}
    \right)
  \right>
  =
  \left<  
    \left(
      \begin{array}{c}
        u \\ - 1
      \end{array}
    \right)
    ,
    \left(
      \begin{array}{c}
        - 1 \\ \alpha_1 u + \alpha_2 + \alpha_3 u^2 - u^3
      \end{array}
    \right)
  \right>
\\ & =
  - \left( 1 + \alpha_1 \right) u  
  - \alpha_2 - \alpha_3 u^2 + u^3
\end{split}
\end{equation}
for all $ u \in \R $
and hence that there exists no real number $ c \in \R $
such that it holds for every $ x \in \R^2 $ that
$
  \left< x, \mu(x) \right>
\leq
  c \left( 1 + \| x \|^2 \right) 
$.
The function 
\begin{equation}
  \R^2 \ni (x_1, x_2) \mapsto 
  1 + \left( x_1 \right)^2 + 
  \left( x_2 \right)^2 \in [1,\infty) 
\end{equation}
is thus no Lyapunov-type function
for the SDE~\eqref{eq:ex_Duffing}.
However,
Corollary~\ref{cor:CT1} applies
here with the 
Lyapunov-type function
$
  V \colon \mathbb{R}^2
  \rightarrow [1,\infty)
$
given by
\begin{equation}
  V(x_1,x_2) = 
  1 + \left( x_1 \right)^4
  + 
  2 \left( x_2 \right)^2
\end{equation}
for all 
$ x = (x_1, x_2) \in \mathbb{R}^2 $
(cf., e.g., (8) in 
Holmes \citationand\ Rand~\cite{hr80})
and we hence obtain
$
  \lim_{ N \rightarrow \infty }
    \sup_{ t \in [0,T] }
    \mathbb{E}\big[
      \| 
        X_t - \bar{Y}^N_t 
      \|^{ 
        q
      }
    \big] 
  = 0
$
for all 
$
  q \in (0,\infty)
$.

\section{Stochastic Lorenz equation}

Lorenz~\cite{Lorenz1963} 
derived a 
three-dimensional 
system
as a simplified model of convection rolls in the atmosphere
and this equation became famous for its chaotic behaviour.
As, for instance, in Schmalfu\ss~\cite{Schmalfuss1997}, 
we consider a stochastic
version hereof with multiplicative noise.
Assume
that the setting in 
Section~\ref{sec:setting00}
is fulfilled,
let
$ 
  (\alpha_1, \alpha_2, \alpha_3),
  (\beta_1, \beta_2, \beta_3)
  \in \mathbb{R}^3
$
be two triples of real numbers
and suppose that
$ d = m = 3 $,
$ D = \mathbb{R}^3 $
and
\begin{equation}
  \mu\!\left( 
    \begin{array}{c}
      x_1
      \\ 
      x_2 
      \\ 
      x_3 
    \end{array}
  \right)
=
  \left( 
    \begin{array}{c}
      \alpha_1 \left( x_2 - x_1 \right)
      \\ 
      \alpha_2 x_1 - x_2 - x_1 x_3
      \\ 
      x_1 x_2 - \alpha_3 x_3 
    \end{array}
  \right) 
\end{equation}
and
\begin{equation}
  \sigma\!\left( 
    \begin{array}{c}
      x_1
      \\ 
      x_2 
      \\ 
      x_3 
    \end{array}
  \right)
=
  \left(
    \begin{array}{ccc}
      \beta_1 x_1 & 0 & 0 
    \\
      0 & \beta_2 x_2 & 0 
    \\
      0 & 0 & \beta_3 x_3 
    \end{array}
  \right)
\end{equation}
for all
$ 
  x = (x_1, x_2, x_3)
  \in \mathbb{R}^3
$.
Under these assumptions,
the SDE~\eqref{eq:SDE00}
is thus the
stochastic Lorenz equation
\begin{equation}
\begin{split}
\label{eq:Lorenz}
   dX_t^{(1)}
  & =
  \left[
   \alpha_1 
   X_t^{(2)} - 
   \alpha_1
    X_t^{(1)} 
  \right] dt 
  +
  \beta_1
    X_t^{(1)} 
  \, dW^{(1)}_t,
 \\
   dX_t^{(2)}
  &=
   \left[
     \alpha_2 X_t^{(1)}
     -
     X_t^{(2)}
     -
     X_t^{(1)}
     X_t^{(3)}
   \right] dt
   + \beta_2
     X_t^{(2)}
   \, dW_t^{(2)} ,
 \\
   dX_t^{(3)}
  &=
   \left[
     X_t^{(1)}
     X_t^{(2)}
     -
     \alpha_3 X_t^{(3)}
   \right]
   dt
   +
   \beta_3
   X_t^{(3)}
   \, dW_t^{(3)}
\end{split}
\end{equation}
for $ t \in [0,\infty) $.
Clearly, the diffusion coefficient 
in \eqref{eq:Lorenz}
is globally Lipschitz
continuous.
In addition, it is well know that
the drift coefficient 
in \eqref{eq:Lorenz}
is not globally one-sided
Lipschitz continuous but
fulfills the global
one-sided linear growth
condition~\eqref{eq:onesidedgrowth}
(see, e.g., (29)
in Schmalfu\ss~\cite{Schmalfuss1997}).
Therefore,
Corollary~\ref{cor:CT1} applies
here with 
$
  V \colon \mathbb{R}^3
  \rightarrow [1,\infty)
$
given by
\begin{equation}
  V(x) = 1 + \| x \|^2
\end{equation}
for all $ x \in \mathbb{R}^3 $
to obtain
$
  \lim_{ N \rightarrow \infty }
    \sup_{ t \in [0,T] }
    \mathbb{E}\big[
      \| 
        X_t - \bar{Y}^N_t 
      \|^{ 
        q
      }
    \big] 
  = 0
$
for all 
$
  q \in (0,\infty)
$.

\section{Stochastic Brusselator in the well-stirred case}
\label{sec:Brusselator}

The Brusselator
is a model for a trimolecular chemical reaction
and has been studied in Prigogine \citationand~Lefever~\cite{PrigogineLefever1968} 
and by other scientists from Brussels
(Tyson~\cite{Tyson1973} proposed the name "Brusselator"
to appreciate the innovative contribution of these scientists from Brussel
on this model).
The following stochastic version hereof
in the ``well-stirred case''
(see, e.g., Section~1 in Scheutzow~\cite{Scheutzow1986} for further details)
has been proposed by Dawson~\cite{Dawson1981} 
(see also Scheutzow~\cite{Scheutzow1986}). 
Assume
that the setting in 
Section~\ref{sec:setting00}
is fulfilled,
let
$ \alpha, \delta \in (0,\infty) $
be two real numbers,
let
$ 
  g_1, g_2 \colon
  [0,\infty)
  \rightarrow \mathbb{R}
$
be two globally Lipschitz continuous
functions with 
$ g_1( 0 ) = g_2( 0 ) = 0 $
and 
$
  \sup_{ x \in [0,\infty) }
  | g_2(x) | < \infty
$
and 
suppose that
$ d = m = 2 $,
$ D = (0,\infty)^2 $,
$
  \mu(x) = 0 
$
and
$ \sigma(x) = 0 $
for all $ x \in D^c $
and 
\begin{equation}
  \mu\!\left( 
    \begin{array}{c}
      x_1
      \\ 
      x_2 
    \end{array}
  \right)
=
  \left( 
    \begin{array}{c}
      \delta - 
      \left( \alpha + 1 \right) x_1
      +
      x_2 \cdot ( x_1 )^2
      \\
      \alpha x_1
      - x_2 \cdot
      ( x_1 )^2
    \end{array}
  \right) ,
\quad
  \sigma\!\left( 
    \begin{array}{c}
      x_1
      \\ 
      x_2 
    \end{array}
  \right)
=
  \left(
    \begin{array}{ccc}
      g_1(x_1) & 0
    \\
      0 & g_2(x_2)
    \end{array}
  \right)
\end{equation}
for all
$ 
  x = (x_1, x_2)
  \in D
$.
Then the SDE~\eqref{eq:SDE00}
is the
stochastic Brusselator equation
\begin{equation}  
\label{eq:ex_Brusselator}
\begin{split}
  dX^{(1)}_t
&
  =
  \left[
    \delta- ( \alpha + 1 )
    X^{(1)}_t
    + 
    X^{(2)}_t
    ( X^{(1)}_t )^2 
  \right]
  dt 
  +
  g_1( X_t^{(1)} )
  \, dW^{(1)}_t
\\
  dX^{ (2) }_t
  &
  =
  \left[ 
    \alpha X^{(1)}_t 
    - 
    X^{(2)}_t   
    ( X_t^{(1)} )^2 
  \right] 
  dt 
  +
  g_2( X_t^{(2)} )
  \, dW^{(2)}_t
\end{split}     
\end{equation}
for $ t \in [0,\infty) $.
By assumption the diffusion coefficient
in \eqref{eq:ex_Brusselator}
is globally Lipschitz continuous.
Clearly,
the drift coefficient
in \eqref{eq:ex_Brusselator}
is not globally one-sided
Lipschitz continuous and
also fails to satisfy
the one-sided linear
growth
condition~\eqref{eq:onesidedgrowth}.
Instead here the squared sum 
of the coordinates is a Lyapunov-type function
on the state space $ (0,\infty)^2 $;
see, e.g., the proof of Theorem~2.1 a) in Scheutzow~\cite{Scheutzow1986}.
However,
in order to apply Corollary~\ref{cor:CT1}
in this example,
we need to construct
a Lyapunov-type
function
$ 
  V \colon \mathbb{R}^2
  \rightarrow [1,\infty) 
$
on the space $\mathbb{R}^2$.
For this let
$
  \phi \colon 
  \mathbb{R}
  \rightarrow [0,1]
$
and
$
  \psi \colon
  \mathbb{R}^2
  \rightarrow [0,1]
$
be two infinitely 
often differentiable
functions with
$
  \phi(x) = 0 
$
for all $ x \in (-\infty,0] $,
with
$
  \phi(x) = 1 
$
for all $ x \in [1,\infty) $
and with
\begin{equation}
  \psi(x_1,x_2) = 
  \phi(x_1) \cdot \phi(-x_2)
  +
  \phi(-x_1) \cdot \phi(x_2)
\end{equation}
for all 
$   
  x = (x_1, x_2)   
  \in \mathbb{R}^2 
$.
Then consider 
$ 
  V \colon \mathbb{R}^3
  \rightarrow [1,\infty) 
$
given by
\begin{equation}
\label{eq:BrusselatorV0}
\begin{split}
&
  V( x_1, x_2 ) 
\\ & = 
  \frac{ 5 }{ 2 } 
  + 
  \left( x_1 + x_2 \right)^2
  -
  2 \cdot x_1 \cdot x_2 \cdot
  \psi(x_1,x_2)
\\ & =
  \frac{ 5 }{ 2 } 
  + 
  \left\| x \right\|^2
  +
  2 x_1 x_2 
  \left( 1 - \psi(x_1,x_2) \right)
\\ & =
  \frac{ 5 }{ 2 } +
\begin{cases}
  \left( x_1 + x_2 \right)^2
& 
  \colon 
  x_1 x_2 \geq 0
\\ 
  \left\| x \right\|^2
& 
  \colon 
  \left( 
    x_1 x_2 < 0
  \right)
  \wedge
  \left(
    \min( |x_1|, |x_2| ) \geq 1
  \right)
\\ 
  \left\| x \right\|^2
  +
  2 x_1 x_2 
  \left( 1 - \psi(x_1,x_2) \right)
& 
  \colon 
  \text{else}
\end{cases}
\end{split}
\end{equation}
for all
$ 
  x = (x_1, x_2) 
  \in \mathbb{R}^2 
$
and observe that
$ V \colon \mathbb{R}^2
\rightarrow [1,\infty) $
is infinitely often differentiable 
and satisfies
\begin{equation}
\label{eq:BrusselatorV1}
  \sup_{ x \in \mathbb{R}^3 }
  \left(
  \tfrac{
    \sum_{ i = 1 }^{ 3 }
    \left\|
      V^{ (i) }( x )
    \right\|_{ 
      L^{(i)}( \mathbb{R}^2, \mathbb{R} ) 
    }
  }{
    \left( 1 + \| x \| \right)
  }
  \right)
  < \infty .
\end{equation}
Next note that the fact
$
  2 + a^2 - 2 a 
\geq
  \frac{ 
    a^2 
  }{ 
    2
  }
$
for all $ a \in [0,\infty) $
implies
\begin{equation}
\label{eq:BrusselatorV2}
\begin{split}
&
  V(x_1,x_2)
  =
  \tfrac{ 5 }{ 2 } 
  + 
  \left\| x \right\|^2
  +
  2 x_1 x_2
  \left(
    1
    -
    \psi(x_1,x_2)
  \right)
\\ & \geq
\begin{cases}
  \tfrac{ 5 }{ 2 } 
  + 
  \left\| x \right\|^2
  &
  \colon
  \left( x_1 x_2 \geq 0 \right)
  \vee
  \left( \min(|x_1|, |x_2|) \geq 1 \right)
\\
  \tfrac{ 5 }{ 2 } 
  + 
  \left\| x \right\|^2
  -
  2 \left| x_1 \right| 
  \left| x_2 \right|
  & 
  \colon
  \left( 
    x_1 x_2 < 0 
  \right)
  \wedge
  \left( 
    \min(|x_1|, |x_2|) < 1 
  \right)
\end{cases}
\\ & \geq
  \tfrac{ 5 }{ 2 } 
  + 
  \left\| x \right\|^2
  -
  2 
  \max\!\left(
    | x_1 |, 
    | x_2 |
  \right)
\\ & =  
  \Big[
  \tfrac{ 1 }{ 2 }
  +
  \left|
    \min\!\left(
      | x_1 |, 
      | x_2 |
    \right) 
  \right|^2
  \Big]
  +
  \Big[
  2
  + 
  \left|
    \max\!\left(
      | x_1 |, 
      | x_2 |
    \right) 
  \right|^2
  -
  2 
  \max\!\left(
    | x_1 |, 
    | x_2 |
  \right)
  \Big]
\\ & \geq
  \frac{
    1 
   +  
    \left|
      \min\!\left(
        | x_1 |, 
        | x_2 |
      \right) 
    \right|^2
    +
    \left|
      \max\!\left(
        | x_1 |, 
        | x_2 |
      \right) 
    \right|^2
  }{ 2 }
  =
  \frac{ 1 + \| x \|^2 }{ 2 }
\end{split}
\end{equation}
for all
$ 
  x = (x_1, x_2) 
  \in \mathbb{R}^2 
$.
In addition, 
observe that
$
  \left\|
    V'(x) \sigma(x)
  \right\|_{ 
    L( 
      \mathbb{R}^2, 
      \mathbb{R} 
    ) 
  } = 0
$
for all $ x \in D^c $
and hence
\begin{equation}
\label{eq:BrusselatorV3}
\begin{split} 
&
  \sup_{ x = (x_1, x_2) \in \R^2 }
  \left(
  \frac{
    \left\|
      V'(x) \sigma(x)
    \right\|_{ 
      L( 
        \mathbb{R}^2, 
        \mathbb{R} 
      ) 
    } 
  }{
    V(x)
  }
  \right)
\\ & \leq
  2
  \left[
  \sup_{ x = (x_1, x_2) \in D }
  \left(
  \frac{
  \left( x_1 + x_2 \right)
  \left(
    \left|
      g_1(x_1)
    \right|
    +
    \left|
      g_2(x_1)
    \right|
  \right)
  }{
    \left(
      1 + \| x \|^2
    \right)
  }
  \right)
  \right]
  < \infty .
\end{split}
\end{equation}
Combining~\eqref{eq:BrusselatorV1}--\eqref{eq:BrusselatorV3}
shows that
Corollary~\ref{cor:CT1} applies
here with the function
$
  V \colon \mathbb{R}^2
  \rightarrow [1,\infty)
$
given in \eqref{eq:BrusselatorV0}
and we therefore get
\begin{equation}
  \lim_{ N \rightarrow \infty }
    \sup_{ t \in [0,T] }
    \mathbb{E}\big[
      \| 
        X_t - \bar{Y}^N_t 
      \|^{ 
        q
      }
    \big] 
  = 0
\end{equation}
for all 
$
  q \in (0,\infty)
$.
Please also note that
other choices
than~\eqref{eq:BrusselatorV0} are possible
for the
Lyapunov-type
function 
$ 
  V \colon \R^2
  \rightarrow [1,\infty) 
$
and that the 
choice~\eqref{eq:BrusselatorV0} simply
ensures that the smoothness and
growth assumptions of
Corollary~\ref{cor:CT1} are met.

\section{Stochastic SIR model}

The SIR model from epidemiology for the total number
of susceptibles, infected and recovered individuals
has been introduced by Anderson \citationand\ May~\cite{AndersonMay1979}. 
The following stochastic version has been studied first
by Tornatore, Buccellato 
\citationand\ Vetro~\cite{TornatoreBuccellatoVetro2005}. 
Assume
that the setting in 
Section~\ref{sec:setting00}
is fulfilled,
let
$ 
  \alpha, \beta, \gamma, 
  \delta \in (0,\infty) 
$
be real numbers
and suppose that
$ d = 3 $,
$ m = 1 $,
$ 
  D = (0,\infty)^3 
$,
$
  \mu(x) = \sigma(x) = 0
$
for all $ x \in D^c $
and
\begin{equation}
  \mu\!\left( 
    \begin{array}{c}
      x_1
      \\ 
      x_2 
      \\ 
      x_3
    \end{array}
  \right)
=
  \left( 
    \begin{array}{c}
      - \alpha x_1 x_2
      - \delta x_1
      + \delta
      \\
      \alpha x_1 x_2
      - 
      ( \gamma + \delta ) x_2 
    \\
      \gamma x_2 - \delta x_3
    \end{array}
  \right) ,
\qquad
  \sigma\!\left( 
    \begin{array}{c}
      x_1
      \\ 
      x_2 
      \\ 
      x_3
    \end{array}
  \right)
=
  \left(
    \begin{array}{ccc}
      - \beta x_1 x_2
    \\
      \beta x_1 x_2
    \\
      0
    \end{array}
  \right)
\end{equation}
for all
$ 
  x = (x_1, x_2, x_3)
  \in D
$.
Under these assumptions,
the 
solution process
$ 
  ( 
    S_t, I_t, R_t 
  )
  :=
  ( 
    X^{(1)}_t, X^{(2)}_t, X^{(3)}_t 
  )
$, $t\in[0,\infty)$,
of the SDE~\eqref{eq:SDE00}
fulfills
\begin{equation}  
\label{eq:ex_SIR}
\begin{split}
  dS_t
&
  =
  \big[
    - \alpha S_t I_t - \delta S_t + \delta
  \big] 
  dt
  - \beta S_t I_t \, dW_t
\\
  d I_t
  &
  =
  \big[ 
    \alpha S_t I_t 
    - ( \gamma + \delta ) I_t
  \big]
  dt
  +
  \beta S_t I_t \, dW_t
\\
  d R_t
  &
  =
  \big[ 
    \gamma I_t - \delta R_t
  \big] dt
\end{split}     
\end{equation}
for $ t \in [0,\infty) $.
Clearly, both the drift coefficient
and the diffusion
coefficient grow superlinearly
in this example.
In addition, 
it is also 
obvious that
the drift coefficient
fails to satisfy the one-sided
linear growth condition~\eqref{eq:onesidedgrowth}.
We now construct
an appropriate
Lyapunov-type function
$ 
  V \colon \mathbb{R}^3
  \rightarrow [1,\infty) 
$
for this example
so that Corollary~\ref{cor:CT1}
can be applied.
The Lyapunov-type function
here is constructed similarly
as in in the case of 
the stochastic Brusselator in 
Section~\ref{sec:Brusselator} above.
More formally, let
$
  \phi \colon 
  \mathbb{R}
  \rightarrow [0,1]
$
and
$
  \psi \colon
  \mathbb{R}^2
  \rightarrow [0,1]
$
be two infinitely 
often differentiable
functions with
$
  \phi(x) = 0 
$
for all $ x \in (-\infty,0] $,
with
$
  \phi(x) = 1 
$
for all $ x \in [1,\infty) $
and with
\begin{equation}
  \psi(x_1,x_2) = 
  \phi(x_1) \cdot \phi(-x_2)
  +
  \phi(-x_1) \cdot \phi(x_2)
\end{equation}
for all 
$   
  x = (x_1, x_2)   
  \in \mathbb{R}^2 
$.
Then consider 
$ 
  V \colon \mathbb{R}^3
  \rightarrow [1,\infty) 
$
given by
\begin{equation}
\label{eq:V0}
  V( x_1, x_2, x_3 ) = 
  \frac{ 5 }{ 2 } 
  + 
  \left( x_1 + x_2 \right)^2
  + \left( x_3 \right)^2
  -
  2 \cdot x_1 \cdot x_2 \cdot
  \psi(x_1,x_2)
\end{equation}
for all
$ 
  x = (x_1, x_2,x_3) 
  \in \mathbb{R}^3 
$
(cf., e.g., Tornatore, Buccellato 
\citationand\ Vetro~\cite{TornatoreBuccellatoVetro2005})
and observe that
$ V \colon \mathbb{R}^3
\rightarrow [1,\infty) $
is infinitely often differentiable 
and satisfies
\begin{equation}
\label{eq:V1}
  \sup_{ x \in \mathbb{R}^3 }
  \left(
  \tfrac{
    \sum_{ i = 1 }^{ 3 }
    \left\|
      V^{ (i) }( x )
    \right\|_{ 
      L^{(i)}( \mathbb{R}^3, \mathbb{R} ) 
    }
  }{
    \left( 1 + \| x \| \right)
  }
  \right)
  < \infty .
\end{equation}
As in \eqref{eq:BrusselatorV2} it follows that
\begin{equation}
\label{eq:V2}
\begin{split}
&
  V(x)
\geq
  \tfrac{ 1 }{ 2 } 
  \left( 1 + \| x \|^2 \right)
\end{split}
\end{equation}
for all
$ 
  x 
  \in \mathbb{R}^3 
$.
In addition, 
observe that
$
  \left|
    V'(x) \sigma(x)
  \right| = 0
$
for all $ x \in D^c $
and
\begin{equation}
\begin{split}
&
  \left|
    V'(x) \sigma(x)
  \right|
\\ & =
  \left|
  -
  \big(
    \tfrac{ 
      \partial 
    }{
      \partial x_1
    }
    V
  \big)( x_1, x_2, x_3 )
  \cdot
  \beta \cdot x_1 \cdot x_2
  +
  \big(
    \tfrac{ 
      \partial 
    }{
      \partial x_2
    }
    V
  \big)( x_1, x_2, x_3 )
  \cdot
  \beta \cdot x_1 \cdot x_2
  \right|
\\ & =
  \beta
  \left| x_1 \right|
  \left| x_2 \right|
  \left|
  \big(
    \tfrac{ 
      \partial 
    }{
      \partial x_1
    }
    V
  \big)( x_1, x_2, x_3 )
  -
  \big(
    \tfrac{ 
      \partial 
    }{
      \partial x_2
    }
    V
  \big)( x_1, x_2, x_3 )
  \right|
\\ & =
  \beta
  \left| x_1 \right|
  \left| x_2 \right|
  \left|
    2 ( x_1 + x_2 )
    -
    2 ( x_1 + x_2 )
  \right|
  = 0
\end{split}
\end{equation}
for all
$ x = (x_1, x_2, x_3) \in D $
and we thus get
\begin{equation}
\label{eq:V3}
  \left| 
    V'(x) \sigma(x)
  \right| = 0
\end{equation}
for all 
$ 
  x \in \mathbb{R}^3 
$.
Combining~\eqref{eq:V1}--\eqref{eq:V3}
shows that
Corollary~\ref{cor:CT1} applies
here with the Lyapunov-type 
function
$
  V \colon \mathbb{R}^3
  \rightarrow [1,\infty)
$
given in \eqref{eq:V0}
and we therefore get that
\begin{equation}
  \lim_{ N \rightarrow \infty }
    \sup_{ t \in [0,T] }
    \mathbb{E}\big[
      \| 
        X_t - \bar{Y}^N_t 
      \|^{ 
        q
      }
    \big] 
  = 0
\end{equation}
for all 
$
  q \in (0,\infty)
$.
Note that
other choices
than~\eqref{eq:V0} are possible
for the
Lyapunov-type
function 
$ 
  V \colon \mathbb{R}^3
  \rightarrow [1,\infty) 
$
and that the 
choice~\eqref{eq:V0} simply
ensures that the smoothness and
growth assumptions of
Corollary~\ref{cor:CT1} are met.

\section{Experimental
psychology model}

The motivation for the following example from experimental psychology
is explained in 
Section~7.2 
in 
Kloeden
\citationand\ 
Platen~\cite{kp92}.
Assume
that the setting in 
Section~\ref{sec:setting00}
is fulfilled,
let
$ \alpha, \delta \in (0,\infty) $,
$ \beta \in \mathbb{R} $
be real numbers
and suppose that
$ d = 2 $,
$ m = 1 $,
$ D = \mathbb{R}^2 $
and 
\begin{equation}
  \mu\!\left( 
    \begin{array}{c}
      x_1
      \\ 
      x_2 
    \end{array}
  \right)
=
  \left( 
    \begin{array}{c}
      \left( x_2 \right)^2
      \left( \delta + 4 \alpha x_1 \right)
      - \frac{ \beta^2 x_1 }{ 2 }
      \\[1ex] 
      - x_1 x_2 
      \left( \delta + 4 \alpha x_1 \right)
      + \frac{ \beta^2 x_2 }{ 2 }
    \end{array}
  \right) ,
\qquad
  \sigma\!\left( 
    \begin{array}{c}
      x_1
      \\ 
      x_2 
    \end{array}
  \right)
=
  \left(
    \begin{array}{ccc}
      - \beta x_2
    \\
      \beta x_1
    \end{array}
  \right)
\end{equation}
for all
$ 
  x = (x_1, x_2)
  \in \mathbb{R}^2
$.
The SDE~\eqref{eq:SDE00}
is then the
Stratonovich stochastic
differential equation
\begin{equation}  
\label{eq:experimental.psychology}
\begin{split}
  dX_t^{(1)}
  &
  =
  \left[
    \big( X_t^{(2)} 
    \big)^2
    \big(
      \delta +
      4 \alpha
      X_t^{(1)}
    \big) 
  \right]
  dt
  - \beta X_t^{(2)} 
  \circ d W_t
\\
  dX_t^{(2)}
  &
  =
  \left[
  -
  X_t^{(1)}
  X_t^{(2)}
  \big(
    \delta +
    4 \alpha
    X_t^{(1)}
  \big)
  \right]
  dt
  + 
  \beta X_t^{ (1) }
  \circ d W_t
\end{split}     
\end{equation}
for $ t \in [0,\infty) $.
The SDE~\eqref{eq:experimental.psychology}
is a transformed version of a model
proposed in Haken, Kelso \citationand\ Bunz~\cite{Hakenetal1985} in the deterministic case and 
in Sch\"{o}ner, Haken \citationand\ Kelso~\cite{Schoeneretal1985} in the stochastic case
(see Section~7.2 in Kloeden
\& Platen~\cite{kp92} for details).
The diffusion coefficient
in \eqref{eq:experimental.psychology} 
is clearly globally Lipschitz continuous.
The drift coefficient
in \eqref{eq:experimental.psychology} 
is not globally one-sided
Lipschitz continuous but fulfills
the global one-sided linear growth
bound~\eqref{eq:onesidedgrowth}.
The drift coefficient
in \eqref{eq:experimental.psychology} 
even fulfills
$
  \left< x, \mu( x ) \right> = 0
$
for all $ x \in \R^2 $
and therefore the function
$
  V \colon \mathbb{R}^2
  \rightarrow [1,\infty)
$
given by
\begin{equation}
\label{eq:Lyapunov_experimentalPsy}
  V(x) = 
  1 
  + 
  \left\| x \right\|^2
\end{equation}
is a Lyapunov-type function 
for the SDE~\eqref{eq:experimental.psychology}
(see, e.g., Section~7.2 in Kloeden
\& Platen~\cite{kp92}).
Hence,
Corollary~\ref{cor:CT1} applies
here with 
$
  V \colon \mathbb{R}^2
  \rightarrow [1,\infty)
$
given by
\eqref{eq:Lyapunov_experimentalPsy}
and we therefore get
$
  \lim_{ N \rightarrow \infty }
    \sup_{ t \in [0,T] }
    \mathbb{E}\big[
      \| 
        X_t - \bar{Y}^N_t 
      \|^{ 
        q
      }
    \big] 
  = 0
$
for all 
$
  q \in (0,\infty)
$.

\section{Scalar stochastic Ginzburg-Landau equation}

The Ginzburg-Landau equation is from the theory of superconductivity 
and has been introduced by Ginzburg \citationand~Landau~\cite{gl50}
to describe a phase
transition.
As, for instance, in
(4.52) in Section~4.4 in 
Kloeden \citationand\ Platen~\cite{kp92}, 
we consider in this section a simplified scalar
version of the Ginzburg-Landau equation
pertubed by a multiplicative noise term.
More precisely, assume
that the setting in 
Section~\ref{sec:setting00}
is fulfilled,
let $ \alpha, \beta, \delta \in (0,\infty) $
be real numbers
and suppose that
$ d = m = 1 $,
$ D = \mathbb{R} $,
$
  \mu(x) = \alpha x - \delta x^3
$
and
$
  \sigma(x) = \beta x
$
for all
$ 
  x 
  \in \mathbb{R}
$.
Under these assumptions,
the SDE~\eqref{eq:SDE00}
reads as
\begin{equation}
\label{eq:ex_Ginzburg}
  d X_t = 
  \left[ 
    \alpha X_t - \delta X_t^3 
  \right] 
  dt 
  + 
  \beta X_t \, dW_t
\end{equation}
for $ t \in [0,\infty) $
(see, e.g., (4.52) in Section~4.4 in 
Kloeden \citationand\ Platen~\cite{kp92}).
Clearly, Corollary~\ref{cor:CT1} applies
here with 
$
  V \colon \mathbb{R}
  \rightarrow [1,\infty)
$
given by
\begin{equation}
  V(x) = 1 + x^2
\end{equation}
for all $ x \in \mathbb{R} $
to obtain
\begin{equation}
  \lim_{ N \rightarrow \infty }
    \sup_{ t \in [0,T] }
    \mathbb{E}\big[
      | 
        X_t - \bar{Y}^N_t 
      |^{ 
        q
      }
    \big] 
  = 0
\end{equation}
for all 
$
  q \in (0,\infty)
$.
Obviously,
this example has a globally one-sided Lipschitz continuous
drift coefficient and a globally Lipschitz continuous
diffusion coefficient. 
So, the convergence 
results, e.g., in~\cite{h96,hms02,Schurz2003,hjk10b,gw11} apply here
(see Chapter~\ref{sec:intro}
for more details).

\section{Stochastic
Lotka-Volterra equations}
\label{sec:lotka}

The Lotka-Volterra 
predator-prey model
(see Lotka~\cite{Lotka1920JAMCS} 
and Volterra~\cite{Volterra1926})
and the Lotka-Volterra model for competing species
have a quadratic drift term.
Here we study the following more 
general Lotka-Volterra model
as considered, for instance, in Section~7.1 
in Kloeden \citationand~Platen~\cite{kp92}
(see also Dobrinevski \citationand~Frey~\cite{fd10}).
Assume
that the setting in 
Section~\ref{sec:setting00}
is fulfilled,
let
$ 
  A, 
  c = (c_1, \dots, c_d) 
  \in \mathbb{R}^d
$,
$ 
  v = (v_1, \dots, v_d) 
  \in (0,\infty)^d 
$,
$
  B \in \mathbb{R}^{ d \times d }
$
and suppose that
$ d = m $,
$ 
  D = (0,\infty)^d
$,
$
  \mu(x) = \sigma(x) = 0
$
for all $ x \in D^c $,
that
\begin{equation}
\label{eq:VolterraCondition}
  \left< 
    x, 
    \left(
      \begin{array}{ccc}
        \! v_1 \! & & \! 0 \! \\[-1ex]
        \! & \ddots & \! \\[-1ex]
        \! 0 \! & & \! v_d \!
      \end{array}
    \right)
    B x 
  \right>
  \leq 0
\end{equation}
for all $ x \in \mathbb{R}^d $
and that
\begin{equation}
  \mu\!\left( 
    \begin{array}{c}
      x_1
      \\ 
      \vdots
      \\ 
      x_d
    \end{array}
  \right)
=
  \left(
    \begin{array}{ccc}
      x_1 & & 0 \\
      & \ddots & \\
      0 & & x_d 
    \end{array}
  \right)
  \left(
    A + B x
  \right) 
\end{equation}
and
\begin{equation}
  \sigma\!\left( 
    \begin{array}{c}
      x_1
      \\ 
      \vdots
      \\ 
      x_d
    \end{array}
  \right)
=
  \left(
    \begin{array}{ccc}
      c_1 x_1 & & 0 \\
      & \ddots & \\
      0 & & c_d x_d 
    \end{array}
  \right)
\end{equation}
for all
$ 
  x = (x_1, \dots, x_d)
  \in D
$.
The SDE~\eqref{eq:SDE00}
is then
the $ d $-dimensional 
stochastic Lotka-Volterra system 
\begin{equation}
\label{eq:ex_Volterra}
  dX_t
  =
  \left(
    \begin{array}{ccc}
      \! 
      X_t^{(1)} \! & & \! 0 \! \\[-1ex]
      & \ddots & \\[-1ex]
      \! 0 \! & & \! X_t^{(d)} \!
    \end{array}
  \right)
  \!
  \left(
    A + B X_t
  \right) 
  dt
  +
  \left(
    \begin{array}{ccc}
      \! c_1 
      X_t^{(1)} \! & & \! 0 \! \\[-1ex]
      & \ddots & \\[-1ex]
      \! 0 \! & & \! c_d X_t^{(d)} \!
    \end{array}
  \right)
  dW_t
\end{equation}
for $ t \in [0,\infty) $
(see, e.g., (1.6) in Section~7.1
in Kloeden \citationand\ Platen~\cite{kp92}).
The drift coefficient of
the SDE~\eqref{eq:ex_Volterra}
contains a quadratic term and
is therefore clearly not globally Lipschitz
continuous.
%
%
In order to apply 
Corollary~\ref{cor:CT1},
a Lyapunov-type function
$ 
  V \colon \mathbb{R}^d
  \rightarrow [1,\infty) 
$
needs to be constructed.
For this let
$
  \phi \colon 
  \mathbb{R}
  \rightarrow [0,1]
$
and
$
  \psi \colon
  \mathbb{R}^2
  \rightarrow [0,1]
$
be two infinitely 
often differentiable
functions with
$
  \phi(x) = 0 
$
for all $ x \in (-\infty,0] $,
with
$
  \phi(x) = 1 
$
for all $ x \in [1,\infty) $
and with
\begin{equation}
  \psi(x,y) = 
  \phi(x) \cdot \phi(-y)
  +
  \phi(-x) \cdot \phi(y)
\end{equation}
for all 
$   
  x, y   
  \in \mathbb{R} 
$.
Note that
$
  \psi(x,y) = \psi(y,x) 
$
for all
$ x, y \in \mathbb{R} $
and
$
  \psi(x,y) = 0
$
for all $ x, y \in \mathbb{R} $
with $ x \cdot y \geq 0 $.
Then consider 
$ 
  V \colon \mathbb{R}^d
  \rightarrow [1,\infty) 
$
given by
\begin{equation}
\label{eq:VolterraV0}
\begin{split}
&
  V( x ) 
=
  1
  +
  \tfrac{
    d^4 \left\| v \right\|^4
  }{
  \left|
    \min\left(
      v_1, \dots, v_d
    \right)
  \right|^2
  }
  + 
  \left| 
    \left< v, x \right> 
  \right|^2
  -
  \sum_{ 
    \substack{
      i, j \in \{ 1, 2, \dots, d \} 
    \\
      i \neq j
    }
  }
  v_i 
  \cdot
  v_j
  \cdot
  x_i 
  \cdot x_j
  \cdot
  \psi(x_i, x_j)
\\ & =
  1
  +
  \tfrac{
    d^4 \left\| v \right\|^4
  }{
  \left|
    \min\left(
      v_1, \dots, v_d
    \right)
  \right|^2
  }
  + 
  \sum_{ i = 1 }^{ d }
  \left| v_i x_i \right|^2
  +
  \sum_{ 
    \substack{
      i, j \in \{ 1, 2, \dots, d \} 
    \\
      i \neq j
    }
  }
  v_i 
  \cdot
  v_j
  \cdot
  x_i 
  \cdot x_j
  \cdot
  \left(
    1 -
    \psi(x_i, x_j)
  \right)
\end{split}
\end{equation}
for all
$ 
  x = (x_1, \dots, x_d) 
  \in \mathbb{R}^d
$
and observe that
$ V \colon \mathbb{R}^d
\rightarrow [1,\infty) $
is infinitely often differentiable 
and satisfies
\begin{equation}
\label{eq:VolterraV1}
  \sup_{ x \in \mathbb{R}^d }
  \left(
  \tfrac{
    \sum_{ i = 1 }^{ 3 }
    \left\|
      V^{ (i) }( x )
    \right\|_{ 
      L^{(i)}( 
        \mathbb{R}^d, 
        \mathbb{R} 
      ) 
    }
  }{
    \left( 1 + \| x \| \right)
  }
  \right)
  < \infty .
\end{equation}
In the next step note that
\begin{equation}
\begin{split}
&
  V( x )
\\ & \geq
  1
  +
  \tfrac{
    d^4 \left\| v \right\|^4
  }{
  \left|
    \min\left(
      v_1, \dots, v_d
    \right)
  \right|^2
  }
  + 
  \left|
  \min\!\left(
    v_1, \dots, v_d
  \right)
  \right|^2
  \left\| x \right\|^2
\\ & \quad
  -
  \left|
  \max\!\left( 
    v_1, \dots, v_d 
  \right)
  \right|^2
  \left(
    \sum_{ 
      i, j \in \{ 1, 2, \dots, d \} 
    }
    \max(|x_i|, |x_j|) 
  \right)
\\ & \geq
  1
  +
  \tfrac{
    d^4 \left\| v \right\|^4
  }{
  \left|
    \min\left(
      v_1, \dots, v_d
    \right)
  \right|^2
  }
  + 
  \left|
  \min\!\left(
    v_1, \dots, v_d
  \right)
  \right|^2
  \left\| x \right\|^2
  -
  \left|
  \max\!\left( 
    v_1, \dots, v_d 
  \right)
  \right|^2
  d^2
  \left\| x \right\|
\end{split}
\end{equation}
for all $ x \in \R^d $
and the estimate 
$
  a^2 + b^2 - a b 
  \geq \frac{ a^2 + b^2 }{ 2 }
$
for all $ a, b \in \R $
therefore proves that
\begin{equation}
\begin{split}
  V( x )
& \geq
  1
  +
  \tfrac{
    d^4 \left\| v \right\|^4
  }{
  \left|
    \min\left(
      v_1, \dots, v_d
    \right)
  \right|^2
  }
  + 
  \left|
    \min\!\left(
      v_1, \dots, v_d
    \right)
  \right|^2
  \left\| x \right\|^2
  -
  d^2
  \left\| v \right\|^2
  \left\| x \right\|
\\ & =
  1
  +
  \left|
    \min\left(
      v_1, \dots, v_d
    \right)
  \right|^2
  \left[
  \tfrac{
    d^4 \left\| v \right\|^4
  }{
  \left|
    \min\left(
      v_1, \dots, v_d
    \right)
  \right|^4
  }
  + 
  \left\| x \right\|^2
  -
  \tfrac{
    d^2
    \left\| v \right\|^2
  }{
    \left|
      \min\left(
        v_1, \dots, v_d
      \right)
    \right|^2
  }
  \left\| x \right\|
  \right]
\\ & \geq
  1
  +
  \tfrac{
  \left|
    \min\left(
      v_1, \dots, v_d
    \right)
  \right|^2
  }{ 2 }
  \left[
  \tfrac{
    d^4 \left\| v \right\|^4
  }{
  \left|
    \min\left(
      v_1, \dots, v_d
    \right)
  \right|^4
  }
  + 
  \left\| x \right\|^2
  \right]
\geq
  1 
  +
  \tfrac{
  \left|
    \min\left(
      v_1, \dots, v_d
    \right)
  \right|^2
  }{ 2 }
  \left\| x \right\|^2
\end{split}
\end{equation}
for all
$ 
  x \in \mathbb{R}^d
$.
This shows
\begin{equation}
\label{eq:Volterra_V_dominates_n2}
  \sup_{ x \in \mathbb{R}^d }
  \left(
  \frac{ \left\| x \right\|^2 }{
    V(x)
  }
  \right)
  < \infty 
  .
\end{equation}
In addition, 
observe that
\begin{equation}
\label{eq:VolterraV3}
  \sup_{ x \in \mathbb{R}^d }
  \left(
  \frac{
  \left\|
    V'(x) \sigma(x)
  \right\|_{
    L( 
      \mathbb{R}^d, \mathbb{R} 
    )
  } 
  }{
    \left(
      1 + \left\| x \right\|^2
    \right)
  }
  \right)
  < \infty .
\end{equation}
Next note that
assumption~\eqref{eq:VolterraCondition}
gives that
\begin{equation}
\label{eq:VolterraV4}
  \sup_{ x \in \mathbb{R}^d }
  \left(
    \frac{
      ( \mathcal{G}_{ 
        \mu, \sigma
      } V)(x)
    }{
      V(x)
    }
  \right)
  < \infty
\end{equation}
Combining~\eqref{eq:VolterraV1},
\eqref{eq:Volterra_V_dominates_n2},
\eqref{eq:VolterraV3}
and \eqref{eq:VolterraV4}
shows that
Corollary~\ref{cor:CT1} 
applies here with the function
$
  V \colon \mathbb{R}^d
  \rightarrow [1,\infty)
$
given in \eqref{eq:VolterraV0}
and we therefore get
\begin{equation}
  \lim_{ N \rightarrow \infty }
    \sup_{ t \in [0,T] }
    \mathbb{E}\big[
      \| 
        X_t - \bar{Y}^N_t 
      \|^{ 
        q
      }
    \big] 
  = 0
\end{equation}
for all 
$
  q \in (0,\infty)
$.

Finally, let us describe
two more specific examples
of the stochastic Lotka-Volterra
system~\eqref{eq:ex_Volterra}.

\subsection{Stochastic Verhulst equation}

In addition to the 
assumptions above,
let in this
subsection 
$ \eta, \lambda \in (0,\infty) $
be real numbers and
suppose 
that
$ d = m = 1 $,
$
  A = \eta + \frac{ c^2 }{ 2 }
$
and
$
  B = - \lambda
$.
The stochastic 
Lotka-Volterra
system~\eqref{eq:ex_Volterra}
thus
simplifies to
the one-dimensional 
Stratonovich SDE
\begin{equation}
\label{eq:Verhulst}
  dX_t
  =
  \Big[
    \eta \cdot
    X_t
    - \lambda \cdot 
    \left( X_t \right)^2
  \Big] \,
  dt +
  c \cdot X_t \circ dW_t
\end{equation}
for $ t \in [0,\infty) $.
Equation~\eqref{eq:Verhulst}
is referred to as
stochastic Verhulst 
equation in the literature
(see, e.g., 
Section 4.4 in Kloeden \citationand\ Platen~\cite{kp92}).
Note also that
\eqref{eq:VolterraCondition}
is fulfilled here with
$ v = 1 $, for instance.
Clearly, this example has a globally one-sided Lipschitz continuous
drift coefficient and a globally Lipschitz continuous
diffusion coefficient.
Hence, the convergence
results, e.g., in~\cite{h96,hms02,Schurz2003}
can be applied here (see Chapter~\ref{sec:intro}
for more details).

\subsection{Predator-prey 
model}

In addition to the 
assumptions above,
let in this
subsection 
$ 
  \alpha, \beta, \gamma,
  \delta \in (0,\infty)
$
be real numbers and
suppose that
$ d = m = 2 $,
$
  A = ( \alpha, - \delta )
$
and
\begin{equation}
  B = 
  \left(
    \begin{array}{cc}
      0 & - \beta
    \\
      \gamma & 0
    \end{array}
  \right) .
\end{equation}
The stochastic 
Lotka-Volterra
system~\eqref{eq:ex_Volterra}
is then
the two-dimensional 
SDE
\begin{equation}  
\label{eq:ex_PredatorPrey}
\begin{split}
  dX^{(1)}_t
& =
  X_t^{(1)} \,
  \big( 
    \alpha - \beta \cdot X^{(2)}_t 
  \big) \,
  dt
  +
  c_1 \cdot X_t^{(1)} \,
  dW_t^{(1)}
\\
  dX^{(2)}_t
  & =
  X^{(2)}_t \,
  \big( 
    \gamma \cdot X^{(1)}_t 
    -
    \delta 
  \big)
  \,
  dt
  +
  c_2 \cdot X_t^{(2)} \,
  dW_t^{ (2) }
\end{split}     
\end{equation}
for $ t \in [0,\infty) $.
Note that
\eqref{eq:VolterraCondition}
is fulfilled here with
$ v = ( \gamma, \beta ) $, 
for instance.
The deterministic case 
($ c_1 = c_2 = 0 $)
of this model
has been introduced 
by 
Lotka~\cite{Lotka1920JAMCS} 
and Volterra~\cite{Volterra1926}.

\section{Volatility 
processes}
\label{sec:vola}

There are a number of models in the literature on computational finance
which generalize the Black-Scholes model with a stochastic volatility process.
To unify some squared 
volatility processes
of these models, 
we consider the following SDE.
Assume that the setting
in Section~\ref{sec:setting00}
is fulfilled,
let 
$ 
  a \in [1,\infty) 
$,
$ 
  b \in [\frac{1}{2},\infty)
$,
$ 
  \alpha, \beta
  \in (0,\infty) 
$,
$ \gamma \in \R $,
$ \delta \in [0,\infty) $
be real numbers
with
\begin{equation}
\label{eq:assumptions_vola}
  a + 1 \geq 2b
\qquad
\text{and}
\qquad
  \delta \geq
  \mathbbm{1}_{ 
    \left\{ 
      \frac{ 1 }{ 2 } 
    \right\} 
  }(b)
  \cdot
  \frac{ \beta^2 }{ 2 }
\end{equation}
and suppose that
$ d = m = 1 $,
$ D = (0,\infty) $,
$ 
  \mu(x) = \delta 
$
and
$
  \sigma(x) = 0 
$
for all $ x \in (-\infty,0] $
and
\begin{equation}
  \mu(x) = \delta + \gamma \cdot x 
  - \alpha \cdot x^a
\qquad
\text{and}
\qquad
  \sigma(x) = \beta \cdot x^b
\end{equation}
for all $ x \in (0,\infty) $.
The SDE~\eqref{eq:SDE00}
then reads as
\begin{equation}
\label{eq:squaredvola}
  d X_t = 
  \big[
    \delta
    +
    \gamma X_t
    -  
    \alpha \,
    ( X_t )^{ a } 
  \big] \, dt
  +
  \beta \,
  ( X_t )^{ b } \,
  dW_t
\end{equation}
for $ t \in [0,\infty) $.
The assumption
$
  \delta \geq
  \mathbbm{1}_{ \{ 1 / 2 \} }(b)
  \cdot
  \frac{ \beta^2 }{ 2 }
$
in \eqref{eq:assumptions_vola}
ensures the existence
of an up to indistinguishability
unique strictly positive solution
of \eqref{eq:squaredvola}.
The drift
coefficient $ \mu \colon \R \to \R $ 
in this example
is globally one-sided Lipschitz
continuous. 
Indeed, note that
\begin{equation}
\begin{split}
&
  \left<
    x - y,
    \mu( x ) - \mu( y )
  \right>
\\ & =
  \begin{cases}
    \gamma \left( x - y \right)^2
    -
    \alpha
    \left(
      x^a - y^a
    \right)
    \left(
      x - y
    \right)
    \leq
    \gamma \left( x - y \right)^2
  &
    \colon
    x, y > 0
  \\
    \left< x - y, \mu( x ) - \delta \right>
    \leq
    \left( x - y \right)
    \gamma x 
    \leq
    \max( \gamma, 0 )
    \left( x - y \right)^2
  &
    \colon
    x > 0,
    y \leq 0
  \\
    \left< y - x, \mu( y ) - \delta \right>
    \leq
    \left( x - y \right)
    \gamma x 
    \leq
    \max( \gamma , 0 ) 
    \left( x - y \right)^2
  &
    \colon
    x \leq 0, y > 0
  \\
    \left< x - y, \delta - \delta \right>
    =
    0
    \leq
    \max( \gamma, 0 )
    \left( x - y \right)^2
  &
    \colon
    x, y \leq 0
  \end{cases}
\end{split}
\end{equation}
for all $ x, y \in \R $
and therefore
\begin{equation}
\label{eq:FINANCE_onesidedlipschitz}
  \left< x - y, \mu(x) - \mu(y)
  \right>
\leq
    \max( \gamma , 0 )
    \left( x - y \right)^2
\end{equation}
for all $ x, y \in \R $.
In the sequel, the application
of Corollary~\ref{cor:CT2} 
for the 
SDE~\eqref{eq:squaredvola}
is illustrated. For this define
$ p_0 \in (1,\infty] $
and $ q_0 \in (-\frac{1}{2},\infty] $
by
\begin{equation}
  p_0 :=
  \begin{cases}
    \infty & 
    \colon
    b \in [ \frac{ 1 }{ 2 } , 1 ] \cup [ \frac{ 1 }{ 2 } , \frac{ a + 1 }{ 2 } )
  \\
    \frac{ 2 \alpha + \beta^2 }{ \beta^2 }
  &
    \colon
    \text{otherwise}
  \end{cases}
\end{equation}
and
\begin{equation}
  q_0 :=
  \begin{cases}
    0
    & \colon
    p_0 \in (1,3)
  \\
    \frac{ 
      p_0
    }{ 
      4 \left\{ 
        b - 2 
        + \max( a, 3 / 2 )
      \right\}
    }
    - \frac{ 1 }{ 2 }
  &
    \colon
    p_0 \in [3,\infty)
  \\
    \infty & 
    \colon
    p_0 = \infty
  \end{cases} 
\end{equation}
and 
note that this definition
ensures 
$ p_0 \geq q_0 $. 
In addition, observe that
\begin{equation}
\begin{split}
&
  \left< x, \mu( x ) \right>
  +
  \tfrac{ ( p - 1 ) }{ 2 }
  \left\| \sigma(x) \right\|^2_{ HS( \R^m, \R^d ) }
=
  x \cdot \mu( x ) 
  +
  \tfrac{ ( p - 1 ) }{ 2 }
  \left| \sigma(x) \right|^2
\\ & =
    x 
    \left( 
      \delta + \gamma x - \alpha x^a
    \right)
    +
    \tfrac{ ( p - 1 ) \beta^2 x^{ 2 b } }{ 2 }
\\ & \leq
    \left(
      \delta + \max( \gamma, 0 )
    \right)
    \left(
      1 + x^2
    \right)
    -
    \alpha x^{ ( a + 1 ) }    
    +
    \left(
    \tfrac{ ( p - 1 ) \beta^2 }{ 2 }
    \right)
    x^{ 2 b }
\\ & =
    \left(
      \delta + \max( \gamma, 0 )
    \right)
    \left(
      1 + x^2
    \right)
    +
    x^{ 2 b }
    \left(
    \tfrac{ ( p - 1 ) \beta^2 }{ 2 }
    -
    \alpha \cdot x^{ ( a + 1 - 2 b ) }
    \right)
\end{split}
\label{eq:FINANCE_x_pos}
\end{equation}
for all $ x \in (0,\infty) $
and all $ p \in [0,\infty) $
and
\begin{equation}
\label{eq:FINANCE_x_neg}
  \left< x, \mu( x ) \right>
  +
  \tfrac{ ( p - 1 ) }{ 2 }
  \left\| \sigma(x) \right\|^2_{ HS( \R^m, \R^d ) }
  =
  x \cdot \delta
  \leq
  \delta \left( 1 + x^2 \right)
\end{equation}
for all
$
  x \in ( - \infty, 0 ]
$
and all $ p \in [0,\infty) $.
Combining \eqref{eq:FINANCE_x_pos} 
and \eqref{eq:FINANCE_x_neg}
then results in
\begin{equation}
  \sup_{ x \in \R }
  \left(
  \tfrac{
  \left< x, \mu( x ) \right>
  +
  \frac{ ( p - 1 ) }{ 2 }
  \left\| \sigma(x) \right\|^2_{ HS( \R^m, \R^d ) }
  }{
    \left( 
      1 + \left\| x \right\|^2 
    \right)
  }
  \right)
  < \infty
\label{eq:FINANCE_generator_estimate}
\end{equation}
for all $ p \in [0,p_0] \cap [0,\infty) $.
Next note 
for every $ p \in [0,\infty) $
that
\begin{equation}
\label{eq:vola_moments}
  \mathbb{E}\big[
    | X_t |^p
  \big]
  < \infty
\end{equation}
for all
$ t \in [0,\infty) $ 
if and only if $ p \leq p_0 $.
Furthermore,
observe that
if $ p_0 \geq 3 $, then
\begin{equation}
\begin{split}
&
  \sup_{
    p \in 
    [ 3 , \infty ) \cap 
    [0,p_0]
  }
  \left(
    \frac{ p }{
      2 \max( 0, 2 (b-1) ) +
      4 \max\!\left( a - 1, \max( 0, 2 ( b - 1 ) ), \frac{ 1 }{ 2 } 
      \right)
    }
    -
    \frac{ 1 }{ 2 }
  \right)
\\ & =
  \sup_{
    p \in 
    [ 3 , \infty ) \cap 
    [0,p_0]
  }
  \left(
  \frac{ p }{
    4 \max( 0, b - 1 ) +
    4 \max\!\left( a - 1, 2 b - 2, \frac{ 1 }{ 2 } 
    \right)
  }
  -
  \frac{ 1 }{ 2 }
  \right)
\\ & =
  \sup_{
    p \in 
    [ 3 , \infty ) \cap 
    [0,p_0]
  }
  \left(
  \frac{ p }{
    4
    \left\{ 
      \max( 0, b - 1 ) +
      \max\!\left( a - 1, \frac{ 1 }{ 2 } 
      \right)
    \right\}
  }
  -
  \frac{ 1 }{ 2 }
  \right)
\\ & =
  \sup_{
    p \in 
    [ 3 , \infty ) \cap 
    [0,p_0]
  }
  \left(
  \frac{ 
    p
  }{
    4
    \left\{ 
      \max( -1, b - 2 ) +
      \max\!\left( a, 3 / 2 
      \right)
    \right\}
  }
  -
  \frac{ 1 }{ 2 }
  \right)
  = q_0 .
\end{split}
\label{eq:FINANCE_p0q0_identity}
\end{equation}
Combining 
\eqref{eq:FINANCE_generator_estimate}
and
\eqref{eq:FINANCE_p0q0_identity}
with Corollary~\ref{cor:CT2}
then implies
\begin{equation}
  \lim_{ N \rightarrow \infty }
    \sup_{ t \in [0,T] }
    \mathbb{E}\big[
      \| 
        X_t - \bar{Y}^N_t 
      \|^{ 
        q
      }
    \big] 
  = 0
\end{equation}
for all 
$
  q \in (0,\infty)
$
with $ q < q_0 $.
Let us illustrate this by
three more specific examples.

\subsection{Cox-Ingersoll-Ross 
process}

In addition to the 
assumptions above,
suppose that
$ a = 1 $, 
$ b = \frac{ 1 }{ 2 } $
and $ \gamma = 0 $.
The SDE~\eqref{eq:squaredvola}
is then the 
Cox-Ingersoll-Ross process 
\begin{equation}
\label{eq:ex_CIR}
\begin{split}
  d 
  X_t
& =
  \big[
  \delta - 
  \alpha X_t
  \big] \, 
  dt
  +
  \beta
  \sqrt{ X_t } 
  \, dW_t
\end{split}
\end{equation}
for $ t \in [0,\infty) $
which has been introduced 
in Cox, Ingersoll \citationand\ 
Ross~\cite{cir85} as 
model for instantaneous 
interest rates.
Later
Heston~\cite{h93}
proposed this process 
as a model for the squared 
volatility in a Black-Scholes type 
market model.
Here we have $ p_0 = q_0 = \infty $
and hence get
$
  \lim_{ N \rightarrow \infty }
    \sup_{ t \in [0,T] }
    \mathbb{E}\big[
      \| 
        X_t - \bar{Y}^N_t 
      \|^{ 
        q
      }
    \big] 
  = 0
$
for all 
$
  q \in (0,\infty)
$.
Both the drift and the diffusion
coefficient clearly grow at most linearly.
Therefore, strong convergence 
of the 
Euler-Maruyama approximations
is well-known
(see, e.g., Krylov~\cite{Krylov1990}
and Gy\"ongy~\cite{g98b} for
convergence in probability and
pathwise convergence
respectively).
Strong convergence rates of
a drift-implicit
Euler method
for the SDE~\eqref{eq:ex_CIR}
are established in 
Theorem~1.1 
in Dereich, Neuenkirch
\citationand~Szpruch~\cite{DereichNeuenkirchSzpruch2012}.

\subsection{Simplified
Ait-Sahalia interest rate model}

In addition to the 
assumptions above,
suppose that
$ a = 2 $ and 
$ b < \frac{ 3 }{ 2 } $.
Under these additional
assumptions, the 
SDE~\eqref{eq:squaredvola}
reads as
\begin{equation}
\label{eq:ex_AitSahalia}
  d X_t
=
  \big[
    \delta + \gamma \, X_t
    - \alpha \left( X_t \right)^2 
  \big] \, dt
+
  \beta 
  \left( X_t \right)^b 
  dW_t
\end{equation}
for $ t \in [0,\infty) $.
A more general version hereof has been used
in Ait-Sahalia~\cite{AitSahalia1996}
for testing continuous-time
models of the spot interest rate.
Here we also 
have $ p_0 = q_0 = \infty $
and therefore
\begin{equation}
  \lim_{ N \rightarrow \infty }
    \sup_{ t \in [0,T] }
    \mathbb{E}\big[
      \| 
        X_t - \bar{Y}^N_t 
      \|^{ 
        q
      }
    \big] 
  = 0
\end{equation}
for all 
$
  q \in (0,\infty)
$.
A strong convergence result for this
SDE is Theorem 6.2 in Szpruch et al.~\cite{hmps11}.

\subsection{Volatility
process in the Lewis stochastic 
volatility model}
\label{sec:ex_Lewis32}

In addition to the 
assumptions above,
suppose that
$ a = 2 $,
$ b = \frac{ 3 }{ 2 } $,
$ \gamma \in [ 0, \infty ) $
and
$ \delta = 0 $.
The 
SDE~\eqref{eq:squaredvola}
is then the 
instantaneous 
variance process
in the Lewis stochastic
volatility model 
(see Lewis~\cite{l00}) 
\begin{equation}
\label{eq:lewis}
  d X_t
=
  \alpha \,
  X_t
  \left(
    \tfrac{ \gamma }{ \alpha }
    - X_t 
  \right) 
  dt
+
  \beta 
  \left( X_t
  \right)^{ \frac{ 3 }{ 2 } } 
  dW_t
\end{equation}
for $ t \in [0,\infty) $.
Here we get
\begin{equation}
  p_0
  =
  \tfrac{ 2 \alpha + \beta^2 }{ \beta^2 }
  \in (1,\infty)
\end{equation}
and
\begin{equation}
  q_0
=
  \max\!\big(
    \tfrac{ 
      p_0
    }{ 
      6
    }
    - \tfrac{ 1 }{ 2 } , 0
  \big)
  =
  \max\!\big(
  \tfrac{ 
    \alpha - \beta^2
  }{ 
    3 \beta^2 
  } , 0
  \big)
  \in [0,\infty)
  .
\end{equation}
In the case
$ 
  \alpha > \beta^2
$ 
we thus get
\begin{equation}
  \lim_{ N \rightarrow \infty }
    \sup_{ t \in [0,T] }
    \mathbb{E}\big[
      \| 
        X_t - \bar{Y}^N_t 
      \|^{ 
        q
      }
    \big] 
  = 0
\end{equation}
for all 
$
  q \in (0,q_0)
$.
The stochastic volatility 
model associated
to \eqref{eq:lewis}
is also known as $ 3/2 $-stochastic 
volatility model (see also
\cite{h07c,h11}).
Furthermore, we note that
Theorem~4.4 
in 
Mao \citationand\ 
Szpruch~\cite{MaoSzpruch2012pre} 
proves strong $ L^2 $-convergence of 
drift-implicit Euler
methods for the 
SDE~\eqref{eq:lewis} in the case 
$ 2 \alpha \geq \beta^2 $.
More formally,
\eqref{eq:FINANCE_onesidedlipschitz}
ensures that there exist
unique stochastic processes
$ 
  Z^N \colon \N_0
  \times \Omega
  \to \R
$,
$ 
  N \in 
  \N \cap ( \max\!\left( 0, \gamma \right) T, \infty ) 
$, satisfying
$ Z^N_0 = X_0 $
and
\begin{equation}
\label{eq:def_scheme_32Vola}
\begin{split}
  Z^N_{ n + 1 } 
& = 
  Z^N_n 
  + 
  \mu( Z^N_{ n + 1 } )
  \frac{ T }{ N }
  + 
  \sigma( Z^N_n )
  \left(
    W_{ \frac{ ( n + 1 ) T }{ N } }
    -
    W_{ \frac{ n T }{ N } }
  \right)
\\ & = 
  Z^N_n 
  + 
  \mathbbm{1}_{
    \left\{
      Z^N_{ n + 1 }
      > 0
    \right\}
  }
  \left(
    \gamma 
    Z^N_{ n + 1 }
    -
    \alpha
    \left( 
      Z^N_{ n + 1 } 
    \right)^2
  \right)
  \!
  \frac{ T }{ N }
  + 
  \sigma( Z^N_n ) 
  \left(
    W_{ \frac{ ( n + 1 ) T }{ N } }
    -
    W_{ \frac{ n T }{ N } }
  \right)
\end{split}
\end{equation}
for all
$ n \in \N_0 $
and all
$
  N \in
  \N \cap ( \max\!\left( 0, \gamma \right) T, \infty ) 
$
and 
Theorem~4.4
in 
Mao \citationand\ 
Szpruch~\cite{MaoSzpruch2012pre}
then,
in particular, proves 
that
in the case $ 2 \alpha \geq \beta^2 $
it holds that
\begin{equation}
\label{eq:vola_implicit1}
  \lim_{ N \to \infty }
  \E\big[
    \| X_T - \bar{Z}_T^N \|^p
  \big]
  = 0
\end{equation}
for all $ p \in (0,2) $
where
$ 
  \bar{Z}^N \colon
  [0,T] \times \Omega
  \to \R
$,
$ N \in \N $,
are defined through
\begin{equation}
  \bar{Z}^N_t
:=
  \big(
    n + 1 - \tfrac{tN}{T}
  \big)
  Z_n^N
  +
  \big(
    \tfrac{tN}{T}-n
  \big)
  Z_{ n + 1 }^N
\end{equation}
for all
$
  t \in 
  [
    n T / N,
    (n + 1) T / N 
  ]
$,
$ 
  n \in \{ 0, 1, \dots, N - 1 \} 
$
and all
$ N \in \mathbb{N} $.
Next 
we observe that
Corollary~\ref{c:Lyapunov.implicit.Euler}
and 
Lemma~\ref{l:Lyapunov.linear.implicit.Euler}
can be applied here to
prove moment bounds
and strong convergence
of implicit numerical
approximations methods 
for the 
SDE~\eqref{eq:lewis}.
In particular,
Corollary~\ref{c:Lyapunov.implicit.Euler} 
implies that
\begin{equation}
  \sup_{ N \in \N }
  \sup_{ n \in \{ 0, 1, \dots, N \} }
  \E\big[
    \| Z^N_n \|^p
  \big]
  < \infty
\end{equation}
for all $ p \in [0,p_0] $
in the case $ 2 \alpha \geq \beta^2 $.
This,
Corollary~\ref{cor:convergence2} 
and Lemma~\ref{lem:full_implicit}
then show 
in the case $ 2 \alpha \geq \beta^2 $
that
\begin{equation}
\label{eq:vola_implicit}
  \lim_{ N \to \infty }
  \sup_{ t \in [0,T] }
  \E\big[
    \| X_t - \bar{Z}_t^N \|^p
  \big]
  = 0
\end{equation}
for all $ p \in (0,p_0) $.
Equation~\eqref{eq:vola_implicit}
improves 
Theorem~4.4
in 
Mao \citationand\ 
Szpruch~\cite{MaoSzpruch2012pre}
in the case $ 2 \alpha > \beta^2 $.

%
%
%
%
%
%
%

\section{Langevin equation}
\label{sec:langevin}

A commonly used 
model 
for the motion
of molecules in a potential
is the Langevin equation (see, e.g.,
Subsection~2.1 
in Beskos \citationand~Stuart~\cite{bs09}).
Corollary~\ref{cor:CT1}
does not apply to completely 
arbitrary Langevin equations 
but requires
the following assumptions on the
potential.
Assume that the setting
in Section~\ref{sec:setting00}
is fulfilled, 
let 
$ \varepsilon, c \in (0,\infty) $
be real numbers,
let 
$
  U
  \colon\mathbbm{R}^d\to\mathbbm{R}
$
be a continuously differentiable function
with a locally Lipschitz continuous derivative
and suppose that
$ D = \mathbb{R}^d $,
$ d = m $
and
\begin{equation}
  \mu(x) = - (\triangledown U)(x)
\qquad
\text{and}
\qquad
  \sigma(x) = 
  \sqrt{ 2 \varepsilon } I 
\end{equation}
for all $ x \in \mathbb{R}^d $.
Moreover, let
$ 
  V \colon \mathbb{R}^d
  \rightarrow [1,\infty)
$
be a twice differentiable function 
with a locally Lipschitz continuous
second derivative,
with
\begin{equation}
  \limsup_{ 
    q \searrow 0
  }
  \sup_{ 
    x \in \mathbb{R}^d
  }
  \left(
  \frac{ 
    \| x \|^{ q } 
  }{
    V(x)
  }
  \right)
  < \infty
\end{equation}
and with 
\begin{equation}
  \langle \triangledown 
    U(x),\triangledown V(x)
  \rangle
  \geq - c \cdot V(x)
\quad
\text{and}
\quad
  \sum_{ i = 1 }^{ 3 }
  \|
    V^{(i)}( x )
  \|_{
    L^{ (i) }( 
      \mathbb{R}^d, \mathbb{R} 
    )
  }
  \leq
  c \,
  |
    V(x)
  |^{ 
    [ 1 - 1 / c ]
  }
\end{equation}
for 
$ \lambda_{ \mathbb{R}^d } $-almost
all 
$ x \in \mathbb{R}^d $.
Under these assumptions,
the SDE~\eqref{eq:SDE00}
reads as
\begin{equation}
\label{eq:ex_Langevin}
  dX_t
  =
  - ( \triangledown U )
  ( X_t ) \, dt
  +
  \sqrt{ 2 \varepsilon }
  \, dW_t
\end{equation}
for $ t \in [0,\infty) $.
Corollary~\ref{cor:CT1} then
implies
\begin{equation}
  \lim_{ N \rightarrow \infty }
    \sup_{ t \in [0,T] }
    \mathbb{E}\big[
      \| 
        X_t - \bar{Y}^N_t 
      \|^{ 
        q
      }
    \big] 
  = 0
\end{equation}
for all 
$
  q \in (0,\infty)
$.
If the force
$ ( - \triangledown U) $
in the Langevin equation
is
globally one-sided Lipschitz
continuous and satisfies suitable growth and regularity conditions, then the 
strong convergence
results, e.g., in~\cite{h96,hms02,Schurz2003,hjk10b,gw11} apply here
(see Chapter~\ref{sec:intro}
for more details).

\subsection*{Acknowledgement}

We gratefully acknowledge Lukas Szpruch
for several useful comments to an earlier
preprint version of this manuscript.

%
%
%

\backmatter

\def\cprime{$'$} \def\polhk\#1{\setbox0=\hbox{\#1}{{\o}oalign{\hidewidth
  \lower1.5ex\hbox{`}\hidewidth\crcr\unhbox0}}}
  \def\lfhook#1{\setbox0=\hbox{#1}{\ooalign{\hidewidth
  \lower1.5ex\hbox{'}\hidewidth\crcr\unhbox0}}} \def\cprime{$'$}

\end{document}